\newtheorem{theorem}{Theorem}[section]
\newtheorem{proposition}[theorem]{Proposition}
\newtheorem{lemma}[theorem]{Lemma}
\newtheorem{corollary}[theorem]{Corollary}
\theoremstyle{definition}
\newtheorem{example}[theorem]{Example}
\newtheorem{definition}[theorem]{Definition}
\theoremstyle{remark}
\theoremstyle{remark}
\newtheorem{remark}[theorem]{Remark}
\def\({{\rm (}}
\def\){{\rm )}}
\let\Mathrm\operator@font
\let\Cal\mathcal
\let\Bbb\mathbb
\newcommand{\fm}{\ensuremath{\mathfrak m}}
\newcommand{\fp}{\ensuremath{\mathfrak p}}
\def\standop#1{\mathop{\Mathrm #1}\nolimits}
\def\difstop#1#2{\expandafter\def\csname #1\endcsname{\standop{#2}}}
\def\defstop#1{\difstop{#1}{#1}}
\def\alg{_{\Mathrm{alg}}}
\def\FSch{\mathop{\text{$F$-Sch}}\nolimits}
\def\GL{\text{\sl{GL}}}
\def\id{\mathord{\Mathrm{id}}}
\def\Id{\mathord{\Mathrm{Id}}}
\def\op{^{\standop{op}}}
\def\red{_{\Mathrm{red}}}
\def\reg{_{\standop{reg}}}
\def\Sch{\underline{\Mathrm Sch}}
\def\SL{\text{\sl{SL}}}
\def\Sp{\text{\sl{Sp}}}
\def\sep{_{\Mathrm{sep}}}
\def\extop{\textstyle{\bigwedge}^{\Mathrm{top}}}
\def\tor{_{\Mathrm{tor}}}
\def\an#1{^{\langle #1\rangle}}
\def\Ki{K\text{\rm-inj}}
\def\C{\Cal C}
\def\E{\Cal E}
\def\F{\Cal F}
\def\g{\mathfrak g}
\def\H{\Cal H}
\def\I{\Cal I}
\def\K{\Cal K}
\def\L{\Cal L}
\def\M{\Cal M}
\def\N{\Cal N}
\def\O{\Cal O}
\def\Q{\Cal Q}
\def\R{\Cal R}
\def\V{\Cal V}
\def\fm{\mathfrak{m}}
\def\fp{\mathfrak{p}}
\let\indlim\varinjlim
\let\projlim\varprojlim
\def\uHom{\mathop{\text{\underline{$\Mathrm Hom$}}}\nolimits}
\def\uProj{\mathop{\text{\underline{$\Mathrm Proj$}}}\nolimits}
\def\uExt{\mathop{\text{\underline{$\Mathrm Ext$}}}\nolimits}
\def\uTor{\mathop{\text{\underline{$\Mathrm Tor$}}}\nolimits}
\def\uGamma{\mathop{\text{\underline{$\Gamma$}}}\nolimits}
\def\uH{\mathop{\text{\underline{$H$}}}\nolimits}
\def\uSpec{\mathop{\text{\underline{$\Mathrm Spec$}}}\nolimits}
\def\sdarrow#1{\downarrow\hbox to 0pt{\scriptsize$#1$\hss}}
\def\suarrow#1{\uparrow\hbox to 0pt{\scriptsize$#1$\hss}}
\def\ssearrow#1{\searrow\hbox to 0pt{\scriptsize$#1$\hss}}
\def\ext{{\textstyle\bigwedge}}
\def\section{\@startsection{section}{1}{\z@ }%
  {-3.5ex plus -1ex minus -.2ex}{2.3ex plus .2ex}{\bf }}
\long\def\refname{\par\kern -3ex
  \begin{center}\rm R\sc{eferences}\end{center}\par\kern 
  -2ex}
\def\@seccntformat#1{\csname the#1\endcsname.\quad}
\def\@@@sect#1#2#3#4#5#6[#7]#8{%
  \ifnum #2>\c@secnumdepth 
  \def \@svsec {}\else \refstepcounter {#1}%
  \def\@svsec{}
  \fi 
  \@tempskipa #5\relax 
  \ifdim \@tempskipa >\z@ 
  \begingroup #6\relax \@hangfrom {\hskip #3\relax 
    \@svsec}{\interlinepenalty \@M #8\par }\endgroup 
  \csname #1mark\endcsname {#7}
  \else 
  \def \@svsechd {#6\hskip #3\@svsec #8\csname #1mark\endcsname {#7}}
  \fi \@xsect {#5}}
\def\@@@startsection#1#2#3#4#5#6{%
  \if@noskipsec \leavevmode \fi \par \@tempskipa #4\relax \@afterindenttrue 
  \ifdim \@tempskipa <\z@ \@tempskipa -\@tempskipa \@afterindentfalse 
  \fi \if@nobreak \everypar {}\else \addpenalty {\@secpenalty }\addvspace 
  {\@tempskipa }\fi \@ifstar {\@ssect {#3}{#4}{#5}{#6}}{\@dblarg 
    {\@@@sect {#1}{#2}{#3}{#4}{#5}{#6}}}}
\def\theparagraph{\thesection.\arabic{paragraph}}
\def\aparagraph{\@@@startsection{paragraph}{2}{\z@ }%
  {1.75ex plus .2ex minus .15ex}{-1em}{\bf(\theparagraph) } }
\def\paragraph{\@@@startsection{paragraph}{2}{\z@ }%
  {1.75ex plus .2ex minus .15ex}{-1em}{}{\bf(\theparagraph)} }
\let\c@theorem\c@paragraph
\title{Equivariant class group. III. \\
Almost principal fiber bundles}
\author{M{\sc itsuyasu} H{\sc ashimoto}}
\date{\normalsize
  Department of Mathematics, Okayama University\\
  Okayama 700--8530, JAPAN\\
  {\small \tt mh@okayama-u.ac.jp}\\
~\\
Dedicated to Professor Yuji Yoshino on the occasion of his 60th birthday
}
\begin{document}

\maketitle
\footnote[0]
{2010 \textit{Mathematics Subject Classification}. 
  Primary 14L30.
  Key Words and Phrases.
  almost principal fiber bundle, class group, canonical module.
}

\begin{abstract}
As a formulation of \lq codimension-two arguments' in invariant theory,
we define a (rational) almost principal bundle.
It is a principal bundle off closed subsets of codimension two or more.
We discuss the behavior of 
the category of reflexive modules over locally Krull schemes, 
the category of the coherent
sheaves which satisfy Serre's condition $(S'_2)$ over
Noetherian $(S_2)$ schemes with dualizing complexes,
the class group, 
the canonical module,
the Frobenius pushforwards, and global $F$-regularity, of a rational 
almost principal bundle.
We give examples of finite group schemes, multisection rings, surjectively
graded rings, and determinantal rings, and give unified treatment and 
new proofs to known results in invariant theory, algebraic geometry, and 
commutative algebra, and generalize some of them.
In particular, 
we generalize the result on the canonical module of the multisection ring
of a sequence of divisors by Kurano and the author.
We also give a new proof of a generalization of
Thomsen's result on the Frobenius 
pushforwards of the structure sheaf of a toric variety.
\end{abstract}

\tableofcontents

\setcounter{section}{-1}
\section{Introduction}

\paragraph
This paper is a continuation of \cite{Hashimoto4} and \cite{Hashimoto5}.

\paragraph
Let $k$ be a field, and $G$ an algebraic group scheme over $k$.
In geometric invariant theory, 
categorical quotients, geometric quotients, and principal fiber bundles 
play important roles.
Among them, principal fiber bundles behave well with respect to 
quasi-coherent sheaves, and they are interesting from the viewpoint of 
algebraic invariant theory.
Grothendieck's descent theorem tells us that if $\psi:X\rightarrow Y$ is
a principal $G$-bundle, then $\psi^*:\Qch(Y)\rightarrow \Qch(G,X)$ is
an equivalence of categories, see \cite[(3.13)]{Hashimoto4}.
This fact has played central role in our treatment of equivariant class groups
and Picard groups in \cite{Hashimoto4} and \cite{Hashimoto5}.

\paragraph
A $G$-invariant morphism $\varphi:X\rightarrow Y$ is said to be an
algebraic quotient (or an affine quotient) by $G$ if it is an
affine morphism, and the canonical map $\O_Y\rightarrow (\varphi_*\O_X)^G$
is an isomorphism.
If $B$ is a $G$-algebra (a $k$-algebra on which $G$-acts), then the canonical
map 
$\varphi:X=\Spec B\rightarrow \Spec B^G=Y$ is an algebraic quotient.
It is the central object in algebraic invariant theory.
This is not even a categorical quotient in general 
(see Example~\ref{masuda.ex}), and it seems that imposing 
geometric conditions should yield a good class of algebraic quotients.
However, the algebraic quotient $\varphi$ is rarely a principal
$G$-bundle.
For example, if $B=k[x_1,\ldots,x_n]$ is a polynomial ring and $G$
acts on $B$ linearly, then the origin is the fixed point, and hence
$\varphi$ is not a principal $G$-bundle unless $G$ is the trivial group.

\paragraph
However, when we remove closed subsets of codimension two or more
both from $X$ and $Y$, sometimes the remaining part is a principal bundle,
and this is sometimes useful enough in invariant theory.
We define that the diagram of $G$-schemes
\begin{equation}\label{almost-intro.eq}
\xymatrix{
X & U \ar@{_{(}->}[l]_i \ar[r]^\rho & V \ar@{^{(}->}[r]^j & Y
}
\end{equation}
is a {\em rational almost principal $G$-bundle} if $G$ acts on $V$ and $Y$
trivially, $i$ and $j$ are open immersions, $\codim_Y(Y\setminus j(V))
\geq 2$, $\codim_X(X\setminus i(U))\geq 2$, and $\rho:U\rightarrow V$ is
a principal $G$-bundle (cf.~Definition~\ref{rational-almost-pb.def}).
The name \lq rational' comes from the fact that $\xymatrix{X\ar@{.>}[r] & Y}$ 
is a rational map.
A $G$-invariant morphism $\varphi:X\rightarrow Y$ is called an
{\em almost principal $G$-bundle} if there exist some $U\subset X$ and
$V\subset Y$ such that (\ref{almost-intro.eq}) is a rational
almost principal $G$-bundle, where $i$ and $j$ are inclusions, and
$\rho$ is the restriction of $\varphi$ (cf.~Definition~\ref{almost-pb.def}).

\paragraph
If $X$ and $Y$ are locally Noetherian and normal, then the categories of 
reflexive modules $\Ref(V)$ and $\Ref(Y)$ are equivalent under the 
equivalences $j_*$ and $j^*$.
Similarly, the categories of $G$-equivariant reflexive modules
$\Ref(G,U)$ and $\Ref(G,X)$ are equivalent under the functors $i_*$ and $i^*$.
Finally, by Grothendieck's descent theorem, $\Ref(V)$ and $\Ref(G,U)$ are 
equivalent under the equivalences $\rho^*$ and $(?)^G\circ \rho_*$.
Combining them, we have that $\Ref(Y)$ and $\Ref(G,X)$ are equivalent 
under the equivalences $i_*\rho^*j^*$ and $(?)^G j_* \rho_*  i^*$ 
(note that $(?)^Gj_*$ and $j_*(?)^G$ are equivalent) 
(cf.~Theorem~\ref{main.thm}).
This is the main observation of this paper.

\paragraph
The first 
purpose of this paper is to show that properties of $X$ and those of 
$Y$ are deeply connected,
and we can get information of $Y$ from that on the action of $G$
on $X$, and vice versa.

\paragraph
This kind of argument, sometimes called codimension-two argument, is 
not new here.
We give one example which shows that our construction is a generalization
of a very important notion in invariant theory.

Let $G$ be a finite group acting on a variety $X$ over a field $k$.
Then for $g\in G$, we define $X_g:=\{x\in X\mid gx=x\}$.
If $\codim_X X_g=1$, then we say that $g$ is a pseudoreflection.
If $\varphi:X\rightarrow Y$ 
is an algebraic quotient by $G$, then 
we have that $\varphi$ is an almost principal
$G$-bundle if and only if the action of $G$ is small, that is, 
$G$ does not have a pseudoreflection (Proposition~\ref{finite-main.prop}).
We will show that some of the known important results on the
invariant subrings under the action of finite groups without pseudoreflections
can be generalized to the results on (rational) almost principal bundles.

\paragraph
The second purpose of this paper is to show that (rational)
almost principal bundles are so ubiquitous in invariant theory,
algebraic geometry, and commutative algebra.
As an application, we give new and short proofs to various known results.
Some of them are generalized using our approach.

\paragraph
In what follows, we list the results on the first purpose, that is,
the general results on the comparison of properties of $X$ and $Y$,
for a rational almost principal $G$-bundle (\ref{almost-intro.eq}).
These are proved in Chapter~1 
(sections~\ref{main.sec}--\ref{global-freg.sec}).

For simplicity, in the following list, we 
assume that $X$ and $Y$ are normal 
varieties over an algebraically closed base field $k$.

\paragraph
$\Ref(Y)$ and $\Ref(G,X)$ are equivalent, as we mentioned.

\paragraph
$\Cl(Y)\cong \Cl(G,X)$ (Theorem~\ref{main.thm}).
This is a consequence of the equivalence $\Ref(Y)\cong \Ref(G,X)$.
This result has essential overlap
with the work of Waterhouse \cite[Theorem~4]{Waterhouse} in the affine case.
Our approach also enables us to establish an exact sequence
\[
0\rightarrow H^1\alg(G,\O_X^\times)\rightarrow 
\Cl(Y)\rightarrow \Cl(X)^G\rightarrow H^2\alg(G,\O_X^\times),
\]
see Theorem~\ref{four-term.thm}.
The proof depends on the corresponding exact sequence for the Picard groups
developed in \cite{Hashimoto4}.

\paragraph\label{canonical-intro.par}
Let $\omega_Y$ be the canonical module of $Y$, and
$\omega_X$ be the $G$-canonical module of $X$.
Then $\omega_X\cong i_*\rho^*j^*\omega_X\otimes_k \Theta_G^*$, and
$\omega_Y\cong (j_*\rho_*i^*\omega_X)^G\otimes_k \Theta_G$,
where $\Theta_G=\Theta_{G,k}$ is a certain 
one-dimensional representation of $G$ determined only by $G$
(Theorem~\ref{canonical.thm}).
If $G$ is smooth, then $\Theta_G=\extop\Lie G$.
If $G$ is connected reductive or abelian, then $\Theta_G$ is trivial.
This kind of relationship between $\omega_X$ and $\omega_Y$ can be found in
the work of Knop \cite{Knop} on an action of an algebraic group over an
algebraically closed field of characteristic zero.
See also \cite{Peskin}.
As we also work over characteristic $p>0$ and treat non-reduced group
schemes too, the description of $\omega_X$ and 
$\Theta_G$ depends on the theory of 
equivariant twisted inverse developed in \cite{ETI}.

Although the situation is different, $\Theta_G$ plays a similar role as the
differential character (or different character) 
$\chi_{B,A}^{-1}$ (in the notation of \cite{FW},
where $X=\Spec B$ and $Y=\Spec A$)
played in the study of finite group actions in \cite{Broer} and \cite{FW}.
In the case that
the group $G$ is \'etale, $X=\Spec B$ and $Y$ are affine, and
$\varphi:X\rightarrow Y$ is 
an almost principal $G$-bundle with $B$ a UFD,
where our settings overlap with theirs,
$\Theta_G=\chi_{B,A}^{-1}$ is trivial (this case is treated in \cite{Braun}).
In these papers, the assumption that $B$ is a UFD (or a polynomial ring) 
was important in order
to make the different module $\Cal D_{B/A}$ rank-one free.
We are free from the assumption that $B$ is a UFD, and
we can treat higher dimensional and non-reduced group schemes.

\paragraph\label{Frobenius-pushforwards-intro.par}
Let the characteristic of $k$ be $p>0$.
If $\M\in\Ref(G,X)$, then the Frobenius pushforward $F^e_*({}^e\M)$ 
also lies in 
$\Ref(G,X)$, see for the notation, section~\ref{frob-twist.sec}.
This simple observation suggests that (rational) almost principal bundles
are useful in studying Frobenius pushforwards and related properties and
invariants of algebraic varieties in characteristic $p>0$.
Let $\N\in\Ref(Y)$ corresponds to $\M\in\Ref(G,X)$ under the equivalence
$\Ref(Y)\cong \Ref(G,X)$ above, and $e\geq 1$.
If $G$ is $k$-smooth, then the Frobenius pushforward $F^e_*({}^e\N)$ 
corresponds to the invariance $(F^e_*({}^e\M))^{G_e}$, where $G_e$ is the
$e$th Frobenius kernel of $G$ (Theorem~\ref{Frobenius-pushforward.thm}).
If $G$ is a finite group, then $G_e$ is trivial.
Applying this result, Nakajima and the author recently 
gave a description of the
generalized $F$-signatures of maximal Cohen--Macaulay modules over 
the invariant subrings under the action of
finite groups without pseudoreflections \cite{HN}.

\paragraph
Using the correspondence in (\ref{Frobenius-pushforwards-intro.par}) 
of Frobenius pushforwards,
we get information on the direct-sum decomposition of the Frobenius 
pushforwards $F^e_*({}^e\O_Y)$ from the information of the decomposition of
$(F^e_*({}^e\O_X))^{G_e}$.
The author expects that this observation will be useful in studying 
the problem of 
finite $F$-representation type defined by Smith and van den Bergh
\cite{SmVdB}.
As an application, we will give a short proof of a 
generalization of Thomsen's result \cite{Thomsen} on the decomposition of 
Frobenius pushforwards of the structure sheaf of 
toric varieties (Theorem~\ref{thomsen.cor}).

\paragraph Another related result on Frobenius maps is the heredity of global
$F$-regularity.
We say that an  integral Noetherian $\Bbb F_p$-scheme $X$ 
is globally $F$-regular if for any invertible sheaf $\L$ 
on $X$ and any nonzero section $s:\O_X\rightarrow \L$ of $\L$, 
there exists some $e\geq 1$ such that $sF^e:\O_{X^{(e)}}\rightarrow \L$ 
splits as an $\O_{X^{(e)}}$-linear map.
This was defined by Smith \cite{Smith} for projective varieties, and this
definition is its obvious extension.
She applied this notion to prove a vanishing theorem on a GIT quotient
of a complex Fano variety with rational Gorenstein singularities.
We prove that when $G$ is linearly reductive and both $X$ and $Y$ have
ample invertible sheaves,
$X$ is globally $F$-regular if and only if $Y$ is globally $F$-regular.
Note that as we work over characteristic $p>0$, if $G$ is affine and 
linearly reductive,
then the identity component $G^\circ$ of $G$ is diagonalizable 
\cite{Sweedler3}, and $G/G^\circ$ is a finite group whose order is not
divisible by the characteristic of $k$.

This is the end of the list of our general results.

\paragraph\label{group-scheme-intro.par}
Some of the results above are proved under more general settings,
see the text.
We tried to study not only smooth algebraic groups but also non-reduced
group schemes, as long as possible.
In fact, our base scheme $S$ is basically general, and our group scheme $G$
is basically general, except that it is almost always assumed to be flat.
Some additional assumptions are added case-by-case.

When we consider a torus action, our main construction has some applications
to algebraic geometry and commutative algebra 
(sections~\ref{multisection.sec}--\ref{surjective.sec}).
For a finitely generated torsion-free abelian group $\Lambda\cong\Bbb Z^s$, 
a ring $B$ with the action of the torus $G=\Spec\Bbb Z\Lambda$ 
is nothing but a $\Lambda$-graded ring.
Then the Veronese subring $B_\Gamma$ with respect to
a subgroup $\Gamma\subset\Lambda$ is nothing but $B^N$, where 
$N$ is the diagonalizable group scheme $\Spec\Bbb Z(\Lambda/\Gamma)$.
If the characteristic of the base field $k$ is $p>0$ and $\Lambda/\Gamma$ 
has a $p$-torsion, this is a non-reduced group scheme.
So the invariant theory of non-reduced group schemes arises naturally
in algebraic geometry and commutative algebra in characteristic $p>0$.
As an application of our main construction, we give a characterization of 
a standard graded $k$-algebra $B$ of dimension greater than or equal to two
whose Veronese subalgebra $B_{d\Bbb Z}$ is quasi-Gorenstein
(Proposition~\ref{degree-one.prop}).
Although their (seeming) statement was a little bit weaker,
Goto and K.-i.~Watanabe \cite[(3.2.1)]{GW} already proved it
(exactly the same proof works).

Recently, the author gave a classification of the linearly reductive finite
subgroup schemes of $\SL_2$ \cite{Hashimoto8}.
This enables us to write any complete local $F$-rational Gorenstein ring
of dimension two over an algebraically closed field of positive characteristic
as an invariant subring of such a subgroup scheme.
This has known to be true for sufficiently large characteristic,
but now the bad characteristics have also be covered, using 
non-reduced group schemes.
In this paper, we define the smallness of the action of a group scheme,
generalizing the action of finite groups without pseudoreflections, 
and show that the canonical action of a linearly reductive finite 
subgroup scheme of $\SL_n$ on $k[[x_1,\ldots,x_n]]$ is small
(Proposition~\ref{SL_n-small.prop}).
Applying the general results listed above to the action of $\SL_2$ 
on $k[[x,y]]$, we get
some basic and known results on the two-dimensional $F$-rational Gorenstein 
singularities (such as finite representation type property),
see Theorem~\ref{SL_2.thm}.

The author expects that 
the generalization from groups to group schemes will give interesting 
new aspects to invariant theory.

\paragraph
The codimension-two argument on reflexive sheaves works comfortably 
on Noetherian normal schemes.
However, as applications to commutative algebra are important motivations,
we mainly work on locally Krull schemes when we discuss class groups,
as in \cite{Hashimoto4} and \cite{Hashimoto5}.
A generalization to different direction is the coherent sheaves $\M$ which 
satisfy Serre's $(S'_2)$ condition (that is, 
$\depth_{\O_{Z,z}}\M_z\geq \min(2,\dim \O_{Z,z})$ for $z\in Z$) on a 
quasi-normal locally Noetherian scheme $Z$.
Quasi-normality is a notion which generalizes a 
normal scheme (a little bit more generally, schemes which satisfy
$(T_1)+(S_2)$ ($(T_1)$ is \lq Gorenstein in codimension one,'
it is also written as $(G_1)$ by some authors.
Our notation is after \cite{GM})) and a locally equidimensional 
scheme with a dualizing complex simultaneously, see (\ref{quasinormal.par}).
In particular, we generalize \cite[(1.12)]{Hartshorne4}.
A quasi-Gorenstein locally Noetherian scheme is quasi-normal, and the
generalization to this direction is used to reprove 
the theorem of Goto--Watanabe mentioned in (\ref{group-scheme-intro.par}).

\paragraph
As in \cite{Hashimoto5}, instead of working on a single group scheme $G$,
we mostly work on a short exact sequence
\[
1\rightarrow N\rightarrow G\xrightarrow f H\rightarrow 1
\]
of group schemes.
That is, $f:G\rightarrow H$ is a qfpqc homomorphism of group schemes, and
$N=\Ker f$.
For qfpqc morphisms, see \cite[section~2]{Hashimoto5}.
We say that a rational almost principal $N$-bundle 
(\ref{almost-intro.eq}) is {\em $G$-enriched} if it is also a diagram of 
$G$-morphisms.

For example, let $B=k[x_1,\ldots,x_n]$ be a polynomial ring over a field $k$,
and $N$ a finite subgroup of $\GL_n$ without pseudoreflection, 
acting on $X=\Spec B$ in a natural way.
Let $H=\Bbb G_m$ be the one-dimensional torus, and $G=N\times H$.
As the action of $N$ on $B$ preserves grading, $G$ acts on $B$.
As the inclusion $A\hookrightarrow B$ preserves grading, the
almost principal $N$-bundle $\varphi:X=\Spec B\rightarrow\Spec B^N=Y$ is
$G$-enriched.
By our general result, 
$\varphi^*:\Ref(H,Y)\rightarrow \Ref(G,X)$ is an equivalence
whose quasi-inverse is $(?)^N\varphi_*$ (Theorem~\ref{main.thm}).
Note that $\Ref(H,Y)$ is the category of graded reflexive $\O_Y$-modules, and
$\Ref(G,X)$ is the category of graded reflexive $(N,\O_X)$-modules.
So the auxiliary action of $H$ gives us the graded version of the
invariant theory.

Another example of an auxiliary action is that of Galois groups.
Let $k$ be a field, $N_1$ an \'etale $k$-group scheme, and $\varphi:
X\rightarrow Y$ an almost principal $N_1$-bundle.
Let $k'$ be a finite Galois extension of $k$ with the Galois group $H$
such that the base change $k'\otimes_k N_1$ is a constant finite group $N$
(such $k'$ always exists).
Then $H$ acts on $N$ by group automorphisms, and we can define the 
semidirect product $G:=N\rtimes H$, and the base change
$\varphi':X'\rightarrow Y'$, where the base field is still $k$, not $k'$, 
is a $G$-enriched almost principal $N$-bundle ($H$-equivariant almost 
principal $N$-bundle).
Even if the original $N_1$ is not constant, $G$ and $H$ are 
constant finite groups, 
and we can utilize the usual group theory to study $\varphi'$.

Yet another example can be found in the study of the Cox rings of 
toric varieties, see Proposition~\ref{cox-main.prop}.
See also Lemma~\ref{Veronese-main.lem}.

\paragraph
In Chapter~2 
(sections~\ref{finite.sec}--\ref{determinantal.sec}), we show
various examples of (rational) almost principal bundles and give applications.

\paragraph
The first example is the finite group schemes.
As we have mentioned, an action of a finite group $G$ on an affine
algebraic variety $X=\Spec B$ yields an almost principal $G$-bundle
$\varphi:X=\Spec B\rightarrow \Spec B^G=Y$ if and only if the action
is small (that is, $G$ does not have a pseudoreflection).
For a general finite group scheme action,
we defined the smallness of the action via the
largeness of the free locus of the action, see (\ref{definition-small.par}).
The author does not know how to redefine the smallness using the
non-existence of pseudoreflections for general finite subgroup 
schemes of $\GL_n$, see Remark~\ref{ps-small.rem}.

We call a group scheme $h:G\rightarrow S$ 
on a scheme $S$ is {\em locally finite free} (LFF for short) 
if it is finite and the structure sheaf $h_*\O_G$ is a locally free
sheaf on $S$.
We work on LFF group schemes (as a generalization of finite group
schemes over a field), and prove that an algebraic quotient
$\varphi:X=\Spec B\rightarrow Y=\Spec B^G$ is an almost principal 
$G$-bundle if and only if the action is small 
(Proposition~\ref{finite-main.prop}).
Thus we know that the action of a finite group $G$ without pseudoreflection
on an affine variety yields an almost principal $G$-bundle, as we have
already mentioned.
This fact is also useful in finding examples of (rational) almost principal
bundles with respect to non-reduced finite group schemes in 
(\ref{group-scheme-intro.par}).

As an application, assuming that $X$ satisfies the $(S_2)$ condition, 
we give a characterization of an algebraic 
quotient $\varphi:X\rightarrow Y$ by the action of an LFF group scheme $G$
such that $Y$ is (connected Noetherian with a dualizing complex and)
quasi-Gorenstein (Theorem~\ref{finite-resume.thm}, {\bf 6}).
If, moreover, $G$ is \'etale; abelian group scheme over a field; or 
a linearly reductive group scheme over a 
field (more generally, a Reynolds group scheme over $S$, see below),
then 
we have a very simple relationship: $\omega_Y\cong(\varphi_*\omega_X)^G$.
If, moreover, $X$ is normal, then we have 
$\omega_X\cong(\varphi^*\omega_Y)^{**}$ 
(Theorem~\ref{finite-resume.thm}, {\bf 4}).
When the group scheme is \'etale, there are considerable overlaps with the 
results of \cite{Peskin}, \cite{Broer}, \cite{Braun}, and \cite{FW}.

Also, well-known formula for the class group of $A$ is generalized to
the action of non-reduced finite group schemes, see
Example~\ref{finite-polynomial.ex}.

We point out that if the base scheme $S$, the group scheme $G$, and
the scheme $X=\Spec B$ are affine, to say that $\varphi:X=\Spec B\rightarrow
\Spec B^G$ is an almost principal $G$ bundle is the same as to say that
$B^G\rightarrow B$ is a pseudo-Galois extension in the sense of 
Waterhouse \cite{Waterhouse2} by definition.
His study on the class group is applicable to finite group schemes also,
and our work has many overlaps with his.

\paragraph\label{multisection-intro.par}
Next example is a rational almost principal $G$-bundle arising from
a sequence of divisors $D_1,\ldots,D_s$ over a Noetherian normal variety 
(more generally, a quasi-compact quasi-separated locally Krull 
integral scheme) $Y$, where $G$ is the torus $\Bbb G_m^s$, and
we assume that $\sum_i \Bbb Z D_i$ contains an ample Cartier divisor.
Let $j:V=Y\reg\hookrightarrow Y$ be the regular locus of $Y$ 
(for simplicity, assume that 
$Y$ is Noetherian), and let
\[
\rho:
U=\uSpec_V(\bigoplus_{\lambda\in\Bbb Z^s}\O_Y(\sum_i\lambda_i D_i)|_V 
\cdot t^\lambda)
\rightarrow V
\]
be the canonical map.
Also, let 
\[
X:=\Spec\Gamma(U,\O_U)
=\Spec (\bigoplus_{\lambda\in\Bbb Z^s}\Gamma(Y,\O_Y(\sum_i\lambda_i D_i))\cdot
t^\lambda),
\]
and $i:U\rightarrow X$ be the 
canonical map.
Then
\[
\xymatrix{
X & U \ar@{_{(}->}[l]_i \ar[r]^\rho & V \ar@{^{(}->}[r]^j & Y
}
\]
is a rational almost principal $G$-bundle (Theorem~\ref{kurano-main.thm}).
No map is defined from $X$ to $Y$ here, and this gives an example of
a rational almost principal bundle which is not an almost principal bundle.
This construction already essentially appeared in \cite{HK}
without the formal definition of rational almost principal bundles.
By our main theorem (Theorem~\ref{main.thm}), 
we get an equivalence between the categories $\Ref(Y)$ and
$\Ref(G,X)$ in an explicit way (Corollary~\ref{kurano-correspondence.cor}).
Also, the description of the canonical module of
a multisection ring (where $Y$ is a projective normal variety over a 
field) in \cite{HK} is generalized to a result on Noetherian normal 
integral schemes (Proposition~\ref{multi-canonical.prop}).
A part of the results on the class group of the multisection ring in \cite{EKW}
is also reproved as a theorem on locally Krull schemes
(Proposition~\ref{multi-class.prop}).

Let $\Lambda=\Bbb Z^s$ so that $G=\Spec \Bbb Z\Lambda\times_{\Spec\Bbb Z}S$.
Let $\Gamma$ be a subgroup of $\Lambda$, and set $H=
\Spec \Bbb Z\Gamma\times_{\Spec\Bbb Z}S$, and $f:G\rightarrow H$ the
canonical map.
Then $N:=\Ker f$ is nothing but $\Spec\Bbb Z(\Lambda/\Gamma)\times_{\Spec
\Bbb Z}S$.
Let $B$ be the multisection ring 
$\bigoplus_{\lambda\in\Lambda}\Gamma(Y,\O_Y(\sum_i\lambda_i D_i))\cdot t^\lambda$.
Then $B^N$ is the Veronese subring
$B_\Gamma=
\bigoplus_{\lambda\in\Gamma}\Gamma(Y,\O_Y(\sum_i\lambda_i D_i))\cdot t^\lambda$.
We show that the canonical algebraic quotient 
$\theta:X=\Spec B\rightarrow \Spec B^N=X'$ is a $G$-enriched 
almost principal $N$-bundle (Lemma~\ref{Veronese-main.lem}).
Consequently, we can prove some results which connect $X$ and $X'$.

\paragraph 
When we apply the construction explained 
in (\ref{multisection-intro.par}) to the Cox ring of a toric variety
(with a torsion-free class group), then we get some basic information
on toric varieties, such as the description of the canonical module
(Corollary~\ref{toric-canonical.cor}), and the global $F$-regularity
(Proposition~\ref{toric-globally-freg.prop}).
Also, we prove that for a toric variety $Y$ over a perfect field $k$, 
there exist finitely many equivariant 
rank-one reflexive modules $\M_1,\ldots,\M_u$
on $Y$ (equivariant with respect to the torus action) such that 
any Frobenius pushforward $F^e_*({}^e\O_Y)$ is a finite direct sum of
copies of them, as $\O_Y$-modules, generalizing the theorem of 
Thomsen \cite{Thomsen} 
on non-singular toric varieties.
This has been known also for affine toric
varieties \cite{Bruns}.

\paragraph
Although we can construct a rational almost principal bundle from 
a set of divisors on a normal variety, it seems difficult to
find a rational almost principal bundle from a given multigraded ring $B$.
But this is relatively easy when $B$ is {\em surjectively graded}.
This notion first appeared in \cite{Hashimoto9} for the case that
$B$ is a domain.
We modify this to a usable definition for the case that $B$ is not a domain,
and give an easy way to get many rational almost principal bundles
from multigraded rings (Lemma~\ref{surjective-almost.lem}).

The most typical example is a standard graded algebra $B=\bigoplus_{n\geq 0}
B_n$ with $\dim B\geq 2$.
Then letting $X=\Spec B$, $U=X\setminus 0$, and $Y=\Proj B$, we get a 
rational almost principal bundle
\[
\xymatrix{
X & U \ar@{_{(}->}[l]_i \ar[r]^\rho & Y \ar@{^{(}->}[r]^{1_Y} & Y
},
\]
where $0$ is the origin of the affine cone $X$.
From this construction, we get a very short proof of 
Grothendieck's theorem which tells that
any locally free sheaf on $\Bbb P^1$ is uniquely a direct sum of 
$\O(n)$ (Example~\ref{Gro.ex}).
This is simply because $\Ref(\Bbb P^1)$ and $\Ref(\Bbb G_m,k[x,y])$ are
equivalent by our main theorem.

\paragraph
There are also examples where the group scheme $G$ is not a torus or 
a finite group scheme.
We point out that determinantal and Pfaffian varieties yield 
examples of almost principal $G$-bundles where $G$ is a connected
reductive group which are not finite (section~\ref{determinantal.sec}).

Given a $G$-algebra Krull domain $B$ and a candidate Krull domain $A\subset
B^G$ of $B^G$, 
it is sufficient to prove that $\varphi:\Spec B\rightarrow \Spec A$ is
an almost principal $G$-bundle 
in order to show $A=B^G$ (Theorem~\ref{alg-quot-krull-equiv.thm}).
Thus, proving that $A=B^G$ is 
reduced to proving that $A$ is a Krull domain, when we know
geometric information that $\varphi$ is an almost principal bundle.
This technique essentially appeared in \cite{Hashimoto11}, and
applied to the  same examples.

An example of an action of the additive group $\Bbb G_a$ is also given
(Example~\ref{masuda.ex}).

\paragraph
Some miscellaneous problems are also discussed in this paper, in order to 
overcome technical difficulties to discuss main ingredients.
Most of them are contained in Chapter~0 
(sections~\ref{quotients.sec}--\ref{semireductive.sec}).
We will see them below.

\paragraph
We overview the contents of this paper section by section.

Chapter~0 (sections~\ref{quotients.sec}--\ref{semireductive.sec})
is preliminaries.

In section~\ref{quotients.sec}, we review some basic definitions and 
facts on quotients.
In section~\ref{invariance.sec}, we discuss the compatibility of the
invariance functor and some other operations on sheaves.
In section~\ref{functorial.sec},
we discuss the problem of functorial resolutions.
In section~\ref{restriction.sec}, we discuss the compatibility of the
restriction functor and some other operations on sheaves.
In section~\ref{Reynolds.sec}, generalizing linearly reductive group
schemes over a field, we define Reynolds group schemes and discuss
basic properties.
This class of group schemes 
includes 
linearly reductive group schemes over a field, 
finite groups with the order invertible in the base 
ring, and diagonalizable group schemes (including split tori) over
an arbitrary base ring 
(Examples~\ref{linearly-reductive-Reynolds.ex}--\ref{diagonalizable-Reynolds.ex}).
In section~\ref{base-change.sec}, we discuss the base-change map of twisted inverse
pseudofunctors.
In section~\ref{Serre-canonical.sec}, we discuss the (equivariant) canonical 
modules.
As a generalization of a special case of Knop's work over a field of 
characteristic zero, we give the correspondence between $\omega_X$ and
$\omega_Y$ for a principal $G$-bundle $X\rightarrow Y$.
Also, generalizing the facts on the category of reflexive sheaves on
normal varieties, we show that the category of sheaves $\M$ which satisfy 
the $(S'_2)$ condition (that is, $\depth \M_z\geq \min(2,\dim \O_{Z,z})$
for $z\in Z$) behaves very similarly on quasi-normal Noetherian schemes
$Z$, and we show that codimension-two argument works.
We generalize \cite[(1.12)]{Hartshorne4}.
On the way, we generalize the well-knwon 
result on the equivalence of $(S'_2)$,
reflexive, and being a second syzygy due to 
Evans and Griffith \cite[Theorem~3.6]{EG}, using the new notion of 
$2$-canonical modules (Lemma~\ref{2-canonical-double-dual.lem}).
Recently, similar results in slightly different contexts are obtained by
Dibaei--Sadeghi \cite{DS} and Araya--Iima \cite{AI}.
In section~\ref{frob-twist.sec}, we define a new category
to treat Frobenius twists and Frobenius kernels effectively.
This enables us to discuss the Frobenius kernels of a group scheme 
over an arbitrary $\Bbb F_p$-scheme.
In section~\ref{semireductive.sec}, we discuss when $B$ is finite over 
$B^G$, and when $B^G$ is $F$-finite for an affine algebraic group scheme 
$G$ over a field $k$ of characteristic $p>0$ and a $G$-algebra $B$.
The main result is Lemma~\ref{F-finite-finite.thm}.
If $G$ is a constant finite group, then the lemma is a special case of 
\cite[Theorem]{Fogarty}.

\paragraph
After these preliminaries, in Chapter~1 
(sections~\ref{main.sec}--\ref{global-freg.sec}), 
we do the main definitions and discuss
general properties of rational almost principal bundles.

In section~\ref{main.sec}, we give main definitions, and discuss the
problem of base change.
We also prove Theorem~\ref{alg-quot-krull-equiv.thm} mentioned
above.
In section~\ref{class-canonical.sec}, we discuss the behavior of the
(equivariant) class group, using the results obtained in 
\cite{Hashimoto4} and \cite{Hashimoto5}.
We also discuss the behavior of the canonical modules, using the
results in sections~\ref{base-change.sec} and \ref{Serre-canonical.sec}.
We discuss the behavior of the Frobenius pushforwards with respect to 
rational almost principal bundles in section~\ref{Frobenius-pushforwards.sec}.
The author expects 
some future applications on
the problems in invariant theory related to 
the characteristic $p$ commutative algebra.
The paper \cite{HN} is a trial toward this direction.
Section~\ref{global-freg.sec} is on the global $F$-regularity.
As our construction uses open subschemes, we discuss global
$F$-regularity of schemes which may not be projective.

\paragraph
After proving general results, we give examples and applications of
rational almost principal bundles in Chapter~2 
(sections~\ref{finite.sec}--\ref{determinantal.sec}).
In section~\ref{finite.sec}, we discuss finite group schemes 
(more precisely, LFF group schemes).
In particular, we prove a similar result to the results on the
canonical modules for the finite group actions due to 
Broer \cite{Broer} and Fleischmann--Woodcock \cite{FW}.
In section~\ref{multisection.sec}, we construct a rational almost
principal bundle from a sequence of divisors on a locally Krull scheme, 
and prove a generalization of the theorem of Kurano and the author 
which describes the canonical module of the multisection ring.
As an application, we prove some known and new results on toric 
varieties, using the Cox ring in section~\ref{toric.sec}.
In section~\ref{surjective.sec}, we give a way to construct 
rational almost principal bundles from a multigraded rings.
This enables us to study the Veronese subring using our approach.
In section~\ref{determinantal.sec}, we show that determinantal and
Pfaffian varieties treated by De Concini and Procesi \cite{DP} give
examples of almost principal bundles.

\paragraph
Acknowledgments:
The author is grateful to Professor Kayo Masuda for
kindly showing me that Example~\ref{masuda.ex} is an example of 
an algebraic quotient which is non-surjective.
Special thanks are also due to 
Professor Kazuhiko Kurano,
Dr.~Yusuke Nakajima,
Dr.~Akiyoshi Sannai,
and
Professor Takafumi Shibuta
for stimulating discussion.
The author also thanks 
Professor Tokuji Araya,
Professor Shiro Goto,
Professor Nobuo Hara,
Dr.~Ryo Kanda,
Professor Takesi Kawasaki,
Professor Shunsuke Takagi,
Professor Ryo Takahashi,
Professor Kohji Yanagawa,
and
Professor Yuji Yoshino
for valuable advice.
He is also grateful to 
Professor Kei-ichiro Iima and
Professor Gregor Kemper for valuable 
comments on an earlier version of this paper.

\section*{\large Chapter~0. Preliminaries}\label{preliminaries.chap}

\section{Actions and quotients}\label{quotients.sec}

\paragraph
This paper is a continuation of \cite{Hashimoto4} and \cite{Hashimoto5}.
We follow the notation and terminology of these papers.
In particular, for the notation and terminology on sheaves over
diagrams of schemes and equivariant modules, we follow
\cite{ETI}, \cite{HO2}, and \cite{HO}, unless otherwise specified.
Unexplained notation and terminologies on commutative algebra,
algebraic geometry, algebraic groups, representations of algebraic groups, and
Hopf algebras that are not in these
should be found in \cite{CRT}, \cite{Hartshorne},
\cite{EGA}, \cite{Borel}, 
\cite{Jantzen}, \cite{Sweedler2}, or \cite{Hashimoto7}.
Throughout this paper, $S$ denotes a (base) scheme.

\paragraph
Let $G$ be an $S$-group scheme, and $\varphi:X\rightarrow Y$ a $G$-invariant
morphism.
The {\em secondary map} associated with $G$ and $\varphi$ is the map
\[
\Psi=\Psi_{G,\varphi}:G\times X\rightarrow X\times_Y X
\]
given by $(g,x)\mapsto (gx,x)$.
This map is independent of the choice of $S$ in the sense that when we replace
$S$ by $Y$ and $G$ by $G_Y=G\times Y$, then we get the same map (over the
base scheme $Y$).
If $h:Y'\rightarrow Y$ is an $S$-morphism between $S$-schemes with trivial
$G$-actions, then $\Psi_{G,\varphi'}:G\times X'\rightarrow X'\times_{Y'}X'$ 
is identified with $1_{Y'}\times \Psi_{G,\varphi}:Y'\times_Y(G\times X')
\rightarrow Y'\times_Y(X\times_Y X)$.

\paragraph
Let $G$ be an $S$-group scheme.
A $G$-invariant morphism
$\varphi:X\rightarrow Y$ is said to be a {\em categorical quotient} 
if for any $G$-invariant morphism $\psi:X\rightarrow Z$, 
there exists some unique $S$-morphism $\theta:Y\rightarrow Z$ such that
$\psi=\theta\varphi$.
The categorical quotient is unique (in the category of $G$-schemes under $X$).

\paragraph
A $G$-invariant morphism
$\varphi:X\rightarrow Y$ is said to be an {\em algebraic quotient} 
or {\em affine quotient} by 
the action of $G$
if it is an affine morphism, and $\bar\eta:\O_Y\rightarrow(\varphi_*\O_Y)^G$ 
is an isomorphism.
An algebraic quotient need not be surjective.
It need not be a categorical quotient either in general, see
Example~\ref{masuda.ex} below.
However, if $S=\Spec k$ is a field and $G$ is a semireductive
$k$-group scheme (see (\ref{semireductive.par}) below), 
it is a categorical quotient 
(see Lemma~\ref{universally-submersive.lem}).

\paragraph
A morphism of schemes $h:Z\rightarrow W$ is said to be submersive if 
$h$ is surjective, and for any subset $U$ of $W$, $U$ is open if and only if 
$h^{-1}(U)$ is open in $Z$.
A $G$-invariant morphism 
$\varphi:X\rightarrow Y$ is said to be a {\em geometric quotient} 
if it is submersive, $\O_Y\rightarrow (\varphi_*\O_X)^G$ is an isomorphism,
and $\Psi_{G,\varphi}$ is surjective.
A geometric quotient is a categorical quotient \cite[(0.0.1)]{GIT}.
By definition, an affine geometric quotient is an algebraic quotient.
However, a geometric quotient need not be an algebraic quotient in general.
For example, let $G=\SL_n$ with $n\geq 2$ over an algebraically closed field 
$k$,
and consider the structure map $\varphi:X=G/B\rightarrow \Spec k=Y$,
where $B$ is the subgroup of the upper triangular matrices in $G$.
It is a geometric quotient by $G$, but is not an affine morphism,
since $G/B$ is a projective variety of dimension one or more 
\cite[(11.1)]{Borel}.
So in particular, we have an example of a categorical quotient which is
not an affine morphism.

\paragraph We say that $\varphi:X\rightarrow Y$ is a universal 
(resp.\ uniform) categorical quotient by $G$ if for any $S$-morphism 
(resp.\ any flat $S$-morphism) $Y'\rightarrow
Y$, the base change $\varphi':X'\rightarrow Y'$ is a categorical quotient
by $G$.
A similar definition is done for algebraic and geometric quotients.
An algebraic quotient is uniform under very mild conditions,
see Corollary~\ref{uniform-algebraic-quotient.cor} below.

\paragraph
Let $G$ be an $S$-group scheme acting on an $S$-scheme $X$.
Let us consider $\Psi=\Psi_{G,h_X}:
G\times X\rightarrow X\times X$, where $h_X:X\rightarrow S$ is the structure 
map.
It is easy to see that $\phi:\Psi^{-1}(X)\rightarrow X$ induced by 
$\Psi$ is an $X$-subgroup scheme of $G\times X$, where $X$ is embedded in 
$X\times X$ via the diagonal map (if $h_X$ is separated, then it is a
closed subgroup).
$\Cal S_X:=\Psi^{-1}(X)$  is called the {\em stabilizer} 
of the action of $G$ on $X$.
If $\Cal S_X$ is trivial as an $X$-group scheme, then we say that the action of
$G$ on $X$ is {\em free}.
We say that the action of $G$ on $X$ is {\em GIT-free} if $\Psi$ is a 
closed immersion.
Obviously, a GIT-free action is free.

\begin{lemma}\label{stabilizer-basics.lem}
Let $G$ be an $S$-group scheme, and $\psi:X'\rightarrow X$ be a $G$-morphism
which is a morphism of schemes.
Then there is an inclusion $Q:\Cal S_{X'}\rightarrow \Cal S_X\times_X X'$
of $X'$-subgroup schemes of $G\times X'$.
In particular, if the action of $G$ on $X$ is free, then so is the
action of $G$ on $X'$.
If $\psi$ is a monomorphism \(e.g., an immersion\), 
then $Q$ is an isomorphism.
\end{lemma}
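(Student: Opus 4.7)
The plan is to use the functor of points / Yoneda approach, which gives the cleanest bookkeeping. For any $S$-scheme $T$, the $T$-points of the stabilizer are
\[
\Cal S_X(T) = \{(g,x) \in G(T) \times X(T) : g\cdot x = x\},
\]
and analogously for $\Cal S_{X'}$. The $G$-equivariance of $\psi$ says $\psi(g\cdot x') = g\cdot\psi(x')$, so if $gx' = x'$ then $g\psi(x') = \psi(x')$. Thus the map $(g,x') \mapsto (g,x')$ carries $\Cal S_{X'}(T)$ into
\[
(\Cal S_X \times_X X')(T) = \{(g,x') \in G(T) \times X'(T) : g\psi(x') = \psi(x')\},
\]
which defines the desired morphism $Q$ by Yoneda. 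Both source and target are subfunctors of $(G \times X')(T)$, and $Q$ agrees with the identity on $G \times X'$ at the level of underlying maps, so $Q$ is a monomorphism. It is moreover a morphism of $X'$-group schemes because on the overlap of $G\times X'$ it respects unit, inverse, and product coming from $G$.

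For the freeness assertion: if $\Cal S_X = X$ (the trivial $X$-group scheme embedded via the identity of $G$), then $\Cal S_X \times_X X' = X'$. The inclusion $Q: \Cal S_{X'} \hookrightarrow X'$ of $X'$-subgroup schemes of $G \times X'$ is then sandwiched between $X' \hookrightarrow \Cal S_{X'}$ (the identity section, which exists in any stabilizer) and $\Cal S_{X'} \hookrightarrow X'$, forcing $\Cal S_{X'} = X'$, i.e.\ the action of $G$ on $X'$ is free.

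For the last clause, suppose $\psi$ is a monomorphism. A $T$-point $(g,x')$ of $\Cal S_X \times_X X'$ satisfies $\psi(g\cdot x') = g\cdot \psi(x') = \psi(x')$; since $\psi$ is a monomorphism, $g\cdot x'$ and $x'$ are equal as elements of $X'(T)$. Hence $(g,x') \in \Cal S_{X'}(T)$, and $Q(T)$ is a bijection for every $T$, so $Q$ is an isomorphism by Yoneda.

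The main obstacle is essentially bookkeeping: checking that the various fiber-product/equalizer identifications are compatible with the group-scheme structures and with the embedding into $G \times X'$. Once one writes $\Cal S_{X'}$ and $\Cal S_X \times_X X'$ as equalizers of $(a_{X'},p_2)$ and of $(\psi\circ a_{X'},\psi\circ p_2)$ respectively inside $G \times X'$, the monomorphism property of $\psi$ makes the two equalizers coincide, completing the argument without any delicate geometric input.
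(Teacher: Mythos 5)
Your proposal is correct and follows essentially the same functor-of-points argument as the paper: identify the $T$-points of $\Cal S_{X'}$ and of $\Cal S_X\times_X X'$ as subfunctors of $G\times X'$, use equivariance of $\psi$ to get the inclusion, deduce triviality of $\Cal S_{X'}$ from triviality of $\Cal S_X$ for freeness, and use the monomorphism property of $\psi$ to turn the inclusion into an equality. No issues to report.
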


\begin{proof}
Note that both $\Cal S_{X'}$ and $\Cal S_X\times_X X'$ are
$X'$-subgroup schemes of $G\times X'$.
For an $S$-scheme $W$, 
\[
\Cal S_{X'}(W)=\{(g,x')\in G(W)\times X'(W)\mid gx'=x'\}
\]
and 
\[
(\Cal S_X\times_X X')(W)
=\{(g,x')\in G(W)\times X'(W)\mid \psi(gx')=\psi(x')\}.
\]
So
$\Cal S_X\times_X X'$ contains $\Cal S_{X'}$.

If the action of $G$ on $X$ is free, then $\Cal S_X$ is trivial.
So $\Cal S_{X'}$ is also trivial by the discussion above, and the
action of $G$ on $X'$ is free.
The argument above also shows that if $\psi$ is a monomorphism, then
$\Cal S_{X'}=\Cal S_X\times_X X'$.
\end{proof}

\paragraph
Let $G$ be an $S$-group scheme acting on $X$, and $\psi:X'\rightarrow X$ any
monomorphism of $S$-schemes ($X'$ need not be a $G$-scheme).
Then we define the stabilizer at $X'$ of the action of $G$ on $X$ by
the $X'$-group scheme $\Cal S_{X'}:=\Cal S_X\times_X X'$.
This definition does not cause a confusion by 
Lemma~\ref{stabilizer-basics.lem}.
If $x$ is a point of $X$, the stabilizer 
$\Cal S_x$ is a $\kappa(x)$-subgroup scheme of $G\times x$.

\paragraph
A $G$-invariant morphism $\varphi:X\rightarrow Y$ is a principal $G$-bundle
if and only if 
it is qfpqc and $\Psi_{G,\varphi}$ is an isomorphism \cite[(2.8)]{Hashimoto5}.
A principal $G$-bundle is a universal geometric quotient 
\cite[(6.2)]{Hashimoto5}.
If, moreover, $G$ is a normal subgroup scheme of an $S$-group scheme
$\tilde G$ and $\varphi$ is also a $\tilde G$-morphism, then we say
that $\varphi$ is {\em $\tilde G$-enriched}.

\begin{lemma}\label{GIT-remark.lem}
Let $G$ be an $S$-group scheme,
and $\varphi:X\rightarrow Y$ a $G$-invariant submersive 
\(resp.\ universally submersive\) morphism such that 
$\Psi:G\times X\rightarrow X\times_Y X$ given by $\Psi(g,x)=(gx,x)$ is
surjective.
If $U$ is a $G$-stable open subset of $X$, then $\varphi(U)$ is an open 
subset, and $\varphi^{-1}(\varphi(U))=U$.
If, moreover, $G$ is universally open, then $\varphi$ is open
\(resp.\ universally open\).
\end{lemma}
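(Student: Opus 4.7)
The plan is to establish the three assertions in order: $\varphi^{-1}(\varphi(U))=U$, openness of $\varphi(U)$ in $Y$, and finally (under the extra hypothesis) openness or universal openness of $\varphi$. The inclusion $U\subset\varphi^{-1}(\varphi(U))$ is trivial. For the reverse, take $x\in\varphi^{-1}(\varphi(U))$, so $\varphi(x)=\varphi(u)$ for some $u\in U$. Since $\kappa(x)\otimes_{\kappa(\varphi(x))}\kappa(u)\neq 0$, there is a point $z\in X\times_Y X$ with $p_1(z)=x$ and $p_2(z)=u$. Topological surjectivity of $\Psi$ yields $w\in G\times X$ with $\Psi(w)=z$, which says $\sigma(w)=x$ and $p_X(w)=u$, where $\sigma:G\times X\to X$ denotes the action. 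Choose any field-valued lift $(\tilde g,\tilde x')$ of $w$ (take $K=\kappa(w)$, for instance). The point $\tilde x'$ factors through $U$ because its image is $u$; since $U$ is $G$-stable, $\tilde g\cdot\tilde x'$ also factors through $U$, forcing its underlying point $x$ to lie in $U$. The second assertion is then immediate from the definition of submersive: $\varphi^{-1}(\varphi(U))=U$ is open, hence so is $\varphi(U)$.

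For the third assertion, assume $G\to S$ is universally open, and let $W\subset X$ be an arbitrary open subset. Set $U:=\sigma(G\times W)$. Writing $\sigma=p_X\circ\alpha$ where $\alpha(g,x):=(g,gx)$ is an automorphism of $G\times X$, and noting that $p_X:G\times X\to X$ is open as a base change of the universally open $G\to S$, we conclude that $\sigma$ is open and hence $U$ is open. Clearly $U$ is $G$-stable. Using $G$-invariance (so $\varphi\sigma=\varphi p_X$) together with the fact that $G\to S$ is surjective (it admits the identity section), we get $\varphi(U)=\varphi(\sigma(G\times W))=\varphi(p_X(G\times W))=\varphi(W)$. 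By the previous step $\varphi(W)=\varphi(U)$ is open, so $\varphi$ is open. For the universally open variant, base change along an arbitrary $Y'\to Y$: the morphism $\varphi'$ remains submersive (as $\varphi$ is universally submersive), $\Psi'$ remains surjective (base change of a surjection), and $G\times_S Y'\to Y'$ remains universally open, so the open case just proved applies to $\varphi'$.

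The main technical step is the first one, where one must translate the topological surjectivity of $\Psi$ into information about $K$-valued points in order to exploit the $G$-stability of $U$; the remaining ingredients reduce to routine properties of open maps and base change.
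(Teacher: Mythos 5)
Your proof is correct, and it is essentially the argument the paper relies on: the paper's ``proof'' is only the citation to \cite[(0.2), Remark~(4)]{GIT}, and your steps (producing a point of $X\times_Y X$ over $(x,u)$ from $\kappa(x)\otimes_{\kappa(\varphi(x))}\kappa(u)\neq 0$, lifting it through the surjective $\Psi$, using $G$-stability of $U$ on a field-valued point, and then writing the saturation $\sigma(G\times W)$ as an open $G$-stable set via the automorphism $(g,x)\mapsto(g,gx)$ and the open projection $G\times X\to X$) are exactly the standard argument behind that remark. The small points you flag — surjectivity of $G\to S$ via the identity section, and that $\Psi$ and submersiveness are preserved under base change for the universally open case — are handled correctly, so nothing is missing.
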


\begin{proof}
This is essentially \cite[(0.2), Remark~(4)]{GIT}.
\end{proof}

\section{Compatibility of $G$-invariance and direct and inverse images}
\label{invariance.sec}

\paragraph\label{G-trivial.par}
Let $G$ be an $S$-group scheme and $Z$ an $S$-scheme on which $G$ acts
trivially.
Set $(?)^G=(?)_{-1}R_{\Delta_M}:\Mod(G,Z)\rightarrow\Mod(Z)$,
and
$\L=(?)_{\Delta_M}L_{-1}:\Mod(Z)\rightarrow \Mod(G,Z)$.
Note that $\L$ is left adjoint to $(?)^G$.

Using the description of $R_{\Delta_M}$ \cite[(6.14)]{ETI},
$(?)^G\M=\M^G$ is the kernel of the map
\[
\M_0\xrightarrow{\beta_{\delta_0}-\beta_{\delta_1}}p_*\M_1,
\]
where $p:G\times Z\rightarrow Z$ is the second projection, which
equals the action (because the action is assumed to be trivial),
and $\delta_i:[0]=\{0\}\rightarrow[1]=\{0,1\}$ is the map given by
$\delta_i(0)=1-i$ (for the notation on simplicial objects, see 
\cite[Chapter~9]{ETI}).
We call $\M^G\in\Mod(Z)$ the {\em $G$-invariance} of $\M$ 
\cite[(30.1)]{ETI}, \cite[(5.30)]{Hashimoto5}.
The natural inclusion
\[
\M^G\hookrightarrow \M_0
\]
is denoted by $\gamma$.

\begin{lemma}\label{G-trivial.thm}
For $\M\in\Mod(G,Z)$, the following are equivalent.
\begin{enumerate}
\item[\bf 1] $\M\cong \L\N$ for some $\N\in\Mod(Z)$.
\item[\bf 2] $\M$ is equivariant, and $\beta_{\delta_0}=\beta_{\delta_1}$.
\item[\bf 3] $\M$ is $G$-trivial, that is, 
$\M$ is equivariant, and $
\gamma:\M^G\rightarrow \M_0$ is an isomorphism \(see {\rm\cite[(30.4)]{ETI}}\).
\item[\bf 4] The counit of adjunction 
$\varepsilon:\L\M^G\rightarrow \M$ is an isomorphism.
\end{enumerate}
\end{lemma}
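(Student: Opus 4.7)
The plan is to establish the four equivalences along a short circuit: first dispose of $2\Leftrightarrow 3$ by unwinding the definition of $\gamma$, then deduce $1\Leftrightarrow 4$ from a single computation showing that the unit of adjunction is an isomorphism on $\Mod(Z)$, and finally handle $3\Leftrightarrow 4$ by a degree-0 argument combined with equivariance. Throughout, the observation that carries everything is that, because $G$ acts trivially on $Z$, the two face maps $G\times Z\rightrightarrows Z$ of the simplicial scheme $B_G(Z)$ both equal the projection $p$.

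For $2\Leftrightarrow 3$, this is essentially definitional: under the equivariance hypothesis shared by both conditions, $\gamma:\M^G\hookrightarrow\M_0$ is by construction the kernel inclusion of $\beta_{\delta_0}-\beta_{\delta_1}:\M_0\to p_*\M_1$, so $\gamma$ is an isomorphism exactly when $\beta_{\delta_0}=\beta_{\delta_1}$.

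For $1\Leftrightarrow 4$, the direction $4\Rightarrow 1$ is trivial (take $\N=\M^G$). For $1\Rightarrow 4$, I would first prove the key lemma that for every $\N\in\Mod(Z)$ the object $\L\N$ is $G$-trivial and the unit $\eta_\N:\N\to(\L\N)^G$ is an isomorphism. Unwinding $\L=(?)_{\Delta_M}L_{-1}$ with trivial action, one finds that $(\L\N)_n$ is canonically the pullback of $\N$ along the projection $G^n\times Z\to Z$, and that both $\beta_{\delta_0}$ and $\beta_{\delta_1}$ agree with the canonical iso $p^*\N\to p^*\N$; hence $\L\N$ satisfies condition 2 and $(\L\N)^G$ is canonically $\N$. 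The triangular identity for the adjunction $\L\dashv(?)^G$ then forces the counit $\varepsilon_{\L\N}:\L((\L\N)^G)\to\L\N$ to be an isomorphism, so any $\M\cong\L\N$ satisfies condition 4.

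For $3\Leftrightarrow 4$, the direction $4\Rightarrow 3$ is by restriction to degree $0$: since $\L$ lands in equivariant sheaves, $\M$ is equivariant, and one checks directly from the adjunction recipe that the degree-$0$ component of $\varepsilon:\L\M^G\to\M$ coincides with $\gamma$, which is therefore an isomorphism. Conversely, $3\Rightarrow 4$: knowing that $\gamma$ is an isomorphism gives $\varepsilon$ is iso in degree $0$, and equivariance of both source and target means that the degree-$n$ components of $\L\M^G$ and $\M$ are obtained from their degree-$0$ components by the structure maps (pullback along $G^n\times Z\to Z$), so $\varepsilon_n$ is iso as well. The main technical obstacle is the bookkeeping needed to identify $\varepsilon$ restricted to degree $0$ with $\gamma$ and to verify that the unit $\eta_\N$ is the identity under the canonical identification $(\L\N)^G\cong\N$; both reduce to tracing through the definitions of $L_{-1}$ and $R_{\Delta_M}$ from \cite[Chapter~9]{ETI} under the simplification afforded by the trivial action.
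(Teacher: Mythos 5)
Your proposal is correct and rests on the same ingredients as the paper's proof: under the trivial action both face maps of $\L\N$ are the unit of the adjunction $(p^*,p_*)$ (so $\L\N$ is equivariant with $\beta_{\delta_0}=\beta_{\delta_1}$), the degree-zero component of the counit $\varepsilon$ is identified with $\gamma$, and equivariance reduces checking isomorphy to degree zero. The only difference is organizational—you close the circuit through $1\Leftrightarrow 4$ via the unit isomorphism and the triangle identity, while the paper chains $1\Rightarrow 2\Leftrightarrow 3\Rightarrow 4\Rightarrow 1$—so the argument is essentially the same.
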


\begin{proof}
{\bf 1$\Rightarrow$2}.
Since $[-1]$ is the initial object of $\Delta^+_M$, $\M\cong
\L\N$ is equivariant by \cite[(6.38)]{ETI}.
By definition, $(\L\N)_{n}=(\tilde B_G^M(Z))_{\varepsilon(n)}^*\N$,
where $\varepsilon(n):[-1]=\emptyset\rightarrow[n]$ is the unique map.
The map $\beta_i:(\L\N)_0=\N\rightarrow p_*(\L\N)_1=p_*p^*\N$ is the
unit map, and is independent of $i$.

{\bf 2$\Leftrightarrow$3} is trivial.

{\bf 3$\Rightarrow$4}.
By {\bf1$\Rightarrow$2}, $\L\M^G$ is equivariant, and $\M$ is
assumed to be equivariant.
Hence it suffices to prove that $\varepsilon_0:(\L\M^G)_0\rightarrow \M_0$ is
an isomorphism, since the restriction $(?)_0:\EM(G,Z)\rightarrow \Mod(Z)$ 
is faithful.
However, this map is identified with $\gamma:\M^G\rightarrow \M_0$.

{\bf 4$\Rightarrow$1}.
This is trivial.
\end{proof}

\paragraph\label{epsilon.par}
Let $G$ be an $S$-group scheme and $h:Z'\rightarrow Z$ be a morphism of
$S$-schemes on which $G$ acts trivially.
Then the canonical map
\[
\epsilon: h^*\M^G\rightarrow (h^*\M)^G
\]
is defined to
be the composite
\begin{multline*}
\epsilon: h^*\M^G=h^*(?)_{-1}R_{\Delta_M}\M
\xrightarrow\theta
(?)_{-1}\tilde B_G^M(h)^*R_{\Delta_M}\M\\
\xrightarrow\mu
(?)_{-1}R_{\Delta_M}B^M_G(h)^*\M
=
(?)^GB^M_G(h)^*\M=(h^*\M)^G,
\end{multline*}
see \cite[(7.4)]{HO2}.
It is an isomorphism between functors from $\Lqc(G,Z)$ to $\Qch(Z')$ 
if $G$ is quasi-compact quasi-separated and 
$h$ is flat \cite[(7.5)]{HO2} (the flatness of $G$ is assumed there,
but that assumption is unnecessary).
Note that as in \cite[(2.18)]{HM}, $B_G^M(h)^*\M$ is abbreviated as $h^*\M$, by
abuse of notation (although $\M$ may not be quasi-coherent here).
Note that $\epsilon$ is a 
natural transformation between the functors from $\Mod(G,Z)$ to
$\Mod(Z')$.

\begin{lemma}\label{eta-bar-eta.thm}
The diagram
\[
\xymatrix{
h^*(?)^G \ar[r]^\gamma \ar[d]^\epsilon & 
h^*(?)_0 \ar[d]^{\theta} \\
(?)^Gh^* \ar[r]^\gamma & (?)_0 h^*
}
\]
is commutative.
\end{lemma}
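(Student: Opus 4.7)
The plan is to recognize $\gamma$ as the structure morphism of the $\Delta_M^+$-diagram $R_{\Delta_M}\M$ along the unique arrow $d:[-1]\to[0]$, and then to deduce the commutativity from the naturality of $\theta$ and of $\mu$ at $d$, after unwinding $\epsilon$ via its definition in (\ref{epsilon.par}).

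First I would note that for any $\N\in\Mod(G,Z)$ we have $(R_{\Delta_M}\N)_{-1}=\N^G$, $(R_{\Delta_M}\N)_0=\N_0$, and $\gamma_\N=(R_{\Delta_M}\N)(d)$; the analogous facts hold for $\N=B_G^M(h)^*\M$. Next I would apply naturality of $\theta$ to the morphism $d$ in the diagram $R_{\Delta_M}\M$, obtaining
\[
\theta_0\circ h^*\gamma_\M \;=\; (\tilde B_G^M(h)^*R_{\Delta_M}\M)(d)\circ\theta_{-1}.
\]
Then I would apply naturality of the diagram morphism $\mu:\tilde B_G^M(h)^*R_{\Delta_M}\M\to R_{\Delta_M}B_G^M(h)^*\M$ at $d$, obtaining
\[
\mu_0\circ(\tilde B_G^M(h)^*R_{\Delta_M}\M)(d) \;=\; \gamma_{h^*\M}\circ\mu_{-1}.
\]

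The key sub-step, which I expect to be the main piece of bookkeeping, is to identify $\mu_0$ with the identity under the canonical identifications of $(\tilde B_G^M(h)^*R_{\Delta_M}\M)_0$ and $(R_{\Delta_M}B_G^M(h)^*\M)_0$ with $h^*\M_0=(B_G^M(h)^*\M)_0$. This follows by unwinding that $\mu$ is the adjoint mate, under the adjunction between restriction $(?)_{\Delta_M}$ and $R_{\Delta_M}$, of the evident isomorphism $(\tilde B_G^M(h)^*R_{\Delta_M}\M)|_{\Delta_M}\cong B_G^M(h)^*\M$, so its components at objects of $\Delta_M$ (in particular at $0$) are the identity. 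Pasting the two naturality squares and using $\mu_0=\id$ then yields $\theta_0\circ h^*\gamma_\M=\gamma_{h^*\M}\circ\mu_{-1}\circ\theta_{-1}=\gamma_{h^*\M}\circ\epsilon$, which is exactly the commutativity asserted.
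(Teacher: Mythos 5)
Your proposal is correct and is in substance the same as the paper's proof: the paper simply invokes the commutative diagram of \cite[(6.27)]{ETI}, which encodes exactly the compatibility of the exchange maps $\theta$ and $\mu$ with the structure data of the augmented bar diagram that you verify by hand (identifying $\gamma$ with the structure morphism along $[-1]\to[0]$ and $\mu_0$ with the canonical identification at level $0$). The only caveat is that your ``naturality of $\theta$ at $d$'' is not naturality in the module but precisely this compatibility of $\theta$ with the structure maps of inverse-image modules---i.e., the content of the cited diagram---so it must be checked from the explicit structure-data description, which is routine.
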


\begin{proof}
Follows easily from the commutative diagram in \cite[(6.27)]{ETI}.
\end{proof}

\begin{corollary}\label{epsilon-composition.cor}
Let $h':Z''\rightarrow Z'$ and $h:Z'\rightarrow Z$ be a sequence of
$G$-morphisms.
Then the composite
\[
(hh')^*(?)^G\xrightarrow{d^{-1}}(h')^*h^*(?)^G\xrightarrow\epsilon
(h')^*(?)^Gh^*\xrightarrow\epsilon (?)^G(h')^*h^*
\xrightarrow{d}(?)^G(hh')^G
\]
agrees with $\epsilon$.
\end{corollary}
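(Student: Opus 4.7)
The plan is to reduce the identity to the pseudofunctor coherence for the inverse image $(?)^*$, using Lemma~\ref{eta-bar-eta.thm} to translate each occurrence of $\epsilon$ into the coherence morphism $\theta$ that pushes $h^*$ past the restriction $(?)_0$.

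Since $(?)^G$ is defined as a kernel, the natural transformation $\gamma:(?)^G\hookrightarrow (?)_0$ is pointwise monic, so it suffices to check the equality after post-composing the two chains with the monomorphism $\gamma:(?)^G(hh')^*\hookrightarrow (hh')^*(?)_0$. For the single $\epsilon$ on the right-hand side, Lemma~\ref{eta-bar-eta.thm} gives $\gamma\circ\epsilon=\theta_{hh'}\circ\gamma$, where $\theta_{hh'}$ is the pseudofunctor coherence isomorphism $(hh')^*(?)_0\cong (?)_0(hh')^*$ induced by pseudofunctoriality of $B_G^M(?)^*$. Two successive applications of Lemma~\ref{eta-bar-eta.thm}, together with the naturality of $d$, translate the left-hand chain into
\[
(hh')^*(?)^G \xrightarrow{d^{-1}\gamma} (h')^*h^*(?)_0 \xrightarrow{(h')^*\theta_h} (h')^*(?)_0 h^* \xrightarrow{\theta_{h'}} (?)_0(h')^*h^* \xrightarrow{d} (?)_0(hh')^*.
\]

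The identity thus reduces to the equation $d\circ \theta_{h'}\circ (h')^*\theta_h\circ d^{-1}=\theta_{hh'}$, which is precisely the cocycle axiom for the inverse-image pseudofunctor $h\mapsto B_G^M(h)^*$ applied to the object $(?)_0$ (that is, to the face $[-1]\to[0]$ in the simplicial scheme $\tilde B_G^M(Z)$). This is exactly the content of the commutative diagram of \cite[(6.27)]{ETI} that already underlies Lemma~\ref{eta-bar-eta.thm}.

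The main obstacle is bookkeeping: carefully keeping track of which of the two parallel arrows defining $(?)^G$ is relevant at each step and threading the $d$ isomorphisms through the simplicial diagrams $\tilde B_G^M(h')$, $\tilde B_G^M(h)$, and $\tilde B_G^M(hh')$ in a way compatible with pseudofunctoriality. Once these identifications are in place, the result falls out of the coherence axiom for the inverse image, together with the corresponding (pseudofunctorial) compatibility of the base-change morphism $\mu$ with composition, both applied inside the formula $\epsilon = (?)_{-1}\mu\circ\theta R_{\Delta_M}$.
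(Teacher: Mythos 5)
Your argument is correct and matches the paper's own proof in essence: the paper also deduces the corollary from Lemma~\ref{eta-bar-eta.thm} together with the composition compatibility of $\theta$ with the pseudofunctoriality isomorphism $d$ (cited there as \cite[(1.23)]{ETI}), which is exactly the coherence identity you reduce to after post-composing with the monomorphism $\gamma$. The only cosmetic difference is your attribution of that final identity to the diagram in \cite[(6.27)]{ETI} rather than to \cite[(1.23)]{ETI}, which does not affect the substance.
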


\begin{proof}
This follows easily from Lemma~\ref{eta-bar-eta.thm} and
\cite[(1.23)]{ETI}.
\end{proof}

\paragraph Let $\varphi:X\rightarrow Y$ and $\psi:Y\rightarrow Z$ be
morphisms of ringed sites.
Then the two functors $(\psi\varphi)_*$ and $\psi_*\varphi_*$ are equal, and
the standard natural isomorphism $c:(\psi\varphi)_*\rightarrow
\psi_*\varphi_*$ is nothing but the identity map.

\paragraph
Let $I$ be a small category, and $X$ an
$I\op$-diagrams of schemes.
For $\M\in\Mod(X)$ and $i\in I$, $\M_i\in\Mod(X_i)$.
For each $\phi\in I(i,j)$, $\beta_\phi:\M_i\rightarrow (X_\phi)_*\M_j$ is
induced, and the composite
\begin{equation}\label{beta.eq}
\M_i\xrightarrow{\beta_\phi}
(X_\phi)_*\M_j
\xrightarrow{\beta_\psi}
(X_\phi)_*(X_\psi)_*\M_k
\xrightarrow{c}
(X_{\psi\phi})_*\M_k
\end{equation}
agrees with $\beta_{\psi\phi}$ for any sequence of morphisms
\begin{equation}\label{composite.eq}
i\xrightarrow{\phi}j\xrightarrow{\psi}k
\end{equation}
in $I$, see \cite[(4.10)]{ETI}.
We call the collection $((\M_i)_{i\in I},(\beta_\phi)_{\phi\in\Mor(I)})$ 
the {\em structure data} of $\M$.
This data exactly determines $\M$ (not up to isomorphisms).

Conversely, if $\M_i\in\Mod(X_i)$ for $i\in I$, 
$\beta_\phi:\Hom_{\O_X}(\M_i,(X_\phi)_*\M_j)$ for $\phi:i\rightarrow j$,
and the composite (\ref{beta.eq}) agrees with $\beta_{\psi\phi}$ for
(\ref{composite.eq}), then it is a structure data for a 
unique $\M\in \Mod(X)$.

\paragraph
Let $f:\M\rightarrow \N$ be a morphism in $\Mod(X)$.
Then $f_i:\M_i\rightarrow \N_i$ is a morphism in $\Mod(X_i)$ such that
$\beta_\phi \circ f_i=(X_\phi)_*f_j \circ \beta_\phi$ for each $\phi:i
\rightarrow j$.
Conversely, such a collection $(f_i)$ gives a unique morphism 
$f:\M\rightarrow \N$.
We call $(f_i)$ the structure data of $f$.

\paragraph
Let $\varphi:X\rightarrow Y$ be a morphism of $I\op$-diagrams of schemes,
and $\M\in\Mod(X)$.
Then the structure data of $\varphi_*\M$ is as follows.
$(\varphi_*\M)_i=(\varphi_i)_*\M_i$, and
$\beta_\phi(\varphi_*\M)$ is the composite
\[
(\varphi_i)_*\M_i\xrightarrow{\beta_\phi}
(\varphi_i)_*(X_\phi)_*\M_j
\xrightarrow{c}
(Y_\phi)_*(\varphi_j)_*\M_j,
\]
as can be seen easily from the direct computation (note that $c$ is the
identity map).

\paragraph
Let $I$ be a small category, $X$ be an $I\op$-diagram of schemes, 
$J$ be a subcategory of $I$, and $\N\in\Mod(X_J)$.
Then $R_J\N\in\Mod(X)$ is given by its structure data.
$(R_J\N)_i=\projlim (X_\phi)_*\N_j$, where the limit is taken over the 
comma category $(i\downarrow J)$, see \cite[(II.6)]{Mac Lane} 
(it is $I_i^J$ in \cite{ETI}).
Here, for a morphism $\psi:j\rightarrow j'$ in $(i\downarrow J)$ from
$\phi:i\rightarrow j$ to $\psi\phi$,
the map
\[
(X_\phi)_*\N_j\xrightarrow {\beta_\psi}
(X_\phi)_*(X_\psi)_*\N_{j'}
\xrightarrow c
(X_{\psi\phi})_*\N_{j'}
\]
is the structure map.

For a morphism $\phi:i\rightarrow i'$ in $I$,
$\beta_\phi:(R_J\N)_i\rightarrow (X_\phi)_*(R_J\N)_{i'}$ is
given by 
\[
\projlim_{\psi\in (i\downarrow J)} (X_\psi)_*\N_j
\rightarrow
(X_{\psi'\phi})_*\N_{j'}
\xrightarrow{c}
(X_\phi)_*(X_{\psi'})_*\N_{j'}
\]
for an  object $\psi':i'\rightarrow j'$ in $(i'\downarrow J)$.

\paragraph
As in \cite[Chapter~5,6]{ETI}, 
$R_J$ is right adjoint to the restriction functor $(?)_J$.
The unit of adjunction $u:\Id\rightarrow R_J(?)_J$ is given by the map
$\M_i\rightarrow \projlim_{\phi\in(i\downarrow J)}(X_\phi)_*\M_j$
induced by $\beta_\phi:\M_i\rightarrow (X_\phi)_*\M_j$ for each $j$.
The counit of adjunction $\varepsilon:(?)_JR_J\rightarrow \Id$ is the projection
$\projlim_{(\psi:j\rightarrow j' )\in (j\downarrow J)}(X_\psi)_*\M_{j'}
\rightarrow \M_j=(X_{\id_j})_*\M_j$
(thus if $J$ is a full subcategory, then $\varepsilon$ is an 
isomorphism \cite[(6.15)]{ETI}).

\paragraph
For a morphism $f:X\rightarrow Y$ and $J\subset I$, we have that
$(?)_Jf_*$ and $(f_J)_*(?)_J$ are identical, and
$c:(?)_Jf_*\rightarrow(f_J)_*(?)_J$ is the identity.

\paragraph\label{xi-structure-data.par}
Combining these, it is easy to describe the canonical isomorphism
\[
\xi:f_*R_J\rightarrow R_J(f_J)_*
\]
(see \cite[(6.26)]{ETI}) via the structure data.
\[
\xi_i:(f_*R_J\N)_i\rightarrow (R_J(f_J)_*\N)_i
\]
is given by
\[
(f_i)_*\projlim (X_\phi)_*\N_j
\cong
\projlim
(f_i)_*(X_\phi)_*\N_j
\xrightarrow c
\projlim
(Y_\phi)_*(f_j)_*\N_j.
\]

\paragraph\label{delta.par}
Let $G$ be an $S$-group scheme, and $h:Z'\rightarrow Z$ an $S$-morphism
between $S$-schemes on which $G$ acts trivially.
We denote the composite isomorphism
\[
h_*(?)^G=h_*(?)_{-1}R_{\Delta_M}
\xrightarrow {c^{-1}}
(?)_{-1}\tilde B_G^M(h)_* R_{\Delta_M}
\xrightarrow\xi
(?)_{-1}R_{\Delta_M}B_G^M(h)_*=(?)^Gh_*
\]
by $e:h_*(?)^G\rightarrow (?)^Gh_*$ as in \cite[(7.3)]{HO2}.
(here $B_G^M(h)_*$ is abbreviated to be $h_*$, by abuse of notation).
By (\ref{xi-structure-data.par}) and the fact that $c$ is the identity,
we have that $e$ is nothing but the canonical isomorphism from 
$h_*\Ker\gamma\rightarrow \Ker h_*\gamma$.
In particular,

\begin{lemma}
Let the notation be as in {\rm(\ref{delta.par})}.
Then the diagram
\[
\xymatrix{
h_*(?)^G \ar[d]^e \ar[r]^\gamma & h_*(?)_0 \ar[d]^{c^{-1}} \\
(?)^Gh_* \ar[r]^\gamma & (?)_0 h_*
}
\]
is commutative.
\qed
\end{lemma}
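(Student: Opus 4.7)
The plan is to reduce the claim to the obvious fact that the canonical isomorphism $h_*\Ker \cong \Ker h_*$, provided by the left exactness of the right adjoint $h_*$, is compatible with the defining inclusions of the kernels. The sentence preceding the lemma already identifies $e$ with exactly this canonical isomorphism; the verification below simply makes that identification, and its compatibility with $\gamma$, explicit.

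First I would check that the right vertical arrow $c^{-1}\colon h_*(?)_0 \to (?)_0 h_*$ is literally the identity natural transformation. By the structure-data description of direct images recalled in (\ref{xi-structure-data.par}) (applied at the index $0$), the functors $h_*(?)_0$ and $(?)_0 h_*$ coincide on objects, both sending $\M$ to $h_*\M_0$; and the $c$ isomorphism between them is the identity, as stated in the paragraph preceding (\ref{beta.eq}). Consequently the claim reduces to proving $\gamma \circ e = h_*\gamma$.

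Second I would unwind the formula $e = \xi \circ c^{-1}$ from (\ref{delta.par}). The factor $c^{-1}$ is again the identity for the same reason. For $\xi$, the explicit formula in (\ref{xi-structure-data.par}), evaluated at index $-1$, displays $\xi_{-1}$ as the canonical map commuting $h_*$ past the projective limit $(R_{\Delta_M}\M)_{-1}$. On the other hand, by the construction recalled in (\ref{G-trivial.par}), the inclusion $\gamma\colon \M^G \hookrightarrow \M_0$ is precisely the structural projection of this limit to its $[0]$-component, namely the one indexed by the object $\id_{[0]} \in ([-1]\downarrow\Delta_M)$. Since $\xi_{-1}$ is built componentwise from these structural projections (each componentwise map being an identity $c$), its composition with the $[0]$-projection on the target side coincides with $h_*$ applied to the same projection on the source side; this is precisely $\gamma \circ e = h_*\gamma$.

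The only step that needs any care is the second one, where one must trace the formula in (\ref{xi-structure-data.par}) to see that $\xi_{-1}$ intertwines the $[0]$-projections. This is a purely formal manipulation with projective limits and direct images, and beyond it the lemma is immediate.
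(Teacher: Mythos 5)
Your argument is correct and is essentially the paper's own: the paper proves the lemma by observing (via the structure data of $\xi$ in (\ref{xi-structure-data.par}) and the fact that the relevant $c$'s are identities) that $e$ is the canonical isomorphism ``$h_*$ of the kernel $=$ kernel of $h_*$,'' which is exactly your reduction to $\gamma\circ e=h_*\gamma$ and the componentwise check that $\xi_{-1}$ intertwines the structural projections. The only slip is notational: the $[0]$-component of the limit is indexed by the unique map $\epsilon(0)\colon[-1]\rightarrow[0]$ in $([-1]\downarrow\Delta_M)$, not by $\id_{[0]}$, and the statement that $c\colon(?)_Jf_*\rightarrow(f_J)_*(?)_J$ is the identity is the one recorded just after the discussion of $R_J$, not the paragraph before (\ref{beta.eq}); neither affects the argument.
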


\begin{corollary}
Let $h':Z''\rightarrow Z'$ and $h:Z'\rightarrow Z$ be a sequence of 
$G$-morphisms.
Then the composite
\[
(hh')_*(?)^G\xrightarrow c h_*h'_*(?)^G\xrightarrow e h_*(?)^G h'_*
\xrightarrow e (?)^G h_*h'_* \xrightarrow{c^{-1}}(?)^G(hh')_*
\]
agrees with $e$.
\qed
\end{corollary}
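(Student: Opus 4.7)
The plan is to adapt the strategy of Corollary~\ref{epsilon-composition.cor}, replacing Lemma~\ref{eta-bar-eta.thm} by the preceding lemma on pushforwards. The key ingredient is that $\gamma:(?)^G\to(?)_0$ is the natural inclusion of invariants recalled in (\ref{G-trivial.par}), hence pointwise a monomorphism; therefore two parallel morphisms into an object of the form $(?)^G\M$ agree as soon as their compositions with $\gamma$ agree.

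Accordingly, I would postcompose both sides of the desired equality with $\gamma:(?)^G(hh')_*\to(hh')_*(?)_0$ (using the identification $(?)_0(hh')_*=(hh')_*(?)_0$), and reduce to showing that two morphisms $(hh')_*(?)^G\rightrightarrows (hh')_*(?)_0$ coincide. For the right-hand side, the preceding lemma immediately yields $\gamma\circ e=c^{-1}\circ\gamma$, which is simply $\gamma$ since $c:(hh')_*\to h_*h'_*$ is the identity in the ringed-site setting. For the long composite on the left, I would propagate $\gamma$ from right to left through the four arrows: across the initial $c$ and the terminal $c^{-1}$ by naturality of $\gamma$ in its argument, and across each of the two internal $e$'s by the preceding lemma, each application producing a $c^{-1}$ at the level of $(?)_0$. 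What remains, before the $\gamma$ on the far left, is a string of structural isomorphisms $(hh')_*\to h_*h'_*\to h_*h'_*\to(hh')_*$ at the $(?)_0$-level; by the coherence of $c$ for a composite of two morphisms---and in fact because $c$ is the identity on the nose---this string collapses to the identity, again leaving only $\gamma$.

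Hence both sides, postcomposed with $\gamma$, equal $\gamma$, and by monicity of $\gamma$ the two original morphisms coincide. The main obstacle, to the extent there is one, is ordering the naturality squares in the correct sequence; since $c$ is strict no genuine coherence calculation is required, and the whole argument reduces to two applications of the preceding lemma, paralleling the reduction of Corollary~\ref{epsilon-composition.cor} to two applications of Lemma~\ref{eta-bar-eta.thm}.
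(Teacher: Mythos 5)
Your argument is correct and is essentially the route the paper intends: the paper leaves this corollary as an immediate consequence of the preceding commutative square together with the observation in (\ref{delta.par}) that $c$ is the identity and $e$ is just the canonical identification $h_*\Ker\gamma\cong\Ker h_*\gamma$, and your reduction---postcomposing with the inclusion $\gamma$, which stays monic after pushforward, and applying the square twice---spells out exactly that verification. No gaps.
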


\begin{lemma}\label{unit-right-induction.thm}
Let $I$ be a small category, $J$ its subcategory, 
$f:X\rightarrow Y$ a morphism of $I\op$-diagrams of schemes.
Then the composite
\[
R_J\xrightarrow u f_*f^*R_J\xrightarrow\mu
f_*R_Jf_J^*\xrightarrow\xi R_J(f_J)_*f_J^*
\]
is the unit map $u$.
\end{lemma}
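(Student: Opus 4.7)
The plan is to pass through the adjunction $((?)_J, R_J)$. Since natural transformations $R_J\rightarrow R_J\F$ correspond bijectively (via $g\mapsto \varepsilon\circ g_J$, where $\varepsilon:(R_J)_J\rightarrow\Id$ is the counit) to natural transformations $(R_J)_J\rightarrow \F$, it suffices to show that the two sides, after being restricted via $(?)_J$ and composed with $\varepsilon$, agree as natural transformations $(R_J\N)_J\rightarrow (f_J)_*f_J^*\N$.

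For the right-hand side $R_J(u_{f_J})$, naturality of the counit $\varepsilon$ applied to $u_{f_J}:\N\rightarrow (f_J)_*f_J^*\N$ gives the result $u_{f_J}\circ \varepsilon$ immediately. For the left-hand side $\xi\circ f_*(\mu)\circ u$, I would unfold each factor. By the strict equality $(?)_Jf_*=(f_J)_*(?)_J$ with the canonical $c$ equal to the identity, both $\xi_J$ and $(f_*\mu)_J$ reduce to $(f_J)_*$ applied to the corresponding maps on the $X_J$-side; and composing with the counit $\varepsilon:(R_J)_J\rightarrow \Id$ on the $Y$-side becomes, under this identification, $(f_J)_*$ applied to the counit on the $X_J$-side. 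The restriction $(u_{R_J\N})_J$ of the $(f^*,f_*)$-unit unfolds via a standard adjunction identity to the $(f_J^*,(f_J)_*)$-unit at $(R_J\N)_J$, and the defining property of $\mu$ (as the mate of $\xi^{-1}$ under the two pairs of adjunctions) makes the whole composition collapse to $u_{f_J}\circ\varepsilon$, matching the right-hand side.

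The main obstacle is the bookkeeping of the several natural transformations $c,\theta,\mu,\xi$ together with the two units and the counit, all in the diagrammatic setting of \cite{ETI}. I would carry out the verification componentwise at each $i\in I$ using the explicit structure-data description $(R_J\N)_i=\projlim_{(i\downarrow J)}(X_\phi)_*\N_j$ recalled in (\ref{xi-structure-data.par}), checking the identity on each factor of the inverse limit and invoking Corollary~\ref{epsilon-composition.cor} to handle the interaction between $\theta$ (hidden inside $\mu$) and $\varepsilon$. Once this diagram chase is done at a single index $i$, naturality over the comma category $(i\downarrow J)$ and the universal property of $\projlim$ assemble the components into the required identity of natural transformations.
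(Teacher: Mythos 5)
Your argument is sound and it does reorganize the proof relative to the paper. The paper's own proof never leaves the $Y$-level: it is a single commutative diagram comparing $f_*\mu\circ u$ with $\xi^{-1}\circ u$ (the latter $u$ being $R_J$ applied to the unit of $(f_J^*,(f_J)_*)$), whose cells are naturality of the unit of $(f^*,f_*)$, the triangle identity $f_*\varepsilon\circ uf_*=\id$, and the cell expressing $f_*\mu$ as the whiskered composite $f_*\varepsilon\circ\xi^{-1}\circ u$, i.e.\ the mate relation between $\mu$ and $\xi^{-1}$; composing with the isomorphism $\xi$ then yields the statement. You instead transpose across the adjunction $((?)_J,R_J)$ and compare adjoints at the $J$-level, using $c=\id$, the compatibility of the unit of $(f^*,f_*)$ with restriction (the compatibility of the type \cite[(1.24)]{ETI}, which is exactly what the paper invokes for the next lemma), and the description of the adjoint of $\mu$ via $\theta$ and the counit. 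The decisive input is the same in both proofs, namely the mate relation tying $\mu$ to $\xi$; what your transposition buys is that both adjoints are visibly $u\circ\varepsilon$, at the cost of explicitly tracking $\theta$ and the counit of $((?)_J,R_J)$.

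Two cautions. First, Corollary~\ref{epsilon-composition.cor} is not the right tool: it concerns the map $\epsilon\colon h^*(?)^G\rightarrow(?)^Gh^*$ for the invariance functor along a composite of morphisms of schemes with trivial $G$-action, and has nothing to do with the interaction of $\theta$ with the counit of $((?)_J,R_J)$. What you actually need at that point is the characterization of $\mu$ as the mate of $\xi^{-1}$ (equivalently, the identification of the $((?)_J,R_J)$-adjoint of $\mu$ with $f_J^*\varepsilon$ via $\theta$); this is precisely the nontrivial cell of the paper's diagram and should be cited from the definitions in \cite{ETI} or checked, not described as a collapse. Second, the componentwise fallback through the structure data of (\ref{xi-structure-data.par}) is unnecessary once these abstract compatibilities are in place, and it would be awkward for the left-hand side in any case: the paper gives explicit limit descriptions only for $R_J$, $\xi$, and direct images, while $f^*$ (hence $\mu$ and $\theta$) involves sheafification and has no equally concrete structure-data description here.
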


\begin{proof}
Follows easily from the commutativity of the diagram
\[
\xymatrix{
R_J \ar[r]^-u \ar[d]^u & 
R_J(f_J)_*f_J^* \ar[d]^u \ar[r]^{\xi^{-1}} &
f_*R_Jf_J^* \ar[d]^u \ar `r[rdd] `[dd]^{\id} [dd] & \\
f_*f^*R_J \ar[r]^-u \ar `d[drr] [drr]^\mu &
f_*f^*R_J(f_J)_*f_J^* \ar[r]^-{\xi^{-1}} &
f_*f^*f_*R_Jf_J^* \ar[d]^\varepsilon & \\
 & & f_*R_Jf_J^* & 
}.
\]
\end{proof}

\begin{lemma}\label{invariance-unit.thm}
Let $G$ be an $S$-group scheme, and $h:Z'\rightarrow Z$ an $S$-morphism
between $S$-schemes on which $G$ acts trivially.
Then the composite
\[
(?)^G\xrightarrow u
h_*h^*(?)^G \xrightarrow\epsilon
h_*(?)^G h^*\xrightarrow e
(?)^Gh_*h^*
\]
is the unit map $u$.
\end{lemma}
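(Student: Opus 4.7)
The plan is to reduce the statement to the general lemma on unit maps and right inductions, namely Lemma~\ref{unit-right-induction.thm}, by unraveling the definitions of $\epsilon$ and $e$.

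First I would expand the three arrows in the composite. By the definition of $\epsilon$ in (\ref{epsilon.par}), the middle arrow $h_*h^*(?)^G\xrightarrow{\epsilon} h_*(?)^Gh^*$ factors as $h_*\theta$ followed by $h_*\mu$ (applied inside $(?)_{-1}R_{\Delta_M}$), where $\theta:h^*(?)_{-1}R_{\Delta_M}\rightarrow (?)_{-1}\tilde B_G^M(h)^*R_{\Delta_M}$ is the base change map and $\mu:\tilde B_G^M(h)^*R_{\Delta_M}\rightarrow R_{\Delta_M}B_G^M(h)^*$ is the canonical map commuting pullback past right induction. Similarly, by (\ref{delta.par}), the last arrow $h_*(?)^Gh^*\xrightarrow{e}(?)^Gh_*h^*$ factors as $c^{-1}$ followed by $\xi$ at the level of $(?)_{-1}R_{\Delta_M}$. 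After peeling off the outer functors $(?)_{-1}$ (which is exact and preserves units), the claim becomes an identity of natural transformations of the form
\[
R_{\Delta_M}\xrightarrow{u} \tilde h_*\tilde h^*R_{\Delta_M}\xrightarrow{\mu} \tilde h_*R_{\Delta_M}h^*\xrightarrow{\xi} R_{\Delta_M}h_*h^*,
\]
where I have written $\tilde h$ for $\tilde B_G^M(h)$ and $h$ for $B_G^M(h)$.

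Next I would recognize this exactly as the diagram appearing in Lemma~\ref{unit-right-induction.thm}, taking $f=\tilde h$, $f_J=h$ and $J=\Delta_M$ inside the ambient category $\Delta^+_M$ (or the appropriate bigger indexing category over which $\tilde B_G^M$ is defined). The $\theta$-piece of $\epsilon$ will be absorbed into the canonical identification coming from $c^{-1}$ in $e$, since on the subcategory $\Delta_M$ the structure data of $\tilde h_*$ restricts to that of $h_*$ and the analogous identification for pullbacks is encoded in $\theta$. Once everything is written out on structure data as in (\ref{xi-structure-data.par}), Lemma~\ref{unit-right-induction.thm} identifies the resulting composite with the unit map of the adjunction $(\tilde h^*,\tilde h_*)$ followed by $(?)_{-1}$, which, after applying $(?)_{-1}$ and using that the unit for $(?)_{-1}R_{\Delta_M}\dashv \L$ is compatible with the unit for $\tilde h^*\dashv \tilde h_*$, is exactly the unit $u$ of the adjunction $h^*\dashv h_*$ composed with $(?)^G$, i.e. the unit $u$ in the statement.

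The main obstacle I expect is bookkeeping: keeping track of which $h$ stands for $\tilde B_G^M(h)$ versus $B_G^M(h)$, and verifying that the identifications made via $c^{-1}$, $\theta$, $\mu$, and $\xi$ fit together to form the precise rectangle that Lemma~\ref{unit-right-induction.thm} applies to. Once the diagram is drawn with the correct labels on all edges, the verification is pure diagram-chase using the naturality of $u$, $\mu$, $\xi$, $c$, and the compatibility already established in \cite[(1.23)]{ETI} and \cite[(6.26)]{ETI}; no new idea beyond Lemma~\ref{unit-right-induction.thm} is needed.
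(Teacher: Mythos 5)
Your proposal is correct and takes essentially the same route as the paper: the paper's proof is exactly this reduction to Lemma~\ref{unit-right-induction.thm} after unwinding $\epsilon=\mu\circ\theta$ and $e=\xi\circ c^{-1}$. The only difference is the auxiliary compatibility you invoke for matching the unit of $h^*\dashv h_*$ with that of $\tilde B_G^M(h)^*\dashv\tilde B_G^M(h)_*$ across $\theta$ and $c$, which the paper gets directly from \cite[(1.24)]{ETI}.
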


\begin{proof}
Follows easily from Lemma~\ref{unit-right-induction.thm} and
\cite[(1.24)]{ETI}.
\end{proof}

\begin{lemma}\label{eta-inverse.thm}
Let $\Cal S$ be a category, $(?)_*$ be a covariant symmetric monoidal
almost pseudofunctor on $\Cal S$ {\rm\cite[(1.28)]{ETI}}, and
$(?)^*$ its left adjoint.
Let 
\begin{equation}\label{fiber.eq}
\xymatrix{
X' \ar[r]^g \ar[d]^{\varphi'} &
X \ar[d]^\varphi \\
Y' \ar[r]^h & Y
}
\end{equation}
be a commutative diagram on $\Cal S$.
Then the diagram
\[
\xymatrix{
h^*\O_Y \ar[rr]^C \ar[d]^\eta & & \O_{Y'} \ar[d]^\eta \\
h^*\varphi_*\O_X \ar[r]^\theta & \varphi'_*g^*\O_X \ar[r]^C &
\varphi_*'\O_{X'}
}
\]
is commutative, where we use the notation in {\rm\cite[Chapter~1]{ETI}}.
\end{lemma}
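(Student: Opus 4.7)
The plan is to reduce the commutativity to naturality of the adjunction unit and a single coherence property of the symmetric monoidal structure of $(?)^*$ with respect to composition. The assertion is essentially formal once the relevant maps are unfolded.

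First, I would express both instances of $\eta$ via the adjunction: by definition, $\eta:\O_Y\to\varphi_*\O_X$ equals the composite $\varphi_*(C)\circ u$, where $u$ is the unit of the adjunction $\varphi^*\dashv\varphi_*$ and $C:\varphi^*\O_Y\to\O_X$ is the monoidal coherence isomorphism; similarly for $\eta:\O_{Y'}\to\varphi'_*\O_{X'}$. Next, I would expand the base-change morphism $\theta:h^*\varphi_*\to\varphi'_*g^*$ as the canonical composite
\[
h^*\varphi_*\xrightarrow{u}\varphi'_*(\varphi')^*h^*\varphi_*\xrightarrow{d}
\varphi'_*g^*\varphi^*\varphi_*\xrightarrow{\varepsilon}\varphi'_*g^*,
\]
with $d$ the composition isomorphism for $(?)^*$ coming from the identification $h\varphi'=\varphi g$.

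With these substitutions, the bottom-row composite becomes an explicit string involving $u$, $d$, $\varepsilon$, and two occurrences of $C$. Applying the naturality of $u$ to slide it past $h^*\eta$ and then using the triangle identity for $(\varphi')^*\dashv\varphi'_*$ cancels an $\varepsilon\circ u$ pair; the bottom-row composite thereby reduces to the image under $\varphi'_*$ of the map $(\varphi')^*h^*\O_Y\xrightarrow{d} g^*\varphi^*\O_Y\xrightarrow{g^*C} g^*\O_X\xrightarrow{C}\O_{X'}$. The top-row composite, after the corresponding rewriting of the second $\eta$, reduces analogously to the image under $\varphi'_*$ of the map $(\varphi')^*h^*\O_Y\xrightarrow{(\varphi')^*C}(\varphi')^*\O_{Y'}\xrightarrow{C}\O_{X'}$.

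The equality of these two reductions is precisely the coherence axiom for the symmetric monoidal almost pseudofunctor $(?)^*$ applied to $h\varphi'=\varphi g$ at the unit object, namely that the $C$-morphism of a composite is obtained from $d$ and the composition of the individual $C$-morphisms. This is built into the definition cited in \cite[(1.28)]{ETI}. The main obstacle is bookkeeping: identifying which naturalities and coherence axioms to invoke, and in the correct order; once this is done, no further content is needed.
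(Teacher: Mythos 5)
Your plan is correct and is essentially the paper's own argument: the paper's proof is just one large diagram built from the adjoint descriptions of $C$ and $\theta$ in terms of $\eta$, $\varepsilon$, and the composition isomorphisms, whose subsquares commute by exactly the naturalities, triangle identities, and the compatibility of $C$ with composition ($C_{\varphi g}=C_g\circ g^*C_\varphi\circ d$) that you invoke, with the details left to the reader. The only cosmetic differences are that you unfold $\theta$ on the inverse-image side (via $u$, $\varepsilon$, $d$) while the paper organizes the chase on the direct-image side using the $\eta$'s and $c$, and that the unit--counit pair you cancel belongs to the adjunction $\varphi^*\dashv\varphi_*$ rather than $(\varphi')^*\dashv\varphi'_*$ --- a harmless bookkeeping slip.
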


\begin{proof}
Prove the commutativity of the diagram
\[
\xymatrix{
h^*\O_Y \ar[dd]^C \ar[dr]^\eta \ar[rrr] & & & 
h^*\varphi_*\O_X \ar[dl]^\eta \ar[dd]^\theta \\
 & h^*h_*\O_Y \ar[dl]^\varepsilon \ar[d]^\eta \ar[r]^\eta &
h^*\varphi_*g_*\O_X' \ar[d]^\theta \ar[dl]_c & \\
\O_{Y'} \ar `d [drr] [drr]^{\eta} & 
h^*h_*\varphi_*'\O_{X'} \ar[dr]^\varepsilon &
\varphi'_*g^*g_*\O_{X'} \ar[d]^\varepsilon &
\varphi'_*g^*\O_X \ar `d[dl] [dl]_-C \ar[l]_-\eta \\
 & & \varphi'\O_{X'}
}.
\]
The details are left to the reader.
\end{proof}

\paragraph\label{bar-eta.par}
Let $G$ be an $S$-group scheme, and $\varphi:X\rightarrow Y$ a
$G$-invariant morphism.
Then the canonical map $\bar\eta:\O_Y\rightarrow (\varphi_*\O_X)^G$ is
nothing but the composite
\[
\O_Y\xrightarrow{\gamma^{-1}}\O_Y^G\xrightarrow{\eta}
(\varphi_*\O_X)^G,
\]
where $\gamma:\O_Y^G\rightarrow \O_Y$ is an isomorphism as
$\O_Y$ is $G$-trivial, and $\eta:\O_Y\rightarrow \varphi_*\O_X$ is the
standard map.
As $\gamma$ is a natural map, it is easy to see that the composite
\[
\O_Y\xrightarrow{\bar\eta}(\varphi_*\O_X)^G\xrightarrow{\gamma}\varphi_*\O_X
\]
is $\eta$.

\begin{lemma}\label{bar-eta-isom.thm}
Let $G$ be an $S$-group scheme, and {\rm(\ref{fiber.eq})} a commutative
diagram of $G$-schemes such that $G$ acts on $Y$ and $Y'$ trivially.
Then 
\begin{enumerate}
\item[\bf 1] The diagram
\begin{equation}\label{bar-eta-inverse.eq}
\xymatrix{
h^*\O_Y \ar[d]^{\bar\eta} \ar[rrr]^C & & & \O_{Y'} \ar[d]^{\bar\eta} \\
h^*(\varphi_*\O_X)^G \ar[r]^\epsilon &
(h^*\varphi_*\O_X)^G \ar[r]^\theta & 
(\varphi_*'g^*\O_X)^G \ar[r]^C &
(\varphi_*'\O_{X'})^G
}
\end{equation}
is commutative.
\item[\bf 2] Assume that 
{\rm(\ref{fiber.eq})} is cartesian, $\varphi$ is quasi-compact
quasi-separated, $h$ is flat, and $G\rightarrow S$ is quasi-compact 
quasi-separated.
Then $\bar \eta:\O_{Y'}\rightarrow (\varphi'_*\O_{X'})^G$ is
an isomorphism if and only if $h^*\bar\eta: h^*\O_Y\rightarrow 
h^*(\varphi_*\O_X)^G$ is an isomorphism.
\item[\bf 3] In addition to the assumption of {\bf 2}, 
assume that $\eta:\O_Y\rightarrow h_*\O_{Y'}$ and
$\eta:\O_X\rightarrow g_*\O_{X'}$ are isomorphisms.
Then $\bar\eta:\O_Y\rightarrow (\varphi_*\O_X)^G$ is an isomorphism
if and only if $\bar\eta: h_*\O_{Y'}\rightarrow h_*(\varphi'_*\O_{X'})^G$ 
is an isomorphism.
\end{enumerate}
\end{lemma}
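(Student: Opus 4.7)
The plan is to decompose each part using the factorization $\bar\eta=\eta\circ\gamma^{-1}$ from (\ref{bar-eta.par}) together with the naturality/compatibility lemmas already proved. For part~\textbf{1}, using this decomposition the left vertical of (\ref{bar-eta-inverse.eq}) factors through $h^*(\O_Y^G)$ and the right vertical through $\O_{Y'}^G$. Inserting the intermediate row $(h^*\O_Y)^G\to(h^*\varphi_*\O_X)^G\to(\varphi'_*g^*\O_X)^G\to(\varphi'_*\O_{X'})^G$ — with $(h^*\O_Y)^G$ connected to $h^*(\O_Y^G)$ by $\epsilon$ and to $\O_{Y'}^G$ by $(?)^G C$ — splits the square into two pieces: an upper rectangle involving only $\gamma$, $\epsilon$, and $\theta$ applied to $\O_Y$, whose commutativity is exactly Lemma~\ref{eta-bar-eta.thm}; and a lower rectangle obtained by applying the functor $(?)^G$ to the diagram of Lemma~\ref{eta-inverse.thm}.

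For part~\textbf{2}, I will check that each of the three horizontal maps in the bottom row of (\ref{bar-eta-inverse.eq}) is an isomorphism. The map $\epsilon$ is an isomorphism by the fact recorded in (\ref{epsilon.par}), since $G$ is quasi-compact quasi-separated and $h$ is flat. The map $\theta$ is an isomorphism by flat base change applied to the cartesian square (\ref{fiber.eq}), using that $\varphi$ is quasi-compact quasi-separated and $h$ is flat. The map $C:h^*\O_Y\to\O_{Y'}$ (both on the top and inside $(?)^G$ on the bottom) is always an isomorphism on structure sheaves. Therefore the horizontal composites in (\ref{bar-eta-inverse.eq}) are isomorphisms, and the commutativity from part~\textbf{1} gives the equivalence.

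For part~\textbf{3}, applying $h_*$ to (\ref{bar-eta-inverse.eq}) and using naturality of the unit $u:\Id\to h_*h^*$, I obtain a commutative square with top row $\bar\eta_Y$, bottom row $h_*h^*\bar\eta_Y$ which, via the $h_*$-images of the horizontal isomorphisms from part~\textbf{2}, is identified with $h_*\bar\eta_{Y'}$; the left vertical is $u_{\O_Y}$ and the right vertical $u_{(\varphi_*\O_X)^G}$. The left vertical equals, via the iso $h_*C$, the map $\eta:\O_Y\to h_*\O_{Y'}$, hence is an isomorphism by hypothesis. The right vertical is identified, using the $h_*$-images of the isomorphisms from part~\textbf{2} together with the isomorphism $e:h_*(?)^G\to(?)^G h_*$ from (\ref{delta.par}) and the identity $h_*\varphi'_*=\varphi_*g_*$, with $(\varphi_*\eta_X)^G:(\varphi_*\O_X)^G\to(\varphi_*g_*\O_{X'})^G$; this is an isomorphism since $\eta_X:\O_X\to g_*\O_{X'}$ is so by hypothesis. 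Both verticals being isomorphisms, the square shows that $\bar\eta_Y$ is an isomorphism if and only if $h_*\bar\eta_{Y'}$ is.

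The main obstacle is the compatibility bookkeeping, especially in part~\textbf{3}, where the identification of $u_{(\varphi_*\O_X)^G}$ with the explicit map built from $\varphi_*\eta_X$ and $e^{-1}$ is the crux. This amounts to an instance of the standard triangular-identity-plus-base-change compatibility packaged in Lemmas~\ref{unit-right-induction.thm} and \ref{invariance-unit.thm}, applied to $\O_X$; none of the individual checks is deep, but coherence between the natural transformations $\epsilon$, $e$, $\theta$, $c$, and the units/counits of the $h^*\dashv h_*$ adjunction must be traced carefully.
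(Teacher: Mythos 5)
Your proposal is correct and takes essentially the same route as the paper's proof: part \textbf{1} combines Lemma~\ref{eta-bar-eta.thm}, Lemma~\ref{eta-inverse.thm} and the factorization $\bar\eta=\eta\circ\gamma^{-1}$ of (\ref{bar-eta.par}) (the paper concludes by using that $\gamma:(\varphi'_*\O_{X'})^G\rightarrow\varphi'_*\O_{X'}$ is a monomorphism rather than applying $(?)^G$ to the diagram of Lemma~\ref{eta-inverse.thm}, a cosmetic difference), and part \textbf{2} rests on the same isomorphy of $\epsilon$, $\theta$ and the two $C$'s. Your part \textbf{3} is just a worked-out version of the argument the paper compresses into ``follows from {\bf 1}, the proof of {\bf 2}, Lemma~\ref{invariance-unit.thm}, and \cite[(1.24)]{ETI},'' with the identification of the unit $u_{(\varphi_*\O_X)^G}$ with $(\varphi_*\eta_X)^G$ (via $e$ and $h_*\varphi'_*=\varphi_*g_*$) being exactly the intended use of those compatibilities.
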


\begin{proof}
{\bf 1}.
By (\ref{eta-bar-eta.thm}), (\ref{eta-inverse.thm}), and 
(\ref{bar-eta.par}), the diagram
\[
\xymatrix{
 & h^*\O_Y \ar[rr]^C \ar[d]^\eta \ar[ld]_{\bar\eta} & & \O_{Y'} \ar[d]^{\eta}
\ar `r[ddr] `[dd]^{\bar\eta} [dd] & \\
h^*(\varphi_*\O_X)^G \ar[r]^\gamma \ar[dr]^\epsilon & 
h^*\varphi_*\O_X \ar[r]^\theta & 
\varphi'_*g^*\O_X \ar[r]^C &
\varphi'_*\O_{X'} & \\
 & (h^*\varphi_*\O_X)^G \ar[u]_\gamma \ar[r]^\theta &
(\varphi_*'g^*\O_X)^G \ar[r]^C \ar[u]_\gamma &
(\varphi_*'\O_{X'})^G \ar[u]_\gamma & 
}
\]
is commutative.
As $\gamma:(\varphi_*'\O_{X'})^G\rightarrow \varphi_*'\O_{X'}$ is a 
monomorphism, the result follows.

{\bf 2}.
Note that $\epsilon$ in (\ref{bar-eta-inverse.eq}) is an isomorphism
by \cite[(7.5)]{HO2} (note that in \cite[section~7]{HO2}, $G$ is assumed
to be flat, but this assumption is unnecessary in proving \cite[(7.5)]{HO2}).
$\theta $ in (\ref{bar-eta-inverse.eq}) is also an isomorphism by
\cite[(7.12)]{ETI}.
As the two $C$'s are isomorphisms, the result follows from {\bf 1}.

{\bf 3} follows easily from {\bf 1}, the proof of {\bf 2}, 
Lemma~\ref{invariance-unit.thm}, and \cite[(1.24)]{ETI}.
\end{proof}

\begin{corollary}\label{uniform-algebraic-quotient.cor}
Let $G$ be a quasi-compact quasi-separated $S$-group scheme, 
and $\varphi:X\rightarrow Y$ an algebraic quotient by $G$.
Then $\varphi$ is a uniform algebraic quotient.
\end{corollary}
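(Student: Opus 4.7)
The plan is to reduce the statement directly to Lemma~\ref{bar-eta-isom.thm}, part \textbf{2}. I need to show that for any flat $S$-morphism $h:Y'\to Y$, the base change $\varphi':X'\to Y'$ of $\varphi$ along $h$ is again an algebraic quotient, i.e.\ that (i) $\varphi'$ is affine, and (ii) the canonical map $\bar\eta:\O_{Y'}\to (\varphi'_*\O_{X'})^G$ is an isomorphism.

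For (i), affineness of morphisms is preserved under arbitrary base change, so the affineness of $\varphi$ gives the affineness of $\varphi'$ immediately. In particular, both $\varphi$ and $\varphi'$ are quasi-compact and quasi-separated, so the hypotheses of Lemma~\ref{bar-eta-isom.thm}, part \textbf{2} are satisfied (using also that $G$ is assumed qcqs over $S$ and that $h$ is flat). The defining square
\[
\xymatrix{
X' \ar[r]^g \ar[d]^{\varphi'} & X \ar[d]^{\varphi} \\
Y' \ar[r]^h & Y
}
\]
is cartesian by construction.

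For (ii), since $\varphi$ is an algebraic quotient, the canonical map $\bar\eta:\O_Y\to(\varphi_*\O_X)^G$ is an isomorphism. Applying the exact functor $h^*$ preserves isomorphisms, so $h^*\bar\eta:h^*\O_Y\to h^*(\varphi_*\O_X)^G$ is an isomorphism. By Lemma~\ref{bar-eta-isom.thm}, part \textbf{2}, this is equivalent to the assertion that $\bar\eta:\O_{Y'}\to(\varphi'_*\O_{X'})^G$ is an isomorphism, which is exactly what we need.

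There is no real obstacle here: the corollary is essentially a direct packaging of Lemma~\ref{bar-eta-isom.thm}, part \textbf{2}, together with the trivial stability of affineness under base change. The only point to double-check is that all hypotheses of that lemma (qcqs on $G$, qcqs on $\varphi$, flatness of $h$, cartesianness of the square) are met under our assumptions, which they are.
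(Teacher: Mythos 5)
Your proposal is correct and is exactly the paper's argument: the paper's proof is simply ``Obvious by Lemma~\ref{bar-eta-isom.thm}, {\bf 2},'' and you have spelled out the same reduction (affineness is stable under base change, and $h^*\bar\eta$ being an isomorphism transfers to $\bar\eta$ for $\varphi'$ via that lemma). The only cosmetic remark is that invoking exactness of $h^*$ is unnecessary, since any functor preserves isomorphisms; flatness of $h$ is needed only as a hypothesis of the lemma.
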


\begin{proof}
Obvious by Lemma~\ref{bar-eta-isom.thm}, {\bf 2}.
\end{proof}

\begin{lemma}\label{invariance-qcqs.lem}
Let $G$ be a quasi-compact quasi-separated $S$-group scheme, 
$X$ an $S$-scheme on which $G$ acts trivially, and $\M$ a locally
quasi-coherent $(G,\O_X)$-module.
Then $\M^G\in\Qch(X)$.
Moreover, $(?)^G:\Mod(G,X)\rightarrow \Mod(X)$ preserves direct sums.
\end{lemma}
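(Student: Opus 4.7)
The plan is to exploit the explicit kernel description of the invariance functor given in (\ref{G-trivial.par}), namely
\[
\M^G = \Ker\bigl(\beta_{\delta_0}-\beta_{\delta_1}:\M_0 \to p_*\M_1\bigr),
\]
where $p:G\times X\to X$ is the second projection. The key observation driving both assertions is that, since $G\to S$ is quasi-compact quasi-separated, $p$ is itself qcqs, as a base change of $G\to S$ along $X\to S$.

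For the first claim, since $\M$ is locally quasi-coherent, $\M_0\in\Qch(X)$ and $\M_1\in\Qch(G\times X)$. Because $p$ is qcqs, $p_*\M_1\in\Qch(X)$. Thus $\beta_{\delta_0}-\beta_{\delta_1}$ is a morphism in $\Qch(X)$, whose kernel (computed in $\Mod(X)$) is quasi-coherent, since $\Qch(X)$ is closed under kernels inside $\Mod(X)$. This proves $\M^G\in\Qch(X)$.

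For the direct-sum assertion, the principal ingredient is that $p_*:\Mod(G\times X)\rightarrow\Mod(X)$ commutes with direct sums whenever $p$ is qcqs. This holds because $X$ has a basis of qcqs open sets (take affines), for which $p^{-1}(U)$ is qcqs, and then $\Gamma(p^{-1}(U),-)$ commutes with filtered colimits, and in particular with direct sums. Given this, for any family $\{\M^{(\alpha)}\}$ in $\Mod(G,X)$ direct sums are computed componentwise, and one writes
\[
\Bigl(\bigoplus_\alpha \M^{(\alpha)}\Bigr)^G
=\Ker\Bigl(\bigoplus_\alpha \M^{(\alpha)}_0 \to \bigoplus_\alpha p_*\M^{(\alpha)}_1\Bigr)
\cong \bigoplus_\alpha (\M^{(\alpha)})^G,
\]
using that direct sums are exact in $\Mod(X)$ and hence commute with kernels.

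The only non-formal point, and the main obstacle, is precisely that $(?)^G$ is a priori a right adjoint (to $\L$, by Lemma~\ref{G-trivial.thm}), so preservation of arbitrary direct sums is not automatic and genuinely rests on the explicit kernel presentation together with the qcqs hypothesis on $G$; without that hypothesis, neither the compatibility of $p_*$ with direct sums nor the quasi-coherence of $p_*\M_1$ would be available.
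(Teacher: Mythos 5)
Your proof is correct and follows essentially the same route as the paper: the kernel description $\M^G=\Ker(\M_0\to p_*\M_1)$, quasi-coherence of $p_*\M_1$ because $p$ is quasi-compact quasi-separated, and commutation of $p_*$ and of kernels with direct sums. The only cosmetic difference is that the paper cites \cite[Theorem~8]{Kempf} for $p_*$ preserving direct sums, whereas you sketch that fact directly via sections over a basis of quasi-compact quasi-separated opens; both are fine.
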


\begin{proof}
Let $p:G\times X\rightarrow X$ be the second projection.
Then $p_*$ preserves quasi-coherence.
So if $\M$ is locally quasi-coherent, then the kernel $\M^G$ of 
$\M_0\rightarrow p_*\M_1$ is quasi-coherent.
Moreover, $p_*:\Mod(G\times X)\rightarrow \Mod(X)$ preserves the direct sums
\cite[Theorem~8]{Kempf}.
As the kernel also preserves the direct sums, $(?)^G$ preserves the
direct sums.
\end{proof}

\paragraph
Let $G$ be an $S$-group scheme, 
and $X$ an $S$-scheme on which $G$ acts trivially.
Then for $\M\in\Mod(G,X)$, we have
\begin{multline*}
\Gamma(X,\M^G)=\Hom_{\O_X}(\O_X,(?)_{-1}R_{\Delta_M}\M)
\cong 
\\
\Hom_{\O_{B_G^M(X)}}(\O_{B_G^M(X)},\M)=\Gamma(\Zar(B_G^M(X)),\M).
\end{multline*}

For $\M,\N\in \Mod(G,X)$,
we denote $\uHom_{\O_{B_G^M(X)}}(\M,\N)$ by
$\uHom_{\O_X}(\M,\N)$, and
$\uHom_{\O_X}(\M,\N)^G$ by $\uHom_{G,\O_X}(\M,\N)$.
In particular, 
\begin{multline*}
\Gamma(X,\uHom_{G,\O_X}(\M,\N))\cong
\Gamma(\Zar(B_G^M(X)),\uHom_{\O_{B_G^M(X)}}(\M,\N))\\
=\Hom_{\O_{B_G^M(X)}}(\M,\N),
\end{multline*}
which we denote by $\Hom_{G,\O_X}(\M,\N)$.

\section{Functorial resolutions}\label{functorial.sec}

\begin{lemma}
Let $\Cal A$ be an abelian category, and assume that for each complex
$\Bbb F\in C(\Cal A)$, a $K$-injective resolution 
$i_{\Bbb F}: \Bbb F\rightarrow \Bbb I_{\Bbb F}$ is chosen.
Then there is a unique functor $\Bbb I:K(\Cal A)\rightarrow\Ki(\Cal A)$,
to the thick subcategory of $K$-injective objects, such that
$\Bbb I(\Bbb F)=\Bbb I_{\Bbb F}$ for each $\Bbb F$, and that
$i:\Id\rightarrow j\Bbb I$ is a natural transformation, where 
$j:\Ki(\Cal A)\hookrightarrow K(\Cal A)$ is the inclusion.
\end{lemma}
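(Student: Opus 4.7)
The plan is to construct $\Bbb I$ on morphisms using the universal property of $K$-injective resolutions, and then check functoriality by the same uniqueness.

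First, I recall the defining property: for any $K$-injective complex $\Bbb I$ and any quasi-isomorphism $s:\Bbb F\rightarrow\Bbb F'$, the induced map $\Hom_{K(\Cal A)}(\Bbb F',\Bbb I)\rightarrow\Hom_{K(\Cal A)}(\Bbb F,\Bbb I)$ is a bijection. Since each $i_{\Bbb F}:\Bbb F\rightarrow \Bbb I_{\Bbb F}$ is a quasi-isomorphism and each $\Bbb I_{\Bbb G}$ is $K$-injective, for any morphism $f:\Bbb F\rightarrow\Bbb G$ in $K(\Cal A)$ the composite $i_{\Bbb G}f$ factors uniquely in $K(\Cal A)$ through $i_{\Bbb F}$. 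Define $\Bbb I(f):\Bbb I_{\Bbb F}\rightarrow \Bbb I_{\Bbb G}$ to be the unique morphism in $K(\Cal A)$ satisfying
\[
\Bbb I(f)\circ i_{\Bbb F}=i_{\Bbb G}\circ f.
\]
This is well-defined on homotopy classes because we are already working in $K(\Cal A)$, and it ensures the naturality of $i:\Id\rightarrow j\Bbb I$ by construction.

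Next I verify functoriality. For the identity, $\id_{\Bbb I_{\Bbb F}}\circ i_{\Bbb F}=i_{\Bbb F}\circ\id_{\Bbb F}$, so uniqueness gives $\Bbb I(\id_{\Bbb F})=\id_{\Bbb I_{\Bbb F}}$. For a composable pair $\Bbb F\xrightarrow f \Bbb G\xrightarrow g\Bbb H$, both $\Bbb I(gf)$ and $\Bbb I(g)\Bbb I(f)$ are morphisms $\Bbb I_{\Bbb F}\rightarrow\Bbb I_{\Bbb H}$ in $K(\Cal A)$ whose precomposition with $i_{\Bbb F}$ equals $i_{\Bbb H}\circ(gf)$; by the uniqueness part of the universal property they coincide.

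Finally, uniqueness of $\Bbb I$ is immediate: any functor $\Bbb I'$ with $\Bbb I'(\Bbb F)=\Bbb I_{\Bbb F}$ for which $i$ is a natural transformation must satisfy $\Bbb I'(f)\circ i_{\Bbb F}=i_{\Bbb G}\circ f$, and by the same universal property there is only one such morphism, so $\Bbb I'(f)=\Bbb I(f)$.

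The only potential obstacle is setting up the universal property correctly, i.e., that $K$-injective objects see quasi-isomorphisms as isomorphisms in the $\Hom$-set of $K(\Cal A)$; this is built into the definition of $K$-injectivity (equivalently, every $K$-injective object is injective with respect to quasi-isomorphisms in $K(\Cal A)$), so no genuine difficulty arises and the construction goes through formally.
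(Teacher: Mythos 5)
Your proof is correct and follows essentially the same route as the paper: define $\Bbb I(f)$ as the unique morphism in $K(\Cal A)$ satisfying $\Bbb I(f)\circ i_{\Bbb F}=i_{\Bbb G}\circ f$, using that $i_{\Bbb F}^*:K(\Bbb I_{\Bbb F},\Bbb I_{\Bbb G})\rightarrow K(\Bbb F,\Bbb I_{\Bbb G})$ is bijective, and then derive functoriality and uniqueness from the same universal property. The paper leaves the functoriality check to the reader, which you have simply written out.
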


We call such a pair $(\Bbb I,i)$ of a functor and a natural map a
{\em functorial $K$-injective resolution}.
Note that the dual assertion of the lemma is the existence of a functorial
$K$-projective resolution.

\begin{proof}
Let $h:\Bbb F\rightarrow \Bbb G$ be a morphism in $C(\Cal A)$.
As $i_{\Bbb F}$ is a quasi-isomorphism and $\Bbb I_{\Bbb G}$ is $K$-injective,
\[
i_{\Bbb F}^*: K(\Bbb I_{\Bbb F},\Bbb I_{\Bbb G})\rightarrow 
K(\Bbb F,\Bbb I_{\Bbb G})
\]
is an isomorphism.
So it is necessary to define 
$\Bbb I(h)$ to be $((i_{\Bbb F})^*)^{-1}(i_{\Bbb G}h)$ to make
$i$ a natural transformation, and the uniqueness follows.

The fact that $\Bbb I$ is a functor and $i$ is a natural transformation
with this definition is easy, and is left to the reader.
\end{proof}

\paragraph
Let $\C$ be a Grothendieck category.
Then for each $\Bbb F\in C(\C)$, the category of complexes in $\Cal C$,
there is an injective 
strictly injective resolution $i_{\Bbb F}:\Bbb F\rightarrow \Bbb I$
\cite{Franke}.
That is, $i_{\Bbb F}$ is a monomorphism and is a quasi-isomorphism, 
$\Bbb I_{\Bbb F}$ is $K$-injective, and $\Bbb I_{\Bbb F}^i$ 
is an injective object for each $i$.
Thus we have

\begin{lemma}
Let $\C$ be a Grothendieck category.
Then there is a functorial 
$K$-injective resolution $i_{\Bbb F}:\Bbb F\rightarrow
\Bbb I_{\Bbb F}$ for $\Bbb F\in K(\C)$ which is a monomorphism
for each $\Bbb F$.
\qed
\end{lemma}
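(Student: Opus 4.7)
The plan is to combine Franke's theorem on the existence of strictly injective resolutions in Grothendieck categories with the preceding lemma on functoriality. Concretely, for every complex $\Bbb F \in C(\C)$, Franke's result supplies a (chosen) morphism $i_{\Bbb F}:\Bbb F\rightarrow \Bbb I_{\Bbb F}$ which is simultaneously a monomorphism, a quasi-isomorphism, and whose target is a $K$-injective complex with each term injective in $\C$. This gives us, in particular, a choice of $K$-injective resolution for every complex.

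Next, I would feed this choice into the preceding lemma. That lemma states that any such term-by-term choice of $K$-injective resolution extends uniquely to a functor $\Bbb I:K(\C)\rightarrow \Ki(\C)$ with $\Bbb I(\Bbb F)=\Bbb I_{\Bbb F}$ and with the chosen maps assembling into a natural transformation $i:\Id\rightarrow j\Bbb I$. No further verification is required for the functoriality itself.

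Finally, the monomorphism clause is automatic: it is a property asserted for each individual $i_{\Bbb F}$, and by construction every chosen $i_{\Bbb F}$ is already a monomorphism thanks to Franke. The only substantive input to the argument is therefore the cited existence result from \cite{Franke}, and the only possible obstacle is verifying that Franke's \emph{strictly injective} resolutions are indeed monomorphic and $K$-injective with injective components (so that applying the preceding lemma is legitimate); this is immediate from the definition recorded just before the lemma statement. No diagrams or additional verifications are needed.
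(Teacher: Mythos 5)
Your proposal is correct and is exactly the argument the paper intends: the lemma is stated with no further proof precisely because Franke's strictly injective resolutions (monic, quasi-isomorphic, $K$-injective with injective terms) feed directly into the preceding lemma on functorial $K$-injective resolutions, and the monomorphism clause is a per-object property of the chosen maps.
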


\begin{lemma}\label{K-inj-uniqueness.thm}
Let $\C$ be an abelian category, and $(\Bbb I,i)$ and $(\Bbb I',i')$ be
functorial $K$-injective resolutions of $\C$.
Then there is a unique natural isomorphism $\lambda:\Bbb I\rightarrow \Bbb I'$
such that $(j\lambda)\circ i=i'$.
\end{lemma}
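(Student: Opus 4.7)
The plan is to build $\lambda$ objectwise using the universal property of $K$-injective resolutions, and then verify uniqueness, invertibility, and naturality by repeated appeals to the same universal property.

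First I would fix an object $\Bbb F\in K(\C)$ and consider the map
\[
i_{\Bbb F}^*:K(\Bbb I(\Bbb F),\Bbb I'(\Bbb F))\rightarrow K(\Bbb F,\Bbb I'(\Bbb F)),
\]
which is an isomorphism since $i_{\Bbb F}$ is a quasi-isomorphism and $\Bbb I'(\Bbb F)$ is $K$-injective (this is exactly the argument already used in the construction of $\Bbb I$ on morphisms). Define $\lambda_{\Bbb F}$ to be the unique preimage of $i'_{\Bbb F}$ under $i_{\Bbb F}^*$, so $(j\lambda_{\Bbb F})\circ i_{\Bbb F}=i'_{\Bbb F}$ and uniqueness of $\lambda_{\Bbb F}$ with this property is automatic.

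Next I would show that $\lambda_{\Bbb F}$ is an isomorphism. By symmetry there is a unique $\mu_{\Bbb F}:\Bbb I'(\Bbb F)\rightarrow \Bbb I(\Bbb F)$ in $K(\C)$ with $(j\mu_{\Bbb F})\circ i'_{\Bbb F}=i_{\Bbb F}$. Then $(j(\mu_{\Bbb F}\circ\lambda_{\Bbb F}))\circ i_{\Bbb F}=j\mu_{\Bbb F}\circ i'_{\Bbb F}=i_{\Bbb F}=(j\id_{\Bbb I(\Bbb F)})\circ i_{\Bbb F}$, so the uniqueness clause applied to the isomorphism $K(\Bbb I(\Bbb F),\Bbb I(\Bbb F))\cong K(\Bbb F,\Bbb I(\Bbb F))$ forces $\mu_{\Bbb F}\circ\lambda_{\Bbb F}=\id$; symmetrically $\lambda_{\Bbb F}\circ\mu_{\Bbb F}=\id$.

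For naturality, given $h:\Bbb F\rightarrow \Bbb G$ in $K(\C)$ I would check that $\lambda_{\Bbb G}\circ\Bbb I(h)$ and $\Bbb I'(h)\circ\lambda_{\Bbb F}$ become equal after precomposition with $i_{\Bbb F}$: using the naturality of $i$ and $i'$, both composites equal $i'_{\Bbb G}\circ h$. Since
\[
i_{\Bbb F}^*:K(\Bbb I(\Bbb F),\Bbb I'(\Bbb G))\rightarrow K(\Bbb F,\Bbb I'(\Bbb G))
\]
is an isomorphism, the two maps $\Bbb I(\Bbb F)\rightarrow \Bbb I'(\Bbb G)$ in $K(\C)$ coincide, proving naturality of $\lambda$. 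The global uniqueness of $\lambda$ then follows from the objectwise uniqueness established in the first step. There is no real obstacle here; the whole argument is a routine bookkeeping of the universal property of $K$-injectives, and the only point to be careful about is to phrase the uniqueness consistently in the homotopy category rather than in $C(\C)$.
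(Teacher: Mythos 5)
Your proof is correct and follows essentially the same route as the paper: for each $\Bbb F$ the map $K(\C)(\Bbb I\Bbb F,\Bbb I'\Bbb F)\rightarrow K(\C)(\Bbb F,j\Bbb I'\Bbb F)$ induced by full inclusion and precomposition with $i_{\Bbb F}$ is an isomorphism, which yields existence and uniqueness of $\lambda_{\Bbb F}$, and then invertibility and naturality follow from the same uniqueness, exactly as you spell out. The paper's proof simply states this isomorphism of hom-sets and leaves the bookkeeping implicit.
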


\begin{proof}
For each $\Bbb F\in K(\C)$, 
\[
K(\C)(\Bbb I\Bbb F,\Bbb I'\Bbb F)
\xrightarrow{j}
K(\C)(j\Bbb I\Bbb F,j\Bbb I'\Bbb F)
\xrightarrow{i^*}
K(\C)(\Bbb F,j\Bbb I'\Bbb F)
\]
are isomorphisms.
\end{proof}

\paragraph 
Let $\Cal T$ be a triangulated category.
A triangulated subcategory 
(see \cite[Definition~1.5.1]{Neeman} for the definition)
$\Cal T'$ is said to be {\em localizing} if
it is closed under small direct sums.
For a set of objects (or a full subcategory) $\F$ of $\Cal T$, there is
a smallest localizing subcategory $\Loc(\F)$ of $\Cal T$ containing $\F$.

\paragraph
Let $\C$ be an abelian category which satisfies the {\rm(AB3)} condition.
For $n\in\Bbb Z$, 
let $C(\C)_{\leq n}$ be the full subcategory of $C(\C)$ of the
category of complexes in $\C$ consisting of complexes $\Bbb F$ with 
$\Bbb F^i=0$ for $i>n$.

\begin{lemma}\label{functorial-resolution.thm}
Let $\Cal F$ be a
full subcategory of $\Cal C$ closed under small direct sums.
Assume that there is a 
pair $(\frak F,\frak f)$ such that $\frak F:\Cal C\rightarrow \Cal F$ is
a functor, and $\frak f: j'\frak F\rightarrow \Id$ is a natural map
which is epic objectwise, where $j':\Cal F\hookrightarrow \C$ is
the inclusion.
Assume also that $\frak F(0)=0$.
Then
\begin{enumerate}
\item[\bf 1] 
For $\Bbb F\in C(\C)_{\leq n}$,
there is a functorial resolution $(\frak G,\frak g)$ such that
$\frak G(\Bbb F)$ is in $C(\Cal F)\cap 
C(\C)_{\leq n}$.
\item[\bf 2] There is a functorial inductive system $\frak G_n(\Bbb F)$ 
of complexes,
functorial on $\Bbb F\in C(\C)$, and a quasi-isomorphism
$\frak g_n:\frak G_n(\Bbb F)\rightarrow \Bbb F_{\leq n}$
such that
\begin{enumerate}
\item[\bf a] $\frak G_{-1}(\Bbb F)=0$;
\item[\bf b] For $n\in\Bbb Z$, 
$s_n:\frak G_n(\Bbb F)\rightarrow \frak G_{n-1}(\Bbb F)$ is
a semisplit epi \(that is, $s_n^i:
\frak G_n(\Bbb F)^i\rightarrow \frak G_{n-1}(\Bbb F)^i$ is a split epimorphism
for each $i\in\Bbb Z$\);
\item[\bf c] $\frak H_n:=\Ker s_n$ is in 
$C(\Cal F)\cap C(\C)_{\leq n}$ for each $n\geq 0$.
\end{enumerate}
\item[\bf 3] If $\C$ satisfies the {\rm(AB4)} condition, then
there is a functorial resolution $\frak g:\frak G(\Bbb F)\rightarrow\Bbb F$,
functorial on $\Bbb F\in C(\C)$
with $\frak G(\Bbb F)\in \Ob(\Loc(\Cal F'))$,
where $\Cal F'$ is the full subcategory of $K(\C)$ whose object set is the
same as $\Cal F$.
\end{enumerate}
\end{lemma}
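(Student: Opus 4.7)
The strategy is to first build a functorial $\F$-resolution of each object of $\C$ by iterating $\frak F$ and taking kernels, then extend to complexes via a bicomplex construction, and finally pass to a filtered colimit. For $M\in\C$, I set $P^0(M):=\frak F(M)$ with the given epi $\frak f_M$, let $K^1(M):=\Ker(\frak f_M)$ (functorial in $M$ as the kernel of a natural transformation), and put $P^{-1}(M):=\frak F(K^1(M))$ with map $P^{-1}(M)\to K^1(M)\hookrightarrow P^0(M)$. Iterating, $K^{i+1}:=\Ker(P^{-i}\to P^{-i+1})$ and $P^{-i-1}:=\frak F(K^{i+1})$. Each $P^{-i}$ is a functor $\C\to\F$ with $P^{-i}(0)=0$ thanks to $\frak F(0)=0$, and since each $P^{-i-1}(M)\twoheadrightarrow K^{i+1}(M)$ is epic, the augmented complex $\cdots\to P^{-1}(M)\to P^0(M)\to M\to 0$ is exact.

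For part \textbf{1}, given $\Bbb F\in C(\C)_{\leq n}$, I would form the bicomplex $Q^{p,q}(\Bbb F):=P^p(\Bbb F^q)$ (with $p\leq 0$, $q\leq n$), with vertical differential the resolution differential and horizontal differential $(-1)^p P^p(d^{\Bbb F})$, and set $\frak G(\Bbb F):=\mathrm{Tot}^{\oplus}Q(\Bbb F)$. In total degree $k\leq n$ the sum ranges over the finite set $\{(p,q):p\leq 0,\,q\leq n,\,p+q=k\}$, so $\frak G(\Bbb F)\in C(\F)\cap C(\C)_{\leq n}$. The augmentation $\frak g:\frak G(\Bbb F)\to\Bbb F$, defined by $\frak f_{\Bbb F^k}$ on the summand $P^0(\Bbb F^k)$ and zero elsewhere, is a chain map by naturality of $\frak f$; it is a quasi-isomorphism by the standard column spectral sequence, as each column of the augmented bicomplex is the exact resolution of $\Bbb F^q$.

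For part \textbf{2}, let $\Bbb F_{\leq n}$ denote the stupid truncation (so $\Bbb F_{\leq -1}=0$) and define $\frak G_n(\Bbb F):=\frak G(\Bbb F_{\leq n})$. The inclusion $\Bbb F_{\leq n-1}\hookrightarrow\Bbb F_{\leq n}$ induces a degreewise-split inclusion of bicomplexes, hence of total complexes; the corresponding projection gives the semisplit epi $s_n:\frak G_n\to\frak G_{n-1}$ onto the sub-bicomplex of columns $q<n$. Its kernel $\frak H_n$ is the total complex of the $n$-th column, with $k$-th term $P^{k-n}(\Bbb F^n)$, so $\frak H_n\in C(\F)\cap C(\C)_{\leq n}$. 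For part \textbf{3}, assuming (AB4), I would set $\frak G(\Bbb F):=\varinjlim_n\frak G_n(\Bbb F)$ along these split inclusions. Filtered colimits are exact under (AB4), so $\frak g:\frak G(\Bbb F)\to\varinjlim\Bbb F_{\leq n}=\Bbb F$ is a quasi-isomorphism. For $\frak G(\Bbb F)\in\Loc(\F')$, I would note that each $\frak H_n$, being an unbounded-below complex in $\F$, is the homotopy colimit of its bounded stupid sub-truncations $\sigma_{\geq -m}\frak H_n$, each of which lies in $\Loc(\F')$ by iterated mapping cones; hence $\frak H_n\in\Loc(\F')$. The triangles $\frak H_n\to\frak G_n\to\frak G_{n-1}$ then inductively give $\frak G_n\in\Loc(\F')$, starting from $\frak G_{-1}=0$, and $\frak G=\mathrm{hocolim}_n\frak G_n\in\Loc(\F')$ by closure under small sums and cones.

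I expect the main obstacle to be part \textbf{3}: realizing $\frak G(\Bbb F)$ and each $\frak H_n$ as objects of $\Loc(\F')$ requires careful telescope/hocolim arguments that depend essentially on (AB4) to control infinite direct sums of complexes and to conclude exactness of filtered colimits. Everything else---functoriality of the $P^{-i}$, the bicomplex construction, chain-map property of $\frak g$, and the semisplit property of $s_n$---flows routinely from our use of only functors, natural transformations, kernels, and direct sums.
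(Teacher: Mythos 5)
Your part~\textbf{1} is correct and is essentially the paper's own construction: iterate $\frak F$ and kernels termwise and totalize the resulting double complex, which has only finitely many terms in each total degree because $\Bbb F\in C(\C)_{\leq n}$. The genuine gap is in part~\textbf{3}, and it originates in the truncations you chose in part~\textbf{2}. The brutal truncations $\Bbb F_{\leq n}$ (killing degrees $>n$) are quotient complexes of $\Bbb F$, not subcomplexes: the differential $\Bbb F^{n-1}\to\Bbb F^{n}$ prevents any chain map $\Bbb F_{\leq n-1}\to\Bbb F_{\leq n}$, and $\Bbb F_{\leq -1}\neq 0$ in general, so already condition \textbf{a} fails for your choice. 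Consequently the only natural maps your construction produces are the surjections $s_n\colon\frak G_n\to\frak G_{n-1}$; the degreewise splittings are not chain maps, because the columns $q<n$ form a quotient, not a complement, of the bicomplex. Hence the object $\varinjlim_n\frak G_n(\Bbb F)$ ``along these split inclusions'' is undefined, and the equality $\varinjlim_n\Bbb F_{\leq n}=\Bbb F$ is false: these truncations form an inverse system with limit $\Bbb F$, and an inverse limit along the $s_n$ would require dual (Mittag--Leffler/(AB4${}^*$)) arguments and would give no control whatsoever on membership in $\Loc(\Cal F')$, which is closed under coproducts, not products.

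To obtain \textbf{3} one needs a genuine direct system: quasi-isomorphisms onto the canonical truncations $\tau_{\leq n}\Bbb F$ (these are subcomplexes of $\Bbb F$, bounded above, with union $\Bbb F$), connected by transition maps $\frak G_{n-1}\to\frak G_n$ that are degreewise split monomorphisms with cokernels in $C(\Cal F)\cap C(\C)_{\leq n}$; then your telescope argument does put the colimit in $\Loc(\Cal F')$. But such a system cannot be produced by simply applying the functor of part~\textbf{1} to the inclusions $\tau_{\leq n-1}\Bbb F\subset\tau_{\leq n}\Bbb F$, since $\frak F$ need not preserve monomorphisms, let alone split ones; the tower has to be built inductively, and this inductive construction is precisely the content of Spaltenstein's Lemma~3.3 and of (the dual of) B\"okstedt--Neeman's Application~2.4 that the paper cites for \textbf{2} and \textbf{3}. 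Finally, your justification ``filtered colimits are exact under (AB4)'' is not correct---that is (AB5); under (AB4) the quasi-isomorphism between $\frak G(\Bbb F)$ and $\Bbb F$ must instead be extracted from the homotopy-colimit triangle on $\bigoplus_n\frak G_n$ together with the semisplit structure of the transition maps, which is again why the paper defers to B\"okstedt--Neeman rather than to exactness of filtered colimits.
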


\begin{proof}
For
\[
\Bbb F: \cdots \rightarrow \Bbb F^i\xrightarrow{\partial_{\Bbb F}^i}
\Bbb F^{i+1}\rightarrow\cdots
\]
in $C(\C)$, 
let $\frak F(\Bbb F)$ be
\[
\cdots \rightarrow \frak F(\Bbb F^i)\xrightarrow{\frak F(\partial_{\Bbb F}^i)}
\frak F(\Bbb F^{i+1})\rightarrow\cdots.
\]
Note that
\[
\frak F(\partial_{\Bbb F}^{i+1})\frak F(\partial_{\Bbb F}^i)
=
\frak F(\partial_{\Bbb F}^{i+1}\partial_{\Bbb F}^i)
=\frak F(0)=0,
\]
since $0: \Bbb F^i\rightarrow \Bbb F^{i+2}$ factors through the
null object $0$, and 
hence $\frak F(0):\frak F(\Bbb F^i)\rightarrow \frak F(\Bbb F^{i+2})$ also
factors through the null object 
$\frak F(0)=0$, and hence $\frak F(0)$ is the zero map.

Let $\frak f(\Bbb F):\frak F(\Bbb F)\rightarrow \Bbb F$ be the obvious
natural map.
It is an epic chain map.
Let $\frak K(\Bbb F):=\Ker \frak f(\Bbb F)$.
Then defining $\frak K(0):=\Bbb F$, 
$\frak G(m):=\frak F(\frak K(m))$, and $\frak K(m+1):=\frak K(\frak K(m))$,
we have a resolution
\[
\cdots\rightarrow \frak G(m+1)\rightarrow \frak G(m)\rightarrow
\cdots \rightarrow \frak G(0)\rightarrow 0
\]
of $\Bbb F$.

Assume that $\Bbb F\in C(\C)_{\leq n}$ for some $n$,
and let $\frak G(\Bbb F)$ be the total complex of this resolution, and
$\frak g:\frak G(\Bbb F)\rightarrow \Bbb F$ be the canonical map.
Then $\frak g$ is the desired resolution, and we have proved {\bf 1}.

{\bf 2} is proved by the same proof as in \cite[Lemma~3.3]{Spaltenstein},
except that everything here is functorial.

{\bf 3} This is proved similarly to 
(the dual assertion of) \cite[Application~2.4]{BN}, using {\bf 2}.
\end{proof}

\paragraph Let $(\Bbb X,\O_{\Bbb X})$ be a ringed site with a small basis
of topology $B$.
For $\M\in\Mod(\Bbb X)$, $x\in B$ and $c\in\Gamma(x,\M)$, 
there corresponds a map
\[
(e(c):\O_x\rightarrow \M)\in\Hom_{\O_{\Bbb X}}(\O_x,\M)
\cong\Hom_{\O_{\Bbb X}|_x}(\O_{\Bbb X}|_x,\M|_x)\cong
\Gamma(x,\M),
\]
corresponding to $c\in\Gamma(x,\M)$.
So
\[
\frak f(\M):=\sum e(c):\frak F:=
\bigoplus_{x\in B}\bigoplus_{0\neq c\in\Gamma(x,\M)}\O_{x,c}
\rightarrow \M
\]
is an epimorphism, where each $\O_{x,c}$ is a copy of $\O_x$,
see \cite[(2.23), (3.19)]{ETI}.
Note that for $y\in\Bbb X$, $\Gamma(y,\O_{x,c})=\bigoplus_{s\in\Bbb X(y,x)}
\Gamma(y,\O_{\Bbb X})_{c,s}$, where 
$\Gamma(y,\O_{\Bbb X})_{c,s}$ is a copy of $\Gamma(y,\O_{\Bbb X})$.
Then $\frak f(\M)$ 
is the unique map such that $1\in \Gamma(x,\O_{\Bbb X})_{c,1_x}$ is
mapped to $c$ for each $x$ and $c\neq 0$.

Let $h:\M\rightarrow \N$ be a map in $\Mod(\Bbb X)$.
Then mapping $1\in\Gamma(x,\O_{\Bbb X})_{c,1_x}$ to
$1\in\Gamma(x,\O_{\Bbb X})_{h(c),1_x}$ if $h(c)\neq 0$, and to $0$ if $h(c)=0$, 
we get a map
$\frak F(h):\frak F(\M)\rightarrow \frak F(\N)$ such that 
$\frak F:\PM(\Bbb X)\rightarrow \frak W$ is a functor, where 
$\frak W$ is the full subcategory of $\PM(\Bbb X)$ consisting of
the direct sum of copies of $\O_x$ with $x\in\Bbb X$, 
and that $\frak f:j'\frak F\rightarrow \Id$ is a natural
transformation, where $j': \frak W\hookrightarrow \PM(\Bbb X)$ is the
inclusion.
Moreover, $\frak F(0)=0$.

\begin{lemma}\label{complex-K-flat.thm}
Let $(\Bbb X,\O_{\Bbb X})$ be a ringed site with a small basis
of the topology.
Then there is an endofunctor $\frak F=
\frak F_{\Bbb X}:C(\Mod(\Bbb X))\rightarrow
\L$ and a functorial $\L$-resolution $\frak f=\frak f_{\Bbb X}
: j'\frak F\rightarrow \Id$,
where $\L$ is the full subcategory of $C(\Mod(\Bbb X))$ whose object
set is the set of strongly $K$-flat complexes {\rm\cite[(3.19)]{ETI}}.
\end{lemma}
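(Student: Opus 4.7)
The plan is to apply Lemma~\ref{functorial-resolution.thm}, part \textbf{3}, to the abelian category $\C=\Mod(\Bbb X)$ together with the full subcategory $\Cal F=\frak W$ and the pair $(\frak F,\frak f)$ constructed in the paragraph preceding the statement.

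First I would verify the hypotheses. The category $\Mod(\Bbb X)$ is a Grothendieck category and therefore satisfies (AB4) (even (AB5)). The subcategory $\frak W$, consisting of direct sums of sheaves of the form $\O_x$ with $x\in B$, is closed under small direct sums, since a direct sum of such direct sums is again of this form. The construction preceding the lemma produces a functor $\frak F:\PM(\Bbb X)\to\frak W$ together with an objectwise epic natural transformation $\frak f:j'\frak F\to\Id$ such that $\frak F(0)=0$.

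Applying Lemma~\ref{functorial-resolution.thm}, part \textbf{3}, I would then obtain a functor $\frak G:C(\Mod(\Bbb X))\to C(\Mod(\Bbb X))$ together with a functorial quasi-isomorphism $\frak g:\frak G(\Bbb F)\to\Bbb F$ such that $\frak G(\Bbb F)\in\Ob(\Loc(\frak W'))$, where $\frak W'$ is the full subcategory of $K(\Mod(\Bbb X))$ whose object set coincides with that of $\frak W$.

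The remaining step is to verify that $\Loc(\frak W')\subseteq\L$, after which we set $\frak F_{\Bbb X}:=\frak G$ and $\frak f_{\Bbb X}:=\frak g$. Each $\O_x$, regarded as a complex concentrated in degree zero, is strongly $K$-flat by \cite[(3.19)]{ETI}; hence every object of $\frak W'$ is strongly $K$-flat as well, being a direct sum of such. The class $\L$ of strongly $K$-flat complexes is stable under shifts, small direct sums, and mapping cones, so it forms a localizing triangulated subcategory of $K(\Mod(\Bbb X))$, which gives $\Loc(\frak W')\subseteq\L$. The main obstacle is confirming these three closure properties of strongly $K$-flat complexes directly from the definition in \cite[(3.19)]{ETI}; this is standard once the definition is unpacked, but it is the only non-formal point in the argument.
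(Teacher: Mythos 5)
Your overall strategy coincides with the paper's: apply Lemma~\ref{functorial-resolution.thm} to $\C=\Mod(\Bbb X)$ with the pair $(\frak F,\frak f)$ and the subcategory $\frak W$ constructed in the paragraph preceding the lemma. The gap is in your last step, where you pass from the conclusion of Lemma~\ref{functorial-resolution.thm}, {\bf 3} to strong $K$-flatness. That conclusion only places $\frak G(\Bbb F)$ in $\Loc(\frak W')$, which is a subcategory of the homotopy category $K(\Mod(\Bbb X))$; since triangulated subcategories in the sense used for $\Loc$ (Neeman's Definition~1.5.1) are strictly full, $\Loc(\frak W')$ is closed under isomorphism in $K(\Mod(\Bbb X))$, that is, under homotopy equivalence. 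Strong $K$-flatness is not a homotopy-invariant condition: beyond $K$-flatness (the only homotopy-invariant part) it constrains the complex itself termwise, and, for instance, every contractible complex belongs to $\Loc(\frak W')$ while contractible complexes with arbitrary terms are not strongly $K$-flat. Consequently the full subcategory of $K(\Mod(\Bbb X))$ on the strongly $K$-flat complexes is not a localizing subcategory, the inclusion $\Loc(\frak W')\subseteq\L$ that your argument hinges on fails, and verifying stability of $\L$ under shifts, coproducts and cones cannot repair it. There is also a formal mismatch: the lemma demands an endofunctor of $C(\Mod(\Bbb X))$ with values in $\L\subseteq C(\Mod(\Bbb X))$, i.e.\ the specific complexes $\frak F_{\Bbb X}(\Bbb F)$ must be strongly $K$-flat on the nose, which no statement made purely at the level of $K(\Mod(\Bbb X))$ can deliver.

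The paper's proof instead argues at the chain level, by construction. Tracking the proof of Lemma~\ref{functorial-resolution.thm}: in {\bf 2} the maps $s_n$ are semisplit with $\frak H_n=\Ker s_n\in C(\frak W)$, so every term of each $\frak G_n(\Bbb F)$ is a direct sum of copies of the $\O_x$, and the same holds for the complex produced in {\bf 3}, which is built from the $\frak G_n(\Bbb F)$ by coproducts and a telescope (filtered colimit). Since each $\O_x$ is strongly $K$-flat and strong $K$-flatness is stable under the operations actually used --- termwise split extensions, coproducts, and the filtered colimit/telescope of {\bf 3} --- the resulting complex $\frak F_{\Bbb X}(\Bbb F)$ is strongly $K$-flat as an object of $C(\Mod(\Bbb X))$, and $\frak f_{\Bbb X}$ is the functorial quasi-isomorphism. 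Your closure checklist is essentially the needed verification, but it must be carried out on the concrete complexes of the construction rather than funneled through $\Loc(\frak W')$ in the homotopy category.
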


\begin{proof}
As $\Mod(\Bbb X)$ is a Grothendieck category (so (AB5) is satisfied), 
Lemma~\ref{functorial-resolution.thm} and the discussion above 
are applicable.

By definition, $\O_x$ is strongly $K$-flat for $x\in\Bbb X$.
Now $\frak F(\Bbb F)$ is strongly $K$-flat for $\Bbb F
\in C(\Mod(\Bbb X))$ by construction.
\end{proof}

\section{The restriction and other operations on 
quasi-coherent sheaves}\label{restriction.sec}

\paragraph Let 
$G'$ and $G$ be flat $S$-group schemes, and $h:G'\rightarrow G$ 
a homomorphism of $S$-group schemes.
Let $Z$ be an $S$-scheme on which $G$ acts.
Then $\res^{G}_{G'}:\Mod(G,Z)\rightarrow \Mod(G',Z)$ is
defined to be the inverse image functor $B_h^M(Z)^*$, 
see \cite[(2.45)]{Hashimoto5}.

\begin{lemma}\label{faithful-exact.thm}
$\res_{G'}^{G}$ is a faithful exact functor from $\Qch(G,Z)$ to $\Qch(G',Z)$.
\end{lemma}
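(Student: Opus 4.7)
The plan is to exploit the fact that $\res^G_{G'}$ acts as the identity on the degree-$0$ component of equivariant modules, so that both faithfulness and exactness descend from the corresponding properties of the degree-$0$ functor $(?)_0$ on $\Qch(G,Z)$.

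First I would unfold the definition. By construction $\res^G_{G'}=B_h^M(Z)^*$ is the inverse image along the morphism of simplicial $S$-schemes $B_h^M(Z):B_{G'}^M(Z)\to B_G^M(Z)$. At simplicial degree $0$, both bases equal $Z$ (as visible for instance in the formula $\M^G=\Ker(\M_0\to p_*\M_1)$ recalled in (\ref{G-trivial.par})), and $B_h^M(Z)_0=1_Z$. Consequently, for $\M\in\Mod(G,Z)$ there is a canonical isomorphism $(\res^G_{G'}\M)_0\cong\M_0$, and the triangle $(?)_0\circ\res^G_{G'}\cong(?)_0:\Mod(G,Z)\to\Mod(Z)$ commutes. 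Moreover, degree by degree the functor $B_h^M(Z)^*$ is ordinary quasi-coherent pullback along a scheme morphism, hence preserves quasi-coherence, so $\res^G_{G'}$ sends $\Qch(G,Z)$ into $\Qch(G',Z)$.

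Next I would invoke the standard fact from the formalism of \cite{ETI} that the degree-$0$ restriction $(?)_0:\Qch(G,Z)\to\Qch(Z)$ is faithful and exact. Faithfulness is because a morphism of equivariant objects is determined by its degree-$0$ component: a morphism in $\Mod(G,Z)$ is compatible with the structure maps $\beta_\phi$, and these being isomorphisms for equivariant objects, $f$ is recovered from $f_0$. Exactness is because, $G$ being flat, equivariance passes to kernels, cokernels, images, and direct sums, so these abelian operations in $\Qch(G,Z)$ are computed degreewise; in particular $(?)_0$ both preserves and reflects exactness.

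Combining the two observations, faithfulness of $\res^G_{G'}$ follows by chasing: if $\res^G_{G'}(f)=0$ then $f_0=(\res^G_{G'}(f))_0=0$, hence $f=0$; and exactness descends analogously from the reflecting property of $(?)_0$ on $\Qch(G',Z)$ together with the identification of the degree-$0$ parts. The main obstacle is really just bookkeeping in the simplicial formalism, notably the identification $B_h^M(Z)_0=1_Z$; importantly, no flatness of $h$ itself is required because the exactness issue is confined to the degree-$0$ component, where the pullback is trivial.
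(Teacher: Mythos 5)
Your proposal is correct and matches the paper's own proof: the paper likewise notes that $\res^G_{G'}$ lands in $\Qch(G',Z)$ (citing the relevant result of \cite{ETI}), identifies $(?)_0\circ\res^G_{G'}\cong(?)_0$, and deduces faithfulness and exactness from the faithful exactness of the two degree-zero restriction functors. Your extra remarks (degreewise pullback, $B_h^M(Z)_0=1_Z$, no flatness of $h$ needed) are just the bookkeeping behind the citations the paper uses.
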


\begin{proof}
By \cite[(7.22)]{ETI}, $\res_{G'}^{G}$ is a functor from $\Qch(G,Z)$ 
to $\Qch(G',Z)$.
Then as functors from $\Qch(G,Z)$ to $\Qch(Z)$, 
we have $(?)_0\res_{G'}^{G} \cong (?)_0$,
where the left $(?)_0$ is from $\Qch(G',Z)$ to $\Qch(Z)$, and
the right $(?)_0$ is from $\Qch(G,Z)$ to $\Qch(Z)$.
As the both $(?)_0$ are faithful exact, $\res_{G'}^{G}$ is also faithful exact.
\end{proof}

\begin{lemma}\label{res-vanishing.thm}
If $\M\in\Qch(G,Z)$, then $(L_i\res_{G'}^{G})\M=0$ for $i>0$,
where $L_i$ denotes the $i$th left derived functor $D(G,Z)\rightarrow
\Mod(G',Z)$.
\end{lemma}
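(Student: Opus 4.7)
The strategy is to find a resolution of $\M$ that both computes $L\res_{G'}^G\M$ and, by Lemma~\ref{faithful-exact.thm}, is preserved by $\res_{G'}^G$ as an exact sequence. First I would construct a bounded-above resolution $\Bbb F^\bullet\rightarrow\M$ in $\Qch(G,Z)$ by modules that are flat in the ambient category $\Mod(G,Z)$. A natural source of such flats is the induction functor left adjoint to the restriction $(?)_0:\Qch(G,Z)\rightarrow\Qch(Z)$: given a flat cover $\Cal F\twoheadrightarrow\M_0$ in $\Qch(Z)$ (which exists since $\Qch(Z)$ has enough flats), induction yields a surjection in $\Qch(G,Z)$ whose $n$-th component is the pullback of $\Cal F$ along the projection $G^n\times Z\rightarrow Z$. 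Since $G\rightarrow S$ is flat by hypothesis, this projection is flat, the induced module is flat componentwise, and hence flat in $\Mod(G,Z)$. Iterating produces the required resolution $\Bbb F^\bullet$.

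As a bounded-above complex of flat $\O_{B_G^M(Z)}$-modules, $\Bbb F^\bullet$ is $K$-flat in $\Mod(G,Z)$, so it computes $L\res_{G'}^G\M$ in $D(G',Z)$. On the other hand, by Lemma~\ref{faithful-exact.thm}, $\res_{G'}^G$ is exact on $\Qch(G,Z)$, so the acyclic augmented complex $\cdots\rightarrow\Bbb F^{-1}\rightarrow\Bbb F^0\rightarrow\M\rightarrow 0$ in $\Qch(G,Z)$ remains acyclic after applying $\res_{G'}^G$. Thus $\res_{G'}^G\Bbb F^\bullet$ is a resolution of $\res_{G'}^G\M$ in $\Qch(G',Z)$, giving $L\res_{G'}^G\M\simeq\res_{G'}^G\M$ concentrated in degree $0$; therefore $(L_i\res_{G'}^G)\M=0$ for $i>0$.

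The main obstacle is the construction in the first paragraph: one must verify that the induced modules are flat as objects of $\Mod(G,Z)$ (not merely of $\Qch(G,Z)$), and that bounded-above complexes of such flats are $K$-flat in $\Mod(G,Z)$. Both reduce to componentwise statements on the bar construction $B_G^M(Z)$, where the hypothesis that $G\rightarrow S$ is flat ensures the relevant projections are flat and the Kan-extension formula for induction reduces to pullback along these projections.
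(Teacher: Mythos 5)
Your overall strategy---resolve $\M$ inside $\Qch(G,Z)$ by objects that are flat in $\Mod(G,Z)$, then use Lemma~\ref{faithful-exact.thm} to see that $\res_{G'}^{G}$ of the resolution stays exact---would indeed prove the lemma if such resolutions existed, but the construction you propose for them does not work. The functor you invoke, a left adjoint to $(?)_0:\Qch(G,Z)\rightarrow\Qch(Z)$ whose value on $\Cal F$ has $n$-th component the pullback of $\Cal F$ along the projection $G^n\times Z\rightarrow Z$, is not available. First, when the action of $G$ on $Z$ is nontrivial the projections $G^n\times Z\rightarrow Z$ are not compatible with the face maps of $B_G^M(Z)$ (these involve the action), so ``pullback along the projections'' does not even define an object of $\Mod(G,Z)$; this recipe is essentially the functor $\L$ of (\ref{G-trivial.par}), which exists because $[-1]$ is initial in $\Delta_M^+$, and it is tied to the trivial-action situation. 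Second, $(?)_0$ is a forgetful functor of comodule type: under suitable hypotheses it admits an exact \emph{right} adjoint (coinduction), giving monomorphisms into coinduced sheaves, but it has no left adjoint on quasi-coherent sheaves in general; since $[0]$ is not initial in $\Delta_M$, the left Kan extension at the level of $\Mod$ has components given by colimits over all maps $[0]\rightarrow[n]$, not by a single pullback, and there is no reason its values are quasi-coherent, componentwise flat, or map onto $\M$ in the way you need. Third, even your starting point---enough flat objects in $\Qch(Z)$---needs hypotheses on $Z$ (quasi-compactness plus some separation) that the lemma does not assume. The later steps (componentwise flat implies flat in $\Mod(G,Z)$, bounded-above complexes of flats are $K$-flat, exactness of $\res_{G'}^{G}$ on $\Qch$) are fine, so the missing ingredient is precisely the existence of the resolutions, which is a nontrivial assertion in its own right.

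For comparison, the paper avoids resolutions in $\Qch(G,Z)$ altogether: by the cited results of \cite{ETI}, the sheaves $(L_i\res_{G'}^{G})\M$ are quasi-coherent; since $(?)_0:\Qch(G',Z)\rightarrow\Qch(Z)$ is faithful and exact, it suffices to check that the zeroth components vanish; and the compatibility of derived inverse images with $(?)_0$ identifies $((L_i\res_{G'}^{G})\M)_0$ with $L_i(\id_Z)^*\M_0$, which is zero for $i>0$. If you want to salvage your route, you would first have to prove that every $\M\in\Qch(G,Z)$ admits a surjection from a quasi-coherent equivariant sheaf with flat components over an arbitrary $G$-scheme $Z$, and that is not established by the argument you give.
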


\begin{proof}
By \cite[(8.20), (8.21)]{ETI}, we have that $(L_i\res_{G'}^{G})\M$ is
quasi-coherent.
As the restriction functor $(?)_0:\Qch(G,Z)\rightarrow\Qch(Z)$ is
faithful and exact by \cite[(12.12)]{ETI}, it suffices to prove that
$((L_i\res_{G'}^{G})\M)_0=0$ for $i>0$.
But this is $L_i(\id_Z)^*\M_0=0$, by \cite[(8.13)]{ETI}.
\end{proof}

\begin{lemma}\label{res-acyclic.thm}
Let $\Bbb G\in D_{\Qch}(G,Z)$.
Then $\Bbb G$ is \(left\) $\res^{G}_{G'}$-acyclic, in the sense that
for any \(or equivalently, some\) $K$-flat resolution
$\Bbb P\rightarrow \Bbb G$,
the map 
$\res^{G}_{G'}\Bbb P\rightarrow \res^{G}_{G'}\Bbb G$ is a quasi-isomorphism.
In particular, $L\res^{G}_{G'}\Bbb G$ has quasi-coherent cohomology groups.
If $Z$ is locally Noetherian, $G$ and $G'$ are 
locally of finite type, and
$\Bbb G$ has coherent cohomology groups, then $\res^{G}_{G'}\Bbb G$ has
coherent cohomology groups.
\end{lemma}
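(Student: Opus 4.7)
The plan is to bootstrap from the single-module vanishing of Lemma \ref{res-vanishing.thm} to arbitrary $\Bbb G\in D_{\Qch}(G,Z)$ by a spectral-sequence / truncation argument. Fix a K-flat resolution $\Bbb P \to \Bbb G$; by definition $L\res^G_{G'}\Bbb G \cong \res^G_{G'}\Bbb P$ in $D(G',Z)$, so the assertion that $\res^G_{G'}\Bbb P \to \res^G_{G'}\Bbb G$ is a quasi-isomorphism is exactly the statement that the canonical comparison map $L\res^G_{G'}\Bbb G \to \res^G_{G'}\Bbb G$ is a quasi-isomorphism.

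To verify this, I would first reduce, using the standard representation of a $D_{\Qch}$-object by a complex of quasi-coherent sheaves, to the case where $\Bbb G$ is a complex in $\Qch(G,Z)$. Since $\res^G_{G'}$ is exact on $\Qch$ by Lemma \ref{faithful-exact.thm}, this gives $H^i(\res^G_{G'}\Bbb G) \cong \res^G_{G'}H^i(\Bbb G)$. On the other side, the hypercohomology spectral sequence
\[
E_2^{p,q} = (L_{-p}\res^G_{G'})(H^q\Bbb G)\;\Longrightarrow\;H^{p+q}(L\res^G_{G'}\Bbb G)
\]
collapses to the $p=0$ column by Lemma \ref{res-vanishing.thm}, yielding $H^i(L\res^G_{G'}\Bbb G) \cong \res^G_{G'}H^i(\Bbb G)$. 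One checks that the comparison map induces these natural isomorphisms, which establishes the acyclicity claim. Quasi-coherence of the cohomology of $L\res^G_{G'}\Bbb G$ is then immediate from Lemma \ref{faithful-exact.thm}. For the coherence statement, under the Noetherian / finite-type hypotheses each component $(B_G^M(Z))_n = G\times_S\cdots\times_S G\times_S Z$ is locally Noetherian and each $(B_h^M(Z))_n$ is a morphism of locally Noetherian schemes; since $\res^G_{G'}=B_h^M(Z)^*$ acts componentwise by ordinary module pullback, which preserves finite generation and hence coherence over a locally Noetherian base, coherence of the cohomology of $\res^G_{G'}\Bbb G$ follows.

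The main obstacle I anticipate is the unbounded case, where spectral-sequence convergence is non-automatic. This is resolved by the cohomological dimension zero of $\res^G_{G'}$ on $\Qch$ (Lemma \ref{res-vanishing.thm}), which forces the sequence to be concentrated in a single column and hence to converge trivially. Alternatively, I would argue directly: treat the bounded case by induction on cohomological amplitude using truncation triangles $\tau_{\leq n}\Bbb G \to \Bbb G \to \tau_{>n}\Bbb G \to$ and the 5-lemma, with the base case of a single quasi-coherent sheaf supplied by Lemma \ref{res-vanishing.thm}; then pass to unbounded $\Bbb G$ via $\Bbb G \cong \operatorname{hocolim}_n \tau_{\geq-n}\Bbb G$ together with the fact that $\res^G_{G'}$ commutes with filtered colimits.
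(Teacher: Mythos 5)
Your outline---settle the bounded-above case from Lemma~\ref{res-vanishing.thm}, then pass to unbounded complexes by exhausting with truncations---is the same as the paper's, which handles $D^-_{\Qch}(G,Z)$ via Lemma~\ref{res-vanishing.thm} and then feeds the general case through the functorial strongly $K$-flat resolution of Lemma~\ref{functorial-resolution.thm} (a B\"okstedt--Neeman style homotopy colimit). However, two of your steps do not work as written.

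In the bounded case, your spectral sequence argues with the cohomology sheaves: collapse of $E_2^{p,q}=(L_{-p}\res^G_{G'})(H^q\Bbb G)$ only yields abstract isomorphisms $H^i(L\res^G_{G'}\Bbb G)\cong\res^G_{G'}H^i(\Bbb G)$; it does not show that the particular map $\res^G_{G'}\Bbb P\rightarrow\res^G_{G'}\Bbb G$ is a quasi-isomorphism, and that comparison is the entire content of acyclicity. Vanishing of $L_{i>0}\res^G_{G'}$ on the cohomology sheaves alone cannot suffice: an exact complex with non-quasi-coherent terms has quasi-coherent (zero) cohomology, yet $\res^G_{G'}$ is only right exact on $\Mod(G,Z)$ (it is exact only on $\Qch$), so it need not carry such a complex to an exact one. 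What actually makes the bounded case work, once the terms are quasi-coherent, is that each \emph{term} is $\res^G_{G'}$-acyclic by Lemma~\ref{res-vanishing.thm}, and a bounded-above complex of acyclic objects computes the left derived functor; you should argue through the terms, not the cohomologies. Relatedly, your opening reduction to a termwise quasi-coherent representative replaces $\Bbb G$ by a quasi-isomorphic complex, and whether $\res^G_{G'}$ of that quasi-isomorphism is again a quasi-isomorphism is itself an instance of the assertion; the paper's terse proof is open to the same remark and the lemma is only ever applied to derived-category consequences or to complexes of quasi-coherent sheaves, so treat this as a caveat to acknowledge rather than a ``standard'' reduction.

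For the unbounded case, the truncations $\tau_{\geq -n}\Bbb G$ form an inverse system ($\Bbb G$ is their homotopy \emph{limit}), so ``$\Bbb G\cong$ homotopy colimit of $\tau_{\geq -n}\Bbb G$'' is the wrong exhaustion for a coproduct-preserving left derived functor; you need $\tau_{\leq n}\Bbb G$ (or brutal truncations $\sigma_{\leq n}\Bbb G$), which are bounded above and whose degreewise-eventually-constant colimit is $\Bbb G$. Your alternative---that collapse to a single column makes the unbounded hyper-derived spectral sequence ``converge trivially''---is not a proof: for unbounded complexes the construction of that spectral sequence and the identification of its abutment with $H^*(L\res^G_{G'}\Bbb G)$ presuppose exactly the resolution comparison being established. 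After correcting the truncation one must still check that $L\res^G_{G'}$ commutes with the homotopy colimit (coproduct preservation) and that the comparison maps are compatible at the chain level; this bookkeeping is precisely what the paper's Lemma~\ref{functorial-resolution.thm} packages. The quasi-coherence and coherence statements at the end of your proposal are fine once the acyclicity and the identification $H^i(L\res^G_{G'}\Bbb G)\cong\res^G_{G'}H^i(\Bbb G)$ are in place.
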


\begin{proof}
If $\Bbb G\in D_{\Qch}^-(G,Z)$, then the assertion follows from
Lemma~\ref{res-vanishing.thm}.
Consider the general case.
From the bounded-above case, 
it is easy to see that 
the resolution
$\frak g:\frak G(\Bbb G)\rightarrow\Bbb G$ in 
Lemma~\ref{functorial-resolution.thm}
is a $K$-flat resolution such that $\res^G_{G'}\frak g$ is a 
quasi-isomorphism.
\end{proof}

\paragraph
Now we can prove that the (derived) restriction is compatible with 
most of basic operations on sheaves.
For the notation, see \cite{ETI}.

\begin{lemma}\label{delta-isom.thm}
Let $f:(\Bbb X,\O_{\Bbb X})\rightarrow (\Bbb Y,\O_{\Bbb Y})$ be a
morphism of ringed sites.
Assume that for each $x\in\Bbb X$, the category $(I^f_x)\op$ 
\(see {\rm\cite[(2.6)]{ETI}}\) is filtered \(that is, connected and
pseudofiltered, see {\rm\cite[Appendix~A]{Milne}}\).
Then the canonical map
\begin{equation}\label{sheaf-delta.eq}
\Delta:
f^*(\M\otimes_{\Cal O_{\Bbb Y}}\N)
\rightarrow 
f^*\M\otimes_{\Cal O_{\Bbb X}}f^*\N
\end{equation}
is an isomorphism for any $\M,\N\in\Mod(\Bbb Y)$, and the
canonical map
\begin{equation}\label{derived-delta.eq}
\Delta:
Lf^*(\Bbb F\otimes_{\Cal O_{\Bbb Y}}^L \Bbb G)
\rightarrow
Lf^*\Bbb F\otimes_{\Cal O_{\Bbb X}}^L Lf^*\Bbb G
\end{equation}
is also an isomorphism for $\Bbb F,\Bbb G\in D(\Bbb Y)$.
\end{lemma}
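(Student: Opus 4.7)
The plan is to first handle the underived statement by a stalk/presheaf calculation, and then bootstrap to the derived statement via strongly $K$-flat resolutions from Lemma~\ref{complex-K-flat.thm}.

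For the underived assertion (\ref{sheaf-delta.eq}), I would work with the inverse image via the presheaf description: for $x \in \Bbb X$ and a presheaf $\M$ on $\Bbb Y$, the presheaf-level inverse image is $f^p\M(x) = \varinjlim_{(I^f_x)\op} \M(y)$. The hypothesis that $(I^f_x)\op$ is filtered is precisely what is needed so that this colimit commutes with tensor products of $\O_{\Bbb Y}(y)$-modules (filtered colimits of modules commute with the finite-limit constructions involved in the tensor product). Sheafification is a left adjoint and therefore commutes with $\otimes$, while the extension of scalars $(-)\otimes_{f^{-1}\O_{\Bbb Y}}\O_{\Bbb X}$ commutes with $\otimes$ by the standard base-change isomorphism for tensor products of modules. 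Composing these three steps shows $\Delta$ is an isomorphism. The cleanest way is to check it on stalks, where under the filtered hypothesis the stalk of $f^{-1}\M$ is a filtered colimit of stalks of $\M$, and then the usual fact that tensor products commute with filtered colimits of modules finishes the argument.

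For the derived assertion (\ref{derived-delta.eq}), I would choose strongly $K$-flat resolutions $\frak F(\Bbb F) \to \Bbb F$ and $\frak F(\Bbb G) \to \Bbb G$ via Lemma~\ref{complex-K-flat.thm}. The tensor product of two strongly $K$-flat complexes is again strongly $K$-flat (componentwise it is a direct sum of terms of the form $\O_y \otimes \O_{y'}$ which remain flat, and the $K$-flatness is preserved under tensor product in the standard way), and each $\O_x$ is $f^*$-acyclic, so $Lf^*$ evaluated on such a complex agrees with $f^*$. Hence both sides of (\ref{derived-delta.eq}) are computed by applying $f^*$ termwise to $\frak F(\Bbb F)\otimes_{\O_{\Bbb Y}}\frak F(\Bbb G)$ and to $f^*\frak F(\Bbb F)\otimes_{\O_{\Bbb X}} f^*\frak F(\Bbb G)$ respectively, and the map between them is an isomorphism in each degree by the underived statement already established.

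The main obstacle I foresee is the bookkeeping at the presheaf-to-sheaf transition: one has to verify that the canonical comparison map $\Delta$ as defined in \cite{ETI} really agrees with the map produced by the filtered-colimit/sheafification argument. Once that identification is in hand, filteredness of $(I^f_x)\op$ delivers everything. A secondary, more technical point is ensuring that the tensor product of strongly $K$-flat complexes (rather than merely $K$-flat ones) remains in the class $\L$ of Lemma~\ref{complex-K-flat.thm}; this follows from the fact that $\O_y\otimes_{\O_{\Bbb Y}}\O_{y'}$ is still a direct summand of a coproduct of sheaves representable by objects of $\Bbb Y$, but should be mentioned explicitly to justify using $f^*$ in place of $Lf^*$.
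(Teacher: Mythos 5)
This is essentially the paper's proof: the paper also establishes the presheaf-level map via the filtered-colimit description of $f^*_{\PM}$ at each $x$, transfers it to sheaves through the sheafification/unit diagram, and then deduces the derived statement from strongly $K$-flat resolutions, using that the tensor product of $K$-flat complexes is $K$-flat and that $f^*$ of a strongly $K$-flat complex is again strongly $K$-flat. The only caveat is your aside about "checking on stalks," which should be dropped since a general ringed site need not have enough points; your primary presheaf-colimit argument (which is exactly what the paper uses) does not need it.
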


\begin{proof}
First consider the corresponding map of presheaves
\begin{equation}\label{presheaf-delta.eq}
\Delta:f_{\PM}^*(\M\otimes_{\Cal O_{\Bbb Y}}^p \N)\rightarrow
f_{\PM}^*\M\otimes_{\Cal O_{\Bbb X}}^p f_{\PM}^*\N
\end{equation}
for $\M,\N\in\PM(\Bbb Y)$.
Then the map between sections at $x\in\Bbb X$ is
\begin{multline*}
\indlim_{x\rightarrow fy}\Gamma(x,\O_{\Bbb X})\otimes_{\Gamma(y,\O_{\Bbb Y})}
(\Gamma(y,\M)\otimes_{\Gamma(y,\O_{\Bbb Y})}\Gamma(y,\N))
\rightarrow\\
(\indlim_{x\rightarrow fy}\Gamma(x,\O_{\Bbb X})\otimes_{\Gamma(y,\O_{\Bbb Y})}
\Gamma(y,\M))
\otimes_{\Gamma(x,\O_{\Bbb X})}
(\indlim_{x\rightarrow fy}\Gamma(x,\O_{\Bbb X})\otimes_{\Gamma(y,\O_{\Bbb Y})}
\Gamma(y,\M)),
\end{multline*}
which is an isomorphism by the assumption that 
$(I^f_x)\op$ is filtered.
Thus (\ref{presheaf-delta.eq}) is an isomorphism.

Now consider $\M,\N\in\Mod(\Bbb Y)$.
It is not so difficult to show that the diagram
\[
\xymatrix{
af^*(q\M\otimes^p_{\O_{\Bbb Y}}q\N) \ar[r]^\Delta \ar[d]^u &
a(f^*q\M\otimes^p_{\O_{\Bbb X}}f^*q\N) \ar[d]^{u\otimes u} \\
af^*qa(q\M\otimes^p_{\O_{\Bbb Y}}q\N) \ar[r]^-{\Delta} &
a(qaf^*q\M\otimes^p_{\O_{\Bbb X}}qaf^*q\N)
}
\]
is commutative.
The top horizontal arrow is an isomorphism by the argument for 
presheaves above, and the vertical arrows are isomorphisms by
\cite[(2.18), (2.34)]{ETI}.
Thus the bottom horizontal arrow, which agrees with the map 
(\ref{sheaf-delta.eq}), is an isomorphism.

Now consider $\Bbb F,\Bbb G\in D(\Bbb Y)$.
Take strongly $K$-flat resolutions $\Bbb P\rightarrow \Bbb F$ and
$\Bbb Q\rightarrow \Bbb G$.
Then the map (\ref{derived-delta.eq}) is nothing but the
composite
\begin{multline*}
Lf^*(\Bbb F\otimes_{\O_{\Bbb Y}}^L\Bbb G)
\cong
Lf^*(\Bbb P\otimes_{\O_{\Bbb Y}}^L\Bbb Q)
\cong
f^*(\Bbb P\otimes_{\O_{\Bbb Y}}\Bbb Q)\\
\xrightarrow{\Delta}
f^*\Bbb P\otimes_{\O_{\Bbb X}}f^*\Bbb Q
\cong
Lf^*\Bbb F\otimes_{\O_{\Bbb X}}^L Lf^*\Bbb G.
\end{multline*}
The second isomorphism comes from the fact that $\Bbb P\otimes_{\O_{\Bbb Y}}
\Bbb Q$ is $K$-flat \cite[(3.21)]{ETI}.
The last isomorphism comes from the fact that $f^*\Bbb P$ and $f^*\Bbb Q$ 
are strongly $K$-flat \cite[(3.20)]{ETI}.
Being the composite of isomorphisms, (\ref{derived-delta.eq}) is an
isomorphism.
\end{proof}

\begin{lemma}\label{diagram-filtered.lem}
Let $J$ be a small category, and $\varphi:X\rightarrow Y$ a morphism
of $J\op$-diagrams of schemes.
Let $f=\varphi^{-1}:\Zar(Y)\rightarrow\Zar(X)$ be the corresponding 
functor between sites {\rm\cite[(5.3)]{ETI}}.
Then for each $(j,U)\in\Zar(X)$ \(where $j\in J$ and $U\in\Zar(X_j)$\),
the category $(I^f_{(j,U)})\op$
\(see {\rm\cite[(2.6)]{ETI}}\) is filtered.
\end{lemma}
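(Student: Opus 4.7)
The plan is to unravel the definition of $I^f_{(j,U)}$ explicitly in this setting, and then verify the three conditions that characterize $(I^f_{(j,U)})\op$ as a filtered category (equivalently, $I^f_{(j,U)}$ as cofiltered): non-emptiness, existence of a common lower bound for any two objects, and existence of an equalizing morphism for any parallel pair.

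First I would make the objects and morphisms of $I^f_{(j,U)}$ concrete. Since $X$ and $Y$ are $J\op$-diagrams of schemes, a morphism $(j',V')\to(i,V)$ in $\Zar(X)$ is a pair consisting of $\phi\in J(i,j')$ and an inclusion $V'\subset X_\phi^{-1}(V)$, and $f(j',V')=(j',\varphi_{j'}^{-1}(V'))$. Using the identity $\varphi_{j'}\circ X_\phi=Y_\phi\circ\varphi_j$, an object $(y,s)$ of $I^f_{(j,U)}$ can be identified with a triple $(j',V,\phi)$ where $\phi\in J(j',j)$, $V$ is an open of $Y_{j'}$, and $V\supset Y_\phi(\varphi_j(U))$; and a morphism $(j',V,\phi)\to(j'',V',\phi')$ in $I^f_{(j,U)}$ corresponds to $\psi\in J(j'',j')$ with $\phi\psi=\phi'$ and $Y_\psi(V)\subset V'$.

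Next I would verify that $I^f_{(j,U)}$ is cofiltered. The triple $(j,Y_j,\id_j)$ is an object, so the category is non-empty. Given two objects $(j_1,V_1,\phi_1)$ and $(j_2,V_2,\phi_2)$, I would set
\[
c:=(j,\;Y_{\phi_1}^{-1}(V_1)\cap Y_{\phi_2}^{-1}(V_2),\;\id_j),
\]
which is an object because $Y_{\phi_i}(\varphi_j(U))\subset V_i$ gives $\varphi_j(U)\subset Y_{\phi_i}^{-1}(V_i)$, and $\tau_i:=\phi_i$ furnishes morphisms $c\to(j_i,V_i,\phi_i)$. Finally, for parallel morphisms $\psi,\psi':(j',V,\phi)\to(j'',V',\phi')$, the definition forces $\phi\psi=\phi'=\phi\psi'$; taking $\tau:=\phi$ yields a morphism $(j,\;Y_\phi^{-1}(V),\;\id_j)\to(j',V,\phi)$ whose post-compositions with $\psi$ and $\psi'$ are both equal to $\phi\psi=\phi\psi'$, as required.

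The main obstacle is purely bookkeeping: carefully tracking the direction of morphisms as one passes between $J$, $\Zar(X)$, and $I^f_{(j,U)}$, and verifying that each candidate open actually contains $\varphi_j(U)$. The key conceptual observation making this work is that the base point $j$ serves as a uniform lower bound in $J$ for every object of $I^f_{(j,U)}$, since every such object carries a structure morphism $\phi\in J(j',j)$ back to $j$; this allows us always to retreat to $j_0=j$ with $\phi_0=\id_j$ when constructing cones and equalizers, reducing the combinatorial problem to elementary intersections of open subsets in $Y_j$.
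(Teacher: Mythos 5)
Your proof is correct, and since the paper leaves this verification to the reader, your argument supplies exactly the intended one: after unravelling $\Zar(X)$, $\Zar(Y)$, and $f=\varphi^{-1}$, every object of $I^f_{(j,U)}$ retracts to the index $j$ via its structure morphism, so cones and equalizers reduce to intersections of open subsets of $Y_j$, which is the standard filtered-colimit argument. The conventions you reconstruct (direction of $\phi$, the identity $\varphi_{j'}\circ X_\phi=Y_\phi\circ\varphi_j$, morphisms determined by their $J$-components) match the paper's usage, and all three cofilteredness conditions are verified correctly.
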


\begin{proof}
Left to the reader.
\end{proof}

\begin{lemma}
Let $\Bbb F, \Bbb G\in D(G,Z)$.
Then the canonical map
\[
\Delta:L\res_{G'}^{G}(\Bbb F\otimes_{\O_Z}^L\Bbb G)\rightarrow
(L\res_{G'}^{G}\Bbb F)\otimes_{\O_Z}^L(L\res_{G'}^{G}\Bbb G)
\]
is an isomorphism
\(Note that the $\otimes_{\O_Z}^L$ in the left \(resp.\ right\) hand side
is an abbreviation for $\otimes_{\O_{B_{G}^M(Z)}}^L$ \(resp.\ 
$\otimes_{\O_{B_{G'}^M(Z)}}^L$\)\).
\end{lemma}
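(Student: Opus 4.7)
The plan is to reduce the statement directly to Lemma~\ref{delta-isom.thm}. By definition, $\res_{G'}^{G}=B_h^M(Z)^*$ is the inverse image functor along the morphism of $\Delta_M\op$-diagrams of schemes $B_h^M(Z):B_{G'}^M(Z)\rightarrow B_G^M(Z)$. Passing to the associated Zariski ringed sites, this gives a morphism of ringed sites $f=(B_h^M(Z))^{-1}:\Zar(B_G^M(Z))\rightarrow \Zar(B_{G'}^M(Z))$, and the derived category $D(G,Z)$ is by definition $D(\Mod(B_G^M(Z)))$, so the derived functor $L\res_{G'}^{G}$ agrees with $Lf^*$ of this morphism of ringed sites.

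First I would verify the filteredness hypothesis of Lemma~\ref{delta-isom.thm}. Since $B_h^M(Z)$ is a morphism of $\Delta_M\op$-diagrams of schemes, Lemma~\ref{diagram-filtered.lem} applied to this morphism shows that for every object $(j,U)\in\Zar(B_G^M(Z))$ the category $(I^f_{(j,U)})\op$ is filtered.

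Second, with this hypothesis verified, I would invoke the conclusion (\ref{derived-delta.eq}) of Lemma~\ref{delta-isom.thm}: for any $\Bbb F,\Bbb G\in D(B_G^M(Z))$, the canonical map
\[
\Delta:Lf^*(\Bbb F\otimes^L_{\O_{B_G^M(Z)}}\Bbb G)\rightarrow Lf^*\Bbb F\otimes^L_{\O_{B_{G'}^M(Z)}}Lf^*\Bbb G
\]
is an isomorphism. Rewriting $Lf^*$ as $L\res_{G'}^{G}$, this is precisely the desired assertion.

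The main (minor) obstacle is the bookkeeping of identifying the categorical setups: one must be sure that the tensor products in the statement are indeed taken over the structure sheaves of $B_G^M(Z)$ and $B_{G'}^M(Z)$ (as indicated in the parenthetical remark of the statement), and that $L\res_{G'}^{G}$ coincides with the derived inverse image $Lf^*$ on the level of ringed sites. This identification is built into the construction of $\res_{G'}^{G}$ via $B_h^M(Z)^*$, and the existence of appropriate $K$-flat resolutions needed to compute $Lf^*$ is supplied by Lemma~\ref{complex-K-flat.thm} (or alternatively, for the quasi-coherent part, by Lemma~\ref{res-acyclic.thm}). Once these identifications are made explicit, the proof is essentially a one-line application of the two previously cited lemmas.
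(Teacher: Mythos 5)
Your proposal is correct and follows exactly the paper's own (very terse) argument: the paper proves this lemma by noting that Lemma~\ref{diagram-filtered.lem} verifies the filteredness hypothesis, so Lemma~\ref{delta-isom.thm} applies to the morphism of ringed sites induced by $B_h^M(Z)$. Your write-up just spells out the identifications that the paper leaves implicit.
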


\begin{proof}
By Lemma~\ref{diagram-filtered.lem}, 
Lemma~\ref{delta-isom.thm} is applicable.
\end{proof}

\begin{corollary}
Let $\M$ and $\N$ be objects in $\Qch(G,Z)$.
Then
\[
\res^{G}_{G'}\uTor^{\O_Z}_i(\M,\N)
\cong
\uTor^{\O_Z}_i(\res^{G}_{G'}\M,\res^{G}_{G'}\N).
\]
\qed
\end{corollary}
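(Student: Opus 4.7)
The plan is to derive the corollary directly from the preceding lemma by specializing $\Bbb F$ and $\Bbb G$ to the quasi-coherent sheaves $\M$ and $\N$ (concentrated in degree zero) and taking cohomology in degree $-i$.

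First I would note that since $\M, \N \in \Qch(G,Z)$, one has $\uTor^{\O_Z}_i(\M,\N) = H^{-i}(\M \otimes^L_{\O_Z} \N)$, and similarly on the $G'$-side after restriction. So the corollary amounts to computing $H^{-i}$ of both sides of the isomorphism
\[
\Delta: L\res^G_{G'}(\M \otimes^L_{\O_Z} \N) \xrightarrow{\sim} (L\res^G_{G'}\M) \otimes^L_{\O_Z} (L\res^G_{G'}\N)
\]
guaranteed by the previous lemma.

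The second step is to eliminate the derived functor $L\res^G_{G'}$ wherever a quasi-coherent object sits. By Lemma~\ref{res-acyclic.thm}, any object of $D_{\Qch}(G,Z)$ is $\res^G_{G'}$-acyclic, so on the left-hand side $L\res^G_{G'}(\M \otimes^L_{\O_Z} \N) \cong \res^G_{G'}(\M \otimes^L_{\O_Z} \N)$ (the complex $\M \otimes^L_{\O_Z} \N$ has quasi-coherent cohomology). On the right-hand side, applied to $\M$ and $\N$ individually, the same lemma (or directly Lemma~\ref{res-vanishing.thm}) gives $L\res^G_{G'}\M \cong \res^G_{G'}\M$ and likewise for $\N$.

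For the third step I would invoke the exactness of $\res^G_{G'}$ on $\Qch(G,Z)$ (Lemma~\ref{faithful-exact.thm}) to commute $\res^G_{G'}$ past $H^{-i}$ on the left, yielding $\res^G_{G'}\uTor^{\O_Z}_i(\M,\N)$; on the right, $H^{-i}$ of the derived tensor product of the two quasi-coherent objects $\res^G_{G'}\M$ and $\res^G_{G'}\N$ is by definition $\uTor^{\O_Z}_i(\res^G_{G'}\M, \res^G_{G'}\N)$. Stringing these identifications together with $\Delta$ gives the claimed isomorphism.

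There is no real obstacle here; the corollary is essentially bookkeeping on top of the preceding lemma. The only point that needs a little care is justifying that one is allowed to pull $H^{-i}$ through $\res^G_{G'}$ and through $L\res^G_{G'}$, but this is exactly what Lemmas~\ref{faithful-exact.thm}, \ref{res-vanishing.thm}, and \ref{res-acyclic.thm} are set up to provide.
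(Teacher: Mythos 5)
Your proposal is correct and follows the same route the paper intends: the corollary is stated with no written proof precisely because it is the degree-wise cohomology of the isomorphism $\Delta$ from the preceding lemma, combined with the acyclicity and exactness facts (Lemmas~\ref{faithful-exact.thm}, \ref{res-vanishing.thm}, \ref{res-acyclic.thm}) that let $H^{-i}$ pass through $L\res^{G}_{G'}$ and replace $L\res^{G}_{G'}$ by $\res^{G}_{G'}$ on quasi-coherent objects. Your bookkeeping is exactly the argument the paper leaves implicit.
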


\begin{lemma}\label{ext-res.thm}
Let $X$ be locally Noetherian, and $G$ be locally of finite type.
Let $\Bbb F\in D^-_{\Coh}(G,Z)$ and $\Bbb G\in D^+_{\Lqc}(G,Z)$.
Then the canonical map
\begin{equation}\label{RHom.eq}
L\res^{G}_{G'} R\uHom_{\O_Z}(\Bbb F,\Bbb G)
\rightarrow R\uHom_{\O_Z}(L\res^{G}_{G'}\Bbb F,L\res^{G}_{G'}\Bbb G)
\end{equation}
is an isomorphism.
\end{lemma}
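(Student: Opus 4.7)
The plan is to construct $\Delta$ via the tensor--Hom adjunction, show both sides land in $D^+_{\Lqc}(G',Z)$, and then reduce the isomorphism check to simplicial level zero, where $L\res^G_{G'}$ becomes the identity because $B_h^M(Z)_0=\id_Z$.

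First, I would define $\Delta$ as the adjoint (under the $\otimes^L$-$R\uHom$ adjunction) of the composite
\[
L\res^G_{G'}R\uHom_{\O_Z}(\Bbb F,\Bbb G)\otimes^L_{\O_Z}L\res^G_{G'}\Bbb F
\cong
L\res^G_{G'}\bigl(R\uHom_{\O_Z}(\Bbb F,\Bbb G)\otimes^L_{\O_Z}\Bbb F\bigr)
\xrightarrow{L\res^G_{G'}(\mathrm{ev})}
L\res^G_{G'}\Bbb G,
\]
using the tensor compatibility just established in the preceding corollary. Under the hypotheses, $R\uHom_{\O_Z}(\Bbb F,\Bbb G)$ has locally quasi-coherent cohomology (this is the standard coherence-of-$R\uHom$ fact, valid because $Z$ is locally Noetherian and $G$ is locally of finite type, so $B_G^M(Z)$ is locally Noetherian degreewise), and hence by Lemma~\ref{res-acyclic.thm} both source and target of $\Delta$ lie in $D^+_{\Lqc}(G',Z)$.

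Second, by Lemma~\ref{faithful-exact.thm} applied to $G'$, the functor $(?)_0\colon\Qch(G',Z)\to\Qch(Z)$ is faithful and exact, so it suffices to verify that $(?)_0\Delta$ induces isomorphisms on cohomology sheaves. Since $B_h^M(Z)_0=\id_Z$, the standard base-change morphism produces a natural isomorphism $(?)_0\circ L\res^G_{G'}\simeq(?)_0$ of functors $D(G,Z)\to D(Z)$. Thus the problem reduces to the compatibility
\[
(?)_0R\uHom_{\O_{B_G^M(Z)}}(\Bbb F,\Bbb G)\cong R\uHom_{\O_Z}(\Bbb F_0,\Bbb G_0)
\]
for $\Bbb F\in D^-_{\Coh}(G,Z)$ and $\Bbb G\in D^+_{\Lqc}(G,Z)$, under which $(?)_0\Delta$ becomes the identity on $R\uHom_{\O_Z}(\Bbb F_0,\Bbb G_0)$.

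Third, the remaining compatibility I would establish by a way-out argument in $\Bbb F$ (in the style of Hartshorne, \emph{Residues and Duality}, I.7.1), reducing to the case where $\Bbb F$ is a single coherent equivariant sheaf. Working locally on $Z$, I would resolve $\Bbb F$ by a strongly $K$-flat complex $\Bbb P^\bullet$ of sheaves on $\Zar(B_G^M(Z))$ produced by Lemma~\ref{complex-K-flat.thm}, whose terms are direct sums of the basic generators $\O_{(n,U)}$; the coherence of $\Bbb F$ together with the locally Noetherian hypothesis ensures the terms can be chosen suitably. For each such generator the comparison is transparent from the description of the structure data of $\uHom(\O_{(n,U)},\Bbb I)$ at simplicial level zero and from pullback along the face maps $\delta^n_0:[0]\to[n]$. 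The principal obstacle is precisely this third step: one must control how the simplicial structure of $B_G^M(Z)$ interacts with the coherent resolution, and this is where both the locally Noetherian hypothesis on $Z$ and the finite-type hypothesis on $G$ (hence the corresponding good properties on $B_G^M(Z)$ termwise) enter essentially.
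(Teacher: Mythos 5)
Your first two reductions coincide with the paper's: both sides have (locally) quasi-coherent cohomology, so one may test the map after applying the faithful exact functor $(?)_0$ (Lemma~\ref{faithful-exact.thm}), and $(?)_0\circ L\res^{G}_{G'}\cong(?)_0$ because the level-zero scheme of $B^M_{G}(Z)$ is $Z$ itself (the paper quotes \cite[(8.13)]{ETI} for this). After that, everything hinges on the single identification $(?)_0\,R\uHom_{\O_{B_G^M(Z)}}(\Bbb F,\Bbb G)\cong R\uHom_{\O_Z}(\Bbb F_0,\Bbb G_0)$, and here the paper does not argue from scratch: it invokes \cite[(13.9)]{ETI}, which is precisely where the hypotheses $\Bbb F\in D^-_{\Coh}(G,Z)$, $Z$ locally Noetherian and $G$ locally of finite type do their work.

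Your proposed proof of that identification is where the genuine gap lies. Resolving $\Bbb F$ by the strongly $K$-flat complexes of Lemma~\ref{complex-K-flat.thm} is the wrong tool: their terms are infinite direct sums of the modules $\O_{(n,U)}$, which are neither equivariant nor coherent, so after the way-out reduction you land exactly outside the class of objects for which the comparison $H_0$ is controlled. For a single generator $\O_{(n,U)}$, the level-zero component of $\uHom(\O_{(n,U)},\Bbb I)$ is computed via data at level $n$ (a limit over a comma category), and there is no reason for $H_0$ to be an isomorphism there; the coherence and equivariance of $\Bbb F$ are not decorative hypotheses. Moreover, $\uHom$ out of an infinite direct sum is an infinite product, which ruins local quasi-coherence and does not commute with $(?)_0$, so even the preliminary bookkeeping of your reduction breaks down termwise. $K$-flat resolutions are adapted to $\otimes^L$, not to $R\uHom$ in the contravariant variable; what a proof of this compatibility actually needs is, locally, finite free equivariant resolutions of the coherent $\Bbb F_0$ together with the flatness of the face maps of $B^M_G(Z)$ — which is in effect the content of \cite[(13.9)]{ETI}. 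So the third step, which you yourself flag as the principal obstacle, is a gap rather than a proof, while the remainder of your argument (including the adjunction definition of the map, which after applying $(?)_0$ is readily identified with $H_0$) is sound and parallels the paper.
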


\begin{proof}
By Lemma~\ref{res-acyclic.thm} and \cite[(13.10)]{ETI}, the two
complexes have quasi-coherent cohomology groups.
So in order to prove that the map is an isomorphism, we may discuss
after applying the functor $(?)_0$.
By \cite[(8.13)]{ETI}, the canonical map
\[
\theta: L(\id_Z^*)(?)_0\rightarrow (?)_0\res_{G'}^{G}
\]
is an isomorphism.
So the map (\ref{RHom.eq}) applied $(?)_0$ is identified with
\[
H_0:(R\uHom_{\O_Z}(\Bbb F,\Bbb G))_0
\rightarrow 
R\uHom_{\O_Z}(\Bbb F_0,\Bbb G_0).
\]
It is an isomorphism by \cite[(13.9)]{ETI}.
\end{proof}

\begin{corollary}\label{restriction-extension.thm}
Let $\M\in\Coh(G,Z)$ and $\N\in\Qch(G,Z)$.
Then we have
\[
\res_{G'}^{G}\uExt_{\O_{B_{G}^M(Z)}}^i(\M,\N)\cong
\uExt_{\O_{B_{G'}^M(Z)}}^i(\res_{G'}^{G}\M,\res_{G'}^{G}\N).
\]
\qed
\end{corollary}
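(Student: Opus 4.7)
The plan is to derive this corollary directly from Lemma~\ref{ext-res.thm} by taking the $i$-th cohomology of both sides of the isomorphism (\ref{RHom.eq}), with $\Bbb F=\M$ and $\Bbb G=\N$ viewed as complexes concentrated in degree zero. Since $\M\in\Coh(G,Z)$ and $\N\in\Qch(G,Z)\subset \Lqc(G,Z)$, the hypotheses of Lemma~\ref{ext-res.thm} are satisfied, so we have an isomorphism
\[
L\res^{G}_{G'} R\uHom_{\O_Z}(\M,\N)
\cong R\uHom_{\O_Z}(L\res^{G}_{G'}\M,L\res^{G}_{G'}\N)
\]
in the derived category.

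Next I would identify the right-hand side. By Lemma~\ref{res-vanishing.thm}, the higher derived functors $L_i\res^{G}_{G'}$ vanish on quasi-coherent sheaves, so $L\res^{G}_{G'}\M$ and $L\res^{G}_{G'}\N$ are both concentrated in degree zero and equal to $\res^{G}_{G'}\M$ and $\res^{G}_{G'}\N$ respectively. Thus taking $H^i$ of the right-hand side yields $\uExt^{i}_{\O_{B_{G'}^M(Z)}}(\res^{G}_{G'}\M,\res^{G}_{G'}\N)$ by definition of $\uExt$.

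For the left-hand side, I need to commute $H^i$ past $L\res^{G}_{G'}$. The complex $R\uHom_{\O_Z}(\M,\N)$ lives in $D^+_{\Lqc}(G,Z)$ (its cohomology sheaves are the quasi-coherent $\uExt^i$-sheaves by \cite[(13.10)]{ETI}). Because $L_i\res^{G}_{G'}$ vanishes on quasi-coherent objects for $i>0$ by Lemma~\ref{res-vanishing.thm}, the hypercohomology spectral sequence degenerates and $L\res^{G}_{G'}$ commutes with the cohomology functor $H^i$ on bounded-below complexes with quasi-coherent cohomology; equivalently, applying $\res^{G}_{G'}$ termwise to a quasi-coherent resolution computes $L\res^{G}_{G'}$, and $\res^{G}_{G'}$ is exact on $\Qch$ by Lemma~\ref{faithful-exact.thm}. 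Thus
\[
H^i L\res^{G}_{G'} R\uHom_{\O_Z}(\M,\N)
\cong \res^{G}_{G'} H^i R\uHom_{\O_Z}(\M,\N)
= \res^{G}_{G'}\uExt^{i}_{\O_{B_{G}^M(Z)}}(\M,\N).
\]

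Combining the two computations gives the desired isomorphism. The only nontrivial step is the commutation of $L\res^{G}_{G'}$ with $H^i$; this is the main point to check carefully, but it is an immediate consequence of the acyclicity result of Lemma~\ref{res-vanishing.thm} together with the boundedness-below of $R\uHom(\M,\N)$ when $\M$ is coherent.
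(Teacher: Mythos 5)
Your argument is correct and is exactly the route the paper intends: the corollary carries no written proof precisely because it is the immediate consequence of Lemma~\ref{ext-res.thm} obtained by taking $H^i$ of (\ref{RHom.eq}) with $\M$ and $\N$ placed in degree zero, using Lemma~\ref{res-vanishing.thm} (together with the acyclicity in Lemma~\ref{res-acyclic.thm} and exactness of $\res^G_{G'}$ on $\Qch$ from Lemma~\ref{faithful-exact.thm}) to identify $L\res^{G}_{G'}$ with $\res^{G}_{G'}$ on both sides. Your careful step commuting $H^i$ past $L\res^{G}_{G'}$ simply spells out what the paper leaves implicit, and it is justified as you say.
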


\begin{lemma}\label{res-direct.thm}
Let $g: Z'\rightarrow Z$ be a concentrated \(that is, quasi-compact 
quasi-separated\) $G$-morphism of $G$-schemes.
Then the canonical map
\[
\theta: L\res_{G'}^{G}Rf_*\Bbb F\rightarrow Rf_*L\res_{G'}^{G}\Bbb F
\]
is an isomorphism for $\Bbb F\in D_{\Qch}(G,Z')$.
\end{lemma}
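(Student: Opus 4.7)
The plan is to reduce the assertion, via the faithful exact restriction $(?)_0$ to the underlying $Z$-component, to the trivial compatibility of $Rf_*$ with the identity pullback on the $[-1]$-component of the Bar construction. (The $f$ appearing in the displayed map is evidently the $g$ of the hypothesis.)

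First I check that both sides of $\theta$ lie in $D_{\Qch}(G',Z)$. For a concentrated morphism of schemes, $Rf_*$ preserves the property of having quasi-coherent cohomology; applied componentwise to the Bar simplicial morphism $f_\bullet:B_{G}^M(Z')\to B_{G}^M(Z)$ (whose degree-wise components are concentrated because they are base changes of $f$) this gives $Rf_*\Bbb F\in D_{\Qch}(G,Z)$, and similarly $Rf_* L\res^{G}_{G'}\Bbb F\in D_{\Qch}(G',Z)$. Combined with Lemma~\ref{res-acyclic.thm}, which states that $L\res^{G}_{G'}$ sends $D_{\Qch}$ to $D_{\Qch}$, I conclude that $L\res^{G}_{G'}Rf_*\Bbb F$ also lies in $D_{\Qch}(G',Z)$.

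Next I use that the forgetful restriction $(?)_0:\Qch(G',Z)\to\Qch(Z)$ is faithful exact (as invoked in the proof of Lemma~\ref{res-vanishing.thm}), hence conservative on $D_{\Qch}(G',Z)$: a morphism there is an isomorphism iff its image under $(?)_0$ is. It therefore suffices to prove that $(?)_0\theta$ is an isomorphism. Using the natural isomorphism $\theta:L(\id_Z)^*(?)_0\to (?)_0 L\res^{G}_{G'}$ from \cite[(8.13)]{ETI} (which reflects that the $[-1]$-component of $B_h^M(Z)$ is $\id_Z$), together with the identification $(?)_0 Rf_*\cong Rf_*(?)_0$ (the simplicial derived pushforward is formed componentwise, and the $[-1]$-component of $f_\bullet$ is $f$ itself), both the source and target of $(?)_0\theta$ identify canonically with $Rf_*\Bbb F_0$, and under these identifications $(?)_0\theta$ is forced to be the identity.

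The main technical nuisance will be verifying that $(?)_0\theta$ really does become the identity under the two canonical identifications, i.e.\ producing the explicit commutative diagram in which the base-change natural transformation $\theta$ for the square
\[
\xymatrix{
B_{G'}^M(Z')\ar[r]^{B_h^M(Z')}\ar[d]_{f_\bullet}&B_{G}^M(Z')\ar[d]^{f_\bullet}\\
B_{G'}^M(Z)\ar[r]^{B_h^M(Z)}&B_{G}^M(Z)
}
\]
is chased through the restriction $(?)_0$ and the componentwise identifications. This is a formal exercise in unit/counit manipulations together with \cite[(1.23), (1.24)]{ETI}, and given the foundational material of the preceding sections it is routine.
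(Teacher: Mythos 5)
Your proposal is correct and follows essentially the same route as the paper: both sides are shown to have quasi-coherent cohomology (via preservation of quasi-coherence under $Rf_*$ for concentrated morphisms together with Lemma~\ref{res-acyclic.thm}), and then the isomorphism is checked after applying the faithful exact functor $(?)_0$, where it reduces to the evident identification. The final compatibility chase that you flag as the "technical nuisance" is exactly what the paper dismisses as "easy," so there is no substantive difference.
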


\begin{proof}
As the complexes have quasi-coherent cohomology groups by \cite[(8.7)]{ETI}
and Lemma~\ref{res-acyclic.thm}, 
we may discuss after applying the functor $(?)_0$, and the rest is easy.
\end{proof}

\paragraph
Let $V\subset U\subset Z$ be $G$-stable open subsets.
Assume that the inclusions $f:U\hookrightarrow Z$ and $g:
V\hookrightarrow U$ are quasi-compact.
For $\Bbb F\in D_{\Qch}(G,Z)$, 
there is a commutative diagram
\begin{equation}\label{triangle.eq}
\xymatrix{
L\res^{G}_{G'} R\uGamma_{U,V} \Bbb F\ar[r]^\iota &
L\res^{G}_{G'} Rf_*f^* \ar[r]^-u \Bbb F\ar[d]^{d\theta} &
L\res^{G}_{G'} Rf_*Rg_*g^*f^* \Bbb F\ar[r] \ar[d]^{dd\theta\theta} & \\
R\uGamma_{U,V} L\res^{G}_{G'} \Bbb F\ar[r]^\iota &
Rf_*f^* L\res^{G}_{G'} \Bbb F\ar[r]^-u &
Rf_*Rg_*g^*f^* L\res^{G}_{G'} \Bbb F\ar[r] & 
}
\end{equation}
whose rows are distinguished triangles by \cite[(4.10)]{HO2}.
Then, as the derived category is a triangulated category, 
\[
\bar \delta: 
L\res^{G}_{G'} R\uGamma_{U,V} \Bbb F\rightarrow
R\uGamma_{U,V} L\res^{G}_{G'} \Bbb F
\]
which completes (\ref{triangle.eq}) as a map of triangles.
As $d\theta$ and $dd\theta\theta$ are isomorphisms by
Lemma~\ref{res-direct.thm}, $\bar \delta$ is an isomorphism by
\cite[(I.1.1)]{Hartshorne2}.

We can make $\bar \delta$ functorial on $\Bbb F$.
Fix a functorial strictly injective resolution $\Bbb F\rightarrow \Bbb I_{
\Bbb F}$
(in the category $K_{\Qch}(G,Z)$) as in section~\ref{functorial.sec}.
Then the composite
\begin{multline*}
L\res^{G}_{G'} R\uGamma_{U,V}\Bbb F
\cong
\res^{G}_{G'} \uGamma_{U,V}\Bbb I_{\Bbb F}
\cong
\res^{G}_{G'}\Cone(u:f_*f^*\Bbb I_{\Bbb F}\rightarrow f_*g_*g^*f^*
\Bbb I_{\Bbb F})[-1]\\
\cong
\Cone(u:f_*f^*\res^{G}_{G'}\Bbb I_{\Bbb F}\rightarrow f_*g_*g^*f^*\res^{G}_{G'}
\Bbb I_{\Bbb F})[-1]
\cong
R\uGamma_{U,V}\res^{G}_{G'}\Bbb F
\end{multline*}
is the desired functorial $\bar\delta$.

In conclusion,

\begin{lemma}
There is an isomorphism
\[
\bar \delta: L\res^{G}_{G'} R\uGamma_{U,V}\Bbb F
\rightarrow
R\uGamma_{U,V}L\res^{G}_{G'}\Bbb F
\]
which is functorial on $\Bbb F\in D_{\Qch}(G,Z)$,
the diagram
\[
\xymatrix{
\L R\uGamma_{U,V} \ar[r]^\iota \ar[d]^{\bar\delta} &
\L Rf_*f^* \ar[r]^-u \ar[d]^{d\theta} &
\L Rf_*Rg_*g^*f^* \ar[r] \ar[d]^{dd\theta\theta} & 
\L R\uGamma_{U,V} [1]\ar[r] \ar[d]^{\bar\delta[1]} &
\\
R\uGamma_{U,V} \L \ar[r]^\iota &
Rf_*f^* \L \ar[r]^-u &
Rf_*Rg_*g^*f^* \L \ar[r] & 
R\uGamma_{U,V} \L [1] \ar[r] &
}
\]
is commutative, where $\L=L\res^{G}_{G'}$, and
the diagram
\[
\xymatrix{
(?)_0 \L R\uGamma_{U,V}\Bbb F
\ar[d]^{(?)_0\bar\delta} \ar[r]^{\cong} &
(?)_0 R\uGamma_{U,V}\Bbb F \ar[r]^{\cong} &
R\uGamma_{U,V}\Bbb F_0 \\
(?)_0 R\uGamma_{U,V}\L\Bbb F \ar[r]^\cong &
R\uGamma_{U,V}(?)_0\L\Bbb F \ar[ur]^{\cong} 
}
\]
is also commutative.
\end{lemma}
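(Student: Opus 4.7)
The plan is to make the construction of $\bar\delta$ sketched above fully functorial, and then verify the two stated compatibilities by a diagram chase. The key inputs are: (i) a functorial strictly injective resolution $r_{\Bbb F}:\Bbb F\to \Bbb I_{\Bbb F}$ in $K_{\Qch}(G,Z)$, (ii) the exactness of $\res^G_{G'}$ on quasi-coherent sheaves (Lemma~\ref{faithful-exact.thm}) together with $\res^G_{G'}$-acyclicity of $K$-flat complexes (Lemma~\ref{res-acyclic.thm}), and (iii) the direct-image compatibility $d\theta:L\res^G_{G'}Rf_*\cong Rf_*L\res^G_{G'}$ of Lemma~\ref{res-direct.thm} (and similarly for $g_*$).

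First I would fix, once and for all, the functorial strictly injective resolution and set
\[
\bar\delta_{\Bbb F} \colon L\res^G_{G'}R\uGamma_{U,V}\Bbb F \;\xrightarrow{\cong}\; \res^G_{G'}\uGamma_{U,V}\Bbb I_{\Bbb F}
\]
(using that $\Bbb I_{\Bbb F}$ computes $R\uGamma_{U,V}$ and that $\res^G_{G'}$ is exact so $L\res^G_{G'}=\res^G_{G'}$ on any complex). Next, identify $\uGamma_{U,V}\Bbb I_{\Bbb F}$ with the shifted mapping cone $\Cone(u:f_*f^*\Bbb I_{\Bbb F}\to f_*g_*g^*f^*\Bbb I_{\Bbb F})[-1]$, commute $\res^G_{G'}$ past $f^*$, $g^*$ (trivially at the module level via $B^M_{G}(h)^*$) and past $f_*$, $g_*$ (via the natural isomorphism $c$ of direct images), and arrive at
\[
\Cone(u:f_*f^*\res^G_{G'}\Bbb I_{\Bbb F}\to f_*g_*g^*f^*\res^G_{G'}\Bbb I_{\Bbb F})[-1] \;\cong\; R\uGamma_{U,V}\res^G_{G'}\Bbb F.
\]
The functoriality of $\bar\delta$ on $\Bbb F$ is then immediate from the functoriality of $\Bbb I_{(-)}$ and the naturality of each intervening isomorphism.

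To verify the triangle compatibility, I would chase the diagram~(\ref{triangle.eq}) through the same sequence of identifications: the construction of $\bar\delta$ puts it in place exactly so that the rows of (\ref{triangle.eq}), each a distinguished triangle by \cite[(4.10)]{HO2}, become isomorphic as triangles; since $d\theta$ and $dd\theta\theta$ are isomorphisms (Lemma~\ref{res-direct.thm}), $\bar\delta$ is determined, and an isomorphism, by \cite[(I.1.1)]{Hartshorne2}. For the second diagram, I would apply $(?)_0$ to every arrow in the construction of $\bar\delta$, observing that $(?)_0\circ \res^G_{G'}=(?)_0$ as functors to $\Qch(Z)$, and that $(?)_0$ commutes with $f_*$, $g_*$, $f^*$, $g^*$ and with cones; the resulting composite collapses to the canonical isomorphism $(?)_0R\uGamma_{U,V}\Bbb F\cong R\uGamma_{U,V}\Bbb F_0$, which is what the diagram asserts.

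The main obstacle is bookkeeping, not ideas: one has to make sure that when $\res^G_{G'}$ is shuttled past $f_*$, $g_*$, $f^*$, $g^*$ and cones, the resulting natural isomorphism $\bar\delta$ coincides up to the prescribed compatibilities with the one defined abstractly by triangulated-category completion of (\ref{triangle.eq}). This hinges on the standard coherence between the $\theta$, $c$, and $d$ isomorphisms established in \cite{ETI}, so once one organizes the chase carefully, the result is essentially forced. No genuinely new argument is required beyond what is already outlined in the paragraph preceding the lemma.
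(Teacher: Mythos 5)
Your proposal is correct and follows essentially the same route as the paper: the paper's proof of this lemma is precisely the construction in the preceding paragraph (the functorial strictly injective resolution, the cone description of $\uGamma_{U,V}$, shuttling $\res^G_{G'}$ past the pushforwards via $\theta$ and $c$, and the triangulated-category completion argument with $d\theta$, $dd\theta\theta$ isomorphisms), which is exactly what you reconstruct. The only nitpick is that ``$\res^G_{G'}$ is exact so $L\res^G_{G'}=\res^G_{G'}$ on any complex'' should be justified by the acyclicity of complexes with quasi-coherent cohomology (your input (ii)), not by exactness on $\Qch$ alone, but you have the right citation in place.
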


\begin{proof}
Follows easily from the construction above.
\end{proof}

\begin{lemma}\label{restriction-invariance.lem}
Let $h:G'\rightarrow G$ be a homomorphism of $S$-group schemes.
Let $X$ be an $S$-scheme with a trivial $G$-action.
Then for $\M\in\Mod(G,X)$, $\M^{G}\subset \M^{G'}=(\res^G_{G'}\M)^{G'}$.
If $h$ is faithfully flat, then $\M^{G}=\M^{G'}$.
\end{lemma}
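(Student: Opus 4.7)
The plan is to realize both $\M^G$ and $\M^{G'}=(\res^G_{G'}\M)^{G'}$ as explicit kernels inside the common sheaf $\M_0$ and compare them. Since $G$ acts trivially on $X$, so does $G'$ via $h$: writing $p\colon G\times X\to X$ and $p'\colon G'\times X\to X$ for the second projections and $\pi:=h\times 1_X\colon G'\times X\to G\times X$, we have $p'=p\pi$. By (\ref{G-trivial.par}),
\[
\M^G=\Ker\bigl(\beta_{\delta_0}-\beta_{\delta_1}\colon \M_0\to p_*\M_1\bigr).
\]
Unravelling $\res^G_{G'}=B^M_h(X)^*$ gives $(\res^G_{G'}\M)_0=\M_0$ and $(\res^G_{G'}\M)_1=\pi^*\M_1$, and hence
\[
\M^{G'}=\Ker\bigl(\beta'_{\delta_0}-\beta'_{\delta_1}\colon \M_0\to p_*\pi_*\pi^*\M_1\bigr).
\]

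The crucial step is to verify the identity $\beta'_{\delta_i}=\phi\circ\beta_{\delta_i}$, where $\phi\colon p_*\M_1\to p_*\pi_*\pi^*\M_1$ is $p_*$ applied to the unit $\M_1\to\pi_*\pi^*\M_1$ of the adjunction $\pi^*\dashv\pi_*$. This is the standard formula for structure data of an inverse image on diagrams of schemes: for a morphism $f\colon X\to Y$ of $I\op$-diagrams and an arrow $\varphi\colon i\to j$, $\beta'_\varphi$ for $f^*\N$ is $f_i^*\beta_\varphi$ postcomposed with the base-change map $\theta\colon f_i^*(Y_\varphi)_*\to(X_\varphi)_*f_j^*$. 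Applied to $f=B^M_h(X)$ with $f_0=1_X$, $f_1=\pi$, and the two face maps $\delta_0,\delta_1$, this base change reduces to $p_*$ of the unit. Granting this identity, any section $s\in\M_0$ annihilated by $\beta_{\delta_0}-\beta_{\delta_1}$ is also annihilated by $\phi\circ(\beta_{\delta_0}-\beta_{\delta_1})=\beta'_{\delta_0}-\beta'_{\delta_1}$, yielding the inclusion $\M^G\subset\M^{G'}$.

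For the equality when $h$ is faithfully flat, $\pi$ is then faithfully flat, so the unit $\M_1\to\pi_*\pi^*\M_1$ is a monomorphism. Indeed, applying $\pi^*$ produces a split monomorphism via the triangle identity, and the faithful flatness of $\pi$ implies that $\pi^*$ reflects zero stalk-wise, using that a flat local homomorphism of local rings is automatically faithfully flat. Since $p_*$ is left exact, $\phi$ is a monomorphism, and so the two kernels coincide. I expect the only nontrivial step is the bookkeeping identification $\beta'_{\delta_i}=\phi\circ\beta_{\delta_i}$; this is the main obstacle, though it is purely formal and follows from the description of inverse image on diagrams already recalled in this section.
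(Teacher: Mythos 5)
Your proposal is correct and follows essentially the same route as the paper's proof: both invariances are realized as kernels in $\M_0$, with $\M^{G'}$ the kernel of $(\beta_{\delta_0}-\beta_{\delta_1})$ composed with $p_*$ of the unit $\M_1\to(h\times 1)_*(h\times 1)^*\M_1$, giving the inclusion, and faithful flatness making that unit (hence its $p_*$) a monomorphism, giving equality. Your extra bookkeeping of the structure data of $\res^G_{G'}\M$ is just a more explicit version of the paper's one-line observation that $p'=p(h\times 1)$.
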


\begin{proof}
Let $p$ (resp.\  $p'$) be the second projection $G\times X\rightarrow X$
(resp.\ $G'\times X\rightarrow X$).
Then $\M^G$ is the kernel of the map
\[
\M\xrightarrow{\beta_{\delta_0}-\beta_{\delta_1}}p_*\M_1.
\]
As $p'=p(h\times 1)$, $\M^{G'}$ is the kernel of the map
\[
\M\xrightarrow{\beta_{\delta_0}-\beta_{\delta_1}}p_*\M_1
\xrightarrow{p_*u}p_*(h\times 1)_*(h\times 1)^*\M_1.
\]
So $\M^G\subset \M^{G'}$.
If $h$ is faithfully flat, then $p_*u$ is a monomorphism, and hence
$\M^G=\M^{G'}$.
\end{proof}

\paragraph
Let $h:G'\rightarrow G$ be a homomorphism between 
flat $S$-group schemes of finite type, and 
$X$ a Noetherian $G$-scheme with a $G$-dualizing complex $\Bbb I_X$.

\begin{lemma}\label{restriction-dualizing.lem}
$L\res_{G'}^G\Bbb I_X$ is a $G'$-dualizing complex of $X$.
\end{lemma}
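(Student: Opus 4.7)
The plan is to verify the defining conditions of a $G'$-dualizing complex by reducing everything to the corresponding statement for the underlying scheme $X$, using the compatibility of the forgetful functor $(?)_0$ with $L\res_{G'}^G$ and with $R\uHom$.

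First, I would check that $L\res_{G'}^G\Bbb I_X$ has bounded coherent cohomology. Coherence is immediate from Lemma~\ref{res-acyclic.thm}, since $G, G'$ are locally of finite type and $X$ is locally Noetherian. Boundedness reduces, via the isomorphism $\theta:(?)_0\res_{G'}^G \cong L(\id_X)^*(?)_0$ of \cite[(8.13)]{ETI}, to boundedness of $(\Bbb I_X)_0$, which holds because $(\Bbb I_X)_0$ is an honest dualizing complex on $X$ by the definition of $G$-dualizing. Since $(?)_0:\Qch(G',X)\to\Qch(X)$ is faithful and exact by \cite[(12.12)]{ETI}, this suffices.

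Next, I would verify finite injective dimension and biduality by passing through $(?)_0$. Concretely, for the biduality map
\[
\Bbb F\longrightarrow R\uHom_{\O_{B_{G'}^M(X)}}\!\bigl(R\uHom_{\O_{B_{G'}^M(X)}}(\Bbb F,L\res_{G'}^G\Bbb I_X),\,L\res_{G'}^G\Bbb I_X\bigr)
\]
for $\Bbb F\in D^-_{\Coh}(G',X)$, I would apply $(?)_0$. By \cite[(13.9)]{ETI}, $(?)_0$ commutes with $R\uHom$ in this range, and by $\theta$ we have $(L\res_{G'}^G\Bbb I_X)_0\cong (\Bbb I_X)_0$. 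Hence the image under $(?)_0$ of the biduality map is identified with the biduality map
\[
\Bbb F_0\longrightarrow R\uHom_{\O_X}\!\bigl(R\uHom_{\O_X}(\Bbb F_0,(\Bbb I_X)_0),(\Bbb I_X)_0\bigr),
\]
which is an isomorphism because $(\Bbb I_X)_0$ is a dualizing complex on the scheme $X$. Faithfulness of $(?)_0$ on $\Qch$ then forces the equivariant biduality map to be an isomorphism. The finite injective dimension condition is handled similarly: injective dimension can be tested after $(?)_0$, and $(\Bbb I_X)_0$ has finite injective dimension.

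The main (mild) obstacle is bookkeeping: one must confirm that the natural transformation $\theta$ intertwines the equivariant biduality map with its non-equivariant counterpart in a compatible way, i.e.\ that the appropriate squares commute. This is routine from the naturality of $\theta$ and the compatibility results already established in Section~\ref{restriction.sec} (notably Lemma~\ref{ext-res.thm} and Corollary~\ref{restriction-extension.thm}), which describe the interaction of $L\res_{G'}^G$ with $R\uHom$. Once these compatibilities are in hand, the argument is essentially formal.
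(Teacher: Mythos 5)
Your overall strategy --- reduce everything to the underlying scheme via the forgetful functor $(?)_0$, using $(?)_0L\res_{G'}^G\cong(?)_0$ together with coherence of the derived restriction --- is in substance the same reduction that the paper's one-line proof encodes: the paper simply cites \cite[(8.20)]{ETI} (derived restriction has quasi-coherent, hence here coherent, cohomology, cf.\ Lemma~\ref{res-acyclic.thm}) and \cite[(31.17)]{ETI}, which characterizes equivariant dualizing complexes in terms of the underlying data, so that $(L\res_{G'}^G\Bbb I_X)_0\cong(\Bbb I_X)_0$ does the rest. Your treatment of the biduality map via \cite[(13.9)]{ETI} and faithfulness of $(?)_0$ is fine as far as it goes.

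The genuine gap is the sentence ``injective dimension can be tested after $(?)_0$.'' A faithful exact functor does not reflect injectivity, and this particular claim is false as a general principle: for instance, if $G$ is a finite group scheme over a field $k$ of characteristic $p$ that is not linearly reductive (say the constant group $\Bbb Z/p\Bbb Z$, or $\alpha_p$), the trivial module $k$ has infinite injective dimension in $\Qch(G,\Spec k)$ (its $G$-cohomology is nonzero in all degrees), even though its image under $(?)_0$ is injective. So if the definition of a $G'$-dualizing complex demanded finite injective dimension in the equivariant category $\Mod(G',X)$, your argument would break down at exactly this point --- and in fact the lemma itself would be false in such generality, since the paper applies it to arbitrary flat finite-type group schemes. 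The step only works because the notion of $G'$-dualizing complex used here (this is what \cite[(31.17)]{ETI} supplies) is formulated through conditions on the underlying complex and the levels of the diagram $B_{G'}^M(X)$, not through equivariant injective dimension. You should either quote that characterization explicitly, or delete the injective-dimension step and check only the conditions that the definition actually imposes; as written, the sentence asserts a principle that is not available.
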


\begin{proof}
Follows easily from 
\cite[(8.20)]{ETI} and \cite[(31.17)]{ETI}.
\end{proof}

\paragraph By abuse of notation, we will sometimes write the 
$G'$-dualizing complex $\res_{G'}^G\Bbb I_X$ by $\Bbb I_X$ or $\Bbb I_X(G')$.

\section{Groups with the Reynolds operators}\label{Reynolds.sec}

\paragraph
Let $G$ be a flat quasi-compact quasi-separated 
$S$-group scheme, and $Y$ an $S$-scheme on which $G$
acts trivially.
Let $\gamma=\gamma_{G,Y}:
(?)^G\rightarrow \Id$ be the inclusion between the functors from
$\Qch(G,Y)$ to itself (although $(?)^G$ is a functor from $\Qch(G,Y)$ to
$\Qch(Y)$, we regard $\M^G$ as a trivial $G$-module, and then
$(?)^G$ can be viewed as a functor from $\Qch(G,Y)$ to itself.
So $\gamma$ in (\ref{G-trivial.par}) is $\gamma_0$ here.
This abuse of notation does not cause a problem).

We say that $G$ has a functorial Reynolds operator on $Y$ if there is a 
natural transformation $p=p_{G,Y}:\Id\rightarrow (?)^G$ such that $p\gamma=\id$.
The natural map $p$ (or sometimes $\frak R:=\gamma p$) 
is called the {\em Reynolds operator}
of $G$ on $Y$.
Note that $\frak R^2=\frak R$.

\paragraph\label{trivial-anti-trivial.par}
Let $G$, $Y$ be as above, and assume that $G$ has a Reynolds operator on $Y$.
For $\M\in\Qch(G,Y)$, we define $U_G(\M)=\Ker p_{G,Y}(\M)=\Image(\id-\frak R)$,
and call it the {\em anti-invariance} of $\M$.
We say that $\M$ is {\em anti-trivial} if $U_G(\M)=\M$, or equivalently, 
$\M^G=0$.
If $\M,\N\in\Qch(G,Y)$, and $\M$ is anti-trivial and $\N$ is trivial, 
then $\Hom_{G,\O_Y}(\M,\N)=0$.
Indeed, if $h\in \Hom_{G,\O_Y}(\M,\N)$, then
$h=\R h(\id-\R)=h\R(\id-\R)=0$.
Similarly, we have $\Hom_{G,\O_Y}(\N,\M)=0$.

\paragraph
Note that $U_G(\M)$ is the sum of all the quasi-coherent submodules 
$\N$ of $\M$ such that $\N^G=0$, and is determined only by $G$, $Y$, and
$\M$, and is independent of the choice of $p$.
As the Reynolds operator 
$p$ is the projection with respect to the direct sum decomposition
$\M=\M^G\oplus U_G(\M)$, it is unique, if exists.
So $\frak R$ is also unique, if exists.

\begin{definition}
We say that an $S$-group scheme $G$ is {\em Reynolds}, if $G$ is 
flat quasi-compact quasi-separated, and for 
any affine open subscheme $U$ of $S$, 
the Reynolds operator of $G$ on $U$ exists.
\end{definition}

\begin{lemma}
Let $G$ be a flat quasi-compact quasi-separated $S$-group scheme, 
$Y$ an $S$-scheme on which $G$ acts trivially, and $U$ its open subset.
If $G$ has a Reynolds operator on $Y$ and
if the inclusion $j:U\hookrightarrow Y$ is quasi-compact, then
$G$ has a Reynolds operator on $U$.
\end{lemma}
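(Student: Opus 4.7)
The plan is to transfer the Reynolds operator from $Y$ to $U$ by means of the adjunction $j^{*}\dashv j_{*}$. Since $j\colon U\hookrightarrow Y$ is quasi-compact, and automatically quasi-separated as an open immersion, $j_{*}$ preserves quasi-coherence, so we have a functor $j_{*}\colon\Qch(G,U)\rightarrow\Qch(G,Y)$. Moreover the counit $\varepsilon\colon j^{*}j_{*}\rightarrow\Id$ is an isomorphism, and consequently $j_{*}$ is fully faithful (and in particular faithful).

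For $\N\in\Qch(G,U)$, my plan is to apply the given Reynolds operator on $Y$ to $j_{*}\N$ to obtain $p_{G,Y}(j_{*}\N)\colon j_{*}\N\rightarrow(j_{*}\N)^{G}$. By the Lemma stated just after (\ref{delta.par}), the canonical map $e(\N)\colon j_{*}(\N^{G})\rightarrow(j_{*}\N)^{G}$ is an isomorphism (this holds for any morphism between $G$-trivial schemes, with $G$ quasi-compact quasi-separated). Composing gives a morphism $\tilde p(\N):=e(\N)^{-1}\circ p_{G,Y}(j_{*}\N)\colon j_{*}\N\rightarrow j_{*}(\N^{G})$ in $\Qch(G,Y)$. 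Since $j_{*}$ is fully faithful, there exists a unique $p_{G,U}(\N)\colon\N\rightarrow\N^{G}$ in $\Qch(G,U)$ with $j_{*}p_{G,U}(\N)=\tilde p(\N)$, and I will take this to be the Reynolds operator of $G$ on $U$. Naturality of $p_{G,U}$ in $\N$ follows from naturality of each ingredient ($p_{G,Y}$, $e$, and the adjunction).

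To verify $p_{G,U}\gamma_{G,U}=\id$, I will use the same Lemma in (\ref{delta.par}), which yields the identity $j_{*}\gamma_{G,U}(\N)=\gamma_{G,Y}(j_{*}\N)\circ e(\N)$. Therefore
\[
j_{*}\bigl(p_{G,U}(\N)\gamma_{G,U}(\N)\bigr)=\tilde p(\N)\circ j_{*}\gamma_{G,U}(\N)=e(\N)^{-1}\circ\bigl(p_{G,Y}(j_{*}\N)\gamma_{G,Y}(j_{*}\N)\bigr)\circ e(\N)=e(\N)^{-1}\circ e(\N)=\id,
\]
where I have invoked the Reynolds identity $p_{G,Y}\gamma_{G,Y}=\id$ on $Y$. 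Faithfulness of $j_{*}$ then gives $p_{G,U}(\N)\gamma_{G,U}(\N)=\id_{\N^{G}}$, as required.

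The only steps that demand care are checking that $j_{*}$ really lands in $\Qch(G,Y)$ and that $e$ really is an isomorphism, but both are guaranteed by the standing hypotheses (flatness and quasi-compact-quasi-separatedness of $G$, quasi-compactness of $j$) together with the Lemma in (\ref{delta.par}); so I do not foresee any serious obstacle beyond bookkeeping with the natural transformations assembled in section~\ref{invariance.sec}.
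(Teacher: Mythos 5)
Your proof is correct and takes essentially the same route as the paper: both transfer $p_{G,Y}$ applied to $j_*\M$ back to $U$ using the isomorphisms $\varepsilon\colon j^*j_*\rightarrow\Id$ and $e\colon j_*(?)^G\rightarrow(?)^Gj_*$, and your descent via full faithfulness of $j_*$ is just the adjoint reformulation of the paper's explicit composite $\varepsilon\circ e^{-1}\circ p_{G,Y}\circ\varepsilon^{-1}$. Your check of $p_{G,U}\gamma_{G,U}=\id$ through the compatibility of $e$ with $\gamma$ simply makes explicit the step the paper dismisses as easy.
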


\begin{proof}
For $\M\in\Qch(G,U)$, 
define $p_{G,U}:\M\rightarrow \M^G$ to be the composite
\[
\M\xrightarrow{\varepsilon^{-1}} j^*j_*\M\xrightarrow{p_{G,Y}}
j^*(?)^Gj_*\M\xrightarrow{e^{-1}}j^*j_*(?)^G\M\xrightarrow{\varepsilon}
(?)^G\M.
\]
It is easy to see that this is the identity map on $\M^G$.
\end{proof}

\begin{corollary}
Let $G$ be a flat quasi-compact quasi-separated $S$-group scheme.
Assume that $G$ has a Reynolds operator over $S$, and $S$ is quasi-separated.
Then $G$ is Reynolds.
\qed
\end{corollary}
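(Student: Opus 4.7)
The plan is to invoke the preceding lemma applied with $Y=S$. Since $S$ is the base scheme, $G$ acts on $S$ trivially by default, and the hypothesis that $G$ has a Reynolds operator over $S$ is precisely what the lemma takes as input. So the strategy is simply: for each affine open $U\subset S$, produce the Reynolds operator on $U$ by specializing the preceding lemma to the inclusion $j\colon U\hookrightarrow S$.

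The only nontrivial check is that $j$ is quasi-compact, as the preceding lemma requires. Since $U$ is affine it is quasi-compact. By the assumption that $S$ is quasi-separated, the intersection of any two quasi-compact opens of $S$ is again quasi-compact. Choosing a cover $\{V_\alpha\}$ of $S$ by affine (hence quasi-compact) opens, one has $j^{-1}(V_\alpha)=U\cap V_\alpha$ quasi-compact for every $\alpha$, so $j$ is quasi-compact by the standard characterization of quasi-compact morphisms against an affine cover of the target.

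Applying the preceding lemma to $j\colon U\hookrightarrow S$ then produces the Reynolds operator of $G$ on $U$. Since the affine open $U\subset S$ was arbitrary, $G$ is Reynolds in the sense of the definition. I do not anticipate any real obstacle: the argument is essentially a formal packaging of the previous lemma together with the elementary fact that a quasi-compact open immersion into a quasi-separated scheme is a quasi-compact morphism.
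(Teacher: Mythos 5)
Your argument is correct and is exactly the intended one: the paper leaves the corollary as an immediate consequence of the preceding lemma applied to the inclusion of each affine open $U\hookrightarrow S$, and your verification that this inclusion is quasi-compact (affine opens are quasi-compact, and quasi-separatedness of $S$ makes $U\cap V_\alpha$ quasi-compact against an affine cover) is precisely the only point that needed checking.
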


\begin{lemma}\label{Reynolds-invariance-exact.lem}
Let $G$ be a flat quasi-compact quasi-separated $S$-group scheme,
$Y$ an $S$-scheme on which $G$-acts trivially, and assume that
$G$ has a Reynolds operator on $Y$.
Then 
\begin{enumerate}
\item[\bf 1] $(?)^G$ and $U_G$ are exact functors on $\Qch(G,Y)$
which preserve direct sums.
In particular, $H^i(G,\M)=0$ for $\M\in\Qch(G,Y)$ and $i>0$, where 
$H^i(G,?)=R^i(?)^G$, the derived functor of the functor $\Qch(G,Y)\rightarrow
\Qch(Y)$ \(not a functor from $\Mod(G,Y)$\).
\item[\bf 2] The full subcategory of the $G$-trivial quasi-coherent 
$(G,\O_Y)$-modules is closed under extensions and subquotients.
Similarly for the full subcategory of $G$-anti-trivial quasi-coherent
$(G,\O_Y)$-modules.
\end{enumerate}
\end{lemma}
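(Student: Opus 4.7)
The plan is to leverage the direct sum decomposition $\M \cong \M^G \oplus U_G(\M)$ induced by the idempotent $\frak R = \gamma p$. Since $p\gamma = \id$, the endomorphism $\frak R : \M \to \M$ satisfies $\frak R^2 = \gamma p \gamma p = \gamma p = \frak R$, and because $\Qch(G,Y)$ is abelian, $\M$ splits naturally as $\Image \frak R \oplus \Ker \frak R = \M^G \oplus U_G(\M)$. Crucially, this decomposition is natural in $\M$ because $\gamma$ and $p$ are natural transformations.

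For part \textbf{1}, given a short exact sequence $0 \to \M' \to \M \to \M'' \to 0$ in $\Qch(G,Y)$, apply the natural decomposition. Naturality of $\gamma$ and $p$ implies that each morphism in the sequence respects the $\M^G \oplus U_G(\M)$ decomposition, so the sequence splits as the direct sum of
\[
0 \to (\M')^G \to \M^G \to (\M'')^G \to 0
\quad\text{and}\quad
0 \to U_G(\M') \to U_G(\M) \to U_G(\M'') \to 0,
\]
both of which are therefore exact. Hence $(?)^G$ and $U_G$ are exact on $\Qch(G,Y)$. Exactness of $(?)^G$ immediately yields $H^i(G,\M) = R^i(?)^G\,\M = 0$ for $i > 0$. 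Preservation of direct sums by $(?)^G$ is already Lemma~\ref{invariance-qcqs.lem} (and alternatively follows because $(?)^G$ is a retract of $\Id$, which preserves direct sums); the analogous statement for $U_G$ follows because $U_G = \Ker(\R)$ and the kernel of an idempotent natural transformation commutes with direct sums.

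For part \textbf{2}, use the characterizations: $\M$ is $G$-trivial iff $U_G(\M) = 0$, and $\M$ is anti-trivial iff $\M^G = 0$. If $0 \to \M' \to \M \to \M'' \to 0$ is exact with both $\M'$ and $\M''$ trivial, the exact sequence
\[
0 \to U_G(\M') \to U_G(\M) \to U_G(\M'') \to 0
\]
from part \textbf{1} has vanishing outer terms, forcing $U_G(\M) = 0$. Closure under subobjects and quotients follows from the same exactness: if $U_G(\M) = 0$ and $\M' \hookrightarrow \M$, then $U_G(\M') \hookrightarrow U_G(\M) = 0$, and dually for quotients. The argument for anti-trivial modules is identical, using the exact sequence for $(?)^G$ in place of $U_G$.

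The only mildly technical point is verifying that $\gamma$ is a natural transformation not just of functors into $\Qch(Y)$ but into $\Qch(G,Y)$ with $\M^G$ viewed as a trivial $G$-module, as remarked in the setup; but this is exactly the mild abuse of notation explicitly permitted immediately before the lemma, and everything is formally compatible. No further obstacle is anticipated.
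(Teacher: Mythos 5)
Your proof is correct and follows essentially the same route as the paper: exactness comes from the natural decomposition $\Id=(?)^G\oplus U_G$, direct-sum preservation of $(?)^G$ is quoted from Lemma~\ref{invariance-qcqs.lem} (with a short extra argument for $U_G$, where the paper instead identifies $\bigoplus\M_i^G$ and $\bigoplus U_G(\M_i)$ as the trivial and anti-trivial parts of $\bigoplus\M_i$), and part \textbf{2} is deduced from exactness, which the paper phrases via the five lemma but amounts to your vanishing-of-$U_G$ (resp.\ $(?)^G$) criterion.
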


\begin{proof}
{\bf 1} 
As we have $\Id=(?)^G\oplus U_G$, we have that 
both $(?)^G$ and $U_G$ are exact.
Let $(\M_i)_{i\in I} $ be a family of objects in $\Qch(G,Y)$.
Then we have
\begin{equation}\label{direct-sum-triv-decomposition.eq}
\bigoplus_i\M_i=(\bigoplus_i \M_i^G)\oplus (\bigoplus_i U_G(\M_i)).
\end{equation}
As $(?)^G$ is compatible with the direct sum by 
Lemma~\ref{invariance-qcqs.lem}, we have that
$(\bigoplus_i \M_i^G)^G=\bigoplus_i((\M_i)^G)^G=\bigoplus_i \M_i^G$,
and hence $\bigoplus \M_i^G$ is $G$-trivial.
As $(\bigoplus_i U_G(\M_i))^G=\bigoplus_i U_G(\M_i)^G=0$, 
we have that $\bigoplus U_G(\M_i)$ is anti-trivial.
So by the decomposition (\ref{direct-sum-triv-decomposition.eq}), we 
have that $\bigoplus_i \M_i^G=(\bigoplus_i\M_i)^G$ and 
$\bigoplus_i U_G(\M_i)=U_G(\bigoplus_i \M_i)$.

{\bf 2} follows easily by the five lemma and {\bf 1}.
\end{proof}

\begin{lemma}\label{Reynolds-affine-i.lem}
Let the notation be as in {\rm Lemma~\ref{Reynolds-invariance-exact.lem}}.
Assume that $Y=\Spec R$ is affine.
If $V$ is a $G$-trivial $R$-module and $M$ is a $G$-module, then
$(V\otimes_R M)^G=V\otimes_R M^G$ and $U_G(V\otimes_R M)=V\otimes_R U_G(M)$.
\end{lemma}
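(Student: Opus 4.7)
The plan is to reduce both equalities to a single computation: that the Reynolds operator satisfies $p_{V \otimes_R N} = \mathrm{id}_V \otimes p_N$ for every $(G,\O_Y)$-module $N$. Once this is established, the direct-sum decomposition $M = M^G \oplus U_G(M)$ furnished by Lemma~\ref{Reynolds-invariance-exact.lem} gives
\[
V \otimes_R M = (V \otimes_R M^G) \oplus (V \otimes_R U_G(M))
\]
as $(G,\O_Y)$-modules (the summand $V \otimes_R M^G$ is $G$-trivial because the tensor product of two $G$-trivial modules is $G$-trivial, by the characterization in Lemma~\ref{G-trivial.thm}), and $p_{V \otimes_R M}$ becomes the projection onto $V \otimes_R M^G$ along $V \otimes_R U_G(M)$. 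The two claimed equalities then read off from $(?)^G = \Image(p)$ and $U_G = \Ker(p)$.

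To identify $p_{V \otimes_R N}$ I would exploit naturality. The key observation is that for each global section $v$ of $V$, the $R$-linear map
\[
\varphi_v : N \to V \otimes_R N, \qquad n \mapsto v \otimes n,
\]
is $G$-equivariant whenever $V$ is $G$-trivial. Indeed, since the coaction on $V$ sends $v$ to $v \otimes 1$, unwinding the diagonal coaction on the tensor product shows that $\rho_{V \otimes_R N} \circ \varphi_v$ coincides with $(\varphi_v \otimes \mathrm{id}) \circ \rho_N$. Applying naturality of the Reynolds operator $p = p_{G,Y}$ to $\varphi_v$ then yields
\[
p_{V \otimes_R N}(v \otimes n) = v \otimes p_N(n).
\]
Since the elementary tensors $v \otimes n$ generate $V \otimes_R N$ as an $R$-module and $p$ is $R$-linear, this extends to the desired identity $p_{V \otimes_R N} = \mathrm{id}_V \otimes p_N$.

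The main obstacle is the verification of $G$-equivariance of $\varphi_v$ and the ensuing identification of Reynolds operators; everything else is a formal consequence of the exactness and direct-sum preservation of $(?)^G$ and $U_G$ from Lemma~\ref{Reynolds-invariance-exact.lem}. It is worth noting that the triviality of $V$, rather than mere $R$-flatness, is precisely what is needed to make $\varphi_v$ equivariant, which is why the Reynolds approach succeeds here even though $V$ itself need not be $R$-flat.
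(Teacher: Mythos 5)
Your proof is correct, but it takes a different route from the paper. The paper's argument never touches the Reynolds operator on $V\otimes_R M$ directly: it chooses a $G$-trivial free $R$-module $F$ with a surjection $F\rightarrow V$, observes that $F\otimes_R M^G$ (a direct sum of copies of $M^G$) is $G$-trivial and $F\otimes_R U_G(M)$ is anti-trivial, and then invokes the closure of the trivial and anti-trivial subcategories under quotients (Lemma~\ref{Reynolds-invariance-exact.lem}, {\bf 2}) to conclude that $V\otimes_R M^G$ is trivial and $V\otimes_R U_G(M)$ is anti-trivial, after which the decomposition $V\otimes_R M=(V\otimes_R M^G)\oplus(V\otimes_R U_G(M))$ forces both equalities. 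You instead identify the Reynolds operator itself, $\frak R_{V\otimes_R M}=\id_V\otimes\frak R_M$, by applying naturality of $p$ to the maps $\varphi_v\colon n\mapsto v\otimes n$, and then read off image and kernel; this is more direct (no free cover needed) and yields slightly more, namely an explicit formula for the projector on the tensor product, which is essentially the content later used in Lemma~\ref{Reynolds-four-operations.lem}, {\bf 3}. One caveat: your verification that $\varphi_v$ is equivariant is phrased via the coaction $v\mapsto v\otimes 1$, which presumes a comodule picture and hence $G$ affine, whereas in this lemma $G$ is only flat quasi-compact quasi-separated; the repair is immediate, since $V$ trivial gives $\Gamma(Y,\V^G)\cong\Gamma(Y,\V_0)=V$ and $\Gamma(Y,\V^G)\cong\Hom_{G,\O_Y}(\O_Y,\V)$, so each $v\in V$ is a $G$-morphism $\O_Y\rightarrow\V$ and $\varphi_v=v\otimes\id_M$ is a morphism in $\Qch(G,Y)$. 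With that adjustment your argument is complete in the paper's generality.
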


\begin{proof}
There is a $G$-trivial free $R$-module $F$ and a surjection $F\rightarrow V$.
Then $F\otimes_R M^G$ is a direct sum of copies of $M^G$ as a $(G,R)$-module,
and hence it is $G$-trivial.
Being a homomorphic image of $F\otimes_R M^G$, $V\otimes_R M^G$ 
is $G$-trivial.
Similarly, $F\otimes_R U_G(M)$ is $G$-anti-trivial, and hence so is
$V\otimes_R U_G(M)$.
As we have the decomposition
\[
V\otimes_R M=V\otimes_R M^G \oplus V\otimes_R U_G(M),
\]
we must have $(V\otimes_R M)^G=V\otimes_R M^G$ and
$U_G(V\otimes_R M)= V\otimes_R U_G(M)$.
\end{proof}

\begin{lemma}\label{Reynolds-affine-ii.lem}
Let the notation be as in {\rm Lemma~\ref{Reynolds-affine-i.lem}}.
Let $R'$ be an $R$-algebra \(on which $G$ acts trivially\), 
and set $h:Y'=\Spec R'\rightarrow \Spec R=Y$ 
be the associated map.
Then 
\begin{enumerate}
\item[\bf 1] $G$ also has a Reynolds operator on $Y'$.
\item[\bf 2] $(?)^Gh_*$ and $U_G h_*$ are canonically identified with 
$h_*(?)^G$ and $h_*U_G$, respectively.
\item[\bf 3] $h_*p_{G,Y'}=p_{G,Y}h_*$.
\item[\bf 4] $(?)^G h^*$ and $U_G h^*$ are canonically identified with
$h^*(?)^G$ and $h^*U_G$, respectively.
\item[\bf 5] For a $(G,R)$-module $M$, the diagram
\[
\xymatrix{
M \ar[r]^-{\eta_M} \ar[d]^-{p} & R'\otimes_R M \ar[d]^-{1\otimes p} \ar[dr]^p \\
M^G \ar[r]^-{\eta_{M^G}} & R'\otimes_R M^G \ar[r]^-\cong & (R'\otimes_R M)^G
}
\]
is commutative.
\end{enumerate}
\end{lemma}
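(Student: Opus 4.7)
My plan is to establish the five parts in an order that respects their logical dependencies: first Part~2 (compatibility with $h_*$), then use it to construct the Reynolds operator in Part~1, then Parts~3, 4, 5 follow as naturality/base-change statements.

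\textbf{Part 2.} Since $h$ is an affine morphism between affine schemes, $h_*\colon \Qch(G,Y')\to\Qch(G,Y)$ is exact. Writing $p'\colon G\times Y'\to Y'$ and $p\colon G\times Y\to Y$ for the second projections and $h_G=1_G\times h$, the fiber square gives $h_* p'_* = p_* (h_G)_*$, and $(h_G)_*\N_1 = (h_*\N)_1$ by the structure data computation in~(\ref{xi-structure-data.par}) (recalling $c$ is the identity). Thus $\N^G$, defined as the kernel of $\N\to p'_*\N_1$, pushes forward to the kernel of $h_*\N\to p_*(h_*\N)_1$, i.e.\ $(h_*\N)^G$; this identification is $e\colon h_*(?)^G\to (?)^G h_*$ of~(\ref{delta.par}). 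The same argument applied to the Reynolds projection (once Part~1 is in hand, or equivalently using that $U_G$ is a kernel) identifies $h_* U_G$ with $U_G h_*$.

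\textbf{Part 1.} For $\N\in\Qch(G,Y')$, define $p_{G,Y'}(\N)\colon \N\to\N^G$ to be the unique morphism of quasi-coherent $\O_{Y'}$-modules whose pushforward under $h_*$ is $p_{G,Y}(h_*\N)\colon h_*\N\to(h_*\N)^G = h_*\N^G$; such a morphism exists because $h_*$ is fully faithful on quasi-coherent sheaves for affine $h$. The main (and only) substantive point is that this $R$-linear map is in fact $R'$-linear: for any local section $r$ of $\O_{Y'}$, multiplication by $r$ is a $G$-equivariant endomorphism of $\N$ (because $G$ acts trivially on $\O_{Y'}$), and naturality of $p_{G,Y}$ applied to $h_*(\cdot r)$ gives the commutation $p_{G,Y'}\cdot r = r \cdot p_{G,Y'}$. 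Since $p_{G,Y}\gamma=\id$ and $e$ is compatible with $\gamma$ (see the lemma in~(\ref{delta.par})), the identity $p_{G,Y'}\gamma=\id$ follows. Uniqueness is automatic from the preceding discussion of the direct-sum decomposition.

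\textbf{Parts 3 and 4.} Part~3 is immediate from the construction in Part~1, combined with Part~2 and the compatibility of $\gamma$ with $e$. For Part~4, identify $h^*\M$ with $R'\otimes_R \M$ (abusing notation between modules and quasi-coherent sheaves on affine schemes). Since $R'$ is a $G$-trivial $R$-module, Lemma~\ref{Reynolds-affine-i.lem} applied with $V=R'$ gives $(R'\otimes_R \M)^G = R'\otimes_R \M^G$ and $U_G(R'\otimes_R \M)=R'\otimes_R U_G(\M)$, which is exactly $h^*(?)^G\cong (?)^G h^*$ and likewise for $U_G$. (Functorially, these are the natural transformation $\epsilon$ of~(\ref{epsilon.par}).)

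\textbf{Part 5.} The map $\eta_M\colon M\to R'\otimes_R M$ is the unit of adjunction and is a morphism in $\Qch(G,Y)$ (after regarding $R'\otimes_R M$ as an $R$-module with $G$-action). Naturality of $p=p_{G,Y}$ applied to $\eta_M$ gives commutativity of the left square. The right isomorphism $R'\otimes_R M^G\xrightarrow{\cong}(R'\otimes_R M)^G$ is the isomorphism of Part~4, and its composition with $\eta_{M^G}$ is the adjunction unit for the $R'$-module $R'\otimes_R M^G$, which is precisely $p_{G,Y'}$ pre-composed with $\eta_M$ on the right triangle—this is again Part~3 combined with Part~4. The diagram therefore commutes.

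The only non-formal step is the $R'$-linearity verification in Part~1; everything else is bookkeeping with naturality, base change for affine $h_*$, and an application of the already proved Lemma~\ref{Reynolds-affine-i.lem}.
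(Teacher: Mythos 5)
Your proof is correct and runs along essentially the same lines as the paper's: the whole lemma reduces to the observation that the Reynolds decomposition of a $(G,R')$-module, formed over $R$, is automatically compatible with the $R'$-structure because multiplication by elements of $R'$ is a $G$-equivariant endomorphism — you express this as $R'$-linearity of the projection via naturality of $p_{G,Y}$, while the paper expresses it by noting that the ($G$-linear) product map $R'\otimes_R M\rightarrow M$ carries the trivial part into $M^G$ and the anti-trivial part into $U_G(M)$ (using Lemma~\ref{Reynolds-affine-i.lem}); parts {\bf 4} and {\bf 5} are then obtained in both arguments from Lemma~\ref{Reynolds-affine-i.lem} with $V=R'$ together with {\bf 3}. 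One inaccuracy to fix: $h_*$ is \emph{not} fully faithful on quasi-coherent sheaves for an affine morphism (it is faithful but not full — e.g.\ $\Hom_k(k[x],k[x])$ is much larger than $\Hom_{k[x]}(k[x],k[x])$), so existence of the descended morphism cannot be justified that way; fortunately the appeal is superfluous, since the $R'$-linearity verification you give immediately afterwards is exactly what legitimizes the definition of $p_{G,Y'}$, and the rest of your argument stands.
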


\begin{proof}
Let $M$ be a $(G,R')$-module.
Then we can decompose $M=M^G\oplus U_G(M)$ as $(G,R)$-modules.
On the other hand, As $R'\otimes_R M^G$ is $G$-trivial, it is mapped to
$M^G$ by the product $a_M:R'\otimes_R M\rightarrow M$.
Similarly, $R'\otimes_R U_G(M)$ is mapped to $U_G(M)$.
Hence $M^G$ and $U_G(M)$ are $(G,R')$-submodules of $M$, and
the decomposition $M=M^G\oplus U_G(M)$ shows {\bf 1, 2, 3}.

Next, let $M$ be a $(G,R)$-module.
Then by Lemma~\ref{Reynolds-affine-i.lem}, $R'\otimes_R M^G$ is identified
with $(R'\otimes_R M)^G$.
As a $(G,R)$-module, $U_G(R'\otimes_R M)$ is identified with $R'\otimes_R
U_G(M)$.
However, by {\bf 3}, this is an identification also as $(G,R')$-modules.
Now {\bf 4} and {\bf 5} are clear.
\end{proof}

\begin{lemma}\label{covering-Reynolds.lem}
Let $G$ be a flat quasi-compact quasi-separated $S$-group scheme,
$Y$ an $S$-scheme on which $G$ acts trivially.
Let $Y=\bigcup_i U_i$ be an affine open covering such that 
$G$ has a Reynolds operator over $U_i$ for each $i$.
Then for any $Y$-scheme $Y'$ \(with a trivial action\), $G$ has a Reynolds
operator.
\end{lemma}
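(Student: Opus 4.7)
The plan is to construct the Reynolds operator on $Y'$ locally and glue, using the uniqueness of the Reynolds operator noted in (\ref{trivial-anti-trivial.par}). Let $h:Y'\to Y$ denote the structure map. Choose an affine open cover $Y'=\bigcup_\lambda V_\lambda$ refining $\{h^{-1}(U_i)\}$, so that each $V_\lambda$ is affine and $h(V_\lambda)\subseteq U_{i(\lambda)}$ for some index $i(\lambda)$. Since $V_\lambda$ is then an affine scheme over the affine scheme $U_{i(\lambda)}$, on which $G$ has a Reynolds operator by hypothesis, Lemma~\ref{Reynolds-affine-ii.lem}, {\bf 1} furnishes a Reynolds operator $p^{(\lambda)}_{G,V_\lambda}:\Id\to (?)^G$ on $\Qch(G,V_\lambda)$.

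For $\M\in\Qch(G,Y')$, the canonical isomorphism $\epsilon: j_\lambda^*\M^G\cong (j_\lambda^*\M)^G$ of (\ref{epsilon.par}) (valid because $j_\lambda:V_\lambda\hookrightarrow Y'$ is flat and $G$ is quasi-compact quasi-separated) lets me regard the local Reynolds operator as a morphism $p^{(\lambda)}(\M):\M|_{V_\lambda}\to \M^G|_{V_\lambda}$. To glue, I must check that $p^{(\lambda)}(\M)$ and $p^{(\mu)}(\M)$ agree on $V_\lambda\cap V_\mu$. Cover $V_\lambda\cap V_\mu$ by affine opens $W$ with $h(W)$ contained in some $U_{i(W)}$; such a cover exists by refining any affine cover of the intersection by the preimages of the $U_i$. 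Then $W$ is an affine $U_{i(W)}$-scheme, hence carries a (unique) Reynolds operator by Lemma~\ref{Reynolds-affine-ii.lem}, {\bf 1}. On the other hand, $W$ is also an affine $V_\lambda$-scheme (respectively $V_\mu$-scheme), and the compatibility asserted in Lemma~\ref{Reynolds-affine-ii.lem}, {\bf 3} shows that the restriction of $p^{(\lambda)}(\M)$ (respectively $p^{(\mu)}(\M)$) to $W$ is again a Reynolds operator on $W$. By the uniqueness observed in (\ref{trivial-anti-trivial.par}), both restrictions coincide with the Reynolds operator on $W$, hence with each other. Since this holds on each $W$ in an affine cover of $V_\lambda\cap V_\mu$, the sheaf property gives $p^{(\lambda)}(\M)|_{V_\lambda\cap V_\mu}=p^{(\mu)}(\M)|_{V_\lambda\cap V_\mu}$.

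The local morphisms therefore glue to a global map $p_{G,Y'}(\M):\M\to \M^G$ which satisfies $p_{G,Y'}(\M)\circ\gamma=\id$ (this identity can be verified locally on each $V_\lambda$). Naturality in $\M$ is automatic: given $f:\M\to \N$ in $\Qch(G,Y')$, the diagram $p_{G,Y'}(\N)\circ f=f^G\circ p_{G,Y'}(\M)$ may be checked on each $V_\lambda$, where it follows from the naturality of $p^{(\lambda)}$, or alternatively from the uniqueness of the projection with image $\M^G$ and kernel $U_G(\M)$.

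The only real obstacle is the gluing step; but once the preliminary lemmas identifying $\M^G|_{V_\lambda}$ with $(\M|_{V_\lambda})^G$ and transporting Reynolds operators along affine $Y$-schemes are in hand, the argument reduces to the uniqueness of the projection on each small affine $W$, and no further calculation is required.
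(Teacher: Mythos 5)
Your proof is correct and follows essentially the same route as the paper: there, too, one produces local Reynolds operators on affine opens mapping into the $U_i$ via Lemma~\ref{Reynolds-affine-ii.lem} and glues them over $Y'$ (the paper packages the overlap/gluing step through the equivalence of sheaf categories over the basis of such affine opens, rather than your explicit cover of $V_\lambda\cap V_\mu$ plus the uniqueness of the projection). One small correction: the compatibility needed at the overlap step — that restricting the Reynolds operator along the affine morphism $W\to V_\lambda$ again yields the Reynolds operator on $W$ — is Lemma~\ref{Reynolds-affine-ii.lem}, {\bf 4} and {\bf 5} (pullback), not {\bf 3}, which concerns pushforward.
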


\begin{proof}
First, we prove that $G$ has a Reynolds operator on $Y$.
Let $\Cal C=\Zar(Y)$, and let $\Cal D$ be the full subcategory of $\Cal C$
consisting of affine open subsets $W$ of $Y$ such that $W\subset U_i$ for
some $i$.
For $W\in\Cal D$, $G$ has a Reynolds operator on $W$ by
Lemma~\ref{Reynolds-affine-ii.lem}.
So defining $\bar p: \M|_{\Cal D}\rightarrow \M^G|_{\Cal D}$ by
\begin{multline*}
\Gamma(W,\M|_{\Cal D})=\Gamma(W,\M|_W)\xrightarrow{p_{G,W}}
\Gamma(W,(\M|_W)^G)
\\
\xrightarrow{\epsilon^{-1}}\Gamma(W,(\M^G)|_W)
=\Gamma(W,\M^G|_{\Cal D}),
\end{multline*}
we get a functorial splitting of $\bar i:\M^G|_{\Cal D}\rightarrow \M|_{\Cal D}$
by Lemma~\ref{Reynolds-affine-ii.lem}, {\bf 4, 5}.
As the restriction from $\Cal C$ to $\Cal D$ gives an equivalence
$\Sh(\Cal C)\rightarrow \Sh(\Cal D)$ by \cite[(4.6)]{Hashimoto5}, 
there is a unique 
splitting $p:\M\rightarrow \M^G$ of $i:\M^G\rightarrow \M$ 
in $\Sh(\Cal C)$ whose restriction
to $\Sh(\Cal D)$ is $\bar p$.
By the uniqueness, it is easy to see that the restriction of $p$ to each
$U_i$ is a Reynolds operator, and hence $p$ is a morphism in 
$\Qch(G,Y)$, and $p$ is the Reynolds operator of $G$ on $Y$.

Next, let $h:Y'\rightarrow Y$ be a $Y$-scheme.
Then for each affine open subset $W$ of $Y'$ such that $h(W)\subset U_i$
for some $i$, $G$ has a Reynolds operator over $W$.
Such $W$ covers $Y'$, and hence $Y'$ has a Reynolds operator by the 
first step.
\end{proof}

\begin{corollary}\label{universal-Reynolds.cor}
Let $G$ be a flat quasi-compact quasi-separated $S$-group scheme.
If $G$ is Reynolds, then for any $S$-scheme $Y$ with the trivial $G$-action,
$G$ has a Reynolds operator over $Y$.
\end{corollary}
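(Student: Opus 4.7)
The statement is a direct application of the preceding Lemma~\ref{covering-Reynolds.lem}, with the base scheme $S$ playing the role of $Y$ in that lemma and the target scheme $Y$ in the corollary playing the role of $Y'$.

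First, regard $S$ itself as an $S$-scheme with the (vacuously) trivial $G$-action: as $G$ is an $S$-group scheme, its structure morphism $G \to S$ is $G$-invariant, which is the requirement. Choose any affine open covering $S = \bigcup_i U_i$; such a covering exists since $S$ is a scheme. By the definition of a Reynolds group scheme, $G$ has a Reynolds operator on each affine open subscheme of $S$, so in particular on each $U_i$.

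Now apply Lemma~\ref{covering-Reynolds.lem} to this covering $S = \bigcup_i U_i$. The conclusion of that lemma states that for any $S$-scheme $Y'$ with trivial $G$-action, $G$ has a Reynolds operator on $Y'$. Taking $Y' = Y$ gives exactly the claim of the corollary.

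There is no real obstacle here: the work has been done in the definition of ``Reynolds'' (which guarantees Reynolds operators on an affine cover of $S$) and in Lemma~\ref{covering-Reynolds.lem} (which propagates the existence to arbitrary schemes over such a base). The only small point to verify is that $S$ genuinely qualifies as a $G$-scheme with trivial action in the sense of the lemma, but this is immediate from the definition of an $S$-group scheme.
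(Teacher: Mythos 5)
Your proof is correct and is exactly the argument the paper intends: the corollary is stated without proof precisely because it follows by taking $Y=S$ (with its trivial $G$-action and any affine open cover, on whose members the Reynolds operators exist by the definition of a Reynolds group scheme) in Lemma~\ref{covering-Reynolds.lem} and then specializing $Y'$ to the given $S$-scheme $Y$. No gaps; your remark that $S$ itself qualifies as a trivially acted-on $G$-scheme is the only point needing verification, and you handle it correctly.
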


\begin{lemma}\label{Reynolds-four-operations.lem}
Let $G$ be a Reynolds $S$-group scheme, and $h:Y\rightarrow S$ a morphism.
Then we have the following.
\begin{enumerate}
\item[\bf 0] The base change $G_Y=Y\times_S G$ is a Reynolds $Y$-group scheme.
\item[\bf 1] The canonical map 
$\epsilon:h^*(?)^G\rightarrow (?)^Gh^*$ \(see 
{\rm(\ref{epsilon.par})}\) is an isomorphism between functors from 
$\Qch(G,S)$ to $\Qch(G,Y)$.
The composite
\[
\Id h^*=h^*=h^*\Id\xrightarrow{p_{G,S}}h^*(?)^G\xrightarrow{\epsilon}(?)^Gh^*
\]
agrees with $p_{G,Y}$.
\item[\bf 2] Let $h$ be quasi-compact quasi-separated.
Then the composite
\[
\Id h_*=h_*=h_*\Id \xrightarrow{p_{G,Y}}h_*(?)^G
\xrightarrow{e}(?)^Gh_*
\]
is $p_{G,S}$.
\item[\bf 3] Let $\Cal V\in\Qch(S)$ be $G$-trivial, and $\M\in\Qch(G,S)$.
Then $1\otimes\gamma:\Cal V\otimes \M^G\rightarrow \Cal V\otimes \M$ is
a monomorphism, and it induces an isomorphism $\gamma'$ 
onto $(\Cal V\otimes \M)^G$.
The composite
\[
\Cal V\otimes \M \xrightarrow{1\otimes p_{G,S}}\Cal V\otimes\M^G
\xrightarrow{\gamma'}(\Cal V\otimes \M)^G
\]
is $p_{G,S}$.
Let $\delta:U_G(\M)\rightarrow \M$ be the inclusion.
Then $1\otimes\delta:\Cal V\otimes U_G(\M)\rightarrow \Cal V\otimes \M$ 
induces an isomorphism $\delta':\Cal V\otimes U_G(\M)
\rightarrow U_G(\Cal V\otimes \M)$.
\item[\bf 4] Let $\M,\N\in\Qch(G,S)$, and assume that $\M$ is trivial and
$\N$ is anti-trivial.
Then $\uHom_{G,\O_S}(\M,\N)=0=\uHom_{G,\O_S}(\N,\M)$.
\item[\bf 5] Let $\Cal V\in\Qch(G,S)$ be $G$-trivial, and $\M\in\Qch(G,S)$.
Then $\uHom_{\O_S}(\Cal V,\M^G)$ is $G$-trivial, and 
$\uHom_{G,\O_S}(\Cal V,U_G(\M))=0$.
In particular, 
\[
\uHom_{\O_S}(\Cal V,\M^G)\rightarrow\uHom_{\O_S}(\Cal V,\M)
\]
is an isomorphism
onto $\uHom_{\O_S}(\Cal V,\M)^G$.
If, moreover, $\uHom_{\O_S}(\Cal V,\M)$ is quasi-coherent, then
\[
\uHom_{\O_S}(\Cal V,U_G(\M)) \rightarrow \uHom_{\O_S}(\Cal V,\M)
\]
is an isomorphism onto 
$U_G(\uHom_{\O_S}(\Cal V,\M))$.
\item[\bf 6] Let $\Cal V\in\Qch(G,S)$ be $G$-trivial, and $\M\in\Qch(G,S)$.
Then $\uHom_{\O_S}(\M^G,\Cal V)$ is $G$-trivial, and 
$\uHom_{G,\O_S}(U_G(\M),\Cal V)=0$.
In particular, 
the monomorphism $p_{G,S}^*:\uHom_{\O_S}(\M^G,V)\rightarrow 
\uHom_{\O_S}(\M,\V)$ is an isomorphism onto $\uHom_{\O_S}(\M,\V)^G$.
If, moreover, $\uHom_{\O_S}(\M,\V)$ is quasi-coherent, then
the monomorphism $q_{G,S}^*: \uHom_{\O_S}(U_G(\M),\V)\rightarrow
\uHom_{\O_S}(\M,\V)$ is an isomorphism onto 
$U_G(\uHom_{\O_S}(\M,\V))$.
\end{enumerate}
\end{lemma}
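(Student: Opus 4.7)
My plan is to exploit the direct-sum decomposition $\M=\M^G\oplus U_G(\M)$ provided by the Reynolds operator throughout, reducing most statements to the affine case already handled in Lemmas~\ref{Reynolds-affine-i.lem} and \ref{Reynolds-affine-ii.lem}, and using Lemma~\ref{covering-Reynolds.lem} to globalize. I would do the items in order, since later items reuse earlier ones.

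For {\bf 0}, cover $S$ by affines $U_i$ on which $G$ has a Reynolds operator and cover $Y$ by affine opens $V_j$ each mapping into some $U_i$. By Lemma~\ref{Reynolds-affine-ii.lem}, {\bf 1}, $G$ (and hence $G_Y$) has a Reynolds operator on each $V_j$; Lemma~\ref{covering-Reynolds.lem} then yields a Reynolds operator over any $Y$-scheme. For {\bf 1}, by Corollary~\ref{universal-Reynolds.cor} the natural transformation $\epsilon:h^*(?)^G\to (?)^Gh^*$ lands in a well-defined functor; restricted to affine opens it is the isomorphism of Lemma~\ref{Reynolds-affine-ii.lem}, {\bf 4}, and the compatibility with $p_{G,\bullet}$ is exactly diagram {\bf 5} of that lemma transferred to the sheaf setting via the equivalence between sheaves on the site and on an affine basis. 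For {\bf 2}, the argument is dual: by Lemma~\ref{Reynolds-affine-ii.lem}, {\bf 2, 3}, the claim holds over affine opens, and uniqueness of the Reynolds operator forces the identity $h_*p_{G,Y}\cdot e=p_{G,S}$ globally.

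For {\bf 3}, working Zariski-locally over an affine open of $S$ I reduce to Lemma~\ref{Reynolds-affine-i.lem}: tensoring the decomposition $\M=\M^G\oplus U_G(\M)$ with the $G$-trivial $\V$ gives $\V\otimes \M=(\V\otimes \M^G)\oplus (\V\otimes U_G(\M))$ where the first summand is $G$-trivial and the second $G$-anti-trivial (these are preserved under quotients by Lemma~\ref{Reynolds-invariance-exact.lem}, {\bf 2}, applied to a presentation of $\V$ by $G$-trivial free modules). Then uniqueness of the decomposition identifies the summands with $(\V\otimes\M)^G$ and $U_G(\V\otimes\M)$. For {\bf 4}, any $h\in\uHom_{G,\O_S}(\M,\N)$ commutes with the Reynolds projection by naturality, so $\frak R_\N\circ h=h\circ\frak R_\M$; if $\M$ is trivial and $\N$ anti-trivial this reads $0=h$, and the other direction is analogous.

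For {\bf 5} and {\bf 6}, the key move is that naturality of $\frak R$ forces any $(G,\O_S)$-linear map $h:\V\to\M$ from a $G$-trivial $\V$ to satisfy $h=\frak R_\M h$, so $h$ factors through $\M^G$; dually any $h:\M\to\V$ factors through $\M^G$, yielding the claimed inclusions onto the invariant parts. To upgrade from $\uHom_{G,\O_S}$ to the assertion that $\uHom_{\O_S}(\V,\M^G)$ is itself $G$-\emph{trivial} (not merely the invariant part), I use that the $G$-action on $\uHom_{\O_S}(\V,\M^G)$ is built from the actions on $\V$ and $\M^G$, both trivial, so the structure data factor through identities. The anti-trivial halves $\uHom_{G,\O_S}(\V,U_G(\M))=0$ and $\uHom_{G,\O_S}(U_G(\M),\V)=0$ follow from {\bf 4} applied to $U_G(\M)$ and $\V$ (or $\M^G$); then the decomposition $\M=\M^G\oplus U_G(\M)$ applied inside $\uHom_{\O_S}(\V,\M)$ gives both the iso onto the invariants and, when $\uHom_{\O_S}$ is quasi-coherent, the iso onto the anti-invariants. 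I expect the main obstacle to be the sheaf-theoretic bookkeeping in {\bf 5} and {\bf 6}: specifically, verifying that the internal Hom inherits the expected $G$-module structure compatibly with the Reynolds splitting when $\uHom_{\O_S}(\V,\M)$ fails to be quasi-coherent, which is why that part of the statement is gated by the quasi-coherence hypothesis.
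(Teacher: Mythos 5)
Your proposal is correct and follows essentially the same route as the paper: item {\bf 0} via Corollary~\ref{universal-Reynolds.cor}/Lemma~\ref{covering-Reynolds.lem}, items {\bf 1} and {\bf 2} by reduction to the affine statements of Lemma~\ref{Reynolds-affine-ii.lem} and gluing (the paper makes the gluing explicit via Corollary~\ref{epsilon-composition.cor} and gets the compatibility with $p$ from Lemma~\ref{eta-bar-eta.thm} plus uniqueness of the Reynolds operator), item {\bf 3} by local reduction to Lemma~\ref{Reynolds-affine-i.lem}, item {\bf 4} by the orthogonality computation of (\ref{trivial-anti-trivial.par}) applied over (affine) opens, and items {\bf 5}, {\bf 6} from {\bf 4} together with the decomposition $\M=\M^G\oplus U_G(\M)$.
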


\begin{proof}
{\bf 0} is trivial by Corollary~\ref{universal-Reynolds.cor}.

{\bf 1} We prove that $\epsilon$ is an isomorphism.
The case that $h$ is an open immersion (from a sufficiently small
affine open subset) follows from Lemma~\ref{covering-Reynolds.lem}.
The case that both $Y$ and $Y'$ are affine follows from 
Lemma~\ref{Reynolds-affine-ii.lem}.
The general case follows from these, using 
Corollary~\ref{epsilon-composition.cor}.

The latter part is obvious, because the composite map is the identity on 
$(?)^Gh^*$ by Lemma~\ref{eta-bar-eta.thm}.

{\bf 2} is proved similarly to {\bf 1}.

{\bf 3} As $\V\otimes\M^G$ is $G$-trivial, it suffices to show that
$\V\otimes U_G(\M)$ is anti-$G$-trivial.
This is checked locally, and we may assume that $Y$ is affine.
Then this is Lemma~\ref{Reynolds-affine-i.lem}.

{\bf 4} It suffices to show that $\Hom_{G,\O_U}(\M|_U,\N|_U)=0=\Hom_{G,\O_U}(
\N|_U,\M|_U)$ for any affine open subset $U$.
As $\M|_U$ is trivial and $\N|_U$ is anti-trivial, 
This is checked in (\ref{trivial-anti-trivial.par}).

{\bf 5, 6} follow from {\bf 4}.
\end{proof}

\begin{lemma}\label{direct-summand-Reynolds.lem}
Let $G$ be a Reynolds $S$-group scheme.
Let $\Cal B$ be a quasi-coherent $(G,\O_S)$-algebra, and 
$\Cal A=\Cal B^G$.
Then the Reynolds operator $p_{G,S}:\Cal B\rightarrow \Cal A$ is
$(G,\Cal A)$-linear.
In particular, $\Cal A$ is a direct summand subalgebra of $\Cal B$.
\end{lemma}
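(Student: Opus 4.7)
The plan is to deduce $\Cal A$-linearity of $p := p_{G,S}$ from its naturality as a morphism in $\Qch(G,S)$, combined with Lemma~\ref{Reynolds-four-operations.lem}, {\bf 3}. The $(G,\O_S)$-algebra structure on $\Cal B$ is encoded by a $(G,\O_S)$-linear multiplication $m : \Cal B \otimes_{\O_S} \Cal B \rightarrow \Cal B$. Restricting along the inclusion $\gamma \otimes 1 : \Cal A \otimes_{\O_S} \Cal B \hookrightarrow \Cal B \otimes_{\O_S} \Cal B$, which is a morphism in $\Qch(G,S)$ because $\gamma$ is $G$-equivariant and $\Cal A$ is $G$-trivial, we obtain a $(G,\O_S)$-linear morphism $m' : \Cal A \otimes_{\O_S} \Cal B \rightarrow \Cal B$.

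Since $p$ is a natural transformation between endofunctors of $\Qch(G,S)$, applying it to $m'$ yields a commutative square
\[
\xymatrix{
\Cal A \otimes_{\O_S} \Cal B \ar[r]^-{m'} \ar[d]_{p_{\Cal A \otimes \Cal B}} &
\Cal B \ar[d]^{p_{\Cal B}} \\
(\Cal A \otimes_{\O_S} \Cal B)^G \ar[r]^-{(m')^G} & \Cal A\;.
}
\]
By Lemma~\ref{Reynolds-four-operations.lem}, {\bf 3}, with $\V = \Cal A$ and $\M = \Cal B$, the map $p_{\Cal A \otimes \Cal B}$ is identified, via the isomorphism $\gamma' : \Cal A \otimes \Cal B^G \xrightarrow{\sim} (\Cal A \otimes \Cal B)^G$, with $1_{\Cal A} \otimes p_{\Cal B}$. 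Chasing a local section $a \otimes b$ around the square then gives $p(ab) = a \cdot p(b)$, which is the desired $\Cal A$-linearity. Together with the fact that $p$ is $G$-equivariant by construction (its target carries the trivial $G$-action and it lies in $\Qch(G,S)$), this shows that $p$ is $(G,\Cal A)$-linear.

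For the last assertion, $p \circ \gamma = \id_{\Cal A}$ by the definition of the Reynolds operator, and now $\gamma$ and $p$ are both $\Cal A$-linear. Hence $\gamma : \Cal A \hookrightarrow \Cal B$ admits the $\Cal A$-linear retraction $p$, so $\Cal B \cong \Cal A \oplus U_G(\Cal B)$ as $\Cal A$-modules, exhibiting $\Cal A$ as a direct summand subalgebra of $\Cal B$.

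The only step requiring care is the identification of $p_{\Cal A \otimes \Cal B}$ as $1 \otimes p_{\Cal B}$; this is precisely the content of Lemma~\ref{Reynolds-four-operations.lem}, {\bf 3}, so no new obstacle arises. Everything else is a routine diagram chase from the naturality of $p$ applied to the algebra multiplication.
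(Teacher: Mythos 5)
Your argument is correct, but it is organized differently from the paper's. You stay over the base $S$: you apply the naturality of $p=p_{G,S}$ (as a transformation $\Id\rightarrow(?)^G$ of endofunctors of $\Qch(G,S)$) to the $(G,\O_S)$-linear map $m'=m\circ(\gamma\otimes 1):\Cal A\otimes_{\O_S}\Cal B\rightarrow\Cal B$, and then use Lemma~\ref{Reynolds-four-operations.lem}, {\bf 3} to identify $p_{\Cal A\otimes\Cal B}$ with $\gamma'\circ(1\otimes p_{\Cal B})$; the chase on simple tensors of local sections then gives $p(ab)=a\,p(b)$, and $G$-equivariance of $p$ is built in, so $p$ is $(G,\Cal A)$-linear and $\gamma$ is split by an $\Cal A$-linear retraction. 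The paper instead changes the base: it sets $Y=\uSpec_S\Cal A$, notes that $G$ has a Reynolds operator over $Y$ (Corollary~\ref{universal-Reynolds.cor}), whose components are by definition morphisms of $(G,\O_Y)$-modules, hence $\Cal A$-linear, and identifies $p_{G,S}$ on $\Cal B$ with the pushforward of $p_{G,Y}$ (using the uniqueness of Reynolds operators, cf.\ Lemma~\ref{Reynolds-affine-ii.lem}). So the paper's proof is a one-liner once the base-change machinery for Reynolds operators is in place, with $\Cal A$-linearity automatic from the definition over $Y$; your proof avoids the relative spectrum entirely and instead leans on the tensor compatibility of $p$ with trivial modules, which is perhaps more transparent but ultimately rests on the same local computations (Lemma~\ref{Reynolds-four-operations.lem} is itself proved by reduction to the affine case). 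Both routes are valid; only note, as you implicitly do, that checking the identity on simple tensors suffices because both composites are sheaf morphisms out of the (sheafified) tensor product.
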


\begin{proof}
Set $Y:=\uSpec_S \Cal A$.
Then $p_{G,S}$ is identified with $p_{G,Y}$, which is $\Cal A$-linear.
\end{proof}

\begin{example}
Let $S=\Spec k$.
We say that an affine algebraic $k$-group scheme $G$ is linearly
reductive if any $G$-module is completely reducible.
In this case, for a $G$-module $M$, letting $U_G(M)$ to be the sum of all
the nontrivial simple $G$-submodules, 
we have a decomposition $M=M^G\oplus U_G(M)$
of $G$-modules.
When we set $p:M\rightarrow M^G$ to be the projection with respect to this
decomposition, we have that $p$ is a Reynolds operator.
Conversely, if $G$ is an affine algebraic $k$-group scheme which is Reynolds, 
then $G$ is linearly reductive.
Assume the contrary.
Then there is a non-semisimple $G$-module $V$.
Let $W$ be its socle.
Then we have that $V/W\neq 0$, and there is a simple submodule $E$ of $V/W$.
Then $0=H^1(G,E^*\otimes W)\cong \Ext^1_G(E,W)\neq 0$, a contradiction.
\end{example}

\begin{example}\label{linearly-reductive-Reynolds.ex}
Let $R$ be a commutative ring, and $H$ a flat commutative $R$-Hopf algebra.
We say that $H$ is Reynolds if there is a decomposition $H=R\oplus U$ 
as an $R$-coalgebra, where $R$ denotes the image $u(R)$ of the unit map $u
:R\rightarrow H$.
Then for any $H$-comodule $M$, the decomposition 
$M=M^H\oplus \ind_H^U M$ is functorial, and so letting $U_G=\ind_H^U$, 
we have that $G=\Spec H$ has a Reynolds operator over $S=\Spec R$.
Hence $G_Y$ is Reynolds for any $R$-scheme $Y$.
Conversely, if $G$ has a Reynolds operator on $S$, then the right regular
representation $H$ is decomposed as $H=R\oplus U$, where $U=U_G(H)$.
As $U$ is a right subcomodule, $\Delta(U)\subset U\otimes H$.
Letting $G$ act trivially on $U$ and right regularly on $H$, 
$U\otimes H$ is a $G$-module, and $\Delta:U\rightarrow U\otimes H$ is
$G$-linear.
So
\[
\Delta(U)\subset 
U_G(U\otimes H)=U\otimes U
\]
by Lemma~\ref{Reynolds-affine-i.lem}.
Being a direct summand of $H$, $U$ is a subcoalgebra of $H$, and
$H=R\oplus U$ is a decomposition as subcoalgebras, and hence $H$ is Reynolds.
\end{example}

\begin{example}\label{finite-Reynolds.ex}
Let $G$ be a finite group of order $n$, and $R=\Bbb Z[n^{-1}]$.
Then $\rho=n^{-1}\sum_{g\in G}g\in RG$ is a central idempotent of $RG$ such
that $g\rho=\rho=\rho g$ for $g\in G$, where $RG$ is the group algebra.
Then we have a decomposition of bimodules $RG=\rho(RG)\oplus (1-\rho)(RG)$.
So we have a decomposition of $R[G]$-bicomodules
\[
R[G]=(RG)^*=(\rho(RG))^*\oplus ((1-\rho)(RG))^*
=\rho R[G]\oplus (1-\rho)R[G].
\]
As $\varepsilon:\rho RG\rightarrow R$ is an isomorphism (since 
$\varepsilon(\rho)=1$), $\rho R[G]=R$,
and $G$ is Reynolds.
The Reynolds operator $p$ is the action of $\rho$, and the element $\rho\in
RG$ is also called the Reynolds operator.
\end{example}

\begin{example}\label{diagonalizable-Reynolds.ex}
Let $\Lambda$ be an additive abelian group, and $R$ a commutative ring.
Then the group ring $R\Lambda=\bigoplus_{\lambda\in\Lambda}Rt^\lambda$ 
is a Hopf algebra, letting each
$t^\lambda$ ($\lambda\in\Lambda$) group-like (that is, $\Delta(t^\lambda)
=t^\lambda\otimes t^\lambda$).
Then as $R\Lambda= R\oplus(\bigoplus_{\lambda\neq 0}R\cdot t^{\lambda})$, 
$R\Lambda$ is Reynolds.
If $\Lambda\cong\Bbb Z^s$, then $G=\Spec R\Lambda$ is a split torus
(of relative dimension $s$), and $G$ is Reynolds.
\end{example}

\paragraph
Let $Y$ be an $S$-scheme, and $G$ an $S$-group scheme.
Let $\kappa$ be an infinite regular cardinal
such that $S$, $G$, and $Y$ are $\kappa$-schemes \cite[(3.11)]{Hashimoto5}.
As in \cite[(3.14)]{Hashimoto5}, we denote the full subcategory of 
the category of $Y$-schemes consisting of $\kappa$-morphisms
by $(\Sch/Y)_\kappa$.
We call a presheaf on $(\Sch/Y)_\kappa$
a $Y$-prefaisceau.
We denote the structure presheaf of $(\Sch/Y)_\kappa$ 
by $\O$.
For an $\O_Y$-module $\M$ (in the Zariski topology), we denote the
associated $Y$-prefaisceau by $\M_a$.
That is, $\M_a$ is the $\O$-module given by $\Gamma(h:Z\rightarrow Y,\M_a)
=\Gamma(Z,h^*\M)$.
For an $\O_Y$-module $\M$, 
the $Y$-prefaisceau of groups $Z\mapsto \End_{\O_Z}\Gamma(Z,\M_a)^\times$ 
is denoted by $\GL(\M)$, and called the 
{\em general linear group} of $\M$.

\paragraph
Let $(\M,\phi)$ be a $G$-linearized $\O_Y$-module.
Then for any $S$-scheme $W$, $y\in Y(W)$ and $\alpha,\beta\in G(W)$, 
the composite
\[
(\alpha\beta y)^*\M\xrightarrow{(\alpha,\beta y)^*\phi} (\beta y)^*\M
\xrightarrow{(\beta,y)^*\phi}y^*\M
\]
agrees with $(\alpha\beta,y)^*\phi$, see \cite[(1.3)]{GIT}.

\paragraph\label{representation-hom-to-GL.par}
Assume that the action of $G$ on $Y$ is trivial.
Then we denote $(\alpha,y)^*\phi:y^*\M\rightarrow y^*\M$ by $h(\alpha)$.
Then by the argument above, $h(\alpha)h(\beta)=h(\alpha\beta)$,
and we get a homomorphism between $Y$-prefaisceaux of groups
$h:G_Y\rightarrow \GL(\M)$, where $G_Y$ is the restriction of $G$ to
$(\Sch/Y)_\kappa$.
For a given $\O_Y$-module $\M$ on an $S$-scheme $Y$ with a trivial 
$G$-action, giving a $G$-linearization $\phi$ and giving a group
homomorphism $G_Y\rightarrow \GL(\M)$ are the same thing.
$\M$ is quasi-coherent if and only if 
$B\otimes_A \Gamma(\Spec A,\M_a)\rightarrow \Gamma(\Spec B,\M_a)$ 
is an isomorphism for any morphism of the form
$\Spec B\rightarrow \Spec A$ in $(\Sch/Y)_\kappa$.

\paragraph
If $G$ is $S$-flat, then we modify the construction above, and we consider
the full subcategory $\E$ of $(\Sch/Y)_\kappa$ consisting of flat 
$Y$-schemes.
If $\M$ is a $G$-equivariant module on $Y$ (in the Zariski topology), 
then we get a homomorphism $h:G_Y\rightarrow \GL(\M)$ of prefaisceaux of
groups on $(\Sch/Y)_\kappa$.
By restriction, we get $h:G|_\E\rightarrow \GL(\M)|_\E$.
As $G$ is flat, it is easy to see that giving such a homomorphism is
the same thing as to give a $G$-linearization on $\M$.

\paragraph
Let $G$ be $S$-flat, $\M$ a $G$-equivariant module on $Y$, and $\N$ its
$\O_Y$-submodule.
Although $\N_a$ may not be a submodule of $\M_a$,
we have that $\N_a|_\E$ is a submodule of $\M_a|_\E$ by flatness.

\begin{lemma}\label{submodule-faisceau.lem}
Let the notation be as above.
Then $\N$ is a $(G,\O_Y)$-submodule of $\M$ if and only if
for each object $U$ in $\E$ which is an affine scheme, 
$\Gamma(U,\N_a)$ is a $G(U)$-submodule of $\Gamma(U,\M_a)$.
\end{lemma}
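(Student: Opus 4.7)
The plan is to prove the nontrivial direction by reducing an arbitrary flat $Y$-scheme $U$ to its affine open subschemes, using that the quasi-coherent submodule $\N \subset \M$ and its inverse image along a flat morphism behave like sheaves in the Zariski topology on $U$.

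First, the ``only if'' direction is immediate: if $\N$ is a $(G,\O_Y)$-submodule of $\M$, then by the discussion preceding the lemma the $G$-linearization of $\M$ restricts to one of $\N$, so the homomorphism $h_\M : G|_\E \to \GL(\M)|_\E$ sends each $\alpha \in G(U)$ into an automorphism of $\Gamma(U,\M_a)$ that preserves the submodule $\Gamma(U,\N_a)$, for every object $U$ in $\E$ (in particular, for affine $U$).

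For the ``if'' direction, I take an arbitrary object $U$ in $\E$ and any $\alpha \in G(U)$, and I must show $h_\M(\alpha)$ preserves $\Gamma(U,\N_a)$. I cover $U$ by affine open subschemes $U_i \hookrightarrow U$; since open immersions are flat and $U \to Y$ is flat by hypothesis, each composite $U_i \to Y$ is flat, so $U_i$ lies in $\E$. The restrictions $\alpha|_{U_i} \in G(U_i)$ satisfy, by the hypothesis, $h_\M(\alpha|_{U_i})\,\Gamma(U_i,\N_a) \subset \Gamma(U_i,\N_a)$. By the naturality of $h_\M$ (it is a morphism of $Y$-prefaisceaux), for any $s \in \Gamma(U,\N_a)$,
\[
h_\M(\alpha)(s)\big|_{U_i} \;=\; h_\M(\alpha|_{U_i})(s|_{U_i}) \;\in\; \Gamma(U_i,\N_a).
\]

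Finally, the restriction of $\N_a$ to $\Zar(U)$ is just $h_U^*\N$ where $h_U : U \to Y$; in particular it is a genuine Zariski sheaf on $U$, and by the flatness of $h_U$ it is a subsheaf of $h_U^*\M$, which is the restriction of $\M_a$ to $\Zar(U)$. Since we have just shown that $h_\M(\alpha)(s)$ locally (on the affine cover $\{U_i\}$) is a section of $\N_a$, the sheaf property gives $h_\M(\alpha)(s) \in \Gamma(U,\N_a)$. Thus $\Gamma(U,\N_a)$ is $G(U)$-stable for every $U \in \E$, which is precisely the condition that $\N$ is a $(G,\O_Y)$-submodule. The main (and only mildly subtle) point is the reduction from arbitrary $U \in \E$ to affine $U \in \E$; the rest is a formal combination of flatness, the sheaf property of $h_U^*\N$ on $\Zar(U)$, and naturality of the representation $h_\M$.
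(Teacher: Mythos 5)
Your ``only if'' direction and your reduction of an arbitrary $U\in\E$ to an affine open cover are both fine, but the proof stops exactly where the content of the lemma begins. Being a $(G,\O_Y)$-submodule is not defined as ``$\Gamma(U,\N_a)$ is $G(U)$-stable for every $U\in\E$''; it means that the linearization $\phi$ of $\M$, a map of sheaves over $G\times Y$, together with its inverse, carries the subsheaf $p_2^*\N$ of $p_2^*\M$ into itself, so that $\phi$ restricts to a linearization of $\N$. Your closing sentence, that stability for all $U\in\E$ ``is precisely the condition that $\N$ is a $(G,\O_Y)$-submodule,'' therefore assumes the very equivalence the lemma asserts (minus the affine refinement). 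The remark preceding the lemma, that a homomorphism $G|_\E\to\GL(\M)|_\E$ is the same thing as a linearization, concerns the single module $\M$; it does not by itself tell you when a quasi-coherent $\O_Y$-submodule is preserved by that linearization, and no step of your argument ever produces or tests the sheaf map $\phi$.

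The missing step is the passage from the functor-of-points statement back to the sheaf-level one, and this is where the affine hypothesis is actually used in the paper. Take $U$ to be any affine open subset of $G\times Y$ with inclusion $j$; since $G$ is $S$-flat, $p_2 j:U\to Y$ is flat, so $U$ is an affine object of $\E$, and the tautological element $g:=p_1 j\in G(U)$ acts on $\Gamma(U,\M_a)=\Gamma(U,j^*p_2^*\M)$ exactly by $j^*\phi$. Applying the hypothesis (or your intermediate statement) to these $U$ and to $g$ and $g^{-1}$ shows that $\phi$ and $\phi^{-1}$ send sections of $p_2^*\N$ over every affine open of $G\times Y$ into $p_2^*\N$; since the affine opens form a basis and $p_2^*\N$ is a subsheaf of $p_2^*\M$ by flatness of $p_2$, one concludes $\phi^{\pm1}(p_2^*\N)\subset p_2^*\N$, which is what ``$\N$ is a $G$-equivariant submodule'' means. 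Note that once one argues this way, your reduction from general flat $U$ to affine $U$ is not needed at all; the whole proof consists of the tautological-element computation you omitted.
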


\begin{proof}
The `only if' part is trivial.
We prove the `if' part.

Let $U$ be any affine open subset of $G\times Y$.
Then $U\xrightarrow{j} G\times Y\xrightarrow{p_2} Y$ lies in $\E$ and
$U$ is affine.
where $j$ is the inclusion, and $p_2$ is the second projection.
Let $g\in G|_\E(U)$ be the map $p_1j:U\rightarrow G$, 
where $p_1:G\times Y\rightarrow G$ is the
first projection.
The action of $g$ on $\Gamma(p_2j:U\rightarrow Y,\M_a)$ is induced by 
$j^*\phi:j^*p_2^*\M\rightarrow j^*p_2^*\M$, where $\phi$ is the linearization
of $\M$.

By assumption, the actions of $g$ and $g^{-1}$ preserve
the submodule $\Gamma(p_2j:U\rightarrow Y,\N_a)$.
As $U$ is arbitrary, $p_2^*\N$ is preserved by the linearization $\phi$ and
its inverse $\phi^{-1}$.
Hence $\N$ is a $G$-equivariant submodule.
\end{proof}

\begin{lemma}\label{invariance-faisceau.lem}
Let $G$ be a flat quasi-compact quasi-separated $S$-group scheme,
$Y$ an $S$-scheme on which $G$ acts trivially, 
and $\M$ a quasi-coherent $(G,\O_Y)$-module.
Then $(\M^G)_a|_\E$ is an $\O$-submodule of $\M_a|_\E$ given by
\begin{multline*}
\Gamma(W,(\M^G)_a)=\{m\in \Gamma(W,\M_a)\mid gm=m \text{ in $\Gamma(W',\M_a)$}
\\
\text{for any morphism $W'\rightarrow W$ in $\E$}, 
\text{ and any $g\in G(W')$}\}.
\end{multline*}
\end{lemma}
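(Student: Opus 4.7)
The plan is to combine the kernel description of $\M^G$ from (\ref{G-trivial.par}) with the flat-base-change isomorphism $\epsilon$ for invariance, and then to identify the resulting universal equation with the pointwise invariance condition via a Yoneda-style argument.

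First I would fix $W \in \E$ with structure map $h: W \to Y$; since $W$ is a flat $Y$-scheme, $h$ is flat. As $G$ is quasi-compact quasi-separated, the map $\epsilon: h^*(\M^G) \to (h^*\M)^{G_W}$ (with $G_W = G \times_S W$ acting trivially on $W$) is an isomorphism by \cite[(7.5)]{HO2}. Since the right-hand side is, by (\ref{G-trivial.par}), the kernel of a map between quasi-coherent $\O_W$-modules, this exhibits $(\M^G)_a|_\E$ as an $\O$-submodule of $\M_a|_\E$. Passing to global sections yields
\[
\Gamma(W,(\M^G)_a) = \{m \in \Gamma(W, h^*\M) : \beta_{\delta_0}(m) = \beta_{\delta_1}(m) \text{ in } \Gamma(G \times_S W, p_W^*h^*\M)\},
\]
where $p_W: G \times_S W \to W$ is the second projection, $\beta_{\delta_1}$ is the $p_W$-pullback, and $\beta_{\delta_0}$ encodes the $G_W$-action on $h^*\M$.

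The next step is to translate this single equation over $G \times_S W$ into the pointwise condition in the statement. Under (\ref{representation-hom-to-GL.par}), $\beta_{\delta_0}(m)$ equals $g_{\mathrm{univ}} \cdot m$, where $g_{\mathrm{univ}} \in G(G \times_S W)$ is the first-projection element, so the equation reads $g_{\mathrm{univ}} \cdot m = m$. For the reverse inclusion, $G \times_S W$ is flat over $Y$ (both $G/S$ and $W/Y$ are flat), so $G \times_S W \in \E$, and specializing the pointwise condition at $(W', g) = (G \times_S W, g_{\mathrm{univ}})$ gives exactly the universal equation. For the forward inclusion, any $f: W' \to W$ in $\E$ together with $g \in G(W')$ determines a $Y$-morphism $(g, f): W' \to G \times_S W$; pulling back the universal equation along $(g, f)$ yields $g \cdot (f^*m) = f^*m$ in $\Gamma(W', \M_a)$, which is the desired condition.

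The main obstacle I anticipate is the careful bookkeeping needed to identify the simplicial kernel description in terms of $\beta_{\delta_0} - \beta_{\delta_1}$ with the functor-of-points formulation $g m = m$ over an arbitrary $W' \in \E$; this hinges on the compatibility between the linearization $\phi$ on $\M$ and the induced group action on sections of the associated prefaisceau $\M_a$ described in (\ref{representation-hom-to-GL.par}), as well as on the flatness of $G \times_S W \to W$ needed to compute the kernel on sections.
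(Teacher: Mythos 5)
Your proposal is correct and follows essentially the same route as the paper: both use the kernel description of $(?)^G$ together with the isomorphism $\epsilon$ of \cite[(7.5)]{HO2} to reduce to invariance computed over $W$ itself, and then identify the universal equation over $G\times_S W$ (via the universal element given by the first projection) with the pointwise condition, exactly as the paper does for $W=Y$ via the map $\psi_g=(g,h)$. The only difference is organizational—you apply the flat base change first and argue over $W$ directly, while the paper proves the case $W=Y$ first and then reduces the general case to it—so there is nothing substantive to add.
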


\begin{proof}
First we prove the assertion for $W=Y$.
Then the left-hand side is 
\[
\{m\in \Gamma(Y,\M)\mid g_0m=m 
\text{ in $\Gamma(G\times Y,\M)$}
\},
\]
where $g_0\in G(G\times Y)$ is the first projection.
So the right-hand side is contained in the left-hand side.
On the other hand, if $h:W'\rightarrow Y$ is any morphism in $\E$, then
for any $g\in G(W')$, we define $\psi_g:W'\rightarrow G\times Y$ by 
$(g,h)$.
Then by definition, $g=\psi_g(g_0)$.
So if $g_0m=m$ in $\Gamma(G\times Y,\M)$, then $gm=m$ in $\Gamma(W',\M)$,
and the equality was proved.

Next consider general $W\in\E$.
Then replacing $Y$ by $W$ using \cite[(7.5)]{HO2}, the problem is reduced
to the case $Y=W$, and we are done.
\end{proof}

\begin{proposition}\label{enriched-Reynolds.prop}
Let $f:G\rightarrow H$ be a quasi-compact quasi-separated flat 
homomorphism of $S$-group schemes with
$N=\Ker f$.
Then for $\M\in\Qch(G,Y)$, $\M^N$ is 
a quasi-coherent $(G,\O_Y)$-submodule of $\M$.

If, moreover, $N$ is Reynolds, then $U_N(\M)$ is also a 
quasi-coherent $(G,\O_Y)$-submodule of $\M$.
In particular, the Reynolds operator $p_{N,Y}:\M\rightarrow \M^N$ is
$(G,\O_Y)$-linear.
\end{proposition}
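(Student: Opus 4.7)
The plan is to check both claims via the prefaisceau criterion of Lemma~\ref{submodule-faisceau.lem}. First I would use Lemma~\ref{invariance-faisceau.lem} to describe $\Gamma(U,(\M^N)_a)$, for affine $U$ in the category $\E$ of flat $Y$-schemes, as the set of sections $m\in\Gamma(U,\M_a)$ satisfying $n\cdot m|_{W'}=m|_{W'}$ for every $W'\to U$ in $\E$ and every $n\in N(W')$. Given such an $m$ and $g\in G(U)$, to show $gm$ lies in the same subset the key step uses that $N=\Ker f$ is normal in $G$: setting $g'=g|_{W'}$ and $n':=(g')^{-1}ng'$, one has $f(n')=f(g')^{-1}f(n)f(g')=e$, so $n'\in N(W')$, and hence $n\cdot(g'\cdot m|_{W'})=(g'n')\cdot m|_{W'}=g'\cdot(n'\cdot m|_{W'})=g'\cdot m|_{W'}$. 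This shows $\M^N$ is a $(G,\O_Y)$-submodule of $\M$, and its quasi-coherence follows from Lemma~\ref{invariance-qcqs.lem} once one notes that $N$ is flat, quasi-compact, and quasi-separated, being the base change of $f$ along the unit $S\to H$.

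For the Reynolds part, the plan is to prove that the Reynolds operator $\frak R=\gamma\circ p_{N,Y}:\M\to \M$ is itself $G$-equivariant; this will simultaneously establish that $U_N(\M)=\Ker \frak R$ is a $(G,\O_Y)$-submodule and that $p_{N,Y}$ is $(G,\O_Y)$-linear. Again I would work sectionwise: for an affine $U\in\E$ and $g\in G(U)$, normality of $N$ gives, for each $W\to U$, a group automorphism $N(W)\to N(W)$, $n\mapsto (g|_W)^{-1}n(g|_W)$, arising from a genuine automorphism of the $U$-group scheme $N_U$. Twisting the $N_U$-action on $\M_U$ by this automorphism yields an object $\M_U^{\mathrm{tw}}\in \Qch(N_U,U)$ with the same underlying $\O_U$-module as $\M_U$; since conjugation is a bijection of $N(W)$ for every $W\to U$, the submodules of invariants of $\M_U^{\mathrm{tw}}$ and $\M_U$ coincide. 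Because $N_U$ is Reynolds (Lemma~\ref{Reynolds-four-operations.lem}, \textbf{0}), both modules carry Reynolds operators, and by the uniqueness of the Reynolds operator noted in Section~\ref{Reynolds.sec} they must equal $\frak R|_U$. Finally, the action map $g:\M_U^{\mathrm{tw}}\to\M_U$ is by construction $(N_U,\O_U)$-linear, so functoriality of the Reynolds operator forces $g\circ \frak R|_U=\frak R|_U\circ g$, as required.

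The main obstacle I anticipate is setting up the conjugation twist rigorously: one must verify that $\M_U^{\mathrm{tw}}$ is a well-defined quasi-coherent $(N_U,\O_U)$-module, built functorially from the normal subgroup scheme structure rather than just a pointwise conjugation, and that the identification of its invariants with $\M_U^{N_U}$ is genuinely functorial in $U$. Once this is in place, the faisceau reduction, the normality computation, and the uniqueness step are all formal applications of the machinery developed in Sections~\ref{restriction.sec}--\ref{Reynolds.sec}.
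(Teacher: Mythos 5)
Your treatment of the first assertion is essentially the paper's: both reduce, via Lemma~\ref{submodule-faisceau.lem} and Lemma~\ref{invariance-faisceau.lem}, to the pointwise identity $ngm=g(g^{-1}ng)m=gm$ coming from normality of $N$, with quasi-coherence being immediate. For the Reynolds part your route is genuinely different. The paper never introduces a twisted module: for affine $W\in\E$, $g\in G(W)$ and $U=\Gamma(W,U_N(\M)_a)$, it checks by the same conjugation trick that $gU$ is an $(N,A)$-submodule of $M=\Gamma(W,\M_a)$ and that $(gU)^N=0$, whence $gU\subset U_N(M)=U$ by the intrinsic description of the anti-invariance as the sum of all submodules with vanishing invariants; the $(G,\O_Y)$-linearity of $p_{N,Y}$ then falls out at the very end because the decomposition $\M=\M^N\oplus U_N(\M)$ is one of $(G,\O_Y)$-modules. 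You instead derive the $G$-equivariance of $\frak R$ first, via the conjugation-twisted module $\M_U^{\mathrm{tw}}$ together with naturality and uniqueness of the Reynolds operator. That works, but note two points. First, the technical cost you flag (realizing $\M_U^{\mathrm{tw}}$ as an honest object of $\Qch(N_U,U)$ and showing the translation $g:\M_U^{\mathrm{tw}}\rightarrow\M_U$ is a morphism there — e.g.\ by applying Lemma~\ref{submodule-faisceau.lem} to its graph inside $\M_U^{\mathrm{tw}}\oplus\M_U$) is precisely what the published section-level argument avoids, and this is its main advantage. Second, uniqueness alone does not yield $\frak R_{\M_U^{\mathrm{tw}}}=\frak R|_U$ from the coincidence of the invariants: the Reynolds operator is the projection onto $\M^N$ \emph{along} $U_N$, so you must also check $U_N(\M_U^{\mathrm{tw}})=U_N(\M_U)$. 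This is true and follows from the same observation you already use — conjugation is bijective on $N(W')$-points, so the twisted and untwisted structures have the same stable quasi-coherent submodules with the same invariants, hence the same sum of submodules with vanishing invariants — but it has to be said. What your approach buys, once the twist is set up, is the cleaner intermediate statement that the Reynolds projector commutes with the $G$-action, from which both the stability of $U_N(\M)$ and the $(G,\O_Y)$-linearity of $p_{N,Y}$ follow at once.
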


\begin{proof}
Although the first assertion can be proved in the same line of 
\cite[(6.18)]{Hashimoto5}, we give a new proof.

For the first assertion,
in view of Lemma~\ref{submodule-faisceau.lem} and 
Lemma~\ref{invariance-faisceau.lem}, it suffices to show that for 
each flat morphism $h:W\rightarrow Y$ and any morphism 
$h':W'\rightarrow W$ such that $hh'$ is flat, 
$m\in\Gamma(W,(\M^G)_a)$, $g\in G(W)$, and
$n\in G(W')$, we have that $ngm=gm$ in $\Gamma(W',\M_a)$.
As $N$ is normal in $G$, $ngm=g(g^{-1}ng)m=gm$.
As the quasi-coherence is trivial, the first assertion has been proved.

Assume that $N$ is Reynolds.
Let $W=\Spec A\rightarrow Y$ be an object of $\E$ such that $W$ is affine,
and $g\in G(W)$.
Set $U=\Gamma(W,U_N(\M)_a)$.
We claim that $gU$ is an $(N,A)$-submodule of $M:=\Gamma(W,\M_a)$.
In order to show this, it suffices to show that for any flat $\kappa$-morphism
$W'=\Spec A'\rightarrow W$ and $n\in N(W')$, we have $n((gU)\otimes_A A')
\subset (gU)\otimes_A A'$ in $M\otimes_A A'$.
This is clear, since
\[
n(gu\otimes a')=g((g^{-1}ng)(u\otimes a'))\in g(U\otimes_A A')=(gU)\otimes_A A'
\]
for $u\in U$ and $a'\in A'$.

Next, we prove that $(gU)^N=0$.
Assume the contrary, and take $u\in U\setminus 0$ such that for each 
flat $\kappa$-morphism $W'\rightarrow W$ and $n\in N(W')$, 
$ngu=gu$.
Then $nu=g^{-1}(gng^{-1})gu=g^{-1}gu=u$, and hence $u\in U\cap M^N=0$, and
this is a contradiction.
Hence $(gU)^N=0$.
That is, $gU\subset U$.
By Lemma~\ref{submodule-faisceau.lem}, we have that $U_N(\M)$ is a 
$G$-equivariant submodule.
Quasi-coherence is trivial.

The last assertion is clear from the fact that the decomposition
$\M=\M^N\oplus U_N(\M)$ is that of a $(G,\O_Y)$-module.
\end{proof}

\begin{lemma}\label{Gabber-Grothendieck.lem}
Let $G$ be a flat quasi-compact quasi-separated $S$-group scheme,
and $X$ a $G$-scheme.
Then $\Qch(G,X)$ is a Grothendieck category.
\end{lemma}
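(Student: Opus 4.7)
The plan is to verify the three defining properties of a Grothendieck category for $\Qch(G,X)$, namely that it is abelian, satisfies (AB5), and has a generator. The abelian structure and closure under all small colimits are inherited from the ambient category $\Mod(G,X)$ of modules on the ringed simplicial scheme $B_G^M(X)$. Since $G$ is quasi-compact quasi-separated, each simplicial term $X_n\cong G^n\times X$ is q.c.q.s., so $\Qch(X_n)\subset\Mod(X_n)$ is closed under kernels, cokernels, and small direct sums; flatness of $G$ ensures that the pullback functors implicit in the structure data $\beta_\phi$ preserve these colimits, so quasi-coherence in the equivariant sense is preserved.

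For (AB5), I would use the forgetful functor $(?)_0:\Qch(G,X)\to\Qch(X)$, which is faithful and exact (exactness is detected on level $0$) and preserves filtered colimits. Since filtered colimits in $\Qch(X)$ are exact for q.c.q.s.\ $X$ (Gabber), the same holds in $\Qch(G,X)$.

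The key step is the construction of a generator. Let $\F$ be a generator of $\Qch(X)$, which exists by Gabber's theorem, and let $L_0:\Qch(X)\to\Qch(G,X)$ denote the left adjoint of $(?)_0$. I would claim that $L_0(\F)$ is a generator of $\Qch(G,X)$: for any proper monomorphism $\N\subsetneq\M$ in $\Qch(G,X)$, faithful exactness of $(?)_0$ yields a proper monomorphism $\N_0\subsetneq\M_0$ in $\Qch(X)$, hence a morphism $\F\to\M_0$ not factoring through $\N_0$; by adjunction this lifts to a morphism $L_0(\F)\to\M$ not factoring through $\N$.

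The main obstacle is ensuring the left adjoint $L_0$ actually lands in $\Qch(G,X)$. The functor $L_0$ exists formally as the left Kan extension along the inclusion of level $0$ into $B_G^M(X)$, but showing that $L_0(\F)$ is quasi-coherent whenever $\F$ is requires the hypotheses on $G$: the bar-construction description realizes $L_0(\F)_n$ (essentially) as a pullback of $\F$ along a composition of face maps of $B_G^M(X)$, and these are flat q.c.q.s.\ morphisms of schemes when $G$ is flat q.c.q.s., so each level is quasi-coherent and compatibility of the structure data is automatic from the functoriality of the bar construction. Alternatively, the statement may be extracted as a special case of the general Grothendieck-category theorem for modules on concentrated diagrams of schemes developed in \cite{ETI}.
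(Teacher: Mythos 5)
There is a genuine gap at the only nontrivial point, the construction of a generator. Your formal argument (a faithful exact $(?)_0$ together with a left adjoint $L_0$ carries a generator of $\Qch(X)$ to one of $\Qch(G,X)$) is fine as pure category theory, but the required left adjoint of $(?)_0:\Qch(G,X)\rightarrow\Qch(X)$ does not exist in general, because $(?)_0$ fails to preserve infinite products: already for $S=X=\Spec k$ and $G=\Bbb G_m$, the category $\Qch(G,X)$ is that of $\Bbb Z$-graded vector spaces, where products are computed degreewise, so the underlying space of a product is not the product of the underlying spaces. The functor you actually describe, the left Kan extension along the inclusion of the $0$th level of $B_G^M(X)$, does exist on the module categories, but it does not land in $\Qch(G,X)$: its value at level $[n]$ is $\bigoplus_{\phi\in\Delta_M([0],[n])}(X_\phi)^*\F$, which is locally quasi-coherent but not equivariant, since the structure maps are the inclusions of summands induced by $\phi\mapsto\psi\phi$ and are not isomorphisms (this fails even for the trivial group, where the Kan extension of $\F$ is the non-constant diagram with levels $\F^{\oplus(n+1)}$). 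In the conventions of this paper and of \cite{ETI}, $\Qch(G,X)$ consists of the equivariant locally quasi-coherent modules on $B_G^M(X)$, so your remark that ``compatibility of the structure data is automatic'' addresses the wrong condition: levelwise quasi-coherence is automatic (flatness is not even needed for pullbacks to preserve quasi-coherence), while equivariance, which is what membership in $\Qch(G,X)$ requires, is exactly what fails.

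The abelian and (AB5) part of your argument is essentially correct (note, though, that $X$ is not assumed quasi-compact quasi-separated, so the levels $G^n\times X$ need not be; this is harmless, since closure under kernels, cokernels and direct sums and exactness of filtered colimits hold levelwise for arbitrary schemes, flatness of $G$ being what is used for kernels and equivariance). The paper's own proof is precisely your closing ``alternative'': by \cite[(11.5)]{ETI} the problem reduces to showing that $\Qch(X)$ is a Grothendieck category, which is Gabber's theorem \cite[(2.1.7)]{Conrad}. The existence of a generator for the equivariant category, the step your main argument tries to supply directly, is exactly what that citation carries, and it is not obtained from a left adjoint to the forgetful functor; so as written your proposal does not establish the lemma without falling back on the cited general theorem.
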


\begin{proof}
In view of \cite[(11.5)]{ETI}, it suffices to show that $\Qch(X)$ is
Grothendieck.
This is Gabber's theorem \cite[(2.1.7)]{Conrad}.
\end{proof}

\begin{lemma}\label{N-injective.thm}
Let $S$ be a scheme, and $G$ a Reynolds group over $S$.
Let $Y$ be an $S$-scheme on which $G$ acts trivially.
Let $\Cal I$ be an injective object of $\Qch(Y)$.
Then $\Cal I$ viewed as an object of $\Qch(G,Y)$
\(formally $\res^e_G \Cal I$\) is an injective object.
\end{lemma}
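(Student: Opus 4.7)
The plan is to identify $(?)^G$ as a \emph{left} adjoint of $\L=\res^e_G\colon\Qch(Y)\to\Qch(G,Y)$ (it is already the right adjoint by (\ref{G-trivial.par})), and then to invoke the fact that a right adjoint preserves injectives whenever its left adjoint is exact. Since $G$ is Reynolds, $(?)^G$ is exact by Lemma~\ref{Reynolds-invariance-exact.lem}, so the conclusion would follow.

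First I would verify that $\L$ is fully faithful. By Lemma~\ref{G-trivial.thm}, $\L\N$ is $G$-trivial, so $\gamma\colon(\L\N)^G\to\L\N$ is an isomorphism; hence the unit $\eta\colon\N\to(\L\N)^G$ is an isomorphism as well, which gives fully faithfulness.

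Next I would establish the key natural identification
\[
\Hom_{\Qch(G,Y)}(\M,\L\N)\;\cong\;\Hom_{\Qch(Y)}(\M^G,\N)
\]
for $\M\in\Qch(G,Y)$ and $\N\in\Qch(Y)$. Because $G$ is Reynolds, $p_{G,Y}$ yields a functorial direct-sum decomposition $\M=\M^G\oplus U_G(\M)$ inside $\Qch(G,Y)$ (the Reynolds operator is by definition a natural transformation of endofunctors of $\Qch(G,Y)$, so both summands are genuine $(G,\O_Y)$-submodules). Any morphism $\varphi\colon\M\to\L\N$ in $\Qch(G,Y)$ must vanish on $U_G(\M)$: this is the observation in (\ref{trivial-anti-trivial.par}), since $\L\N$ is $G$-trivial and $U_G(\M)$ is $G$-anti-trivial, so any equivariant map between them is $\mathfrak R\varphi(\id-\mathfrak R)=\varphi\mathfrak R(\id-\mathfrak R)=0$. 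Hence $\varphi$ factors uniquely through the projection $\M\twoheadrightarrow\M^G\cong\L(\M^G)$, and full faithfulness of $\L$ identifies $\Hom_{\Qch(G,Y)}(\L\M^G,\L\N)$ with $\Hom_{\Qch(Y)}(\M^G,\N)$.

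This will establish the adjunction $(?)^G\dashv\L$. Because $(?)^G$ is exact, its right adjoint $\L$ will preserve injective objects, and therefore $\L\Cal I=\res^e_G\Cal I$ will be injective in $\Qch(G,Y)$. The only mildly delicate point is ensuring that the Reynolds decomposition genuinely lives in $\Qch(G,Y)$ rather than just in $\Qch(Y)$; but this is built into the convention of Section~\ref{Reynolds.sec} that $(?)^G$ is regarded as an endofunctor of $\Qch(G,Y)$ and $p_{G,Y}$ is a natural transformation there, so it is not really an obstacle.
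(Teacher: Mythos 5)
Your argument is correct, and it rests on exactly the same Reynolds ingredients as the paper's proof (the functorial decomposition $\M=\M^G\oplus U_G(\M)$ in $\Qch(G,Y)$, the vanishing of equivariant maps from anti-trivial to trivial objects from (\ref{trivial-anti-trivial.par}), and the identification of $G$-trivial objects with $\Qch(Y)$ via Lemma~\ref{G-trivial.thm}), but you package them differently. You upgrade these facts to a second adjunction $\Hom_{\Qch(G,Y)}(\M,\L\N)\cong\Hom_{\Qch(Y)}(\M^G,\N)$, so that $\L=\res^e_G$ becomes a right adjoint of the exact functor $(?)^G$ and the conclusion follows from the standard fact that right adjoints of exact (indeed, merely mono-preserving) functors preserve injectives. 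The paper instead argues by splitting: it first invokes Lemma~\ref{Gabber-Grothendieck.lem} (Gabber) to get enough injectives in $\Qch(G,Y)$, reduces injectivity of $\res^e_G\Cal I$ to the splitting of every monomorphism $\res^e_G\Cal I\hookrightarrow\M$, observes that such a monomorphism lands in the direct summand $\M^G$, and then splits it in $\Qch(Y)$ using injectivity of $\Cal I$ and the quasi-inverse pair $(?)_0$, $\res^e_G$ on trivial objects. So your route buys independence from the enough-injectives input, at the modest cost of verifying naturality of your adjunction isomorphism (which follows from naturality of the Reynolds operator) and of the identification of $\L$ with $\res^e_G$ on trivial objects — an identification the paper's proof also uses implicitly. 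Note also that full exactness of $(?)^G$ from Lemma~\ref{Reynolds-invariance-exact.lem} is more than you need: preservation of monomorphisms suffices, and that is automatic since $(?)^G$ is a kernel; the genuinely Reynolds-dependent step is the existence of the splitting $\M=\M^G\oplus U_G(\M)$ itself.
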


\begin{proof}
Set $\Cal J:=\res^e_G\Cal I$.
Let $i:\Cal J\hookrightarrow\M$ be a monomorphism in $\Qch(G,Y)$.
By Lemma~\ref{Gabber-Grothendieck.lem}, we have that $\Qch(G,Y)$ has
enough injectives, 
and hence it suffices to show that $i$ splits.
As we have that $\Cal J=\Cal J^G$, the image of $i$ is contained in $\M^G$.
As $\M^G$ is a direct summand of $\M$, it suffices to show that
$i:\Cal J\hookrightarrow \M^G$ splits.
Note that 
$(?)_0:\Qch_G(G,Y)\rightarrow \Qch(Y)$ and $\res^e_N:\Qch(Y)\rightarrow
\Qch_G(G,Y)$ are quasi-inverse each other, where $\Qch_G(G,Y)$ is the
category of $G$-trivial objects in $\Qch(G,Y)$.
So it suffices to show that $\Cal I\hookrightarrow \M^G$ splits in $\Qch(Y)$.
This is obvious, since $\Cal I$ is injective by assumption.
\end{proof}

\begin{lemma}\label{UN.thm}
Let $f:G\rightarrow H$ be an fpqc homomorphism of flat $S$-group schemes
with $N=\Ker f$.
Assume that $N$ is Reynolds.
Let $Y$ be a locally Noetherian $G$-scheme on which $N$ acts trivially.
Let $\Bbb F\in D^-_{\Coh}(G,Y)$ and $\Bbb G\in D^+_{\Qch}(G,Y)$.
If for each $i\in\Bbb Z$, $H^i(\Bbb F)=U_N(H^i(\Bbb F))$ and
$H^i(\Bbb G)=H^i(\Bbb G)^N$,
then $\uExt^i_{\O_{B_M^G(Y)}}(\Bbb F,\Bbb G)^N=0$ for $i\in\Bbb Z$.
\end{lemma}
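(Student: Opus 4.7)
The plan is first to reduce to the case $G = N$, then to exploit the direct-sum decomposition of $\Qch(N, Y)$ into its $N$-trivial and $N$-anti-trivial subcategories so that the equivariant Hom sheaves vanish termwise.

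I would begin by applying Corollary~\ref{restriction-extension.thm} (in its derived-category version, via Lemma~\ref{ext-res.thm}) to identify
\[
\res_N^G \uExt^i_{\O_{B_G^M(Y)}}(\Bbb F, \Bbb G) \cong \uExt^i_{\O_{B_N^M(Y)}}(\res_N^G \Bbb F, \res_N^G \Bbb G),
\]
an isomorphism compatible with passing to $N$-invariants. Since $\res_N^G$ is faithful and exact on quasi-coherent modules (Lemma~\ref{faithful-exact.thm}) and preserves the hypotheses on $H^i(\Bbb F)$ and $H^i(\Bbb G)$, the problem reduces to the case $G = N$. Lemma~\ref{Reynolds-invariance-exact.lem} now provides exact endofunctors $(?)^N$ and $U_N$ on $\Qch(N, Y)$ summing to the identity, yielding an orthogonal direct-sum decomposition $\Qch(N, Y) = \Qch(N, Y)^{\mathrm{triv}} \oplus \Qch(N, Y)^{\mathrm{anti}}$ with no cross-morphisms (Lemma~\ref{Reynolds-four-operations.lem}, item 4). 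Because $Y$ is locally Noetherian I may represent $\Bbb F$ and $\Bbb G$ by quasi-isomorphic complexes in $\Qch(N, Y)$ and then apply $U_N$ and $(?)^N$ component-wise: the discarded summands are acyclic by the hypotheses on cohomology, so the results are quasi-isomorphic to $\Bbb F$ and $\Bbb G$, and I may assume every $\Bbb F^k$ is anti-trivial while every $\Bbb G^j$ is trivial.

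I would then resolve $\Bbb G$ by a bounded-below complex $\Bbb I$ of $N$-trivial injective quasi-coherent sheaves, produced from an injective resolution in $\Qch(Y)$ pulled back via $\res^e_N$ through the equivalence $\Qch(N, Y)^{\mathrm{triv}} \cong \Qch(Y)$; each $\Bbb I^j$ is injective in $\Qch(N, Y)$ by Lemma~\ref{N-injective.thm}. For $\Bbb F \in D^-_\Coh(N, Y)$ over a locally Noetherian scheme, such a resolution computes the $\uExt$ sheaves correctly over the ringed site. For each pair of indices, Lemma~\ref{Reynolds-four-operations.lem} (item 4, applied after the base change to affine opens of $Y$ furnished by item 0) gives
\[
\uHom_{\O_{B_N^M(Y)}}(\Bbb F^k, \Bbb I^j)^N = \uHom_{N, \O_Y}(\Bbb F^k, \Bbb I^j) = 0,
\]
so $\uHom(\Bbb F, \Bbb I)^N = 0$ as a complex. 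These Hom sheaves are quasi-coherent and $(?)^N$ is exact on $\Qch(N, Y)$, so cohomology commutes with invariants and I conclude $\uExt^i(\Bbb F, \Bbb G)^N = H^i(\uHom(\Bbb F, \Bbb I)^N) = 0$. The hardest point is the technical verification that injective resolutions of $\Bbb G$ chosen inside $\Qch(N, Y)^{\mathrm{triv}}$ actually compute $\uExt^i$ as Ext in the full category $\Mod(\O_{B_N^M(Y)})$; this relies on the standard comparison $D^+(\Qch(N, Y)) \simeq D^+_\Qch(\Mod(\O_{B_N^M(Y)}))$ for locally Noetherian $Y$ together with the coherence and boundedness hypotheses on $\Bbb F$.
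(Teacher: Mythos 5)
Your proposal is correct in substance and its engine is the same as the paper's: restrict from $G$ to $N$ via Lemma~\ref{ext-res.thm}/Corollary~\ref{restriction-extension.thm}, exploit the Reynolds decomposition into $N$-trivial and $N$-anti-trivial parts, resolve the trivial side by restrictions of injectives of $\Qch(Y)$ (Lemma~\ref{N-injective.thm}), and kill everything termwise by Lemma~\ref{Reynolds-four-operations.lem}, {\bf 4}. Where you genuinely differ is the reduction step: the paper observes that the $N$-trivial (resp.\ $N$-anti-trivial) coherent (resp.\ quasi-coherent) modules form plump subcategories and invokes the way-out lemma to reduce at once to a single coherent $\M$ and a single quasi-coherent $\N$, whereas you keep complexes and split them termwise into trivial and anti-trivial summands. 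Your route can be made to work, but it quietly requires two things that the paper's reduction sidesteps: (a) a representative of $\Bbb F$ with coherent, not merely quasi-coherent, terms, since your final step uses quasi-coherence of the sheaves $\uHom_{\O_{B_N^M(Y)}}(\Bbb F^k,\Bbb I^j)$ in order to invoke exactness of $(?)^N$ on $\Qch(N,Y)$, and $\uHom$ out of a merely quasi-coherent module need not be quasi-coherent (the termwise vanishing of invariants alone does not give vanishing of invariants of the cohomology, because without quasi-coherence the Reynolds splitting is unavailable); producing such an equivariant coherent representative is not automatic; and (b) the statement that restrictions of $\Qch(Y)$-injectives are $\uHom(\M,?)$-acyclic in the full module category over the ringed site --- this is not a formal consequence of the comparison $D^+(\Qch)\simeq D^+_{\Qch}(\Mod)$ that you appeal to, and the paper cites a specific acyclicity result, \cite[(15.2)]{ETI}, at exactly this point. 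With (a) and (b) supplied --- or with the way-out reduction to single sheaves used in place of your termwise splitting --- your argument coincides with the paper's proof.
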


\begin{proof}
Note that the full subcategory $\Coh_N(G,Y)$ (resp.\ $U_N(G,Y)$) 
consisting of $N$-trivial (resp.\ $N$-anti-trivial) 
coherent (resp.\ quasi-coherent) $G$-modules 
forms a plump subcategory (that is, a full subcategory
which is closed under extensions, kernels, and cokernels) of $\Mod(G,Y)$.
So we may assume that $\Bbb F=\M$ and $\Bbb G=\N$ are single coherent sheaf
and quasi-coherent sheaf, respectively, by the way-out lemma 
\cite{Hartshorne2}.

Note that
$\uExt^i_{\O_{B_M^G(Y)}}(\M,\N)^N=0$
if and only if 
\[
(\res_N^G(\uExt^i_{\O_{B_M^G(Y)}}(\M,\N)))^N
\cong \uExt^i_{\O_{B_N^M(Y)}}(\res_N^G\M,\res_N^G\N)^N=0
\]
by Corollary~\ref{restriction-extension.thm}.
Set $\M':=\res_N^G\M$ and $\N':=\res_N^G\N$.

Let $0\rightarrow \N'_0\rightarrow\Bbb I$ be an injective resolution in
$\Qch(Y)$.
Then applying Lemma~\ref{faithful-exact.thm},
Lemma~\ref{N-injective.thm}, and \cite[(15.2)]{ETI},
we have that $0\rightarrow \res_N^{\{e\}}\N'_0\rightarrow\res_N^{\{e\}}\Bbb I$ 
is an $\uHom_{\O_{B^M_N(Y)}}(\M',?)$-acyclic resolution.
By assumption, $\N'$ is of the form $\res_N^{\{e\}}\L$ for some $\L\in\Qch(Y)$.
Then $\res_N^{\{e\}}\N'_0\cong\res_N^{\{e\}}\L\cong\N'$.
So $\uExt^i_{\O_{B_N^M(Y)}}(\M',\N')^N$ is a subquotient of
$\uHom_{\O_{B_N^M(Y)}}(\M',\Bbb I^i)^N$.
This is zero by Lemma~\ref{Reynolds-four-operations.lem}, {\bf 4}.
\end{proof}

\begin{lemma}\label{linearly-reductive-algebraic-quotient.lem}
Let $G$ be a Reynolds $S$-group scheme,
and $\varphi:X\rightarrow Y$ an algebraic quotient by $G$.
Then $\varphi$ is a universal algebraic quotient.
If, moreover, $\varphi$ is an affine universally
submersive geometric quotient, then $\varphi$ is a universal 
geometric quotient.
\end{lemma}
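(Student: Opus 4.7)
The first assertion. Let $h:Y'\to Y$ be an arbitrary $S$-morphism and form the cartesian square with $\varphi':X'\to Y'$ and $g:X'\to X$. Affineness is stable under base change, so $\varphi'$ is affine, and it remains to show that $\bar\eta':\O_{Y'}\to(\varphi'_*\O_{X'})^G$ is an isomorphism. For this I would invoke the commutative diagram of Lemma~\ref{bar-eta-isom.thm}, {\bf 1}: the two canonical maps labeled $C$ are isomorphisms, $h^*\bar\eta$ is an isomorphism because $\bar\eta$ is one by hypothesis, and $\theta:h^*\varphi_*\O_X\to\varphi'_*g^*\O_X$ is the flat-base-change map, which is an isomorphism because $\varphi$ is affine. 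The remaining map, $\epsilon:h^*(\varphi_*\O_X)^G\to(h^*\varphi_*\O_X)^G$, is exactly the point where Lemma~\ref{bar-eta-isom.thm}, {\bf 2} would demand $h$ to be flat. Because $G$ is Reynolds, I can instead appeal to Lemma~\ref{Reynolds-four-operations.lem}, {\bf 1}, which asserts that $\epsilon$ is an isomorphism for \emph{every} $h$. Chasing the diagram then gives that $\bar\eta'$ is an isomorphism, so $\varphi$ is a universal algebraic quotient.

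The second assertion. A geometric quotient is characterized by three conditions: submersiveness, the isomorphism $\bar\eta:\O_Y\to(\varphi_*\O_X)^G$, and surjectivity of $\Psi_{G,\varphi}$. For the base change $\varphi':X'\to Y'$ along an arbitrary $S$-morphism $h:Y'\to Y$, universal submersiveness of $\varphi$ yields the submersiveness of $\varphi'$, the first assertion provides that $\bar\eta'$ is an isomorphism, and, as noted right after the definition of the secondary map in Section~\ref{quotients.sec}, $\Psi_{G,\varphi'}$ is identified with the base change of $\Psi_{G,\varphi}$ along $h$; surjectivity of a morphism of schemes is stable under arbitrary base change, so $\Psi_{G,\varphi'}$ is surjective. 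Therefore $\varphi'$ is a geometric quotient, and $\varphi$ is a universal geometric quotient.

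\emph{Main obstacle.} The one non-formal input is the promotion of $\epsilon$ from a flat-base-change isomorphism to an arbitrary-base-change isomorphism. This is precisely what the Reynolds hypothesis buys through Lemma~\ref{Reynolds-four-operations.lem}, {\bf 1}; once that is in hand, both parts reduce to routine diagram chasing and the elementary stability of affineness, submersiveness, and surjectivity under base change.
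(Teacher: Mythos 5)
Your proposal is correct and follows essentially the same route as the paper: the first assertion is proved by combining the diagram of Lemma~\ref{bar-eta-isom.thm}, {\bf 1} with the affine base-change isomorphism $\theta$ and the Reynolds isomorphism $\epsilon$ from Lemma~\ref{Reynolds-four-operations.lem}, {\bf 1}, exactly as in the paper. The second assertion, which the paper dismisses as immediate, is correctly spelled out in your proposal via stability of submersiveness and of surjectivity of the secondary map under base change.
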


\begin{proof}
Let $h:Y'\rightarrow Y$ be an $S$-morphism between $S$-schemes on which
$G$ acts trivially, and consider the fiber square
(\ref{fiber.eq}) in Lemma~\ref{eta-inverse.thm}.
As $\varphi$ is affine, $\theta:h^*\varphi_*\O_X\rightarrow
\varphi'_*g^*\O_X$ is an isomorphism.
Now the first assertion follows from
Lemma~\ref{bar-eta-isom.thm}, {\bf 1} and
Lemma~\ref{Reynolds-four-operations.lem}, {\bf 1}.
The second assertion follows immediately.
\end{proof}

\section{Base change of twisted inverse}\label{base-change.sec}

\begin{lemma}\label{independent-square.lem}
Let
\begin{equation}\label{tor-independent.eq}
\xymatrix{
X' \ar[r]^g \ar[d]^{\varphi'} \ar@{}[dr]|{\sigma}  &
X \ar[d]^\varphi \\
Y' \ar[r]^h & Y
}
\end{equation}
be a fiber square of schemes.
Assume that $\varphi$ is quasi-compact quasi-separated.
Then the following are equivalent.
\begin{enumerate}
\item[\bf 1] Lipman's theta 
$\theta:Lh^*R\varphi_*\rightarrow R\varphi'_* Lg^*$ between
the functors $D_{\Qch}(X)\rightarrow D_{\Qch}(Y')$ 
(cf.~\cite[(3.9.1), (3.9.2)]{Lipman}) is an isomorphism.
\item[\bf 2] The square is tor-independent in the sense that
for each $x\in X$ and $y'\in Y'$ such that $\varphi(x)=h(y')=y\in Y$,
$\Tor^{\O_{Y,y}}_i(\O_{X,x},\O_{Y',y'})=0$ for $i>0$.
\end{enumerate}
\end{lemma}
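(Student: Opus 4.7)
The plan is to reduce to the affine case and identify $\theta$ with an explicit derived base-change map, so that its being an isomorphism amounts to Tor vanishing. Both conditions are Zariski-local on $Y$ and $Y'$: (2) is stalk-local, while (1) is checked locally on $Y'$ since $Lh^*$, $R\varphi_*$, $R\varphi'_*$, $Lg^*$ all commute with restriction to affine opens. Thus I reduce to $Y=\Spec A$ and $Y'=\Spec A'$. Since $\varphi$ is qcqs with $Y$ affine, $X$ admits a finite affine open cover $\mathfrak U=\{U_i\}$, and $\mathfrak U':=\{U_i\times_Y Y'\}$ is an affine open cover of $X'$.

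For (2)$\Rightarrow$(1), I take a K-flat resolution $\Bbb P\to\Bbb F$ (Lemma~\ref{complex-K-flat.thm}) and represent $R\varphi_*\Bbb P$ and $R\varphi'_*g^*\Bbb P$ by the total Cech complexes $C^\bullet=\check C^\bullet(\mathfrak U,\Bbb P)$ and $\check C^\bullet(\mathfrak U',g^*\Bbb P)$, respectively. The affine base-change identity $\Gamma(U_I\times_Y Y',g^*\Bbb P)=\Gamma(U_I,\Bbb P)\otimes_A A'$ (for multi-indices $I$) identifies the second Cech complex with $A'\otimes_A C^\bullet$, so under these identifications $\theta$ becomes the canonical map $A'\otimes_A^L C^\bullet\to A'\otimes_A C^\bullet$. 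It suffices to show each term $\Gamma(U_I,\Bbb P^n)$ is Tor-independent from $A'$ over $A$, which I check stalk-wise at $x\in U_I$: K-flatness of $\Bbb P^n_x$ over $\O_{X,x}$ reduces the question to $\Tor_i^{\O_{Y,y}}(\O_{X,x},A')=0$ for $i>0$, and the vanishing at every $y'\in Y'$ over $y=\varphi(x)$, guaranteed by (2), suffices since $A'\otimes_A\O_{X,x}$ is supported on such $y'$. The principal obstacle in this direction is the bookkeeping required to pass from stalk-wise Tor vanishing to a global $A'\otimes_A^L$-acyclicity statement on the Cech components, uniformly over all $y'\in Y'$ above $y$.

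For (1)$\Rightarrow$(2), given $x\in X$ and $y'\in Y'$ with $\varphi(x)=h(y')=y$, I pick an affine open $V=\Spec B\subset X$ containing $x$ and restrict the entire diagram along the flat qcqs open immersion $V\hookrightarrow X$; this restriction preserves the isomorphism property of $\theta$ because open restriction commutes with $R\varphi_*$ and $Lg^*$. Hence I may assume $X=V=\Spec B$. Applying $\theta$ to $\Bbb F=\O_X$, the source becomes $A'\otimes_A^L B$ and the target becomes $A'\otimes_A B$ in $D(A')$, so (1) gives $\Tor_i^A(A',B)=0$ for $i>0$. Localizing at the primes $\fp\subset B$ and $\fp'\subset A'$ corresponding to $x$ and $y'$, and setting $\fq:=\fp\cap A$, this yields the required $\Tor_i^{A_\fq}(B_\fp,A'_{\fp'})=0$ for $i>0$, completing the equivalence.
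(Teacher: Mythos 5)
Your first move (the question is local on $Y$ and $Y'$, so reduce to the affine base) is exactly the paper's proof, which at that point simply cites \cite[(3.10.3)]{Lipman}; what you then do is attempt to re-derive Lipman's theorem, and as written the derivation has real gaps. In the direction {\bf 2$\Rightarrow$1}, the resolution supplied by Lemma~\ref{complex-K-flat.thm} has terms that are flat but \emph{not} quasi-coherent (they are direct sums of modules of the form $j_!\O_U$), so neither of your two key identifications applies to it: the \v{C}ech complex of such a complex need not compute $R\varphi_*$ (the terms need not be acyclic on the $U_I$), and the ``affine base-change identity'' $\Gamma(U_I\times_Y Y',g^*\Bbb P^n)=\Gamma(U_I,\Bbb P^n)\otimes_A A'$ is a statement about quasi-coherent modules. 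To run your argument you need K-flat resolutions by complexes of flat \emph{quasi-coherent} sheaves, and you need the intersections $U_I$ to be affine; both force $X$ to be semiseparated, whereas the lemma assumes only that $\varphi$ is quasi-compact quasi-separated. The general qcqs case requires an extra induction on the number of affines in a cover (Mayer--Vietoris), which is absent. Also, ``K-flatness of $\Bbb P^n_x$'' conflates a property of the complex with one of its terms; the correct chain is $A'\otimes_A^L\Gamma(U_I,\Bbb P)\cong(A'\otimes_A^LB_I)\otimes_{B_I}^L\Gamma(U_I,\Bbb P)$, then $\Tor^A_i(A',B_I)=0$ (deduced from the stalkwise hypothesis by a support/localization argument), then K-flatness of $\Gamma(U_I,\Bbb P)$ over $B_I$; termwise $\otimes_AA'$-acyclicity alone does not identify the derived with the underived tensor product for an unbounded complex.

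In the direction {\bf 1$\Rightarrow$2}, the restriction step is not the formal commutation you assert. Replacing $X$ by an affine open $V$ changes the \emph{source} of $\varphi$, so to transport the isomorphism you need $Lg^*Rj_*\cong Rj'_*L(g|_{V'})^*$ for the square built from $j:V\hookrightarrow X$ and $g$; since $g$ is not flat, this is neither flat base change nor ``restriction commutes with $Lg^*$'' --- it is itself an instance of tor-independent base change (tor-independent because $j$ is flat). It can be justified by applying the already-proved implication {\bf 2$\Rightarrow$1} to that square and then using the transitivity of $\theta$ under vertical pasting of squares (the $\theta$-analogue of Lemma~\ref{phi-psi.lem}), but as written this is a gap rather than an observation. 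Once that is repaired, evaluating $\theta$ at $\O_X$ over the affine $V$ and localizing does give the required Tor vanishing.
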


\begin{proof}
As the question is local both on $Y$ and $Y'$, we may assume that both
$Y$ and $Y'$ are affine.
This case is \cite[(3.10.3)]{Lipman}.
\end{proof}

\paragraph
Let $I$ be a small category, and 
(\ref{tor-independent.eq}) be a tor-independent 
fiber square of $I\op$-diagrams of Noetherian schemes 
such that $\varphi:X\rightarrow Y$ is proper.
By Lemma~\ref{independent-square.lem},
Lipman's theta $\theta:Lh^*R\varphi_*\rightarrow R\varphi'_*Lg^*$ is
an isomorphism (between functors from $D_{\Lqc}(X)\rightarrow D_{\Lqc}(Y')$.
Then we define $\zeta(\sigma):
Lg^*\varphi^\times
\rightarrow
(\varphi')^\times h^*
$ as the composite
\[
  Lg^*\varphi^\times
  \xrightarrow u
  (\varphi')^\times R\varphi'_*  Lg^*\varphi^\times
  \xrightarrow{\theta^{-1}}
  (\varphi')^\times Lh^* R\varphi_* \varphi^\times
  \xrightarrow{\varepsilon}
  (\varphi')^\times Lh^*
\]
(cf.~\cite[(19.1)]{ETI}).

\begin{lemma}
Let 
  \begin{equation}\label{horizontal-squares.eq}
  \xymatrix{
  X'' \ar[r]^{g'} \ar[d]^{\varphi''} \ar@{}[dr]|{\sigma'} &
  X' \ar[r]^{g} \ar[d]^{\varphi'} \ar@{}[dr]|{\sigma} &
  X \ar[d]^{\varphi} \\
  Y'' \ar[r]^{h'} &
  Y' \ar[r]^{h} &
  Y
  }
  \end{equation}
  be a commutative diagram of schemes.
  Assume that $\sigma$ is a tor-independent cartesian square.
  Then $\sigma'$ is tor-independent cartesian if and only if
  the whole rectangle $\sigma'+\sigma$ is a tor-independent
  cartesian square.
\end{lemma}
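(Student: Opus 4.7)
The statement decomposes into a cartesian assertion and a tor-independence assertion. The cartesian part is the pasting lemma for pullback diagrams: given $\sigma$ cartesian, $\sigma'$ is cartesian if and only if the composite rectangle $\sigma'+\sigma$ is cartesian.

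For the tor-independence part, I would combine the pointwise Tor characterization from Lemma~\ref{independent-square.lem} with the Cartan--Eilenberg change-of-rings spectral sequence. Writing $R = \O_{Y,y}$, $R' = \O_{Y',y'}$, $R'' = \O_{Y'',y''}$, and $A = \O_{X,x}$ at appropriately chosen points with $y = \varphi(x) = hh'(y'')$ and $y' = h'(y'')$, the spectral sequence
\[
E^2_{p,q} = \Tor^{R'}_p(\Tor^R_q(A, R'), R'') \Rightarrow \Tor^R_{p+q}(A, R'')
\]
degenerates by virtue of the tor-independence of $\sigma$, which forces $\Tor^R_q(A, R') = 0$ for $q > 0$, and yields the key isomorphism
\[
\Tor^R_i(A, R'') \cong \Tor^{R'}_i(A \otimes_R R', R''),
\]
from which both directions of the biconditional are extracted using that Tor commutes with localization.

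To deduce tor-independence of $\sigma'$ from that of $\sigma'+\sigma$: fix $(x', y_0'')$ with common image $\bar{y'}$ in $Y'$; setting $\bar{x} = g(x')$ and $\bar{y} = \varphi(\bar{x})$, the pair $(\bar{x}, y_0'')$ has common image $\bar{y}$ in $Y$, so the key isomorphism at this pair yields vanishing of $\Tor^{\O_{Y', \bar{y'}}}_i(B', \O_{Y'', y_0''})$ for $B' = \O_{X, \bar{x}} \otimes_{\O_{Y, \bar{y}}} \O_{Y', \bar{y'}}$; then $\O_{X', x'}$ is a localization of $B'$ at the prime corresponding to $x'$, so the required vanishing follows. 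For the converse, fix $(x, y'')$ with common image $y$ and reduce via the isomorphism to $\Tor^{R'}_i(B, R'') = 0$ for $B = A \otimes_R R'$. As a $B \otimes_{R'} R''$-module, this vanishes iff it vanishes at each prime of $B \otimes_{R'} R''$; crucially, every such prime projects to a pair $(\fp_B, \fq_{R''})$ whose contractions to $R'$ coincide, so the associated $x' \in X'$ and $y_0'' \in Y''$ automatically satisfy $\varphi'(x') = h'(y_0'')$, and the tor-independence of $\sigma'$ applies directly at this pair.

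The main technical point will be verifying the compatibility of the flat base change along the localization $R' \to \O_{Y', \varphi'(x')}$ with the various localizations of $B$ and $R''$, so that the local computation of $\Tor^{R'}_i(\O_{X', x'}, \O_{Y'', y_0''})$ matches $\Tor^{\O_{Y', \varphi'(x')}}_i(\O_{X', x'}, \O_{Y'', y_0''})$, the latter being what tor-independence of $\sigma'$ directly controls. Once these localizations are tracked carefully, the two directions close up cleanly via the collapsed spectral sequence.
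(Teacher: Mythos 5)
Your proposal is correct and follows essentially the same route as the paper: the pasting lemma handles the cartesian assertion, and the change-of-rings spectral sequence $E^2_{p,q}=\Tor_p^{\O_{Y',y'}}(\O_{Y'',y''},\Tor_q^{\O_{Y,y}}(\O_{Y',y'},\O_{X,x}))$ collapses by tor-independence of $\sigma$ to give the key isomorphism, from which the equivalence follows. The only difference is that you spell out the localization bookkeeping (identifying $\O_{X',x'}$ as a localization of $\O_{X,x}\otimes_{\O_{Y,y}}\O_{Y',y'}$ and checking primes of the tensor product), which the paper's proof leaves implicit in ``the equivalence follows,'' and your treatment of that step is sound.
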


\begin{proof}
  As $\sigma$ is cartesian, $\sigma'$ is cartesian if and only if
  $\sigma'+\sigma$ is cartesian.

  Let $y''\in Y''$ and $x\in X$ such that $hh'(y'')=\varphi(x)$.
  Let $A''=\O_{Y'',y''}$, $A'=\O_{Y',y'}$, $A=\O_{Y,y}$, and $B=\O_{X,x}$,
  where $y'=h'(y'')$ and $y=h(y')=\varphi(x)$.
  There is a spectral sequence
  \[
  E^2_{p,q}=\Tor_p^{A'}(A'',\Tor_q^{A}(A',B))\Rightarrow \Tor_{p+q}^{A}(A'',B).
  \]
  By assumption, $E^2_{p,q}=0$ for $q\neq 0$.
  So $\Tor_n^{A'}(A'',A'\otimes_A B)\cong \Tor_n^A(A'',B)$, and the
  equivalence follows.
\end{proof}

\begin{lemma}\label{hh'.lem}
  Let $I$ be a small category, and (\ref{horizontal-squares.eq})
  be a diagram of $I\op$-diagrams of Noetherian schemes.
  Assume that $\varphi$ is proper, and
  $\sigma$ and $\sigma'$ are tor-independent fiber squares.
  Then the composite
  \begin{multline*}
  L(gg')^* \varphi^\times
  \xrightarrow d
  L(g')^* Lg^*\varphi^\times
  \xrightarrow{\zeta(\sigma)}
  L(g')^*(\varphi')^\times Lh^*\\
  \xrightarrow{\zeta(\sigma')}
  (\varphi'')^\times L(h')^*Lh^*
  \xrightarrow d
  (\varphi'')^\times L(hh')^*
  \end{multline*}
  agrees with $\zeta(\sigma'+\sigma)$.
\end{lemma}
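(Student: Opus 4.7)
The plan is a diagram chase, reducing the statement to the horizontal pasting compatibility of Lipman's $\theta$ together with one triangle identity for the adjunction $(R\varphi'_*,(\varphi')^\times)$. The structure of the argument is parallel to Corollary~\ref{epsilon-composition.cor} and Lemma~\ref{invariance-unit.thm}, which handle the analogous compatibility for $\epsilon$ and $e$.

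The key input is the horizontal pasting compatibility for Lipman's theta: for $\sigma$ and $\sigma'$ as in (\ref{horizontal-squares.eq}), the diagram
\[
\xymatrix{
L(hh')^* R\varphi_* \ar[rr]^-{\theta(\sigma'+\sigma)} \ar[d]_{d^{-1}} &&
R\varphi''_* L(gg')^* \\
L(h')^*Lh^* R\varphi_* \ar[r]^-{\theta(\sigma)} &
L(h')^*R\varphi'_* Lg^* \ar[r]^-{\theta(\sigma')} &
R\varphi''_*L(g')^*Lg^* \ar[u]_{d}
}
\]
commutes. This is standard and follows formally from the definition of $\theta$ as the mate (under the adjunctions $L(?)^*\dashv R(?)_*$) of the pseudofunctor-compatibility isomorphism $c$ for $R(?)_*$, together with the pseudofunctoriality of $L(?)^*$; concretely, expanding each $\theta$ as $\varepsilon\circ(?)\circ u$ and moving the coherence $d$'s through the relevant naturality squares closes the diagram.

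Granting this, I would unfold both sides of the asserted equality using the definition
\[
\zeta(\tau)=\varepsilon\circ(\text{target})^\times\,\theta(\tau)^{-1}\circ u.
\]
Substituting the pasting identity for $\theta(\sigma'+\sigma)^{-1}$ into the expansion of $\zeta(\sigma'+\sigma)$ converts it into a composite involving both $\theta(\sigma)^{-1}$ and $\theta(\sigma')^{-1}$ along with the coherence $d$'s. The composite in the statement, after unfolding $\zeta(\sigma)$ and $\zeta(\sigma')$, is an eight-arrow composite; the two expressions differ only by an inserted pair $(u,\varepsilon)$ for the adjunction $(R\varphi'_*,(\varphi')^\times)$ sandwiched between the two $\theta^{-1}$'s, plus some rearrangement by naturality (moving $L(g')^*$ past $(\varphi')^\times, R\varphi'_*, Lh^*$ and similarly for $L(h')^*$). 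The triangle identity $\varepsilon\circ u=\id$ for that adjunction closes the gap.

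The main obstacle is not conceptual but simply bookkeeping: there are many functors being composed, and one must track carefully which copy of $u,\varepsilon,\theta,d$ acts on which position. No ingredient beyond the triangle identities, the naturality of $u,\varepsilon,\theta$, and the pseudofunctoriality of $L(?)^*,R(?)_*,(?)^\times$ is required, so once the large diagram is drawn the verification is entirely formal.
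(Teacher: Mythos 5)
Your argument is correct and is essentially the proof the paper has in mind: the paper leaves the verification to the reader, citing \cite[(1.23)]{ETI}, which packages exactly the mate/conjugation compatibility that you re-derive from the horizontal pasting of Lipman's $\theta$, the triangle identity for $(R\varphi'_*,(\varphi')^\times)$, and naturality.
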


\begin{proof}
  Straightforward, and left to the reader (use \cite[(1.23)]{ETI}).
\end{proof}

\begin{lemma}\label{phi-psi.lem}
  Let
  \begin{equation}\label{vertical-squares.eq}
  \xymatrix{
    X' \ar[r]^f \ar[d]^{\psi'} \ar@{}[dr]|{\sigma'} &
    X \ar[d]^\psi \\
    Z' \ar[r]^g \ar[d]^{\varphi'} \ar@{}[dr]|{\sigma}  &
    Z \ar[d]^\varphi \\
    Y' \ar[r]^h & Y
    }
  \end{equation}
  be a diagram of $I\op$-diagrams of Noetherian schemes.
  Assume that $\varphi$ and $\psi$ are proper, and
  $\sigma$ and $\sigma'$ are tor-independent fiber squares.
  Then the composite
  \begin{multline*}
    Lf^*(\varphi\psi)^\times
    \xrightarrow d
    Lf^*\psi^\times \varphi^\times
    \xrightarrow{\zeta(\sigma')}
    (\psi')^\times Lg^* \varphi^\times\\
    \xrightarrow{\zeta(\sigma)}
    (\psi')^\times (\varphi')^\times Lh^*
    \xrightarrow d
    (\varphi'\psi')^\times Lh^*
  \end{multline*}
  agrees with $\zeta(\sigma'+\sigma)$.
\end{lemma}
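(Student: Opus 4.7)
The plan is to reduce the statement to the analogous compatibility for Lipman's theta and then perform a diagram chase on units and counits of adjunction, following the template already used (implicitly) in Lemma~\ref{hh'.lem}.

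First, I would unfold the definition of $\zeta$ for each of the three squares involved. By the definition given in the paragraph preceding Lemma~\ref{hh'.lem}, $\zeta(\sigma)$ is the composite $\varepsilon \circ (\varphi')^\times\theta(\sigma)^{-1} \circ u$, where $u$ is the unit for the adjunction $(L(\varphi')^*,R\varphi'_*)$ applied to $Lg^*\varphi^\times$, and $\varepsilon$ is the counit for $(L\varphi^*,R\varphi_*)$. Writing out $\zeta(\sigma')$, $\zeta(\sigma)$, and $\zeta(\sigma'+\sigma)$ in this way turns the whole statement into an identity between two morphisms $Lf^*(\varphi\psi)^\times \to (\varphi'\psi')^\times Lh^*$.

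The essential ingredient is the compatibility of Lipman's theta with vertical composition of tor-independent fiber squares: namely, that $\theta(\sigma'+\sigma): Lh^*R(\varphi\psi)_* \to R(\varphi'\psi')_*Lf^*$ factors as
\[
Lh^*R(\varphi\psi)_* \xrightarrow{d^{-1}} Lh^*R\varphi_*R\psi_* \xrightarrow{\theta(\sigma)} R\varphi'_*Lg^*R\psi_* \xrightarrow{\theta(\sigma')} R\varphi'_*R\psi'_*Lf^* \xrightarrow{d} R(\varphi'\psi')_*Lf^*.
\]
This is standard for Lipman's theta (a transitivity property of flat base change along a composite proper map), and I would either cite it from \cite{Lipman} or verify it by the same kind of unit/counit chase used to define $\theta$. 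Inverting this identity gives the analogous factorization of $\theta(\sigma'+\sigma)^{-1}$.

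Next, I would combine this factorization with the compatibility of $d:(\varphi\psi)^\times \to \psi^\times\varphi^\times$ with the units and counits of adjunction, together with the basic identities $d \circ u = u \circ d$ and $\varepsilon \circ d = d \circ \varepsilon$ linking the two formulations of the adjunction $(L(\varphi\psi)^*,R(\varphi\psi)_*) \cong (L\psi^* L\varphi^*, R\varphi_* R\psi_*)$; these are standard (see \cite[(1.23)]{ETI}, which is already used in Lemma~\ref{hh'.lem}). The result is a large commutative diagram whose outside edge matches the composite in the statement and whose other outside edge is $\zeta(\sigma'+\sigma)$.

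The main obstacle is purely bookkeeping: keeping track of the six or seven $d$'s, two $\theta$'s, and four unit/counit maps so that the intermediate faces of the diagram really commute. Each individual face is either an adjunction triangle, a naturality square, or an instance of the transitivity of $\theta$ or $d$, so nothing deep is required; the difficulty is only drawing the diagram. As in Lemma~\ref{hh'.lem}, I would therefore present the proof as a single commutative diagram and leave the verification of each face to the reader.
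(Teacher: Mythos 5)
Your proposal is correct and takes essentially the same route the paper intends: the paper's proof is simply ``left to the reader (use \cite[(1.22)]{ETI})'', i.e., exactly the strategy you describe of unfolding each $\zeta$ as unit--$\theta^{-1}$--counit, using the transitivity of Lipman's $\theta$ along the composite proper map $\varphi\psi$, and invoking the compatibility of $d$ with units and counits (your citation of \cite[(1.23)]{ETI} should be \cite[(1.22)]{ETI}, the version for composing the right adjoints $(?)^\times$, whereas (1.23) is the one used for the horizontal case of Lemma~\ref{hh'.lem}). Nothing further is needed.
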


\begin{proof}
  Left to the reader (use \cite[(1.22)]{ETI}).
\end{proof}

\paragraph\label{subscheme.par}
Let $G$ be an $S$-group scheme, $Y$ be a $G$-scheme, and $X$ a
$G$-stable subscheme.
That is, $X$ is a subscheme of $Y$ such that $GX\subset X$.
Then $X$ is a closed subscheme of an open subscheme $U$ of $Y$.
Assume that $G$ is universally open (e.g., flat locally of finite presentation)
over $S$.
Then $GU$ is a $G$-stable open subscheme of $Y$, and $X=GU
\setminus G(U\setminus X)$ is a $G$-stable closed subset in $GU$.
Being a $G$-stable subscheme of $Y$, $X$ is a $G$-stable closed
subscheme of $GU$.
Thus if $G$ is universally open, a $G$-stable subscheme is nothing but
a $G$-stable closed subscheme of a $G$-stable open subscheme.

\paragraph\label{zeta-bar.par}
Let $G$ be a flat $S$-group scheme of finite type, and
$\varphi:X\rightarrow Y$ an immersion between Noetherian $G$-schemes.
As we have seen in (\ref{subscheme.par}), 
We can factorize $\varphi$ as
\[
\varphi: X\xrightarrow p
U\xrightarrow i
Y,
\]
where $U$ is a $G$-stable open subscheme of $Y$, $i$ the inclusion,
and $p$ is a $G$-stable closed immersion.

Let $h:Y'\rightarrow Y$ be a $G$-morphism between Noetherian $G$-schemes.
Assume that $\varphi$ and $h$ are tor-independent.
Then $p$ and (the base change of) $h$ are also tor-independent.

For the fiber square (\ref{tor-independent.eq}), 
we define $\bar\zeta(\sigma)$ to be the composite
\[
h^*\varphi^!
=
h^*p^\times i^*
\xrightarrow\zeta
(p')^\times g^* i^*
\xrightarrow d
(p')^\times (i')^*h^*
=
(\varphi')^!h^*,
\]
where
  \begin{equation}\label{vertical-squares.eq}
  \xymatrix{
    X' \ar[r]^f \ar[d]^{p'} \ar@{}[dr]|{\sigma_1} &
    X \ar[d]^p \\
    U' \ar[r]^g \ar[d]^{i'} \ar@{}[dr]|{\sigma_2}  &
    U \ar[d]^i \\
    Y' \ar[r]^h & Y
    }
  \end{equation}
is a commutative diagram with $\sigma_1$ and $\sigma_2$ are cartesian,
and $\varphi'=i'p'$.
Using Lemma~\ref{hh'.lem}, it is easy to see that
the definition of $\bar\zeta(\sigma)$ depends only on $\sigma$, and
is independent of the choice of factorization $\sigma_1$ and $\sigma_2$.

\section{Serre's conditions and the canonical modules}
\label{Serre-canonical.sec}

\paragraph
Let $G$ be an $S$-group scheme.
We say that a $G$-scheme $X$ is {\em $G$-connected} if $X=X_1\coprod X_2$ with
$X_1$ and $X_2$ are $G$-stable open subsets, then either $X_1$ or $X_2$ is
empty.
In this paper, a $G$-connected $G$-scheme is required to be nonempty.
A connected topological space is also required to be nonempty.
If the action of $G$ on $X$ is trivial, then $G$-connected and connected 
are the same thing.

\begin{lemma}\label{categorical-quotient-mono.lem}
Let $G$ be an $S$-group scheme, and $\psi:X\rightarrow W$ and 
$u:W\rightarrow Y$ be $S$-morphisms.
Assume that $\varphi=u\psi:X\rightarrow Y$ is $G$-invariant, and is a 
categorical quotient by $G$.
If $u$ is a monomorphism \(e.g., an immersion\), then $u$ is an isomorphism.
\end{lemma}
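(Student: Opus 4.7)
The plan is to exhibit an explicit two-sided inverse $\theta:Y\to W$ of $u$, obtained from the universal property of the categorical quotient $\varphi$.

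First I would verify that $\psi$ itself is $G$-invariant. Let $\sigma,p_2:G\times X\to X$ denote the action and the second projection. Since $\varphi=u\psi$ is $G$-invariant, we have $u\circ(\psi\sigma)=u\circ(\psi p_2)$; because $u$ is a monomorphism, this forces $\psi\sigma=\psi p_2$, i.e.\ $\psi$ is $G$-invariant.

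Next, applying the universal property of the categorical quotient $\varphi:X\to Y$ to the $G$-invariant morphism $\psi:X\to W$, I obtain a unique $S$-morphism $\theta:Y\to W$ with $\psi=\theta\varphi$. To show $u\theta=1_Y$, observe that both $1_Y$ and $u\theta$ are $S$-morphisms $Y\to Y$ satisfying $1_Y\circ\varphi=\varphi$ and $(u\theta)\circ\varphi=u\theta u\psi=u\psi=\varphi$; by the uniqueness clause in the definition of a categorical quotient (applied to the $G$-invariant morphism $\varphi:X\to Y$), we conclude $u\theta=1_Y$. Finally, from $u\theta u=u=u\circ 1_W$ and the fact that $u$ is a monomorphism, we deduce $\theta u=1_W$. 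Hence $u$ is an isomorphism with inverse $\theta$.

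There is no real obstacle here: the only small point to keep in mind is the implicit step that $\psi$ is $G$-invariant, which is exactly where the monomorphism hypothesis on $u$ enters (and without which one could not invoke the universal property of $\varphi$ on $\psi$). The parenthetical remark that immersions are monomorphisms requires no comment, as immersions in the category of schemes are known to be monomorphisms.
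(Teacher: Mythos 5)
Your proof is correct and follows essentially the same route as the paper's: deduce $G$-invariance of $\psi$ from the monomorphism hypothesis, use the universal property of the categorical quotient to produce $\theta$ with $\psi=\theta\varphi$, get $u\theta=1_Y$ from the uniqueness clause, and then cancel $u$ to obtain $\theta u=1_W$. The only difference is that you spell out the $G$-invariance step which the paper leaves as "easy to see."
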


\begin{proof}
As $u$ is a monomorphism, it is easy to see that $\psi$ is also 
$G$-invariant.
By the definition of the categorical quotient, there exists some
$v:Y\rightarrow W$ such that $\psi=v\varphi$.
Then $1_Y\varphi=\varphi=u\psi=uv\varphi$.
As $\varphi$ is a categorical quotient, $1_Y=uv$ by the uniqueness.
As $u1_W=u=1_Yu=uvu$ and $u$ is a monomorphism, $1_W=vu$.
Hence $u$ is an isomorphism.
\end{proof}

\begin{lemma}\label{connected.lem}
Let $G$ be an $S$-group scheme, and $\varphi:X\rightarrow Y$ a $G$-morphism.
\begin{enumerate}
\item[\bf 1] If $\varphi$ is dominating and
$X$ is $G$-connected, then $Y$ is $G$-connected.
\item[\bf 2] 
If $\varphi$ is $G$-invariant dominating and $X$ is $G$-connected, 
then $Y$ is connected.
\item[\bf 3] 
Assume that $\varphi$ is a categorical quotient or an algebraic quotient.
Then
$X$ is $G$-connected if and only if $Y$ is connected.
\end{enumerate}
\end{lemma}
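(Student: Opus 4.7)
The plan is to handle the three parts sequentially. For Part~1, I would argue by contrapositive: if $Y=Y_1\coprod Y_2$ with $Y_1,Y_2$ nonempty $G$-stable opens, then the pullbacks $\varphi^{-1}(Y_i)$ are $G$-stable opens of $X$ forming a disjoint cover; dominance of $\varphi$ ensures $\varphi(X)$ meets each nonempty open $Y_i$, so both $\varphi^{-1}(Y_i)$ are nonempty, contradicting $G$-connectedness of $X$. Part~2 is then immediate from Part~1: the trivial $G$-action on $Y$ makes every open subset $G$-stable, so $G$-connectedness of $Y$ coincides with topological connectedness.

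For the forward direction of Part~3, I would argue by contradiction starting from a disconnection $Y=Y_1\coprod Y_2$ into nonempty clopen pieces. In the algebraic case, each $Y_i$ corresponds to a nonzero idempotent $e_i\in\Gamma(Y,\O_Y)$; the canonical monomorphism $\O_Y\cong(\varphi_*\O_X)^G\hookrightarrow \varphi_*\O_X$ built into the definition of algebraic quotient sends each $e_i$ to a nonzero $G$-invariant idempotent of $\Gamma(X,\O_X)$, yielding a nontrivial $G$-stable clopen decomposition of $X$, the desired contradiction. In the categorical case, either both $\varphi^{-1}(Y_i)$ are nonempty and give a $G$-disconnection of $X$ directly, or say $\varphi^{-1}(Y_2)=\emptyset$; then $\varphi$ factors as $\iota\circ\psi$ through the open immersion $\iota:Y_1\hookrightarrow Y$, which is a monomorphism, and Lemma~\ref{categorical-quotient-mono.lem} forces $\iota$ to be an isomorphism, contradicting $Y_2\neq\emptyset$.

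For the converse direction of Part~3, I would reverse these constructions starting from a decomposition $X=X_1\coprod X_2$ with both $X_i$ nonempty $G$-stable open. In the categorical case, form the $G$-invariant $S$-morphism $\psi:X\to S\coprod S$ sending each $X_i$ into the $i$-th copy of $S$ via its structure map; the universal property yields $\theta:Y\to S\coprod S$ with $\theta\varphi=\psi$, and since $\varphi(X_i)\subset\theta^{-1}(S_i)$ while $X_i\neq\emptyset$, both $\theta^{-1}(S_i)$ are nonempty clopen, so $Y$ is disconnected. In the algebraic case, the characteristic idempotents $e_1,e_2\in\Gamma(X,\O_X)$ of the given decomposition are $G$-invariant, and under the identification $\Gamma(Y,\O_Y)\cong\Gamma(X,\O_X)^G$ (which comes from the defining iso $\O_Y\cong(\varphi_*\O_X)^G$ combined with the description of $(?)^G$ as a kernel on a $G$-trivially acting base given in~\ref{G-trivial.par}) they become two nonzero orthogonal idempotents summing to $1$ in $\Gamma(Y,\O_Y)$, again disconnecting $Y$. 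The only step that merits care is this last identification, though it is a routine consequence of $\Gamma(Y,\cdot)$ being left exact.
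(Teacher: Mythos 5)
Your proof is correct, and parts 1, 2, and the categorical-quotient halves of part 3 are essentially the paper's own argument (pulling back a disconnection, invoking Lemma~\ref{categorical-quotient-mono.lem}, and mapping to $S\coprod S$ via the universal property). Where you genuinely diverge is the algebraic-quotient case: you run both directions through global idempotents, using that $\bar\eta$ together with the inclusion $\gamma:(\varphi_*\O_X)^G\hookrightarrow\varphi_*\O_X$ identifies $\Gamma(Y,\O_Y)$ with the ring of $G$-invariant global functions on $X$ (left exactness of $\Gamma$ applied to the kernel defining $(?)^G$), and that a $G$-invariant idempotent $f$ cuts out a $G$-stable clopen decomposition, since the pullbacks of $f$ along the action and the second projection agree and hence so do the nonvanishing loci $a^{-1}(X_f)=p_2^{-1}(X_f)$. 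The paper instead (i) gets the forward direction for free from part~2, observing that an algebraic quotient is dominating and $G$-invariant, and (ii) proves the converse affine-locally: over each affine open $V=\Spec A$ of $Y$ it decomposes $B=B_1\times B_2$, takes invariants $A=A_1\times A_2$, and glues the resulting decompositions $V=V_1\coprod V_2$ after checking compatibility with localization. Your route treats both directions uniformly and avoids the gluing and localization bookkeeping, at the price of spelling out the global-sections description of $(\varphi_*\O_X)^G$ and the $G$-stability of the idempotent loci (both routine, and you flag the former); the paper's forward direction is shorter, and its converse stays entirely inside the affine invariant-ring picture. Like the paper, you silently restrict to nontrivial decompositions, which is harmless since the empty-scheme cases are immediate in either approach.
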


\begin{proof}
{\bf 1}.
Assume the contrary, and let $Y=Y_1\coprod Y_2$ with $Y_i$ are
nonempty $G$-invariant open.
Then letting $X_i=\varphi^{-1}(Y_i)$, 
we have that 
$X=X_1\coprod X_2$ with $X_i$ nonempty $G$-invariant open, and this
is a contradiction.

{\bf 2} is obvious from {\bf 1}.

{\bf 3} We prove the \lq only if' part.
First consider the case that
$\varphi$ is a categorical quotient.
Assume that $Y=Y_1\coprod Y_2$ with each $Y_i$ nonempty open.
Then $\varphi$ cannot factors through $Y_1$ or $Y_2$ by 
Lemma~\ref{categorical-quotient-mono.lem}.
Letting $X_i=\varphi^{-1}(Y_i)$ for $i=1,2$, we have that each 
$X_i$ is nonempty $G$-stable open
and $X=X_1\coprod X_2$.
Next, assume that $\varphi$ is an algebraic quotient.
Then it is easy to see that $\varphi$ is dominating and $G$-invariant.
By {\bf 2}, if $X$ is $G$-connected, then $Y$ is connected.

We prove the \lq if' part.
Assume that $X=X_1\coprod X_2$ with $X_i$ are nonempty $G$-stable open.
First consider the case that $\varphi$ is a categorical quotient.
Then the map $h=h_1\coprod h_2:X_1\coprod X_2\rightarrow S\coprod S$,
where $h_i:X_i\rightarrow S$ is the structure map, factors through $Y$.
This shows that $Y$ cannot be connected.
Next, consider the case that $\varphi$ is an algebraic quotient.
Let $V=\Spec A$ be an affine open subset of $Y$.
Then $U=\varphi^{-1}(V)=\Spec B$ is affine.
We have $U=U_1\coprod U_2$ with $U_i=X_i\cap U$.
Set $B_i=\Gamma(U_i,\O_{U_i})$, and $A_i=B_i^G$.
Then $A=A_1\times A_2$, and we can write $V=V_1\coprod V_2$.
This construction is compatible with the localization $A\rightarrow A[a^{-1}]$
for $a\in A$.
So for $y\in Y$ and two affine open neighborhoods $V$ and $W$, $y\in V_1$ if and
only if $y\in W_1$.
So letting $Y_i=\bigcup_V V_i$, we have $Y=Y_1\coprod Y_2$.
As $\varphi^{-1}(Y_i)=X_i$, both $Y_1$ and $Y_2$ are nonempty, and $Y$ 
is disconnected.
\end{proof}

\paragraph
For a subset $Z$ of $X$, 
we say that $Z$ is a $G$-stable closed subset of $X$ if $X\setminus Z$ is
a $G$-stable open subset.
We say that $X$ is $G$-Noetherian if any descending chain of $G$-stable closed
subsets of $X$ eventually stabilizes.

\paragraph
Let $X$ be a $G$-Noetherian $G$-scheme.
A $G$-closed subset $Z$ of $X$ is said to be $G$-irreducible if 
$Z$ is nonempty and if
$Z=Z_1\cup Z_2$, $Z_1$ and $Z_2$ are $G$-closed subsets of $X$, then
either $Z=Z_1$ or $Z=Z_2$.
Any $G$-closed subset $Z$ of $X$ is of the form $Z=\bigcup_{i=1}^r Z_i$ for
some $r\geq 0$ and $G$-irreducible closed subsets $Z_i$.
Thus we can write $X=\bigcup_{i=1}^r X_i$ with $X_i$ $G$-irreducible.
Let $\equiv$ be the equivalence relation on $\{1,2,\ldots,r\}$ 
generated by the relation $X_i\cap X_j\neq \emptyset$.
For any equivalence class $\gamma$ with respect to $\equiv$, 
$X_\gamma:=\bigcup_{i\in\gamma}X_i$ is called a 
{\em $G$-connected component} of $X$.
Obviously, $X_\gamma$ is $G$-stable closed open and $G$-connected, 
and $X=\coprod_{\gamma\in\{1,\ldots,r\}/\equiv}X_\gamma$.
It is easy to see that a $G$-connected component of $X$ is nothing but
a maximal $G$-connected $G$-stable closed subset of $X$.

\paragraph
Let $I$ be a small category, and $X$ an $I\op$-diagram of 
Noetherian schemes.
Then a connected component (see for the definition, \cite[(28.1)]{ETI}) 
$U$ of $X$ is a cartesian closed open subdiagram of schemes.
Indeed, for any $i\in I$, $U_i$ is the union of connected components of 
$X_i$, and for any $\phi:i\rightarrow j$, $X_\phi^{-1}(U_i)= U_j$
by the definition of connected components.
In particular, we have

\begin{lemma}\label{F-connected.thm}
Let $G$ be an $S$-group scheme and $X$ a Noetherian $G$-scheme.
Then a connected 
component of $B_M^G(X)$ is of the form $B_M^G(X_i)$ with $X_i$ a $G$-connected
component of $X$.
Conversely, $B_M^G(X_i)$ with $X_i$ a $G$-connected component of $X$ is
a connected component of $B_M^G(X)$.
\qed
\end{lemma}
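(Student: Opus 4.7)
The plan is to characterize the cartesian closed open subdiagrams of $B_M^G(X)$ explicitly and then invoke the paragraph immediately preceding the lemma, which describes connected components of a diagram of schemes as precisely the cartesian closed open subdiagrams that are connected.

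First I would unpack the low-dimensional part of $B_M^G(X)$: we have $(B_M^G(X))_0 = X$ and $(B_M^G(X))_1 = G\times X$, and the two structure morphisms $[0]\to[1]$ induce, respectively, the second projection $p:G\times X\to X$ and the action $\sigma:G\times X\to X$. For any $G$-stable closed open subset $Y$ of $X$, it is routine to check that $B_M^G(Y)$ is a cartesian closed open subdiagram of $B_M^G(X)$ with $(B_M^G(Y))_n = G^n\times Y$: the face maps either project, multiply group factors, or act on $Y$ (which is $G$-stable), and the degeneracies insert identities, so everything restricts cleanly.

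Conversely, every cartesian closed open subdiagram $U$ of $B_M^G(X)$ arises this way. By the preceding paragraph, $U_0$ is closed open in $X$; and the cartesianness condition with respect to $p$ and $\sigma$ forces
\[
G\times U_0 \;=\; p^{-1}(U_0) \;=\; U_1 \;=\; \sigma^{-1}(U_0),
\]
which says exactly that $U_0$ is $G$-stable. An easy induction on $n$, using (for instance) the face map $[0]\to[n]$ sending $0\mapsto 0$ together with the cartesian condition applied to the appropriate structure morphisms, then gives $U_n = G^n\times U_0$, so $U = B_M^G(U_0)$.

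Thus $Y\leftrightarrow B_M^G(Y)$ is an inclusion-preserving bijection between $G$-stable closed open subsets of $X$ and cartesian closed open subdiagrams of $B_M^G(X)$, and it respects disjoint unions: $B_M^G(Y'\sqcup Y'')=B_M^G(Y')\sqcup B_M^G(Y'')$. Hence it identifies maximal nonempty connected objects on the two sides, that is, $G$-connected components of $X$ with connected components of $B_M^G(X)$, which is exactly both halves of the lemma. No step is truly delicate; the only thing requiring care is tracking the identifications of the face maps $p$ and $\sigma$ at level one so that the cartesian condition translates into $G$-stability of $U_0$.
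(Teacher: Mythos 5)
Your proposal is correct and takes essentially the same route as the paper, which leaves the lemma as an immediate consequence of the preceding paragraph: a connected component of the diagram $B_M^G(X)$ is a cartesian closed open subdiagram, and cartesianness with respect to the projection and the action at levels $0$--$1$ translates exactly into $G$-stability of the degree-zero part, giving the correspondence $X_i\leftrightarrow B_M^G(X_i)$. Your explicit bijection between $G$-stable closed open subsets and cartesian closed open subdiagrams is just a fleshed-out version of that intended argument.
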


\paragraph
Let $G$ be an $S$-group scheme flat of finite type.
Let $X$ be a Noetherian $G$-scheme.

\begin{lemma}
If $\Bbb I_X$ is a $G$-dualizing complex on $X$, $n\in\Bbb Z$, 
and $\L$ a $G$-linearized invertible sheaf on $X$, then
$\Bbb I_X\otimes_{\O_X}\L[n]$ is a $G$-dualizing complex on $X$.
If $\Bbb I_X$ and $\Bbb I'_X$ are two $G$-dualizing complex on $X$
and $X$ is $G$-connected, 
then $R\Hom_{\O_X}(\Bbb I_X,\Bbb I'_X)\cong \L[n]$ for some $n$ and $\L$,
and we have $\Bbb I'_X\cong \Bbb I_X\otimes_{\O_X}\L[n]$.
\end{lemma}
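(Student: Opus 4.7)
The plan is to derive part~1 directly from the formal properties of $G$-dualizing complexes, and to prove part~2 by combining the classical uniqueness theorem for dualizing complexes on a connected Noetherian scheme with Lemma~\ref{F-connected.thm} in order to propagate the answer equivariantly across $B_M^G(X)$.

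For part~1, I would verify that each defining property of a $G$-dualizing complex---coherence of cohomology, appropriate finiteness of injective dimension, and the double-duality isomorphism $\F \xrightarrow{\sim} R\uHom(R\uHom(\F,\Bbb I_X),\Bbb I_X)$ on $D^b_{\Coh}(G,X)$---is preserved under the operation $(-) \otimes_{\O_X} \L[n]$, where $\L$ is a $G$-linearized invertible sheaf and $n\in\Bbb Z$. Because $\L[n]$ is invertible in the symmetric monoidal category $D(G,X)$, the functor $(-)\otimes_{\O_X}\L[n]$ is a self-equivalence and is compatible with $R\uHom$ via the standard projection-formula identity $R\uHom(\F, \G\otimes\L[n])\cong R\uHom(\F,\G)\otimes\L[n]$. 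All three properties therefore transport. The verification is formal once one has the axiomatic framework of \cite[\S31]{ETI} in hand.

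For part~2, I would set $\F := R\uHom_{\O_X}(\Bbb I_X,\Bbb I'_X) \in D(G,X)$. Applying the faithful exact restriction $(?)_0\colon D(G,X)\to D(X_0)$ yields $\F_0 \cong R\uHom_{\O_{X_0}}((\Bbb I_X)_0,(\Bbb I'_X)_0)$, and by the classical uniqueness theorem for dualizing complexes (Hartshorne, \emph{Residues and Duality}, V.3), applied connected component by connected component on $X_0$, this is locally isomorphic to a shift $\O_{X_0}[n(x)]$ of the structure sheaf, with $n$ a locally constant $\Bbb Z$-valued function on $X_0$. The equivariant structure on $\F$ forces the degree function to be compatible with the simplicial structure maps, so $n$ lifts to a locally constant function on the diagram $B_M^G(X)$. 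By Lemma~\ref{F-connected.thm}, the $G$-connectedness of $X$ is equivalent to the connectedness of $B_M^G(X)$ as a diagram of schemes, so $n$ is globally constant; consequently $\F \cong \L[n]$ for a uniquely determined invertible object $\L\in\Qch(G,X)$. The canonical evaluation map $\Bbb I_X \otimes^L_{\O_X} \F \to \Bbb I'_X$, which is automatically $G$-equivariant, is an isomorphism because it is one after applying the faithful exact functor $(?)_0$ (again by Hartshorne V.3); substituting $\F\cong\L[n]$ yields $\Bbb I'_X \cong \Bbb I_X \otimes_{\O_X}\L[n]$.

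The main obstacle is the third step of part~2: forcing the locally constant shift $n$ to be globally constant. The subtle point is that $X_0$ may well be disconnected even when $X$ is $G$-connected, so connectedness of the underlying scheme cannot be invoked directly; it is precisely the passage from $X_0$ to the simplicial diagram $B_M^G(X)$ via Lemma~\ref{F-connected.thm} that resolves this. Once $n$ is a single integer, the invertible sheaf $\L$ automatically inherits its $G$-linearization from the equivariance of $\F$, and the rest of the argument reduces to formal manipulations with the standard evaluation morphism.
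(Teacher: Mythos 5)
Your argument is correct in substance, but it is worth pointing out that the paper does not actually prove this lemma at all: its proof is the single citation \cite[(31.12)]{ETI}, so what you have written is a reconstruction of that cited result rather than a variant of an argument in this paper. Your route is a legitimate one. Part~1 is indeed formal, since $(-)\otimes_{\O_X}\L[n]$ is a self-equivalence of $D(G,X)$ compatible with $R\uHom$. For part~2, the key points you rely on are all available under the standing hypotheses ($G$ flat of finite type, $X$ Noetherian): $R\uHom_{\O_X}(\Bbb I_X,\Bbb I'_X)$ has equivariant coherent cohomology and commutes with the restriction $(?)_0$ by \cite[(13.9), (13.10)]{ETI}, the restriction of a $G$-dualizing complex is an ordinary dualizing complex (used elsewhere in the paper, e.g.\ in the proof of Lemma~\ref{dualizing-semicanonical.lem}), and $(?)_0$ is faithful exact, hence conservative, so the evaluation map $\Bbb I_X\otimes^L\F\rightarrow\Bbb I'_X$ can be tested after restriction. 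Your treatment of the shift is the genuinely nontrivial step and you handle it correctly: equivariance of $\F$ gives $n\circ a=n\circ p_2$ on $G\times X$, so the level sets of the locally constant degree function are $G$-stable clopen subsets, and $G$-connectedness of $X$ (equivalently, connectedness of $B_M^G(X)$ via Lemma~\ref{F-connected.thm}) forces $n$ to be a single integer; the invertible sheaf $H^n(\F)$ then carries its $G$-linearization automatically. In short, what the paper buys by citation, you obtain by an explicit reduction to Hartshorne's classical uniqueness theorem plus an equivariant clopen-descent argument, which is a reasonable and self-contained alternative.
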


\begin{proof}
This is \cite[(31.12)]{ETI}.
\end{proof}

\paragraph\label{G-canonical.par}
Let $G$ be an $S$-group scheme flat of finite type.
Let $X$ be a Noetherian $G$-scheme.
Let $\Bbb I_X$ be a $G$-dualizing complex on $X$.
Letting $s$ be the smallest integer such that $H^i(\Bbb I_X)\neq 0$, 
we define $\omega_X$ to be $H^s(\Bbb I_X)$.
We call $\omega_X$ the {\em $G$-canonical module} corresponding to $\Bbb I_X$.
If $X=\coprod X_i$ with each $X_i$ $G$-connected, then we define
$\omega'_X$ by $\omega'_X|_{X_i}=\omega_{X_i}$.
We call $\omega'_X$ the {\em componentwise $G$-canonical module} 
(in \cite[(31.13)]{ETI}, we called $\omega'_X$ the $G$-canonical module,
but this is less useful in this paper, and we change the terminology).

A coherent $(G,\O_X)$-module $\omega$ (resp.\ $\omega'$) 
is called a $G$-canonical module
(resp. a componentwise $G$-canonical module)
if there exists some $G$-dualizing complex $\Bbb I$ on $X$ such that
$\omega$ (resp.\ $\omega'$) 
is isomorphic to the $G$-canonical module 
(resp. a componentwise $G$-canonical module)
corresponding to $\Bbb I$.
Thus if $\omega$ (resp.\ $\omega'$) is a $G$-canonical module 
(resp. a componentwise $G$-canonical module)
and $\L$ is a $G$-linearized 
invertible sheaf, then $\omega\otimes_{\O_X}\L$ 
(resp.\ $\omega'\otimes_{\O_X}\L$) 
is a $G$-canonical module
(resp. a componentwise $G$-canonical module).
If $\omega'$ and $\omega''$ are two componentwise 
$G$-canonical modules on $X$, then
there exists some $G$-linerized invertible sheaf $\L$ on $X$ such that
$\omega''\cong\omega'\otimes_{\O_X}\L$.
If $G$ is trivial, then a $G$-canonical module and a 
$G$-componentwise canonical module (with respect to $\Bbb I$) 
are called a canonical module and
a componentwise canonical module (with respect to $\Bbb I$), respectively,
where $\Bbb I$ is a dualizing complex of $X$.

\begin{lemma}
Let $h:G'\rightarrow G$ be a homomorphism between
$S$-group schemes flat of finite type.
Let $X$ be a Noetherian $G$-scheme.
Let $\Bbb I_X(G)$ be a $G$-dualizing complex on $X$, and
set $\Bbb I_X(G'):=L\res^G_{G'}\Bbb I_X(G)$ \(see
{\rm Lemma~\ref{restriction-dualizing.lem}}\).
Let $\omega_X(G)$ be the $G$-canonical module corresponding to $\Bbb I_X(G)$.
Then $\omega_X(G'):=\res^G_{G'}\omega_X(G)$ is the $G'$-canonical module
corresponding to the $G'$-dualizing complex $\Bbb I_X(G')$.
\end{lemma}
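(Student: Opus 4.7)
The plan is to reduce the computation of $\omega_X(G')$ to showing that the derived restriction functor $L\res^G_{G'}$ acts on the dualizing complex $\Bbb I_X(G)$ simply by termwise restriction (no higher derived terms contributing), and that it preserves the position of the first nonzero cohomology sheaf. Both properties have been set up in section~\ref{restriction.sec}.

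First, I would invoke Lemma~\ref{res-acyclic.thm}, which tells us that every object of $D_{\Qch}(G,X)$ is left $\res^G_{G'}$-acyclic. In particular, applied to the $G$-dualizing complex $\Bbb I_X(G)\in D^b_{\Coh}(G,X)\subset D_{\Qch}(G,X)$, this yields a canonical isomorphism
\[
\Bbb I_X(G')=L\res^G_{G'}\Bbb I_X(G)\cong\res^G_{G'}\Bbb I_X(G)
\]
in the derived category. Since $\res^G_{G'}$ is an exact functor on $\Qch(G,X)$ by Lemma~\ref{faithful-exact.thm}, the cohomology sheaves of the right-hand side are computed termwise, giving $H^i(\Bbb I_X(G'))\cong\res^G_{G'}H^i(\Bbb I_X(G))$ for every $i\in\Bbb Z$.

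Next, because $\res^G_{G'}:\Qch(G,X)\to\Qch(G',X)$ is \emph{faithful} exact (again by Lemma~\ref{faithful-exact.thm}), a coherent $(G,\O_X)$-module $\N$ is zero if and only if $\res^G_{G'}\N$ is zero. Consequently the integer
\[
s=\min\{\,i\in\Bbb Z \mid H^i(\Bbb I_X(G))\ne 0\,\}
\]
used to define $\omega_X(G)$ coincides with the smallest integer $i$ such that $H^i(\Bbb I_X(G'))\ne 0$. Combining this with the isomorphism of the previous paragraph at degree $i=s$ gives
\[
\omega_X(G')=H^s(\Bbb I_X(G'))\cong\res^G_{G'}H^s(\Bbb I_X(G))=\res^G_{G'}\omega_X(G),
\]
which is the required identification, with $\Bbb I_X(G')$ already known to be a $G'$-dualizing complex by Lemma~\ref{restriction-dualizing.lem}.

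No step looks hard: the whole argument is a packaging of the acyclicity result for $\res^G_{G'}$ on quasi-coherent sheaves. The only point that needs a little care is making sure that the acyclicity statement of Lemma~\ref{res-acyclic.thm} yields the termwise description of cohomology used above; this is where one secretly uses that $\res^G_{G'}$ is exact on $\Qch(G,X)$, so that a $K$-flat resolution may be replaced (after taking cohomology) by the complex itself.
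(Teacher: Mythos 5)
Your overall strategy is the same as the paper's: identify $H^i(\Bbb I_X(G'))$ with $\res^G_{G'}H^i(\Bbb I_X(G))$ and then use faithfulness of $\res^G_{G'}$ (Lemma~\ref{faithful-exact.thm}) to see that the lowest nonvanishing degree is unchanged; the paper does exactly this, citing Lemma~\ref{res-vanishing.thm} for the first step, while you route through Lemma~\ref{res-acyclic.thm}.

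However, the justification you give for the key isomorphism $H^i(\res^G_{G'}\Bbb I_X(G))\cong\res^G_{G'}H^i(\Bbb I_X(G))$ is not quite right as stated, and this is the one place where your "little care" remark does not actually close the issue. Exactness of $\res^G_{G'}$ on $\Qch(G,X)$ lets you commute restriction with cohomology only for complexes whose terms (and hence kernels and images) are quasi-coherent; but a representative of the $G$-dualizing complex $\Bbb I_X(G)$ need not have quasi-coherent terms — only its cohomology sheaves are coherent — and the underlying functor $B_h^M(X)^*$ on all of $\Mod(G,X)$ is merely right exact, so termwise restriction does not a priori compute the correct cohomology of the complex itself. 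The clean fix is the paper's: since the cohomology sheaves $H^q(\Bbb I_X(G))$ \emph{are} quasi-coherent, Lemma~\ref{res-vanishing.thm} gives $L_p\res^G_{G'}H^q(\Bbb I_X(G))=0$ for $p>0$, and the hypercohomology spectral sequence (or the way-out lemma) applied to the bounded complex $\Bbb I_X(G)$ then degenerates to yield $H^i(L\res^G_{G'}\Bbb I_X(G))\cong\res^G_{G'}H^i(\Bbb I_X(G))$ for all $i$. With that substitution your argument coincides with the paper's proof; the final step, that faithfulness forces $\res^G_{G'}\omega_X(G)\neq 0$ so that $s$ remains the bottom degree, is identical.
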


\begin{proof}
Let $s=\inf\{i\in\Bbb Z\mid H^i(\Bbb I_X(G))\neq 0\}$.
By Lemma~\ref{res-vanishing.thm}, $\omega_X(G')=H^s(\Bbb I_X(G'))$ and
$H^i(\Bbb I_X(G'))=0$ for $i<s$.
As $\res^G_{G'}:\Qch(G,X)\rightarrow \Qch(G',X)$ is
faithful by Lemma~\ref{faithful-exact.thm},
$\omega_X(G')\neq 0$, and we are done.
\end{proof}

A similar compatibility with the change of groups does not hold for
the componentwise canonical module.

\paragraph
Let $X$ be a locally Noetherian scheme.
A coherent $\O_X$-module $\omega$ is said to be {\em semicanonical} at
$x\in X$ if $\omega_{X,x}$ is either zero, or is 
the canonical module \cite[(1.1)]{Aoyama} 
of the local ring 
$\O_{X,x}$.
We say that 
$\omega$ is semicanonical if it is so at each point.
Being a semicanonical module is a local condition.
That is, if $\omega$ is semicanonical and $U\subset X$ is an
open subset, then $\omega|_U$ is semicanonical on $U$.
If $(U_i)$ is an open covering of $X$ and each $\omega|_{U_i}$ is
semicanonical, then $\omega$ is semicanonical.

\begin{lemma}\label{dualizing-semicanonical.lem}
Let the notation be as in
{\rm(\ref{G-canonical.par})}.
Then the canonical module $\omega_X$ and the componentwise
canonical module corresponding to $\Bbb I_X$ are semicanonical $\O_X$-modules.
\end{lemma}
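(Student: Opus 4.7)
The plan is to verify semicanonicity pointwise, which is all that is required by the very definition. Since being semicanonical is a local condition and the componentwise canonical module $\omega'_X$ restricts on each $G$-connected component $X_i$ to the canonical module of $X_i$ corresponding to $\Bbb I_X|_{X_i}$, it is enough to prove the assertion for $\omega_X$ itself, for arbitrary $X$ and $\Bbb I_X$.

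Fix $x \in X$. The key observation is that the localization $(\Bbb I_X)_x$ is a dualizing complex of the Noetherian local ring $\O_{X,x}$ (standard, and also a degenerate instance of Lemma~\ref{restriction-dualizing.lem} where $G$ is trivial and we restrict along $\Spec \O_{X,x} \hookrightarrow X$). A dualizing complex on a Noetherian local ring is unique up to shift, so after the appropriate translation it is normalized in Aoyama's sense, and its lowest nonvanishing cohomology module is a canonical module of the local ring. Thus, writing $s_x := \min\{i : H^i((\Bbb I_X)_x) \neq 0\}$, we have that $H^{s_x}((\Bbb I_X)_x)$ is a canonical module of $\O_{X,x}$.

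Now let $s := \min\{i : H^i(\Bbb I_X) \neq 0\}$, so $\omega_X = H^s(\Bbb I_X)$. Because $H^i(\Bbb I_X)_x = H^i((\Bbb I_X)_x)$ and $H^{s_x}(\Bbb I_X)_x \neq 0$ forces $H^{s_x}(\Bbb I_X) \neq 0$, we always have $s \leq s_x$. If $s < s_x$, then $(\omega_X)_x = H^s((\Bbb I_X)_x) = 0$ by minimality of $s_x$. If $s = s_x$, then $(\omega_X)_x = H^s((\Bbb I_X)_x)$ is a canonical module of $\O_{X,x}$. In either case $\omega_X$ is semicanonical at $x$, and since $x$ was arbitrary, $\omega_X$ is semicanonical.

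The only nontrivial ingredient is the local assertion that the lowest nonvanishing cohomology module of a dualizing complex of a Noetherian local ring is a canonical module in Aoyama's sense. This is a standard fact (amounting to the comparison of normalizations of dualizing complexes over local rings with Aoyama's definition via completion and Matlis duality), and I expect it to be the only place where an external citation is needed; everything else is bookkeeping about the commutation of stalks with cohomology.
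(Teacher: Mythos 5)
Your argument is correct and is essentially the paper's own proof: both localize $\Bbb I_X$ at a point $x$, note that $\Bbb I_{X,x}$ is a dualizing complex of the local ring $\O_{X,x}$, and conclude via local duality and Aoyama's definition that $\omega_{X,x}$ is either zero or the canonical module of $\O_{X,x}$, with the componentwise canonical module handled component by component. (Only your parenthetical appeal to Lemma~\ref{restriction-dualizing.lem} is slightly off the mark---that lemma concerns restriction along a change of groups, not localization at a point---but you correctly flag the localization fact as standard, so nothing is lost.)
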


\begin{proof}
Let $x\in X$.
Then $\Bbb I_{X,x}$ is a dualizing complex in the usual sense for the
local ring $\O_{X,x}$.
So if $\omega_{X,x}\neq 0$, then by the definition of the canonical module
for a local ring \cite[(1.1)]{Aoyama} and the local 
duality \cite[(6.3)]{Hartshorne2}, $\omega_{X,x}$ is the canonical module
of $\O_{X,x}$.
Using this result componentwise, we get a similar result for $\omega'_{X}$.
\end{proof}

\paragraph
For a local ring $(A,\fm)$ and an $A$-module $M$, we define
$\depth_A M=\inf\{n\in\Bbb Z\mid H^n_\fm(M)\neq 0\}$.
For a commutative ring $A$, an ideal $I$, and an $A$-module $M$, we define
\[
\depth_A(I,M)=\depth(I,M):=\inf_{P\in V(I)}\depth_{A_P}M_P.
\]
If $A$ is Noetherian and $M$ is finitely generated, then we have
\[
\depth_A(I,M)=\inf\{n\in\Bbb Z\mid \Ext^n_A(A/I,M)\neq 0\},
\]
which also equals the length of a maximal $M$-sequence in $I$ 
(provided $M\neq IM$),
see \cite[(3.4), (3.6), (3.10)]{LC}.

\begin{lemma}\label{grade-local-cohomology.lem}
Let $A$ be a Noetherian ring, $M$ a finite $A$-module, and $I$ an ideal of $A$.
Then
\[
\depth_A(I,M)=\inf\{n\in\Bbb Z\mid H^n_I(M)\neq 0\}.
\]
\end{lemma}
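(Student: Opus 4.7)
The plan is to combine the characterization of $\depth_A(I,M)$ as the length of a maximal $M$-regular sequence in $I$ (cited above from \cite{LC}) with standard long exact sequence arguments for local cohomology.

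First I dispose of the degenerate case $IM = M$. Nakayama's lemma gives $M_P = 0$ for every $P \in V(I)$, so the localization definition of depth forces $\depth_A(I,M) = +\infty$. On the other hand each $\Ext^n_A(A/I^k, M)$ is annihilated by $I^k$, hence supported on $V(I)$; localizing at any $P \supset I$ gives zero because $M_P = 0$, so $\Ext^n_A(A/I^k, M) = 0$ for all $k$ and $n$. Passing to the direct limit $H^n_I(M) = \indlim_k \Ext^n_A(A/I^k, M)$ yields $H^n_I(M) = 0$ for every $n$, and both infima are $+\infty$.

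Now assume $IM \neq M$ and fix a maximal $M$-regular sequence $x_1, \ldots, x_d$ in $I$, with $d = \depth_A(I,M)$. The claim is that $H^n_I(M) = 0$ for $n < d$ and $H^d_I(M) \neq 0$, which yields the formula. I argue by simultaneous induction on $d$. For $d = 0$ the vanishing is vacuous, while the maximality of the empty sequence means $I$ consists of zerodivisors on $M$, so prime avoidance places $I$ inside some $P \in \Ass M$; picking $m \in M$ with $\ann(m) = P$ gives $0 \neq m \in H^0_I(M)$. For $d \geq 1$, the short exact sequence $0 \to M \xrightarrow{x_1} M \to M/x_1 M \to 0$ produces the long exact sequence
\[
H^{n-1}_I(M/x_1 M) \longrightarrow H^n_I(M) \xrightarrow{\,x_1\,} H^n_I(M) \longrightarrow H^n_I(M/x_1 M).
\]
Applying the inductive hypothesis to $M/x_1 M$, on which $x_2, \ldots, x_d$ is a maximal $I$-regular sequence, gives $H^n_I(M/x_1 M) = 0$ for $n < d-1$ and $H^{d-1}_I(M/x_1 M) \neq 0$. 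The vanishing statement forces $x_1$ to act injectively on $H^n_I(M)$ for $n < d$; since every element of $H^n_I(M)$ is killed by some power of $I$ and $x_1 \in I$, this injectivity combined with local nilpotence forces $H^n_I(M) = 0$. The same vanishing at level $d-1$ turns the segment $H^{d-1}_I(M) \to H^{d-1}_I(M/x_1 M) \to H^d_I(M)$ into an injection, whence $H^d_I(M) \neq 0$.

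The main technical point is the inductive vanishing step, which rests on the elementary observation that an injective and locally nilpotent endomorphism of an abelian group must be zero; the rest is bookkeeping with the long exact sequence of local cohomology and the standard behaviour of depth under quotienting by a regular element.
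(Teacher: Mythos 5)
Your proof is correct. It is built from the same standard ingredients as the paper's (the description $H^n_I(M)=\indlim_j\Ext^n_A(A/I^j,M)$ and the long exact sequence of local cohomology after dividing by a regular element), but it is organized differently. The paper proves the two inequalities separately: the vanishing of $H^i_I(M)$ for $i<\depth_A(I,M)$ is delegated to the Ext-vanishing theorem \cite[(16.6)]{CRT}, while the reverse inequality is an induction on $n$ that extracts a nonzerodivisor $a\in I$ from $\Hom_A(A/I,M)\subset H^0_I(M)=0$ and passes to $M/aM$. You instead fix a maximal $M$-regular sequence in $I$ of length $d$ (using the characterization of $\depth_A(I,M)$ quoted from \cite{LC} just before the lemma, which is why you must, and correctly do, treat the degenerate case $IM=M$ separately) and run a single induction on $d$ that proves the sharper statement ``$H^n_I(M)=0$ for $n<d$ and $H^d_I(M)\neq 0$'' all at once: the vanishing comes from the observation that $x_1$ acts injectively on the $I$-power-torsion module $H^n_I(M)$, which replaces the citation to \cite{CRT}, and the nonvanishing comes from the injectivity of the connecting map together with the prime-avoidance argument in the base case $d=0$. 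What each buys: the paper's proof is shorter by citation and never needs the case split on $IM=M$, since it works purely with inequalities; yours is self-contained, exhibits the nonvanishing at exactly $n=\depth_A(I,M)$ directly rather than through the combination of two inequalities, and its only external input is the regular-sequence description of depth already stated in the paper.
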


\begin{proof}
If $\depth_A(I,M)\geq n$, then $\Ext^i_A(A/I^j,M)=0$ for $i<n$ and any 
$j\geq 1$ by 
\cite[(16.6)]{CRT}, and hence $H^i_I(M)=\indlim \Ext^i_A(A/I^j,M)=0$ 
for $i<n$, and the right-hand side is $\geq n$.

We prove the converse.
Let $n\geq 0$.
We want to prove that for a finite $A$-module $M$, 
$H^i_I(M)=0$ for $i<n$ implies that $\depth_A(I,M)\geq n$.
We prove this by induction on $n$.
If $n=0$, this is trivial.
Assume that $n>0$.
Then as $\Hom_A(A/I,M)\subset H^0_I(M)=0$, we have that $\depth_A(I,M)>0$.
So we can find a nonzerodivisor $a\in I$ on $M$.
Then we have a long exact sequence
\[
\cdots\rightarrow H^i_I(M)\xrightarrow a H^i_I(M)\rightarrow H^i_I(M/aM)
\rightarrow H^{i+1}_I(M)\rightarrow \cdots
\]
of the local cohomology.
By assumption, we have that $H^i_I(M/aM)=0$ for $i<n-1$.
By induction, $\depth_A(I,M/aM)\geq n-1$.
Hence $\depth_A(I,M)\geq n$.
\end{proof}

\paragraph
We define $\dim_A M=\dim M$ to be the dimension of the support $\supp_A M$.

\paragraph
Let $X$ be a scheme, $n\geq 0$, 
and $\M$ a quasi-coherent $\O_X$-module.
We say that $\M$ satisfies the $(S'_n)$ (resp.\ $(S_n)$) condition if 
for each $x\in X$, we have $\depth \M_x\geq \min(n,\dim \O_{X,x})$
(resp.\ $\depth \M_x\geq \min(n,\dim \M_x)$
(here we define $\depth 0=\infty>n$)).
Or equivalently, if $\depth\M_x<n$, then $\depth \M_x=\dim \O_{X,x}$ 
(resp.\ $\depth\M_x=\dim \M_x$).
If $\O_{X,x}$ is Noetherian and $\M_x$ is a finite module, then
$\M_x$ is called {\em maximal Cohen--Macaulay} 
(resp.\ {\em Cohen--Macaulay}) 
if $\depth \M_x=\dim \O_{X,x}$ (resp.\ $\depth\M_x=\dim \M_x$).
We say that $X$ satisfies the $(S_n)$ condition if $\O_X$ satisfies the
$(S_n)$ condition (or equivalently, $(S'_n)$ condition).

\begin{lemma}\label{canonical-localization.lem}
Let $B$ be a Noetherian local ring with the canonical module $K$.
Then the associated sheaf $\omega:=\tilde K$ is a semicanonical module on 
$Z=\Spec B$.
In other words, if 
$P$ is a prime ideal of $B$ and $K_P\neq 0$, 
then $K_P$ is the canonical module of $B_P$.
\end{lemma}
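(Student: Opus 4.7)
The plan is to derive this lemma as a direct consequence of the previous result (Lemma~\ref{dualizing-semicanonical.lem}) together with the translation between Aoyama's definition of the canonical module and the dualizing complex formalism used in the paper.

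First I would recall that since $B$ admits a canonical module $K$ in the sense of Aoyama \cite[(1.1)]{Aoyama}, the local ring $B$ admits a (normalized) dualizing complex $\Bbb I$ in $D^+_{\Coh}(B)$ such that $K \cong H^s(\Bbb I)$, where $s := \inf\{i \in \Bbb Z \mid H^i(\Bbb I) \neq 0\}$. This is the local-duality characterization used implicitly in (\ref{G-canonical.par}) (with trivial group $G$). Then setting $\Bbb I_Z := \tilde{\Bbb I}$, we obtain a dualizing complex on $Z = \Spec B$ whose associated canonical module $\omega_Z$ (in the sense of (\ref{G-canonical.par})) is exactly $\tilde K = \omega$.

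Next I would invoke Lemma~\ref{dualizing-semicanonical.lem}, which says precisely that the canonical module $\omega_Z$ associated with $\Bbb I_Z$ is semicanonical on $Z$. Unwinding the definition at the point $P \in Z$: either $\omega_{Z,P} = K_P = 0$, or $K_P$ is the canonical module of the local ring $\O_{Z,P} = B_P$. This is exactly the claimed statement.

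For the reader who wants a self-contained verification without invoking Lemma~\ref{dualizing-semicanonical.lem}, the only facts needed are: (i) the localization $\Bbb I_P$ of a dualizing complex is a dualizing complex for $B_P$; (ii) cohomology commutes with localization, so $H^i(\Bbb I_P) = H^i(\Bbb I)_P$, which vanishes for $i < s$ and equals $K_P$ for $i = s$; (iii) hence if $K_P \neq 0$, then $s$ is again the smallest index with nonvanishing cohomology in $\Bbb I_P$, so $K_P = H^s(\Bbb I_P)$ is the canonical module of $B_P$ by the same local-duality characterization applied to $B_P$.

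There is no real obstacle here; the only subtle point is ensuring that the definition of ``canonical module'' one starts with (Aoyama's, via local cohomology/Matlis duality of the completion) is consistent with the dualizing-complex description used to define $\omega_X$ in (\ref{G-canonical.par}). This consistency is standard (\cite[(6.3)]{Hartshorne2} together with \cite[(1.1)]{Aoyama}), and was already the content of Lemma~\ref{dualizing-semicanonical.lem}, so the present lemma is essentially a reformulation in the affine local case.
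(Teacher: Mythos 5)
There is a genuine gap at the very first step, and it is exactly the point you dismiss at the end as ``standard.'' The lemma is stated for an arbitrary Noetherian local ring $B$ possessing a canonical module in Aoyama's sense, i.e.\ a finite module $K$ with $K\otimes_B\hat B\cong K_{\hat B}$; such a ring need \emph{not} admit a dualizing complex. By Kawasaki's theorem, $B$ has a dualizing complex if and only if $B$ is a homomorphic image of a (finite-dimensional) Gorenstein ring, and the existence of a canonical module is a strictly weaker hypothesis (for non-Cohen--Macaulay rings the two are not equivalent). So your opening claim ``since $B$ admits a canonical module, $B$ admits a dualizing complex $\Bbb I$ with $K\cong H^s(\Bbb I)$'' is unjustified, and both your reduction to Lemma~\ref{dualizing-semicanonical.lem} and your ``self-contained'' variant via localization of $\Bbb I$ collapse with it. Note also that the logical direction is backwards: Lemma~\ref{dualizing-semicanonical.lem} treats the special situation where a dualizing complex is given, whereas the present lemma is the more general statement with no such hypothesis; this is precisely why the paper does not deduce it from Lemma~\ref{dualizing-semicanonical.lem} but simply quotes Aoyama \cite[(4.3)]{Aoyama}, whose proof works on the completion and descends.

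If you want an argument in the spirit of your proposal, the repair is to pass to $\hat B$ first: $\hat B$ does have a dualizing complex and $\hat K=K\otimes_B\hat B$ is its canonical module, so Lemma~\ref{dualizing-semicanonical.lem} (or local duality over $\hat B$) applies there; then, for $P\in\Spec B$ with $K_P\neq 0$, choose a prime $Q$ of $\hat B$ lying over $P$ (possible by faithful flatness, and $\hat K_Q\neq 0$), and descend the statement ``$\hat K_Q$ is the canonical module of $\hat B_Q$'' along the flat local homomorphism $B_P\rightarrow \hat B_Q$ using Aoyama's descent theorem \cite[(4.2)]{Aoyama} (the same result underlying Lemma~\ref{n-canonical-flat.lem}, {\bf 1}). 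But this descent step is of essentially the same depth as \cite[(4.3)]{Aoyama} itself, which is why the paper cites Aoyama directly rather than reproving it.
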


\begin{proof}
This is \cite[(4.3)]{Aoyama}.
\end{proof}

\begin{lemma}\label{omega-S_2.lem}
Let $Z$ be a locally Noetherian scheme with a semicanonical module $\omega_Z$.
Then $\omega_Z$ satisfies the $(S'_2)$ condition.
\end{lemma}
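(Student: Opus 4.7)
The plan is to verify the $(S'_2)$ condition pointwise. Fix $x \in Z$ and set $A := \O_{Z,x}$, $M := (\omega_Z)_x$. We must check that $\depth_A M \geq \min(2, \dim A)$. By the definition of semicanonical at $x$, either $M = 0$ or $M$ is the canonical module of the Noetherian local ring $A$ in the sense of \cite{Aoyama}.

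In the first case, by convention $\depth_A 0 = \infty$, so the inequality holds trivially. In the second case, I would invoke the two classical properties of the canonical module of a Noetherian local ring $A$: (a) $\supp_A \omega_A = \Spec A$, so in particular $\dim_A \omega_A = \dim A$, and (b) $\depth_A \omega_A \geq \min(2, \dim A)$. Property (b) is precisely the statement that the canonical module is $(S_2)$, and it together with (a) yields the desired inequality $\depth_A M \geq \min(2, \dim A)$.

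Both (a) and (b) are standard results. The clean route is to pass to the completion: by the defining property of the canonical module in Aoyama's sense, $\widehat{\omega_A} \cong \omega_A \otimes_A \widehat{A} \cong \omega_{\widehat{A}}$, where $\omega_{\widehat{A}}$ is the canonical module of the complete local ring $\widehat{A}$ (which exists by Grothendieck/Herzog--Kunz, since $\widehat{A}$ is a quotient of a Gorenstein local ring). For the complete case, (a) and (b) are classical (Herzog--Kunz), and they descend to $A$ because $\widehat{A}$ is faithfully flat over $A$: support and dimension are preserved, and depth is preserved in the sense that $\depth_A M = \depth_{\widehat{A}} \widehat{M}$ for finite $M$.

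Having established the inequality at the arbitrary point $x$, the module $\omega_Z$ satisfies $(S'_2)$ by definition. The only real input is the known $(S_2)$-property of the canonical module of a local ring; the rest is bookkeeping, and I do not anticipate any genuine obstacle. No compatibility with the global structure of $Z$ is needed, since $(S'_2)$ is a stalkwise condition.
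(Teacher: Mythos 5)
Your overall strategy is the same as the paper's: the condition is stalkwise, so one only has to know that the canonical module $K$ of a Noetherian local ring $A$ satisfies $\depth_A K\geq\min(2,\dim A)$, and this is extracted from the classical theory (the paper quotes Aoyama (1.10), the $(S_2)$ property of $K$, and Aoyama (1.7) for the dimension statement). However, your auxiliary claim (a), that $\supp_A\omega_A=\Spec A$, is false in the generality of this lemma: the canonical module of a non-equidimensional local ring is supported only on the part of maximal dimension. For example, for $A=k[[x,y,z]]/(xz,yz)$ one has $\omega_A\cong k[[x,y]]$, supported on $V(z)\subsetneq\Spec A$; and $Z=\Spec A$ with the sheaf associated to $\omega_A$ is a legitimate instance of the lemma, since that sheaf is semicanonical by Aoyama (4.3) (Lemma~\ref{canonical-localization.lem}). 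Indeed, the paper is careful elsewhere to prove fullness of the (componentwise) canonical module only under equidimensionality hypotheses (Lemma~\ref{ogoma.lem}, Corollary~\ref{full-support-omega.cor}), so (a) cannot be invoked as a standard fact here, whether before or after completion.

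The gap is repairable, because the only consequence of (a) you use is $\dim_A\omega_A=\dim A$, and that weaker statement is true: by Aoyama (1.7) every minimal prime of $\supp\omega_A$ has coheight $\dim A$ (equivalently, argue over $\hat A$, where $\omega_{\hat A}$ is supported exactly on the union of the maximal-dimensional components). Combining this with the $(S_2)$ property of the canonical module (Aoyama (1.10)), which gives $\depth\omega_A=\dim\omega_A$ whenever $\depth\omega_A<2$, yields $\depth\omega_A\geq\min(2,\dim A)$, i.e.\ the $(S'_2)$ condition; this is exactly the paper's argument. Note also that your (b) as stated is literally the inequality to be proved at the closed point and is not ``precisely'' the $(S_2)$ condition: $(S_2)$ involves $\dim\omega_A$, not $\dim A$, which is why the dimension comparison (via (1.7), not via fullness of the support) is an essential ingredient rather than bookkeeping; in the Cohen--Macaulay setting covered by Herzog--Kunz this distinction disappears because $\omega_A$ is maximal Cohen--Macaulay, but for general $A$ the right references are Aoyama's (1.7) and (1.10).
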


\begin{proof}
If $\depth\omega_{Z,z}<2$, then $\omega_{Z,z}\neq 0$, and hence
$\depth\omega_{Z,z}=\dim\omega_{Z,z}$ by
\cite[(1.10)]{Aoyama}.
On the other hand, if $\zeta$ is a minimal element of $\supp\omega_{Z,z}$,
then the dimension of the closure $\bar\zeta$ 
of $\zeta$ in $\Spec \O_{Z,z}$ 
agrees with
$\dim\O_{Z,z}$ by
\cite[(1.7)]{Aoyama}.
Hence $\dim\omega_{Z,z}=\dim\O_{Z,z}$, and $\omega_Z$ satisfies the 
$(S'_2)$ condition.
\end{proof}

\begin{corollary}\label{full-support-omega.cor}
Let $G$ be a flat $S$-group scheme of finite type, and $X$ a
Noetherian $G$-scheme with a $G$-dualizing complex.
If $X$ is locally equidimensional \(e.g., $X$ is $(S_2)$, see
\cite{Ogoma}\), 
then for a componentwise $G$-canonical module $\omega_X'$, we have that
$\supp\omega_X'=X$.
\end{corollary}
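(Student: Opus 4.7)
The plan is to reduce to the $G$-connected case and then show that the support of $\omega_X$ is a nonempty $G$-stable clopen subset of $X$, invoking $G$-connectedness to force it to be all of $X$. Since $\omega'_X$ is defined componentwise, it suffices to show $\supp \omega_{X_i} = X_i$ for each $G$-connected component $X_i$; so I may assume $X$ itself is $G$-connected. Note $\omega_X = H^s(\Bbb I_X) \neq 0$ by the choice of $s$, so $\supp \omega_X$ is a nonempty $G$-stable closed subset of $X$.

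First I would show that $\supp \omega_X$ is stable under generization. Let $x \in \supp \omega_X$. By Lemma~\ref{dualizing-semicanonical.lem}, $\omega_X$ is semicanonical, so $(\omega_X)_x$ is the canonical module of the equidimensional local ring $A := \O_{X,x}$. The standard fact $\Ass_A K_A = \{P \in \Min A : \dim A/P = \dim A\}$ (compare \cite[(1.7)]{Aoyama} and the argument of Lemma~\ref{omega-S_2.lem}) together with equidimensionality gives $\Ass_A (\omega_X)_x = \Min A$, and hence $\supp_A (\omega_X)_x = \bigcup_{P \in \Min A} V(P) = \Spec A$. Consequently every generization of $x$ lies in $\supp \omega_X$, and being a closed set stable under generization in a Noetherian space, $\supp \omega_X$ is a union of irreducible components of $X$, say $\supp \omega_X = W_1 \cup \cdots \cup W_k$, with remaining components $W_{k+1},\ldots,W_n$.

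The key step is to show the union $V := W_{k+1} \cup \cdots \cup W_n$ is disjoint from $\supp \omega_X$. If $x \in W_j \cap \supp \omega_X$ for some $j > k$, then the minimal prime $P_j \in \Min A$ corresponding to $W_j$ lies in $\supp_A (\omega_X)_x = \Spec A$ by the previous paragraph; pulling back, the generic point of $W_j$ lies in $\supp \omega_X$, forcing $W_j \subset \supp \omega_X$, a contradiction. Hence $X = \supp \omega_X \coprod V$ is a disjoint union, so both pieces are clopen; both are $G$-stable since $G$ permutes the irreducible components and preserves $\supp \omega_X$. Since $\supp \omega_X \neq \emptyset$, $G$-connectedness forces $V = \emptyset$, i.e. $\supp \omega_X = X$.

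The main technical hurdle is confirming that semicanonicality plus equidimensionality of the local ring really forces the canonical module to have full spectrum as support; this rests on the description of $\Ass$ of the canonical module cited above. Everything else is a bookkeeping argument packaging this local fact into the global clopen decomposition.
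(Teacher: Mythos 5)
Your argument is correct and is essentially the paper's own proof: reduce to the $G$-connected case, show that $\supp\omega'_X$ is closed and stable under generization, hence a $G$-stable closed open subset, and conclude from $G$-connectedness and $\omega'_X\neq 0$ that it is all of $X$. The only divergence is in the local step: where you invoke semicanonicality, local equidimensionality and Aoyama's description of $\Ass$ of the canonical module to get $\supp(\omega'_X)_x=\Spec\O_{X,x}$, the paper cites only the $(S'_1)$ condition of Lemma~\ref{omega-S_2.lem} (which by itself does not force generization-stability of the support); your justification is the more complete one and is exactly the mechanism the paper itself uses in the proof of Lemma~\ref{ogoma.lem}.
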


\begin{proof}
We may assume that $X$ is $G$-connected.
Let $\Bbb I$ be a $G$-dualizing complex of $X$, and let
$\omega_X'=H^s(\Bbb I)$ with $H^i(\Bbb I)=0$ for $i<s$.
Let $x\in \supp\omega_X'$.
As $\omega'_{X,x}$ satisfies the $(S_1')$-condition
by Lemma~\ref{omega-S_2.lem}, any generalization of $x$ is in $\supp\omega'_X$.
This shows that $\supp\omega_X'$ is $G$-stable closed open.
As $X$ is $G$-connected and $\omega_X'\neq 0$, $\supp\omega_X'=X$.
\end{proof}

\paragraph
Let $X$ be a scheme and $x\in X$.
Then the codimension of $x$ is that of $\{x\}$.
Namely, $\codim_X x=\codim_X\{x\}=\dim \O_{X,x}$.
We denote the set of points of $X$ of codimension $n$ by $X\an n$.
As can be seen easily using \cite[(10.24.4)]{SP}, 
any irreducible closed subset $Z$ of $X$ has a unique generic point $\zeta$,
and obviously we have $\codim_X Z=\codim_X\zeta$.
In particular, $X\an0$ is in one-to-one
correspondence with the set of irreducible components of $X$ by the
correspondence $\xi\mapsto \bar \xi$.

For $n\in\Bbb Z$, a subset of $U$ is said to be {\em $n$-large} if
$\codim_X(X\setminus U)\geq n+1$.
This is equivalent to say that 
$U\supset X\an0\cup\cdots\cup X\an{n}$.
$0$-large is also called {\em strongly dense}, 
and $1$-large is simply called {\em large}.
Note that a strongly dense subset is dense.

If $U\subset V\subset X$ and $U$ is $n$-large
in $X$, then $V$ is $n$-large in $X$.
If $U\subset V\subset X$, $V$ is open in $X$, $U$ is $n$-large
in $V$, and
$V$ is $n$-large in $X$, then $U$ is $n$-large in $X$.

\paragraph\label{LFI.par}
For a scheme $X$ and $n\geq 0$, let $P^n(X)$ be the set of integral closed
subsets of codimension $n$.
Note that $X\an n$ is in one-to-one correspondence with $P^n(X)$.
An element of $P^0(X)$ is nothing but an irreducible component of $X$.
A set $\Lambda$ of subsets of $X$ is said to be {\em locally finite} if for
any affine open subset $U$ of $X$, $U\cap F\neq \emptyset$ for only
finitely many elements $F$ of $\Lambda$.
We say that a scheme $X$ is an LFI-scheme if $P^0(X)$ is
locally finite.
This is equivalent to say that $\{\{\xi\}\mid \xi\in X\an0\}$ 
is locally finite.

A locally Noetherian scheme is LFI.
An open subset $U$ of an LFI-scheme $X$ is dense in $X$ if and only if
it is strongly dense in $X$.

\paragraph
Let $Z$ be a locally Noetherian scheme.
A coherent sheaf $\omega$ on $Z$ is said to be $n$-canonical at $z\in Z$
if the $\O_{Z,z}$-module $\omega_z$ satisfies the $(S'_n)$-condition,
and for each generalization $z'\in Z$ with 
$\codim z' <n$,
$\omega_{z'}$ is either zero, or is the canonical module of the
local ring $\O_{Z,z'}$.
We say that $\omega$ is $n$-canonical if it is $n$-canonical at each point.
Being $n$-canonical is a local condition.

\begin{lemma}\label{1-canonical-equiv.lem}
Let $Z$ be a locally Noetherian scheme with a $1$-canonical module $\omega$.
For $\M\in\Coh(Z)$, consider the following conditions.
\begin{enumerate}
\item[\bf 1] 
$\M$ satisfies the $(S'_1)$ condition, and 
$\supp\M\subset\supp\omega$.
\item[\bf 2] $\M$ satisfies the $(S_1)$ condition, and 
$(\supp\M)\an0\subset (\supp\omega)\an0$.
\item[\bf 3] The canonical map $\M\rightarrow\M^{\vee\vee}$ is monic,
where $(?)^{\vee}=\uHom_{\O_Z}(?,\omega)$.
\item[\bf 4] $\M$ is isomorphic to a submodule of a finite direct sum
of copies of $\omega$.
\end{enumerate}
Then we have 
{\bf 4$\Rightarrow$1$\Leftrightarrow$2$\Leftrightarrow$3}.
A coherent module of the form $\M=\N^\vee$ with $\N\in\Coh(Z)$ satisfies 
{\bf 3}.
If $Z=\Spec B$ is affine, then {\bf 3$\Rightarrow$4} holds.
\end{lemma}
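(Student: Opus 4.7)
The plan is to establish the chain $\mathbf{4} \Rightarrow \mathbf{1} \Leftrightarrow \mathbf{2} \Leftrightarrow \mathbf{3}$ and then the two supplementary claims. For $\mathbf{4}\Rightarrow\mathbf{1}$ I would note that $\supp\M\subset\supp\omega$ is automatic, and a nonzerodivisor on $\omega_z^n$ (furnished at any $z$ with $\dim\O_{Z,z}\geq 1$ by the $(S'_1)$ property of $\omega$, part of being $1$-canonical) remains one on any submodule. For $\mathbf{1}\Leftrightarrow\mathbf{2}$, the key observation is that any $z\in(\supp\M)^{\langle 0\rangle}$ has $\dim\M_z=0$ and hence $\depth\M_z=0$; under $(S'_1)$ this forces $\dim\O_{Z,z}=0$, so $z$ is a generic point of $Z$ lying in $\supp\omega$, giving $z\in(\supp\omega)^{\langle 0\rangle}$. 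The reverse direction combines coherence ($\supp\M=\overline{(\supp\M)^{\langle 0\rangle}}$) with $(S_1)$ and the $(S'_1)$ property of $\omega$ to upgrade $(S_1)$ to $(S'_1)$ for $\M$.

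For $\mathbf{3}\Rightarrow\mathbf{1}$ I would argue directly: monicity of $\M\to\M^{\vee\vee}$ yields $(\M^{\vee\vee})_z\neq 0$ whenever $\M_z\neq 0$, forcing $\omega_z\neq 0$; and if $\fm_z\in\Ass\M_z$ at some $z$ with $\dim\O_{Z,z}\geq 1$, choose $m\in\M_z$ with $\ann m=\fm_z$, and observe that every $\varphi\in\Hom(\M_z,\omega_z)$ sends $m$ into $(0:_{\omega_z}\fm_z)$, which vanishes by $\depth\omega_z\geq 1$ (from the $(S'_1)$ property of $\omega$), contradicting monicity. For $\mathbf{1}\Rightarrow\mathbf{3}$, given $0\neq m\in\M_z$, the condition $(S_1)$ gives $\Ass\M_z\subset\Min\supp\M_z$; pick $P\supset\ann m$ in $\Ass\M_z$, corresponding to a generic point $w$ of an irreducible component of $\supp\M$ passing through $z$. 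By $\mathbf{1}\Leftrightarrow\mathbf{2}$, $w\in(\supp\omega)^{\langle 0\rangle}$ with $\dim\O_{Z,w}=0$, so $\omega_w$ is the canonical module of an Artinian local ring, hence an injective cogenerator by Matlis duality; produce $\varphi_P\in\Hom(\M_P,\omega_w)$ with $\varphi_P(m)\neq 0$, and lift through the isomorphism $\Hom(\M_z,\omega_z)_P\cong\Hom(\M_P,\omega_w)$ (valid since $\M$ is finitely presented on a locally Noetherian scheme) to get $\varphi\in\Hom(\M_z,\omega_z)$ with $\varphi(m)\neq 0$, whence $\mathrm{ev}_m\neq 0$.

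For the claim that $\M=\N^\vee$ satisfies $\mathbf{3}$, apply $(?)^\vee$ to the canonical map $\N\to\N^{\vee\vee}$ to obtain a morphism $\N^{\vee\vee\vee}\to\N^\vee$; the triangle identity shows this is a retraction of the canonical $\N^\vee\to\N^{\vee\vee\vee}$, which is therefore split monic. For $\mathbf{3}\Rightarrow\mathbf{4}$ in the affine case $Z=\Spec B$, the module $\M^\vee$ is finitely generated, so choose generators $\varphi_1,\ldots,\varphi_n$ and form $\Phi=(\varphi_1,\ldots,\varphi_n):\M\to\omega^n$; an element $m\in\ker\Phi$ satisfies $\varphi(m)=0$ for every $\varphi\in\M^\vee$, which is precisely the condition that $m$ lies in the kernel of $\M\to\M^{\vee\vee}$, zero by hypothesis.

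The main obstacle will be the bookkeeping in $\mathbf{1}\Leftrightarrow\mathbf{2}$ and its deployment in $\mathbf{1}\Rightarrow\mathbf{3}$: one must verify that each relevant associated prime of $\M_z$ corresponds to a generic point $w$ where $\omega_w$ is literally the canonical module of a zero-dimensional local ring, which is exactly where the $1$-canonical hypothesis on $\omega$ intervenes and allows Matlis duality to produce the needed separating homomorphism.
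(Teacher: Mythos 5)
Your argument is correct, and every step can be completed as indicated; the only place you leave real work implicit is \textbf{2$\Rightarrow$1}, but the ingredients you name (closedness of $\supp\omega$, the identity $\supp\M=\overline{(\supp\M)\an0}$, and the $(S'_1)$ property of $\omega$ forcing $\dim\O_{Z,z}=0$ wherever $\depth\M_z=0$) are exactly what is needed, so this is not a gap. Your organization differs from the paper's in two respects worth recording. The paper first reduces to $Z=\Spec B$ affine (conditions \textbf{1}--\textbf{3} are local and \textbf{4} localizes) and then proves the single cycle \textbf{1$\Rightarrow$2$\Rightarrow$3$\Rightarrow$4$\Rightarrow$1} together with the claim that $\N^\vee$ satisfies \textbf{4}: for \textbf{2$\Rightarrow$3} it shows that $\Ker(\M\rightarrow\M^{\vee\vee})$ vanishes at all codimension-zero points by quoting the double-dual theorem \cite[(4.4)]{Aoyama} for maximal Cohen--Macaulay modules, and then kills the kernel by an associated-prime argument using $(S'_1)$; the claim about $\N^\vee$ is obtained by dualizing a surjection $B^n\rightarrow N$ to embed $N^\vee$ into $\omega^n$. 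You instead prove \textbf{1$\Leftrightarrow$3} directly and element-wise: for \textbf{1$\Rightarrow$3} you separate a nonzero section by localizing at an associated prime, which by \textbf{1$\Leftrightarrow$2} is a generic point of $\supp\M$ where $\dim\O_{Z,z}=0$ and $\omega$ is the Matlis dual, hence an injective cogenerator, and for \textbf{3$\Rightarrow$1} you kill a socle element using $\depth\omega_z\geq 1$; this needs only Matlis duality over Artinian local rings, which is precisely the special case of Aoyama's theorem the paper's argument actually invokes, so the mathematical content is the same but your version is more self-contained. Your proof that $\N^\vee$ satisfies \textbf{3} via the triangle identity $(D_\N)^\vee\circ D_{\N^\vee}=\id$ is genuinely different: it is global and sheaf-theoretic and avoids the affine reduction, whereas the paper deduces this claim from \textbf{4} affine-locally. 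Your \textbf{3$\Rightarrow$4} (generators of $\M^\vee$) and \textbf{4$\Rightarrow$1} (a regular element on $\omega_z^n$ restricts to the submodule, versus the paper's $\Ass\M\subset\Ass\omega$ argument) coincide with the paper's up to cosmetic changes.
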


\begin{proof}
As the conditions {\bf 1, 2, 3} are local, and the 
conditions $\M=\N^\vee$ and {\bf 4} localizes, 
we may assume that $Z=\Spec B$ is affine, and we are to prove that
the four conditions are equivalent, and $\N^\vee$ satisfies {\bf 4}
for $\N\in\Coh(Z)$.

Set
$M=\Gamma(Z,\M)$, $K=\Gamma(Z,\omega)$, and $(?)^\vee=\Hom_B(?,K)$.

{\bf 1$\Rightarrow$2}.
As $M$ satisfies the $(S'_1)$-condition, it satisfies the 
$(S_1)$-condition.
Let $P$ be a minimal prime of $M$.
As $\dim M_P=0$, we have that $\depth M_P=0$, and hence $\dim B_P=0$
by the $(S'_1)$-property.
As $K_P\neq 0$, $\dim K_P=0$.

{\bf 2$\Rightarrow$3}.
First, assume that $P\in\supp M$ and $\dim M_P=0$.
Then by assumption, $P\in \supp K$ and $\dim K_P=0$.
As $K_P$ satisfies $(S'_1)$ and $\depth K_P=0$, we have $\dim B_P=0$.

In particular, as $M$ satisfies $(S_1)$, it also satisfies $(S'_1)$.

Let $D:M\rightarrow M^{\vee\vee}$ be the canonical map, and set 
$F:=\supp\Ker D$.
If $\height P=0$, then we have that $K_P=0$ or $K_P$ is the canonical module
of $B_P$.
If $M_P\neq 0$, then 
as the $B_P$-module $M_P$ is maximal Cohen--Macaulay and 
$K_P\neq 0$, $D_P:M_P
\rightarrow M_P^{\vee\vee}$ is an isomorphism by \cite[(4.4)]{Aoyama}.
Hence $\codim_Z F\geq 1$.
If $\Ker D\neq 0$, then taking an associated prime $Q$ of $\Ker D$,
$\height Q\geq 1$ and $Q$ is an associated prime of $M$.
As $\depth M_Q=0$ and $M$ satisfies the $(S'_1)$ condition, 
$\dim B_Q=0$.
This contradicts $\height Q\geq 1$.
Hence $\Ker D=0$, as desired.

Now we prove that $\N^\vee$ satisfies {\bf 4}.
Set $N=\Gamma(Z,\N)$, and take a surjection $B^n\rightarrow N$.
Taking the dual, we have an injection $N^\vee\rightarrow K^n$.

We prove {\bf 3$\Rightarrow$4}.
Set $N:=M^\vee$.
Then there is an injection $M\rightarrow M^{\vee\vee}=N^\vee$ and
an injection $N^\vee\rightarrow K^n$.
So there is an injection $M\rightarrow K^n$.

{\bf 4$\Rightarrow$1}.
As $M\subset K^n$, we have that $\supp M\subset \supp K$.
If $\depth M_P=0$, then $P$ is an associated prime of $M$, and hence
$P$ is an associated prime of $K$, and $\depth K_P=0$.
As $K$ satisfies the $(S'_1)$-condition, $\height P=0$, and hence
$M$ satisfies the $(S'_1)$-condition.
\end{proof}

\begin{lemma}\label{Ischebeck.lem}
Let $B$ be a Noetherian local ring, $N$ a finite $B$-module which satisfies 
the $(S_n)$ condition.
If there is a minimal prime $P$ of $N$ such that $\dim B/P < n$, then
$\dim B/P=\depth N=\dim N$.
If, moreover, $N$ satisfies $(S'_n)$, then $\dim B/P =\dim B$.
\end{lemma}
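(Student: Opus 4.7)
The plan is to combine the classical depth inequality $\depth_B N\le \dim B/P$ for $P\in\Ass_B(N)$ with the two Serre-type conditions. Note first that a minimal prime of $N$ (in the sense of a minimal element of $\supp N=V(\ann N)$) is automatically an associated prime of $N$, so Ischebeck's inequality applies: for such $P$,
\[
\depth_B N \le \dim B/P.
\]
(This is the standard consequence of $\Hom_B(B/P,N)\ne 0$, or alternatively of the inequality $\depth_B N\le \dim B/Q+\depth N_Q$ applied at $Q=P$, where $\depth N_P=\dim N_P=0$.)

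From the hypothesis $\dim B/P<n$ we immediately obtain $\depth_B N<n$. The $(S_n)$ condition, applied at the maximal ideal, says $\depth_B N\ge \min(n,\dim N)$; combined with $\depth_B N<n$ this forces $\depth_B N\ge \dim N$. Since $\depth_B N\le \dim N$ holds unconditionally, we conclude
\[
\depth_B N = \dim N.
\]
On the other hand $P$ being a minimal prime of $\supp N$ gives $\dim B/P\le \dim N$, and we already have $\depth_B N\le \dim B/P$. Chaining these inequalities,
\[
\dim N = \depth_B N \le \dim B/P \le \dim N,
\]
so all three quantities coincide, which is the first assertion.

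For the second assertion, suppose further that $N$ satisfies $(S'_n)$. Applied at the closed point this reads $\depth_B N\ge \min(n,\dim B)$. Since we have already shown $\depth_B N<n$, this forces $\depth_B N\ge \dim B$, and combined with the trivial $\dim N\le \dim B$ we get $\dim N=\dim B$. Plugging this into the equality $\dim B/P=\dim N$ from the first part yields $\dim B/P=\dim B$.

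The only nontrivial ingredient is the inequality $\depth_B N\le \dim B/P$ for $P\in\Ass(N)$, which is the content of Ischebeck's lemma; everything else is formal manipulation of the definitions of $(S_n)$ and $(S'_n)$, so no real obstacle is anticipated.
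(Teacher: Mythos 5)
Your proof is correct and follows essentially the same route as the paper's: both rest on Ischebeck's inequality $\depth N\le\dim B/P$ (since a minimal prime of $\Supp N$ is associated) together with the $(S_n)$ and $(S'_n)$ conditions at the closed point. The only cosmetic difference is that the paper cites the unmixedness of Cohen--Macaulay modules to get $\dim B/P=\dim N$, whereas you obtain it directly by chaining $\dim N=\depth N\le\dim B/P\le\dim N$, which is a perfectly valid shortcut.
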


\begin{proof}
As $\Hom_B(B/P,N)\neq 0$, we have $\depth N<n$ by \cite[(17.1)]{CRT}.
Hence $\depth N=\dim N$ by the $(S_n)$ property.
Hence $\dim B/P=\dim N$ by \cite[(17.3)]{CRT}.
Assume that $N$ satisfies $(S'_n)$.
Since $\depth N=\dim B/P<n$, we have $\dim B/P=\depth N=\dim B$.
\end{proof}

\begin{corollary}\label{Ischebeck2.cor}
Let $B$ be a Noetherian local ring,
and $M$ a finite $B$-module which satisfies $(S_n)$.
Let $N$ be a finite $B$-module which satisfies $(S'_n)$.
If a minimal prime of $M$ is a minimal prime of $N$,
then $M$ satisfies $(S'_n)$.
\end{corollary}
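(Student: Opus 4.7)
The plan is to verify the $(S'_n)$ condition of $M$ pointwise: for each $P\in\Spec B$ with $\depth M_P<n$, I will show $\depth M_P=\dim B_P$. Since both $(S_n)$ and $(S'_n)$ are local conditions, after localizing at $P$ one may assume $B$ is local with maximal ideal $P$ and $\depth M<n$, so it suffices to prove $\depth M=\dim B$.

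The first step uses $M$. The $(S_n)$ hypothesis together with $\depth M<n$ forces $\dim M=\depth M<n$, so choosing a minimal prime $Q$ of $M$ with $\dim B/Q=\dim M$, we have $\dim B/Q<n$. Lemma~\ref{Ischebeck.lem} applied to $M$ then yields $\dim B/Q=\depth M$. The second step uses $N$: by hypothesis every minimal prime of $M$ is also a minimal prime of $N$, so $Q$ is a minimal prime of $N$ as well, still with $\dim B/Q<n$. Invoking the ``moreover'' clause of Lemma~\ref{Ischebeck.lem} applied to $N$ (valid because $N$ satisfies $(S'_n)$), we obtain $\dim B/Q=\dim B$. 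Chaining the two equalities gives $\depth M=\dim B$, which is what we needed.

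The argument is essentially two back-to-back applications of Lemma~\ref{Ischebeck.lem}, so no serious obstacle is anticipated; the main task is bookkeeping. One must confirm that the minimal prime $Q$ of the localized module $M_P$ is indeed a minimal prime of $M$ over $B$ (immediate, because $\supp M\cap \Spec B_P$ is downward-closed in $\supp M$), that the chosen $Q$ achieving $\dim B/Q=\dim M$ exists (immediate from $\dim M=\max\{\dim B/Q':Q'\in\Min M\}$ for finite $M$), and that $N_P$ inherits $(S'_n)$ from $N$ (immediate from locality of the condition, since primes of $B_P$ are primes of $B$ contained in $P$).
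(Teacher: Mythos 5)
Your proof is correct and follows essentially the same route as the paper: localize at a prime where $\depth M_P<n$, use $(S_n)$ of $M$ to pick a dimension-achieving minimal prime $Q$, transfer $Q$ to $N$ by the hypothesis, and apply the ``moreover'' clause of Lemma~\ref{Ischebeck.lem} to $N_P$ to get $\dim B_P/QB_P=\dim B_P$. The only cosmetic difference is that you also invoke Lemma~\ref{Ischebeck.lem} for $M$, where the paper just uses the $(S_n)$ property and the choice of $Q$ directly; this changes nothing of substance.
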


\begin{proof}
Let $P\in\Spec B$ with $\depth M_P<n$.
Then by $(S_n)$ property of $M$, we have $\dim M_P=\depth M_P$.
Let $Q$ be a minimal prime of $M$ such that $Q\subset P$ and
$\dim B_P/QB_P=\dim M_P$.
Then by assumption, $Q$ is a minimal prime of $N$.
As $\dim B_P/QB_P<n$ and $QB_P$ is a minimal prime of $N_P$, 
we have that $\depth M_P=\dim B_P/QB_P=\dim B_P$ by 
Lemma~\ref{Ischebeck.lem} and $(S'_n)$ property of $N$.
\end{proof}

\begin{corollary}
Let $B$ be a Noetherian local ring which satisfies $(S_n)$,
and $M$ a finite $B$-module which satisfies $(S_n)$ and $(S'_1)$.
Then $M$ satisfies $(S'_n)$.
\end{corollary}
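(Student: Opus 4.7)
The plan is to apply the preceding Corollary~\ref{Ischebeck2.cor} with the choice $N := B$. Two things must be verified: that $B$, viewed as a module over itself, satisfies $(S'_n)$, and that every minimal prime of $M$ is a minimal prime of $N = B$.

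For the first, $(S'_n)$ for $B$ as a $B$-module asks that $\depth B_P \ge \min(n, \dim B_P)$ for all $P \in \Spec B$, which is literally the $(S_n)$ condition on the ring $B$, and so holds by hypothesis.

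For the second, I would argue as follows. Let $P$ be a minimal prime of $M$, i.e., a minimal element of $\supp M$. A minimal element of the support of a finite module is an associated prime, so $P \in \Ass_B M$, and consequently $\depth M_P = 0$. Since $P \in \supp M$, the $(S'_1)$ condition on $M$ now forces $\min(1,\dim B_P) \le 0$, that is, $\dim B_P = 0$; so $P$ is a minimal prime of $B = N$, as required.

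With both hypotheses of Corollary~\ref{Ischebeck2.cor} confirmed, that corollary yields $(S'_n)$ for $M$. There is essentially no obstacle beyond noticing that $B$ itself is the right choice of auxiliary module $N$; the $(S'_1)$ assumption is precisely what pushes the minimal primes of $M$ down to minimal primes of $B$, which is the bridge between the two $(S_n)$ hypotheses and the desired $(S'_n)$ conclusion.
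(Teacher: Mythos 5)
Your proposal is correct and is essentially the paper's own proof: the paper simply says to apply Corollary~\ref{Ischebeck2.cor} with $N=B$, and your verification that $B$ satisfies $(S'_n)$ and that $(S'_1)$ forces every minimal prime of $M$ to be a minimal prime of $B$ is exactly the (implicit) content of that one-line argument.
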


\begin{proof}
Apply Corollary~\ref{Ischebeck2.cor} to the case that $N=B$.
\end{proof}

\begin{lemma}\label{2-canonical-double-dual.lem}
Let $Z$ be a locally Noetherian scheme with a $2$-canonical module $\omega$.
For $\M\in\Coh(Z)$, consider the following conditions.
\begin{enumerate}
\item[\bf 1] 
$\M$ satisfies the $(S'_2)$ condition, and 
$\supp\M\subset\supp\omega$.
\item[\bf 2] $\M$ satisfies the $(S_2)$ condition, and 
$(\supp\M)\an0\subset (\supp\omega)\an0$.
\item[\bf 3] The canonical map $\M\rightarrow\M^{\vee\vee}$ is isomorphic,
where $(?)^{\vee}=\uHom_{\O_Z}(?,\omega)$.
\item[\bf 4] There is an exact sequence of the form 
\[
0\rightarrow \M\rightarrow \K^0\rightarrow \K^1
\]
such that $\K^i$ is a finite direct sum of copies of $\omega$ for $i=0,1$.
\end{enumerate}
Then we have {\bf 4$\Rightarrow$1$\Leftrightarrow$2$\Leftrightarrow$3}.
A coherent module of the form $\M=\N^\vee$ with $\N\in\Coh(Z)$ 
satisfies {\bf 3}.
If $Z=\Spec B$ is affine, then {\bf 3$\Rightarrow$4} holds.
\end{lemma}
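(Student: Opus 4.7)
The plan is to follow the strategy of the proof of Lemma~\ref{1-canonical-equiv.lem}, upgrading from $(S'_1)$ to $(S'_2)$ by exploiting that $\omega$ is now $2$-canonical. Conditions 1, 2, 3 and the formation $\M\mapsto\M^\vee$ are local on $Z$, so for the implications that do not involve condition 4 I reduce to the affine case $Z=\Spec B$; for condition 4 itself I work on an affine open on which the sequence is given. The order of proof will be: $4\Rightarrow 1$, then $1\Leftrightarrow 2$, then $1\Rightarrow 3$, then the assertion that any $\M=\N^\vee$ satisfies 3 (which in particular yields $3\Rightarrow 1$, since any $\M$ satisfying 3 is of the form $(\M^\vee)^\vee$), and finally $3\Rightarrow 4$ in the affine case.

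For $4\Rightarrow 1$ the inclusion $\supp\M\subset\supp\omega$ is immediate from $\M\hookrightarrow\K^0$. Splitting the sequence as $0\to\M\to\K^0\to\N'\to 0$ with $\N'\hookrightarrow\K^1$, the $(S'_2)$-property of $\omega$ (which is part of the definition of $2$-canonical) forces $\Ass\N'\subset\Ass\K^1$ to consist of height-zero primes, so $\N'$ satisfies $(S'_1)$; the standard depth inequality $\depth\M_z\ge\min(\depth\K^0_z,\depth\N'_z+1)$ then gives $(S'_2)$ for $\M$. For $1\Leftrightarrow 2$, the forward direction is trivial and for $2\Rightarrow 1$ I localize at $P$ and observe that the minimal primes of $M_P$ are exactly those minimal primes of $M$ contained in $P$, and hence by hypothesis minimal primes of $\omega_P$; Corollary~\ref{Ischebeck2.cor} with $n=2$ and $N=\omega_P$ then gives $(S'_2)$ for $M_P$.

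For $1\Rightarrow 3$, let $K=\Ker D$ and $C=\Coker D$ where $D:\M\to\M^{\vee\vee}$. The key observation is that $D_z$ is an isomorphism for every $z$ with $\codim z\le 1$: either $\omega_z=0$ (whence $\M_z=0$ by $\supp\M\subset\supp\omega$ and there is nothing to prove), or $\omega_z$ is the canonical module of $\O_{Z,z}$ by $2$-canonicity and $\M_z$ is maximal Cohen--Macaulay by $(S'_2)$, in which case Aoyama's reflexivity theorem \cite[(4.4)]{Aoyama} (the input used already in the proof of Lemma~\ref{1-canonical-equiv.lem}) applies. Hence $\supp K$ and $\supp C$ lie in codimension $\ge 2$. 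If $K\ne 0$, a generic point $z_0$ of an irreducible component of $\supp K$ lies in $\Ass K\subset\Ass\M$, giving $\depth\M_{z_0}=0$, which contradicts $(S'_2)$ for $\M$ at $\codim z_0\ge 2$; hence $K=0$ and $D$ is monic. For $C$, note that $\M^{\vee\vee}=(\M^\vee)^\vee$ satisfies $(S'_2)$, since locally any finite presentation of $\M^\vee$ dualises to a condition-4 sequence for $\M^{\vee\vee}$ to which the already-proved $4\Rightarrow 1$ applies; at a generic point $z_0$ of an irreducible component of $\supp C$, necessarily in codimension $\ge 2$, both $\M_{z_0}$ and $\M^{\vee\vee}_{z_0}$ have depth $\ge 2$, so the depth lemma applied to $0\to\M_{z_0}\to\M^{\vee\vee}_{z_0}\to C_{z_0}\to 0$ yields $\depth C_{z_0}\ge 1$, contradicting $\depth C_{z_0}=0$.

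Finally I turn to the assertion that $\M=\N^\vee$ always satisfies 3, and to $3\Rightarrow 4$ in the affine case. In the affine case, presenting $N$ as $B^m\to B^n\to N\to 0$ and dualising yields $0\to N^\vee\to\omega^n\to\omega^m$; so $N^\vee$ satisfies 4, hence 1, hence 3 by the implications already established, and the non-affine case follows since 3 is local. For $3\Rightarrow 4$ in the affine case, apply the construction with $N:=M^\vee$: pick $B^r\twoheadrightarrow M^\vee$ with kernel $L$ and dualise to obtain $0\to M\to\omega^r\to L^\vee$ (using $M=M^{\vee\vee}$); picking $B^s\twoheadrightarrow L$ and dualising gives $L^\vee\hookrightarrow\omega^s$, so the composite $\omega^r\to\omega^s$ has kernel $M$, producing the desired condition-4 sequence. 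The main obstacle is the cokernel step of $1\Rightarrow 3$, which requires $(S'_2)$ for $\M^{\vee\vee}$ and so depends on $4\Rightarrow 1$ being established first; equally, the kernel and cokernel arguments must be carried out separately, since only after $K=0$ is established does one have the short exact sequence $0\to\M\to\M^{\vee\vee}\to C\to 0$ available for the depth count.
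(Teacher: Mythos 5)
Your proposal is correct and follows essentially the same route as the paper's proof: reduction to local statements, Aoyama's reflexivity theorem \cite[(4.4)]{Aoyama} at points of codimension at most one, Corollary~\ref{Ischebeck2.cor} for passing from $(S_2)$ to $(S'_2)$, the depth lemma for killing the cokernel, and dualized free presentations for condition {\bf 4}. The only differences are organizational (you prove {\bf 4$\Rightarrow$1} first and route {\bf 3$\Rightarrow$1} through the $\N^\vee$ trick, and you rederive inline the injectivity of $D$ and the Serre condition for $\M^{\vee\vee}$ where the paper simply cites Lemma~\ref{1-canonical-equiv.lem}), which does not change the substance of the argument.
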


\begin{proof}
We may assume that $Z=\Spec B$ is affine, and we need to prove that 
the four conditions are equivalent, and $\N^\vee$ satisfies {\bf 4}.
Let $M$, $K$, and $(?)^\vee$ be as in the proof of 
Lemma~\ref{1-canonical-equiv.lem}.

{\bf 1$\Rightarrow$2}.
As $M$ satisfies $(S'_2)$, it satisfies $(S_2)$.
$(\supp\M)\an0\subset (\supp\omega)\an0$ is by 
Lemma~\ref{1-canonical-equiv.lem}, {\bf 1$\Rightarrow$2}.

{\bf 2$\Rightarrow$3}.
By Corollary~\ref{Ischebeck2.cor}, we have that $M$ satisfies $(S'_2)$.
Note that $\supp M\subset \supp K$ by 
Lemma~\ref{1-canonical-equiv.lem}, {\bf 2$\Rightarrow$1}.

Also by Lemma~\ref{1-canonical-equiv.lem}, {\bf 2$\Rightarrow$3}, 
we have that $D:M\rightarrow M^{\vee\vee} $ is monic.
Let $C:=\Coker D$.
Let $P\in\Spec B$ and $\depth M_P<2$.
Then $M_P$ is maximal Cohen--Macaulay by the $(S'_2)$ property of $M$.
We have $K_P\neq 0$ by $\supp M\subset \supp K$.
So $K_P$ is the canonical module of $B_P$, since $K$ is $2$-canonical.
Hence $C_P=0$ by \cite[(4.4)]{Aoyama}.

Now assume that $C\neq 0$, and let $Q$ be a minimal prime of $C$.
Then $C_Q\neq 0$, and hence $\depth M_Q\geq 2$ by the argument above.
Hence $\dim B_Q\geq 2$.
On the other hand, 
\[
0\rightarrow M_Q\rightarrow M_Q^{\vee\vee}\rightarrow C_Q\rightarrow 0
\]
is exact.
As $M_Q^{\vee\vee}$ satisfies $(S'_1)$ by
Lemma~\ref{1-canonical-equiv.lem}, $\depth M_Q^{\vee\vee}\geq 1$.
By the choice of $Q$, $\depth C_Q=0$.
By the depth lemma, $\depth M_Q=1$, and this contradicts $\depth M_Q\geq 2$.
Hence $C=0$, as desired.

Now we prove that $\N^\vee$ satisfies {\bf 4}.
Set $N=\Gamma(Z,\N)$, and take a presentation
\[
F_1\rightarrow F_0\rightarrow N\rightarrow 0
\]
with $F_0$ and $F_1$ finite free.
Dualizing, we get a desired exact sequence.

Now we prove {\bf 3$\Rightarrow$4}.
Set $N:=M^\vee$.
Then $M\cong M^{\vee\vee}\cong N^\vee$.
As $N^\vee$ satisfies {\bf 4}, $M$ satisfies {\bf 4}, too.

{\bf 4$\Rightarrow$1}.
Let $P\in\Spec B$ and assume that $\depth M_P<2$.
By the exact sequence, we must have that $\depth K_P=\depth M_P$.
As $K$ satisfies the $(S'_2)$-condition, $\dim B_P=\depth K_P=\depth M_P$,
and hence $M$ satisfies the $(S'_2)$-condition.
$\supp\M\subset\supp\omega$ is trivial.
\end{proof}

\paragraph
Let $Z$ be a locally Noetherian scheme.
We denote the full subcategory of $\Coh(Z)$ consisting of coherent sheaves
satisfying the $(S'_n)$ condition by $(S'_n)(Z)$.
It is an additive subcategory of $\Coh(Z)$ closed under direct summands,
extensions, and epikernels.

\begin{lemma}\label{S_1'-equiv.lem}
For $\M\in\Coh(Z)$, the following are equivalent.
\begin{enumerate}
\item[\bf 1] $\M\in (S'_1)(Z)$.
\item[\bf 2] For any dense open subset $i:U\rightarrow Z$ of $Z$, 
the canonical map $u:\M\rightarrow i_*i^*\M$ is a monomorphism.
\end{enumerate}
\end{lemma}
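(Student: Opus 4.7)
The plan is to reduce to the affine Noetherian case, interpret both conditions in terms of the set $\Ass\M$ of associated points of $\M$, and show each is equivalent to $\Ass\M\subset Z\an 0$.

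Since both conditions are local, I would assume $Z=\Spec B$ with $B$ Noetherian and $\M=\tilde M$ for a finite $B$-module $M$. The kernel of $u:\M\to i_*i^*\M$ is the subsheaf $\uGamma_{Z\setminus U}(\M)$ of sections supported on $Z\setminus U$; this vanishes precisely when no point of $\Ass\M$ lies in $Z\setminus U$, i.e., when $\Ass\M\subset U$. On the other hand, $(S'_1)$ for $\M$ says: whenever $\depth\M_z=0$ and $\M_z\neq 0$, we have $\dim\O_{Z,z}=0$. Since $\depth\M_z=0$ with $\M_z\neq 0$ is exactly the condition $z\in\Ass\M$, the $(S'_1)$ condition is equivalent to $\Ass\M\subset Z\an 0$.

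For \textbf{1}$\Rightarrow$\textbf{2}, if $U\subset Z$ is a dense open subset, then since $Z$ is locally Noetherian (hence LFI by (\ref{LFI.par})), $U$ is strongly dense, so $Z\an 0\subset U$. Assuming \textbf{1}, $\Ass\M\subset Z\an 0\subset U$, and hence $u$ is monic. For \textbf{2}$\Rightarrow$\textbf{1}, I would argue contrapositively: if $z\in\Ass\M$ is not a generic point of $Z$, set $U:=Z\setminus\overline{\{z\}}$. The key observation is that $\overline{\{z\}}$ contains no generic point of $Z$: if $\eta\in\overline{\{z\}}$ were a generic point, then $\overline{\{\eta\}}\subset\overline{\{z\}}\subset\overline{\{\eta_0\}}$ for some irreducible component $\overline{\{\eta_0\}}\supset\overline{\{z\}}$, forcing $\overline{\{\eta\}}=\overline{\{\eta_0\}}=\overline{\{z\}}$ by maximality of $\overline{\{\eta\}}$, contradicting that $z$ is not generic. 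Hence $Z\an 0\subset U$, so $U$ is dense, yet $z\in\Ass\M\cap(Z\setminus U)$, whence $u$ fails to be monic, violating \textbf{2}.

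No step looks genuinely hard; the only mild subtlety is the argument in \textbf{2}$\Rightarrow$\textbf{1} that the closure of a non-generic point avoids all generic points, which uses the maximality of irreducible components. The result is thus essentially a reformulation of $(S'_1)$ in terms of associated points combined with the standard fact that on an LFI scheme a dense open is strongly dense.
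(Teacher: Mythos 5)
Your proposal is correct and takes essentially the same route as the paper: reduce to the affine Noetherian case, identify $\Ker u$ with the sections supported on $Z\setminus U$ (the paper writes this as $H^0_I(M)$ and argues via $\depth_B(I,M)\geq 1$, which is just the depth-language form of your condition $\Ass\M\subset Z\an 0\subset U$), and for {\bf 2$\Rightarrow$1} take $U$ to be the complement of the closure of an associated point of positive codimension, exactly as in the paper's choice $U=Z\setminus V(P)$.
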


\begin{proof}
As the question is local, we may assume that $Z=\Spec B$ is affine.
Set $M=\Gamma(Z,\M)$.

{\bf 1$\Rightarrow$2}.
Let $I$ be an ideal of $B$ such that $U=Z\setminus V(I)$.
Then we have $\height I\geq 1$, and hence
we have that $\depth \M_z\geq 1$ for each $z\in V(I)$ 
by the $(S'_1)$ property.
Thus $\depth_B(I,M)\geq 1$, and
hence $H^0_I(M)=0$ by Lemma~\ref{grade-local-cohomology.lem}.
Hence $M\rightarrow \Gamma(U,\M)$ is injective.

{\bf 2$\Rightarrow$1}.
Assume that $P$ is an associated prime of $M$ with $\height P\geq 1$.
Then letting $U=D(P)=Z\setminus V(P)$, $U$ is dense.
However, as $P$ is an associated prime of $M$, the local cohomology 
$H^0_P(M)$, which is the kernel of $M\rightarrow \Gamma(Z,i_*i^*\M)$,
is nonzero, and this is a contradiction.
Hence if $P\in\Ass M$, then $\height P=0$.
Namely, $\M$ satisfies the $(S'_1)$ condition.
\end{proof}

\begin{lemma}\label{S_2.lem}
Let $Z$ be a locally Noetherian scheme.
For $\M\in\Coh(Z)$, the following are equivalent.
\begin{enumerate}
\item[\bf 1] $\M\in (S'_2)(Z)$.
\item[\bf 2] $\M\in (S'_1)(Z)$, and
for any large open subset $i:U\rightarrow Z$ of $Z$, 
the canonical map $u:\M\rightarrow i_*i^*\M$ is an isomorphism.
\end{enumerate}
\end{lemma}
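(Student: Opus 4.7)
The approach will mirror Lemma~\ref{S_1'-equiv.lem}: reduce to the affine case $Z = \Spec B$ with $M = \Gamma(Z, \M)$, and translate both largeness and the isomorphism of $u$ into ideal-theoretic data. Every open has the form $U = Z \setminus V(I)$ for an ideal $I \subset B$, and $U$ is large precisely when $\height I \geq 2$, since $\codim_Z V(I) = \height I$. The standard local cohomology exact sequence
\[
0 \to H^0_I(M) \to M \xrightarrow{u} \Gamma(U, i^*\M) \to H^1_I(M) \to 0,
\]
combined with Lemma~\ref{grade-local-cohomology.lem}, translates the isomorphism of $u$ into the depth estimate $\depth_B(I, M) \geq 2$.

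For $\textbf{1} \Rightarrow \textbf{2}$, the implication $(S'_2) \Rightarrow (S'_1)$ is immediate from the definitions. Given a large open $U = Z \setminus V(I)$, every prime $Q \supseteq I$ satisfies $\height Q \geq \height I \geq 2$, so the $(S'_2)$ hypothesis forces $\depth M_Q \geq 2$. Taking infimum over all $Q \supseteq I$ yields $\depth_B(I, M) \geq 2$, and the preliminary discussion shows $u$ is an isomorphism.

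For $\textbf{2} \Rightarrow \textbf{1}$, I must show every prime $P$ with $\height P \geq 2$ satisfies $\depth M_P \geq 2$; this, together with the assumed $(S'_1)$-condition, will give $(S'_2)$. The key trick is to apply the hypothesis to the specific open $U = Z \setminus V(P)$: since $V(P)$ is irreducible of codimension $\height P \geq 2$, $U$ is large, so $u$ is an isomorphism and $\depth_B(P, M) \geq 2$. Choosing $Q = P$ in the defining infimum $\depth_B(P, M) = \inf_{Q \supseteq P} \depth M_Q$ then delivers $\depth M_P \geq 2$, as desired. I do not anticipate a serious obstacle; the only nontrivial move is choosing $I = P$ in order to convert a depth-on-ideal inequality into the depth at a single prime.
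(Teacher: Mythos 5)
Your proposal is correct and follows essentially the same route as the paper: reduce to $Z=\Spec B$ affine, use the exact sequence $0\rightarrow H^0_I(M)\rightarrow M\rightarrow \Gamma(U,i^*\M)\rightarrow H^1_I(M)\rightarrow 0$ together with Lemma~\ref{grade-local-cohomology.lem}, proving {\bf 1$\Rightarrow$2} via $\depth(I,M)\geq 2$ and {\bf 2$\Rightarrow$1} by applying the hypothesis to $U=Z\setminus V(P)$. The only cosmetic difference is that the paper argues {\bf 2$\Rightarrow$1} by contradiction and localizes the local cohomology to get $H^0_{PB_P}(M_P)=H^1_{PB_P}(M_P)=0$, while you read off $\depth M_P\geq 2$ directly from $\depth_B(P,M)\geq 2$; these are equivalent.
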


\begin{proof}
As in the proof of Lemma~\ref{S_1'-equiv.lem}, we may assume that
$Z=\Spec B$ is affine.
Let $M=\Gamma(Z,\O_Z)$.

{\bf 1$\Rightarrow$2}.
Clearly, $\M$ satisfies the $(S'_1)$ condition.

Let $I$ be an ideal of $B$ such that $U=Z\setminus V(I)$.
As we have $\height I \geq 2$, $\depth(I,M)\geq 2$.
So the local cohomology $H^i_I(M)$ vanishes for $i=0,1$.
By the exact sequence
\[
H^0_I(M)\rightarrow M\rightarrow \Gamma(Z,i_*i^*\M)\rightarrow H^1_I(M),
\]
we are done.

{\bf 2$\Rightarrow$1}.
Let $P\in\Spec B$ satisfy $\depth M_P<2$.
If $\depth M_P=0$, then $\dim B_P=0$ by the $(S'_1)$ assumption.
If not, then $\depth M_P=1$.
We want to prove that $\dim B_P=1$.
Assume the contrary.
Then $\dim A_P\geq 2$.
Then letting $U=Z\setminus V(P)$, we have that $U$ is a large open 
subset of $Z$.
As we have an exact sequence
\[
0\rightarrow H^0_P(M)\rightarrow M \xrightarrow{\cong}\Gamma(U,i^*\M)
\rightarrow H^1_P(M)\rightarrow 0
\]
by \cite[(1.9)]{LC}, 
we have that $H^0_P(M)=H^1_P(M)=0$ by assumption.
Hence $H^0_{PB_P}(M_P)=H^1_{PB_P}(M_P)=0$.
Hence $\depth M_P\geq 2$, and this is a contradiction.

Hence $\depth M_P<2$ implies that $\depth M_P=\dim A_P$.
That is, $M$ satisfies the $(S'_2)$ condition.
\end{proof}

\paragraph
Let $Z$ be a scheme with a quasi-coherent sheaf $\M$.
We say that $\M$ is {\em full} if $\supp\M=Z$.

\begin{lemma}
If $\varphi:X\rightarrow Y$ is a flat morphism of schemes 
and $\M$ is a quasi-coherent sheaf on $Y$.
If $\M$ is full, then $\varphi^*\M$ is also full.
If $\varphi^*\M$ is a full and $\varphi$ is 
faithfully flat, then $\M$ is full.
\end{lemma}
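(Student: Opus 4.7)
The plan is to reduce both assertions to the stalkwise identity $(\varphi^*\M)_x = \M_{\varphi(x)} \otimes_{\O_{Y,\varphi(x)}} \O_{X,x}$ combined with the elementary fact that a flat local homomorphism of local rings is automatically faithfully flat.

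For the first assertion, I would fix $x \in X$, set $y = \varphi(x)$, and use that $\M_y \neq 0$ since $\M$ is full. The induced homomorphism $\O_{Y,y} \to \O_{X,x}$ is local by construction and flat by the flatness of $\varphi$; hence it is faithfully flat. Therefore the stalk $(\varphi^*\M)_x = \M_y \otimes_{\O_{Y,y}}\O_{X,x}$ is nonzero, so $x \in \supp(\varphi^*\M)$. Since $x$ was arbitrary, $\supp(\varphi^*\M) = X$.

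For the second assertion, faithful flatness of $\varphi$ gives surjectivity of the underlying map of topological spaces. Given $y \in Y$, I would pick $x \in X$ with $\varphi(x) = y$. The fullness of $\varphi^*\M$ yields $(\varphi^*\M)_x = \M_y \otimes_{\O_{Y,y}}\O_{X,x} \neq 0$, which forces $\M_y \neq 0$, and hence $\supp\M = Y$.

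There is essentially no obstacle here: the only nontrivial ingredient is that a flat local homomorphism of local rings is faithfully flat, which is a standard Nakayama-type argument (nonzero finitely generated $A$-modules have nonzero reduction mod $\fm_A$, and tensoring further with the nonzero $\kappa(A)$-vector space $B/\fm_A B$ preserves nonvanishing; the passage to arbitrary modules is by filtered colimits combined with flatness). Once this is recorded, both directions are immediate stalk computations.
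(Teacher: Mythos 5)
Your argument is correct, and since the paper dismisses this lemma with ``Easy,'' your stalkwise computation $(\varphi^*\M)_x \cong \M_{\varphi(x)}\otimes_{\O_{Y,\varphi(x)}}\O_{X,x}$, together with the fact that a flat local homomorphism of local rings is faithfully flat (plus surjectivity of $\varphi$ for the converse), is exactly the routine argument intended. Nothing is missing.
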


\begin{proof}
Easy.
\end{proof}

\begin{lemma}\label{S_2-large-equiv.lem}
Let $Z$ be a locally Noetherian scheme with a full 
$2$-canonical module $\omega$, 
and $U$ its large open subset.
Let $i:U\hookrightarrow Z$ be the inclusion.
If $\M\in (S'_2)(U)$, then $i_*\M\in (S'_2)(Z)$.
Moreover,
$i_*:(S'_2)(U)\rightarrow (S'_2)(Z)$ and
$i^*:(S'_2)(Z)\rightarrow (S'_2)(U)$ are quasi-inverse each other.
\end{lemma}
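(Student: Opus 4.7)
The plan is to prove the first assertion---that $i_*\M \in (S'_2)(Z)$ whenever $\M \in (S'_2)(U)$---and deduce the equivalence of categories from it formally. For the deduction, I observe that $i^*$ carries $(S'_2)(Z)$ into $(S'_2)(U)$ because $(S'_2)$ is a stalk-local property and the stalks of $i^*\M'$ at points of $U$ agree with those of $\M'$; that $i^*i_*\M\cong \M$ for any $\M$ on $U$ since $i$ is an open immersion; and that for $\M'\in (S'_2)(Z)$ the unit $\M'\to i_*i^*\M'$ is an isomorphism by Lemma~\ref{S_2.lem}, since $U$ is large in $Z$ and $\M'$ in particular satisfies $(S'_1)$.

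So the real content is the first assertion. The question being local on $Z$, I would reduce to the case $Z=\Spec B$ Noetherian affine, in which case $U\subset Z$ is quasi-compact. Since being $n$-canonical is a local condition, $\omega|_U$ is a $2$-canonical module on $U$, full on $U$ because $\omega$ is full on $Z$. Hence by Lemma~\ref{2-canonical-double-dual.lem}, the double-dual map gives an isomorphism $\M\cong\uHom_U(\N,\omega|_U)$, where $\N:=\uHom_U(\M,\omega|_U)\in\Coh(U)$. Then I would use the standard coherent-extension theorem for Noetherian schemes to extend $\N$ to some $\tilde\N\in\Coh(Z)$.

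Now set $\Cal H:=\uHom_Z(\tilde\N,\omega)\in\Coh(Z)$. As $\Cal H$ is the $\omega$-dual of a coherent sheaf, Lemma~\ref{2-canonical-double-dual.lem} shows $\Cal H$ satisfies its condition {\bf 3}, hence condition {\bf 1}; the support constraint $\supp\Cal H\subset\supp\omega$ is automatic since $\omega$ is full, so $\Cal H\in (S'_2)(Z)$. Because $i$ is an open immersion, $i^*$ commutes with $\uHom$ of a coherent source, giving
\[
i^*\Cal H\cong\uHom_U(i^*\tilde\N,i^*\omega)=\uHom_U(\N,\omega|_U)\cong\M.
\]
Applying Lemma~\ref{S_2.lem} to $\Cal H$ (which is $(S'_2)$) and the large open $U\hookrightarrow Z$ yields $\Cal H\cong i_*i^*\Cal H=i_*\M$. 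Hence $i_*\M\cong\Cal H\in(S'_2)(Z)$, as required.

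The only genuinely nontrivial step is the coherent-extension theorem used to produce $\tilde\N$, which depends on $Z$ being Noetherian after reduction. The remainder is a routine interplay between the double-dual characterization of $(S'_2)$ supplied by Lemma~\ref{2-canonical-double-dual.lem} and the large-open characterization supplied by Lemma~\ref{S_2.lem}; fullness of $\omega$ enters precisely to make the support hypothesis in Lemma~\ref{2-canonical-double-dual.lem} vacuous for the duals $\tilde\N^\vee$ and $\M$.
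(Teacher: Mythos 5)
Your proof is correct, and it rests on exactly the same ingredients as the paper's: the double-dual characterization of $(S'_2)$ from Lemma~\ref{2-canonical-double-dual.lem}, the large-open characterization from Lemma~\ref{S_2.lem}, and the coherent extension theorem; the formal deduction of the equivalence from the first assertion is identical. The only difference is in which sheaf gets extended: the paper takes a coherent subsheaf $\Q$ of $i_*\M$ with $i^*\Q=\M$ (the subsheaf form of \cite[Exercise~II.5.15]{Hartshorne}) and shows that $\N:=\Q^{\vee\vee}$ is in $(S'_2)(Z)$ with $\N\cong i_*\M$, whereas you dualize once on $U$, extend $\N=\M^{\vee}$ arbitrarily to $\tilde\N\in\Coh(Z)$, and take the single dual $\uHom_Z(\tilde\N,\omega)$; both variants land on an $\omega$-dual, hence an $(S'_2)$ sheaf restricting to $\M$, and conclude by Lemma~\ref{S_2.lem}. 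Your explicit reduction to the Noetherian affine case before invoking the extension theorem is a reasonable touch, since the lemma only assumes $Z$ locally Noetherian; otherwise the two arguments are interchangeable.
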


\begin{proof}
If $i_*\M\in(S'_2)(Z)$ for $\M\in(S'_2)(U)$, then
$i_*:(S'_2)(U)\rightarrow (S'_2)(Z)$ and
$i^*:(S'_2)(Z)\rightarrow (S'_2)(U)$ are well-defined functors.
The counit map $i^*i_*\rightarrow \Id$ is obviously an isomorphism.
On the other hand, $\Id\rightarrow i_*i^*$ is an isomorphism by
Lemma~\ref{S_2.lem}.
So the last assertion follows.

So it suffices to show, 
If $i_*\M\in(S'_2)(Z)$ for $\M\in(S'_2)(U)$.
We can take a coherent subsheaf $\Q$ of $i_*\M$ such that $i^*\Q=i^*i_*\M=\M$
by \cite[Exercise~II.5.15]{Hartshorne}.
Set $\N:=\Q^{\vee\vee}$.
Then $\N\in (S_2')(Z)$ by Lemma~\ref{2-canonical-double-dual.lem}.
So by Lemma~\ref{S_2.lem},
$\N\rightarrow i_*i^*\N\cong i_*(i^*\Q)^{\vee\vee}\cong i_*\M^{\vee\vee}
\cong i_*\M$ is an isomorphism, and hence $i_*\M$ is coherent and is
in $(S_2')(Z)$.
\end{proof}

\begin{example}
Let $Z$ be a locally Noetherian scheme.
In the following cases, $Z$ has a full $2$-canonical module $\omega$.
\begin{enumerate}
\item[\bf 1] $Z$ is normal.
Any rank-one reflexive module (e.g., $\O_Z$) can be used as $\omega$.
\item[\bf 2] $Z$ is locally equidimensional
  Noetherian with a dualizing complex.
The componentwise canonical module is the desired one.
\item[\bf 3] $Z=\Spec B$ with $B$ an equidimensional Noetherian local ring
with a canonical module in the local sense.
The canonical module is the desired one.
\end{enumerate}
\end{example}

\paragraph\label{quasinormal.par}
A locally Noetherian scheme $Z$ is said to be {\em quasi-normal} by $\omega$ if
$\omega$ is a full $2$-canonical module of $Z$.
It is simply called quasi-normal, if it is quasi-normal by some $\omega$.
We say that $Z$ satisfies $(T_n)$ (resp.\ $(R_n)$) if $\O_{Z,z}$ is 
Gorenstein (resp.\ regular) for $z\in Z$ with $\codim z\leq n$,
or equivalently, the Gorenstein (resp.\ regular) locus of
$Z$ is $n$-large.

\begin{lemma}
For a locally Noetherian scheme $Z$, the following are equivalent.
\begin{enumerate}
\item[\bf 1] $Z$ satisfies $(T_1)+(S_2)$.
\item[\bf 2] $Z$ is quasi-normal by $\O_Z$.
\end{enumerate}
In particular, if $Z$ is normal \(or equivalently, satisfies 
$(R_1)+(S_2)$\), then it is quasi-normal by $\O_Z$.
\end{lemma}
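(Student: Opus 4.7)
The plan is to unwind the definition of quasi-normality by $\O_Z$ and match it piece-by-piece with $(T_1)+(S_2)$, relying only on definitions already set up in the excerpt plus one standard local fact.

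Quasi-normality by $\O_Z$ means that $\O_Z$ is a full $2$-canonical module on $Z$. Fullness is automatic, since $\supp\O_Z=Z$, so it suffices to characterize when $\O_Z$ is $2$-canonical. By the definition of $n$-canonical given just before (\ref{quasinormal.par}), this amounts to two requirements: first, that $\O_Z$ satisfies the $(S'_2)$ condition; and second, that for every generalization $z'\in Z$ with $\codim z'<2$, the stalk $(\O_Z)_{z'}=\O_{Z,z'}$ is either zero or is the canonical module of $\O_{Z,z'}$.

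The first requirement is literally $(S_2)$ on $Z$. For the second, $\O_{Z,z'}$ is never zero, so the condition becomes: $\O_{Z,z'}$ is its own canonical module, for every $z'$ of codimension $\leq 1$. Here I would invoke the standard local fact that a Noetherian local ring $A$ admitting a canonical module $\omega_A$ satisfies $\omega_A\cong A$ if and only if $A$ is Gorenstein (this follows from the uniqueness of the canonical module together with the characterization of Gorensteinness by finite self-injective dimension; for a Cohen--Macaulay $A$ with canonical module, $\omega_A\cong A$ gives $\id_A A<\infty$, and conversely). Applied at every $z'$ with $\codim z'\leq 1$, this is exactly the $(T_1)$ condition on $Z$. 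Combining these two observations gives the equivalence of \textbf{1} and \textbf{2}.

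For the final ``in particular'' statement, normality is equivalent to $(R_1)+(S_2)$, and $(R_1)$ implies $(T_1)$ because every regular local ring is Gorenstein. Hence a normal scheme automatically satisfies $(T_1)+(S_2)$, and by the equivalence just proved it is quasi-normal by $\O_Z$. The only point requiring careful citation is the local characterization ``$A\cong\omega_A\Leftrightarrow A$ is Gorenstein'' in the sense of \cite[(1.1)]{Aoyama}; the rest is a direct unfolding of definitions.
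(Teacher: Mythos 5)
Your overall strategy is the right one, and essentially the only one: unwind the definition of quasi-normality by $\O_Z$, observe that fullness is automatic because $\supp\O_Z=Z$, and match the two clauses of the $2$-canonical condition with $(S_2)$ and with a Gorenstein condition in codimension at most one. The paper offers no proof beyond ``Easy,'' and your unwinding of the definitions (including the reading of ``$\codim z'<2$'' as $\codim z'\leq 1$) is correct.

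The one genuine flaw is the ``standard local fact'' you invoke. For a Noetherian local ring $A$ with a canonical module in the sense of \cite[(1.1)]{Aoyama}, the condition $\omega_A\cong A$ is precisely the definition of \emph{quasi-Gorenstein} used in this very paper, and it is strictly weaker than Gorenstein in general: non-Cohen--Macaulay quasi-Gorenstein local rings exist (in dimension at least three). Your parenthetical justification via finite self-injective dimension proves the equivalence only under a Cohen--Macaulay hypothesis, so the fact as you state it is false, and the step where you conclude $(T_1)$ from ``$\O_{Z,z'}$ is its own canonical module'' has a gap as written. The repair is one sentence and you should make it explicit: at the points where the fact is applied one has $\codim z'\leq 1$, and the $(S'_2)$ clause of the $2$-canonical condition (respectively the $(S_2)$ hypothesis in the other direction) gives $\depth\O_{Z,z'}\geq\min(2,\dim\O_{Z,z'})=\dim\O_{Z,z'}$, so $\O_{Z,z'}$ is Cohen--Macaulay of dimension at most one; for such rings quasi-Gorenstein and Gorenstein do coincide, by exactly the argument you sketch. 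Note also that in the direction {\bf 1}$\Rightarrow${\bf 2} no such care is needed, since a Gorenstein local ring is trivially its own canonical module. With that insertion the proof, including the final ``in particular'' step via $(R_1)\Rightarrow(T_1)$, is complete.
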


\begin{proof}
Easy.
\end{proof}

\begin{lemma}\label{ogoma.lem}
  Let $Z$ be a locally equidimensional
  connected Noetherian scheme with a nonzero 
semicanonical module $\omega$.
Then $\supp\omega=Z$, and we have that $Z$ is universally catenary and 
quasi-normal by $\omega$.
\end{lemma}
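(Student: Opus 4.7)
The plan is to prove the three conclusions in the order $\supp\omega = Z$, universal catenarity, and quasi-normality by $\omega$; the first is the substantive step, and the last two will follow quickly once it is in hand. The main obstacle will be promoting the pointwise fact that $\supp\omega_z$ is all of $\Spec\O_{Z,z}$ into the global statement $\supp\omega = Z$, which requires combining local equidimensionality with the connectedness hypothesis through a combinatorial argument on irreducible components.

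For $\supp\omega = Z$, first I would use the semicanonicity hypothesis: whenever $\omega_z\neq 0$ the stalk $\omega_z$ is the canonical module of $\O_{Z,z}$. Since $Z$ is locally equidimensional the ring $\O_{Z,z}$ is equidimensional, and the fact on minimal elements of $\supp\omega_z$ cited from \cite[(1.7)]{Aoyama} in the proof of Lemma~\ref{omega-S_2.lem} then forces \emph{every} minimal prime of $\O_{Z,z}$ to lie in $\supp\omega_z$; since the support of a finite module over a Noetherian ring is closed under specialization, this yields $\supp\omega_z = \Spec\O_{Z,z}$. Translated back to $Z$, for every $z\in\supp\omega$ each irreducible component of $Z$ passing through $z$ is contained in $\supp\omega$. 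Next I would form the intersection graph $\Gamma$ whose vertices are the (locally finitely many) irreducible components of $Z$ and whose edges record nonempty intersections; a disconnection of $\Gamma$ would partition the components into two groups whose unions are disjoint closed subsets covering $Z$, contradicting connectedness of $Z$, so $\Gamma$ must be connected. Since $\omega\neq 0$ places at least one irreducible component into $\supp\omega$, propagating along edges of $\Gamma$ concludes $\supp\omega = Z$.

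With $\supp\omega = Z$ in hand, every stalk $\omega_z$ is nonzero and hence serves as a canonical module for $\O_{Z,z}$; I would invoke Aoyama's theorem (in \cite{Aoyama}) that any Noetherian local ring admitting a canonical module is universally catenary, from which universal catenarity of $Z$ follows. Finally, for quasi-normality by $\omega$ one checks that $\omega$ is a full $2$-canonical module: fullness is precisely $\supp\omega = Z$; the $(S'_2)$-clause of the definition of $2$-canonical is Lemma~\ref{omega-S_2.lem}; and the remaining clause concerning generalizations of codimension less than $2$ holds at every point of $Z$ simply because $\omega$ is semicanonical by hypothesis.
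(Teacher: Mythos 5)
Your overall skeleton (propagate $\supp\omega$ across intersecting irreducible components using local equidimensionality and Aoyama's (1.7), then get quasi-normality from semicanonicity plus Lemma~\ref{omega-S_2.lem}) is the paper's argument, but two steps do not hold as you justify them. The lesser one: you claim that the fact quoted in the proof of Lemma~\ref{omega-S_2.lem} --- that the \emph{minimal elements} of $\supp\omega_z$ have coheight $\dim\O_{Z,z}$ --- together with equidimensionality ``forces every minimal prime of $\O_{Z,z}$ to lie in $\supp\omega_z$.'' It does not: that statement only says the minimal elements of the support are among the maximal-dimensional minimal primes, and is perfectly consistent with $\supp\omega_z$ being the closure of a single component. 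What you need is the other inclusion contained in \cite[(1.7)]{Aoyama} (every prime $\fp$ with $\dim B/\fp=\dim B$ lies in $\Ass$ of the canonical module; one sees this by passing to the completion), which is exactly what the paper invokes to write $\supp K=\Spec B$. Repairable, but as written it is a gap at the key local step.

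The serious gap is universal catenarity. There is no theorem that ``any Noetherian local ring admitting a canonical module is universally catenary,'' and it is false: let $D$ be a Noetherian local domain that is not universally catenary (Nagata), with residue field $k$, let $R=k[[x_1,\ldots,x_n]]$ with $n>\dim D$, and let $A=R\times_k D$ (Noetherian by Eakin--Nagata, since $R\times D$ is a finite $A$-module). Then $\hat A=R\times_k\hat D$, the sequence $0\rightarrow\hat A\rightarrow R\oplus\hat D\rightarrow k\rightarrow 0$ gives $H^n_{\fm}(\hat A)\cong H^n_{\fm}(R)$, hence $K_{\hat A}\cong R$, and $R=A/\Ker(A\rightarrow R)$ satisfies $\hat A\otimes_A R\cong R\cong K_{\hat A}$; so $A$ has a canonical module in Aoyama's sense, yet $A$ surjects onto $D$ and therefore is not universally catenary. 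The true statement needed here uses exactly what you established in the first part: since $\supp\omega_z=\Spec\O_{Z,z}$, the completion $\hat\omega_z$ is a canonical module of $\hat\O_{Z,z}$ with full support, so $\hat\O_{Z,z}$ is equidimensional by \cite[(1.7)]{Aoyama}, and then Ratliff's theorem \cite[(31.6)]{CRT} gives that $\O_{Z,z}$ is universally catenary. Your proposal omits this chain; supply it (the repair is short, since full support is already in hand). The quasi-normality paragraph is fine.
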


\begin{proof}
Let $X_1,\ldots,X_r$ be the irreducible components of $Z$ that are
contained in $\supp\omega$.
Assume that $\supp\omega\neq Z$.
Let $Y_1,\ldots,Y_s$ be the irreducible components of $Z$ that are 
not contained in $\supp\omega$.
By assumption, $r\geq 1$ and $s\geq 1$.
As $Z$ is connected, there exist some $i$ and $j$ such that $X_i\cap Y_j
\neq\emptyset$.
Take $z\in (X_i\cap Y_j)\an0$, and consider the local ring 
$B=\O_{Z,z}$.
As $z\in\supp\omega$, $B$ has a canonical module $K=\omega_z$.
By assumption, $B$ is equidimensional.
By \cite[(1.7)]{Aoyama}, $\supp K=\Spec B$.
Hence $Y_j\subset\supp\omega$, and this is a contradiction.
Hence $\supp\omega=Z$, as desired.

Let $z\in Z$ be any point, and set $B=\O_{Z,z}$.
As $\supp\omega_z=\Spec B$, we have that $\supp\hat\omega_z=\Spec \hat B$,
where $\hat B$ is the completion of $B$.
As $\hat \omega_z$ is a canonical module of $\hat B$, we have that
$\hat B$ is equidimensional by \cite[(1.7)]{Aoyama}.
Hence $B$ is universally catenary by \cite[(31.6)]{CRT}.
Hence $Z$ is universally catenary.
\end{proof}

\begin{corollary}\label{S_2-dualizing-quasinormal.cor}
Let $Z$ be a Noetherian scheme with a dualizing complex.
If $Z$ is locally equidimensional, then the componentwise
canonical module $\omega'$ is a full semicanonical module, 
and $Z$ is quasi-normal by $\omega'$.
\end{corollary}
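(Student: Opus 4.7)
The plan is to assemble three earlier results, with no substantive new work required. First, I would invoke Lemma~\ref{dualizing-semicanonical.lem} with $G$ trivial to conclude that the componentwise canonical module $\omega'$ attached to the dualizing complex is semicanonical. From semicanonicity, Lemma~\ref{omega-S_2.lem} immediately gives that $\omega'$ satisfies the $(S'_2)$ condition at every point of $Z$.

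Second, for fullness, I would apply Corollary~\ref{full-support-omega.cor} (again with $G$ trivial), whose hypotheses are exactly that $Z$ be locally equidimensional Noetherian with a dualizing complex; this yields $\supp\omega' = Z$. Equivalently, one can reduce to each connected component $Z_i$ of $Z$ separately: the restriction $\omega'|_{Z_i}$ is the canonical module of the dualizing complex on $Z_i$, hence a nonzero semicanonical module on a locally equidimensional connected Noetherian scheme, and Lemma~\ref{ogoma.lem} then forces $\supp(\omega'|_{Z_i}) = Z_i$.

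Finally, to see that $Z$ is quasi-normal by $\omega'$, that is, that $\omega'$ is a full $2$-canonical module, I would verify the defining conditions of $2$-canonicity: the $(S'_2)$ condition is already secured by step one, and the requirement that $\omega'_{z'}$ be either zero or the canonical module of $\O_{Z,z'}$ at every point $z'$ with $\codim z' < 2$ is an immediate (and weaker) consequence of the semicanonicity established in step one, which provides this property at \emph{every} point of $Z$. Combined with fullness from step two, this completes the argument. The only step that takes any real thought is the fullness assertion, but it is already packaged in Corollary~\ref{full-support-omega.cor}, so no significant obstacle arises.
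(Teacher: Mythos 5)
Your proposal is correct and follows essentially the paper's route: the paper's proof simply observes that $\omega'$ is semicanonical (Lemma~\ref{dualizing-semicanonical.lem}) and applies Lemma~\ref{ogoma.lem} to each connected component, whose conclusion already contains both $\supp\omega'=Z$ and quasi-normality by $\omega'$. Your use of Corollary~\ref{full-support-omega.cor} for fullness and your explicit check that a semicanonical module satisfying $(S'_2)$ (Lemma~\ref{omega-S_2.lem}) is $2$-canonical merely unpack what Lemma~\ref{ogoma.lem} packages, so nothing further is needed.
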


\begin{proof}
The componentwise canonical module $\omega'$ is full by Lemma~\ref{ogoma.lem}.
\end{proof}

\begin{lemma}\label{S_n-ascent-descent.lem}
Let $\varphi:X\rightarrow Y$ be a flat morphism between locally Noetherian
schemes.
Let $\M$ be a coherent sheaf on $Y$, and $n\geq 0$.
Then
\begin{enumerate}
\item[\bf 1] If $\varphi$ is faithfully flat, 
and $\varphi^*\M$ satisfies the $(S'_n)$ condition
\(resp.\ the $(S_n)$ condition\), then so does $\M$.
\item[\bf 2] If the fibers of $\varphi$ satisfy $(S_n)$ and $\M$ satisfies
$(S'_n)$ \(resp.\ $(S_n)$\), then so does $\varphi^*\M$.
\end{enumerate}
\end{lemma}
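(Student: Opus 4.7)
The plan is to reduce both parts to a flat local homomorphism of Noetherian local rings $A\to B$ (with $A=\O_{Y,y}$ and $B=\O_{X,x}$ for suitable pairs $(x,y)$ with $\varphi(x)=y$) and then invoke the two standard numerical identities for such a homomorphism: if $M$ is a finite $A$-module, then
\[
  \depth_B(M\otimes_A B)=\depth_A M+\depth(B/\fm_A B),
\]
\[
  \dim B=\dim A+\dim(B/\fm_A B),\qquad
  \dim_B(M\otimes_A B)=\dim_A M+\dim(B/\fm_A B)\ \text{if } M\neq 0.
\]
These appear in \cite[\S23]{CRT}. Once everything is expressed via these identities, the lemma becomes an exercise in comparing the quantities $\min(n,\dim A)$, $\min(n,\dim B)$, and the depth bound coming from the hypothesis.

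For part \textbf{1}, fix $y\in Y$ and set $A=\O_{Y,y}$, $M=\M_y$. Because $\varphi$ is faithfully flat the fiber $\varphi^{-1}(y)$ is nonempty, and I choose for $x$ a generic point of an irreducible component of this fiber. Then $B/\fm_A B$ is the local ring of $\varphi^{-1}(y)$ at $x$, hence has Krull dimension $0$, and in particular depth $0$. With this choice the two identities collapse to $\depth_B(M\otimes_A B)=\depth_A M$, $\dim B=\dim A$, and (if $M\neq 0$) $\dim_B(M\otimes_A B)=\dim_A M$. Thus the hypothesis $\depth_B(M\otimes_A B)\geq\min(n,\dim B)$ becomes $\depth_A M\geq\min(n,\dim A)$, which is $(S'_n)$ at $y$; and in the $(S_n)$ version the same identities transport the inequality $\depth\geq\min(n,\dim)$ verbatim.

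For part \textbf{2}, fix an arbitrary $x\in X$ above $y$, set $A=\O_{Y,y}$, $B=\O_{X,x}$, $M=\M_y$. Write $a=\dim A$, $b=\dim(B/\fm_A B)$ (or, in the $(S_n)$ case, $a=\dim_A M$ and $b=\dim(B/\fm_A B)$, assuming $M\neq 0$; the case $M=0$ is trivial). The $(S'_n)$ hypothesis on $\M$ and the $(S_n)$ hypothesis on the fiber give $\depth_A M\geq\min(n,a)$ and $\depth(B/\fm_A B)\geq\min(n,b)$, and by the depth identity we obtain
\[
  \depth_B(M\otimes_A B)\geq \min(n,a)+\min(n,b).
\]
The only thing left is the elementary inequality $\min(n,a)+\min(n,b)\geq\min(n,a+b)$ for $a,b\geq 0$, which one checks by cases (if $a+b\leq n$ both terms are untruncated, otherwise at least one summand is already $\geq n$ or the sum exceeds $n$). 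This yields $\depth_B(M\otimes_A B)\geq\min(n,\dim B)$ (resp.\ $\min(n,\dim_B(M\otimes_A B))$), proving $(S'_n)$ (resp.\ $(S_n)$) for $\varphi^*\M$ at $x$.

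There is no real obstacle here: the content of the lemma is the classical flat base-change behavior of depth and dimension recorded above. The only subtle point is the choice of $x$ in part \textbf{1}—one must take $x$ to be a generic point of the fiber, not an arbitrary lift—so that the fiber contribution $\dim(B/\fm_A B)$ vanishes and the hypothesis descends sharply.
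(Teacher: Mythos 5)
Your proof is correct. The paper itself gives essentially no argument — it just cites EGA IV, (6.4.1) for the $(S_n)$ case and remarks that $(S'_n)$ is proved similarly — and your reduction to a flat local homomorphism $\O_{Y,y}\rightarrow\O_{X,x}$ together with the standard depth and dimension formulas (with the key points being the choice of $x$ as a generic point of the fiber in part \textbf{1}, so that the fiber contribution vanishes, and the inequality $\min(n,a)+\min(n,b)\geq\min(n,a+b)$ in part \textbf{2}) is exactly the classical argument underlying that reference, so the two approaches coincide.
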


\begin{proof}
For the property $(S_n)$, see \cite[(6.4.1)]{EGA-IV-2}.
The assertions for $(S'_n)$ is also proved similarly.
\end{proof}

\begin{lemma}\label{n-canonical-flat.lem}
Let $\varphi:X\rightarrow Y$ be a flat morphism between locally Noetherian
schemes.
Let $\omega$ be a coherent sheaf on $Y$.
\begin{enumerate}
\item[\bf 1] If $\varphi$ is faithfully flat and $\varphi^*\omega$ 
is semicanonical \(resp.\ $n$-canonical\),
then so is $\omega$.
\item[\bf 2] If $X$ is quasi-normal by $\varphi^*\omega$, then
$Y$ is quasi-normal by $\omega$.
\item[\bf 3] If 
$\omega$ is semicanonical and each fiber of $\varphi$ is Gorenstein, 
then $\varphi^*\omega$ is semicanonical.
\item[\bf 4] 
If $\omega$ is $n$-canonical and each fiber of $\varphi$ 
satisfies $(T_{n-1})+(S_n)$, then $\varphi^*\omega$ is $n$-canonical.
\item[\bf 5] If $Y$ is quasi-normal by $\omega$ and each fiber of $\varphi$
satisfies $(T_1)+(S_2)$, then $X$ is quasi-normal by $\varphi^*\omega$.
\end{enumerate}
\end{lemma}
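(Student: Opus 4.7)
The plan is to treat each statement pointwise by comparing $(\varphi^*\omega)_x=\omega_y\otimes_{\O_{Y,y}}\O_{X,x}$ with $\omega_y$, where $y=\varphi(x)$, using the flat local homomorphism $\O_{Y,y}\to\O_{X,x}$ whose closed fibre is $\O_{X_y,x}$, and invoking Aoyama's ascent-descent theorems for canonical modules under flat local extensions. The $(S'_n)$ parts will be immediate from Lemma~\ref{S_n-ascent-descent.lem}.

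For {\bf 3} I would argue pointwise: at $x\in X$ with $y=\varphi(x)$, either $\omega_y=0$ (so $(\varphi^*\omega)_x=0$), or $\omega_y$ is the canonical module of $\O_{Y,y}$. In the latter case, since the closed fibre $\O_{X_y,x}$ is Gorenstein by hypothesis, Aoyama's ascent gives that $\omega_y\otimes_{\O_{Y,y}}\O_{X,x}$ is the canonical module of $\O_{X,x}$. For {\bf 4}, I would combine Lemma~\ref{S_n-ascent-descent.lem}~{\bf 2} with the flat dimension formula $\dim\O_{X,x}=\dim\O_{Y,y}+\dim\O_{X_y,x}$: if $\codim x<n$, then $\codim y\leq \codim x<n$, so $\omega_y$ is $0$ or canonical of $\O_{Y,y}$ by $n$-canonicity of $\omega$, while $\dim\O_{X_y,x}\leq n-1$, so $\O_{X_y,x}$ is Gorenstein by the fibrewise $(T_{n-1})$ hypothesis; then the argument of {\bf 3} applies.

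For {\bf 1} I would proceed dually. Given $y\in Y$ (with $\codim y<n$ in the $n$-canonical case), pick a generic point $x$ of the fibre $X_y$ (nonempty by faithful flatness), so that $\dim\O_{X_y,x}=0$ and $\codim x=\codim y$. If $(\varphi^*\omega)_x=0$, faithful flatness of $\O_{Y,y}\to\O_{X,x}$ forces $\omega_y=0$. Otherwise $(\varphi^*\omega)_x=\omega_y\otimes_{\O_{Y,y}}\O_{X,x}$ is the canonical module of $\O_{X,x}$, and Aoyama's descent theorem gives that $\omega_y$ is the canonical module of $\O_{Y,y}$; in the semicanonical case any $x$ over $y$ works. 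The $(S'_n)$ descent uses Lemma~\ref{S_n-ascent-descent.lem}~{\bf 1}. Statement {\bf 5} then follows from {\bf 4} with $n=2$, noting that $\supp\varphi^*\omega=\varphi^{-1}(\supp\omega)=X$ when $\omega$ is full and $\varphi$ is flat. Statement {\bf 2} follows from {\bf 1} with $n=2$, together with the preceding unnumbered lemma on descent of fullness along faithfully flat maps.

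The main obstacle is pinning down the descent form of Aoyama's theorem, namely that if $M\otimes_A B$ is the canonical module of $B$ for a flat local $(A,\mathfrak{m})\to(B,\mathfrak{n})$ and $M$ is a finite $A$-module, then $M$ is the canonical module of $A$ (here Gorensteinness of the closed fibre is an automatic consequence rather than a hypothesis). Once this is extracted from \cite{Aoyama}, the remaining bookkeeping---the flat dimension formula, the fact that the low-codimension Gorensteinness of fibres triggers for the relevant values of $n$, and preservation of support under flat pullback---is routine.
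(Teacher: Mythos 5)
Your overall architecture (pointwise reduction to the flat local homomorphism $\O_{Y,y}\rightarrow\O_{X,x}$, Aoyama's descent for {\bf 1}, the dimension formula $\dim\O_{X,x}=\dim\O_{Y,y}+\dim\O_{X_y,x}$ for the codimension bookkeeping in {\bf 4}, fullness of supports for {\bf 2} and {\bf 5}, and Lemma~\ref{S_n-ascent-descent.lem} for the $(S'_n)$ transfers) agrees with the paper, and parts {\bf 1}, {\bf 2} are handled exactly as in the text, where \cite[(4.2)]{Aoyama} is precisely the unconditional descent statement you were worried about extracting. So the ``main obstacle'' you flag at the end is in fact the one step that is a direct citation.

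The genuine gap is the opposite step: the ascent in {\bf 3}, which you dispatch as ``Aoyama's ascent.'' No such result is being quoted here, and it cannot be, because in this generality ($\O_{Y,y}$ an arbitrary Noetherian local ring, not assumed Cohen--Macaulay and not assumed to have a dualizing complex, with the canonical module defined via the completion as in \cite[(1.1)]{Aoyama}) the statement ``flat local with Gorenstein closed fibre implies $K\otimes_A B$ is the canonical module of $B$'' is exactly the substantive content of this lemma; the Cohen--Macaulay versions one finds in standard references do not apply. The paper proves it by completing: $\hat A$ has a dualizing complex $\Bbb I$, the induced map to $C:=(\hat A\otimes_A B)_Q$ (one could equally use $\hat A\rightarrow\hat B$) is flat local Gorenstein, so by Avramov--Foxby \cite[(5.1)]{AF} the base change $C\otimes_{\hat A}\Bbb I$ is again a dualizing complex, whose lowest nonvanishing cohomology is $C\otimes_A K$ by flatness; this identifies $C\otimes_A K$ with the canonical module of $C$, and one then descends to $B$ using \cite[(4.2)]{Aoyama}. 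Without some such argument (the essential ingredients being the passage to a ring with a dualizing complex and the Avramov--Foxby base-change theorem for dualizing complexes along flat Gorenstein local maps), your parts {\bf 3}, {\bf 4}, {\bf 5} remain unproven, since {\bf 4} and {\bf 5} are deduced from {\bf 3}.
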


\begin{proof}
{\bf 1}.
The assertion for the semicanonical property follows from 
\cite[(4.2)]{Aoyama}.
As the $(S_n)$ property also descends
by Lemma~\ref{S_n-ascent-descent.lem}, the assertion for the
$n$-canonical property follows easily.

{\bf 2} follows immediately by {\bf 1}.

{\bf 3}.
Let $A\rightarrow B$ a flat Gorenstein local homomorphism between Noetherian
local rings, and $K$ the canonical module of $A$.
It suffices to prove $B\otimes_A K$ is the canonical module.
Let $Q=\fm_B(\hat A\otimes_A B)$, where $\fm_B$ is the maximal ideal of $B$.
Note that $Q$ is a maximal ideal of $\hat A\otimes_A B$.
Set $C:=(\hat A\otimes_A B)_Q$.
Since $\hat K=\hat A\otimes_A K$ is the lowest nonvanishing cohomology group
of the dualizing complex $\Bbb I$ of $\hat A$,
we have that $C\otimes_A K$ is the lowest nonvanishing
cohomology group of $C\otimes_{\hat A}\Bbb I$.
As $\hat A\rightarrow C$ is a flat Gorenstein local homomorphism,
$C\otimes_{\hat A}\Bbb I$ is a dualizing complex by \cite[(5.1)]{AF}.
Hence $C\otimes_A K$ is the canonical module of $C$.
By \cite[(4.2)]{Aoyama}, $B\otimes_A K$ is the canonical module of $B$.

{\bf 4} and {\bf 5} are immediate consequences of {\bf 3}.
\end{proof}

\paragraph\label{initial-canonical-settings.par}
Let $f:G\rightarrow H$ be a quasi-compact 
flat homomorphism between flat $S$-group schemes
of finite type with $N=\Ker f$.
Note that $N$ is also flat of finite type.

\begin{lemma}\label{flat-base-change.thm}
Let $g:Z'\rightarrow Z$ be a $G$-morphism separated of finite type.
Assume that $Z$ is Noetherian.
Then the flat base change map
\[
\bar\zeta:\res^{H}_{G}g^!\rightarrow g^!\res^{H}_{G}
\]
\(see {\rm\cite[Chapter~21]{ETI}}\) is an isomorphism between the functors
$D^+_{\Lqc}(H,Z)\rightarrow D^+_{\Lqc}(G,Z')$
\(it would be better to write $L\res^H_G$ instead of $\res^H_G$, but
as in {\rm\cite{ETI}}, for a left or right derived functor of an exact functor,
we omit $L$ or $R$\).
\end{lemma}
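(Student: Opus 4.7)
The plan is to verify the isomorphism by reducing to classical flat base change for the twisted inverse on single Noetherian schemes, applied level-wise on the simplicial bar constructions. Recall that $\res^H_G = B^M_f(-)^*$ is pullback along the morphism $B^M_f(Z):B^M_G(Z)\rightarrow B^M_H(Z)$ of simplicial schemes, which at simplicial level $n$ is $f^n\times 1_Z: G^n\times Z\rightarrow H^n\times Z$, hence flat since $f$ is flat.

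Using the definition of $\bar\zeta$ in (\ref{zeta-bar.par}) together with the compatibilities in Lemma~\ref{hh'.lem} and Lemma~\ref{phi-psi.lem}, the problem reduces to the two cases where $g$ is a $G$-equivariant open immersion and where $g$ is $G$-equivariant proper. The open immersion case is immediate because $g^!=g^*$ and the assertion becomes the commutation of two pullbacks along the cartesian square of simplicial schemes arising from $f$ and $g$. For the proper case, apply $B^M_G$ and $B^M_H$ to obtain the square
\[
\xymatrix{
B^M_G(Z') \ar[r]^-{B^M_f(Z')} \ar[d]_{B^M_G(g)} & B^M_H(Z') \ar[d]^{B^M_H(g)} \\
B^M_G(Z) \ar[r]^-{B^M_f(Z)} & B^M_H(Z)
}
\]
which is cartesian at each simplicial level (since $G^n\times Z'\cong(G^n\times Z)\times_{H^n\times Z}(H^n\times Z')$) with flat horizontal arrows, hence tor-independent at each level. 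By Lemma~\ref{independent-square.lem}, Lipman's $\theta$ is a level-wise isomorphism on $D_{\Qch}$ of each component. Since $Rg_*$ on Lqc complexes over a Noetherian diagram of schemes is computed level-wise (using the quasi-compact quasi-separated hypothesis implicit in $g$ being separated of finite type over Noetherian base), the diagram-level $\theta$ between $L(B^M_f(Z))^*R(B^M_H(g))_*$ and $R(B^M_G(g))_*L(B^M_f(Z'))^*$ is also an isomorphism. From this, $\zeta$ is an isomorphism in the proper case, and combining with the open immersion case via the factorization yields that $\bar\zeta$ is an isomorphism in general.

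The main obstacle will be justifying the reduction to the level-wise picture cleanly: this requires (i) an equivariant factorization of the separated finite-type $G$-morphism $g$ as a $G$-stable open immersion followed by a proper $G$-morphism, which uses the discussion in (\ref{subscheme.par}) for the immersion factor together with (equivariant) Nagata compactification for the proper factor, and (ii) verifying that the twisted inverse pseudofunctor of \cite{ETI} on the simplicial bar construction assembles level-wise with the classical twisted inverse on each component. Once these points are settled, the conclusion follows formally from the cited compatibility lemmas governing $\zeta$ and $\bar\zeta$.
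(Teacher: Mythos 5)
The paper's own proof is a single citation: the lemma is exactly \cite[(21.8)]{ETI}, the equivariant flat base change theorem for the twisted inverse over diagrams of schemes. Your proposal instead tries to reprove that result from scratch by a level-wise reduction, and as written it has two genuine gaps. The central one is in the proper case: from the level-wise tor-independence of the square of bar constructions you conclude, via Lemma~\ref{independent-square.lem}, that Lipman's $\theta$ is an isomorphism, and then assert ``from this, $\zeta$ is an isomorphism.'' That inference is not valid. The isomorphy of $\theta$ is merely the tor-independence hypothesis; the base change isomorphism for the right adjoint $(\,)^\times$ is a substantially deeper statement (already in the non-equivariant case it is Lipman's independent base change theorem, not a formal consequence of $\theta$ being invertible), and nothing in the present paper establishes it for morphisms of diagrams --- Lemmas~\ref{hh'.lem} and \ref{phi-psi.lem} only give compatibility of $\zeta$ with compositions of squares. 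Moreover, your level-wise strategy needs to know that the diagram-level $(\,)^\times$ (and hence $(\,)^!$) is compatible with pullback along the flat morphism $B^M_f(Z)$, or at least with the restrictions to the individual levels; but that compatibility is precisely the assertion of the lemma, so the reduction is circular unless one imports the relevant theorems of \cite{ETI} --- which is what the paper does by citing (21.8).

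The second gap is the factorization step. To define and manipulate $g^!$ for a separated finite-type $G$-morphism you factor $g$ as a $G$-stable open immersion followed by a proper $G$-morphism and appeal to ``(equivariant) Nagata compactification.'' No such equivariant compactification is available at this level of generality (arbitrary flat finite-type group schemes over a base, with no normality or linearization hypotheses on $Z'$); the construction in \cite{ETI} deliberately avoids it by compactifying the induced cartesian morphism of diagrams $B^M_G(g)$ --- i.e.\ compactifying the underlying morphism $Z'\rightarrow Z$ non-equivariantly and inducing on the bar construction --- rather than compactifying $Z'$ as a $G$-scheme. You do flag both issues as ``obstacles to be settled,'' but they constitute the entire content of the theorem, so the proposal does not yet prove the lemma; the honest options are either to cite \cite[(21.8)]{ETI} as the paper does, or to reproduce the machinery of \cite[Chapters~19--21]{ETI} (diagram compactification, the equivariant duality isomorphism, and its compatibility with flat morphisms of diagrams) in full.
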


\begin{proof}
This is \cite[(21.8)]{ETI}.
\end{proof}

\paragraph\label{canonical-Y_0.par}
Let $f:G\rightarrow H$, and $N$ be as in 
(\ref{initial-canonical-settings.par}).
Let $Y_0$ be a fixed Noetherian $H$-scheme with a 
fixed $H$-dualizing complex $\Bbb I_{Y_0}=\Bbb I_{Y_0}(H)$.
The restriction $\res^H_G\Bbb I_{Y_0}(H)$ is a $G$-dualizing complex
by \cite[(31.17)]{ETI}.
We denote it by $\Bbb I_{Y_0}$ or $\Bbb I_{Y_0}(G)$.

Let $\Cal F(G,Y_0)$ be the category of $(G,Y_0)$-schemes
separated of finite type over $Y_0$.
For $(h_Z:Z\rightarrow Y_0)\in\Cal F(G,Y_0)$, {\em the} $G$-dualizing 
complex of $Z$ (or better, of $h_Z$) is $h_Z^! \Bbb I_{Y_0}(G)$ by definition,
and we denote it by $\Bbb I_Z=\Bbb I_Z(G)$.

\begin{lemma}
Let $h_Z:Z\rightarrow Y_0$ be an object of $\Cal F(G,Y_0)$.
Assume that the action of $N$ on $Z$ is trivial.
Then $Z\in\Cal F(H,Y_0)$.
When we set $\Bbb I_Z(H):=h_Z^!\Bbb I_{Y_0}(H)$, then we have
$\Bbb I_Z(G)=\res^H_G\Bbb I_Z(H)$.
In particular, 
each cohomology group of $\Bbb I_Z(G)$ belongs to $\Coh_N(G,Z)$, the
full subcategory of $\Qch(G,Z)$ consisting of $N$-trivial coherent
$(G,\O_Z)$-modules.
\end{lemma}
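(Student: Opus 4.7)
The plan is to verify the three assertions in sequence, each being essentially formal given the machinery of the preceding sections.

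For $Z \in \Cal F(H, Y_0)$, I would use fpqc descent along $f \colon G \to H$ (which realizes $H$ as the fpqc quotient $G/N$). Since $N$ acts trivially on $Z$, the action morphism $G \times Z \to Z$ factors uniquely through an action $H \times Z \to Z$, and the group-action axioms transfer by uniqueness of the descended morphism. The map $h_Z$ becomes $H$-equivariant because it is $G$-equivariant and the $G$-action on $Y_0$ is the pullback of its $H$-action, so both sides of the equivariance relation descend along $f$; being separated of finite type over $Y_0$ is inherited.

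For the identification $\Bbb I_Z(G) = \res^H_G \Bbb I_Z(H)$, I would apply the flat base-change isomorphism Lemma~\ref{flat-base-change.thm} to $h_Z$. By the convention in (\ref{canonical-Y_0.par}) we have $\Bbb I_{Y_0}(G) = \res^H_G \Bbb I_{Y_0}(H)$, so
\[
\Bbb I_Z(G) = h_Z^!\,\Bbb I_{Y_0}(G) = h_Z^!\,\res^H_G\,\Bbb I_{Y_0}(H) \xleftarrow[\ \cong\ ]{\bar\zeta} \res^H_G\,h_Z^!\,\Bbb I_{Y_0}(H) = \res^H_G\,\Bbb I_Z(H).
\]
Lemma~\ref{flat-base-change.thm} applies since $h_Z$ is separated of finite type, $Y_0$ is Noetherian, and $\Bbb I_{Y_0}(H) \in D^+_{\Coh}(H, Y_0) \subset D^+_{\Lqc}(H, Y_0)$.

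The final ``in particular'' claim is then formal. Lemma~\ref{res-vanishing.thm} makes $\res^H_G$ exact on $\Qch$ and hence commute with taking cohomology of a complex with quasi-coherent cohomology groups, so $H^i(\Bbb I_Z(G)) \cong \res^H_G\,H^i(\Bbb I_Z(H)) \in \Coh(G, Z)$. For any $\M \in \Qch(H, Z)$, the module $\res^H_G \M$ is $N$-trivial: the composite $\res^G_N \circ \res^H_G$ is restriction along the composite homomorphism $N \hookrightarrow G \xrightarrow{f} H$, which is the constant map with value $e_H$, so the resulting $N$-module is pulled back from $\Qch(Z)$. Hence each $H^i(\Bbb I_Z(G))$ belongs to $\Coh_N(G, Z)$. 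The only step requiring genuine care is the descent of the $G$-action to an $H$-action in part one, relying on the (standing qfpqc) faithful flatness of $f$; the remaining two parts are direct applications of Lemma~\ref{flat-base-change.thm} and Lemma~\ref{res-vanishing.thm}.
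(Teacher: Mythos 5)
Your proof is correct and takes essentially the same route as the paper: the identification $\Bbb I_Z(G)\cong\res^H_G\Bbb I_Z(H)$ comes from the flat base-change isomorphism $\bar\zeta$ of Lemma~\ref{flat-base-change.thm} together with $\Bbb I_{Y_0}(G)=\res^H_G\Bbb I_{Y_0}(H)$, and the $N$-triviality of the cohomology groups follows because they are restricted from $H$. The only difference is cosmetic: for the first assertion the paper simply cites \cite[(6.5)]{Hashimoto5}, whereas you reprove the descent of the $G$-action to an $H$-action directly via the qfpqc sheaf property, which is a valid substitute under the standing assumption that $f$ is qfpqc.
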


\begin{proof}
The first assertion is by \cite[(6.5)]{Hashimoto5}.
We have
\[
\Bbb I_Z(G)=h_Z^!\Bbb I_{Y_0}(G)=h_Z^!\res^H_G\Bbb I_{Y_0}(H)
\cong \res^H_G h_Z^! \Bbb I_{Y_0}(H)
\cong \res^H_G \Bbb I_Z(H)
\]
by Lemma~\ref{flat-base-change.thm}, and the second assertion holds.
In particular, being restricted from $H$, each cohomology group
of $\Bbb I_Z(G)$ is $N$-trivial.
\end{proof}

\begin{example}
Let $S$ be Noetherian with a fixed dualizing complex $\Bbb I_S$.
Then
$\Bbb I_S(H)=(L_{-1}\Bbb I_S)_{\Delta_M}$ is an $H$-dualizing complex of the
$H$-scheme $S$ by \cite[(31.17)]{ETI}, 
where $L_{-1}:\Mod(S)\rightarrow \Mod(\tilde B_H^M(S))$ is the
left induction, and $(?)_{\Delta_M}: \Mod(\tilde B_H^M(S))\rightarrow
\Mod(H,S)=\Mod(B_H^M(S))$ is the restriction.
See for the notation, \cite{ETI}.
Letting $Y_0=S$, we are in the situation of (\ref{canonical-Y_0.par}).
\end{example}

\paragraph\label{canonical-settings.par}
Let $S$, $f:G\rightarrow H$ and $N$ 
be as in {\rm(\ref{initial-canonical-settings.par})}.
$Y_0$, $\Bbb I_{Y_0}$, and $\Cal F(G,Y_0)$ be as in
{\rm(\ref{canonical-Y_0.par})}.

Let $Z\in\Cal F(G,Y_0)$.
{\em The} $G$-canonical module of 
$Z$, denoted by $\omega_Z$, is defined to be the $G$-canonical module
corresponding to the $G$-dualizing complex $\Bbb I_Z$.
Note that the definition in 
\cite[(31.13)]{ETI} is slightly different, and used the
componentwise $G$-canonical module, see (\ref{G-canonical.par}).

\begin{lemma}\label{codim-two-canonical.thm}
Let $f:G\rightarrow H$, $N$, $Y_0$, $\Bbb I_{Y_0}$, 
and $Z\in\Cal F(G,Y_0)$ be as in {\rm(\ref{canonical-settings.par})}.
Let $U$ be a $G$-stable open subset of $Z$.
If $\omega_Z|_U\neq 0$ \(e.g., $U$ is dense\),
then $\omega_Z|_U\cong \omega_U$ as $(G,\O_U)$-modules.
If, moreover, $U$ is large in $Z$, then
$\omega_Z\cong i_*\omega_U$, where $i:U\hookrightarrow Z$ is the
inclusion.
\end{lemma}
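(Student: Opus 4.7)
The plan is to reduce everything to an identification of dualizing complexes under restriction to open subschemes and then apply the $(S'_2)$ machinery from Lemma~\ref{S_2.lem}.

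First I would observe that, in the setup of (\ref{canonical-settings.par}), the $G$-dualizing complex of $U$ is literally the restriction of that of $Z$. Indeed, $h_U = h_Z\circ i$, and for an open immersion $i$ the twisted inverse image coincides with the ordinary inverse image (this is visible from the factorization in (\ref{zeta-bar.par}), taking the closed immersion part to be the identity). Hence
\[
\Bbb I_U = h_U^!\Bbb I_{Y_0}(G) = i^! h_Z^!\Bbb I_{Y_0}(G) = i^* \Bbb I_Z.
\]
Because $i^*$ is exact, we get $H^n(\Bbb I_U)\cong H^n(\Bbb I_Z)|_U$ for every $n\in\Bbb Z$ as $(G,\O_U)$-modules.

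Now let $s$ be the smallest integer with $H^s(\Bbb I_Z)\neq 0$, so $\omega_Z = H^s(\Bbb I_Z)$. The displayed isomorphism kills $H^n(\Bbb I_U)$ for every $n<s$. Under the hypothesis $\omega_Z|_U\neq 0$, we have $H^s(\Bbb I_U)\neq 0$, so $s$ is also the smallest integer with nonvanishing cohomology of $\Bbb I_U$. By definition of $\omega_U$ this gives $\omega_U \cong H^s(\Bbb I_U) \cong \omega_Z|_U$ as $(G,\O_U)$-modules, proving the first assertion.

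For the second assertion, first check that the hypothesis $\omega_Z|_U\neq 0$ is automatic when $U$ is large in $Z$. By Lemma~\ref{dualizing-semicanonical.lem}, $\omega_Z$ is a semicanonical $\O_Z$-module, and by Lemma~\ref{omega-S_2.lem} it therefore satisfies $(S'_2)$ and in particular $(S'_1)$; hence every associated point of $\omega_Z$ has codimension $0$. Since $U$ is large, $U\supset Z\an 0$, so $U$ contains every associated point of $\omega_Z$, and thus $\omega_Z|_U\neq 0$ (as $\omega_Z\neq 0$). By the first part, $\omega_U = \omega_Z|_U = i^*\omega_Z$. Finally, Lemma~\ref{S_2.lem} applied to the coherent $\O_Z$-module $\omega_Z \in (S'_2)(Z)$ and the large open $U$ shows that the unit $\omega_Z \to i_* i^*\omega_Z$ is an isomorphism; this unit is a morphism of $(G,\O_Z)$-modules by the naturality of $u$ and the fact that $i$ is a $G$-morphism, so we obtain $\omega_Z \cong i_*\omega_U$ as $(G,\O_Z)$-modules. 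No step looks genuinely hard; the main thing to be careful about is the identification $i^! = i^*$ for the open immersion $i$ in the equivariant framework, and the verification that the isomorphisms used (which are stated in \cite{ETI} for ordinary sheaves) transport to the $(G,\O)$-equivariant setting via the naturality of the canonical maps involved.
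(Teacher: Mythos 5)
Your proof is correct and follows essentially the same route as the paper: identify $\Bbb I_U$ with $i^*\Bbb I_Z$ via $i^!=i^*$ for the open immersion, compare lowest nonvanishing cohomology degrees to get $\omega_Z|_U\cong\omega_U$, and then use the $(S'_2)$ property of $\omega_Z$ together with Lemma~\ref{S_2.lem} to conclude $\omega_Z\cong i_*\omega_U$ when $U$ is large. Your extra check that $\omega_Z|_U\neq 0$ follows automatically from largeness (via the $(S'_1)$ property) is a harmless elaboration of what the paper leaves implicit.
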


\begin{proof}
Assume that $\omega_Z=H^s(\Bbb I_Z)$.
If $\omega_Z|_U=H^s(\Bbb I_U)\neq 0$, then as $H^i(\Bbb I_U)=0$ for $i<s$, 
we have that $\omega_Z|_U\cong \omega_U$ as $(G,\O_U)$-modules.

Assume that $U$ is large in $Z$.
Then since $\omega_Z$ satisfies $(S'_2)$, we have that
$\omega_Z\rightarrow i_*i^*\omega_Z\cong i_*\omega_U$ is an isomorphism
by Lemma~\ref{S_2.lem}.
\end{proof}

\paragraph 
Let $f:G\rightarrow H$ be as in (\ref{initial-canonical-settings.par}).
Assume that $N$ is smooth over $S$.
Let $\Cal I$ be the defining ideal of the unit element $e$ in $N$.
Then $\Cal I/\Cal I^2$ is a locally free sheaf over $S$ on which $G$ acts
via the conjugation.
We set $\Lie N:=(\Cal I/\Cal I^2)^*$, and $\Theta_N:=\extop \Lie N$.
The following is essentially due to Knop \cite[Lemma~5]{Knop}.

\begin{proposition}\label{canonical-princ.thm}
Assume that $N$ is smooth over $S$.
If $\varphi:X\rightarrow Y$ is a $G$-enriched principal $N$-bundle with 
$Y$ locally Noetherian,
then $\Omega_{X/Y}$ is isomorphic to $h^*_X((\Lie N)^*)$, where $h_X:X
\rightarrow S$ is the structure map.
\end{proposition}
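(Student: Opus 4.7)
The plan is to exploit the principal-bundle isomorphism $\Psi:N\times X\to X\times_Y X$, $(g,x)\mapsto(gx,x)$, together with compatibility of K\"ahler differentials with base change, and then restrict along the identity section $e:S\to N$.

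First I would record the dictionary induced by $\Psi$. Writing $p_1,p_2:X\times_Y X\to X$ for the two projections and $a,p_X:N\times X\to X$ for the action and the second projection, one checks directly from the definition of $\Psi$ that $p_2\Psi=p_X$ and $p_1\Psi=a$. Next I would invoke two standard base-change identifications for $\Omega$. Viewing $X\times_Y X$ as an $X$-scheme via $p_2$ (which is the base change of $\varphi$ along itself), we have a canonical isomorphism $\Omega_{X\times_Y X/X}\cong p_1^*\Omega_{X/Y}$. Similarly, since $p_X:N\times X\to X$ is the base change of $N\to S$ along $h_X$, we get $\Omega_{N\times X/X}\cong p_N^*\Omega_{N/S}$, where $p_N:N\times X\to N$ is the first projection. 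Transporting the first isomorphism along the $X$-isomorphism $\Psi$ (with respect to the $X$-structures given by $p_2$ and $p_X$) yields
\[
a^*\Omega_{X/Y}\;\cong\;p_N^*\Omega_{N/S}
\]
as $\O_{N\times X}$-modules.

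Finally I would apply $\sigma^*$ to both sides, where $\sigma:=(e,1_X):X=S\times_S X\to N\times X$ is induced by the identity section $e$ of $N$. A direct computation shows $a\sigma=1_X$ and $p_N\sigma=e\circ h_X$, so the two sides become $\Omega_{X/Y}$ and $h_X^*e^*\Omega_{N/S}=h_X^*(\Cal I/\Cal I^2)$ respectively. Because $N/S$ is smooth, $\Cal I/\Cal I^2$ is locally free, so biduality gives $(\Lie N)^*=(\Cal I/\Cal I^2)^{**}=\Cal I/\Cal I^2$, and we conclude $\Omega_{X/Y}\cong h_X^*(\Lie N)^*$, as desired.

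I do not anticipate a substantial obstacle in the underlying geometric content: once the principal-bundle isomorphism $\Psi$ is available, everything reduces to routine base-change identities for Kähler differentials and the smoothness of $N$. The main bookkeeping point to be careful with is that the intermediate isomorphism $a^*\Omega_{X/Y}\cong p_N^*\Omega_{N/S}$ should be made canonical enough that, when $G$-enrichment is invoked in later sections, the resulting isomorphism $\Omega_{X/Y}\cong h_X^*(\Lie N)^*$ is automatically compatible with the natural $G$-linearizations on both sides (the conjugation action of $G$ on $\Lie N$ on the target).
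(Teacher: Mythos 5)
Your argument is correct and follows essentially the same route as the paper: transport $\Omega_{X\times_YX/X}\cong p_1^*\Omega_{X/Y}$ along the principal-bundle isomorphism $\Psi$ (using the base-change identifications for K\"ahler differentials coming from the two cartesian squares) and then restrict along the unit section $(e,1_X)$, noting $e^*\Omega_{N/S}\cong\Cal I/\Cal I^2\cong(\Lie N)^*$ by smoothness. The paper simply packages the same steps as one chain of pullbacks through a single commutative diagram of $G$-schemes (with $G$ acting on $N$ by conjugation), which also takes care of the equivariance point you flag at the end.
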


\begin{proof}
Consider the commutative diagram
\begin{equation}\label{knop.eq}
\xymatrix{
X \ar[r]^-{e\times 1_X} \ar[d]^{h_X} &
N\times X \ar[d]^{p_1} \ar[dr]^{p_2} \ar[r]^-\Psi &
X\times_YX \ar[d]^{p_2} \ar[r]^-{p_1} \ar@{}[dr]|{\text{(b)}} &
X \ar[d]^{\varphi} \\
S \ar[r]^e & 
N \ar[dr] \ar@{}[r]|{\text{(a)}} &
X \ar[d] \ar[r]^\varphi \ar[d]^{h_X} &
Y \\
 & & S
}
\end{equation}
of $G$-schemes, where $G$ acts on $N$ by conjugation action,
and $\Psi(n,x)=(nx,x)$.
Then as $\Psi$ is an isomorphism and (a) and (b) are fiber squares,
\begin{multline*}
\Omega_{X/Y}\cong (e\times 1_X)^*\Psi^* p_1^*\Omega_{X/Y}
\cong (e\times 1_X)^* \Psi^*\Omega_{X\times_Y X/X}
\cong \\
(e\times 1_X)^* \Omega_{N\times X/X}
\cong (e\times 1_X)^* p_1^* \Omega_{N/S}
\cong h_X^* e^*\Omega_{N/X}
\cong h_X^*((\Lie N)^*),
\end{multline*}
as desired.
\end{proof}

So $\omega_{X/Y}:=\extop\Omega_{X/Y}=h^*_X(\Theta_N^*)=\Theta_{N,X}^*$,
where $\Theta_{N,X}:=h_X^*\Theta_N$.
We prove a version of this fact which can be used also for the
case where $N$ may not be smooth.

\paragraph
We say that a morphism of schemes $\varphi:X\rightarrow Y$ is of
relative dimension $d$ if $\dim_x\varphi=d$ for each $x\in X$,
see \cite[(17.10.1)]{EGA-IV-4} for the notation.

\paragraph
Let $f:A\rightarrow B$ be a local homomorphism between Noetherian local rings.
Let $\hat f:\hat A\rightarrow \hat B$ be its completion, and
\begin{equation}\label{Cohen-factorization.eq}
\hat A\xrightarrow g C \xrightarrow h \hat B
\end{equation}
a Cohen factorization \cite{AFH} of it.
That is, $g$ is flat with $C/\fm_A C$ regular, and $h$ is surjective.
If so, we define the Avramov--Foxby--Herzog dimension
(AFH dimension for short) of $f$ by
\[
\adim f = \adim_AB=\dim C-\dim A -\height\Ker h
\]
and the depth of $f$ by
$\depth f = \depth B - \depth A$, see \cite{AFH} and \cite{AF2}.
If $f$ is flat, then $\adim f$ is nothing but the dimension of the closed fiber.
We define $\cmd f = \adim f - \depth f$, and call it the Cohen--Macaulay
defect of $f$.
We say that $f$ is Cohen--Macaulay at $\fm_B$ if $\fdim f < \infty$
and $\cmd f=0$.
This is equivalent to say that $\Ker h$ is a perfect ideal.
If $\Ker h$ is a Gorenstein ideal (that is, $\Ker h$ is perfect and
$\Ext^c_C(B,C)\cong B$,
where $c=\height\Ker h$, we say that $f$ is Gorenstein at $\fm_B$.
These definitions are independent of the choice of Cohen factorization
(\ref{Cohen-factorization.eq}) of $\hat f$.

\paragraph
A morphism $\varphi:X\rightarrow Y$ between locally Noetherian schemes
is said to be
Cohen--Macaulay (resp.\ Gorenstein) if $\O_{Y,y}\rightarrow \O_{X,x}$ is
Cohen--Macaulay (resp.\ Gorenstein) at $\fm_x$ for every $x\in X$.
$\adim_x \varphi$ is $\adim_{\O_{Y,y}}\O_{X,x}$.
If $\adim_x \varphi=d$ is independent of $x\in X$, then we say that
$\varphi$ has AFH dimension $d$.

\begin{lemma}
Let $\varphi:X\rightarrow Y$ be a Cohen--Macaulay 
separated morphism of finite type between Noetherian schemes.
Then $\varphi$ has a well-defined AFH dimension on each connected component of $X$.

If $\varphi$ 
is of AFH dimension $d$, then 
$H^i(\varphi^!(\O_Y))=0$ for $i\neq -d$.
If, moreover, $\varphi$ is Gorenstein, then
$\omega_{X/Y}:=H^{-d}(\varphi^!(\O_Y))$ is an invertible sheaf.
If, moreover, $G$ is a flat $S$-group scheme of finite type and $\varphi$
is a $G$-morphism, then $\omega_{X/Y}$ is a $G$-linearized invertible sheaf.
If, moreover, $\varphi$ is smooth, then $\omega_{X/Y}=\ext^d\Omega_{X/Y}$.
\end{lemma}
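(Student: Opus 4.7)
The plan is to verify the five claims in order, reducing everything to a local factorization of $\varphi$ through affine space.

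First, the well-definedness of the AFH dimension on connected components. I will show that $x\mapsto \adim_x\varphi$ is locally constant on $X$, which clearly forces it to be constant on each connected component. Since the question is local on both $X$ and $Y$ and $Y$ is Noetherian, I can Zariski-locally write $\varphi$ as $\Spec B\rightarrow \Spec A$ with $B=A[T_1,\ldots,T_n]/I$, factoring $\varphi$ as a closed immersion $i:X\hookrightarrow \Bbb A^n_Y$ followed by the structure map $p:\Bbb A^n_Y\rightarrow Y$. Using a Cohen factorization of the completion of $\O_{Y,\varphi(x)}\to \O_{X,x}$ that refines this embedding, the CM hypothesis translates into perfectness of the ideal $I$ at the relevant local rings, and the AFH dimension at $x$ comes out to $n - \grade(I)_x$. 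Since $I$ is perfect, its grade equals the projective dimension of $\O_X$ over $\O_{\Bbb A^n_Y}$ at $x$, an invariant that is locally constant on $X$.

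Next, the concentration of $\varphi^!(\O_Y)$ in degree $-d$. Using the same factorization $\varphi=pi$, pseudofunctoriality of $(?)^!$ yields
\[
\varphi^!(\O_Y) \;\cong\; i^! p^!(\O_Y) \;\cong\; i^!\bigl(\ext^n\Omega_{\Bbb A^n_Y/Y}[n]\bigr) \;\cong\; R\uHom_{\O_{\Bbb A^n_Y}}\!\bigl(\O_X,\ext^n\Omega_{\Bbb A^n_Y/Y}\bigr)[n].
\]
Because $I$ is locally perfect of grade $n-d$, the local-to-global $\uExt$-sheaves $\uExt^j_{\O_{\Bbb A^n_Y}}(\O_X,-)$ vanish outside $j=n-d$, so the cohomology of $\varphi^!(\O_Y)$ lives only in degree $(n-d)-n=-d$. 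This also identifies $\omega_{X/Y}$ with $\uExt^{n-d}_{\O_{\Bbb A^n_Y}}(\O_X,\ext^n\Omega_{\Bbb A^n_Y/Y})$.

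For invertibility in the Gorenstein case, note that $\varphi$ Gorenstein means precisely that $I$ is a Gorenstein ideal locally, so $\uExt^{n-d}_{\O_{\Bbb A^n_Y}}(\O_X,\O_{\Bbb A^n_Y})$ is locally isomorphic to $\O_X$. Twisting by the invertible sheaf $\ext^n\Omega_{\Bbb A^n_Y/Y}$ keeps it invertible, proving $\omega_{X/Y}$ is an invertible $\O_X$-module. The $G$-linearization when $\varphi$ is a $G$-morphism is automatic from the equivariant twisted inverse pseudofunctor of \cite[Chapter~21]{ETI}: all the isomorphisms above are natural in the equivariant category, so $\omega_{X/Y}=H^{-d}(\varphi^!(\O_Y))$ inherits a canonical $G$-linearization. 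For the smooth case, the closed immersion $i$ is locally defined by a regular sequence of length $n-d$, and the standard Koszul computation identifies $\omega_{X/Y}$ with $\ext^d\Omega_{X/Y}$; equivalently, this is the well-known formula $\omega_{X/Y}=\ext^{\mathrm{top}}\Omega_{X/Y}$ for smooth morphisms, which follows from the conormal exact sequence and the fact that $\Omega_{\Bbb A^n_Y/Y}$ is free of rank $n$.

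The main technical obstacle is the first step: the AFH dimension is defined via completion, so one must check carefully that the formula $\adim_x\varphi = n-\grade(I)_x$ (computed in the non-completed ring $A[T]_{\tilde{\fp}}$) agrees with the completion-based definition. This relies on the behavior of grade under flat base change to the completion, plus the fact that one can always refine a local presentation to give a Cohen factorization of $\widehat{\O_{Y,y}}\to \widehat{\O_{X,x}}$.
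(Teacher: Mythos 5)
Your proposal follows essentially the same route as the paper's proof: localize by flat base change, present $B=C/I$ over a polynomial ring $C=A[x_1,\ldots,x_n]$, use $\varphi^!\O_Y\cong R\uHom_{\O_Z}(\O_X,\O_Z)[n]$ (your twist by $\ext^n\Omega_{\Bbb A^n_Y/Y}$ is trivial, so this is the same formula) so that perfectness of $I$ concentrates the cohomology in degree $-d$, Gorensteinness makes $\Ext^{n-d}_C(B,C)$ rank-one projective, the $G$-linearization comes for free from the equivariant twisted inverse, and the smooth case is the standard Koszul/top-exterior-power identification, which the paper simply cites as \cite[(28.11)]{ETI}. Your additional discussion of the local constancy of the AFH dimension (grade equals projective dimension at every point by pointwise perfectness) only makes explicit what the paper's proof compresses into the phrase ``$I$ is a perfect ideal of codimension $h:=n-d$.''
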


\begin{proof}
By the flat base change \cite[(4.4.3)]{Lipman}, the question is local both
on $X$ and $Y$, and we may assume that $Y=\Spec A$ and $X=\Spec B$ 
are both affine.
As $B$ is finitely generated, we may write $B=C/I$, where 
$C=A[x_1,\ldots,x_n]$ is a polynomial ring, and $I$ an ideal of $C$.
By assumption, $I$ is a perfect ideal of codimension $h:=n-d$.
We have
\[
j_*\varphi^!=R\uHom_{\O_Z}(j_*\O_X,\psi^*(?))[n],
\]
where $\psi:Z=\Spec C\rightarrow Y$ is the canonical map, 
and $j:X\rightarrow Z$ is the inclusion.
As we have $\Ext^i_C(B,C)=0$ for $i\neq h$, the first assertion follows.
If, moreover, $\varphi$ is Gorenstein, $\Ext^h_C(B,C)$ is rank-one 
projective as a $B$-module, and the second assertion follows.
The third assertion is trivial.
The last assertion follows from \cite[(28.11)]{ETI}.
\end{proof}

\begin{definition}\label{relative-canonical.def}
  Let $G$ be a flat $S$-group scheme of finite type, and
  $\varphi:X\rightarrow Y$ be a $G$-morphism separated of
  finite type between Noetherian   $G$-schemes.
  We denote the lowest non-vanishing cohomology group $H^s(\varphi^!\O_Y)\neq 0$
  ($H^i(\varphi^!\O_Y)=0$ for $i<s$) of
  $\varphi^!\O_Y$ by $\omega_{X/Y}$ or $\omega_\varphi$
  if $X\neq\emptyset$ (if $X=\emptyset$, we define $\omega_{X/Y}=0$),
  and call $\omega_{X/Y}$ the relative canonical
  sheaf of $\varphi$ (or of $X/Y$).
\end{definition}

\begin{lemma}\label{omega-base-change.lem}
  Let $G$ and $\varphi:X\rightarrow Y$ be as in
  Definition~\ref{relative-canonical.def}.
  Assume that $\varphi$ is flat Gorenstein of relative dimension $d$.
  Then for any morphism $h:Y'\rightarrow Y$ with $Y'$ Noetherian,
  we have that $\omega_{X'/Y'}\cong h_X^*\omega_{X/Y}$, where
  $X'=Y'\times_Y X$ and $h_X:X'\rightarrow X$ is the second projection.
\end{lemma}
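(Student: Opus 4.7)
The plan is to invoke the base-change isomorphism for the twisted inverse image pseudofunctor and exploit the fact that $\varphi^!\O_Y$ is concentrated in a single degree. First, flatness, Gorenstein-ness, and relative dimension are all stable under arbitrary base change, so $\varphi':X'\to Y'$ is again flat Gorenstein of relative dimension $d$. For flat morphisms, the AFH dimension of the preceding lemma coincides with the relative dimension on each connected component. Hence by the preceding lemma, both $\varphi^!\O_Y \cong \omega_{X/Y}[d]$ and $(\varphi')^!\O_{Y'} \cong \omega_{X'/Y'}[d]$, with both $\omega$'s invertible sheaves.

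Because $\varphi$ is flat, the given fiber square is tor-independent by Lemma~\ref{independent-square.lem}. The statement is local on $Y$ and $Y'$, so I would reduce to the case where both are affine. Then I would factor $\varphi$ as $q\circ p$ with $p:X\hookrightarrow U$ a closed immersion into a $G$-stable open subscheme $U$ of $\mathbb A^n_Y$, and $q:U\to Y$ smooth (forgetting the equivariant structure if necessary—the resulting isomorphism of $\O_{X'}$-modules will be canonical and hence automatically $G$-equivariant). Base changing produces $\varphi' = q'\circ p'$ of the same form.

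The base-change map $\theta$ for the smooth morphism $q$ is manifestly an isomorphism, since $q^!\O_Y \cong (\ext^n\Omega_{U/Y})[n]$ and the formation of $\Omega_{U/Y}$ commutes with base change. The base-change map $\bar\zeta$ for the closed immersion $p$ is an isomorphism by (\ref{zeta-bar.par}), since the relevant fiber squares are automatically tor-independent (the closed immersion factors through the tor-independent outer square, via the compatibility of Lemma~\ref{hh'.lem}). Combining these via Lemma~\ref{phi-psi.lem} yields an isomorphism
\[
\bar\zeta: Lh_X^*\varphi^!\O_Y \xrightarrow{\ \sim\ } (\varphi')^!\O_{Y'}.
\]
Because $\varphi^!\O_Y$ is concentrated in degree $-d$ with invertible cohomology, $Lh_X^*$ acting on it is just $h_X^*$ applied in degree $-d$ (no higher Tors); taking $H^{-d}$ of both sides gives the desired $h_X^*\omega_{X/Y}\cong \omega_{X'/Y'}$.

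The main point requiring care is the application of $\bar\zeta$ in this non-proper setting via the closed-immersion-plus-smooth factorization, but this is exactly what (\ref{zeta-bar.par}) together with Lemma~\ref{phi-psi.lem} is designed to handle. The $G$-equivariance of the final isomorphism follows from the naturality of all the base-change maps involved, so no separate equivariant argument is needed.
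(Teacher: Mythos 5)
Your overall architecture (reduce to a base-change statement for $\varphi^!\O_Y$, exploit concentration in a single degree) is reasonable, but the decisive step is not justified: you claim that $\bar\zeta$ for the closed immersion $p:X\hookrightarrow U\subset\Bbb A^n_Y$ is an isomorphism ``since the relevant fiber squares are automatically tor-independent.'' Tor-independence is only what is needed to \emph{define} $\bar\zeta$ in (\ref{zeta-bar.par}); it does not make it an isomorphism. Here $h:Y'\rightarrow Y$ is an arbitrary morphism of Noetherian schemes --- not flat, not of finite Tor-dimension --- and $p^!(-)$ is $R\uHom_{\O_U}(\O_X,-)$, which does not commute with $Lh^*$ for a general closed immersion under non-flat base change. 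To close the gap you must use the hypothesis that $\varphi$ is flat with Cohen--Macaulay (indeed Gorenstein) fibers: for instance, show that $\O_X$ is perfect over $\O_U$ by lifting the length-$(n-d)$ free resolution of the fiber $\O_{X_y}$ over the regular ring $\O_{U_y}$ via the fiberwise criterion of flatness, and then use perfectness plus tor-independence to commute $R\uHom_{\O_U}(\O_X,\O_U)$ with $Lh^*$. Without some such finiteness input, the asserted isomorphism can fail, and your proof as written proves nothing at the one point where the Gorenstein-flatness hypothesis must actually be used. This is precisely the part to which the paper devotes the second half of its proof: it takes a (possibly infinite) resolution $\Bbb F$ of $B$ by finite free $C$-modules and uses the vanishing $H^i(\Bbb F^*\otimes_A A/J)=0$ for $i>d$ and all ideals $J$ (valid because flat Gorenstein of relative dimension $d$ is preserved by any base change) to show that the relevant cocycle and coboundary modules are $A$-flat, whence $\omega_{X/Y}\otimes_A A'\rightarrow\omega_{X'/Y'}$ is an isomorphism.

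Two further points. The paper avoids your choice of embedding altogether by using the diagonal: from the square built on $\Delta:X\rightarrow X\times_YX$, flat base change gives $\Delta^!\O_{X\times_YX}\cong\omega_{X/Y}^{-1}[-d]$, so the problem becomes base change of $\Delta^!\O_{X\times_YX}$ along $h_{X\times_YX}$ --- an intrinsic, choice-free formulation, which is why the $G$-equivariance issue disappears (one defines the equivariant map $\bar\zeta$ first and checks it is an isomorphism locally after forgetting $G$). Your remark that a ``canonical'' isomorphism built from a chosen, possibly non-equivariant factorization is ``automatically $G$-equivariant'' is not an argument; the repair is exactly the order of operations just described. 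Finally, Lemma~\ref{phi-psi.lem} concerns compositions of \emph{proper} morphisms, so it does not literally apply to your factorization with $q$ smooth; the smooth part must be handled separately (as you indicate, via $q^!\O_Y\cong\ext^n\Omega_{U/Y}[n]$), and the compatibility of the two comparison maps then needs its own (routine but not quoted) verification.
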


\begin{proof}
Consider the diagram
\begin{equation}\label{delta-phi.eq}
\xymatrix{
  &  &  \\
  X \ar[dr]^{1_X} \ar[r]^-\Delta \ar `u [urr] `[rr]^\id [rr] &
X\times_YX \ar[d]^{p_2} \ar[r]^-{p_1} & X \ar[d]^{\varphi} \\
& X \ar[r]^\varphi & Y
}.
\end{equation}
Then by the flat base change, 
\[
\O_X\cong \Delta^!p_2^!\varphi^*\O_Y\cong \Delta^!p_1^*\varphi^!
\O_Y\cong \Delta^!p_1^*\omega_{X/Y}[d].
\]
As $\Delta$ is a closed immersion, the description of $\Delta^!$ in
\cite[Chapter~27]{ETI} yields that
there is an isomorphism
\[
\O_X\cong \Delta^!p_1^*\omega_{X/Y}[d]\cong p_1^*\omega_{X/Y}\otimes
\Delta^!\O_{X\times_Y
  X}[d].
\]
Thus $\Delta^!\O_{X\times_Y X}\cong \omega_{X/Y}^{-1}[-d]$.
Note that all the maps in (\ref{delta-phi.eq}) are tor-independent to $h$
and its base change.

The argument above applied to $\varphi':X'\rightarrow Y'$ yields
$(\Delta')^!\O_{X'\times_{Y'}X'}\cong \omega_{X'/Y'}^{-1}[-d]$.
So it suffices to show that the canonical map
$\bar\zeta:Lh_X^*\Delta^!\O_{X\times_Y X}\rightarrow
(\Delta')^!Lh_{X\times_Y X}^*$ is an isomorphism.
To verify this, we may forget the $G$-action,
and we may assume that $G$ is trivial.
Then, as the question is local on $X$, $Y$, and $Y'$, we may assume that
$X=\Spec B$, $Y=\Spec A$, $Y'=\Spec A'$ are all affine.

Then by definition, $\bar\zeta=\zeta$ is identified with the map
\[
H: R\Hom_C(B,C)\otimes_A^L A'\rightarrow R\Hom_{C'}(B',C'),
\]
where $C=B\otimes_A B$, and $B$ is viewed as a $C$-algebra via the product map,
and $B'=A'\otimes_A B$ and $C'=A'\otimes_A C$.

To compute the map $H$, let $\Bbb F$ be a $C$-free resolution of
$B$ whose terms are finite free.
Note that
\[
H^i(\Bbb F^*\otimes_A A')\cong H^i(\Hom_{C'}(A'\otimes_A \Bbb F,C'))\cong
\begin{cases}
  \omega_{X'/Y'} & (i=d) \\
  0 & (otherwise)
\end{cases}.
\]
Letting
\[
\Bbb F^*: 0\rightarrow G^0\rightarrow
\cdots \rightarrow G^i \xrightarrow {\partial^i} G^{i+1}\rightarrow\cdots
\]
and $B^i(\Bbb F^*)=\Image \partial^{i-1}$, we have that
$H^i(\Bbb F^*\otimes A/J)=0$ for any ideal $J$ of $A$ and any $i>d$.
This shows that $\Tor_1^A(B^{d+3},A/J)=0$ for any $J$, and hence $B^{d+3}$ is
$A$-flat, and $B^{d+3}\otimes_A A'\rightarrow B^{d+3}(\Bbb F^*\otimes_A A')$ is
an isomorphism.
So it is easy to see that $Z^d=\Ker \partial^d$ is also $A$-flat and
$Z^d\otimes_A A'\rightarrow Z^d(\Bbb F^*\otimes_A A')$ is an isomorphism.
So considering the exact flat complex bounded above
\[
0\rightarrow G^0\rightarrow\cdots\rightarrow G^{d-1}\rightarrow Z^{d}
\rightarrow Z^d/B^d\rightarrow 0
\]
is compatible with the base change.
We have that the induced map $\omega_{X/Y}\otimes_A A'\rightarrow
\omega_{X'/Y'}$ is an isomorphism, as desired.
\end{proof}

\begin{lemma}
Let $G$ be a flat $S$-group scheme of finite type, and 
$\varphi:X\rightarrow Y$ and $\psi:Y\rightarrow Z$ be 
flat Gorenstein $G$-morphisms separated of finite type between 
Noetherian $G$-schemes with well-defined relative dimensions.
Then $\omega_{X/Z}\cong \varphi^*\omega_{Y/Z}\otimes_{\O_X} \omega_{X/Y}$.
\end{lemma}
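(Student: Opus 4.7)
The plan is to reduce everything to the pseudofunctoriality of the twisted inverse image and the explicit shape of $\varphi^!\O_Y$ and $\psi^!\O_Z$ coming from flatness plus the Gorenstein hypothesis. Since both $\varphi$ and $\psi$ are flat of finite type with well-defined relative dimensions, so is $\psi\varphi$, and its relative dimension is $d+e$, where $d=\dim\varphi$ and $e=\dim\psi$. Moreover, the Gorenstein property is preserved under composition (the Cohen factorizations compose to give a Gorenstein ideal on top), so $\psi\varphi$ is flat Gorenstein of relative dimension $d+e$, and in particular $\omega_{X/Z}=H^{-(d+e)}((\psi\varphi)^!\O_Z)$ is an invertible $G$-linearized sheaf by the previous lemma.

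Next, using pseudofunctoriality $(\psi\varphi)^!\cong\varphi^!\psi^!$ in the equivariant derived category (the $G$-equivariant version is set up in \cite{ETI}), I compute
\[
(\psi\varphi)^!\O_Z\;\cong\;\varphi^!\psi^!\O_Z\;\cong\;\varphi^!\bigl(\omega_{Y/Z}[e]\bigr)\;\cong\;\varphi^!(\omega_{Y/Z})[e],
\]
where the second isomorphism uses that $\psi$ is flat Gorenstein of relative dimension $e$, so $\psi^!\O_Z$ is concentrated in degree $-e$ with value the $G$-linearized invertible sheaf $\omega_{Y/Z}$.

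The key intermediate step is then the identity
\[
\varphi^!(\omega_{Y/Z})\;\cong\;\varphi^*\omega_{Y/Z}\otimes_{\O_X}\varphi^!\O_Y
\]
for the flat morphism $\varphi$ and the invertible sheaf $\omega_{Y/Z}$. This is the standard projection formula for $(?)^!$ applied to a locally free sheaf of finite rank (see Lipman \cite{Lipman} or, in our equivariant formulation, the corresponding statement in \cite{ETI}); invertibility of $\omega_{Y/Z}$ lets one reduce the assertion Zariski-locally to the free case, where it is immediate from $\varphi^!\O_Y$ being tensored with $\O_Y$. Plugging $\varphi^!\O_Y\cong\omega_{X/Y}[d]$ in and rearranging shifts yields
\[
(\psi\varphi)^!\O_Z\;\cong\;\varphi^*\omega_{Y/Z}\otimes_{\O_X}\omega_{X/Y}\,[d+e],
\]
which is concentrated in a single degree, and taking $H^{-(d+e)}$ gives the asserted isomorphism of $G$-linearized invertible sheaves.

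The main obstacle is the projection-formula-type identity for $\varphi^!$ applied to an invertible sheaf in the equivariant setting, together with checking that all isomorphisms are $G$-equivariant; once one appeals to the equivariant twisted inverse formalism of \cite{ETI} (which already packages pseudofunctoriality and the local description of $\varphi^!$ for smooth/Gorenstein flat maps, cf.\ the previous lemma and \cite[(28.11)]{ETI}), the rest is a short bookkeeping with shifts and Gorenstein relative dimensions.
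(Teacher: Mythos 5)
Your proposal is correct and follows essentially the same route as the paper: pseudofunctoriality $(\psi\varphi)^!\cong\varphi^!\psi^!$, the identification $\psi^!\O_Z\cong\omega_{Y/Z}[d']$ from the flat Gorenstein hypothesis, the projection-formula identity $\varphi^!(\omega_{Y/Z})\cong\varphi^*\omega_{Y/Z}\otimes^L_{\O_X}\varphi^!\O_Y$, and then taking the lowest cohomology. The paper's proof is exactly this chain of isomorphisms (it does not even spell out the composability of Gorenstein morphisms, since the concentration in degree $-d-d'$ falls out of the isomorphism chain), so no further comment is needed.
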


\begin{proof}
Let $d$ and $d'$ be the relative dimensions of $\varphi$ and $\psi$,
respectively.
We have
\begin{multline*}
\omega_{X/Z}=H^{-d-d'}((\psi\varphi)^!(\O_Z))
\cong H^{-d-d'}(\varphi^!(\omega_{Y/Z}[d']))\\
\cong H^{-d}(\varphi^*\omega_{Y/Z}\otimes_{\O_X}^L \varphi^!(\O_Y))
\cong \varphi^*\omega_{Y/Z}\otimes_{\O_X}\omega_{X/Y}.
\end{multline*}
\end{proof}

\paragraph
Let $f:G\rightarrow H$, $N$, $Y_0$, $\Bbb I_{Y_0}$, 
and $Z\in\Cal F(G,Y_0)$ be as in {\rm(\ref{canonical-settings.par})}.
Assume that $N$ is separated and has a fixed relative dimension.
We define $\Theta=\Theta_{N,Z}:=e_{N_{Z}}^*\omega_{N_{Z}/Z}^*$,
where $N_{Z}=N\times_S Z$, and $e_{N_{Z}}:Z\rightarrow N_{Z}$ is the
unit element.
Letting $h_Z:Z\rightarrow Y_0$ is the structure map,
$h_Z^*\Theta_{N,Y_0}\cong \Theta_{N,Z}$
by Lemma~\ref{omega-base-change.lem}.
If $S$ is Noetherian, then
letting $\Theta_{N,S}=e_N^*\omega_{N/S}^*$, we have that
$\Theta_{N,Z}=\bar h^*_{Z}\Theta_{N,S}$, where $\bar h_Z:Z\rightarrow S$
is the structure map.

Note that $\Theta$ is a $G$-linearized invertible sheaf on $Z$.
If $N$ is smooth of relative dimension $d$, then 
$\Theta_{N,Z}\cong \bar h_Z^*(\ext^d\Lie N)$, where $\Lie N=\Omega_{N/S}^*$ and
$\bar h_Z:Z\rightarrow S$ is the structure map.

\begin{proposition}
Let $\varphi:X\rightarrow Y$ be a morphism in $\Cal F(G,Y_0)$.
Assume that $N$ is separated and has a relative dimension $d$.
If $\varphi$ is a $G$-enriched principal $N$-bundle, then 
$\omega_{X/Y}\cong \Theta_{N,X}^*$.
\end{proposition}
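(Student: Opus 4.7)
The plan is to mimic the proof of Proposition~\ref{canonical-princ.thm}, but at the level of the relative canonical sheaves defined through the twisted inverse pseudofunctor $(?)^!$, replacing the smoothness hypothesis on $N$ by the flat base change isomorphism for $(?)^!$. First I would exploit that $\Psi:N\times_S X\to X\times_Y X$, $(n,x)\mapsto(nx,x)$, is an isomorphism of $G$-schemes (since $\varphi$ is a principal $N$-bundle), so that the outer rectangle of diagram~(\ref{knop.eq}) becomes the tor-independent $G$-equivariant cartesian square
\[
\xymatrix{
N\times_S X \ar[r]^-{q_2} \ar[d]^m & X \ar[d]^\varphi \\
X \ar[r]^\varphi & Y
}
\]
with $m$ the action and $q_2$ the second projection. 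Both $m$ and $q_2$ are faithfully flat (they correspond via $\Psi$ to the two projections from $X\times_Y X$, which are base changes of the faithfully flat morphism $\varphi$), and the unit section $e_X:=e_{N_X}:X\to N\times_S X$ is a common $G$-equivariant section of $m$ and $q_2$ (the identity of $N$ is fixed under conjugation).

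Next, I would invoke the equivariant flat base change isomorphism $\bar\zeta$ for the square above (Lemma~\ref{flat-base-change.thm}), obtaining an isomorphism
\[
m^*\varphi^!\O_Y \;\xrightarrow{\;\sim\;}\; q_2^!\O_X
\]
in $D^+_{\Lqc}(G,N\times_S X)$. Since $m$ is flat, $m^*$ is exact and commutes with taking cohomology; since $m$ is faithfully flat, $m^*$ also reflects zero. Hence if $\omega_{X/Y}=H^s(\varphi^!\O_Y)$ is the lowest nonvanishing cohomology, then $m^*\omega_{X/Y}=H^s(m^*\varphi^!\O_Y)$ is nonzero and is itself the lowest nonvanishing cohomology of $q_2^!\O_X$, which by Definition~\ref{relative-canonical.def} is precisely $\omega_{N_X/X}$. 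This produces a $G$-equivariant isomorphism $m^*\omega_{X/Y}\cong\omega_{N_X/X}$. Pulling back along $e_X$ and using $m\circ e_X=\id_X$,
\[
\omega_{X/Y}\;\cong\;e_X^*m^*\omega_{X/Y}\;\cong\;e_X^*\omega_{N_X/X}\;=\;\Theta_{N,X}^{\,*},
\]
where the last equality uses that $\omega_{N_X/X}$ is an invertible sheaf, so $e_X^*\omega_{N_X/X}=(e_X^*\omega_{N_X/X}^{\,*})^*=\Theta_{N,X}^{\,*}$.

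The main technical obstacle will be making sure that $\bar\zeta$ is genuinely produced in the equivariant derived category (not merely as an isomorphism of underlying quasi-coherent complexes on the schemes): this is exactly the content of Lemma~\ref{flat-base-change.thm}, the equivariant refinement of the $\bar\zeta$ of (\ref{zeta-bar.par}), obtained via the simplicial bar-construction diagrams $B_G^M$. A secondary, bookkeeping-level check is the matching of the degrees in which the lowest nonvanishing cohomology of $\varphi^!\O_Y$ and of $q_2^!\O_X$ sit; this is handled cleanly by the exactness and faithfulness of $m^*$, so the only real labor is the equivariance of the base change map.
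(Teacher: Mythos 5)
Your argument is correct and is essentially the paper's own proof: your square is the paper's cartesian square (a) transported along the isomorphism $\Psi$ (so $m=p_1\Psi$, $q_2=p_2\Psi$), and as in the paper the conclusion follows from the equivariant flat base change isomorphism for that square followed by pullback along the $G$-equivariant unit section, your only variation being that you compare lowest nonvanishing cohomologies using faithful flatness of $m$ where the paper invokes that $\varphi$ is flat Gorenstein of relative dimension $d$. One small repair: the base change you need is the $\zeta$/$\bar\zeta$ of section~\ref{base-change.sec} (the equivariant flat base change for a tor-independent cartesian square, cf.\ \cite[(19.1)]{ETI}), not Lemma~\ref{flat-base-change.thm}, which is about compatibility of $(?)^!$ with the restriction functor $\res^H_G$ along a change of groups.
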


\begin{proof}
Let us consider the commutative diagram 
\[
\xymatrix{
X \ar[r]^-{e\times 1_X} &
N\times X \ar[dr]^{p_2} \ar[r]^-\Psi &
X\times_YX \ar[d]^{p_2} \ar[r]^-{p_1} \ar@{}[dr]|{\text{(a)}} &
X \ar[d]^{\varphi} \\
& &
X \ar[r]^\varphi & Y
}
\]
in $\Cal F(G,Y_0)$.

Note that (a) is cartesian, and $\varphi$ is flat Gorenstein of
relative dimension $d$.
Now
\begin{multline*}
\omega_{X/Y}[d]\cong \varphi^!\O_Y\cong 
L(e\times 1_X)^*\Psi^*p_1^*\varphi^!\O_Y
\cong
L(e\times 1_X)^*\Psi^*p_2^!\varphi^*\O_Y\\
\cong
L(e\times 1_X)^*p_2^!\O_X
\cong
L(e\times 1_X)^*\omega_{(N\times X)/X}[d]
\cong
\Theta_{N,X}^*[d],
\end{multline*}
and the result follows.
\end{proof}

The following is due to Knop \cite{Knop} when $S=\Spec k$ with $k$ an
algebraically closed field of characteristic zero.

\begin{corollary}\label{canonical-principal.thm}
Let $f:G\rightarrow H$, $N$, $Y_0$, and $\Bbb I_{Y_0}$ 
be as in {\rm(\ref{canonical-settings.par})}.
Let $\varphi:X\rightarrow Y$ be a $G$-enriched principal $N$-bundle
which is a morphism in $\Cal F(G,Y_0)$.
If $N$ is separated and has a fixed relative dimension,
then $\omega_X \cong \varphi^*\omega_Y\otimes_{\O_X}\Theta_{N,X}^*$ as
$(G,\O_X)$-modules, and
$\omega_Y\cong (\varphi_*\omega_X\otimes_{\Cal O_Y}\Theta_{N,Y})^N$ as
$(H,\O_Y)$-modules.
\end{corollary}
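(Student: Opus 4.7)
The plan is to derive the first isomorphism from the fundamental identity $\Bbb I_X\cong \varphi^!\Bbb I_Y$ combined with the flat-base-change/projection formula description of $\varphi^!$ for a flat Gorenstein morphism, and then to obtain the second isomorphism by applying the projection formula, $N$-invariants, and Grothendieck descent.

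\textbf{Step 1 (Reduction to the relative dualizing complex).} Since $h_X=h_Y\varphi$ and by the composition isomorphism $d:(h_Y\varphi)^!\cong \varphi^!h_Y^!$, we have a canonical $(G,\O_X)$-isomorphism
\[
\Bbb I_X=h_X^!\Bbb I_{Y_0}\cong \varphi^! h_Y^!\Bbb I_{Y_0}=\varphi^!\Bbb I_Y.
\]
Since $\varphi$ is a principal $N$-bundle and $N$ is flat, separated, of relative dimension $d$, the map $\varphi$ is flat of relative dimension $d$; by fpqc descent along $\Psi:N\times X\xrightarrow{\cong} X\times_Y X$, $\varphi$ is moreover Gorenstein. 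Hence $\varphi^!\O_Y\cong \omega_{X/Y}[d]$ with $\omega_{X/Y}$ invertible, and by the preceding proposition $\omega_{X/Y}\cong\Theta_{N,X}^*$. Using the standard identity $\varphi^!\Bbb F\cong L\varphi^*\Bbb F\otimes^L_{\O_X}\varphi^!\O_Y$ for $\varphi$ flat Gorenstein, we conclude
\[
\Bbb I_X\cong L\varphi^*\Bbb I_Y\otimes_{\O_X}\Theta_{N,X}^*[d],
\]
as objects of $D(G,X)$.

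\textbf{Step 2 (Taking the lowest nonvanishing cohomology).} Tensoring with the invertible $\Theta_{N,X}^*$ is exact, and $\varphi^*$ is exact because $\varphi$ is flat. Hence, on cohomology,
\[
H^i(\Bbb I_X)\cong \varphi^* H^{i+d}(\Bbb I_Y)\otimes_{\O_X}\Theta_{N,X}^*
\]
for every $i$. Since $\varphi$ is faithfully flat, $\varphi^*$ is faithful, so $H^i(\Bbb I_X)$ vanishes if and only if $H^{i+d}(\Bbb I_Y)$ does. Consequently the lowest nonvanishing cohomology degrees satisfy $s_X=s_Y-d$, and taking $i=s_X$ gives the desired $(G,\O_X)$-isomorphism
\[
\omega_X\cong \varphi^*\omega_Y\otimes_{\O_X}\Theta_{N,X}^*.
\]

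\textbf{Step 3 (Inverting via projection formula and descent).} Tensoring both sides by the invertible $\Theta_{N,X}=\varphi^*\Theta_{N,Y}$ (the identification coming from the base-change property $h_Z^*\Theta_{N,Y_0}\cong\Theta_{N,Z}$ in Lemma~\ref{omega-base-change.lem}), we obtain the $(G,\O_X)$-isomorphism
\[
\omega_X\otimes_{\O_X}\varphi^*\Theta_{N,Y}\cong \varphi^*\omega_Y.
\]
Apply $\varphi_*$ and use the projection formula (valid here because $\Theta_{N,Y}$ is locally free of finite rank) to get the $(H,\O_Y)$-isomorphism
\[
\varphi_*\omega_X\otimes_{\O_Y}\Theta_{N,Y}\cong \varphi_*\varphi^*\omega_Y.
\]
Taking $N$-invariants and using that $\varphi:X\to Y$ is a principal $N$-bundle, so that $\varphi^*:\Qch(Y)\to\Qch(N,X)$ is an equivalence with quasi-inverse $(?)^N\varphi_*$ by Grothendieck's descent theorem, the right-hand side becomes $\omega_Y$, yielding
\[
\omega_Y\cong (\varphi_*\omega_X\otimes_{\O_Y}\Theta_{N,Y})^N
\]
as $(H,\O_Y)$-modules, since all maps above are $G$-equivariant and $N$-invariants of $G$-modules carry an induced $H$-structure.

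\textbf{Main obstacle.} The genuine technical point is Step~1: checking that the combined pseudofunctorial compatibility $d$ and the flat-Gorenstein identification $\varphi^!\Bbb F\cong L\varphi^*\Bbb F\otimes^L\omega_{X/Y}[d]$ can be carried out in the $G$-equivariant setting (so that the isomorphism obtained is truly $G$-linear, not merely after restriction of equivariance). Once this is in place, Steps~2 and~3 are formal: exactness of $\varphi^*$ and of tensoring with an invertible sheaf reduce the first statement to inspecting cohomological degrees, and the second statement is a direct application of the projection formula and Grothendieck descent.
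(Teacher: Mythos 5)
Your proposal is correct and follows essentially the same route as the paper: the paper obtains the first isomorphism from the preceding proposition $\omega_{X/Y}\cong\Theta_{N,X}^*$ together with the equivariant identity $\varphi^!(?)\cong L\varphi^*(?)\otimes^L\omega_{X/Y}[d]$ for the flat Gorenstein morphism $\varphi$ (your Steps 1--2, cited there as \cite[(28.11)]{ETI}), and the second from the first via the equivariant projection formula and Grothendieck descent for the principal $N$-bundle (your Step 3, cited as \cite[(26.4)]{ETI} and \cite[(6.21)]{Hashimoto5}). The equivariance concern you flag is exactly what the paper's use of the equivariant twisted inverse machinery of \cite{ETI} is designed to handle, so there is no gap.
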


\begin{proof}
The first assertion follows immediately from \cite[(28.11)]{ETI} and 
Proposition~\ref{canonical-princ.thm}.
The second assertion follows from the first one and 
\cite[(6.21)]{Hashimoto5}, using the equivariant projection formula 
\cite[(26.4)]{ETI}.
\end{proof}

\begin{lemma}\label{Reynolds-Theta.lem}
Let $f:G\rightarrow H$, $N$, and $Y_0$
be as in {\rm(\ref{canonical-settings.par})}.
If $N$ is finite and Reynolds, then $\Theta_{N,Y_0}\cong\O_{Y_0}$.
In particular, for any object $Y$ of $\Cal F(G,Y_0)$, $\Theta_{N,Y}\cong
\O_Y$.
\end{lemma}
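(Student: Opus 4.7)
The plan is to reduce both assertions to proving $\Theta_{N,S}\cong\O_S$. By Lemma~\ref{Reynolds-four-operations.lem}, {\bf 0}, the base change $N_{Y_0}$ is finite Reynolds over $Y_0$, so replacing $(S,N)$ by $(Y_0,N_{Y_0})$ is harmless; the base-change identity $\Theta_{N,Z}\cong h_Z^*\Theta_{N,Y_0}$ recorded just before the lemma then propagates the case $\Theta_{N,S}\cong\O_S$ to both assertions.

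To prove $\Theta_{N,S}\cong\O_S$, I first observe that $\pi:N\to S$ is a principal $N$-bundle for the left-translation action of $N$ on itself: $\pi$ is qfpqc (being finite flat), and the secondary map $\Psi:N\times N\to N\times N$, $(g,h)\mapsto(gh,h)$, has inverse $(g',h)\mapsto (g'h^{-1},h)$ and is therefore an isomorphism. By Grothendieck's descent (\cite[(3.13)]{Hashimoto4}), $\pi^*:\Qch(S)\to\Qch(N,N)$ is an equivalence, with quasi-inverse $(?)^N\pi_*$. Since $\omega_{N/S}=H^0(\pi^!\O_S)$ inherits an $N$-equivariance from the functoriality of $\pi^!$ applied to $\O_S$ with its trivial $N$-equivariance, it is an object of $\Qch(N,N)$, so there is a unique $L\in\Qch(S)$ with $\omega_{N/S}\cong\pi^*L$, namely $L\cong(\pi_*\omega_{N/S})^N$.

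To identify $L$, I would apply Grothendieck duality for the finite flat $\pi$ to obtain an $N$-equivariant isomorphism $\pi_*\omega_{N/S}\cong\uHom_{\O_S}(P,\O_S)$, where $P:=\pi_*\O_N$. Then Lemma~\ref{Reynolds-four-operations.lem}, {\bf 6}, applied with $\V=\O_S$ (trivial) and $\M=P$, identifies $\uHom_{\O_S}(P,\O_S)^N\cong\uHom_{\O_S}(P^N,\O_S)$. The Reynolds hypothesis gives the decomposition $P=u(\O_S)\oplus U_N(P)$, whence $P^N=\O_S$; so $L\cong\uHom_{\O_S}(\O_S,\O_S)=\O_S$. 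Therefore $\omega_{N/S}\cong\pi^*\O_S=\O_N$, so $\omega_{N/S}^*\cong\O_N$, and $\Theta_{N,S}=e^*\O_N=\O_S$. The delicate technical point will be verifying the $N$-equivariance of the Grothendieck duality isomorphism for the finite flat $N$-morphism $\pi$: this should follow from combining the functoriality of the equivariant twisted inverse developed in \cite{ETI} with the standard explicit description of $\pi^!$ as $\uHom_{\O_S}(\pi_*\O_N,-)$ for finite flat morphisms, though the equivariant bookkeeping will require some care.
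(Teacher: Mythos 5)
Your proof is correct and follows essentially the same route as the paper's: view $N_{Y_0}\rightarrow Y_0$ as a principal $N$-bundle, identify $e^*\omega_{N_{Y_0}/Y_0}$ with $(\pi_*\omega_{N_{Y_0}/Y_0})^N$ by descent, compute the pushforward via duality for the finite flat morphism as $\uHom_{\O_{Y_0}}(\pi_*\O_{N_{Y_0}},\O_{Y_0})$, and finish with Lemma~\ref{Reynolds-four-operations.lem}, {\bf 6} together with $(\pi_*\O_{N_{Y_0}})^N\cong\O_{Y_0}$. The one difference worth noting is that the paper runs the same computation for the $\tilde G$-enriched bundle, where $\tilde G=G\ltimes N$ acts on $N_{Y_0}$ through conjugation, so its isomorphism $\Theta_{N,Y_0}\cong\O_{Y_0}$ is one of $(G,\O_{Y_0})$-modules; your chain only tracks the translation action of $N$, so as written it yields an $\O_{Y_0}$-linear isomorphism, whereas the equivariant triviality is what Remark~\ref{knop-remark.thm} and Theorem~\ref{finite-resume.thm}, {\bf 4} actually use downstream. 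Your argument upgrades verbatim once you carry the conjugation action along (the projection and the unit section are $G$-morphisms for that action, and Lemma~\ref{Reynolds-four-operations.lem} already gives the equivariance of the Reynolds splitting). Finally, $(\pi_*\O_{N_{Y_0}})^N\cong\O_{Y_0}$ is not really a consequence of the Reynolds decomposition but of $\pi$ being a principal $N$-bundle (see \cite[(5.31)]{Hashimoto5}); this slip is harmless.
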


\begin{proof}
  Note that $G$ acts on $N$ by conjugation, and hence
  we can define the semidirect product
$\tilde G:=G\ltimes N$.
  Letting $G$ act on $Y_0$ by the original action and $N$ act on $Y_0$
  as a subgroup of $G$, $\tilde G$ acts on $Y_0$.
The group $\tilde G$ acts on $N$ by $(g,n)n'=gnn'g^{-1}$.
The first projection $p_1: N_{Y_0}=N\times Y_0\rightarrow Y_0$ is 
a $\tilde G$-enriched principal $N$-bundle.
As $(G,\O_{Y_0})$-modules,
\begin{multline*}
\Theta_{N,Y_0}^*=e_{N_{Y_0}}^*\omega_{N_{Y_0}/Y_0}
\cong
e_{N_{Y_0}}^* p_1^* (?)^N \omega_{N_{Y_0}/Y_0}
\cong \omega_{N_{Y_0}/Y_0}^N
\\
\cong R(?)^N R\uHom_{\O_{Y_0}}(\O_{N_{Y_0}},\O_{Y_0})
\cong  \uHom_{\O_{Y_0}}(\O_{N_{Y_0}},\O_{Y_0})^N
\\
\cong \uHom_{\O_{Y_0}}(\O_{N_{Y_0}}^N,\O_{Y_0})
\cong \uHom_{\O_{Y_0}}(\O_{Y_0},\O_{Y_0})\cong \O_{Y_0}.
\end{multline*}
Hence $\Theta_{N,Y_0}\cong \O_{Y_0}$.
The last assertion is trivial.
\end{proof}

\section{Frobenius twists and Frobenius kernels}\label{frob-twist.sec}

\paragraph
In this section, $S$ is an $\Bbb F_p$-scheme, where $p$ is a prime number, and
$\Bbb F_p$ is the prime field of characteristic $p$,
unless otherwise specified.

\paragraph
Let us consider the ordered set $\Bbb Z$ as a category.
For $m,n\in\Bbb Z$, there is a unique morphism from $m$ to $n$ when
$m\leq n$.
Otherwise, $\Bbb Z(m,n)$ is empty.

Let $\FSch_S$ be the category defined as follows.
An object of $\FSch_S$ is a pair $(h_X:X\rightarrow S,n)$ with $h_X:
X\rightarrow S$ an $S$-scheme, and $n\in\Bbb Z$.
The hom-set $\FSch_S((X,n),(Y,m))$ is empty if $n<m$.
If $n\geq m$, then $\FSch_S((X,n),(Y,m))$ is the set of morphisms 
$\varphi:X\rightarrow Y$ (not necessarily $S$-morphisms) such that 
$h_Y\varphi=F_S^{n-m}h_X$, where $F_S^{n-m}:S\rightarrow S$ is the 
$(n-m)$th iteration
of the (absolute) Frobenius morphism.
Note that $\nu:\FSch_S\rightarrow \Bbb Z\op$ given by $\nu(X,n)=n$ is a functor
which makes $\FSch_S$ a fibered category over $\Bbb Z\op$.
An object $\FSch_S$ is called an $(F,S)$-scheme, and 
a morphism of $\FSch_S$ is called an $(F,S)$-morphism.

\paragraph
For $(F,S)$-morphisms 
$\varphi:(X,n)\rightarrow (Y,r)$ and $h:(Y',m)\rightarrow (Y,r)$,
the fiber product $(X,n)\times_{(Y,r)}(Y',m)$ in $\FSch_S$ 
does not exist in general.
However, if $S=\Spec k$ with $k$ a perfect field (of characteristic $p$), 
then it does exist.
If $S$ is general and $m=r$, then it exists.
It is $(X\times_YY',n)$ with the structure map
\[
X\times_Y Y'\xrightarrow{p_1}X\rightarrow S.
\]
Similarly, if $n=r$, then the fiber product exists.

\paragraph If $u:S\rightarrow S'$ is a morphism of $\Bbb F_p$-schemes,
then
\[
(h_X:X\rightarrow S,n)\mapsto(uh_X:X\rightarrow S',n)
\]
is a functor from $\FSch_S$ to $\FSch_{S'}$.
If $v:S'\rightarrow S$ is a morphism of $\Bbb F_p$-schemes,
then
$(X,n)\mapsto (S'\times_S X,n)$ is a functor from $\FSch_S$ to
$\FSch_{S'}$.

\paragraph
A forgetful functor $\FSch_S\rightarrow \Sch/\Bbb F_p$ is given by 
$(X,n)\mapsto X$.
$X$ is called the underlying scheme of $(X,n)$.
Sheaves and modules over $(X,n)$ are those for its underlying scheme $X$.

\paragraph
For $n\in\Bbb Z$,
We denote the fiber $\nu^{-1}(n)$ by $\FSch_{S,n}$.
Note that $i: \Sch/S\rightarrow\FSch_{S,0}$ given by $X\mapsto (X,0)$ is
an equivalence.
We identify $X$ with $i(X)=(X,0)$, and 
$\Sch/S$ with $\FSch_{S,0}$ via $i$, and consider that $\Sch/S$ is
a full subcategory of $\FSch_S$.

\paragraph For $r\in\Bbb Z$, ${}^r(?):\FSch_S\rightarrow\FSch_S$ given by
${}^r(X,n)=(X,n+r)$ and ${}^r\varphi=\varphi$ is an autoequivalence of
$\FSch_S$.
${}^r(?)$ is also denoted by $(?)^{(-r)}$.
Thus we will write $(X,r)$ by ${}^rX$.

In what follows, when we consider Frobenius maps, we work over $\FSch_S$.
The advantage of doing so is, we may consider $X^{(r)}$ for artitrary 
$r\in\Bbb Z$ not only for $S=\Spec k$ with $k$ a perfect field 
(as in \cite[(9.2)]{Jantzen}), but also 
for an arbitrary $\Bbb F_p$-scheme $S$.

\paragraph
A homomorphism $h:A\rightarrow B$ of $\Bbb F_p$-algebras is said to be
{\em purely inseparable} if for each $b\in B$, there exists some $e\geq 0$ 
such that $b^{p^e}\in h(A)$.
A morphism of $\Bbb F_p$-schemes $\varphi:X\rightarrow Y$ is purely inseparable
if it is affine, and for each affine open subset $U$ of $Y$, 
$\Gamma(U,Y)\rightarrow \Gamma(\varphi^{-1}(U),X)$ is purely inseparable.
A purely inseparable morphism is a radical morphism, and hence is an
integral morphism.

\paragraph\label{Frobenius-kernel.par}
Let $Z$ be an $S$-scheme, $r\in\Bbb Z$ and $e\geq 0$.
Note that the absolute Frobenius map $F^e_Z:{}^{e+r}Z\rightarrow {}^rZ$ is an
$(F,S)$-morphism.

An $\O_Z$-module $\M$, viewed as an $\O_{{}^rZ}$-module
(note that ${}^rZ$ is $Z$, when it is viewed 
as a scheme), is denoted by ${}^r\M$.
The structure sheaf ${}^r\O_X$ is also denoted by $\O_{{}^rX}$.

Let $\psi:Z'\rightarrow Z$ be an $S$-morphism.
The map ${}^{e+r}Z'\rightarrow {}^{e+r}Z\times_{{}^rZ} {}^rZ'$ given by 
$z'\mapsto (\psi(z'),
F^e(z'))$ is denoted by $\Phi_e(Z,Z')$ or $\Phi_e(\psi)$ for $e\geq 0$.
Note that $\Phi_e(Z,Z')$ is purely inseparable.
By abuse of notation, we sometimes denote the map
\[
\eta_{\Phi_e(Z,Z')}:\O_{{}^{e+r}Z\times_{{}^rZ} {}^rZ'}
\rightarrow \Phi_e(Z,Z')_*\O_{{}^{e+r}Z'}
\]
by $\Phi_e(Z,Z')$ or $\Phi_e$.
$\Phi_e(S,Z)$ is denoted by $\Phi_e(Z)$, and is called the
{\em $e$th relative Frobenius map} or the {\em $S$-Frobenius map} of $Z$.

$\Phi_e$ is a natural transformation between the functors
from the category of morphisms in $\Sch/S$ to the category 
$\FSch_{S,r+e}$, as can be seen easily.

\begin{lemma}
If $G$ is an $S$-group scheme, then $\Phi_e(G):{}^eG\rightarrow 
{}^eS\times_{S}G$ 
is a homomorphism of ${}^eS$-group schemes.
Similarly, $\Phi_e(G):G\rightarrow S\times_{S^{(e)}}G^{(e)}$ is a
homomorphism of $S$-group schemes.
\end{lemma}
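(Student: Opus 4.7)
The plan is to verify that $\Phi_e(G)$ intertwines the three structure morphisms of $G$ as an $S$-group scheme---the multiplication $\mu_G:G\times_S G\to G$, the unit section $e_G:S\to G$, and the inverse $\iota_G:G\to G$---with the corresponding structure morphisms of ${}^eS\times_S G$ obtained by base change along $F_S^e:{}^eS\to S$. For each, I will invoke the naturality of $\Phi_e$, noted in the paragraph preceding the lemma, applied to the commutative square in $\Sch/S$ that exhibits the given morphism as an $S$-morphism. That $\Phi_e(G)$ is itself an ${}^eS$-morphism is immediate, since its first component is $h_G$.

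For the multiplication, I apply $\Phi_e$ as a natural transformation to the morphism of objects in the category of morphisms of $\Sch/S$ from $h_{G\times_SG}:G\times_S G\to S$ to $h_G:G\to S$ given by the pair $(\mu_G,\id_S)$. Naturality yields the commutative square
\[
\xymatrix@C=4em{
{}^{r+e}(G\times_S G) \ar[r]^-{\Phi_e(G\times_S G)} \ar[d]_{\mu_G} & {}^{r+e}S\times_{{}^rS}{}^r(G\times_S G) \ar[d]^{\id_S\times\mu_G} \\
{}^{r+e}G \ar[r]^-{\Phi_e(G)} & {}^{r+e}S\times_{{}^rS}{}^rG
}
\]
in $\FSch_{S,r+e}$. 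Under the canonical identifications ${}^{r+e}(G\times_S G)\cong{}^{r+e}G\times_{{}^{r+e}S}{}^{r+e}G$ (the forgetful functor to schemes preserves fiber products within a single fiber of $\nu$) and ${}^{r+e}S\times_{{}^rS}{}^r(G\times_S G)\cong({}^{r+e}S\times_{{}^rS}{}^rG)\times_{{}^{r+e}S}({}^{r+e}S\times_{{}^rS}{}^rG)$ (base change commutes with fiber products), the square translates precisely into the multiplication-compatibility of $\Phi_e(G)$; one must also identify $\Phi_e(G\times_S G)$ with $\Phi_e(G)\times_{{}^{r+e}S}\Phi_e(G)$ under these identifications, which reduces to $F_{G\times_SG}^e=F_G^e\times F_G^e$.

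For the unit, I apply the same technique to the pair $(e_G,\id_S)$, viewed as a morphism in the category of morphisms of $\Sch/S$ from $\id_S:S\to S$ to $h_G:G\to S$ (well-defined because $h_G\circ e_G=\id_S$). Naturality yields unit-compatibility after noting that $\Phi_e(S)$ is the identity under the canonical isomorphism ${}^{r+e}S\times_{{}^rS}{}^rS\cong{}^{r+e}S$. The pair $(\iota_G,\id_S)$ from $h_G$ to itself handles the inverse analogously. The first assertion of the lemma is the case $r=0$, and the second is the case $r=-e$; the same argument covers both.

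The main obstacle is purely bookkeeping: one must verify the canonical identifications of iterated fiber products under the forgetful functor $\FSch_S\to\Sch$ and confirm that $\Phi_e$ respects products in the sense described above. Once those are in hand, the lemma is a formal consequence of the naturality of $\Phi_e$.
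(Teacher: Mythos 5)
Your proposal is correct and follows essentially the same route as the paper: the paper's proof is precisely the commutativity of the diagram obtained from the naturality of $\Phi_e$ applied to $\mu_G$, glued to the canonical identifications ${}^e(G\times_S G)\cong{}^eG\times_{{}^eS}{}^eG$ and ${}^eS\times_S(G\times_SG)\cong({}^eS\times_SG)\times_{{}^eS}({}^eS\times_SG)$ under which $\Phi_e(G\times_SG)$ becomes $(\Phi_e,\Phi_e)$, with the second assertion "proved similarly" (your $r=-e$ case). Your additional checks of the unit and inverse are harmless but redundant, since compatibility with multiplication already makes a morphism of group schemes a homomorphism.
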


\begin{proof}
The first assertion is the consequence of the commutativity of the diagram
\[
\xymatrix{
 & {}^eG\times_{{}^eS}{}^eG \ar `l [ldd] `[dd]_{\mu_{{}^eG}} [dd]
\ar[d]^\cong \ar[r]^-{(\Phi_e,\Phi_e)} & 
({}^eS\times_S G)\times_{{}^eS}({}^eS\times_S G)
\ar `r [rdd] `[dd]^{\mu_{{}^eS\times_S G}} [dd] \ar[d]^\cong & \\
 & {}^e(G\times_S G) \ar[d]^{{}^e\mu_G} \ar[r]^-{\Phi_e} \ar[d]^{{}^e\mu_G} &
{}^eS\times_S(G\times_S G) \ar[d]^{1\times\mu_G} & \\
 & {}^eG \ar[r]^-{\Phi_e} & {}^eS\times_S G &
}.
\]
The second assertion is also proved similarly.
\end{proof}

\paragraph\label{Frobenius-kernel.par}
For a $G$-scheme $Z$, ${}^eZ$ is an ${}^eG$-scheme.
Also, ${}^eS\times_S Z$ is an ${}^eS\times_S G$-scheme, and hence it is
also an ${}^eG$-scheme through $\Phi_e(G)$.
It is easy to see that $\Phi_e(Z):{}^eZ\rightarrow {}^eS\times_S Z$ is
an ${}^eG$-morphism.

The kernel of $\Phi_e(G):G\rightarrow S\times_{S^{(e)}} G^{(e)}$ 
is denoted by $G_e$, and is called the {\em $e$th Frobenius kernel} of $G$,
see \cite[(I.9.4)]{Jantzen} (for the case that $S=\Spec k$ with $k$
a perfect field).
It is an $S$-subgroup scheme of $G$.
The kernel of $\Phi_e(G):{}^eG\rightarrow {}^eS\times_S G$ is ${}^eG_e$.
We may also call ${}^eG_e$ the Frobenius kernel, by abuse of terminologies.

\begin{lemma}\label{smoth-Frobenius-Noetherian.lem}
Let $V$ and $W$ be locally Noetherian $\Bbb F_p$-schemes,
and $\psi:V\rightarrow W$ a smooth morphism with relative dimension $d$.
Then $\Phi_e(W,V)_*(\O_{{}^eV})$ is a locally free 
sheaf of ${}^eW\times_W V$ of rank $p^{de}$.
\end{lemma}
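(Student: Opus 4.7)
The plan is to reduce the claim to a direct monomial computation on affine space, using the étale-local structure of smooth morphisms.

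First, since finite local freeness is local on the target of $\Phi_e(W,V)$ and the statement is invariant under affine base change, I may assume $W = \Spec A$ and $V = \Spec B$ are affine, with $A \to B$ a smooth homomorphism of Noetherian $\Bbb F_p$-algebras of relative dimension $d$. Then $\Phi_e(W,V)$ is dual to the ring map
\[
F^e_{B/A}: B^{(e)} := A \otimes_{F^e_A, A} B \to B,\qquad a \otimes b \mapsto \psi(a)\, b^{p^e},
\]
and it suffices to show $B$ is finite locally free of rank $p^{de}$ over $B^{(e)}$ via this map. By the local structure theorem for smooth morphisms (EGA IV.17.11.4), étale-locally on $\Spec B$ the map $\psi$ admits a factorization $A \to C := A[x_1,\ldots,x_d] \xrightarrow{\alpha} B$ with $\alpha$ étale; since the desired local freeness is étale-local on $\Spec B^{(e)}$, I may pass to such a factorization.

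Second, I will handle the polynomial case $C = A[x_1,\ldots,x_d]$ by a direct calculation. Setting $y_i := 1 \otimes x_i \in C^{(e)}$ yields a ring isomorphism $C^{(e)} \cong A[y_1,\ldots,y_d]$, and under this identification $F^e_{C/A}$ sends $y_i$ to $x_i^{p^e}$ while fixing $A$. Hence $F^e_{C/A}$ identifies $C^{(e)}$ with the subring $A[x_1^{p^e},\ldots,x_d^{p^e}] \subset C$, over which $C$ is free of rank $p^{de}$ with the monomial basis $\{x^\alpha : 0 \le \alpha_i < p^e\}$.

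Third, I will transfer this to $B$ via $\alpha$. Using $B^{(e)} = A^{(e)} \otimes_A B \cong C^{(e)} \otimes_C B$ and the analogous description for $C$, one obtains a commutative square
\[
\xymatrix{
\Spec B \ar[r]^-{F^e_{B/A}} \ar[d]_\alpha & \Spec B^{(e)} \ar[d]^{\alpha^{(e)}} \\
\Spec C \ar[r]^-{F^e_{C/A}} & \Spec C^{(e)}
}
\]
which I claim is Cartesian. Granting this, $F^e_{B/A}$ is the base change of $F^e_{C/A}$ along the flat map $\alpha^{(e)}$, and so $B$ is finite locally free of rank $p^{de}$ over $B^{(e)}$ by the polynomial-ring step.

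The main obstacle is verifying this Cartesian-ness claim, which amounts to showing that the natural ring map $C \otimes_{C^{(e)}} B^{(e)} \to B$, given by $c \otimes \beta \mapsto \alpha(c)\, F^e_{B/A}(\beta)$, is an isomorphism. Surjectivity follows from the standard fact that the relative Frobenius of an étale morphism is an isomorphism, which yields $\alpha(C) \cdot B^{p^e} = B$. For injectivity, I would combine the tensor-cancellation isomorphism $C \otimes_{C^{(e)}} B^{(e)} \cong B$ of $C$-modules (from $B^{(e)} = C^{(e)} \otimes_C B$) with the standard fact that a surjective ring endomorphism of a Noetherian ring is necessarily an isomorphism.
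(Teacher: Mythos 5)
Your overall route is the same as the paper's: localize to the affine case, invoke the structure theorem to factor the smooth map as an \'etale morphism $\alpha:C=A[x_1,\ldots,x_d]\to B$ over affine $d$-space, settle the polynomial case by the monomial basis $\{x^\beta\mid 0\le\beta_i<p^e\}$, and transfer along $\alpha$ using the compatibility of relative Frobenius with \'etale morphisms (the paper does this transfer in one line via ${}^eB\cong{}^eC\otimes_CB$, citing \cite[(33.5)]{ETI}); your Cartesian square is exactly this isomorphism in disguise. One remark on the localization: the factorization is in fact Zariski-local on $\Spec B$, and since $\Phi_e$ is a homeomorphism this localizes $\Spec B^{(e)}$ as well, so you do not even need to appeal to \'etale-local descent of local freeness.

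The one step that does not survive scrutiny as written is your injectivity argument for the Cartesian-ness. There is no formal ``tensor-cancellation isomorphism $C\otimes_{C^{(e)}}B^{(e)}\cong B$'': cancelling $C^{(e)}$ in $C\otimes_{C^{(e)}}(C^{(e)}\otimes_CB)$ leaves the left-hand copy of $C$ carrying the $C$-module structure obtained through $C\to C^{(e)}\xrightarrow{F^e_{C/A}}C$, which is the absolute Frobenius $F^e_C$, so what the cancellation canonically produces is $C\otimes_{F^e_C,\,C}B$, the Frobenius base change of $B$ over $C$, not $B$ itself; and under this identification your map $c\otimes\beta\mapsto\alpha(c)\,F^e_{B/A}(\beta)$ becomes precisely the relative Frobenius $F^e_{B/C}$. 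Identifying $C\otimes_{F^e_C,\,C}B$ with $B$ is therefore not a cancellation but exactly the assertion that the relative Frobenius of the \'etale map $\alpha$ is an isomorphism, so the proposed injectivity argument (abstract isomorphism with $B$ plus ``a surjective endomorphism of a Noetherian ring is bijective'') is circular. The repair is already contained in your own write-up: the standard fact you quote for surjectivity---for $\alpha$ \'etale, $F^e_{B/C}$ is an isomorphism---applied in full gives bijectivity of your map at once, hence the Cartesian square, and the rest of your argument (base change of the free, rank-$p^{de}$ polynomial case along $\alpha^{(e)}$) then goes through exactly as in the paper.
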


\begin{proof}
The question is local both on $V$ and $W$,
and we may assume that $V=\Spec B$ and $W=\Spec A$ are both affine,
and that there is a factorization $A\rightarrow C=A[x_1,\ldots,x_d]\rightarrow
B$ such that $C$ is a polynomial ring on $d$ variables over $A$, and
$B$ is \'etale over $C$ (see \cite[(I.3.24)]{Milne}).
It suffices to show that ${}^eB$ is a projective 
${}^eA\otimes_A B$-module of rank $p^{de}$.
By \cite[(33.5)]{ETI}, ${}^eB\cong {}^eC\otimes_C B$, and hence we may
assume that $B=C$.
In this case, 
${}^eA\otimes_A B={}^eA[x_1,\ldots,x_d]$, and
${}^eB={}^eA[{}^ex_1,\ldots,{}^ex_d]$.
So ${}^eB$ is a free ${}^eA\otimes_A B$-module with the basis
$\{{}^ex_1^{i_1}\cdots{}^ex_d^{i_d}\mid 0\leq i_1,\ldots,i_d < p^e\}$.
\end{proof}

\begin{lemma}\label{relative-Frob.thm}
Let $G$ be an $S$-group scheme and $N$ its normal subgroup scheme.
Let $\psi:V\rightarrow W$ be a $G$-enriched principal $N$-bundle, 
and assume that $S$ and $N$ are locally Noetherian, and 
$N$ is regular over $S$ \(that is, flat 
with geometrically regular fibers\).
Then $\Phi_e(W,V):{}^eV\rightarrow {}^eW\times_W V$ is an ${}^eG$-enriched 
principal ${}^eN_e$-bundle.
\end{lemma}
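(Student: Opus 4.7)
The plan is to verify the two defining properties of a principal ${}^eN_e$-bundle as in \cite[(2.8)]{Hashimoto5}---namely that $\Phi_e(W,V)$ is qfpqc and that the secondary map $\Psi_{{}^eN_e,\Phi_e(W,V)}$ is an isomorphism---together with the ${}^eG$-enrichment. Both conditions are fpqc-local on the base ${}^eW\times_W V$, so after working fpqc-locally on $W$ one may assume $\psi$ is the trivial bundle $N\times_S W\to W$.

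For the secondary map, the starting point is the isomorphism $\Psi_{N,\psi}\colon N\times V\to V\times_W V$, $(n,v)\mapsto(nv,v)$, expressing that $\psi$ is a principal $N$-bundle. Applying the autoequivalence ${}^e(?)$ of $\FSch_S$, which preserves fiber products, yields ${}^eN\times {}^eV\cong {}^eV\times_{{}^eW}{}^eV$. By the definition of $\Phi_e(W,V)$, the fiber product ${}^eV\times_{{}^eW\times_W V}{}^eV$ is the locus inside ${}^eV\times_{{}^eW}{}^eV$ where the two composites to $V$ via $F^e$ agree; equivalently, it is the pullback of the diagonal $\Delta\colon V\to V\times_W V$ along $(F^e,F^e)\colon {}^eV\times_{{}^eW}{}^eV\to V\times_W V$. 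Transporting along both isomorphisms (using also $\Psi_{N,\psi}^{-1}\colon V\times_W V\cong N\times V$), the composite map ${}^eN\times {}^eV\to N\times V$ becomes $(n,v)\mapsto(F^e(n),F^e(v))$, while $\Delta$ corresponds to the section $v\mapsto(e_N,v)$; the scheme-theoretic preimage of this section is therefore $\{n\in {}^eN\mid F^e(n)=e_N\}\times {}^eV={}^eN_e\times {}^eV$, by the description of the Frobenius kernel in (\ref{Frobenius-kernel.par}). Unwinding shows that this isomorphism is exactly $\Psi_{{}^eN_e,\Phi_e(W,V)}$.

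For the qfpqc condition, the key observation is that $\Phi_e(W,V)$ is the $e$-th relative Frobenius of $\psi\colon V\to W$. Under the standing hypotheses---$S$ and $N$ locally Noetherian, $N$ flat over $S$ with geometrically regular fibers---$N$ is smooth over $S$. After the fpqc trivialization above, or equivalently by base change to a locally Noetherian fpqc cover of $W$, $\psi$ is smooth of the same relative dimension $d$ as $N\to S$ between locally Noetherian schemes. Lemma~\ref{smoth-Frobenius-Noetherian.lem} then applies and gives that $\Phi_e(W,V)_*\O_{{}^eV}$ is a locally free $\O_{{}^eW\times_W V}$-module of rank $p^{de}$; in particular $\Phi_e(W,V)$ is finite locally free and faithfully flat, hence qfpqc.

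For the ${}^eG$-enrichment, the naturality of $\Phi_e$ in (\ref{Frobenius-kernel.par}) shows $\Phi_e(W,V)$ is an ${}^eG$-morphism, and since the $G$-conjugation on $N$ commutes with $\Phi_e(N)$, the kernel $N_e=\ker\Phi_e(N)$ is $G$-stable under conjugation, making ${}^eN_e$ normal in ${}^eG$. The main obstacle, I anticipate, is the careful bookkeeping of Frobenius shifts inside $\FSch_S$ when identifying the various fiber products, and particularly the identification of $\Phi_e(W,V)$ with the relative Frobenius of $\psi$ so that Lemma~\ref{smoth-Frobenius-Noetherian.lem} can be applied cleanly.
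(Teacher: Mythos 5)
The decisive step of your argument --- establishing that $\Phi_e(W,V)$ is (faithfully) flat and qfpqc --- rests on the claim that ``$N$ is smooth over $S$'' because it is locally Noetherian and regular over $S$. This inference is not valid: regularity of $N\rightarrow S$ means flat with geometrically regular fibers, and without an assumption of local finite presentation this does \emph{not} imply smoothness. The lemma is deliberately stated for regular, possibly non-finite-type $N$ (this generality is what is used later, where $f:G\rightarrow H$ is only assumed regular), so an argument that passes through smoothness proves a strictly weaker statement. Compounding this, Lemma~\ref{smoth-Frobenius-Noetherian.lem}, which you invoke to get local freeness of rank $p^{de}$, requires both the source and the target to be locally Noetherian and the morphism to have a well-defined relative dimension; in the lemma at hand only $S$ and $N$ are assumed locally Noetherian, while $V$ and $W$ are arbitrary, and your suggested fix of passing to ``a locally Noetherian fpqc cover of $W$'' is vacuous, since such a cover exists only when $W$ is already locally Noetherian. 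So as written the qfpqc part of your proof has a genuine gap.

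The paper's proof is arranged precisely to avoid these issues: after disposing of the ${}^eG$-enrichment and reducing to $G=N$, it base changes along the fpqc morphism $h=\psi:W'=V\rightarrow W$ (with trivial $N$-action on $W'$), identifies the pullback of $\Phi_e(W,V)$ with $\Phi_e(\psi')=1_{W'}\times\Phi_e(N)$ for the trivialized bundle, and thereby reduces, via fpqc descent of the principal-bundle property, to showing that $\Phi_e(N):{}^eN\rightarrow{}^eS\times_S N$ is a principal ${}^eN_e$-bundle. At that point the Noetherian hypotheses on $S$ and $N$ are exactly what is needed to apply the Radu--Andr\'e theorem (flatness of the relative Frobenius of a regular homomorphism of Noetherian rings of characteristic $p$), and since $\Phi_e(N)$ is a purely inseparable surjective, hence qfpqc, homomorphism of group schemes with kernel ${}^eN_e$, it is automatically a principal ${}^eN_e$-bundle --- no direct verification of the secondary map is needed. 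Your identification of $\Psi_{{}^eN_e,\Phi_e(W,V)}$ with the Frobenius-twisted $\Psi_{N,\psi}$ pulled back over the diagonal is a reasonable alternative to that last step (modulo the bookkeeping you flag), but it does not repair the flatness argument; to prove the lemma as stated you should replace the smoothness claim by the reduction to $\Phi_e(N)$ and Radu--Andr\'e.
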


\begin{proof}
It is obvious that $\Phi_e(W,V)$ is an ${}^eG$-morphism.
So it suffices to prove that $\Phi_e(W,V)$ is a principal ${}^eN_e$-bundle,
assuming that $G=N$.
Since $N$ is flat over $S$, $\psi$ is fpqc.

Let $W'$ be the $S$-scheme $V$ with the trivial $N$-action,
and $h:W'\rightarrow W$ be $\psi$.
Then since $\psi$ is a principal $N$-bundle, the base change
$\psi':V'\rightarrow W'$ of $\psi$ by $h$ is a trivial $N$-bundle.
As the base change
\[
\Phi_e(W,V)': W'\times_W {}^eV
\xrightarrow{1_{W'}\times \Phi_e(W,V)} 
W'\times_W ({}^eW\times_W V)
\]
is identified with $\Phi_e(W',V')=\Phi_e(\psi')$ 
(see \cite[Lemma~4.1, {\bf 4}]{Hashimoto6}),
it suffices to prove that $\Phi_e(W',V')$ is a principal ${}^eN_e$-bundle
by \cite[(2.11)]{Hashimoto5}, since $h$ is fpqc.

As $\psi'$ is a trivial bundle, $\Phi_e(\psi')$ is identified 
with $1_{W'}\times \Phi_e(N)$.
By \cite[(2.11)]{Hashimoto5} again, it suffices to
prove that $\Phi_e(N):{}^eN\rightarrow{}^eS\times_S N$
is a principal ${}^eN_e$-bundle.
By the theorem of Radu and Andr\'e \cite{Radu}, \cite{Andre}, 
\cite{Dumitrescu2}, $\Phi_e(W,V)=\Phi_e(N)$ is flat.
Being a homeomorphism, it is fpqc.
Note that $\Ker \Phi_e(N)={}^eN_e$.
Being an fpqc homomorphism, $\Phi_e$ is a principal ${}^eN_e$-bundle
(as in \cite[(6.4)]{Hashimoto5}), as required.
\end{proof}

\section{Semireductive group schemes}\label{semireductive.sec}

\begin{lemma}\label{finite-extension-F-finite.lem}
Let $A\subset B$ be a finite extension of commutative rings.
$A$ is Noetherian if and only if $B$ is Noetherian.
$A$ is Noetherian $F$-finite if and only if $B$ is Noetherian $F$-finite.
\end{lemma}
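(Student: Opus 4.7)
The plan is to handle the two biconditionals separately, using standard module-theoretic facts and the Eakin–Nagata theorem.

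First I would prove the Noetherian equivalence. The direction ``$A$ Noetherian $\Rightarrow$ $B$ Noetherian'' is standard: since $B$ is a finitely generated $A$-module, it is Noetherian as an $A$-module, and hence also as a ring because every ideal of $B$ is in particular an $A$-submodule. The reverse direction ``$B$ Noetherian $\Rightarrow$ $A$ Noetherian'' is precisely the Eakin–Nagata theorem applied to the finite extension $A\subset B$; I would simply invoke it (citing e.g.\ \cite{CRT}).

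Next I would prove the $F$-finite equivalence, assuming we are in characteristic $p>0$ (which is implicit in the notion of $F$-finiteness). Since Noetherian-ness has already been handled, it suffices to treat the $F$-finiteness condition, i.e., the finiteness of ${}^1A$ over $A$ and of ${}^1B$ over $B$. If $A$ is $F$-finite, then ${}^1A$ is finite over $A$, and since ${}^1B$ is finite over ${}^1A$ (as $B$ is finite over $A$), the transitivity of finiteness gives that ${}^1B$ is finite over $A$, hence over $B$. Conversely, if $B$ is $F$-finite, then ${}^1B$ is finite over $B$, and hence finite over $A$. Since $A$ is Noetherian, the $A$-submodule ${}^1A\subset {}^1B$ is finitely generated over $A$, so $A$ is $F$-finite.

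The main (in fact only nontrivial) obstacle is the Eakin–Nagata direction of the Noetherian equivalence; everything else is a short diagram-free argument with finite modules. Since Eakin–Nagata is a well-known result, this lemma should require only a brief reference plus the elementary chain-of-finiteness computation above. I would therefore write the proof as two short paragraphs: one citing Eakin–Nagata and the standard module-Noetherian argument, and one chaining the finiteness of ${}^1A\subset{}^1B$ over $A$ and $B$ in both directions.
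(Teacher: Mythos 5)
Your proof is correct and takes essentially the same route as the paper: the Noetherian equivalence via module-finiteness plus Eakin--Nagata, the forward $F$-finiteness direction by chaining finiteness of ${}^1A$ over $A$ and ${}^1B$ over ${}^1A$, and the converse by viewing ${}^1A$ (equivalently, $A$ over $A^p$ in the paper's phrasing) as a submodule of the finite module ${}^1B$ over the Noetherian ring $A$.
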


\begin{proof}
If $A$ is Noetherian, then $B$ is Noetherian by Hilbert's basis theorem.
The converse is known as Eakin--Nagata theorem \cite[Thereom~3.7]{CRT}.

We prove the second assertion.
If $A$ is $F$-finite, then $B$ is $F$-finite, since $B$ is $F$-finite over $A$
\cite[Lemma~2, Example~3]{Hashimoto3}.
We prove the converse.
We have $A^p\subset B^p\subset B$, and $B^p$ is $A^p$-finite and $B$ is 
$B^p$-finite, where $A^p=\{a^p\mid a\in A\}=F_A(A)\subset A$ 
($F_A$ is the Frobenius map).
So $B$ is $A^p$-finite.
As $A^p$ is Noetherian and $A$ is a submodule of the finite $A^p$-module
$B$, $A$ is a finite $A^p$-module, and $A$ is $F$-finite.
\end{proof}

\begin{lemma}\label{finite-mumford-conjecture.lem}
Let $k$ be a field of characteristic $p>0$, and $G$ a finite group.
Then there exists some $e\geq 0$ such that for any
finite dimensional $G$-module $V$ and any $v\in V^G\setminus\{0\}$,
there exists some $h\in\Sym_{p^e}V^*$ such that $h(v)=1$.
\end{lemma}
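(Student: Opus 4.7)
The plan is to set $e := v_p(|G|)$, the $p$-adic valuation of the group order, and prove the lemma by explicitly constructing the required invariant via elementary symmetric functions of a Galois-like orbit.

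First, given $V$ and $v \in V^G \setminus\{0\}$, I would choose any linear form $\ell \in V^*$ with $\ell(v) = 1$ (possible since $v \neq 0$, by extending $v$ to a basis). The key observation is then that for every $g \in G$ one has $(g\ell)(v) = \ell(g^{-1}v) = \ell(v) = 1$, since $v$ is $G$-invariant.

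Next, consider the generating polynomial
\[
P(t) \;=\; \prod_{g \in G}\bigl(t + g\ell\bigr) \;=\; \sum_{i=0}^{n} \sigma_i\, t^{\,n-i} \quad\in\; \bigl(\Sym V^*\bigr)[t],
\]
where $n=|G|$ and $\sigma_i \in \Sym_i V^*$ is the $i$-th elementary symmetric function of the multiset $\{g\ell : g\in G\}$. Since $G$ permutes this multiset, each $\sigma_i$ lies in $(\Sym_i V^*)^G$. Evaluating at $v$ replaces every $g\ell$ by $1$, so $\sigma_i(v) = \binom{n}{i}\cdot 1_k$ in $k$.

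The decisive step is now to choose $i = p^e$ and apply Lucas' theorem: writing $n = \sum_j n_j p^j$ in base $p$ with $0 \le n_j < p$, one has $\binom{n}{p^e} \equiv n_e \pmod{p}$. With the choice $e = v_p(n)$, the digit $n_e$ is the first nonzero digit of $n$, hence nonzero in $k$. Therefore $\sigma_{p^e}(v) = \binom{n}{p^e} \in k^\times$, and $h := \sigma_{p^e}(v)^{-1}\cdot \sigma_{p^e}$ is a $G$-invariant element of $\Sym_{p^e} V^*$ satisfying $h(v) = 1$. Since $e$ depends only on $G$, the lemma follows.

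There is no genuine obstacle here; the only care needed is to list the multiset $\{g\ell\}$ with multiplicity (so the count is exactly $n$) in order to make the binomial coefficient identity come out correctly, and to observe that everything is independent of $V$ and of the choice of $\ell$.
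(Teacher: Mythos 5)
Your proof is correct, and it delivers the version of the statement that is actually needed downstream (an $h$ that is moreover $G$-invariant, which is what the application to Lemma~\ref{uniform-git-lemma.lem} requires; without invariance the statement would be trivial with $e=0$). Your route is genuinely different from the paper's, though it lands on the same exponent $e=v_p(|G|)$. The paper picks a Sylow $p$-subgroup $P$ of order $p^e$, forms the $P$-invariant product $\prod_{g\in P}g\psi$ of degree $p^e$, and then makes it $G$-invariant by averaging over a set of coset representatives of $G/P$, using that the index $[G:P]$ is invertible in $k$ --- a transfer/relative-trace argument. You instead take the full multiset $\{g\ell : g\in G\}$, observe that its elementary symmetric functions are automatically $G$-invariant, and isolate the degree-$p^e$ one, checking nonvanishing at $v$ via $\sigma_{p^e}(v)=\binom{|G|}{p^e}$ and Lucas' theorem. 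The two arguments hinge on the same arithmetic fact in disguise, since $\binom{p^e m}{p^e}\equiv m\pmod p$, i.e.\ your binomial coefficient is congruent to the index $[G:P]$; but your construction bypasses Sylow's theorem and any choice of coset representatives, at the cost of the (elementary) Lucas computation, while the paper's is a one-line formula once the Sylow subgroup is fixed. Your care about counting the multiset with multiplicity is exactly the right point to flag; there is no gap.
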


\begin{proof}
Let $P$ be a Sylow $p$-subgroup of $G$ of order $p^e$.
Let $\sigma_1,\ldots,\sigma_n$ be
a complete set of representatives of $G/P$.
Note that $n$ is invertible in $k$.
Let $\psi\in V^*$ be any element such that $\psi(v)=1$.
Then $h=n^{-1}\sum_{i=1}^r \sigma_i\prod_{g\in P}g\psi$ is the desired element.
\end{proof}

\paragraph\label{semireductive.par}
Let $k$ be a field of arbitrary characteristic, 
and $G$ be an affine algebraic $k$-group scheme.
We say that $G$ is {\em semireductive} if
$\bar G\red^\circ$ is (connected) reductive,
where $\bar k$ is the algebraic closure of $k$, and
$\bar G=\bar k\otimes_k G$.
That is, the radical of
$\bar G\red$ is a torus.
We say that $G$ is a {\em semitorus} if $\bar G\red^\circ$ is a torus.
Note that a semireductive and linearly reductive are equivalent in
characteristic zero.
A linearly reductive affine algebraic $k$-group scheme in characteristic
$p>0$ is a semitorus, as can be seen easily from Nagata's theorem
\cite[Theorem~1]{Nagata}.
See also \cite{Sweedler3}.

\begin{lemma}\label{uniform-git-lemma.lem}
Let $k$ be a field of arbitrary characteristic, 
and $G$ a semireductive affine algebraic $k$-group scheme.
Let $B$ be a $G$-algebra, and $I$ a $G$-ideal.
Then for each $b\in (B/I)^G$, there exists some $r$ such that $b^r\in B^G/I^G$.
More precisely,
\begin{enumerate}
\item[\bf 1] 
If the characteristic of $k$ is zero, $r$ can be taken to be $1$.
\item[\bf 2] 
If the characteristic of $k$ is $p>0$, then $r$ can be taken to be a power of
$p$.
\item[\bf 3] In {\bf 2}, if $G$ is a semitorus, then there exists some $e_0$
which depends only on $G$ and independent of $B$ or $b$, such that
$r$ can be taken to be $p^{e_0}$.
\end{enumerate}
In case {\bf 2}, $(B/I)^G$ is purely inseparable over $B^G/I^G$.
In any case, the canonical map 
$\Spec (B/I)^G\rightarrow \Spec (B^G/I^G)$ is a universal
homeomorphism.
\end{lemma}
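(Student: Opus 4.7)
The plan is to reduce the statement to a finite-dimensional problem on $G$-modules and then invoke geometric reductivity case by case. Fix a lift $\tilde b \in B$ of $b$; since the coaction $B \to B \otimes_k k[G]$ is locally finite, the $G$-submodule $V := k[G]\cdot \tilde b \subset B$ is finite-dimensional. Setting $L := V \cap I$, we obtain a short exact sequence
\[
0 \to L \to V \to V/L \to 0
\]
of finite-dimensional $G$-modules, $G$-equivariantly compatible with $V/L \hookrightarrow B/I$, and with $b$ corresponding to a nonzero element of $(V/L)^G$. The intermediate claim to establish is the existence of $r \geq 1$ of the form asserted and $\xi \in (\Sym^r V)^G$ whose image in $\Sym^r(V/L)$ equals $b^r$; granting this, the $G$-equivariant multiplication map $\mu\colon \Sym^r V \to B$ induced by $V \hookrightarrow B$ sends $\xi$ to an element of $B^G$ whose image in $B/I$ is $b^r$, whence $b^r \in (B^G + I)/I = B^G/I^G$ as required.

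For case \textbf{1}, $G$ is linearly reductive and so $V \to V/L$ admits a $G$-equivariant splitting; thus $b$ itself lifts to an element of $V^G$ and one takes $r = 1$. For cases \textbf{2} and \textbf{3}, I would base-change to the algebraic closure $\bar k$ (permissible by flatness of $(?)^G$ under faithfully flat extensions, cf.\ Lemma~\ref{Reynolds-four-operations.lem}) and decompose the passage to $\bar G$-invariance along $\bar G \supset \bar{G}\red \supset \bar{G}\red^\circ$. Haboush's theorem for the reductive group $\bar{G}\red^\circ$ yields some $e_1 \geq 0$ and an $\bar{G}\red^\circ$-invariant $\xi_0 \in \Sym^{p^{e_1}} V$ mapping to $b^{p^{e_1}}$, and multiplication produces $F_0 \in B^{\bar{G}\red^\circ}$ lifting $b^{p^{e_1}}$ modulo $I$. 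Next, taking a Sylow $p$-subgroup $P \subset \Gamma := \bar{G}\red/\bar{G}\red^\circ$ of order $p^{e_3}$, the multiplicative average $F_1 := \prod_{g \in P} gF_0 \in B^{\bar{G}\red^\circ}$ is $P$-invariant and reduces modulo $I$ to $b^{p^{e_1+e_3}}$ (since $gb = b$ for every $g \in \bar G$); a further additive averaging over coset representatives of $\Gamma/P$ (of cardinality invertible in $k$) yields $F_2 \in B^{\bar{G}\red}$ lifting the same power of $b$. Finally, since $\bar G/\bar{G}\red$ is infinitesimal of some bounded height $e_2$ depending only on $G$, the $p^{e_2}$-th power $F_2^{p^{e_2}}$ is automatically $\bar G$-invariant, yielding $r = p^{e_1+e_2+e_3}$, a power of $p$, and establishing \textbf{2}.

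Case \textbf{3} follows the same scheme, but since $\bar{G}\red^\circ$ is a torus and therefore linearly reductive, the Haboush step becomes trivial with $e_1 = 0$ ($V \to V/L$ already splits over $\bar{G}\red^\circ$). The remaining exponents $e_2$ and $e_3$ depend only on $G$, producing the uniform bound $r = p^{e_0}$ with $e_0 := e_2 + e_3$. The main obstacle is verifying cleanly that the two non-trivial passages through $\bar{G}\red^\circ \subset \bar{G}\red \subset \bar G$ contribute only uniform exponents with no dependence on $V$, $L$, or $b$; this is precisely the content of Lemma~\ref{finite-mumford-conjecture.lem} applied to the finite component group, together with the elementary vanishing of the $e_2$-th relative Frobenius on the infinitesimal quotient $\bar G/\bar{G}\red$. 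The final assertions are then formal: in case \textbf{2}, $b^{p^e} \in B^G/I^G$ yields at once the pure inseparability of $(B/I)^G$ over $B^G/I^G$; and in every case, $x^r - b^r = 0$ shows that the inclusion $B^G/I^G \hookrightarrow (B/I)^G$ is integral (and radicial in positive characteristic), so the induced morphism $\Spec (B/I)^G \to \Spec (B^G/I^G)$ is an integral radicial surjection, hence a universal homeomorphism.
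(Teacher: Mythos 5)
Your proposal is correct, and it reaches the lemma by a genuinely different dévissage from the paper's. The paper kills the infinitesimal part first: it picks $e$ with $G/G_e$ reduced, notes that $c^{p^e}\in B^{G_e}$ for a lift $c$ of $b$, and replaces $(G,B,I)$ by $(G/G_e\cong G^{(e)}\red,B^{G_e},I^{G_e})$; only then does it invoke Haboush together with \cite[(A.1.2)]{GIT} for the connected reductive part, and finally runs the GIT argument with Lemma~\ref{finite-mumford-conjecture.lem} for the finite quotient. You go in the opposite order (reductive, then finite, then infinitesimal) and, instead of citing \cite[(A.1.2)]{GIT} for the finite step, you redo it directly in the algebra: the multiplicative Sylow average $\prod_{g\in P}gF_0$ followed by the additive average over coset representatives of $\Gamma/P$ is exactly the mechanism of Lemma~\ref{finite-mumford-conjecture.lem} transplanted from linear forms to ring elements, and it works as you say (left multiplication permutes the cosets, $[\Gamma:P]$ is invertible, $I$ is a $G$-ideal, so all congruences modulo $I$ persist); likewise your $\Sym^rV$ set-up is sound because $L\subset I$ makes $\Sym^rV\to B\to B/I$ factor through $\Sym^r(V/L)$. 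The one step you must re-justify is the last one: $\bar G\red$ need not be normal in $\bar G$ (e.g.\ $\alpha_p\rtimes\Bbb G_m$), so the phrase ``$\bar G/\bar G\red$ is infinitesimal of height $e_2$'' is not literally available. The assertion you need is nevertheless true and uniform in $G$: choose $e_2$, depending only on $G$, with $\bar G/\bar G_{e_2}$ reduced; then $x^{p^{e_2}}\in B^{\bar G_{e_2}}$ for every $x\in B$, and since $\bar G_{e_2}$ and $\bar G\red$ generate $\bar G$ (the image of $\bar G\red$ in the smooth group $\bar G/\bar G_{e_2}$ contains all closed points), one gets $F_2^{p^{e_2}}\in B^{\bar G_{e_2}}\cap B^{\bar G\red}=B^{\bar G}$. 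This is precisely the Frobenius-kernel reduction the paper performs at the outset, merely postponed to the end; with that repair your exponents $p^{e_1+e_2+e_3}$, resp.\ $p^{e_2+e_3}$ in the semitorus case, give {\bf 2} and {\bf 3} as stated, and your descent from $\bar k$ to $k$ and the concluding formal assertions (pure inseparability, integral radicial surjection, universal homeomorphism) are fine.
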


\begin{proof}
We may assume that $k$ is algebraically closed.
If the characteristic of $k$ is zero, then $G$ is linearly reductive, and
$B^G\rightarrow (B/I)^G$ is surjective, and the assertion {\bf 1} is obvious.
So we may assume that the characteristic is $p>0$.

Take $c\in B$ such that $c$ modulo $I$ equals $b$.
Let $e\geq 0$ be the number such that $G/G_e$ is reduced (hence is smooth
and is isomorphic to $G^{(e)}\red$).
Note that the connected reductive group 
$G\red^\circ$ over the algebraically closed field $k$ 
is defined over $\Bbb F_p$ (see \cite[(II.1)]{Jantzen} and references
therein), and hence
$(G^{(e)}\red)^\circ=(G\red^\circ)^{(e)}\cong G\red^\circ$ is reductive.
It is easy to see that $c^{p^e}\in B^{G_e}$.
Note that we can take this $e$ depending only on $G$.
Replacing $b$ by $b^{p^e}$, $B$ by $B^{G_e}$, $I$ by $I^{G_e}$, and
$G$ by $G/G_e\cong G^{(e)}\red$, we may assume
that $G$ is smooth.

Then by Haboush's theorem (the Mumford conjecture) 
\cite[(II.10.7)]{Jantzen} 
and \cite[(A.1.2)]{GIT},
we have that there exists some $e$ such that
$b^{p^e}$ is in $(B^{G^\circ}/I^{G^\circ})^{G/G^\circ}$.
The choice of $e$ may depend on $b$ this time, but if {\bf 3} is assumed,
then $G^{\circ}$ is a torus, which is linearly reductive, and we can
take $e=0$, which depends only on $G$.

Then replacing $G$ by $G/G^\circ$, we may assume that $G$ is a finite group.
This case is proved by the same proof as in \cite[(A.1.2)]{GIT},
using Lemma~\ref{finite-mumford-conjecture.lem}.
\end{proof}

\begin{lemma}\label{universally-submersive.lem}
Let $k$ be a field of arbitrary characteristic
and $G$ a semireductive $k$-group scheme,
and $\varphi:X\rightarrow Y$ be an algebraic quotient by $G$.
Then $\varphi$ is a universally submersive categorical quotient.
If, moreover, $\varphi$ is a geometric quotient, then it is universally open.
\end{lemma}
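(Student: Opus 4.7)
The proof will proceed by reduction to the non-universal case followed by three technical steps, and I outline the strategy now. By Corollary~\ref{uniform-algebraic-quotient.cor}, $\varphi$ is a uniform algebraic quotient; since semireductivity is defined via base change to the algebraic closure $\bar k$, it is preserved under arbitrary field base change, so every base change $\varphi'$ is again an algebraic quotient by a semireductive group. It therefore suffices to show that $\varphi$ itself is a submersive categorical quotient, and the universal versions will then follow by applying the same argument after any base change. The final clause---that a geometric quotient is universally open---will follow immediately from Lemma~\ref{GIT-remark.lem} once universal submersiveness is in hand, since any affine algebraic $k$-group scheme is flat of finite type, hence universally open.

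For submersiveness I would work locally with $Y = \Spec A$, $X = \Spec B$, $A = B^G$. The key input is Lemma~\ref{uniform-git-lemma.lem}: for every $G$-ideal $I \subset B$, the inclusion $A/(I \cap A) \hookrightarrow (B/I)^G$ induces a universal homeomorphism of spectra. Applying this with $I = \fp B$ for $\fp \in \Spec A$, combined with the standard Haboush-type argument for semireductive groups (which shows that the inclusion $A \hookrightarrow B$ is pure, and in particular $\fp B \cap A = \fp$), I would deduce that $\varphi$ is surjective. Closedness of $\varphi$ on $\varphi$-saturated (hence $G$-stable) closed sets then follows from the image identification $\varphi(V(I)) = V(I \cap A)$. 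For the categorical quotient property, given a $G$-invariant $\psi\colon X \to Z$, I plan to cover $Z$ by affine opens $W$ and show that each $\psi^{-1}(W) \subset X$ is $\varphi$-saturated, so that $\psi$ factors locally via the affine universal property into pieces which glue to give $\theta\colon Y \to Z$. The $\varphi$-saturation rests on the GIT separation principle---disjoint $G$-stable closed subsets $F_1, F_2 \subset X$ have disjoint images in $Y$---which I derive from Lemma~\ref{uniform-git-lemma.lem} by a Chinese-remainder argument: if $G$-ideals satisfy $I_1 + I_2 = B$, then $B/(I_1 \cap I_2) \cong B/I_1 \times B/I_2$ as $G$-algebras, so taking $G$-invariants and applying the universal homeomorphism to each factor forces $(I_1 \cap A) + (I_2 \cap A) = A$.

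The main obstacle will be verifying the $\varphi$-saturation of $\psi^{-1}(W)$. Separation cannot be applied directly to $\overline{Gx}$ and the complement $X \setminus \psi^{-1}(W)$, since these $G$-stable closed sets may well intersect; and in the absence of a Reynolds operator (compare Lemma~\ref{linearly-reductive-algebraic-quotient.lem}) there is no clean splitting of invariants with which to transport a local factorization. The intended resolution uses the fact---a consequence of semireductivity together with Lemma~\ref{uniform-git-lemma.lem}---that each fiber of $\varphi$ contains, at the level of $\bar k$-points, a unique closed $G$-orbit, and every $G$-invariant morphism is determined on a fiber by its restriction to this closed orbit. I anticipate carrying this out by passing to $\bar k$-points in order to invoke the classical GIT picture and then descending back to $k$ via flatness of $\bar k/k$.
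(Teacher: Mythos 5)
Your reduction of the universal statement to the plain one does not work, and this is exactly the crux of the lemma. You argue that since $\varphi$ is a uniform algebraic quotient (Corollary~\ref{uniform-algebraic-quotient.cor}) and semireductivity survives field extension, ``every base change $\varphi'$ is again an algebraic quotient by a semireductive group,'' so universal submersiveness follows by rerunning the argument after base change. But in this paper \emph{uniform} means stable only under \emph{flat} base change of $Y$; universal submersiveness must be checked along arbitrary morphisms $Y'\rightarrow Y$, in particular along closed immersions, and there the base change is in general \emph{not} an algebraic quotient: for $A\rightarrow A'=A/I$ one only has an inclusion $A'\subseteq (B')^G$ which, by Lemma~\ref{uniform-git-lemma.lem}, is purely inseparable in characteristic $p$, not an isomorphism. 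The paper's proof is built around precisely this point: after reducing to characteristic $p$ and citing Mumford's Theorem~A.1.1 (GIT, Appendix to Chapter~1) for the fact that an algebraic quotient by a semireductive group is a submersive categorical quotient, it factors an arbitrary $A$-algebra as a flat map followed by a surjection; the flat leg is handled by uniformity, and the surjective leg by showing that $\Spec(B')^G\rightarrow \Spec A'$ is a (universal) homeomorphism --- surjective because $\varphi'$ is, injective and closed because of the purely inseparable inclusion from Lemma~\ref{uniform-git-lemma.lem} --- so that submersiveness of $\Spec B'\rightarrow\Spec(B')^G$ (the cited GIT result again) transfers to $\varphi'$. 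Your proposal contains no substitute for this step.

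Two further remarks. First, your appeal to purity of $A\hookrightarrow B$ is not available: a semireductive group in characteristic $p$ is only geometrically reductive, and all one gets is the $p^e$-th power lifting of Lemma~\ref{uniform-git-lemma.lem}; purity (a Reynolds-type splitting) is genuinely stronger and fails in general, so the parts of your surjectivity and separation arguments resting on it need to be rerouted through that lemma. Second, most of the work you invest in reproving that $\varphi$ is a submersive categorical quotient (saturation of $\psi^{-1}(W)$, uniqueness of closed orbits in fibers) is unnecessary for this lemma --- the paper simply quotes GIT Theorem~A.1.1 for that --- and, by your own admission, your sketch of the saturation step is not yet a proof. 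The part you do get right is the final clause: once universal submersiveness is known, universal openness of a geometric quotient follows from Lemma~\ref{GIT-remark.lem}, exactly as in the paper.
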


\begin{proof}
We may assume that $k$ is of characteristic $p>0$ by \cite[Theorem~1.1]{GIT}.
In the proof of \cite[Theorem~A.1.1]{GIT} in Appendix to Chapter~1, C., 
it is proved that $\varphi$ is a submersive categorical quotient.
We prove the first assertion.
We only need to prove that $\varphi$ is universally submersive.
So we may assume that $Y=\Spec A$ is affine.
Then $B=\Spec B$ is also affine and $A=B^G$.
It suffices to show that for any $A$-algebra $A'$, 
the base change $X'=\Spec B'\rightarrow \Spec A'=Y'$ is submersive.

There is a sequence of maps
\[
A\xrightarrow \alpha A''\xrightarrow \beta A'
\]
such that $\alpha$ is flat and $\beta$ is surjective.
Indeed, for each $a\in A'$, consider a variable $x_a$, and set $A''=A[x_a\mid
a\in A']$.
Then $(B'')^G=A''$, where $B''=A''\otimes_A B$.
So we know that $\varphi''=X''=\Spec B''\rightarrow\Spec A''=Y''$ is submersive.
So replacing $A$ by $A''$, we may assume that $A\rightarrow A'=A/I$ 
is surjective.

We want to prove that the canonical map 
$\gamma:Z'=\Spec (B')^G\rightarrow \Spec A'=Y'$ is a homeomorphism.
As we know that $\varphi$ is surjective, 
$\varphi':X'=\Spec B'\rightarrow \Spec A'=Y'$ is also surjective.
As $\varphi'$ factors through $\gamma$, we have that $\gamma$ is surjective.
On the other hand, by Lemma~\ref{uniform-git-lemma.lem},
$A'\rightarrow (B')^G$ is purely inseparable.
Thus $\gamma$ is injective and closed, and hence is a homeomorphism.
As $\delta:X'=\Spec B'\rightarrow \Spec (B')^G=Z'$ is known to be submersive, 
$\varphi'=\gamma\delta$ is also submersive.

Now the last assertion follows from Lemma~\ref{GIT-remark.lem}.
\end{proof}

\begin{lemma}\label{F-finite-finite.thm}
Let $k$ be a field, and $G$ an affine algebraic $k$-group scheme.
Let $B$ be a Noetherian $G$-algebra.
Set $A:=B^G$.
Assume either
\begin{enumerate}
\item[\bf a] $k$ is of characteristic zero and $G$ is semireductive; 
\item[\bf b] $k$ is a field of characteristic $p>0$, $G$ is a semitorus,
and $B$ is $F$-finite; or
\item[\bf c] 
$k$ is a field of characteristic $p>0$, $G$ is semireductive, and there is
a Noetherian $k$-algebra $R$ and a $k$-algebra map $R\rightarrow A$ such that
$B$ is of finite type over $R$.
\end{enumerate}
Then
\begin{enumerate}
\item[\bf 1] For any $B$-finite $(G,B)$-module $M$, $M^G$ is a finite
$A$-module.
\item[\bf 2] $A$ is Noetherian.
\item[\bf 3] If $G$ is finite, then $B$ is finite over $A$.
\item[\bf 4] In the case of {\bf b}, $A$ is $F$-finite.
\item[\bf 5] In the case of {\bf c}, $A$ is of finite type over $R$.
\end{enumerate}
\end{lemma}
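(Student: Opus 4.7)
The plan is to deduce all five conclusions from the uniform GIT content of Lemma~\ref{uniform-git-lemma.lem}. I would first pass to the case $k=\bar k$ by faithfully flat base change, which preserves Noetherianness, $F$-finiteness, finite type over the base, and the semireductive property.

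The core assertion is conclusion \textbf{1}. For a $B$-finite $(G,B)$-module $M$, let $V\subseteq M$ be the finite-dimensional $G$-subcomodule generated by a finite $B$-generating set; then $M$ is a $(G,B)$-quotient of $B\otimes_k V$, and it suffices to bound $(B\otimes_k V)^G$ as an $A$-module (using Lemma~\ref{uniform-git-lemma.lem} to control how invariants behave in short exact sequences). In case \textbf{a}, linear reductivity realizes $(B\otimes_k V)^G$ as an $A$-direct summand of $B\otimes_k V$, and the coherence of $\Hom_G(W,B)$ for an irreducible $W$ gives $A$-finiteness via the isotypic decomposition of $V^*$. In cases \textbf{b} and \textbf{c}, the Reynolds splitting is replaced by the purely-inseparable surrogate of Lemma~\ref{uniform-git-lemma.lem}, and an $A$-finiteness estimate for $(B\otimes_k V)^G$ is obtained by a Nagata--Haboush style graded argument on the symmetric algebra $B\otimes_k\Sym^\bullet(V^*)$.

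Conclusion \textbf{2} follows from \textbf{1}: apply \textbf{1} to the $(G,B)$-submodule $IB$ for each ideal $I\subseteq A$, so that $(IB)^G=IB\cap A$ is $A$-finite. In case \textbf{a} the Reynolds operator gives $IB\cap A=I$, and combined with the Noetherianness of $B$ (extend chains to $B$, stabilize, restrict) this yields that $A$ is Noetherian. In cases \textbf{b} and \textbf{c}, the purely-inseparable comparison between $IB\cap A$ and $I$ provided by Lemma~\ref{uniform-git-lemma.lem} is enough to force ascending chains in $A$ to stabilize, once one has stabilized the extensions to $B$. Conclusion \textbf{4} is an application of \textbf{1} to the Frobenius twist: the $(G,B)$-module $M={}^1B$, namely $B$ with action $b\cdot c:=b^pc$, is $B$-finite exactly when $B$ is $F$-finite (case \textbf{b}), and $M^G={}^1A$; so \textbf{1} gives that $A$ is finite over $A^p$, i.e., $F$-finite. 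For conclusion \textbf{3}, the classical Fogarty argument applies: every $b\in B$ lies in a finite-dimensional $G$-subcomodule, and multiplication by $b$ on a carefully chosen invariant complement yields a monic polynomial equation with $A$-coefficients (possibly after a $p^e$-th power, by Lemma~\ref{uniform-git-lemma.lem}), so $B$ is integral over $A$; combined with \textbf{1} applied to $B$ itself this gives $B$ is $A$-finite. Conclusion \textbf{5} is the Hilbert--Nagata--Haboush theorem: presenting $B$ as a $G$-equivariant quotient of $R\otimes_k\Sym^\bullet(V^*)$ for a finite-dimensional $G$-subcomodule $V\subseteq B$, finite generation of $A$ over $R$ reduces to the polynomial-ring case, handled by Hilbert in characteristic zero and by Nagata--Haboush in characteristic $p$, both leveraging Lemma~\ref{uniform-git-lemma.lem}.

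The main obstacle lies in conclusion \textbf{1} in cases \textbf{b} and \textbf{c}: controlling $(B\otimes_k V)^G$ as an $A$-module for a merely Noetherian $G$-algebra $B$ when the Reynolds operator is replaced by the purely-inseparable substitute of Lemma~\ref{uniform-git-lemma.lem}. One must arrange the graded Nagata--Haboush argument so that the $p$-power estimate still produces genuine finite $A$-generation rather than finite generation only up to radicals.
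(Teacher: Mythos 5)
There is a genuine gap, and it sits exactly where you flag it: conclusion \textbf{1} in cases \textbf{b} and \textbf{c} is the whole content of the lemma, and your plan for it does not close. Your reduction ``$M$ is a quotient of $B\otimes_k V$, so it suffices to bound $(B\otimes_k V)^G$'' is only valid when $(?)^G$ is right exact, i.e.\ in the linearly reductive case \textbf{a}; in characteristic $p$ the invariants of a quotient can be strictly larger than the image of the invariants, and Lemma~\ref{uniform-git-lemma.lem} offers no substitute at the module level, since its $p^e$-th power device only makes sense for elements of a quotient \emph{ring} $(B/I)^G$ versus $B^G/I^G$, not for a quotient $(G,B)$-module. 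The same problem infects your conclusion \textbf{2}: extending a chain $I_n$ to $I_nB$, stabilizing, and invoking a ``purely inseparable comparison between $IB\cap A$ and $I$'' is not something Lemma~\ref{uniform-git-lemma.lem} provides (it compares $A/(I\cap A)$ with $(B/I)^G$, not $I\cap A$ with $I$), and without already knowing some finiteness of $I_NB\cap A$ over $A$ the chain argument does not terminate. Two of your auxiliary steps are also not correct as stated: for \textbf{3}, integrality of $B$ over $A$ plus ``\textbf{1} applied to $B$ itself'' does not yield module-finiteness (since $B$ is not assumed of finite type over $A$, and $B^G=A$ gives nothing); and for \textbf{4}, ${}^1B$ is not a $(G,B)$-module with $({}^1B)^G={}^1A$ unless one twists the action through $F\colon G\to G^{(1)}$ and knows $F$ is faithfully flat, which fails for non-reduced semitori such as $\mu_p$ (so a prior reduction to smooth $G$ is indispensable).

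For comparison, the paper proves \textbf{1}--\textbf{5} by Noetherian induction on the $G$-stable ideals of $B$: Lemma~\ref{uniform-git-lemma.lem} (with the uniform exponent $e_0$ in case \textbf{b}, and $F$-finiteness resp.\ finite type over $R$) is used only to show $(B/I)^G$ is \emph{finite} over $B^G/I^G$ for nonzero $G$-ideals $I$, and Eakin--Nagata transfers Noetherianity; the module statement is then reduced, via $G$-torsion parts and induction on length at a minimal prime of a $G$-domain, to Noetherianity of $A$, which is settled by the identity $(aB)^G=aA$ for a nonzero $a\in A$ (a nonzerodivisor on the $G$-domain $B$). Conclusion \textbf{3} comes not from an integrality equation but from the coaction trick $B\cong(B_0\otimes k[G])^G$ with $B_0\otimes k[G]$ a $B$-finite $(G,B)$-module; \textbf{5} uses universal submersivity and Alper's result; and \textbf{4} is obtained only after reducing the general $G$ through Frobenius kernels, the \'etale quotient, and a Galois twist to a split torus over $\Bbb F_p$, where Kunz's theorem makes $F\colon G\to G^{(1)}$ faithfully flat and \textbf{1} can be applied to $B$ as a $B^{(1)}$-finite module. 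If you want to salvage your outline, you would need to replace the ``bound $(B\otimes_k V)^G$'' step by an induction of this kind (or some other device that genuinely controls invariants of quotients in characteristic $p$), and repair the three auxiliary deductions accordingly.
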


\begin{proof}
We prove the lemma by the Noetherian induction.
The cases are divided, and when we consider the case {\bf b} (resp.\ {\bf c}),
the inductive hypothesis {\bf 5} (resp.\ {\bf 4}) will never be used.

We may assume that for any nonzero $G$-ideal $I$ of $B$ and any $(G,B/I)$-module
$M$, $M^G$ is $(B/I)^G$-finite, 
$(B/I)^G$ is Noetherian, and if {\bf b} is assumed, $(B/I)^G$ is $F$-finite.
If {\bf c} is assumed, then we may assume that $(B/I)^G$ is of finite
type over $R$.

Consider the case that {\bf b} is assumed.
Note that there exists some $e_0$ such that $((B/I)^G)^{p^{e_0}}\subset
B^G/I^G\subset (B/I)^G$ by Lemma~\ref{uniform-git-lemma.lem}, {\bf 3}.
As $(B/I)^G$ is $F$-finite, $(B/I)^G$ is a finite 
$B^G/I^G$-module for any nonzero $G$-ideal $I$ of $B$.
If {\bf c} is assumed, then we have that $(B/I)^G$ is integral over $B^G/I^G$
by Lemma~\ref{uniform-git-lemma.lem}, {\bf 2}.
As we assume that $(B/I)^G$ is of finite type over $R$, 
$(B/I)^G$ is finite over $B^G/I^G$.
If {\bf a} is assumed, then $B^G/I^G=(B/I)^G$ by the linear reductivity,
and obviously $(B/I)^G$ is $B^G/I^G$-finite.
Thus $(B/I)^G$ is finite over $B^G/I^G$ in either case.

In either case, we have that $B^G/I^G$ is a Noetherian ring by
Lemma~\ref{finite-extension-F-finite.lem}.
Also by the induction hypothesis, $M^G$ is a Noetherian $A$-module if
$M$ is a $B$-finite $(G,B)$-module with $\ann M\neq 0$.

We prove that for any $B$-finite $(G,B)$-module $M$, $M^G$ is a Noetherian
$A$-module.
This proves {\bf 1} and {\bf 2}.

If $B$ is not a $G$-domain 
(see \cite{HM})
and $IJ=0$ for some nonzero $G$-ideals $I$ and $J$,
then 
\[
0\rightarrow (IM)^G\rightarrow M^G\rightarrow (M/IM)^G
\]
is exact and $(IM)^G$ and $(M/IM)^G$ are Noetherian $A$-modules
(since $J$ and $I$ 
respectively annihilate $IM$ and $M/IM$),
$M^G$ is a Noetherian $A$-module.

So we may assume that $B$ is a $G$-domain.
We prove that $M^G$ is a Noetherian $A$-module
by the induction on the length of $M_P$,
where $P$ is any fixed minimal prime ideal of $B$.

By a $G$-torsion submodule of $M$, we mean a $(G,B)$-submodule of $M$
whose annihilator is nonzero.
We define the $G$-torsion part $M\tor$ 
of $M$ to be the sum of all the $G$-torsion submodules of $M$.
Note that $M\tor$ is the largest $G$-torsion submodule of $M$.
As $(M\tor)^G$ is a Noetherian $A$-module and
\[
0\rightarrow (M\tor)^G\rightarrow M^G\rightarrow (M/M\tor)^G
\]
is exact, replacing $M$ by $M/M\tor$, we may assume that $M$ is 
$G$-torsion-free, that is, $M$ does not have a nonzero $G$-torsion submodule.

If $M^G=0$, then $M^G$ is a Noetherian module.
If $M^G\neq 0$, then there is an injective $(G,B)$-linear map
$B\rightarrow M$.
As $(M/B)^G$ is Noetherian by induction,
it suffices to prove that
$A=B^G$ is a Noetherian $A$-module, that is, $A$ is a Noetherian ring.

Let $J$ be an ideal of $A$.
We want to show that $J$ is finitely generated.
If $J=0$, then $J$ is finitely generated.
So we may assume that $J\neq 0$.
Let $a\in J\setminus\{0\}$.
Since $aB$ is a nonzero $G$-ideal of $B$ and $B$ is a $G$-domain, 
we have that $0:_BaB=0$.
That is, $a$ is a nonzero divisor in $B$.
So $B\subset B[a^{-1}]$, and hence $A=B^G=B\cap B[a^{-1}]^G$.
So if $c=ab\in aB\cap A$, then $b=ca^{-1}\in B\cap B[a^{-1}]^G=A$.
So $c\in aA$, and we have that $(aB)^G=aB\cap A=aA$.
Hence $A/aA$ is a Noetherian ring, and $J/aA$ is finitely generated.
Hence $J$ is finitely generated, as desired.

Next, we prove {\bf 3}.
Let $B_0$ be the $k$-algebra $B$ with the trivial $G$-action.
Then the coaction $\omega_B:B\rightarrow B_0\otimes k[G]$ is a
$G$-algebra map.
Thus $B_0\otimes k[G]$ is a $B$-finite $(G,B)$-module.
Clearly, $(B_0\otimes k[G])^G=B_0\otimes k=B_0$ as $A$-algebras, and
$B_0$ is isomorphic to $B$ as $A$-algebras, as can be seen easily.
On the other hand, $B_0=(B_0\otimes k[G])^G$ is $A$-finite by {\bf 1}.
Thus $B$ is $A$-finite, and {\bf 3} has been proved.

So the case {\bf a} has been completed, because {\bf 4} and {\bf 5} are
trivial in this case.

If {\bf c} is assumed, then $\Spec B\rightarrow \Spec A$ is universally
submersive by Lemma~\ref{universally-submersive.lem}.
Now the assertion {\bf 5} follows from {\bf 2}, which has already been
proved, by \cite[(6.2.1)]{Alper}.
So the proof of {\bf c$\Rightarrow$1,2,3,5} has been completed
(as we have emphasized, we have not used the inductive hypothesis {\bf 4}
for the case {\bf c} at all).

If $G$ is finite, then {\bf 4} follows from {\bf 3} and 
Lemma~\ref{finite-extension-F-finite.lem}.

Thus the lemma has also been completely proved for the case that $G$ is finite.

We prove the lemma for the general case.
It suffices to prove {\bf 1, 2, 3, 4} assuming
{\bf b}.

Replacing $k$ by its finite purely inseparable extension, we may assume
that $G\red$ is $k$-smooth.
Then for some $e\geq 0$, $G/G_e\cong k\otimes_{k^{p^e}}G^{(e)}\red$ is 
$k$-smooth, see for the notation on the Frobenius twist, see
section~\ref{Frobenius-pushforwards.sec}.
Note that $\bar k\otimes_k(G/G_e)\cong \bar k\otimes_{k^{p^e}}G^{(e)}\red
\cong \bar G^{(e)}\red$.
As the torus $\bar G^\circ\red$ is defined over $\Bbb F_p$, 
we have that $G/G_e$ is a semitorus.
As the lemma is already proved for the finite group scheme $G_e$,
replacing $G$ by $G/G_e$, we may assume that $G$ is smooth.

Next, replacing $k$ by some finite Galois extension, we may assume that
$G^\circ$ is a split torus.
If the lemma is proved for the split torus $G^\circ$, then as the
lemma is already proved for the finite group scheme $G/G^\circ$, 
the proof of the lemma completes.
Thus we may assume that $G$ is a split torus.
Then replacing $k$ by $\Bbb F_p$, we may assume that $k=\Bbb F_p$, 
which is perfect.
Then the absolute Frobenius map $F:G\rightarrow G^{(1)}$ is a faithfully
flat homomorphism of $k$-group schemes by the theorem of Kunz
\cite[Theorem~2.1]{Kunz}, $G$ acts on $B^{(1)}$, and $(B^{(1)})^G
=(B^{(1)})^{G^{(1)}}=A^{(1)}$ by Lemma~\ref{restriction-invariance.lem}.

Now we repeat the inductive argument above.
Then as above, we reach the situation that 
{\bf 1, 2, 3} are already proved, and we prove {\bf 4}.
Then {\bf 1, 2, 3} are also true for the action of $G^{(1)}$ on $B^{(1)}$.
Hence {\bf 1, 2, 3} are also true for the action of $G$ on $B^{(1)}$
through the Frobenius map $G\rightarrow G^{(1)}$.

As $B$ is a $B^{(1)}$-finite $(G,B^{(1)})$-module by $F$-finiteness,
$A=B^G$ is a finite $A^{(1)}$-module by {\bf 1}, which has already been
proved.
Now by induction, we have proved {\bf 1, 2, 3, 4} for the case {\bf b}.
This finishes the proof of the lemma.
\end{proof}

\section*{\large Chapter~1. Main Results}\label{main.chap}

\section{Almost principal fiber bundles}\label{main.sec}

\paragraph
Let $f:G\rightarrow H$ be a qfpqc homomorphism of $S$-group schemes with
$N=\Ker f$.

\begin{definition}\label{rational-almost-pb.def}
A diagram of $G$-schemes
\[
\xymatrix{
X & U \ar@{_{(}->}[l]_i \ar[r]^\rho & V \ar@{^{(}->}[r]^j & Y
}
\]
is said to be a 
{\em $G$-enriched rational $n$-almost principal $N$-bundle} 
if the following six conditions hold.
\begin{enumerate}
\item[\bf 1] $N$ acts on $Y$ trivially.
\item[\bf 2] $i$ is an open immersion.
\item[\bf 3] $j$ is an open immersion.
\item[\bf 4] $i(U)$ is $n$-large in $X$.
\item[\bf 5] $j(V)$ is $n$-large in $Y$.
\item[\bf 6] $\rho$ is a $G$-enriched principal $N$-bundle.
\end{enumerate}
A $G$-enriched rational $n$-almost
principal $G$-bundle is simply called
a rational $n$-almost principal $G$-bundle.
In these definitions, we may simply say \lq almost' instead of 
saying \lq $1$-almost.'
\end{definition}

\begin{definition}\label{almost-pb.def}
A $G$-morphism $\varphi:X\rightarrow Y$ is said to be a
{\em $G$-enriched $n$-almost principal $N$-bundle} 
with respect to $U$ and $V$ if $U$ is a $G$-stable open subset of $X$ and
$V$ is an $H$-stable open subset of $Y$, $\varphi(U)\subset V$, and
\[
\xymatrix{
X & U \ar@{_{(}->}[l]_i \ar[r]^{\rho} & V \ar@{^{(}->}[r]^j & Y
}
\]
is a $G$-enriched rational $n$-almost principal $N$-bundle,
where $\rho:U\rightarrow V$ is the restriction of $\varphi$,
and $i$ and $j$ are inclusions.
We simply say that $\varphi$ is a $G$-enriched $n$-almost principal 
$N$-bundle, if it is so with respect to some $U$ and $V$.
We may omit the epithet \lq $G$-enriched' if $G=N$.
We may use \lq almost' as a synonym of \lq$1$-almost.'
\end{definition}

\begin{lemma}\label{qfpqc-functor.thm}
Let $S$ be a scheme, and $h:M\rightarrow N$ be a morphism of $S$-schemes.
Then the following conditions are equivalent.
\begin{enumerate}
\item[\bf 1] $h$ is qfpqc.
\item[\bf 2] 
For any $S$-scheme $W$ and an $S$-morphism $\alpha:W\rightarrow N$,
there exists some qfpqc morphism $\beta:W'\rightarrow W$ such that
$\alpha\beta\in N(W')$ is in the image of $h(W'):M(W')\rightarrow N(W')$.
\end{enumerate}
\end{lemma}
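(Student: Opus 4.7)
The plan is to treat the two implications separately.

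\textbf{(1) $\Rightarrow$ (2):} Given $\alpha: W \to N$, I would form the fiber product $W' := W \times_N M$, with $\beta: W' \to W$ and $\gamma: W' \to M$ the two canonical projections. Since the class of qfpqc morphisms is stable under base change (cf.~\cite[Section~2]{Hashimoto5}), $\beta$ is qfpqc. By the universal property of the fiber product, $h\gamma = \alpha\beta$, and hence $\alpha\beta = h(W')(\gamma)$ lies in the image of $h(W')$, as required.

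\textbf{(2) $\Rightarrow$ (1):} The idea is to apply the hypothesis with $W := N$ and $\alpha := \mathrm{id}_N$, yielding a qfpqc morphism $\beta: W' \to N$ together with $\gamma: W' \to M$ such that $h\gamma = \beta$. I would then consider the base-change square
\[
\xymatrix{
M\times_N W'\ar[r]^-{p_1}\ar[d]^{h_\beta} & M\ar[d]^{h}\\
W'\ar[r]^-{\beta} & N,
}
\]
in which $p_1$ is qfpqc (being a base change of $\beta$), and $h_\beta$ admits the section $s := (\gamma,1_{W'}):W'\to M\times_N W'$ because $h\gamma=\beta$. Since $\beta$ and $p_1$ are both qfpqc and $h\circ p_1 = \beta\circ h_\beta$, I would deduce that $h$ is qfpqc via the descent machinery for qfpqc morphisms in \cite[Section~2]{Hashimoto5}: surjectivity of $h$ is immediate from the factorization $h\gamma=\beta$ through the surjection $\beta$, and flatness together with quasi-compactness descend from the composite once the correct commutative square is in hand.

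The main obstacle is the descent step in the direction (2) $\Rightarrow$ (1): namely, transferring qfpqc-ness of the composite $h\gamma=\beta$ back to $h$ itself, given that $\gamma$ is an arbitrary morphism. The section $s = (\gamma,1_{W'})$ of $h_\beta$ plays the essential role here, together with the base-change qfpqc-ness of $p_1$; the final ingredient is the local-on-target descent property of the class of qfpqc morphisms, which I expect to import directly from \cite[Section~2]{Hashimoto5}. The first implication, by contrast, is formal once the base-change stability of qfpqc is granted.
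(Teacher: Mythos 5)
Your direction \textbf{1}$\Rightarrow$\textbf{2} is fine and is essentially the paper's argument: the paper first replaces $M$ so that $h$ becomes fpqc and then takes the fiber product $W'=M\times_N W$, while you invoke base-change stability of qfpqc directly; either way the lift is the first projection and $h\gamma=\alpha\beta$.

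The gap is in \textbf{2}$\Rightarrow$\textbf{1}, in the final ``descent'' step. You appear to be treating qfpqc as if it meant faithfully flat and quasi-compact, and you try to descend flatness and quasi-compactness of the composite $h\gamma=\beta$ back to $h$ itself. That is not what qfpqc means, and it is not something you could prove: as it is used throughout the paper (e.g.\ ``replacing $M$ if necessary, we may assume that $h$ is fpqc'' in the proof of \textbf{1}$\Rightarrow$\textbf{2}, and ``replacing $\beta$ if necessary, we may assume that $\beta$ is fpqc'' in the proof of Lemma~\ref{monomorphism-isom.thm}), $h$ is qfpqc precisely when there exists some morphism $u:M'\rightarrow M$ with $hu$ fpqc; a qfpqc morphism need not itself be flat or quasi-compact. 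Indeed, condition \textbf{2} cannot force flatness of $h$: take $M=N\sqcup Z$ with $h$ restricting to $\id_N$ on the first summand and to an arbitrary non-flat morphism $Z\rightarrow N$ on the second; then \textbf{2} holds (lift $\alpha$ into the first summand, with $\beta=1_W$), but $h$ is not flat. So the route via the square $M\times_N W'\rightarrow W'$, its section $s=(\gamma,1_{W'})$, and ``descent of flatness and quasi-compactness from the composite'' cannot be completed; moreover quasi-compactness does not descend along an arbitrary precomposition $\gamma$ in the way you claim. The correct conclusion is immediate once the definition is used: from $\alpha=\id_N$ you get $\gamma\in M(W')$ with $h\gamma=\beta$ qfpqc; choosing $\delta:W''\rightarrow W'$ with $\beta\delta$ fpqc, the composite $h\circ(\gamma\delta)=\beta\delta$ is fpqc, so $h$ is qfpqc by definition. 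This one-line finish is exactly what the paper does, and none of the auxiliary fiber-product construction is needed.
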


\begin{proof}
{\bf 1$\Rightarrow$2}.
Replacing $M$ if necessary, we may assume that $h$ is fpqc.
Let $W'=M\times_N W$, and let $\beta:W'\rightarrow W$ be the second projection, 
and let $\gamma\in M(W')$ be the first projection.
Then $h\gamma=\alpha\beta$ is in the image of $h(W')$.

{\bf 2$\Rightarrow $1}.
Let $\alpha$ be the identity morphism of $N$.
Then there exists some qfpqc morphism $\gamma:W'\rightarrow M$ such that
$\beta=h\gamma:W'\rightarrow N$ is qfpqc.
Thus $h$ is also qfpqc, as desired.
\end{proof}

\begin{lemma}\label{monomorphism-isom.thm}
Let $h: M\rightarrow N$ be a qfpqc monomorphism of $S$-schemes.
Then $h$ is an isomorphism.
\end{lemma}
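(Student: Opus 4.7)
The plan is to construct the inverse $s: N \rightarrow M$ of $h$ by descending a local lift of $\id_N$ along a qfpqc cover. The key inputs are Lemma~\ref{qfpqc-functor.thm}, the fact that $h$ being a monomorphism makes lifts through $h$ unique, and the fact that morphisms to a scheme descend along qfpqc covers (since representable functors are qfpqc sheaves, as developed in \cite[section~2]{Hashimoto5}).

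First, I would apply Lemma~\ref{qfpqc-functor.thm} to the $S$-scheme $W=N$ and the morphism $\alpha=\id_N\in N(N)$. This gives a qfpqc morphism $\beta:W'\rightarrow N$ and a morphism $\gamma:W'\rightarrow M$ with $h\gamma=\beta$. Next, I would descend $\gamma$ to a morphism $s:N\rightarrow M$. Letting $p_1,p_2:W'\times_N W'\rightarrow W'$ be the two projections, we have
\[
h\gamma p_1=\beta p_1=\beta p_2=h\gamma p_2,
\]
and since $h$ is a monomorphism, $\gamma p_1=\gamma p_2$. Because $\beta$ is a qfpqc cover of $N$ and the representable presheaf $\Hom_S(-,M)$ is a qfpqc sheaf, there is a unique morphism $s:N\rightarrow M$ with $s\beta=\gamma$.

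Finally, I would verify that $s$ is a two-sided inverse of $h$. On the one hand,
\[
(hs)\beta=h\gamma=\beta=\id_N\circ\beta,
\]
and since $\beta$ is qfpqc and therefore an epimorphism in the category of $S$-schemes, $hs=\id_N$. On the other hand, $h(sh)=(hs)h=h=h\cdot\id_M$, and since $h$ is a monomorphism (hence left cancellable), we obtain $sh=\id_M$. Thus $h$ is an isomorphism with inverse $s$.

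The only real obstacle is checking that qfpqc morphisms are epimorphisms and that representable functors are sheaves in the qfpqc topology, so that the descent step in the middle actually produces $s$; both facts should be available from \cite[section~2]{Hashimoto5}, where qfpqc morphisms are set up precisely so that Grothendieck-style descent works. Everything else is a formal diagram chase using the monomorphism property.
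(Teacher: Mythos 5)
Your proof is correct and follows essentially the same route as the paper: apply Lemma~\ref{qfpqc-functor.thm} with $W=N$ and $\alpha=\id_N$, use the monomorphism property to check the cocycle condition $\gamma p_1=\gamma p_2$, descend along $\beta$ (the paper first replaces $\beta$ by an fpqc refinement and cites Vistoli's theorem that representable functors are fpqc sheaves, which is the same descent step you invoke), and then cancel on both sides exactly as you do. No gaps.
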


\begin{proof}
Letting $W=N$ in Lemma~\ref{qfpqc-functor.thm}, {\bf 2}, 
there is a qfpqc morphism $\beta:W'\rightarrow N$ which is
in the image of $h(W'):M(W')\rightarrow N(W')$.
Replacing $\beta$ if necessary, we may assume that $\beta$ is fpqc.
There is $\gamma\in M(W')$ such that $h\gamma=\beta$.
Let $p_i:W'\times_N W'\rightarrow W'$ be the $i$th projection.
Then $h\gamma p_1=\beta p_1=\beta p_2=h\gamma p_2$.
As $h$ is a monomorphism, $\gamma p_1=\gamma p_2$.
By \cite[(2.55)]{Vistoli}, there is $g:N\rightarrow M$ such that
$g\beta=\gamma$.
Then $hg\beta=h\gamma=\beta$.
By \cite[(2.9)]{Hashimoto5} (or by \cite[(2.55)]{Vistoli} again), $hg=1_N$.
So $hgh=1_Nh=h1_M$.
As $h$ is a monomorphism, $gh=1_M$.
So $g=h^{-1}$, and $h$ is an isomorphism, as desired.
\end{proof}

\paragraph\label{four-groups.par}
Let $E$ be an $S$-group scheme, and $G$ its subgroup scheme.
Let $f:G\rightarrow H$ be a qfpqc homomorphism between $S$-group schemes,
and $N:=\Ker f$.
Assume that $G$ and $N$ are normal in $E$.
As $G$ is normal in $E$, $E$ acts on $G$ by the conjugation.
That is, the action is given by $(a,g)\mapsto aga^{-1}$ for $a\in E$ and 
$g\in G$.

\begin{lemma}
Let the notation be as in {\rm (\ref{four-groups.par})}.
There exists a unique action of $E$ on $H$ such that $f$ is an $E$-morphism.
The action is by group automorphisms.
\end{lemma}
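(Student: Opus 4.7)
The plan is to descend the conjugation action $\alpha\colon E\times G\to G$, $(a,g)\mapsto aga^{-1}$, along $f$. Composing, set $\phi:=f\circ\alpha\colon E\times G\to H$. Since $f$ is qfpqc, the base change $1_E\times f\colon E\times G\to E\times H$ is qfpqc, hence by \cite[(2.9)]{Hashimoto5} an (effective) epimorphism of schemes. So producing a unique $\beta\colon E\times H\to H$ with $\beta\circ(1_E\times f)=\phi$ is equivalent to checking that $\phi\circ p_1=\phi\circ p_2$, where $p_1,p_2$ are the two projections from $(E\times G)\times_{E\times H}(E\times G)$ to $E\times G$; uniqueness is then automatic from the epimorphism property.

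For the equalizer check, identify $(E\times G)\times_{E\times H}(E\times G)\cong E\times(G\times_H G)$. Because $f$ is a qfpqc homomorphism with kernel $N$, it is a principal $N$-bundle, and the map $G\times N\to G\times_H G$, $(g,n)\mapsto(g,gn)$, is an isomorphism; under it the two projections become $(a,g,n)\mapsto(a,g)$ and $(a,g,n)\mapsto(a,gn)$. So it suffices to verify $\phi(a,gn)=\phi(a,g)$, which is immediate from the normality of $N$ in $E$:
\[
\phi(a,gn)=f(agna^{-1})=f\bigl((aga^{-1})(ana^{-1})\bigr)=f(aga^{-1})\cdot f(ana^{-1})=\phi(a,g),
\]
since $ana^{-1}\in N=\ker f$. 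This produces $\beta$, and by construction $f$ is $E$-equivariant: $\beta\circ(1_E\times f)=f\circ\alpha$.

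It remains to verify that $\beta$ defines an action and that for each point of $E$ the induced self-map of $H$ is a group homomorphism. Each of the required identities between morphisms into $H$, namely $\beta\circ(\mu_E\times 1_H)=\beta\circ(1_E\times\beta)$, $\beta\circ(e\times 1_H)=1_H$, and $\beta(a,hh')=\beta(a,h)\beta(a,h')$, has a source admitting a canonical qfpqc cover of the form $E^k\times G^\ell\to E^k\times H^\ell$ (apply $1\times f$ in each $H$ factor, and use that qfpqc is stable under composition and base change); after this cover, each identity reduces to an identity in $\phi=f\circ\alpha$ that follows from the corresponding property of $\alpha$ (associativity, unit) or from the facts that conjugation by a fixed $a\in E$ is a group automorphism of $G$ and $f$ is a homomorphism. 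Invoking the epimorphism property of the cover once more, the identities descend to $E\times H$ (resp.\ $E\times E\times H$, $E\times H\times H$), finishing the proof.

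The main obstacle is organizational rather than algebraic: one must package the several equalizer conditions cleanly and invoke the qfpqc epimorphism property at the right spots. All the content is concentrated in the one calculation $f(ana^{-1})=1$, i.e., in the normality of $N$ in $E$.
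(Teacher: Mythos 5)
Your proof is correct and follows essentially the same route as the paper: descend $f\circ(\text{conjugation})$ along the qfpqc epimorphism $1_E\times f$ using the fpqc sheaf property of the representable functor $H$, with all the content in the computation that normality of $N$ in $E$ forces $f$ to kill the relevant conjugate (the paper writes this as $f(bg_1g_2^{-1}b^{-1})=e$ for $(g_1,g_2)\in G\times_HG$, rather than via your identification $G\times_HG\cong G\times N$). Your explicit verification of the action axioms by the same descent trick fills in details the paper leaves to the reader, but it is the same argument.
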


\begin{proof}
Consider the following diagram.
\[
\xymatrix{
E\times G\times_{H}G \ar@<1ex>[r]^-{1_E\times p_1}
\ar@<-1ex>[r]_-{1_E\times p_2} &
E\times G \ar[r]^{1_E\times f} \ar[d]^{a_G} &
E\times H \ar@{.>}[d]^{a_H} \\
 & G \ar[r]^f & H
}
\]
We want to find a unique arrow $a_H$ such that the diagram is commutative.
As the representable functor $H=\Hom_{\Sch/S}(?,H)$ is a sheaf 
with respect to the fpqc topology \cite[(2.55)]{Vistoli},
$H(E\times H)$ is the difference kernel of
\[
\xymatrix{
H(E\times G)
\ar@<1ex>[r]^-{(1_E\times p_1)^*}
\ar@<-1ex>[r]_-{(1_E\times p_2)^*} &
H(E\times G\times_H G)
}.
\]
So $a_H$ which makes the diagram commutative is unique.

To show the existence, it suffices to show that 
$fa_G(1_E\times p_1)=fa_G(1_E\times p_2)$.
Let $b\in E$, and $(g_1,g_2)\in G\times_H G$.
Then
\begin{multline*}
(fa_G(1_E\times p_1)(b,g_1,g_2))
(fa_G(1_E\times p_2)(b,g_1,g_2))^{-1}
=\\(fa_G(b,g_1))(fa_G(b,g_2))^{-1}
=f(bg_1b^{-1})f(bg_2b^{-1})^{-1}=f(bg_1g_2^{-1}b^{-1}).
\end{multline*}
As $g_1g_2^{-1}\in N$ and $N$ is normal in $E$, $f(bg_1g_2^{-1}b^{-1})$ is 
trivial, and we have
$fa_G(1_E\times p_1)=fa_G(1_E\times p_2)$.

We show that the action $a_H$ is by group automorphisms.
We want to show that the two maps $\alpha_1,\alpha_2:
E\times H\times H\rightarrow H$
given by $\alpha_1(b,h_1,h_2)=b\cdot (h_1h_2)$ and $\alpha_2(b,h_1,h_2)
= (b\cdot h_1)(b\cdot h_2)$ agree.
As the qfpqc morphism $1_E\times f\times f:
E\times G\times G\rightarrow E\times H\times H$ is an epimorphism,
it suffices to prove that $\alpha_1(1_E\times f\times f)
=\alpha_2(1_E\times f\times f)$.
This is left to the reader.
\end{proof}

\begin{lemma}\label{principal-composition.thm}
Let $E$, $G$, $f:G\rightarrow H$, and $N$ be as in {\rm(\ref{four-groups.par})}.
Let $X$, $Y$, and $Z$ be $E$-schemes.
Let $\varphi:X\rightarrow Y$ be a $G$-morphism which is $N$-invariant.
Let $\psi:Y\rightarrow Z$ be an $H$-invariant morphism.
Consider the following conditions.
\begin{enumerate}
\item[\bf a] $\varphi$ is an $E$-enriched principal $N$-bundle.
\item[\bf b] $\psi$ is an $E$-enriched principal $H$-bundle.
\item[\bf c] $\psi\varphi$ is an $E$-enriched principal $G$-bundle.
\end{enumerate}
Then we have
\begin{enumerate}
\item[\bf 1] {\bf a} and {\bf b} together imply {\bf c}.
\item[\bf 2] {\bf a} and {\bf c} together imply {\bf b}.
\item[\bf 3] If $\varphi$ is an $E$-morphism, then 
{\bf b} and {\bf c} together imply {\bf a}.
\end{enumerate}
\end{lemma}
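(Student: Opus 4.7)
The plan is to exploit the commutative square
\[
\xymatrix{
G\times X \ar[r]^-{\Psi_{G,\psi\varphi}} \ar[d]_-{f\times\varphi} &
X\times_Z X \ar[d]^-{\varphi\times_Z\varphi} \\
H\times Y \ar[r]_-{\Psi_{H,\psi}} &
Y\times_Z Y
}
\]
whose commutativity is immediate from $\varphi(gx)=f(g)\varphi(x)$. The key observation is that this square is cartesian under (a), and also under (b)+(c); once cartesianness is available, each of parts 1, 2, 3 follows by propagating the isomorphism property of one arrow to another, via either base change or fpqc descent of isomorphisms.

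To prove cartesianness under (a), set $P:=(H\times Y)\times_{Y\times_Z Y}(X\times_Z X)$, which on $T$-points reads $\{(h,x_0,x_1):h\varphi(x_1)=\varphi(x_0)\}$, and consider the canonical morphism $\Phi:G\times X\to P$, $(g,x)\mapsto(f(g),gx,x)$. Uniqueness of a preimage $g$ of $(h,x_0,x_1)$ follows from freeness of the $N$-action on $X$, which is a consequence of (a) by Lemma~\ref{stabilizer-basics.lem}, so $\Phi$ is a monomorphism. For existence, lift $h$ to some $g_0\in G(T')$ after a qfpqc base change $T'\to T$ (using that $f$ is qfpqc), note $\varphi(g_0x_1)=h\varphi(x_1)=\varphi(x_0)$, and invoke the isomorphism $\Psi_{N,\varphi}$ from (a) to obtain the unique $n\in N(T')$ with $n(g_0x_1)=x_0$; then $g:=ng_0$ satisfies $\Phi(g,x_1)=(h,x_0,x_1)$. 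Hence $\Phi$ is qfpqc by Lemma~\ref{qfpqc-functor.thm}, and being a qfpqc monomorphism, it is an isomorphism by Lemma~\ref{monomorphism-isom.thm}.

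Part 1 is then immediate: $\Psi_{G,\psi\varphi}$ is the base change of the isomorphism $\Psi_{H,\psi}$ (from (b)) along $\varphi\times_Z\varphi$, hence itself an isomorphism, and $\psi\varphi$ is qfpqc and $E$-equivariant as a composition. For part 2, $\varphi\times_Z\varphi$ is fpqc as an iterated pullback of the principal $N$-bundle $\varphi$, and its base change of $\Psi_{H,\psi}$ is $\Psi_{G,\psi\varphi}$, which is an isomorphism by (c); fpqc descent of isomorphisms yields that $\Psi_{H,\psi}$ is an isomorphism. Qfpqc-ness of $\psi$ follows from that of $\psi\varphi$ by Lemma~\ref{qfpqc-functor.thm}, and $E$-equivariance of $\psi$ follows from that of $\psi\varphi$ and $\varphi$ using that $\varphi$ is an fpqc epimorphism. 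For part 3, assuming (b) makes $\Psi_{H,\psi}$ an isomorphism, so $P\cong X\times_Z X$ via the second projection, whence the square is cartesian iff $\Psi_{G,\psi\varphi}$ is an isomorphism, which is (c); pulling this cartesian square back along the diagonal $\Delta:Y\to Y\times_Z Y$ and using freeness of the $H$-action on $Y$ from (b) to identify $(H\times Y)\times_{Y\times_Z Y}Y\cong Y$ and $(G\times X)\times_{Y\times_Z Y}Y\cong N\times X$, one obtains a cartesian square whose top arrow $\Psi_{N,\varphi}:N\times X\to X\times_Y X$ is the base change of the isomorphism $\Psi_{G,\psi\varphi}$, hence an isomorphism; qfpqc-ness of $\varphi$ is by Lemma~\ref{qfpqc-functor.thm} applied to $\psi\varphi$, and $E$-equivariance of $\varphi$ is assumed in part 3. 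The main obstacle is the proof of cartesianness under (a), which requires the interplay between the qfpqc lifting property of $f$ and the principal $N$-bundle structure of $\varphi$; the remainder of the argument is bookkeeping of base change and fpqc descent.
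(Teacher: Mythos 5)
Your square-is-cartesian packaging is sound, and parts 1 and 2 are essentially correct: the proof that $\Phi:G\times X\to P$ is an isomorphism under (a) is exactly the paper's computation for its part 1 (monomorphism from freeness of the $N$-action via Lemma~\ref{stabilizer-basics.lem}, surjectivity after a qfpqc refinement using qfpqc-ness of $f$ and the isomorphism $\Psi_{N,\varphi}$, then Lemmas~\ref{qfpqc-functor.thm} and \ref{monomorphism-isom.thm}), and transporting the isomorphism property of $\Psi_{H,\psi}$, resp.\ $\Psi_{G,\psi\varphi}$, by base change, resp.\ descent along $\varphi\times_Z\varphi$, replaces the paper's second direct point computation. (Minor imprecision: $\varphi\times_Z\varphi$ is only qfpqc in general, not fpqc, but that is harmless since a qfpqc morphism admits an fpqc refinement, so isomorphism still descends.)

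Part 3, however, has a genuine gap: you claim that qfpqc-ness of $\varphi$ follows ``by Lemma~\ref{qfpqc-functor.thm} applied to $\psi\varphi$.'' Qfpqc-ness of a composite yields qfpqc-ness of the \emph{second} factor $\psi$ (this is how the paper argues in its part 2), not of the first factor $\varphi$: for instance the open immersion $\Bbb A^1\setminus\{0\}\hookrightarrow \Bbb A^1$ followed by the structure morphism to $\Spec k$ has fpqc composite while the first map is not even surjective. So at this point you have only shown that $\Psi_{N,\varphi}$ is an isomorphism, which by the characterization of principal bundles is not yet (a). The statement is true and the gap is repairable: given $\alpha:W\to Y$, apply qfpqc-ness of $\psi\varphi$ to $\psi\alpha$ to get, after a qfpqc $\beta:W'\to W$, some $\delta:W'\to X$ with $\psi\varphi\delta=\psi\alpha\beta$; then $(\alpha\beta,\varphi\delta)\in (Y\times_ZY)(W')$, so by (b) there is $h\in H(W')$ with $h\varphi\delta=\alpha\beta$, and lifting $h$ to $g\in G(W'')$ along a further qfpqc $W''\to W'$ (qfpqc-ness of $f$) gives $\varphi(g\cdot\delta)=\alpha\beta$ on $W''$, so $\varphi$ is qfpqc by Lemma~\ref{qfpqc-functor.thm}. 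The paper avoids the issue altogether: from the cartesian squares furnished by (b) and (c) it deduces that the square with vertices $G\times X$, $X$, $H\times X$, $Y$ is cartesian, so that $f\times 1_X$ (a principal $N$-bundle, $f$ being qfpqc with kernel $N$) is the base change of $\varphi$ along $H\times X\to Y$, $(h,x)\mapsto h\varphi(x)$, a qfpqc morphism (it is, up to the cartesian identification $H\times X\cong X\times_ZY$, the base change of $\psi\varphi$ along $\psi$); descent of the principal-bundle property along qfpqc morphisms \cite[(2.11)]{Hashimoto5} then gives (a), qfpqc-ness of $\varphi$ included, in one stroke. Either repair should be inserted in place of your one-line justification.
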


\begin{proof}
{\bf 1}.
First, we prove that $\Psi_{G}:G\times X\rightarrow X\times_{Z}X$ given by
$\Psi_G(g,x)=(gx,x)$ is a monomorphism.
That is, 
$\Psi_{G}(W): G(W)\times X(W)\rightarrow X(W)\times_{Z(W)}X(W)$
is injective for any $S$-scheme $W$.
Let $(g,x),(g_1,x_1)\in G(W)\times X(W)$ such that 
$\Psi_{G}(W)(g,x)=\Psi_{G}(W)(g_1,x_1)$.
Then $x=x_1$, and $gx=g_1x$.
Letting $g_1^{-1}g=u$, $ux=x$.
As $f(u)\varphi(x)=\varphi(x)$ and hence 
$\Psi_{H}(e,\varphi(x))=\Psi_{H}(f(u),\varphi(x))$, 
we have that $f(u)=e$ by {\bf b}.
That is, $u\in N$.

As $\Psi_{N}(e,x)=\Psi_{N}(u,x)$, we have $u=e$ by {\bf a}, and hence 
$\Psi_{G}$ is injective.

Next, we prove that for any
$(x',x)\in X(W)\times_{Z(W)}X(W)$, there exists some qfpqc morphism
$\alpha:W'\rightarrow W$ such that $(x'\alpha,x\alpha)\in 
X(W')\times_{Z(W')}X(W')$ is in the image of $\Psi_{G}(W')$.

Note that $(\varphi x',\varphi x)\in Y(W)\times_{Z(W)}Y(W)$ is in the
image of $\Psi_{H}(W)$.
That is, there exists some $h\in H(W)$ such that $h\varphi x=\varphi x'$.
As $f:G\rightarrow H$ is qfpqc, there exists some qfpqc morphism
$\beta:W'\rightarrow W$ and $g\in G(W')$ such that $f(g)=h\beta$.
Then we have $\varphi(g(x\beta))=\varphi(x'\beta)$.
As $(g(x\beta),x'\beta)\in X(W')\times_{Y(W')}X(W')$ is in the image
of $\Psi_{N}(W')$,
there exists some $n\in N(W')$ such that $ng(x\beta)=x'\beta$.
This shows that $\Psi_{G}(ng,x\beta)=(x'\beta,x\beta)$, and hence
the image of $(x',x)$ in $(X\times_{Z}X)(W')$ is in the image of
$\Psi_{G}(W')$.
Hence $\Psi_{G}$ is qfpqc by Lemma~\ref{qfpqc-functor.thm}.
Being a qfpqc monomorphism, $\Psi_G$ is an isomorphism
by Lemma~\ref{monomorphism-isom.thm}.
As $\varphi$ and $\psi$ are qfpqc $E$-morphisms, 
$\psi\varphi$ is a qfpqc $E$-morphism by \cite[(2.3)]{Hashimoto5}.
By \cite[(4.43)]{Vistoli}, $\psi\varphi$ is an $E$-enriched 
principal $G$-bundle.

{\bf 2}.
Let $W$ be any $S$-scheme, $y\in Y(W)$, $h\in H(W)$ such that $hy=y$.
Then there is a qfpqc morphism $\beta:W'\rightarrow W$, $x\in X(W')$, 
$g\in G(W')$ such that $\varphi(x)=y\beta$ and $f(g)=h\beta$.
Thus $f(g)(\varphi(x))=\varphi(x)$.
This implies $(gx,x)\in X(W')\times_{Y(W')}X(W')$.
So there exists some $n\in N(W')$ such that $(nx,x)=(gx,x)$.
As $\Psi_{G}$ is a monomorphism, $g=n\in N(W')$.
Hence $h\beta=f(g)=e=e\beta$.
As $\beta$ is an epimorphism, $h=e$, and hence $\Psi_{H}$ is a monomorphism.

Next, let $(y',y)\in Y(W)\times_{Z(W)}Y(W)$.
Take an fpqc morphism $\beta:W'\rightarrow W$, $x'\in X(W')$, $x\in X(W')$
such that $\varphi(x')=y'\beta$ and $\varphi(x)=y\beta$.
As $\Psi_{G}(W')$ is bijective, there exists some $g\in G(W')$ such that
$gx=x'$.
Then $\Psi_{H}(fg,y\beta)=(y'\beta,y\beta)$.
By Lemma~\ref{monomorphism-isom.thm}, $\Psi_{H}$ is an isomorphism.
As $\psi\varphi$ is qfpqc, $\psi$ is qfpqc.
As $\psi\varphi$ is an $E$-morphism and $\varphi$ is a qfpqc $E$-morphism,
it is easy to see that $\psi$ is an $E$-morphism.
Thus $\psi$ is an $E$-enriched principal $H$-bundle.

{\bf 3}.
Consider the diagram
\[
\xymatrix{
G\times X \ar[rr]^{a_{G,X}} \ar[d]^{f\times 1_X} \ar@{}[drr]|{\text{(a)}} & &
X \ar[d]^{\varphi} \\
H\times X \ar[r]^{1_H\times \varphi} \ar[d]^{p_2^X} \ar@{}[dr]|{\text{(b)}} &
H\times Y \ar[r]^-{a_{H,Y}} \ar[d]^{p_2^Y} \ar@{}[dr]|{\text{(c)}} &
Y \ar[d]^{\psi} \\
X \ar[r]^{\varphi} & Y \ar[r]^{\psi} & Z
},
\]
where $p_2^X$ and $p_2^Y$ are the second projections.
It is easy to check that the diagram is commutative.
As $\psi$ is a principal $H$-bundle, the square (c) is cartesian.
Similarly, as $\psi\varphi$ is a principal $G$-bundle, the whole square
((a)+(b)+(c)) is also cartesian.
(b) is also cartesian, and hence it is easy to see that (a) is cartesian.
On the other hand, letting $N$ act on $H\times X$ trivially and on
$G\times X$ by $n(g,x)=(ng,x)$, (a) is a commutative diagram of 
$N$-schemes.

As $f\times 1_X$ is 
a principal $N$-bundle and it is a base change of $\varphi$ by $\psi\varphi$,
which is qfpqc, we have that $\varphi$ is also a principal $N$-bundle by
\cite[(2.11)]{Hashimoto5}.
As we assume that $\varphi$ is an $E$-morphism, we have that
$\varphi$ is an $E$-enriched principal $N$-bundle.
\end{proof}

\paragraph
In Lemma~\ref{principal-composition.thm}, {\bf 3}, the assumption that
$\varphi$ is an $E$-morphism is indispensable.
Let $S=\Spec k=Z$ with $k$ a field, $Y=H=N=E_0=\Bbb Z/2\Bbb Z$ (the
constant group), $X
=G=H\times N$, 
$\varphi=f:G\rightarrow H$ the first projection, and $E=E_0\times G$.
Let $E$ act on $X$ by $(e_0,h,n)(h',n')=(e_0hh',nn')$ for 
$e_0\in E_0$ and $(h,n),(h',n')\in G$.
Let $E$ act on $Y$ by $(e_0,h,n)y=hy$, and on $Z$ trivially.
Then {\bf b} and {\bf c} are satisfied, but $\varphi$ is not an $E$-morhphism.

\begin{lemma}\label{codim-two-flat.thm}
Let $\psi:Z'\rightarrow Z$ be a flat morphism of schemes, and $W\subset Z$ be 
an open subset.
Set $W':=\psi^{-1}(W)$.
\begin{enumerate}
\item[\bf 1] If $W$ is $n$-large 
in $Z$, then $W'$ is $n$-large in $Z'$.
\item[\bf 2] Assume that $\psi$ is qfpqc, and that
$Z'$ is locally Krull.
If $W'$ is large in $Z'$, then $W$ is large in $Z$.
\item[\bf 3] 
Assume that $\psi$ is qfpqc, and 
that $Z'$ is locally Noetherian.
If $W'$ is $n$-large in $Z'$, then $W$ is $n$-large in $Z$.
\end{enumerate}
\end{lemma}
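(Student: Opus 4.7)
I use going-down for flat morphisms as the unifying tool. For \textbf{Part 1}, given $z'\in Z'\setminus W'$, set $z=\psi(z')\in Z\setminus W$; then $\codim_Z z\geq n+1$, and going-down along the flat local homomorphism $\O_{Z,z}\to\O_{Z',z'}$ lifts any chain of specializations of length $n+1$ terminating at $z$ in $Z$ to such a chain terminating at $z'$ in $Z'$, yielding $\codim_{Z'}z'\geq n+1$.

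For \textbf{Parts 2 and 3}, the common strategy is: for each $z\in Z\setminus W$, produce a preimage $z'\in\psi^{-1}(z)$ with $\codim_{Z'}z'=\codim_Z z$. Since $\psi$ is qfpqc and hence surjective, pick $w\in\psi^{-1}(z)$ and, after restricting to a suitable affine neighborhood of $w$ in $Z'$, analyze the faithfully flat local homomorphism $A:=\O_{Z,z}\to B:=\O_{Z',w}$. Once such $z'$ is constructed, $z'\in\psi^{-1}(Z\setminus W)=Z'\setminus W'$, so the hypothesis on $W'$ forces $\codim_{Z'}z'$ (and hence $\codim_Z z$) to satisfy the desired bound.

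For \textbf{Part 3}, $B$ is Noetherian, so $A$ is Noetherian by faithfully flat descent: an ascending chain $I_1\subset I_2\subset\cdots$ of ideals of $A$ has $I_iB$ eventually stable, and $(I_{i+1}/I_i)\otimes_A B=I_{i+1}B/I_iB=0$ forces $I_{i+1}=I_i$. Take $z'$ corresponding to a minimal prime of $B$ over $\fm_zB$; the fiber local ring $\O_{Z',z'}/\fm_z\O_{Z',z'}$ has dimension zero, and Matsumura's flat dimension formula \cite[(15.1)]{CRT} gives $\codim_{Z'}z'=\codim_Z z$.

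For \textbf{Part 2}, only codimensions $0$ and $1$ need treatment. If $\codim_Z z=0$, then $\fm_z$ is nilpotent in $A$, hence $\fm_zB$ is nilpotent in $B$, and a prime minimal over $\fm_zB$ is a minimal prime of $B$, so $\codim_{Z'}z'=0$. If $\codim_Z z=1$, then $A$ is a one-dimensional local domain (the domain property descends from the Krull domain $B$ by faithful flatness). Pick any nonzero $t\in\fm_z$; since $B$ is a Krull domain, some height-one prime $Q$ of $B$ contains $t$. Because $A$ is a one-dimensional domain, $\sqrt{(t)_A}=\fm_A$, so every element of $\fm_A$ has a power in $(t)\subset Q$; this forces $\fm_A\subset Q\cap A$, and the maximality of $\fm_A$ gives $Q\cap A=\fm_A$, producing a preimage of codimension one. \textbf{The principal obstacle} is this codimension-one case in Part 2, where the lack of Noetherianness precludes the direct use of the flat dimension formula; it is circumvented by the Krull-domain property that every nonzero non-unit is contained in a height-one prime.
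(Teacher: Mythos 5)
Your proof is correct. Parts 1 and 3 follow the paper's own route: going-down for Part 1, and for Part 3 faithfully flat descent of the Noetherian property plus the dimension formula \cite[Theorem~15.1]{CRT} applied to a prime minimal over $\fm_z B$ — exactly what the paper does, only phrased with local rings instead of affine charts. Part 2 is where you genuinely diverge: the paper settles it in two lines by citing \cite[(5.8)]{Hashimoto4} (the locally Krull property descends, so $A$ may be assumed a Krull domain) and then \cite[(5.13)]{Hashimoto4}, whereas you never descend the Krull property at all — you only use that $A=\O_{Z,z}$ embeds into the Krull domain $B=\O_{Z',w}$ (hence is a domain) and handle the two low-codimension cases directly: in codimension one, the fact that every nonzero nonunit of a Krull domain lies in a height-one prime, together with $\sqrt{tA}=\fm_A$ in a one-dimensional local domain, produces a height-one prime of $B$ contracting to $\fm_A$. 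This is more elementary and self-contained (no input from \cite{Hashimoto4} beyond the definition of a Krull domain), at the price of being tailored to codimension $\leq 1$ — which is all Part 2 asserts, and the Remark after the lemma shows the hypotheses cannot simply be dropped, so nothing is lost in generality. One cosmetic point: in your codimension-zero case, ``$\fm_z$ is nilpotent'' should be read as ``$\fm_z$ consists of nilpotents, so $\fm_z B$ lies in the nilradical of $B$'' (in fact $\fm_z B=0$ here, since $B$ is a domain and $A\to B$ is injective); the conclusion that a prime minimal over $\fm_z B$ is a minimal prime of $B$ contracting to $\fm_z$ is unaffected.
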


\begin{proof}
{\bf 1}.
Let $z'\in Z'\setminus W'$.
Then $z:=\psi(z')\in Z\setminus W$, and hence $\dim \O_{Z,z}\geq n$.
As the going-down theorem holds between $\O_{Z,z}$ and $\O_{Z',z'}$ by
flatness, we have $\dim \O_{Z',z'}\geq \dim \O_{Z,z}\geq n$,
and hence $W'$ is $n$-large in $Z'$.

{\bf 2, 3}.
The question is local on $Z$, and
we may assume that $Z=\Spec A$ is affine.
Replacing $Z'$, we may assume that $Z'=\Spec A'$ is also affine.
Note that $A'$ is faithfully flat over $A$.

We prove {\bf 2}.
$A'$ is locally Krull.
As $A'$ is a finite direct product of Krull domains, so is $A$ by
\cite[(5.8)]{Hashimoto4}.
So we may further assume $A$ is a domain.
Then the result follows from \cite[(5.13)]{Hashimoto4}.

We prove {\bf 3}.
$A'$ is Noetherian.
Then $A$ is also Noetherian.
Let $P\in \Spec A$ with $\height P<n$.
If $P'$ is a minimal prime of the ideal $PB$,
then $\height P'<n$ by \cite[Theorem~15.1]{CRT}.
So the assertion follows.
\end{proof}

\begin{remark}
Let $A$ be the DVR $k[y]_{(y)}$, $B$ the DVR $k(y)[x]_{(x)}$, and
$A':=A+xB$.
Note that $A'$ is the composite of $B$ and $A$ \cite[section~10]{CRT}.
Set $Z':=\Spec A'$, and $Z:=\Spec A$.
Let $W=Z\setminus\{(y)\}=D(y)\subset Z$.
Let $\psi:Z'\rightarrow Z$ be the morphism associated with the inclusion
$A\hookrightarrow A'$.
Then although $A$ is a DVR and $A'$ is a valuation ring faithfully flat
over $A$, the conclusion of {\bf 2} or {\bf 3} in (\ref{codim-two-flat.thm})
does not hold.
So the Noetherian or Krull hypothesis on $A'$ is indispensable.
\end{remark}

\begin{lemma}
Let $f:G\rightarrow H$ be a qfpqc homomorphism between $S$-group schemes 
with $N=\Ker f$, 
and $\varphi:X\rightarrow Y$ a $G$-morphism which is $N$-invariant.
Let $U\subset X$ and $V\subset Y$ be open subsets.
Let $h:Y'\rightarrow Y$ be a flat $G$-morphism such that $Y'$ is $N$-trivial.
Let $\varphi':X':=Y'\times_Y X\rightarrow Y'$ be the base change,
and set $U'=Y'\times_Y U$, and $V'=Y'\times_Y V$.
Then
\begin{enumerate}
\item[\bf a] If 
$\varphi$ is a $G$-enriched $n$-almost principal $N$-bundle with respect
to $U$ and $V$, then
$\varphi'$ is a $G$-enriched $n$-almost 
principal $N$-bundle with respect to $U'$ and $V'$.
\item[\bf b] Assume that $h$ is fpqc,
and that both
$X'$ and $Y'$ are locally Krull \(resp.\ locally Noetherian\).
If $\varphi'$ is a $G$-enriched almost \(resp.\ $n$-almost\) 
principal $N$-bundle with respect to $U'$ and $V'$,
then 
$\varphi$ is a $G$-enriched almost \(resp.\ $n$-almost\) 
principal $N$-bundle with respect to $U$ and $V$.
\end{enumerate}
\end{lemma}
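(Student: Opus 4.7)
The plan is to verify, in each case, the six conditions of Definition~\ref{rational-almost-pb.def} for the appropriate diagram, using flat base change to ascend conditions in (a) and fpqc descent to descend them in (b).

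For part (a), I would check each condition for the diagram $X'\xleftarrow{i'} U'\xrightarrow{\rho'} V'\hookrightarrow Y'$. Triviality of the $N$-action on $Y'$ is part of the hypothesis. The maps $i'$ and $j'$ are open immersions as base changes of $i$ and $j$. For the codimension conditions, since $h$ is flat, so is the induced morphism $h_X:X'\to X$; then Lemma~\ref{codim-two-flat.thm}, part {\bf 1}, gives that $U'=h_X^{-1}(U)$ is $n$-large in $X'$ and $V'=h^{-1}(V)$ is $n$-large in $Y'$. Finally, $\rho'$ is the base change of the $G$-enriched principal $N$-bundle $\rho:U\to V$ along the flat $G$-morphism $h|_{V'}:V'\to V$, and principal bundles are preserved by base change.

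For part (b), I would reverse the direction. Since $h$ is fpqc, so is $h_X:X'\to X$. The ``residual'' hypotheses---$N$-triviality of the action on $Y$, $G$-stability of $U$, $H$-stability of $V$, and the inclusion $\varphi(U)\subset V$---all descend via faithful flatness; e.g., the equality of the action map $a_{N,Y}$ with the second projection descends from $Y'$ to $Y$ by the sheaf property \cite[(2.55)]{Vistoli}, and the various stability conditions descend similarly using $G$-equivariance of $h$ and $h_X$. For the codimension conditions, Lemma~\ref{codim-two-flat.thm}, part {\bf 2}, handles the locally Krull case with largeness ($1$-largeness), and part {\bf 3} handles the locally Noetherian case with arbitrary $n$-largeness. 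For the principal bundle property, $\rho'$ is the base change of $\rho$ along the fpqc morphism $h|_{V'}:V'\to V$; by \cite[(2.11)]{Hashimoto5}, $\rho$ is qfpqc, and the secondary map $\Psi_{N,\rho}$ is an isomorphism because its base change $\Psi_{N,\rho'}$ is (fpqc descent of isomorphisms). Thus $\rho$ is a $G$-enriched principal $N$-bundle.

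The verification is largely bookkeeping, but the main point to be aware of is the asymmetric pairing in (b): the Krull hypothesis descends only largeness, while the Noetherian hypothesis descends arbitrary $n$-largeness. This traces back directly to the analogous asymmetry between parts {\bf 2} and {\bf 3} of Lemma~\ref{codim-two-flat.thm}, and is the reason the two subcases of (b) are grouped together with the (resp.) notation rather than stated as a single uniform statement.
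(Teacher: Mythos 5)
Your proposal is correct and follows essentially the same route as the paper: both check the six conditions of Definition~\ref{rational-almost-pb.def}, using Lemma~\ref{codim-two-flat.thm}, {\bf 1} together with base change of principal bundles (\cite[(2.7)]{Hashimoto5}) for part (a), and Lemma~\ref{codim-two-flat.thm}, {\bf 2} (resp.\ {\bf 3}) together with fpqc descent of principal bundles (\cite[(2.11)]{Hashimoto5}) for part (b), including the observation that the Krull/Noetherian dichotomy is exactly the largeness versus $n$-largeness asymmetry. The only cosmetic difference is that the paper makes the descent of $G$-stability of $U$ (and $H$-stability of $V$) explicit via an image argument with the surjection $1\times h|_{U'}$, which is the same faithful-flatness mechanism you invoke.
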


\begin{proof}
Clearly, $\varphi'$ is also a $G$-morphism which is $N$-invariant.

{\bf a} Let $\rho:U\rightarrow V$ be the restriction of $\varphi$, and
$\rho':U'\rightarrow V'$ be its base change.
Each of the six conditions in Definition~\ref{rational-almost-pb.def} for
$X'$, $\rho':U'\rightarrow V'$ and $U'$ is proved using the corresponding
condition for
$X$, $\rho:U\rightarrow V$ and $U$.
This is trivial for the conditions {\bf 1}, {\bf 2}, and {\bf 3}.
The conditions {\bf 4} and {\bf 5} follow from 
Lemma~\ref{codim-two-flat.thm}, {\bf 1}.
The conditions {\bf 6} follows from \cite[(2.7)]{Hashimoto5}.

{\bf b} 
The image of the composite
\[
G\times U'\hookrightarrow G\times X'\xrightarrow{a} X'\rightarrow X
\]
is contained in $U$ by assumption.
This map agrees with
\[
G\times U'\xrightarrow{1\times h|_{U'}} G\times U\hookrightarrow 
G\times X\xrightarrow{a}X.
\]
As $1\times h|_{U'}$ is faithfully flat and hence is surjective,
$U$ is $G$-stable.
Similarly, $V$ is $H$-stable.

Now we check the six conditions
in Definition~\ref{rational-almost-pb.def} for
$X$, $\rho:U\rightarrow V$ and $U$.
The conditions {\bf 1, 2, 3} are assumed.

{\bf 4} and {\bf 5} are the consequences of Lemma~\ref{codim-two-flat.thm},
{\bf 2} (resp.\ {\bf 3}).
{\bf 6} follows from \cite[(2.11)]{Hashimoto5}.
\end{proof}

\begin{theorem}\label{alg-quot-krull-equiv.thm}
Let $G$ be a quasi-compact quasi-separated flat $S$-group scheme, 
and $\varphi:X\rightarrow Y$ a quasi-compact quasi-separated 
almost principal $G$-bundle.
Assume that $X$ is locally Krull.
Then the following are equivalent.
\begin{enumerate}
\item[\bf 1] The canonical map $\bar\eta:
\O_Y\rightarrow (\varphi_*\O_X)^G$ is an
isomorphism.
\item[\bf 2] $Y$ is a locally Krull scheme.
\end{enumerate}
\end{theorem}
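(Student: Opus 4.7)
\paragraph
\textbf{Proof plan.}
Fix $G$-stable open subsets $U\subset X$ and $V\subset Y$ witnessing the almost principal $G$-bundle structure, with $i:U\hookrightarrow X$ and $j:V\hookrightarrow Y$ the inclusions and $\rho:U\rightarrow V$ the restriction of $\varphi$. The plan is to identify $\bar\eta:\O_Y\rightarrow(\varphi_*\O_X)^G$ with the unit map $u:\O_Y\rightarrow j_*j^*\O_Y=j_*\O_V$, and then to show separately that each of the two conditions is equivalent to $u$ being an isomorphism. Note first that $U$ is open in the locally Krull $X$, hence locally Krull itself, and that $\rho$ is qfpqc (being a principal bundle for the flat group $G$); by the fpqc descent of the locally Krull property from \cite{Hashimoto4}, $V$ is locally Krull as well.

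\paragraph
For the identification, since $X$ is locally Krull and $U$ is $1$-large in $X$, the equivalence $\Ref(U)\cong \Ref(X)$ via $(i_*,i^*)$ from \cite{Hashimoto4} applied to the reflexive sheaf $\O_X$ gives $\O_X\cong i_*\O_U$. Therefore $\varphi_*\O_X\cong \varphi_*i_*\O_U=j_*\rho_*\O_U$. Applying $(?)^G$, using the canonical isomorphism $e:j_*(?)^G\cong (?)^Gj_*$ from section~\ref{invariance.sec}, and using that $\rho:U\rightarrow V$ is a principal $G$-bundle so that $\bar\eta_\rho:\O_V\rightarrow (\rho_*\O_U)^G$ is an isomorphism, we obtain a chain
\[
(\varphi_*\O_X)^G\cong (j_*\rho_*\O_U)^G\xrightarrow{e^{-1}} j_*(\rho_*\O_U)^G\xrightarrow{\bar\eta_\rho^{-1}} j_*\O_V.
\]
A diagram chase using Lemma~\ref{bar-eta-isom.thm} and the formulas in (\ref{bar-eta.par}) shows that under this isomorphism $\bar\eta:\O_Y\rightarrow (\varphi_*\O_X)^G$ is carried to the unit $u:\O_Y\rightarrow j_*\O_V$.

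\paragraph
For the implication \textbf{2}$\Rightarrow$\textbf{1}, if $Y$ is locally Krull then $\O_Y$ is reflexive, and since $V$ is $1$-large in $Y$ the equivalence $\Ref(V)\cong\Ref(Y)$ from \cite{Hashimoto4} again gives $\O_Y\cong j_*j^*\O_Y=j_*\O_V$, so $u$ is an isomorphism.

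\paragraph
The implication \textbf{1}$\Rightarrow$\textbf{2} is the heart of the argument. The question is local on $Y$, so pick an affine open $W=\Spec A\subset Y$, and set $V':=V\cap W$. Then $V'$ is $1$-large in $W$, the scheme $V'$ is locally Krull (being open in the locally Krull $V$), and by assumption \textbf{1} together with the identification above,
\[
A=\Gamma(W,\O_Y)\cong \Gamma(W,j_*\O_V)=\Gamma(V',\O_{V'}).
\]
The task is thus reduced to the following purely ring-theoretic claim: if $V'$ is a $1$-large open subscheme of $\Spec A$ that is locally Krull and $A=\Gamma(V',\O_{V'})$, then $A$ is locally Krull. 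The plan for this is to cover $V'$ by finitely many affines $\Spec B_\alpha$ with each $B_\alpha$ a finite direct product of Krull domains (using quasi-compactness of $V'$, which follows from the qcqs assumption on $\varphi$), observe that every height-$\leq 1$ prime of $A$ lies in $V'$ so that $A_P$ is a DVR or a field, and conclude via the global sections formula that $A=\bigcap_{\height P\leq 1}A_P$ with the required local finiteness, decomposing $A$ as a finite direct product of Krull domains. I expect this final step to be the main obstacle, since it requires carefully controlling the irreducible components and passage from a Krull condition on $V'$ to one on $\Spec A$ via the complement of codimension $\geq 2$; the relevant finiteness and decomposition tools are in \cite{Hashimoto4} and \cite{Hashimoto5}, in particular the behavior of locally Krull schemes under taking closures of $1$-large opens.
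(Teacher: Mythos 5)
Your identification of $\bar\eta$ with the unit $u\colon \O_Y\rightarrow j_*\O_V$ and your treatment of {\bf 2}$\Rightarrow${\bf 1} are correct and essentially coincide with the paper's argument (the paper reduces to $V=Y$ via Lemma~\ref{bar-eta-isom.thm}, {\bf 3} and then quotes \cite[(5.28)]{Hashimoto4} and \cite[(5.31)]{Hashimoto5}). The genuine gap is in {\bf 1}$\Rightarrow${\bf 2}. After reducing to $W=\Spec A$ with $A=\Gamma(V',\O_{V'})$ and $V'=V\cap W$ a $1$-large locally Krull open subscheme of $\Spec A$, everything hinges on your final ``purely ring-theoretic claim,'' and the plan you sketch for it starts from an unjustified premise: the quasi-compactness of $V'$ does \emph{not} follow from the qcqs assumption on $\varphi$. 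Those hypotheses concern $\varphi$ and $G$ only; nothing forces the open immersions $i$ and $j$ to be quasi-compact, so $V'$ need not be retrocompact in the affine $W$. What $1$-largeness and $A=\Gamma(V',\O_{V'})$ do give is that $A_P$ is a field or a DVR for $\height P\le 1$ and that $A$ is (componentwise) the intersection of these localizations, i.e.\ $A$ is completely integrally closed with Krull height-one localizations. But the finite-character condition in the definition of a Krull domain --- a nonzero element lies in only finitely many height-one primes --- and even the finiteness of the number of components are exactly what the quasi-compactness of $V'$ was supposed to supply, and they are not available. Nor can you repair this by shrinking: shrinking $V$ to a quasi-compact large open via \cite[(5.29)]{Hashimoto4} presupposes that $Y$ is locally Krull (circular), and shrinking $U$ instead and pushing forward along $\rho$ would need $\rho$ to be open, an extra hypothesis that Theorem~\ref{four-term.thm} adds for precisely this purpose but which is absent here.

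The paper gets the needed finiteness from $X$, not from $V$: after localizing to $Y$ affine by Lemma~\ref{bar-eta-isom.thm}, {\bf 2}, the scheme $X$ itself becomes quasi-compact quasi-separated (this is where the qcqs hypothesis on $\varphi$ actually enters), and {\bf 1} identifies $A=\Gamma(Y,\O_Y)$ with $\Gamma(X,\O_X)^G$; then \cite[(6.3)]{Hashimoto4} --- the ring of invariants of a quasi-compact quasi-separated locally Krull $G$-scheme is again locally Krull --- yields {\bf 2} immediately. To salvage your route you would either have to prove your ring-theoretic claim with no compactness of $V'$ at all (doubtful: it amounts to upgrading an almost Krull intersection to a Krull one), or import the finite-character statement from the quasi-compact $X$, which in effect reproves the cited result.
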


\begin{proof}
By Lemma~\ref{bar-eta-isom.thm}, {\bf 2},
the question is local on $Y$, and hence we may
assume that $Y$ is affine.
So $X$ is quasi-compact quasi-separated, and
{\bf 1$\Rightarrow$2} follows from \cite[(6.3)]{Hashimoto4}.

We prove {\bf 2$\Rightarrow$1}.
Let $\varphi$ be an almost principal $G$-bundle with respect to $i:U
\hookrightarrow X$
and $j:V\hookrightarrow Y$.
Let $\rho:U\rightarrow V$ be the restriction of $\varphi$, which is a
principal $G$-bundle.
Then applying Lemma~\ref{bar-eta-isom.thm}, {\bf 3} to the cartesian square
\[
\xymatrix{
\varphi^{-1}(V) \ar[r]^-{i'} \ar[d]^{\varphi'} & X \ar[d]^\varphi \\
V \ar[r]^j & Y
},
\]
the result follows if $\bar\eta:\O_V\rightarrow
(\varphi'_*\O_{\varphi^{-1}(V)})^G$ is
an isomorphism, where $\varphi':\varphi^{-1}(V)\rightarrow V$ is the
restriction of $\varphi$, and $i':\varphi^{-1}(V)\rightarrow X$ is
the inclusion.
So we may assume that $V=Y$ (and hence $\varphi^{-1}(V)=X$).
As $\eta:\O_X\rightarrow i_*\O_U$ is an isomorphism by 
\cite[(5.28)]{Hashimoto4}, it suffices to show that the composite
\[
\O_Y\xrightarrow{\bar\eta}(\varphi_*\O_X)^G\xrightarrow{\eta}
(\varphi_*i_*\O_U)^G=(\rho_*\O_U)^G,
\]
which equals $\bar\eta$ for $\rho$, is an isomorphism.
As $\rho$ is a principal $G$-bundle, this is \cite[(5.31)]{Hashimoto5}.
\end{proof}

\begin{example}\label{masuda.ex}
Theorem~\ref{alg-quot-krull-equiv.thm} can be used to check that a 
candidate of the invariant subring is certainly the one.

Let $S=\Spec \Bbb C$, $X=\Bbb A^4_{\Bbb C}$, $Y=\Bbb A^3_{\Bbb C}$, and
$G=\Bbb G_a=\Spec \Bbb C[\tau]$.
Let $G$ act on $X$ by $t(x_1,x_2,x_3,x_4)=(x_1+tx_2,x_2,x_3+tx_4,x_4)$.
Let $\varphi:X\rightarrow Y$ be the map given by 
$\varphi(x_1,x_2,x_3,x_4)=(x_2,x_4,x_1x_4-x_2x_3)$.
Let $B=\Gamma(X,\O_X)=\Bbb C[\xi_1,\xi_2,\xi_3,\xi_4]$, and 
$A=\Gamma(Y,\O_Y)=\Bbb C[\xi_2,\xi_4,w]$, where 
$\xi_i(x_1,x_2,x_3,x_4)=x_i$, and $w=\xi_1\xi_4-\xi_2\xi_3$.
Then it is easy to verify that $\varphi$ is $G$-invariant.
Let $V=D(\xi_2,\xi_4)=Y\setminus V(\xi_2,\xi_4)$, and $U=\varphi^{-1}(V)
=X\setminus V(\xi_2,\xi_4)$.
Obviously, $V$ is a large open subset of $Y$, and $U$ is a large 
$G$-stable open subset of $X$.
Let $\rho:U\rightarrow V$ be the restriction of $\varphi$.
Since $B[\xi_2^{-1}]=A[\xi_2^{-1}][-\xi_2^{-1}\xi_1]$ and $t(-\xi_2^{-1}\xi_1)
=-\xi_2^{-1}\xi_1+t$, $\Spec B[\xi_2^{-1}]\rightarrow \Spec A[\xi_2^{-1}]$ is
a trivial $G$-bundle.
Similarly, $\Spec B[\xi_4^{-1}]\rightarrow \Spec A[\xi_4^{-1}]$ is also 
a trivial $G$-bundle, and hence $\rho:U\rightarrow V$ is a principal
$G$-bundle.

Hence $\varphi:X\rightarrow Y$ is an almost principal $G$-bundle
with respect to $U$ and $V$.
By Theorem~\ref{alg-quot-krull-equiv.thm}, we have that $A=B^G$.
So $\varphi$ is an algebraic quotient.

Note that $\varphi$ is not surjective.
Indeed, $(0,0,1)$ is not in the image of $\varphi$.
This also shows that $\varphi$ is not a categorical quotient.
Indeed, if $\varphi$ is a categorical quotient, 
then letting $W=Y\setminus\{(0,0,1)\}$, 
$\psi:X\rightarrow W$ the same as $\varphi$, and $u:W\hookrightarrow Y$ 
the inclusion, we have that $\varphi=u\varphi'$.
By Lemma~\ref{categorical-quotient-mono.lem}, 
$u$ must be an isomorphism, and this is absurd.
\end{example}

\section{The behavior of the class groups and the canonical modules
with respect to rational almost principal bundles}
\label{class-canonical.sec}

\paragraph
Let 
$f:G\rightarrow H$ be an fpqc homomorphism between flat $S$-group schemes
with $N=\Ker f$.

\begin{theorem}\label{main.thm}
Let
\[
\xymatrix{
X & U \ar@{_{(}->}[l]_i \ar[r]^\rho & V \ar@{^{(}->}[r]^j & Y
}
\]
be a $G$-enriched rational almost principal $N$-bundle.
Assume that both $X$ and $Y$ are locally Krull.
Then $i_*\rho^*j^*:\Ref(H,Y)\rightarrow \Ref(G,X)$ is an equivalence,
and $(j_*\rho_*i^*?)^N$ is its quasi-inverse.
This equivalence induces an equivalence $\Ref_n(H,Y)\cong \Ref_n(G,X)$
for each $n\geq 0$, where $\Ref_n$ denotes the category of reflexive 
modules of rank $n$.
It also induces an isomorphism $\Cl(H,Y)\cong \Cl(G,X)$.
\end{theorem}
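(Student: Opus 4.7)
The plan is to realize the stated equivalence as the composition of three elementary equivalences, corresponding to the three arrows in the rational almost principal bundle diagram, and then read off the rank and class group statements from the fact that each piece preserves the relevant invariants.

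First I would factor the functor as
\[
\Ref(H,Y)\xrightarrow{j^{*}}\Ref(H,V)\xrightarrow{\rho^{*}}\Ref(G,U)\xrightarrow{i_{*}}\Ref(G,X),
\]
with quasi-inverses
\[
\Ref(G,X)\xrightarrow{i^{*}}\Ref(G,U)\xrightarrow{(?)^{N}\rho_{*}}\Ref(H,V)\xrightarrow{j_{*}}\Ref(H,Y).
\]
The middle pair is the Grothendieck descent equivalence for the $G$-enriched principal $N$-bundle $\rho:U\to V$; since $\rho$ is qfpqc and $N$ acts trivially on $V$, the standard descent result (as recalled in the introduction, cf.\ \cite[(3.13)]{Hashimoto4} and \cite[(6.18), (6.21)]{Hashimoto5}) gives an equivalence $\Qch(H,V)\simeq\Qch(G,U)$ whose quasi-inverse is $(?)^{N}\rho_{*}$, and this restricts to an equivalence on reflexive modules because $\rho$ is faithfully flat, hence preserves and reflects reflexivity. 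The outer two pairs are codimension-two extension equivalences on locally Krull schemes: since $V\hookrightarrow Y$ and $U\hookrightarrow X$ are large open immersions (conditions \textbf{4}, \textbf{5} of Definition~\ref{rational-almost-pb.def}) and $X$, $Y$ are locally Krull, the results of \cite[\S5]{Hashimoto4} and \cite[\S5]{Hashimoto5} yield that $j_{*}$ and $i_{*}$ preserve reflexivity and that $j^{*}$, $i^{*}$ are quasi-inverses to them (both on the non-equivariant side and, by the equivariant version in \cite[\S5]{Hashimoto5}, on the $H$- and $G$-equivariant sides).

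The main technical obstacle is to verify that $V$ (equivalently $U$) is locally Krull so that the codimension-two extension theorems on the two outer equivalences apply in the form needed, and to glue the three equivalences coherently as functors of $(G,\O_X)$- and $(H,\O_Y)$-modules (rather than merely of $\O_X$- and $\O_Y$-modules). For the Krull issue, $V$ is an open subscheme of the locally Krull $Y$, hence locally Krull; for $U$, one uses that $\rho:U\to V$ is fpqc and Lemma~\ref{codim-two-flat.thm} together with $X$ being locally Krull (or directly: $U$ is open in $X$). For the gluing, the key is that $j$ and $i$ are $H$- and $G$-stable open immersions, so the restriction and pushforward functors preserve the equivariant structure; and the compatibility of $(?)^{N}$ with $j_{*}$ is precisely the isomorphism $e:j_{*}(?)^{N}\cong (?)^{N}j_{*}$ from Section~\ref{invariance.sec} (the fact that $j^{*}(?)^{N}\cong (?)^{N}j^{*}$ via $\epsilon$ is similarly available, since $N$ is flat qfpqc). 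These compatibilities let me identify $(j_{*}\rho_{*}i^{*}?)^{N}$ with $j_{*}((?)^{N}\rho_{*})i^{*}$, which is the genuine composition of quasi-inverses.

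Finally, rank preservation is immediate from the three intermediate equivalences: $j^{*}$, $\rho^{*}$, $i^{*}$ all preserve rank (being pullbacks along dominant maps, and a principal bundle has trivial stalk-wise effect on ranks after trivialization). Therefore the restriction $\Ref_{n}(H,Y)\simeq \Ref_{n}(G,X)$ follows formally. For the class group statement, since $\Cl(H,Y)$ (resp.\ $\Cl(G,X)$) is by definition the set of isomorphism classes in $\Ref_{1}(H,Y)$ (resp.\ $\Ref_{1}(G,X)$) with group operation $(\M,\N)\mapsto (\M\otimes\N)^{**}$, and since the functor $i_{*}\rho^{*}j^{*}$ commutes with tensor product up to reflexivization (both $\rho^{*}$ being strong monoidal, and $i_{*}$, $j^{*}$ interacting correctly with $(-)^{**}$ via the codimension-two equivalence), the induced bijection on $\Ref_{1}$ is a group isomorphism $\Cl(H,Y)\cong \Cl(G,X)$.
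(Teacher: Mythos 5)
Your proposal is correct and follows essentially the same route as the paper: the paper's proof is simply the citation of \cite[(7.4)]{Hashimoto5} and \cite[(5.31)]{Hashimoto4}, which package exactly the three-step decomposition you spell out (large-open-immersion extension equivalences on $X$ and $Y$ plus enriched descent for the principal $N$-bundle $\rho$, glued via $e:j_*(?)^N\cong(?)^Nj_*$). The only difference is that you unfold the cited results explicitly, which matches the outline already given in the introduction of the paper.
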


\begin{proof}
This follows immediately from \cite[(7.4)]{Hashimoto5} and
\cite[(5.31)]{Hashimoto4}.
\end{proof}

\begin{lemma}\label{almost-double-dual.lem}
Let $\varphi:X\rightarrow Y$ be a $G$-enriched almost principal
$N$-bundle with respect to the open subsets $U$ and $V$.
Let $i:U\rightarrow X$ and $j:V\rightarrow Y$ be the inclusion, and
$\rho:U\rightarrow V$ the restriction of $\varphi$.
Assume that $X$ and $Y$ are locally Krull.
Then the equivalence $i_*\rho^*j^*$ agrees with $(?)^{**}\varphi^*$
as functors from $\Ref(H,Y)$ to $\Ref(G,X)$,
and is independent of the choice of $U$ or $V$.
Its quasi-inverse $(j_*\rho_*i^*?)^N$ agrees with $(\varphi_*?)^N$
as functors from $\Ref(G,X)$ to $\Ref(H,Y)$, and 
is also independent of $U$ or $V$.
\end{lemma}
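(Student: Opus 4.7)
The plan is to exploit $\varphi i = j\rho$, which yields $\rho^*j^* = i^*\varphi^*$ and $j_*\rho_* = \varphi_*i_*$ at the functor level. This reduces both comparisons to the codimension-one behavior of reflexive sheaves on the locally Krull scheme $X$.

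For the quasi-inverse, let $\N\in\Ref(G,X)$. The unit $\N\to i_*i^*\N$ is an isomorphism because $i:U\hookrightarrow X$ is the inclusion of a $1$-large $G$-stable open into a locally Krull scheme and $\N$ is reflexive; this is precisely the equivalence $i_*:\Ref(G,U)\xrightarrow{\sim}\Ref(G,X)$ underlying Theorem~\ref{main.thm} (cf.~\cite[(5.31)]{Hashimoto4} and \cite[(7.4)]{Hashimoto5}). Pushing forward by $\varphi$ and invoking $\varphi_*i_* = j_*\rho_*$ then gives a natural isomorphism $\varphi_*\N\cong j_*\rho_*i^*\N$, and taking $N$-invariants (which is meaningful since $N$ acts trivially on $Y$) yields $(\varphi_*\N)^N\cong(j_*\rho_*i^*\N)^N$.

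For the forward functor, let $\M\in\Ref(H,Y)$ and rewrite $i_*\rho^*j^*\M = i_*i^*\varphi^*\M$. Applying $(?)^{**}$ to the unit $\varphi^*\M\to i_*i^*\varphi^*\M$ produces a canonical morphism $\alpha:(\varphi^*\M)^{**}\to i_*i^*\varphi^*\M$, whose target is reflexive by Theorem~\ref{main.thm}. On $U$, the flatness of $i$ gives $i^*((\varphi^*\M)^{**})\cong(i^*\varphi^*\M)^{**} = (\rho^*j^*\M)^{**}$; since $\rho^*j^*\M$ is already reflexive, being the pullback under the principal $N$-bundle $\rho$ of the reflexive sheaf $j^*\M\in\Ref(H,V)$, the map $\alpha|_U$ is the identity. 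As both source and target of $\alpha$ lie in $\Ref(G,X)$, the equivalence $i^*:\Ref(G,X)\xrightarrow{\sim}\Ref(G,U)$ upgrades this to a global isomorphism. Independence of $U$ and $V$ is then immediate, since $(?)^{**}\varphi^*$ and $(\varphi_*?)^N$ make no reference to these data and must therefore agree with the equivalences of Theorem~\ref{main.thm} for any admissible pair $(U,V)$. The main obstacle I foresee is bookkeeping: verifying that $(?)^{**}$ commutes with $i^*$ in the equivariant category and that the factorization through the double dual is natural in $\M$; both reduce to standard compatibilities of flat pullback with $\uHom$ against coherent first arguments, carried out on the bar construction underlying $\Qch(G,?)$.
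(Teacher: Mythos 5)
Your proof is correct and follows essentially the same route as the paper: both reduce everything to the large open subset $U$ via $i_*i^*\cong\Id$ on reflexive sheaves, use $\rho^*j^*=i^*\varphi^*$ and $j_*\rho_*=\varphi_*i_*$, the compatibility of $(?)^{**}$ with restriction to $U$, and the reflexivity of $\rho^*j^*\M$; the paper simply writes the chain of natural isomorphisms directly instead of constructing a comparison map $\alpha$ and checking it on $U$.
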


\begin{proof}
As functors from $\Ref(H,Y)$ to $\Ref(G,X)$, 
\[
(?)^{**}\varphi^*\cong 
i_*i^*(?)^{**}\varphi^*
\cong
i_*(?)^{**}i^*\varphi^*
\cong 
i_*(?)^{**}\rho^*j^*
\cong
i_*\rho^*j^*
\]
by \cite[(5.28), (5.20), (5.9)]{Hashimoto4}.
As functors from $\Ref(G,X)$ to $\Ref(H,Y)$, 
\[
(?)^N\varphi_*\cong (?)^N\varphi_*i_*i^*
\cong (?)^Nj_*\rho_*i^*.
\]
\end{proof}

\begin{corollary}\label{almost-main.thm}
If $\varphi:X\rightarrow Y$ is a $G$-enriched almost principal
$N$-bundle, and $X$ and $Y$ are locally Krull,
then $(?)^N\circ \varphi_*:\Ref(G,X)\rightarrow \Ref(H,Y)$ is
an equivalence, and $(?)^{**}\circ\varphi^*:\Ref(H,Y)\rightarrow\Ref(G,X)$ 
is its quasi-inverse.
In particular, $(\varphi_*\O_X)^G\cong\O_Y$ in $\Ref(H,Y)$.
This equivalence also induces an isomorphism $\Cl(H,Y)\cong\Cl(G,X)$.
\qed
\end{corollary}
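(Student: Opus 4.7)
The plan is to deduce this corollary directly from Theorem~\ref{main.thm} together with Lemma~\ref{almost-double-dual.lem}, which were set up precisely for this kind of application. First, I would unpack the hypothesis: since $\varphi: X \to Y$ is a $G$-enriched almost principal $N$-bundle, by Definition~\ref{almost-pb.def} there exist open subschemes $i: U \hookrightarrow X$ and $j: V \hookrightarrow Y$ such that
\[
\xymatrix{
X & U \ar@{_{(}->}[l]_i \ar[r]^\rho & V \ar@{^{(}->}[r]^j & Y
}
\]
is a $G$-enriched rational almost principal $N$-bundle, where $\rho$ is the restriction of $\varphi$. Since $X$ and $Y$ are locally Krull, Theorem~\ref{main.thm} applies and yields mutually quasi-inverse equivalences
\[
i_*\rho^*j^* : \Ref(H,Y) \rightleftarrows \Ref(G,X) : (j_*\rho_*i^*\,?)^N,
\]
together with the isomorphism $\Cl(H,Y) \cong \Cl(G,X)$.

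Next, I would invoke Lemma~\ref{almost-double-dual.lem}, whose content is precisely that these two functors agree with $(?)^{**}\varphi^*$ and $(?)^N\varphi_*$, respectively, and are independent of the choice of $U$ and $V$. Substituting these identifications into Theorem~\ref{main.thm} immediately gives the first claim: $(?)^N\varphi_* : \Ref(G,X) \to \Ref(H,Y)$ is an equivalence with quasi-inverse $(?)^{**}\varphi^* : \Ref(H,Y) \to \Ref(G,X)$. The class-group isomorphism $\Cl(H,Y) \cong \Cl(G,X)$ transfers verbatim from Theorem~\ref{main.thm}.

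For the specific identification $(\varphi_*\O_X)^G \cong \O_Y$, I would apply the equivalence to the object $\O_X \in \Ref(G,X)$ (which is obviously reflexive). Under the quasi-inverse, $\O_Y$ corresponds to $(\O_Y)^{**}$ via $\varphi^*$, which is just $\varphi^*\O_Y \cong \O_X$. Hence applying $(?)^N\varphi_*$ to $\O_X$ must return $\O_Y$ in $\Ref(H,Y)$, giving the isomorphism as stated (with the $G$-invariants interpreted via the normal subgroup $N$ that acts trivially on $Y$, so that the residual $H = G/N$ action matches the canonical $H$-equivariant structure on $\O_Y$).

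There is no real obstacle: every non-trivial piece of work — the codimension-two descent, the combination of Grothendieck descent along $\rho$ with the extension across the complements of $U$ and $V$, and the identification of the abstract pair $(i_*\rho^*j^*,\,(j_*\rho_*i^*\,?)^N)$ with $((?)^{**}\varphi^*,\,(?)^N\varphi_*)$ — has been isolated into Theorem~\ref{main.thm} and Lemma~\ref{almost-double-dual.lem}. The corollary is therefore a formal consequence, and the only care required is bookkeeping to recognize that the equivalence of Theorem~\ref{main.thm} sends the canonical object $\O_X$ to $\O_Y$.
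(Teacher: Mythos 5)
Your proposal is correct and is exactly the paper's intended derivation: the corollary is stated with no separate proof precisely because it follows immediately by combining Theorem~\ref{main.thm} with the identifications $(i_*\rho^*j^*,\,(j_*\rho_*i^*\,?)^N)\simeq((?)^{**}\varphi^*,\,(?)^N\varphi_*)$ from Lemma~\ref{almost-double-dual.lem}, and by evaluating the equivalence at $\O_X$ (using $(\varphi^*\O_Y)^{**}\cong\O_X$) to get $(\varphi_*\O_X)^N\cong\O_Y$.
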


In the next theorem, consider that $G=N$ and $f:G\rightarrow H=e=S$ is the
trivial homomorphism.

\begin{theorem}\label{four-term.thm}
Let $G$ be a flat $S$-group scheme, and
let
\[
\xymatrix{
X & U \ar@{_{(}->}[l]_i \ar[r]^\rho & V \ar@{^{(}->}[r]^j & Y
}
\]
be a rational almost principal $G$-bundle.
Assume that both $X$ and $Y$ are locally Krull,
$X$ is quasi-compact quasi-separated, and $Y$ is quasi-compact.
Assume that
$\rho$ is quasi-compact
\(e.g., $G\rightarrow S$ is quasi-compact\)
and universally open
\(e.g., $G\rightarrow S$ or $\rho$ is locally of finite presentation\).
Then there is an exact sequence
\[
0\rightarrow H^1\alg(G,\O_X^\times)\rightarrow 
\Cl(Y)\rightarrow \Cl(X)^G\rightarrow H^2\alg(G,\O_X^\times),
\]
where $\Cl(X)^G$ is the subgroup 
of $\Cl(X)$ consisting of $[\M]$ with
$\M\in\Ref_1(X)^G$, where 
$\Ref_1(X)^G$ is the full subcategory of $\Qch(X)$
consisting of rank-one reflexive sheaves $\M$
such that $a^*\M\cong p_2^*\M$ in $\Ref_1(G\times X)$, where 
$a:G\times X\rightarrow X$ is the action, and $p_2:G\times X\rightarrow X$ 
is the second projection.
\end{theorem}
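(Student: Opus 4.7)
The plan is to reduce everything to the analogous five-term exact sequence for the equivariant class group, which is essentially already in place in \cite{Hashimoto5}. By Theorem~\ref{main.thm}, the equivalence $i_*\rho^*j^*:\Ref(Y)\rightarrow\Ref(G,X)$ preserves rank and therefore restricts to an equivalence $\Ref_1(Y)\cong\Ref_1(G,X)$. Passing to isomorphism classes, this yields a group isomorphism $\Cl(Y)\cong\Cl(G,X)$. Thus it suffices to produce the exact sequence
\[
0\rightarrow H^1\alg(G,\O_X^\times)\rightarrow \Cl(G,X)\rightarrow \Cl(X)^G
\rightarrow H^2\alg(G,\O_X^\times).
\]

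The first three arrows I would construct by hand using the standard low-degree description. The restriction map $\res:\Cl(G,X)\rightarrow\Cl(X)$ is induced by the forgetful functor $(?)_0$ (restricting equivariant data to the underlying sheaf on $X$). Its image lands in $\Cl(X)^G$ by the very definition of a $G$-linearization: if $\phi:a^*\M\rightarrow p_2^*\M$ is a $G$-linearization on $\M\in\Ref_1(G,X)$, then in particular $a^*\M\cong p_2^*\M$ in $\Ref_1(G\times X)$, so $[\M_0]\in\Cl(X)^G$. The kernel of $\res$ consists of classes $[\M]$ with $\M_0\cong\O_X$ as reflexive sheaves; fixing such an isomorphism, the linearization $\phi$ becomes an automorphism of $p_2^*\O_X\cong\O_{G\times X}$, i.e.\ a section of $\O_{G\times X}^\times$, and the cocycle condition $(\mu\times 1)^*\phi=(1\times a)^*\phi\cdot p_{23}^*\phi$ exhibits this section as a $1$-cocycle in the algebraic Hochschild complex $Z^1\alg(G,\O_X^\times)$ of \cite{Hashimoto4}. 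Coboundaries correspond to changes of trivialization, giving the identification $\Ker\res\cong H^1\alg(G,\O_X^\times)$.

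For the connecting map $\partial:\Cl(X)^G\rightarrow H^2\alg(G,\O_X^\times)$, given $[\M]\in\Cl(X)^G$ I would pick any isomorphism $\phi:a^*\M\xrightarrow{\sim}p_2^*\M$ in $\Ref_1(G\times X)$ and measure the failure of $\phi$ to satisfy the cocycle condition by a $2$-cocycle in $Z^2\alg(G,\O_X^\times)$; this is well-defined modulo coboundaries (different choices of $\phi$ differ by a $1$-cochain). Exactness at $\Cl(X)^G$ amounts to the tautology that $[\M]$ lifts to $\Cl(G,X)$ iff we can choose $\phi$ to satisfy the cocycle condition, iff $\partial[\M]=0$. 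Exactness at $\Cl(G,X)$ was already noted. The technical hypotheses (both $X,Y$ locally Krull; $X$ qcqs; $Y$ quasi-compact; $\rho$ quasi-compact and universally open) are precisely those required in \cite{Hashimoto4,Hashimoto5} to make $H^\bullet\alg(G,\O_X^\times)$ behave well under pushforward along $\rho$ and to make the codimension-two arguments (identifying $\Cl$ with reflexive-Picard via $(?)^{**}$ and descent along $j$) go through, so that rank-one reflexive objects on $X$ and $G$-linearized rank-one reflexive objects on $X$ really are classified by the first and second cohomology of the corresponding Čech/Hochschild-type complex.

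The main obstacle is not the formal bookkeeping of the five-term sequence but checking that the equivariant cocycles live in the \emph{algebraic} cohomology $H^i\alg$ rather than in the fppf or Čech cohomology of $\O_X^\times$; this is where quasi-compactness of $\rho$, universal openness, and the locally Krull hypothesis on $X$ enter, because they ensure that $\rho_*$ preserves the relevant quasi-coherence and reflexivity properties and that $\Cl$ computed via reflexive rank-one modules agrees with the one computed via the simplicial nerve $B_G^M(X)$. Once the corresponding fact for the equivariant Picard group from \cite{Hashimoto4} is invoked on the dense regular locus and transferred back by $i_*(?)^{**}$, the sequence follows with no further surprises.
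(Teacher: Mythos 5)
Your reduction of $\Cl(Y)$ to $\Cl(G,X)$ via Theorem~\ref{main.thm} is fine, and your identification of the kernel of $\Cl(G,X)\rightarrow\Cl(X)$ with $H^1\alg(G,\O_X^\times)$ is unproblematic, because there the underlying sheaf is trivial and all pullbacks are $\O$. The genuine gap is at $\Cl(X)^G$: your connecting map and the exactness argument there silently treat a rank-one \emph{reflexive} sheaf as if it were invertible. To say that the failure of the cocycle condition for a chosen isomorphism $\phi:a^*\M\rightarrow p_2^*\M$ is a unit section on $G\times G\times X$, and that two choices of $\phi$ differ by a unit on $G\times X$, you need $\uEnd$ of the pulled-back sheaf to be $\O$ on those products. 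On $X$ itself this holds for rank-one reflexives over a locally Krull scheme, but $G\times X$ and $G\times G\times X$ are not assumed locally Krull ($G$ is merely flat over $S$), the pullbacks need not be reflexive, and in the non-Noetherian Krull setting rank-one reflexive modules need not be finitely presented, so the comparison map $q^*\uHom(\M,\M)\rightarrow\uHom(q^*\M,q^*\M)$ is not available. This is exactly the point the paper's proof is engineered to avoid: it shows that any $\M\in\Ref_1(X)^G$ becomes \emph{invertible} on a suitable quasi-compact large $G$-stable open $Z\subset U$ (invertibility is descended along the fpqc action map $a_1:G\times Z_1\rightarrow Z$ using $a^*\M\cong p_2^*\M$), and then invokes the already-established Picard-level sequence \cite[(3.14)]{Hashimoto4} together with $\Pic(G,Z)\cong\Pic(\rho(Z))$, passing to the filtered colimit over such $Z$ after proving $\Cl(Y)\cong\indlim\Pic(\rho(Z))$, $\Cl(X)^G\cong\indlim\Pic(Z)^G$, and $H^i\alg(G,\O_X^\times)\cong H^i\alg(G,\O_Z^\times)$.

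A symptom of the gap is that your argument never actually uses the stated hypotheses (quasi-compactness of $X$, $Y$ and $\rho$, universal openness of $\rho$, local Krullness), and your closing remarks misplace their role: they are not about $\rho_*$ preserving reflexivity, nor about a ``dense regular locus''. They are used to show that images under $\rho$ of quasi-compact large opens of $U$ are large open in $V$ (Lemma~\ref{codim-two-flat.thm} plus universal openness), that quasi-compact large $G$-stable opens are cofinal, and hence that the three colimit identifications above hold so that the four-term sequence, known for invertible sheaves on each $Z$, passes to the limit. If you want to keep your direct cocycle route, you must first insert the paper's shrinking step (or otherwise prove the needed endomorphism and Hom-pullback statements for rank-one reflexives on $G\times X$ and $G\times G\times X$); as written, the construction of $\partial:\Cl(X)^G\rightarrow H^2\alg(G,\O_X^\times)$ and the exactness at $\Cl(X)^G$ are not justified.
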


\begin{proof}
Let $\Cal C$ be the set of 
quasi-compact large open subsets of $V$.
Let $\Cal D_1$ be the set of quasi-compact large open 
subsets of $U$.
Let $\Cal D$ be the set of $G$-stable open subsets $Z$ of $U$ 
such that $Z\in\Cal D_1$.

First, for $Z_1\in \Cal D_1$, we have that $\rho(Z_1)\in \Cal C$.
Indeed, as $\rho$ is universally open, $\rho(Z_1)$ is an open subset of $V$.
As $Z_1$ is quasi-compact, so is $\rho(Z_1)$.
As $Z_1$ is large in $U$ and $\rho^{-1}(\rho(Z_1))\supset Z_1$, we have that 
$\rho^{-1}(\rho(Z_1))$ is also large in $U$.
As $U$ is locally Krull and $\rho$ is fpqc,
we have that $\rho(Z_1)$ is large in $V$ by
Lemma~\ref{codim-two-flat.thm}, {\bf 2}.
Thus $\rho(Z_1)\in\Cal C$.

Next, for $W\in \Cal C$, we have that $\rho^{-1}(W)\in\Cal D$.
As $\rho$ is a $G$-invariant morphism, $\rho^{-1}(W)$ is a $G$-stable
open subset of $U$.
As $\rho$ is quasi-compact, $\rho^{-1}(W)$ is quasi-compact.
By Lemma~\ref{codim-two-flat.thm}, {\bf 1}, we have that $\rho^{-1}(W)$ is
large in $U$, and hence $\rho^{-1}(W)\in \Cal D$.

As $\rho$ is a principal $G$-bundle, $\Psi:G\times U\rightarrow U\times_V U$
is a $U$-isomorphism, where $U\times_V U$ is a $U$-scheme via the second
projection.
As $\rho$ is quasi-compact, $U\times_V U$ is quasi-compact over $U$, and
hence so is $G\times U$.
Thus for each $Z_1\in\Cal D_1$, $G\times Z_1$ is quasi-compact.

For $Z_1\in \Cal D_1$, $Z:=\rho^{-1}(\rho(Z_1))$ lies in $\Cal D$ by the 
argument above.
Let $a_1:G\times Z_1\rightarrow Z$ be the action.
As $\Psi:G\times U\rightarrow U\times_V U$ is surjective
(since $\rho$ is a principal $G$-bundle), 
$a_1$ is surjective.
As $a_1$ is flat surjective, $G\times Z_1$ is quasi-compact, and
$Z$ is quasi-separated, we have that $a_1$ is fpqc.

Note that $Z\mapsto \rho(Z)$ and $W\mapsto \rho^{-1}(W)$ gives an
order-preserving bijection between $\Cal D$ and $\Cal C$.
Indeed, as $\rho$ is surjective, $\rho(\rho^{-1}(W))=W$.
On the other hand, for $Z\in\Cal D$, 
as $G\times Z\rightarrow \rho^{-1}(\rho(Z))$ is
surjective and $Z$ is $G$-stable, we have that $\rho^{-1}(\rho(Z))=Z$.

Note that 
\begin{equation}\label{W.eq}
\indlim_{Z\in\Cal D}\Pic(\rho(Z))\cong \indlim_{W\in\Cal C} \Pic(W)\cong \Cl(Y)
\end{equation}
by \cite[(5.33)]{Hashimoto4}.

Let $Z\in\Cal D$.
Then
$i^*:\Cl(X)\rightarrow \Cl(Z)$ induces an isomorphism between 
$\Cl(X)^G$ and $\Cl(Z)^G$, where $i:Z\hookrightarrow X$ is the inclusion.
Indeed, if $\M\in\Ref_1(X)^G$, then
\[
a^*_Zi^*\M\cong(1_G\times i)^*a^*\M
\cong
(1_G\times i)^*p_2^*\M
\cong
(p_2^Z)^*i^*\M, 
\]
and $i^*$ maps $\Cl(X)^G$ to $\Cl(Z)^G$.
Let $\N\in\Ref_1(Z)^G$.
As $i$ is quasi-compact quasi-separated and $a$ and $p_2$ are flat,
\[
a^*i_*\N\cong (1\times i)_*a_Z^*\N\cong
(1\times i)_*(p_2^Z)^*\N\cong
p_2^*i_*\N, 
\]
and $i_*\N\in\Ref_1(X)^G$.
So $(i^*)^{-1}=i_*$ maps $\Cl(Z)^G$ to $\Cl(X)^G$.

On the other hand, for any $\M\in\Ref_1(X)^G$, there exists some 
$Z\in\Cal D$ such that 
$\M|_Z$ is an invertible sheaf.
Indeed, first take a large open subset $Z_1$ of $U$ such that 
$\M|_{Z_1}$ is an invertible sheaf.
This is possible as in the proof of \cite[(5.33)]{Hashimoto4}.
By \cite[(5.29)]{Hashimoto4}, replacing $Z_1$ if necessary, we may assume
that $Z_1$ is quasi-compact, and $Z_1\in\Cal D_1$.
Let $Z=\rho^{-1}(\rho(Z_1))\in\Cal D$.
As $\M|_{Z_1}$ is an invertible sheaf, 
\[
p_2^*(\M|_{Z_1})\cong (p_2^*\M)|_{G\times Z_1}\cong (a^*\M)|_{G\times Z_1}
\cong a_1^*(\M|_Z)
\]
is also an invertible sheaf.
So $\M|_Z$ is also an invertible sheaf, since $a_1$ is fpqc as we have seen.

Combining these, we have that
\begin{equation}\label{X^G.eq}
\indlim_{Z\in \Cal D} \Pic(Z)^G\cong\Cl(X)^G.
\end{equation}

Next, we have that $H^i\alg(G,\O^\times_X)\rightarrow H^i\alg(G,\O^\times_Z)$ is 
an isomorphism for $Z\in \Cal D$.
In order to prove this, 
it suffices to prove the canonical chain map
\[
\xymatrix{
0 \ar[r] & \Gamma(X,\O^\times) \ar[d] \ar[r]^-{d_0-d_1} &
~\Gamma(G\times X,\O^\times)~ \ar[d] \ar[r]^-{d_0-d_1+d_2} &
~\Gamma(G\times G\times X,\O^\times)~ \ar[d]\ar[r] &
\cdots \\
0 \ar[r] & \Gamma(Z,\O^\times) \ar[r]^-{d_0-d_1} &
~\Gamma(G\times Z,\O^\times)~ \ar[r]^-{d_0-d_1+d_2} &
~\Gamma(G\times G\times Z,\O^\times)~ \ar[r] &
\cdots
}
\]
is a chain isomorphism.
To verify this, it suffices to prove that the canonical restriction
$\Gamma(G^i\times X,\O_X)\rightarrow \Gamma(G^i\times Z,\O_Z)$ is an
isomorphism.
Let $i:Z\hookrightarrow X$ be the inclusion.
Then as $Z$ is large in $X$, 
$
\O_X\rightarrow i_*\O_Z
$ is an isomorphism.
As $G^i$ is flat over $S$ and $i$ is quasi-compact quasi-separated,
\[
\O_{G^i\times X}\cong p_2^*\O_X\cong p_2^*i_*\O_Z \cong
(1\times i)_*p_2^*\O_Z\cong (1\times i)_*\O_{G^i\times Z}.
\]
Taking the global section, we get the desired isomorphism.

Thus we have proved that the canonical map 
$H^i\alg(G,\O_X^\times)\rightarrow H^i\alg(G,\O_Z^\times)$ is an isomorphism.
In particular, we have that 
\begin{equation}\label{algebraic-G-cohomology.eq}
H^i\alg(G,\O_X^\times)\cong \indlim_Z H^i\alg(G,\O_{Z}^\times).
\end{equation}

By \cite[(3.14)]{Hashimoto4}, there is an exact sequence
\[
0\rightarrow 
H^1\alg(G,\O_Z^\times)
\rightarrow 
\Pic(G,Z)\rightarrow
\Pic(Z)^G
\rightarrow 
H^2\alg(G,\O_Z^\times).
\]
The proof of \cite[(3.14)]{Hashimoto4} shows that the sequence is
functorial on $Z$.
On the other hand, there is a natural isomorphism
$\Pic(G,Z)\cong \Pic(\rho(Z))$.
This is obvious by \cite[(3.13)]{Hashimoto}.
Taking the inductive limit $\indlim_{Z\in\Cal D}$, 
and using the isomorphisms
(\ref{W.eq}), (\ref{X^G.eq}), and (\ref{algebraic-G-cohomology.eq}),
\[
0\rightarrow H^1\alg(G,\O_X^\times)\rightarrow \Cl(Y)
\rightarrow \Cl(X)^G\rightarrow H^2\alg(G,\O_X^\times)
\]
is exact, as desired.
\end{proof}

\paragraph
Let $f:G\rightarrow H$ be an fpqc homomorphism between flat $S$-group schemes
with $N=\Ker f$.

Let $X$ be a locally Noetherian $G$-scheme.
We denote 
the full subcategory of $\Coh(G,X)$ consisting of $\M\in\Coh(G,X)$ 
which satisfy the $(S'_n)$ condition as an $\O_X$-modules
by $(S'_n)(G,X)$.

\begin{lemma}\label{S_2-equiv-equivariant.lem}
Let $X$ be as above, and $U$ a large open subset of $X$.
If $X$ has a full $2$-canonical module, then
$i_*:(S'_2)(G,U)\rightarrow (S'_2)(G,X)$ is an equivalence
whose quasi-inverse is
$i^*:(S'_2)(G,X)\rightarrow (S'_2)(G,U)$.
\end{lemma}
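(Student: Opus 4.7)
The plan is to deduce this equivariant statement from the non-equivariant Lemma~\ref{S_2-large-equiv.lem} via the simplicial description of equivariant sheaves. Since $U$ is a $G$-stable open subset of $X$, the inclusion $i$ extends to an open immersion of diagrams $i : B_M^G(U)\to B_M^G(X)$ whose $n$th component $i_n : G^n\times U\hookrightarrow G^n\times X$ is the base change of $i$ along the structure map $B_M^G(X)_n\to X$. Consequently $i^*$ and $i_*$ are computed level-wise, the forgetful functor $(?)_0$ commutes with both, and the counit $i^*i_*\to \Id$ is an isomorphism (the level-$n$ component being the counit for the open immersion $i_n$).

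First I would check that $i^*$ sends $(S'_2)(G,X)$ into $(S'_2)(G,U)$: the $(S'_2)$ condition is, by definition, a property of the underlying $\O_X$-module, and being local it is preserved by restriction to any open subscheme, while the $G$-equivariance is obviously preserved by $i^*$.

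Second, for $\N \in (S'_2)(G,U)$ the underlying sheaf $(i_*\N)_0 = i_*\N_0$ is, by Lemma~\ref{S_2-large-equiv.lem} applied to $\N_0 \in (S'_2)(U)$, coherent and lies in $(S'_2)(X)$. Coherence at higher levels follows because the equivariance of $\N$ identifies $\N_n$ with the pullback of $\N_0$ along the appropriate structure map, and then flat base change through the cartesian square defining $i_n$ identifies $(i_*\N)_n = (i_n)_*\N_n$ with the pullback of the coherent sheaf $i_*\N_0$ from $X$ to $G^n\times X$, which is again coherent. The equivariance data for $i_*\N$ is inherited from that of $\N$ by applying $(i_n)_*$ and invoking flat base change along the cartesian squares, and we obtain $i_*\N\in (S'_2)(G,X)$.

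Third, it remains to show that the unit $u: \M \to i_*i^*\M$ is an isomorphism for $\M\in (S'_2)(G,X)$. By \cite[(12.12)]{ETI}, the forgetful functor $(?)_0 : \Qch(G,X) \to \Qch(X)$ is faithful exact, hence reflects isomorphisms. Since $(?)_0$ commutes with $i_*$ and $i^*$, applying $(?)_0$ to $u$ gives the non-equivariant unit $\M_0 \to i_*(\M_0|_U)$, which is an isomorphism by Lemma~\ref{S_2.lem} (i.e., by the non-equivariant equivalence half of Lemma~\ref{S_2-large-equiv.lem}); therefore $u$ is itself an isomorphism. The only nontrivial work is the bookkeeping around cartesian-ness and flat base change to justify the level-wise compatibility of $i_*$ and $i^*$ with the equivariant structure; the main obstacle, if any, is precisely this compatibility check, since all the substantive $(S'_2)$-content has been extracted from the non-equivariant Lemma~\ref{S_2-large-equiv.lem}.
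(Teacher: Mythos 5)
Your proof is correct and is exactly the argument the paper intends: its own proof of this lemma is the one-line statement that it "follows easily from Lemma~\ref{S_2-large-equiv.lem}," and your write-up simply makes explicit the routine bookkeeping (level-wise computation of $i_*$ and $i^*$ on $B_M^G(X)$, flat base change along the cartesian squares, and the faithful exact forgetful functor $(?)_0$ reflecting the isomorphism of the unit) that the paper leaves to the reader.
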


\begin{proof}
Follows easily from Lemma~\ref{S_2-large-equiv.lem}.
\end{proof}

\begin{proposition}\label{S_2-main.prop}
Let $f:G\rightarrow H$ be an fpqc 
homomorphism between flat $S$-group schemes, and
\[
\xymatrix{
X & U \ar@{_{(}->}[l]_i \ar[r]^\rho & V \ar@{^{(}->}[r]^j & Y
}
\]
be a $G$-enriched rational almost principal $N$-bundle.
Assume that $X$ and $Y$ are locally Noetherian, and 
have full $2$-canonical modules
\(e.g., they are normal; or Noetherian locally equidimensional
and have dualizing complexes\).
If $\rho$ has $(S_2)$ fibers (e.g., $N$ is of finite type or $X$ is $(S_2)$),
then $i_*\rho^*j^*:(S'_2)(H,Y)\rightarrow (S'_2)(G,X)$ 
is an equivalence,
and $(j_*\rho_*i^*?)^N$ is its quasi-inverse.
\end{proposition}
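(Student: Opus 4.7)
The plan is to imitate the proof of Theorem~\ref{main.thm}, replacing the category of reflexive modules by the category $(S'_2)$ of modules satisfying Serre's condition $(S'_2)$. Specifically, factor the asserted equivalence as the composite
\[
(S'_2)(H,Y)\xrightarrow{j^*}(S'_2)(H,V)\xrightarrow{\rho^*}(S'_2)(G,U)\xrightarrow{i_*}(S'_2)(G,X),
\]
with quasi-inverses $j_*$, $(\rho_*?)^N$, and $i^*$, respectively, and then compose.

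The outer two equivalences use that $j(V)$ is large in $Y$ and $i(U)$ is large in $X$, together with the standing assumption that $X$ and $Y$ both carry full $2$-canonical modules. In the non-equivariant setting Lemma~\ref{S_2-large-equiv.lem} gives the equivalences $(S'_2)(V)\cong(S'_2)(Y)$ and $(S'_2)(U)\cong(S'_2)(X)$ via $(j_*,j^*)$ and $(i_*,i^*)$, respectively. To promote these to the equivariant setting, I invoke (the equivariant strengthening of) Lemma~\ref{S_2-equiv-equivariant.lem}: the open-immersion pushforwards $i_*$ and $j_*$ commute with the cosimplicial face maps defining equivariance (by flat base change for open immersions), and coherence and the $(S'_2)$-condition are both detected by the faithful exact restriction functor $(?)_0$ forgetting the equivariant structure.

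For the middle piece, Grothendieck descent for the principal $N$-bundle $\rho:U\to V$ gives an equivalence $\rho^*:\Qch(H,V)\to\Qch(G,U)$ with quasi-inverse $(\rho_*?)^N$ (see \cite[(3.13)]{Hashimoto4}, \cite[(7.4)]{Hashimoto5}). I must show it restricts to an equivalence on the $(S'_2)$-subcategories. Since $\rho$ is flat with $(S_2)$ fibers, Lemma~\ref{S_n-ascent-descent.lem}, {\bf 2} implies $\rho^*$ carries $(S'_2)$ to $(S'_2)$. Conversely, given $\M\in(S'_2)(G,U)$ with descent $\N\in\Qch(H,V)$, the isomorphism $\rho^*\N\cong\M$ combined with the faithful flatness of $\rho$ forces $\N\in(S'_2)(H,V)$ by Lemma~\ref{S_n-ascent-descent.lem}, {\bf 1}; hence $(\rho_*\M)^N\cong\N$ lies in $(S'_2)(H,V)$.

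The main obstacle I foresee is the equivariant enhancement in the first step, namely verifying that for $\M\in(S'_2)(G,U)$ the sheaf $i_*\M$ genuinely carries a $(G,\O_X)$-structure whose underlying $\O_X$-module lies in $(S'_2)(X)$ (and analogously for $j$). The resolution is that the construction of Lemma~\ref{S_2-large-equiv.lem}—enlarging the sheaf to a coherent subsheaf and taking a double dual with respect to a full $2$-canonical module on an affine cover—need not be carried out $G$-equivariantly: the $(G,\O_X)$-structure on $i_*\M$ comes tautologically from the morphism of simplicial diagrams of schemes $B_G^M(i):B_G^M(U)\to B_G^M(X)$, while coherence and $(S'_2)$ of the underlying $\O_X$-module are the non-equivariant consequences already secured. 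Once this is in place, composing the three equivalences gives $i_*\rho^*j^*:(S'_2)(H,Y)\to(S'_2)(G,X)$ as claimed, with quasi-inverse $(j_*\rho_*i^*?)^N\cong (?)^Nj_*\rho_*i^*$ via the commutativities of (\ref{epsilon.par}) and (\ref{delta.par}).
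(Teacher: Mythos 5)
Your proposal is correct and follows essentially the same route as the paper: the paper likewise factors the functor through the two large-open-immersion equivalences (its Lemma~\ref{S_2-equiv-equivariant.lem}, which is exactly your equivariant promotion of Lemma~\ref{S_2-large-equiv.lem}) and then reduces the middle step to descent along the principal $N$-bundle together with the ascent/descent of the $(S'_2)$ condition via Lemma~\ref{S_n-ascent-descent.lem}.
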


\begin{proof}
By Lemma~\ref{S_2-equiv-equivariant.lem}, $i_*:(S'_2)(G,U)\rightarrow 
(S'_2)(G,X)$ is an equivalence with the quasi-inverse 
$i^*$.
Similarly, $j_*:(S'_2)(H,V)\rightarrow (S'_2)(H,Y)$ is an
equivalence with the quasi-inverse $j^*$.
In view of \cite[(6.21)]{Hashimoto5}, it suffices to show that
for $\M\in\Coh(V)$, 
$\rho^*\M$ satisfies $(S'_2)$ if and only if $\M$ does.
This is proved easily using (\ref{S_n-ascent-descent.lem}).
\end{proof}

\begin{lemma}\label{rank-one-isom.lem}
Let $h:A\rightarrow B$ be a ring homomorphism, and assume that $B$ is
rank-one free as an $A$-module.
Then $h$ is an isomorphism.
\end{lemma}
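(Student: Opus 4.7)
The plan is to unpack the hypothesis: picking an $A$-basis $b \in B$ gives an $A$-module isomorphism $\phi:A \to B$ with $\phi(a) = h(a)\cdot b$, and every element of $B$ is uniquely of the form $h(a)b$. The key will be to show that $b$ itself lies in $h(A)$ and that the coefficient of $1_B$ in the basis is a unit of $A$; then both surjectivity and injectivity of $h$ will drop out.

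First, I would write $1_B = h(a_0)\, b$ for the unique $a_0 \in A$. Multiplying by $b$ gives $b = h(a_0)\,b^2$. Expanding $b^2 = h(a_2)\,b$ for the unique $a_2 \in A$, I obtain $h(a_0 a_2)\,b = b = h(1_A)\,b$, so by the uniqueness of basis coefficients $a_0 a_2 = 1_A$. Thus $a_0 \in A^\times$, and $h(a_0)$ is a unit in $B$ with inverse $b$. In particular $b = h(a_0^{-1}) \in h(A)$, so $B = h(A)\cdot b \subseteq h(A)$, giving surjectivity.

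For injectivity, if $h(a) = 0$ then $0 = h(a)\cdot 1_B = h(a)h(a_0)\,b = h(a a_0)\,b$, so by uniqueness $a a_0 = 0$, and since $a_0$ is a unit, $a = 0$.

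I do not expect any genuine obstacle here; the only subtlety is the bookkeeping between the $A$-module structure on $B$ (which goes through $h$) and the ring structure on $B$, and this is handled once one notes that $\phi(a)= h(a)\,b$ is an $A$-linear isomorphism so that $h(a')\,b = h(a)\,b$ forces $a' = a$.
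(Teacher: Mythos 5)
Your proof is correct, and it takes a genuinely different route from the paper's. The paper argues in two steps: injectivity comes from $\Ker h=\ann_A B=\ann_A A=0$ (faithfulness of a rank-one free module), and surjectivity is checked locally — one reduces to $(A,\fm)$ local, applies Nakayama's lemma to reduce modulo $\fm$, and finishes with the dimension count $\dim_A A=\dim_A B=1$ over the residue field. You instead do a direct, global computation with the basis element $b$: writing $1_B=h(a_0)b$ and $b^2=h(a_2)b$, the uniqueness of coefficients forces $a_0a_2=1_A$, whence $b=h(a_2)\in h(A)$ and both surjectivity and injectivity follow by elementary bookkeeping (your injectivity step could even be shortened to the paper's observation that $\Ker h$ annihilates the faithful module $B\cong A$, but your version is fine). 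Your argument is more elementary — no localization, no Nakayama — while the paper's is the standard slick reduction; both are complete proofs, and the only point you correctly had to keep straight is that the $A$-action on $B$ goes through $h$, so that freeness gives uniqueness of the coefficients $h(a)b$.
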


\begin{proof}
Note that $\Ker h=\ann B=\ann A=0$, and $h$ is injective.
So it suffices to show that $h$ is surjective.
So we may assume that $(A,\fm)$ is local.
By Nakayama's lemma, we may assume that $A$ is a field.
Then $A=B$, since $\dim_AA=\dim_A B=1$.
\end{proof}

\begin{lemma}\label{almost-double-dual-S_2.lem}
Let $\varphi:X\rightarrow Y$ be a $G$-enriched almost principal
$N$-bundle with respect to the open subsets $U$ and $V$.
Let $i:U\rightarrow X$ and $j:V\rightarrow Y$ be the inclusion, and
$\rho:U\rightarrow V$ the restriction of $\varphi$.
Assume that $X$ and $Y$ are locally Noetherian.
\begin{enumerate}
\item[\bf 1] 
The functor $(j_*\rho_*i^*?)^N:
(S'_2)(G,X)\rightarrow \Qch(H,Y)$ agrees with $(\varphi_*?)^N$.
\item[\bf 2] If $\M_X$ is a coherent $(G,\O_X)$-module
which is a full $2$-canonical module as an $\O_X$-module, 
then the functor $i_*\rho^*j^*:(S'_2)(H,Y)\rightarrow (S'_2)(G,X)$ 
agrees with $(?)^{\vee\vee}\varphi^*$, where $(?)^\vee=\uHom_{\O_X}(?,\M_X)$.
\item[\bf 3] Assume that both $X$ and $Y$ are quasi-normal.
If $N$ is of finite type or $X$ is $(S_2)$, 
then $(\varphi_*?)^N:(S'_2)(G,X)\rightarrow (S'_2)(H,Y)$ 
is an equivalence whose quasi-inverse is $(?)^{\vee\vee}\varphi^*$.
\item[\bf 4] In {\bf 3}, if 
$X$ and $Y$ satisfy $(S_2)$, then $\bar\eta:
\O_Y\rightarrow(\varphi_*\O_X)^N$ is an isomorphism.
\item[\bf 5] If either $X$ and $Y$ satisfy $(T_1)+(S_2)$; or
$N$ is of finite type and 
$X$ and $Y$ are Noetherian $(S_2)$ with dualizing complexes,
then $\bar\eta$ in {\bf 4} is an isomorphism.
\end{enumerate}
\end{lemma}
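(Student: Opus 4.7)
The plan is to dispatch parts \textbf{1} and \textbf{2} in parallel by exploiting $\varphi i=j\rho$ together with the equivalence $i_*\dashv i^*$ and $j_*\dashv j^*$ on $(S_2')$-modules, then derive \textbf{3} by matching against Proposition~\ref{S_2-main.prop}, and finally specialize to \textbf{4} and \textbf{5} by direct computation on $\O_X$.

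For \textbf{1}, the observation is that any $\M\in(S'_2)(G,X)$ satisfies $\M\cong i_*i^*\M$ by Lemma~\ref{S_2-equiv-equivariant.lem} (quasi-normality supplies the full $2$-canonical module on $X$ needed there); applying $\varphi_*$ and using $\varphi i=j\rho$ gives $\varphi_*\M\cong j_*\rho_*i^*\M$, and applying $(?)^N$ yields the stated natural identification. For \textbf{2} I will work dually. Given $\N\in(S'_2)(H,Y)$, the module $(\varphi^*\N)^{\vee\vee}$ lies in $(S'_2)(G,X)$ by Lemma~\ref{2-canonical-double-dual.lem}, and $i_*\rho^*j^*\N$ lies there by Proposition~\ref{S_2-main.prop}. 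The unit $\varphi^*\N\to i_*i^*\varphi^*\N=i_*\rho^*j^*\N$ together with the double-dual construction gives a natural arrow
\[
(\varphi^*\N)^{\vee\vee}\longrightarrow (i_*\rho^*j^*\N)^{\vee\vee}\cong i_*\rho^*j^*\N,
\]
the last isomorphism by Lemma~\ref{2-canonical-double-dual.lem} again. To verify this is an isomorphism I will restrict to $U$: since $i$ is flat, $i^*$ commutes with $\uHom$, and $i^*\M_X$ is a full $2$-canonical module on $U$ by Lemma~\ref{n-canonical-flat.lem}, so the restricted arrow is $(\rho^*j^*\N)^{\vee\vee}\to \rho^*j^*\N$, which is an isomorphism because $\rho^*j^*\N\in(S'_2)(G,U)$ (inherited from $\N$ through the flat pullbacks $j^*$ and $\rho^*$ under the $(S_2)$-fiber condition already built into the setting of Proposition~\ref{S_2-main.prop}) and has support inside $\supp i^*\M_X=U$. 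Since $i^*\colon(S'_2)(G,X)\to(S'_2)(G,U)$ is an equivalence, the original arrow is an isomorphism.

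For \textbf{3}, Proposition~\ref{S_2-main.prop} already produces $i_*\rho^*j^*$ and $(j_*\rho_*i^*?)^N$ as quasi-inverse equivalences between $(S'_2)(H,Y)$ and $(S'_2)(G,X)$; parts \textbf{1} and \textbf{2} then identify these functors with $(\varphi_*?)^N$ and $(?)^{\vee\vee}\varphi^*$, so the same pair of functors realizes the equivalence (with an equivariant $\M_X$ supplied, in the cases where we apply the result, by the componentwise $G$-canonical module in the dualizing-complex case or by $\O_X$ in the $(T_1)+(S_2)$ case). For \textbf{4}, the $(S_2)$ hypothesis puts $\O_X$ in $(S'_2)(G,X)$, so by \textbf{1} we get $(\varphi_*\O_X)^N\cong(j_*\rho_*\O_U)^N$; commuting $j_*$ past $(?)^N$ via the isomorphism $e$ of (\ref{delta.par}) gives $j_*(\rho_*\O_U)^N$; Grothendieck descent on the principal $N$-bundle $\rho$ yields $(\rho_*\O_U)^N\cong\O_V$; and $j_*\O_V\cong\O_Y$ by Lemma~\ref{S_2.lem} since $Y$ is $(S_2)$ with $V$ large. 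Tracing the construction of $\bar\eta$ in (\ref{bar-eta.par}) through these natural identifications (using Lemma~\ref{eta-bar-eta.thm} and its direct-image analogue) shows the composite equals $\bar\eta^{-1}$. \textbf{5} then reduces to \textbf{4}: the $(T_1)+(S_2)$ case provides $\O_X,\O_Y$ as full $2$-canonical modules with $X,Y$ automatically $(S_2)$, while the Noetherian $(S_2)$-with-dualizing-complex case supplies componentwise canonical modules via Corollary~\ref{S_2-dualizing-quasinormal.cor}, and the finite-type hypothesis on $N$ guarantees the fiber condition invoked in \textbf{3}. The main obstacle is the isomorphism check in \textbf{2}: one must carefully track how the $(S_2')$, full-support, and equivariance conditions for $\M_X$ descend to $U$ so that Lemma~\ref{2-canonical-double-dual.lem} can be applied to $\rho^*j^*\N$ against the pulled-back reference module $i^*\M_X$.
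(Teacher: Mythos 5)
Your argument is correct in substance, and for parts \textbf{1}, \textbf{2}, \textbf{3}, and \textbf{5} it is essentially the intended one: the paper dispatches \textbf{1} and \textbf{2} by saying they are ``proved similarly to Lemma~\ref{almost-double-dual.lem}'' (i.e.\ exactly the $\varphi i=j\rho$ plus $i_*i^*$-manipulations you carry out, with the double-dual identification checked after restriction to $U$), and gets \textbf{3} from \textbf{1} together with Proposition~\ref{S_2-main.prop}, and \textbf{5} from \textbf{3}, \textbf{4} and Corollary~\ref{S_2-dualizing-quasinormal.cor}. One small correction for \textbf{1}: quasi-normality of $X$ is not among the hypotheses there, and you do not need it --- the only fact used is that $u:\M\rightarrow i_*i^*\M$ is an isomorphism for $\M\in(S'_2)(G,X)$ with $i(U)$ large, which is Lemma~\ref{S_2.lem} alone; citing Lemma~\ref{S_2-equiv-equivariant.lem} (which does require a full $2$-canonical module) smuggles in an assumption you neither have nor need. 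Your remark that \textbf{3} as applied needs an equivariant $\M_X$ is a fair point that the paper glosses over. The genuine divergence is \textbf{4}: you identify $(\varphi_*\O_X)^N$ with $\O_Y$ by an explicit chain of natural isomorphisms and then assert, by tracing the construction of $\bar\eta$, that the composite is $\bar\eta^{-1}$; this works (it is the same kind of verification carried out in the proof of Theorem~\ref{alg-quot-krull-equiv.thm}, using the compatibilities of $\gamma$, $e$, $\epsilon$ with units), but it is the delicate part of your write-up and is only gestured at. The paper avoids the diagram chase entirely: from \textbf{3} one gets an abstract isomorphism $(\varphi_*\O_X)^N\cong\O_Y$ of $\O_Y$-modules, and then Lemma~\ref{rank-one-isom.lem} (a ring homomorphism whose target is rank-one free over the source is automatically an isomorphism) upgrades this to the statement that the specific ring map $\bar\eta$ is an isomorphism, with no need to compare it with the abstract isomorphism. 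Your route is more computational but self-contained; the paper's buys brevity at the cost of the slightly clever rank-one-free trick.
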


\begin{proof}
{\bf 1} and {\bf 2} are proved similarly to 
Lemma~\ref{almost-double-dual.lem}, and is left to the reader.
{\bf 3} is immediate by {\bf 1} and Proposition~\ref{S_2-main.prop}.

{\bf 4}.
As $X$ satisfies $(S_2)$, we have that 
$\O_X\cong i_*i^*\O_X\cong i_*\O_U\cong i_*\rho^*j^*\O_Y$.
As $\O_Y\in(S'_2)(H,Y)$,
we have that $(\varphi_*\O_X)^N\cong \O_Y$ as $(H,\O_Y)$-modules
by {\bf 3}.
By Lemma~\ref{rank-one-isom.lem}, $\bar\eta$ is an isomorphism.

{\bf 5} is immediate by {\bf 3} and {\bf 4}, 
in view of Corollary~\ref{S_2-dualizing-quasinormal.cor}.
\end{proof}

\paragraph
An $S$-group scheme $G$ is said to be {\em locally finite free} 
({\em LFF} for short), if 
the structure map $h_G:G\rightarrow S$ is finite and $(h_G)_*\O_G$ is 
locally free.
Using the results of \cite[(10.129)]{SP}, it is not so difficult to
show that $G$ is LFF if and only if it is flat finite of finite presentation
(hence is finite syntomic by \cite[(31.14)]{ETI}).

\begin{lemma}\label{DG.thm}
Let $G$ be an LFF $S$-group scheme,
and $\psi:X\rightarrow Y$ be an algebraic quotient by the action of $G$.
Then $\psi$ is a surjective integral universally open morphism which is a  
universally submersive geometric quotient.
If the action of $G$ on $X$ is free, it is a principal $G$-bundle.
\end{lemma}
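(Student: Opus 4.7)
Plan: Localizing, I assume $X=\Spec B$, $Y=\Spec A$, $S=\Spec R$, $G=\Spec H$ with $H$ locally free of rank $n$ over $R$, $A=B^G$, and write $\omega:B\to B\otimes_R H$ for the coaction. The argument proceeds in stages.

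The cornerstone is integrality of $\psi$. For $b\in B$, I form the characteristic polynomial $\chi_b(T)\in B[T]$ of left multiplication by $\omega(b)$ on the rank-$n$ locally free $B$-module $M:=B\otimes_R H$. Cayley--Hamilton gives $\chi_b(\omega(b))=0$ in $M$, and applying the counit retraction $\mathrm{id}\otimes\varepsilon:M\to B$ (a $B$-algebra map sending $\omega(b)$ to $b$) yields $\chi_b(b)=0$. A Hopf-algebra invariance check---equipping $M$ with a $G$-action combining the given action on $B$ with right translation on $H$, under which the operator $M_{\omega(b)}$ is $G$-equivariant---puts the coefficients of $\chi_b$ in $B^G=A$. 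Hence $B$ is integral over $A$, so $\psi$ is integral, in particular affine and universally closed. Combined with injectivity of $A\hookrightarrow B$ (from $\bar\eta:\O_Y\to(\psi_*\O_X)^G$ being iso) and lying-over, this gives $\psi$ surjective, hence submersive. Since integrality (and thus closedness) is preserved under base change and the image of the base change of a surjection is the preimage of the original image, $\psi$ is universally submersive.

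For the surjectivity of $\Psi=\Psi_{G,\psi}:G\times X\to X\times_Y X$, I analyze $\Psi^*:B\otimes_A B\to B\otimes_R H$: its target is finite locally free of rank $n$ over $B$ via the first factor and the image of $\Psi^*$ contains $B\otimes 1$, so the target is integral over the image, and the image of $\Psi$ on Spec equals $V(\ker\Psi^*)$. Nilpotency of $\ker\Psi^*$---i.e., scheme-theoretic dominance of $\Psi$---is a Hopf--Galois descent result: fpqc-locally one reduces to the trivial-bundle situation where $\Psi$ is visibly an isomorphism, and dominance descends. With $\Psi$ surjective and $\bar\eta$ iso, $\psi$ satisfies the definition of a geometric quotient. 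Since $G\to S$ is universally open (being finite flat of finite presentation), Lemma~\ref{GIT-remark.lem} then gives $\psi$ universally open.

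For the free-action case, Lemma~\ref{stabilizer-basics.lem} makes $\Psi$ a monomorphism, and the classical Hopf--Galois theorem for finite locally free Hopf algebras (triviality of the stabilizer upgrades to bijectivity of $\Psi^*$, which simultaneously forces $B$ to be faithfully flat over $A$) yields both $\Psi$ an isomorphism and $\psi$ fpqc; by Lemma~\ref{monomorphism-isom.thm} this already makes $\psi$ a principal $G$-bundle. The main obstacles are (a) the Hopf-algebra invariance showing $\chi_b\in A[T]$, which requires a careful equivariant set-up (delicate for non-reduced group schemes such as $\alpha_p$), and (b) the scheme-theoretic dominance of $\Psi$ in the general non-free case, which rests on the Hopf--Galois formalism. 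Both are standard ingredients in the Demazure--Gabriel theory of quotients by finite flat group schemes, to which the label \emph{DG} in the lemma's name alludes.
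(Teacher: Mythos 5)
Your overall skeleton matches the paper's: reduce to the affine case, get integrality of $A=B^G\subset B$, deduce surjectivity from lying over, universal submersiveness from ``integral $\Rightarrow$ universally closed'' plus surjectivity, universal openness from Lemma~\ref{GIT-remark.lem} (using that an LFF group scheme is universally open), and then read off the geometric-quotient property; the free case is the principal-bundle statement. The paper disposes of the three hard ingredients (integrality, surjectivity of $\Psi$, and the free case) in one stroke by citing \cite{DG}, III, \S2, n${}^\circ$4, and your Cayley--Hamilton/counit sketch of the integrality, with the invariance of the characteristic-polynomial coefficients coming from an equivariant structure on $\O_{G\times X}$ under which $a^*(b)$ is invariant, is indeed an accurate outline of how that reference proves integrality.

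The genuine gap is your justification of the surjectivity of $\Psi$ in the general (non-free) case. You claim that nilpotency of $\Ker\Psi^*$ is ``a Hopf--Galois descent result: fpqc-locally one reduces to the trivial-bundle situation where $\Psi$ is visibly an isomorphism, and dominance descends.'' For a non-free action there is no fpqc cover over which $\psi$ becomes a trivial $G$-bundle --- local triviality is equivalent to $\psi$ being a principal bundle, which fails precisely when the action is not free (e.g.\ $\mu_2$ or $\Bbb Z/2\Bbb Z$ acting on $\Bbb A^1_k$ by sign: over the origin the fiber is not a torsor, yet $\Psi$ is still surjective there). Hopf--Galois theory has nothing to say here, since it concerns exactly the situation where the Galois map $\Psi^*$ is bijective. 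The actual content of the Demazure--Gabriel theorem at this point is that the topological fibers of $\psi$ are single orbits, proved by an orbit-separation argument with invariant norms (the constant term of the same characteristic polynomial), not by descent; your reduction step would simply not get off the ground. A similar, milder, slip occurs in the free case: the faithful flatness of $B$ over $A$ is the hard conclusion of the Demazure--Gabriel theorem, not something supplied by ``the classical Hopf--Galois theorem'' (which presupposes flatness), and once $\Psi$ is known to be an isomorphism and $\psi$ fpqc, the principal-bundle property is immediate from the paper's characterization, so Lemma~\ref{monomorphism-isom.thm} is not doing the work you assign to it. Since you ultimately defer both obstacles to \cite{DG} anyway --- which is exactly what the paper does --- the proof can be repaired by citing that theorem outright instead of the descent sketch, but as written the argument for the surjectivity of $\Psi$ would fail.
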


\begin{proof}
Replacing $Y$ by its affine open subset, we may assume that $Y$ is affine.
Then $X$ is affine, since $\psi$ is assumed to be affine.
Now $\psi$ is integral and the map $\Psi:G\times X\rightarrow X\times_Y X$ 
is surjective by \cite[(III.\S 2,n$\!{}^\circ$4)]{DG}.
By the same theorem, $\psi$ is a principal $G$-bundle if the action is free.

Being an algebraic quotient, it is dominating.
Being integral, it is universally closed.
Being dominating and closed, it is surjective.
Being surjective and universally closed, it is universally submersive.
By Lemma~\ref{GIT-remark.lem}, it is universally open.
Now it is clear that $\varphi$ is a geometric quotient.
\end{proof}

\paragraph
The following generalizes \cite[(32.4), {\bf 3}]{ETI}.

\begin{proposition}\label{linearly-reductive-omega.thm}
Let $f:G\rightarrow H$ be as in {\rm(\ref{initial-canonical-settings.par})}.
Assume that $N$ is finite and Reynolds.
Let $Y_0$, $\Bbb I_{Y_0}$, and $\Cal F(G,Y_0)$ be as in
{\rm(\ref{canonical-Y_0.par})}.
Let $\varphi:X\rightarrow Y$ be a morphism in $\Cal F(G,Y_0)$.
Assume that it is also an algebraic quotient by the action of $N$.
Then $\varphi$ is finite, 
and $(\varphi_*\omega_X)^N\cong \omega_Y$ as $(G,\O_Y)$-modules.
\end{proposition}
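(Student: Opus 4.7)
The plan is to combine finite Grothendieck--Serre duality with the Reynolds splitting of $\varphi_*\O_X$, reducing everything to a local $\uHom$ computation. First I would verify that $\varphi$ is finite. Since $N$ is finite and Reynolds, it is LFF, so Lemma~\ref{DG.thm} applied to the algebraic quotient $\varphi$ by $N$ shows $\varphi$ is integral. Combined with the fact that $X,Y\in\Cal F(G,Y_0)$ are separated of finite type over $Y_0$, so that $\varphi$ is of finite type, this yields $\varphi$ finite.

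Next, because $\varphi$ is finite, $\varphi^!\cong\varphi^\times$ and $\Bbb I_X=\varphi^!\Bbb I_Y$, so finite duality supplies a $(G,\O_Y)$-linear isomorphism
\[
\varphi_*\Bbb I_X\;\cong\;R\uHom_{\O_Y}(\varphi_*\O_X,\Bbb I_Y)
\]
in $D^+_{\Qch}(G,Y)$. Since $N$ is normal in $G$ and Reynolds, Proposition~\ref{enriched-Reynolds.prop} makes the Reynolds operator on $\varphi_*\O_X$ a $(G,\O_Y)$-linear projection, and the algebraic-quotient hypothesis $(\varphi_*\O_X)^N\cong\O_Y$ gives a $(G,\O_Y)$-algebra splitting
\[
\varphi_*\O_X\;=\;\O_Y\oplus U_N(\varphi_*\O_X).
\]

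I would then extract the canonical modules by a degree comparison. Let $s,s'$ be the lowest degrees with $\omega_X=H^s(\Bbb I_X)$ and $\omega_Y=H^{s'}(\Bbb I_Y)$ nonzero (the case $X=\emptyset$ is trivial). Since $\varphi$ is affine, $\varphi_*$ is exact and commutes with $H^i$, so $\varphi_*\omega_X=H^s(\varphi_*\Bbb I_X)$ is the lowest nonvanishing cohomology of $\varphi_*\Bbb I_X$. On the other hand, using the truncation triangle $\omega_Y[-s']\to\Bbb I_Y\to\tau^{>s'}\Bbb I_Y$ together with coherence of $\varphi_*\O_X$, the complex $R\uHom_{\O_Y}(\varphi_*\O_X,\Bbb I_Y)$ vanishes below degree $s'$ and has $H^{s'}=\uHom_{\O_Y}(\varphi_*\O_X,\omega_Y)$. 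Comparing, $s=s'$ and
\[
\varphi_*\omega_X\;\cong\;\uHom_{\O_Y}(\varphi_*\O_X,\omega_Y)
\]
as $(G,\O_Y)$-modules. The coincidence $s=s'$ is the main delicate point: it is what allows a derived-level identity of dualizing complexes to descend to an identity of canonical modules.

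Finally, I would take $N$-invariants. Because $N$ acts trivially on $Y$ and $\Bbb I_Y(G)=\res^H_G\Bbb I_Y(H)$, the module $\omega_Y$ is $N$-trivial. Applying the splitting of the second step and using $\omega_Y^N=\omega_Y$,
\[
(\varphi_*\omega_X)^N\;\cong\;\uHom_{N,\O_Y}(\O_Y,\omega_Y)\oplus\uHom_{N,\O_Y}(U_N(\varphi_*\O_X),\omega_Y)\;\cong\;\omega_Y\oplus 0,
\]
where the second summand vanishes by Lemma~\ref{Reynolds-four-operations.lem}, {\bf 4}, as $U_N(\varphi_*\O_X)$ is $N$-anti-trivial and $\omega_Y$ is $N$-trivial. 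The resulting isomorphism $(\varphi_*\omega_X)^N\cong\omega_Y$ is one of $(H,\O_Y)$-modules, hence of $(G,\O_Y)$-modules, as required.
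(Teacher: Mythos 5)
Your proposal uses the same ingredients as the paper's proof: finiteness of $\varphi$ via Lemma~\ref{DG.thm} plus finite type, equivariant Grothendieck duality giving $\varphi_*\Bbb I_X\cong R\uHom_{\O_Y}(\varphi_*\O_X,\Bbb I_Y)$, the Reynolds decomposition $\varphi_*\O_X=\O_Y\oplus U_N(\varphi_*\O_X)$ coming from Proposition~\ref{enriched-Reynolds.prop} and the algebraic-quotient hypothesis, and the vanishing of invariant Hom's between $N$-trivial and $N$-anti-trivial modules. The only structural difference is the order of operations: the paper applies $(?)^N$ to the whole complex first (taking $\Bbb I_Y$ to consist of equivariant injectives so the Hom's are computed termwise), obtains $(\varphi_*\Bbb I_X)^N\cong\uHom_{\O_Y}(\O_Y,\Bbb I_Y)\cong\Bbb I_Y$, and only then extracts the lowest cohomology using faithfulness of $\varphi_*$; you extract the lowest cohomology first (avoiding injective resolutions via the truncation triangle, which is a pleasant simplification) and take $N$-invariants of the resulting sheaf isomorphism. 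The one thin spot is exactly the step you flag: ``comparing'' the two descriptions of $\varphi_*\Bbb I_X$ only gives $s\geq s'$; to get $s=s'$ you must also know that $H^{s'}$ of the Hom complex, namely $\uHom_{\O_Y}(\varphi_*\O_X,\omega_Y)$, is nonzero. This follows in one line from your own second step, since $\O_Y$ is a $(G,\O_Y)$-direct summand of $\varphi_*\O_X$ and hence $\omega_Y\neq 0$ is a direct summand of $\uHom_{\O_Y}(\varphi_*\O_X,\omega_Y)$, but it needs to be said; in the paper's ordering this point comes for free, because $H^{s'}$ of $(\varphi_*\Bbb I_X)^N\cong\Bbb I_Y$ is $\omega_Y\neq 0$. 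A second, minor point: exactness of $\varphi_*$ alone does not show that $\varphi_*\omega_X$ is the \emph{nonvanishing} lowest cohomology of $\varphi_*\Bbb I_X$; you also need faithfulness of $\varphi_*:\Qch(X)\rightarrow\Qch(Y)$ for the affine morphism $\varphi$, which the paper invokes explicitly.
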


\begin{proof}
As $N$ is finite flat and $Y_0$ is Noetherian, the $Y_0$-group scheme 
$N\times_S Y_0$ is LFF.
So $\varphi$ is  integral by Lemma~\ref{DG.thm}.
Being a morphism in $\Cal F(G,Y_0)$, $\varphi$ is of finite type.
So $\varphi$ is finite.

Let $s:=\inf\{i\mid H^i(\Bbb I_Y)\neq 0\}$.
We may assume that $\Bbb I_Y=\Bbb I_Y(G)$ 
consists of injective $\O_{B_G^M(Y)}$-modules,
and $\Bbb I_Y^i=0$ for $i<s$.

Then by definition, $\omega_Y=H^s(\Bbb I_Y)$.
Let $\Bbb I_X=\rho^!(\Bbb I_Y)$ be the $G$-equivariant dualizing complex of $X$.
We may assume that $\Bbb I_X$ is bounded below and 
consists of injective $\O_{B_G^M(X)}$-modules.
Then using $G$-Grothendieck's duality \cite[(29.5)]{ETI},
\begin{multline*}
(\varphi_*\Bbb I_X)^N=(R\varphi_*R\uHom_{\O_X}(\O_X,\varphi^!\Bbb I_Y))^N
\cong
(R\uHom_{\O_Y}(R\varphi_*\O_X,\Bbb I_Y))^N\\
\cong 
\uHom_{\O_Y}(\varphi_*\O_X,\Bbb I_Y)^N
\cong
\uHom_{\O_Y}((\varphi_*\O_X)^N\oplus U_N(\varphi_*\O_X),\Bbb I_Y)^N.
\end{multline*}
By Lemma~\ref{Reynolds-four-operations.lem}, {\bf 4}
and Corollary~\ref{almost-main.thm}, this is 
\[
\uHom_{\O_Y}((\varphi_*\O_X)^N,\Bbb I_Y)\cong \uHom_{\O_Y}(\O_Y,\Bbb I_Y)
\cong \Bbb I_Y.
\]
If $i<s$, then
\[
\varphi_* H^i(\Bbb I_X)\cong \uExt^i_{\O_Y}(\varphi_*\O_X,\Bbb I_Y)=0.
\]
As $\varphi_*:\Qch(X)\rightarrow\Qch(Y)$ is faithful and
$\varphi_*\omega_X$ is nonzero, we must have $\omega_X=H^s(\Bbb I_X)$, and
$(\varphi_*\omega_X)^N=\omega_Y$.
\end{proof}

\paragraph
A Noetherian local ring $A$ is said to be {\em quasi-Gorenstein} 
if $A$ is the
canonical module of $A$.
A locally Noetherian scheme is said to be quasi-Gorenstein if all of its 
local rings are quasi-Gorenstein.

\begin{lemma}\label{quasi-Gorenstein.lem}
Let $Z$ be a locally Noetherian scheme.
Then the following are equivalent.
\begin{enumerate}
\item[\bf 1] $Z$ is quasi-Gorenstein.
\item[\bf 2] $\O_Z$ is a semicanonical module of $Z$.
\item[\bf 3] There exists some invertible sheaf on $Z$ 
which is also a semicanonical module of $Z$.
\item[\bf 4] A coherent sheaf on $Z$ is an invertible sheaf if and only if
it is a full semicanonical module.
\end{enumerate}
\end{lemma}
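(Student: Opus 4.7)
The plan is to establish the equivalences via the cycle {\bf 1}$\Leftrightarrow${\bf 2}$\Rightarrow${\bf 4}$\Rightarrow${\bf 3}$\Rightarrow${\bf 2}, only one step of which requires real work.

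First I would verify {\bf 1}$\Leftrightarrow${\bf 2} directly from the definitions: for each $z\in Z$ the stalk $\O_{Z,z}$ is nonzero, so the ``either zero or canonical'' dichotomy in the definition of semicanonical forces $\O_Z$ to be semicanonical at $z$ if and only if $\O_{Z,z}$ is the canonical module of itself, which is precisely the quasi-Gorenstein condition at $z$.

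Next, {\bf 4}$\Rightarrow${\bf 3} is immediate by taking $\L=\O_Z$, which is invertible and therefore, by {\bf 4}, a full semicanonical module. For {\bf 3}$\Rightarrow${\bf 2}, given an invertible semicanonical $\L$, at every $z$ we have both $\L_z\cong\O_{Z,z}$ (invertibility) and $\L_z$ equal to the canonical module of $\O_{Z,z}$ (semicanonicity, using that $\L_z\neq 0$ so the zero alternative of the dichotomy is excluded); transporting the canonical module structure across the isomorphism shows $\O_{Z,z}$ is its own canonical module, which is {\bf 2}.

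The substantive step is {\bf 2}$\Rightarrow${\bf 4}. In one direction, an invertible sheaf $\L$ has $\L_z\cong\O_{Z,z}$ at each point; by {\bf 1} the latter equals the canonical module of $\O_{Z,z}$, so $\L$ is semicanonical, and it is full because $\L_z\neq 0$ everywhere. In the reverse direction, let $\M$ be a full semicanonical module. At each $z\in Z$ we have $\M_z$ nonzero (fullness) and equal to the canonical module of $\O_{Z,z}$ (semicanonicity); combining this with {\bf 1} and the uniqueness up to isomorphism of the canonical module \cite{Aoyama}, we obtain $\M_z\cong\O_{Z,z}$ for every $z$. Since $\M$ is coherent on a locally Noetherian scheme, such a stalkwise isomorphism propagates to a local one: lifting a generator of $\M_z$ to a section on a neighborhood produces a map $\O_U\to\M|_U$ which is an isomorphism at $z$, and coherence of both kernel and cokernel then forces the map to be an isomorphism on a smaller neighborhood. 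Hence $\M$ is invertible.

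The main obstacle is the globalization step in the hard half of {\bf 2}$\Rightarrow${\bf 4}: upgrading the pointwise identifications $\M_z\cong\O_{Z,z}$ to a local sheaf isomorphism. The argument uses nothing beyond coherence, Nakayama's lemma, and the Noetherian hypothesis, but it is where those hypotheses actually bite.
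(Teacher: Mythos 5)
Your proof is correct, and since the paper dismisses this lemma with the single word ``Trivial,'' your argument is exactly the routine verification the author intends: stalkwise unwinding of the definitions, uniqueness of the canonical module of a Noetherian local ring (Aoyama), and coherence plus Nakayama to upgrade the stalkwise isomorphisms $\M_z\cong\O_{Z,z}$ to local invertibility. Nothing further is needed.
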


\begin{proof}
Trivial.
\end{proof}

\begin{lemma}\label{q-Gor-q-nor.lem}
A quasi-Gorenstein locally Noetherian scheme is quasi-normal by $\O_Z$.
\end{lemma}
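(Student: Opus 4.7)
The plan is to verify the three conditions packaged in the definition of \lq quasi-normal by $\O_Z$' from (\ref{quasinormal.par}): namely that $\O_Z$ is a coherent (trivial), full, and $2$-canonical module. Once these are verified, the conclusion is immediate by definition.

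The central observation is that the quasi-Gorenstein hypothesis means exactly that at every $z\in Z$, the stalk $\O_{Z,z}$ equals the canonical module of the local ring $\O_{Z,z}$ itself. This has two consequences simultaneously. First, $\O_Z$ is a \emph{semicanonical} module on $Z$, in the sense of the definition preceding Lemma~\ref{canonical-localization.lem}, since its stalk at every point is either zero or the canonical module of the stalk of $\O_Z$ (and here, never zero). Second, the condition in the definition of \lq $2$-canonical' on stalks at points of codimension $<2$ holds trivially, because in fact it holds at \emph{every} point.

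To finish verifying that $\O_Z$ is $2$-canonical, it only remains to check the $(S'_2)$ condition. This is provided directly by Lemma~\ref{omega-S_2.lem}, which tells us that any semicanonical module on a locally Noetherian scheme satisfies $(S'_2)$. Fullness is automatic since $\supp\O_Z=Z$. Combining these items, $\O_Z$ is a full $2$-canonical module on $Z$, so $Z$ is quasi-normal by $\O_Z$, as desired.

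Since all the substance is in Lemma~\ref{omega-S_2.lem}, there is no genuine obstacle here; the argument is essentially bookkeeping of definitions, packaged around a single application of that lemma.
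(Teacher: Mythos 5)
Your argument is correct and is essentially the paper's own proof: the paper cites Lemma~\ref{quasi-Gorenstein.lem} (quasi-Gorenstein $\Leftrightarrow$ $\O_Z$ semicanonical, which you verify directly from the definitions) together with Lemma~\ref{omega-S_2.lem} for the $(S'_2)$ condition, with fullness and the $2$-canonical stalk condition being immediate exactly as you note.
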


\begin{proof}
Follows immediately by Lemma~\ref{quasi-Gorenstein.lem} and
Lemma~\ref{omega-S_2.lem}.
\end{proof}

\begin{theorem}\label{canonical.thm}
Let $f:G\rightarrow H$ and $N$ 
be as in {\rm(\ref{initial-canonical-settings.par})}.
$Y_0$, $\Bbb I_{Y_0}$, and $\Cal F(G,Y_0)$ be as in
{\rm(\ref{canonical-Y_0.par})}.
Let
\[
\xymatrix{
X & U \ar@{_{(}->}[l]_i \ar[r]^{\rho} & V \ar@{^{(}->}[r]^j & Y
}
\]
be a $G$-enriched rational almost principal $N$-bundle
which is also a diagram in $\Cal F(G,Y_0)$.
Assume that $N$ is separated and has a fixed relative dimension.
Then there exist isomorphisms of $(G,\O_X)$-modules
\begin{equation}\label{omega_X-isoms.eq}
\omega_X
\cong i_*\rho^*j^*\omega_Y\otimes_{\O_X}\Theta_{N,X}^*
\cong i_*\rho^*j^*(\omega_Y\otimes_{\O_Y}\Theta_{N,Y}^*)
\end{equation}
and isomorphisms  of $(H,\O_Y)$-modules
\begin{equation}\label{omega_Y-isoms.eq}
\omega_Y\cong (j_*\rho_*i^*(\omega_X\otimes_{\O_X}\Theta_{N,X}))^N
\cong ((j_*\rho_*i^*\omega_X)\otimes_{\O_Y}\Theta_{N,Y})^N.
\end{equation}
\end{theorem}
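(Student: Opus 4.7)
The plan is to reduce the statement to the principal-bundle case established in Corollary~\ref{canonical-principal.thm} by means of the codimension-two arguments of Section~\ref{Serre-canonical.sec}. The canonical module, defined via the lowest nonvanishing cohomology of $\Bbb I_Z$, behaves well under restriction to and extension from large open subsets by Lemma~\ref{codim-two-canonical.thm}; so the identities we want on $X$ and $Y$ will follow from the known identities on the open subsets $U$ and $V$, where $\rho:U\rightarrow V$ is a genuine principal $N$-bundle.

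First, using that $U$ is large in $X$ and $V$ is large in $Y$ (conditions {\bf 4} and {\bf 5} of Definition~\ref{rational-almost-pb.def}), Lemma~\ref{codim-two-canonical.thm} supplies $G$-equivariant isomorphisms $\omega_X\cong i_*\omega_U$, $\omega_U\cong i^*\omega_X$, $\omega_Y\cong j_*\omega_V$, and $\omega_V\cong j^*\omega_Y$. Since $\rho:U\rightarrow V$ is a $G$-enriched principal $N$-bundle and a morphism in $\Cal F(G,Y_0)$ (the structure maps to $Y_0$ being inherited from $X$, $Y$, and the open immersions being of finite type by Noetherian-ness), Corollary~\ref{canonical-principal.thm} yields
\[
\omega_U\cong \rho^*\omega_V\otimes_{\O_U}\Theta_{N,U}^*,\qquad \omega_V\cong (\rho_*\omega_U\otimes_{\O_V}\Theta_{N,V})^N.
\]
The auxiliary fact I would rely on is that Lemma~\ref{omega-base-change.lem} applied to the commutative diagram of structure maps to $Y_0$ (namely $h_X i=h_V\rho=h_Y j\rho$) delivers the chain $i^*\Theta_{N,X}=\Theta_{N,U}=\rho^*\Theta_{N,V}=\rho^*j^*\Theta_{N,Y}$, and likewise for duals.

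For (\ref{omega_X-isoms.eq}), I would substitute $\omega_V\cong j^*\omega_Y$ into the principal-bundle formula for $\omega_U$, then apply $i_*$; the projection formula for the open immersion $i$ against the invertible sheaf $\Theta_{N,X}^*$ produces the first form $\omega_X\cong i_*\rho^*j^*\omega_Y\otimes_{\O_X}\Theta_{N,X}^*$, and a further application of the projection formula for $\rho$ and $j$ combined with the $\Theta$-compatibilities converts this to the second form $i_*\rho^*j^*(\omega_Y\otimes_{\O_Y}\Theta_{N,Y}^*)$. For (\ref{omega_Y-isoms.eq}), I would substitute $\omega_U\cong i^*\omega_X$ into the second principal-bundle formula and push along $j$, obtaining $\omega_Y\cong j_*(\rho_*i^*\omega_X\otimes_{\O_V}\Theta_{N,V})^N$; the natural isomorphism $e:j_*(?)^N\cong (?)^N j_*$ of (\ref{delta.par}) then commutes the invariance functor past $j_*$. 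Moving $\Theta$ through $j_*\rho_*i^*$ by the projection formula in the two possible ways produces the two asserted forms: using $\Theta_{N,V}=j^*\Theta_{N,Y}$ pushes $\Theta$ outside, giving $((j_*\rho_*i^*\omega_X)\otimes_{\O_Y}\Theta_{N,Y})^N$; whereas using $\rho^*\Theta_{N,V}=\Theta_{N,U}=i^*\Theta_{N,X}$ pulls $\Theta$ inside (twice), giving $(j_*\rho_*i^*(\omega_X\otimes_{\O_X}\Theta_{N,X}))^N$.

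I do not foresee a serious obstacle: every step is either a direct appeal to Lemma~\ref{codim-two-canonical.thm}, Corollary~\ref{canonical-principal.thm}, Lemma~\ref{omega-base-change.lem}, or a projection-formula computation against an invertible sheaf. The only bookkeeping to keep straight is that all identifications are equivariant for the relevant group structure ($G$ inside the invariance functor, $H$ outside), which holds because each constituent sheaf and morphism carries its natural $G$-linearization throughout.
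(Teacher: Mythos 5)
Your proposal is correct and follows essentially the same route as the paper: reduce to the principal-bundle case of Corollary~\ref{canonical-principal.thm} via Lemma~\ref{codim-two-canonical.thm} (largeness of $U$ in $X$ and $V$ in $Y$), and move $\Theta$ and the invariance functor through $i_*$, $\rho$, $j_*$ by the equivariant projection formula, the compatibility $h_Z^*\Theta_{N,Y_0}\cong\Theta_{N,Z}$, and $e:j_*(?)^N\cong(?)^N j_*$. The paper merely phrases the same steps as successive reductions ``we may assume $V=Y$'' and ``we may assume $U=X$,'' whereas you carry the open immersions explicitly; the content is identical.
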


\begin{proof}
We prove the first isomorphism of (\ref{omega_X-isoms.eq}).
By Lemma~\ref{codim-two-canonical.thm}, $j^*\omega_Y\cong \omega_V$, 
and hence we may assume that $V=Y$.
As $i_*\rho^*\omega_Y\otimes_{\O_X}\Theta_{N,X}^*
\cong
i_*(\rho^*\omega_Y\otimes_{\O_U}\Theta_{N,U}^*)
$ 
by the equivariant projection formula \cite[(26.4)]{ETI}
and $i_*\omega_U\cong \omega_X$ by
Lemma~\ref{codim-two-canonical.thm}, we may assume that $U=X$.
Now the assertion follows from Corollary~\ref{canonical-principal.thm}.
The second isomorphism follows easily using the equivariant
projection formula.
We prove 
the first isomorphism of (\ref{omega_Y-isoms.eq}).
As $(?)^N\circ j_*\cong j_* \circ (?)^N$ by \cite[(7.3)]{HO2}
and $j_*\omega_V\cong\omega_Y$, we may assume that $Y=V$.
As $i^*(\omega_X\otimes_{\O_X}\Theta_{N,X})
\cong \omega_U\otimes_{\O_U}\Theta_{N,U}$, we may assume that $X=U$.
Now the assertion follows from 
Corollary~\ref{canonical-principal.thm}.
The second isomorphism follows easily from the equivariant projection formula.
\end{proof}

\begin{corollary}[Watanabe type theorem]\label{abstract-Watanabe.thm}
Let the assumptions be as in {\rm Theorem~\ref{canonical.thm}}.
Then for an $H$-linerized invertible sheaf $\L$ on $Y$,
the following conditions are equivalent.
\begin{enumerate}
\item[\bf a] $\omega_X\cong i_*\rho^*j^*\L\otimes_{\O_X}\Theta_{N,X}^*
\cong i_*\rho^*j^*(\L\otimes_{\O_Y}\Theta_{N,Y}^*)$ 
\(resp.\ $\omega_X\cong i_*\rho^*j^*\L$\) 
in $\Qch(G,X)$, and $Y$ satisfies the $(S_2)$ condition.
\item[\bf b] $\omega_Y\cong\L$ in $\Qch(H,Y)$.
\end{enumerate}
If, moreover, 
$\Theta_{N,X}$ is trivial, then the following are
equivalent.
\begin{enumerate}
\item[\bf c]
$\omega_X\cong \O_X$ and $Y$ is $(S_2)$
\item[\bf d]
$\omega_Y\cong \O_Y$ and $X$ is $(S_2)$.
\end{enumerate}
If these conditions are satisfied, then both $X$ and $Y$ are quasi-Gorenstein.
\end{corollary}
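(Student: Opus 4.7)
The plan is to derive both implications of the first equivalence from Theorem~\ref{canonical.thm}. Direction (b)$\Rightarrow$(a) is almost immediate: substituting $\omega_Y\cong\L$ into the two forms of (\ref{omega_X-isoms.eq}) yields the two isomorphisms claimed for $\omega_X$, and the $(S_2)$ property of $Y$ comes for free, because $\omega_Y$ is semicanonical by Lemma~\ref{dualizing-semicanonical.lem}, and a semicanonical module that is invertible forces every local ring of $Y$ to be isomorphic to its own canonical module, so $Y$ is quasi-Gorenstein and in particular quasi-normal by $\O_Y$ via Lemma~\ref{q-Gor-q-nor.lem}, hence satisfies $(S_2)$.

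For the reverse direction (a)$\Rightarrow$(b), I will start from the second form of (\ref{omega_Y-isoms.eq}), namely $\omega_Y\cong((j_*\rho_*i^*\omega_X)\otimes_{\O_Y}\Theta_{N,Y})^N$, and plug in $\omega_X\cong i_*\rho^*j^*\L\otimes_{\O_X}\Theta^*_{N,X}$. The counit $i^*i_*\cong\Id$ strips the outer $i_*$; the naturality identifications $i^*\Theta_{N,X}\cong\Theta_{N,U}\cong\rho^*\Theta_{N,V}$ and $j^*\Theta_{N,Y}\cong\Theta_{N,V}$, which follow from $\Theta_{N,?}=\bar h^*_?\Theta_{N,S}$ together with the base-change Lemma~\ref{omega-base-change.lem}, combine with the projection formulas for $\rho$ and for $j$ to cancel every factor of $\Theta^{\pm1}$, leaving the expression $(j_*\rho_*\rho^*j^*\L)^N$. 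Pulling the invariance past $j_*$ via the isomorphism $e$ of (\ref{delta.par}), invoking Grothendieck descent $(\rho_*\rho^*\N)^N\cong\N$ for $\N\in\Qch(H,V)$ along the $H$-enriched principal $N$-bundle $\rho$, and finally $j_*j^*\L\cong\L$ (which holds because $V$ is large in $Y$, $Y$ is $(S_2)$ by hypothesis so $j_*\O_V\cong\O_Y$, and the projection formula applies since $\L$ is invertible) yields $\omega_Y\cong\L$.

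For the second equivalence (c)$\Leftrightarrow$(d), triviality of $\Theta_{N,X}$ transfers to $\Theta_{N,U}$ by the naturality $i^*\Theta_{N,X}\cong\Theta_{N,U}$, so (a) with $\L=\O_Y$ reduces to $\omega_X\cong i_*\O_U$, and this equals $\O_X$ precisely when $X$ is $(S_2)$ (by Lemma~\ref{S_2.lem}). Under (c), $\omega_X\cong\O_X$ is semicanonical, hence $X$ is quasi-Gorenstein and in particular $(S_2)$, so the identification $i_*\O_U\cong\O_X$ holds and (c) becomes the $\L=\O_Y$ instance of (a); applying (a)$\Rightarrow$(b) gives $\omega_Y\cong\O_Y$. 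Under (d), the $(S_2)$ assumption on $X$ directly supplies the same identification, and (b)$\Rightarrow$(a) gives $\omega_X\cong\O_X$. Finally, $\omega_X\cong\O_X$ and $\omega_Y\cong\O_Y$ are invertible semicanonical modules, so every local ring of $X$ and of $Y$ equals its own canonical module, and both are quasi-Gorenstein. The main obstacle will be the bookkeeping in (a)$\Rightarrow$(b), namely correctly sequencing the projection formulas and naturality isomorphisms so that the $\Theta$ factors actually cancel; once this is laid out, everything else is formal.
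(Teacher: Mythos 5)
Your proposal is correct and follows essentially the same route as the paper: both directions of (a)$\Leftrightarrow$(b) are read off from the two formulas of Theorem~\ref{canonical.thm} (with the projection-formula/descent cancellation of the $\Theta$ factors and $j_*j^*\L\cong\L$ from largeness of $V$ plus $(S_2)$ of $Y$, and the converse via ``invertible semicanonical $\Rightarrow$ quasi-Gorenstein $\Rightarrow (S_2)$''), and (c)$\Leftrightarrow$(d) is reduced to the $\L=\O_Y$ case exactly as in the paper's proof, using $\O_X\cong i_*\O_U\cong i_*\rho^*j^*\O_Y$ under the relevant $(S_2)$ hypothesis.
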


\begin{proof}
{\bf a$\Rightarrow$b}.
We easily have that $\omega_Y\cong j_*j^*\L$ by Theorem~\ref{canonical.thm}
and the assumption.
As $Y$ is $(S_2)$, $\L\cong j_*j^*\L$, and $\omega_Y\cong\L$.

{\bf b$\Rightarrow$a}.
As the semicanonical module $\omega_Y$ is an invertible sheaf,
$Y$ is quasi-Gorenstein.
In particular, $Y$ is $(S_2)$.
The isomorphisms 
follow from Theorem~\ref{canonical.thm} and the assumption.

Now assume that $\Theta_{N,X}$ is trivial.

{\bf c$\Rightarrow$d}.
As the semicanonical module $\omega_X$ is invertible, we have that
$X$ is quasi-Gorenstein and $(S_2)$.
So $\O_X\cong i_*i^*\O_X\cong i_*\O_U\cong i_*\rho^*j^*\O_Y$.
By {\bf a$\Rightarrow$b} above, we have that $\omega_Y\cong\O_Y$.

{\bf d$\Rightarrow$c}.
Then $Y$ is quasi-Gorenstein, and so $Y$ is $(S_2)$.
By {\bf b$\Rightarrow$a} above, we have that
$\omega_X\cong i_*\rho^*j^*\O_Y\cong i_*i^*\O_X\cong \O_X$.
\end{proof}

\begin{corollary}\label{quasi-Gor.cor}
Let the assumptions be as in {\rm Theorem~\ref{canonical.thm}}.
Then the following are equivalent.
\begin{enumerate}
\item[\bf a] $\omega_X\cong i_*\rho^*j^*\L$ for some $H$-linearized 
invertible sheaf $\L$ on $Y$, and $Y$ satisfies the $(S_2)$ condition.
\item[\bf b] $\omega_Y$ is an invertible sheaf.
\end{enumerate}
These conditions imply
\begin{enumerate}
\item[\bf c] $Y$ is quasi-Gorenstein.
\end{enumerate}
If, moreover, $Y$ is connected, then {\bf a, b, c} are equivalent.
\end{corollary}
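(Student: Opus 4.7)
The plan is to derive (a)$\Leftrightarrow$(b) directly from Corollary~\ref{abstract-Watanabe.thm} by absorbing the twist by $\Theta_{N,Y}$ into the choice of the $H$-linearized invertible sheaf, and to handle (b)$\Leftrightarrow$(c) by a purely local analysis of semicanonical stalks. The technical identity I will lean on is $i^*\Theta_{N,X}\cong \rho^*j^*\Theta_{N,Y}$, which holds because both sides equal $\Theta_{N,U}$ (all three are pulled back from $\Theta_{N,Y_0}$ via the structure maps to $Y_0$). Combining this with the equivariant projection formula for the flat open immersions $i$ and $j$ yields the identity
\begin{equation*}
i_*\rho^*j^*(\L\otimes_{\O_Y}\Theta_{N,Y})\otimes_{\O_X}\Theta_{N,X}^*\cong i_*\rho^*j^*\L
\end{equation*}
for any $\L\in\Qch(H,Y)$, which is the bridge between the two formulations of condition \textbf{a}.

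Given this, for (a)$\Rightarrow$(b) I will set $\L':=\L\otimes_{\O_Y}\Theta_{N,Y}$, rewriting the hypothesis $\omega_X\cong i_*\rho^*j^*\L$ as $\omega_X\cong i_*\rho^*j^*\L'\otimes_{\O_X}\Theta_{N,X}^*$, which is exactly condition \textbf{a} of Corollary~\ref{abstract-Watanabe.thm} applied to $\L'$; the corollary then yields $\omega_Y\cong\L'$, which is invertible. Conversely, for (b)$\Rightarrow$(a), setting $\L:=\omega_Y\otimes_{\O_Y}\Theta_{N,Y}^*$ and applying condition \textbf{b} of Corollary~\ref{abstract-Watanabe.thm} to $\L':=\omega_Y$ gives $\omega_X\cong i_*\rho^*j^*(\omega_Y\otimes_{\O_Y}\Theta_{N,Y}^*)\cong i_*\rho^*j^*\L$. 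The $(S_2)$ claim then comes for free from the same local argument that gives (b)$\Rightarrow$(c): by Lemma~\ref{dualizing-semicanonical.lem} the module $\omega_Y$ is semicanonical as an $\O_Y$-module, so once $\omega_Y$ is invertible each stalk $\omega_{Y,y}\cong\O_{Y,y}$ is nonzero and must be the canonical module of $\O_{Y,y}$, making $\O_{Y,y}$ its own canonical module, i.e., quasi-Gorenstein and in particular $(S_2)$.

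For (c)$\Rightarrow$(b) under the connectedness assumption, I will first upgrade quasi-Gorenstein to locally equidimensional: every $\O_{Y,y}$ has a canonical module (namely itself), so its completion is equidimensional by \cite[(1.7)]{Aoyama}, and therefore $\O_{Y,y}$ is equidimensional. Since $Y$ is connected, nonempty, locally equidimensional and $\omega_Y$ is a nonzero semicanonical module on $Y$, Lemma~\ref{ogoma.lem} gives $\supp\omega_Y=Y$. At each $y$ the stalk $\omega_{Y,y}$ is then the canonical module of $\O_{Y,y}$, while quasi-Gorensteinness exhibits $\O_{Y,y}$ itself as another canonical module; by uniqueness $\omega_{Y,y}\cong\O_{Y,y}$, so $\omega_Y$ is locally free of rank one and hence invertible. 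The main obstacle in this direction is precisely that without the connectedness hypothesis $\omega_Y$ may vanish identically on components whose local lowest nonvanishing cohomology index of $\Bbb I_Y$ exceeds the global one, and Lemma~\ref{ogoma.lem} is exactly the tool that rules this out once the locally equidimensional property has been secured.
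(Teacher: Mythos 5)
Your argument is correct and matches the paper's proof in essentially every step: both directions {\bf a}$\Leftrightarrow${\bf b} are obtained from Corollary~\ref{abstract-Watanabe.thm} by absorbing the $\Theta_{N,Y}$-twist into the invertible sheaf (your explicit verification of $i_*\rho^*j^*(\L\otimes_{\O_Y}\Theta_{N,Y})\otimes_{\O_X}\Theta_{N,X}^*\cong i_*\rho^*j^*\L$ via $i^*\Theta_{N,X}\cong\rho^*j^*\Theta_{N,Y}$ and the projection formula is exactly what the paper leaves implicit), and {\bf c}$\Rightarrow${\bf b} under connectedness goes through local equidimensionality, Lemma~\ref{ogoma.lem}, and the invertibility of a full semicanonical module on a quasi-Gorenstein scheme, just as in the paper. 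The only cosmetic divergence is that you deduce local equidimensionality from \cite[(1.7)]{Aoyama} applied to the completions of the quasi-Gorenstein local rings, whereas the paper invokes Ogoma's theorem for $(S_2)$ Noetherian schemes with dualizing complexes; both are valid and feed into Lemma~\ref{ogoma.lem} in the same way.
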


\begin{proof}
{\bf a$\Rightarrow$b}.
We have
\[
\omega_X\cong i_*\rho^*j^*\L\cong i_*\rho^*j^*(\L\otimes_{\O_Y}\Theta_{N,Y})
\otimes_{\O_X}\Theta_{N,X}^*,
\]
and hence $\omega_Y\cong \L\otimes_{\O_Y}\Theta_{N,Y}$ is an invertible sheaf
by 
Corollary~\ref{abstract-Watanabe.thm}.

{\bf b$\Rightarrow$a,c}.
Letting $\L=\omega_Y\otimes_{\O_Y}
\Theta_{N,Y}^*$, we have
that $\L$ is an $H$-linearized invertible sheaf on $Y$.
We have 
$\omega_X\cong i_*\rho^*j^*\L$ by Corollary~\ref{abstract-Watanabe.thm}.
As $\omega_Y$ is an invertible sheaf, $Y$ is quasi-Gorenstein, and
hence is $(S_2)$.

Now assume that $Y$ is connected and {\bf c} is satisfied.
Then $Y$ is $(S_2)$ and has a dualizing complex.
Hence it is locally equidimensional by Ogoma's theorem \cite{Ogoma}.
By Lemma~\ref{ogoma.lem}, the semicanonical module $\omega_Y$ is full.
As $Y$ is quasi-Gorenstein, $\omega_Y$ is an invertible sheaf and
so {\bf c$\Rightarrow$b} holds.
\end{proof}

\begin{remark}\label{knop-remark.thm}
Let the assumptions be as in Theorem~\ref{canonical.thm}.
In view of Corollary~\ref{abstract-Watanabe.thm},
it is important to know when $\Theta_{N,Y_0}\cong \O_{Y_0}$ 
holds.
\begin{enumerate}
\item[\bf 1] If $N$ is \'etale, then $\Theta_{N,Y_0}\cong \O_{Y_0}$,
since $\Theta_{N,Y_0}\cong q_{Y_0}^*(\ext^0\Omega_{N/S}^*)$, where $q_{Y_0}:Y_0
\rightarrow S$ is the structure map.
\item[\bf 2] If $N$ is finite and Reynolds, then
$\Theta_{N,Y_0}\cong \O_{Y_0}$, see
Lemma~\ref{Reynolds-Theta.lem}.
\item[\bf 3] If
$G=N$ and $N$ is split reductive, then $\Theta_{N,Y_0}\cong\O_{Y_0}$.
To verify this, as $G$ is defined over $\Bbb Z$, we may assume that
$S=Y_0=\Bbb Z$.
As the positive and the negative roots cancel out in $\extop\Lie G$,
$\Theta$ is a rank-one free representation whose weight is zero.
Similarly, if $S=\Spec k$ with $k$ a field, $G=N$, and $N$ is reductive, 
then $\Theta_{N,S}=\O_S$.
\item[\bf 3] If $S=\Spec k$ with $k$ a field and 
$N$ is contained in the center of $G$, then the action of $G$ on 
$N$ is trivial, and hence $\omega_{N/S}$ is $G$-trivial.
Hence $\Theta$ is a $G$-trivial one-dimensional representation, that is,
$\Theta\cong k$.
\item[\bf 4] Even if $S=Y_0=\Spec k$, $G=N$, and 
the identity component $N^\circ$ of $N$ is reductive, 
$\Theta$ may not be trivial.
For example, if the characteristic of $k$ is not two and 
$N=O(2)$, the orthogornal group, then $\Theta$ is not trivial,
see \cite[Bemerkung~4 after Korollar~2]{Knop}.
\end{enumerate}
\end{remark}

\begin{corollary}\label{canonical-cor.thm}
Let $f:G\rightarrow H$ and $N$ 
be as in {\rm(\ref{initial-canonical-settings.par})}.
$Y_0$, $\Bbb I_{Y_0}$, and $\Cal F(G,Y_0)$ be as in
{\rm(\ref{canonical-Y_0.par})}.
Let
$\varphi:X\rightarrow Y$ be a $G$-enriched 
almost principal $N$-bundle which is also a morphism in $\Cal F(G,Y_0)$.
Assume that $N$ is separated with a fixed relative dimension.
Then we have the following.
\begin{enumerate}
  \item[\bf 1]
There exist 
isomorphisms of $(H,\O_Y)$-modules
\begin{equation}\label{knop-almost-1.eq}
\omega_Y\cong (\varphi_*(\omega_X\otimes_{\O_X}\Theta_{N,X}))^N
\cong (\varphi_*\omega_X\otimes_{\O_Y}\Theta_{N,Y})^N.
\end{equation}
If, moreover, $X$ 
has a coherent $(G,\O_X)$-module $\M_X$ which is a full $2$-canonical module,
then there exist
isomorphisms of $(G,\O_X)$-modules
\begin{equation}\label{knop-almost-2.eq}
\omega_X\cong (\varphi^*\omega_Y)^{\vee\vee}\otimes_{\O_X}\Theta_{N,X}^*
\cong (\varphi^*(\omega_Y\otimes_{\O_Y}\Theta_{N,Y}^*))^{\vee\vee},
\end{equation}
where $(?)^\vee=\uHom_{\O_X}(?,\M_X)$.
\item[\bf 2] {\rm (Watanabe type theorem)}
Let $\L$ be an $H$-linearized invertible sheaf on $Y$.
Then the following are equivalent.
\begin{enumerate}
\item[\bf a] $\omega_X\cong \varphi^*\L\otimes_{\O_X}\Theta_{N,X}^*
\cong
\varphi^*(\L\otimes_{\O_Y}\Theta_{N,Y}^*)$ \(resp.\ 
$\omega_X\cong\varphi^*\L$\),
and $Y$ satisfies $(S_2)$.
\item[\bf b] $\omega_Y\cong \L$, and $X$ satisfies $(S_2)$.
\end{enumerate}
\item[\bf 3]
The following are equivalent.
\begin{enumerate}
\item[\bf a] $\omega_X\cong\varphi^*\L$ for some $H$-linearized 
invertible sheaf $\L$ on $Y$, and $Y$ satisfies the $(S_2)$ condition.
\item[\bf b] $\omega_Y$ is an invertible sheaf on $Y$, and $X$ satisfies 
the $(S_2)$ condition.
\end{enumerate}
These conditions imply that both $X$ and $Y$ are 
quasi-Gorenstein, and hence we have
\begin{enumerate}
\item[\bf c] $Y$ is quasi-Gorenstein and $X$ satisfies the $(S_2)$ condition.
\end{enumerate}
If, moreover, $Y$ is connected, then {\bf a, b, c} are equivalent.
\end{enumerate}
\end{corollary}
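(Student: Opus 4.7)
The strategy is to reduce everything to the rational almost principal case (Theorem~\ref{canonical.thm} and Corollary~\ref{abstract-Watanabe.thm}, Corollary~\ref{quasi-Gor.cor}) by using the identifications of the zig-zag functors $i_*\rho^*j^*$ and $(j_*\rho_*i^*?)^N$ with the direct functors $(?)^{\vee\vee}\varphi^*$ and $(\varphi_*?)^N$ available for an honest almost principal bundle.

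For {\bf 1}, fix open subsets $U \subset X$ and $V \subset Y$ witnessing $\varphi$ as a $G$-enriched almost principal $N$-bundle, with $\rho : U \to V$ the restriction; the diagram $X \hookleftarrow U \to V \hookrightarrow Y$ then satisfies the hypotheses of Theorem~\ref{canonical.thm}. That theorem yields (\ref{omega_Y-isoms.eq}), and the identification $(j_*\rho_*i^*?)^N \cong (\varphi_*?)^N$ from Lemma~\ref{almost-double-dual-S_2.lem}, {\bf 1} (noting that the cohomology groups $\omega_X$ and $\omega_X \otimes \Theta_{N,X}$ automatically satisfy $(S'_2)$ by Lemma~\ref{dualizing-semicanonical.lem} and Lemma~\ref{omega-S_2.lem}, since $\Theta_{N,X}$ is invertible) converts this into (\ref{knop-almost-1.eq}); the second isomorphism there comes from the equivariant projection formula. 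For (\ref{knop-almost-2.eq}), the first isomorphism of (\ref{omega_X-isoms.eq}) in Theorem~\ref{canonical.thm} together with Lemma~\ref{almost-double-dual-S_2.lem}, {\bf 2} (applicable since $\M_X$ is given to be a full $2$-canonical module, and $\omega_Y \in (S'_2)(H,Y)$) yields $\omega_X \cong (\varphi^*\omega_Y)^{\vee\vee} \otimes_{\O_X} \Theta_{N,X}^*$; the second isomorphism follows by tensoring with $\Theta_{N,X}^* \cong \varphi^*\Theta_{N,Y}^*$ inside the double dual and applying the projection formula.

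For {\bf 2}, the argument parallels Corollary~\ref{abstract-Watanabe.thm}. Assuming {\bf a}, the $(S_2)$ hypothesis on $Y$ lets us combine {\bf 1} with the fact that an $(S_2)$ scheme satisfies $\O_Y \cong j_*j^*\O_Y$ to identify $\omega_Y \cong \L$; conversely, from {\bf b} the $(S_2)$ property of $X$ propagates to the $(S_2)$ property of $Y$ (via $\bar\eta$ and the almost principal bundle; compare Lemma~\ref{almost-double-dual-S_2.lem}, {\bf 4}), and then Corollary~\ref{abstract-Watanabe.thm} combined with $i_*\rho^*j^*\L \cong \varphi^*\L$ (valid when the target is reflexive/$(S'_2)$) gives the desired form of $\omega_X$. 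The resp.\ versions (with trivial $\Theta$) follow by the same route.

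For {\bf 3}, the implication {\bf a}$\Rightarrow${\bf b} uses {\bf 2} applied to $\L \otimes_{\O_Y} \Theta_{N,Y}$ to conclude $\omega_Y \cong \L \otimes \Theta_{N,Y}$, which is invertible; for {\bf b}$\Rightarrow${\bf a} we take $\L := \omega_Y \otimes \Theta_{N,Y}^*$ and invoke {\bf 2} again. That {\bf b}$\Rightarrow${\bf c} is immediate from Lemma~\ref{quasi-Gorenstein.lem}. For {\bf c}$\Rightarrow${\bf b} under the connectedness hypothesis on $Y$, we argue exactly as in Corollary~\ref{quasi-Gor.cor}: Ogoma's theorem \cite{Ogoma} yields local equidimensionality of $Y$, then Lemma~\ref{ogoma.lem} ensures $\omega_Y$ is full, and being quasi-Gorenstein forces $\omega_Y$ to be invertible. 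The main subtlety I expect is matching up $(?)^{\vee\vee}\varphi^*$ with $i_*\rho^*j^*$ cleanly when $\Theta_{N,X}$ intervenes; this is handled by the fact that twisting by a $G$-linearized invertible sheaf preserves the full $2$-canonical property, so Lemma~\ref{almost-double-dual-S_2.lem}, {\bf 2} applies with $\M_X$ replaced by $\M_X \otimes \Theta_{N,X}$ whenever needed.
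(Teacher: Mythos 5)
Your overall strategy coincides with the paper's: both parts are obtained by specializing Theorem~\ref{canonical.thm} and Corollary~\ref{abstract-Watanabe.thm} to the witnesses $U$, $V$ and translating the zig-zag functors into $(\varphi_*?)^N$ and $(?)^{\vee\vee}\varphi^*$ via Lemma~\ref{almost-double-dual-S_2.lem}; your treatment of {\bf 1} is exactly the paper's argument, and for {\bf 3} the paper simply derives everything from {\bf 2}, with your appeal to Ogoma's theorem and Lemma~\ref{ogoma.lem} for the implication {\bf c}$\Rightarrow${\bf b} reproducing Corollary~\ref{quasi-Gor.cor}.

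Two points in your part {\bf 2} need repair. First, in {\bf a}$\Rightarrow${\bf b} the conclusion also asserts that $X$ satisfies $(S_2)$, and you only derive $\omega_Y\cong\L$; the missing conjunct is immediate but must be said: under {\bf a} the semicanonical module $\omega_X$ is invertible, so $X$ is quasi-Gorenstein (Lemma~\ref{quasi-Gorenstein.lem}) and in particular $(S_2)$. This is precisely how the paper, in either direction, secures the $(S_2)$ property of $X$ needed to identify $\varphi^*\L\cong i_*i^*\varphi^*\L\cong i_*\rho^*j^*\L$ before invoking Corollary~\ref{abstract-Watanabe.thm}. Second, in {\bf b}$\Rightarrow${\bf a} you claim that the $(S_2)$ property of $X$ ``propagates'' to $Y$ via $\bar\eta$ and Lemma~\ref{almost-double-dual-S_2.lem}, {\bf 4}; that lemma runs the other way (it assumes both $X$ and $Y$ are $(S_2)$ and concludes that $\bar\eta$ is an isomorphism), and for a non-finite $N$ no such descent of $(S_2)$ along $\varphi$ is available in the paper. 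Fortunately the step is unnecessary: the $(S_2)$ condition on $Y$ demanded by conclusion {\bf a} is already part of the output of Corollary~\ref{abstract-Watanabe.thm}, {\bf b}$\Rightarrow${\bf a}, where it comes from $\omega_Y\cong\L$ being invertible, hence $Y$ quasi-Gorenstein. With these two adjustments your proof agrees with the paper's.
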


\begin{proof}
Let $\varphi:X\rightarrow Y$ be a $G$-enriched almost principal
bundle with respect to $U$ and $V$.
Let $i:U\rightarrow X$ and $j:V\rightarrow V$ be the inclusion,
and $\rho:U\rightarrow V$ be the restriction of $\varphi$.

{\bf 1}.
As $\omega_X\otimes_{\O_X}\Theta_{N,X}$ satisfies the $(S'_2)$-condition
by Lemma~\ref{omega-S_2.lem},
the first isomorphism of (\ref{knop-almost-1.eq})
  is immediate from (\ref{omega_Y-isoms.eq}) in 
Theorem~\ref{canonical.thm}, {\bf 1} and 
Lemma~\ref{almost-double-dual-S_2.lem},
{\bf 1}.
The second isomorphism is by the equivariant projection formula
\cite[(26.4)]{ETI}.

The first isomorphism of (\ref{knop-almost-2.eq})
follows from (\ref{omega_X-isoms.eq}) in
Theorem~\ref{canonical.thm}, {\bf 1} and
Lemma~\ref{almost-double-dual-S_2.lem},
{\bf 2}.
The second isomorphism follows easily by the equivariant projection formula.

{\bf 2}.
If {\bf a} is assumed, then 
the semicanonical module $\omega_X$ is an invertible sheaf, and hence 
$X$ is
quasi-Gorenstein.
In particular, $X$ satisfies $(S_2)$.
If {\bf b} is assumed, $X$ is $(S_2)$ by assumption.
So in either case, we have
\[
\varphi^*\L
\cong
i_*i^*\varphi^*\L
\cong
i_*\rho^*j^*\L.
\]
By Corollary~\ref{abstract-Watanabe.thm}, the assertion follows.

{\bf 3} follows easily from {\bf 2}.
\end{proof}

\section{Frobenius pushforwards}\label{Frobenius-pushforwards.sec}

\begin{lemma}\label{finite-depth.lem}
Let $A\rightarrow B$ be a homomorphism between Noetherian rings
whose fibers are zero-dimensional \(e.g., an integral homomorphism\),
and $M$ a \(possibly infinite\) $B$-module.
Then for a prime ideal $\fp$ of $A$, 
\[
\depth_{A_\fp} M_\fp = \inf_{P\cap A=\fp}\depth_{B_P}M_P.
\]
If, moreover, the going-down theorem holds between $A$ and $B$, and
$M$ satisfies the $(S'_n)$ condition as a $B$-module, 
then $M$ satisfies the $(S'_n)$ condition as an $A$-module.
\end{lemma}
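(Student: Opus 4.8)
The plan is to reduce the depth formula to a statement about local cohomology and to the classical behavior of depth under integral (or, more generally, zero-dimensional fiber) base change. First I would localize: fix a prime $\fp$ of $A$ and replace $A$ by $A_\fp$ and $B$ by $B_\fp=A_\fp\otimes_A B$, and $M$ by $M_\fp$; this is harmless because both sides of the asserted equality only involve primes $P$ of $B$ with $P\cap A=\fp$, and localization commutes with the relevant $\Ext$/local cohomology computations. Thus I may assume $(A,\fp)$ is local and I must show $\depth_A M=\inf_{P}\depth_{B_P}M_P$, where $P$ ranges over the maximal ideals of $B$ lying over $\fp$ — these are exactly the primes of $B$ sitting over $\fp$ because the fiber $B\otimes_A\kappa(\fp)$ is zero-dimensional, hence Artinian, so every prime of $B$ over $\fp$ is both minimal and maximal in its fiber. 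Note $M$ need not be finite, so I cannot simply invoke the $\Ext$-characterization of depth for finite modules over $B$; instead I would work throughout with the characterization $\depth_R N=\inf\{n\mid H^n_{\mathfrak a}(N)\neq 0\}$ for $\mathfrak a$ the maximal ideal (valid for arbitrary modules, cf.\ Lemma~\ref{grade-local-cohomology.lem} and the discussion preceding it), being careful that the unbounded case is handled by the fact that $H^n_{\mathfrak a}$ commutes with the relevant limits.

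The key computation is then: $H^n_{\fp A}(M)$, computed over $A$, agrees with $\bigoplus$ (or rather the finite product) of the $H^n_{PB_P}(M_P)$ over the finitely many $P$ over $\fp$. Concretely, I would argue that $\fp B$ has radical equal to $\bigcap_P P$ (finitely many, pairwise comaximal), so by the independence of local cohomology of the choice of ideal with a given radical and by a Mayer--Vietoris / CRT decomposition, $H^n_{\fp B}(M)\cong\prod_P H^n_{PB_P}(M_P)$; and since $A\to B$ has the property that $\fp$-torsion over $A$ equals $\fp B$-torsion over $B$ (because elements of $\fp$ generate, up to radical, the ideal $\fp B$), one gets $H^n_{\fp A}(M)\cong H^n_{\fp B}(M)$ as $A$-modules, the $B$-structure being forgotten. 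Taking $\inf\{n\mid(-)\neq 0\}$ on each side yields $\depth_A M=\inf_P\depth_{B_P}M_P$, which is the first assertion.

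For the second assertion, assume going-down holds between $A$ and $B$ and that $M$ is $(S'_n)$ as a $B$-module. I must check $\depth_{A_\fp}M_\fp\geq\min(n,\dim A_\fp)$ for every $\fp\in\Spec A$. By the depth formula just proved, pick $P$ over $\fp$ with $\depth_{B_P}M_P$ minimal; then $\depth_{A_\fp}M_\fp=\depth_{B_P}M_P\geq\min(n,\dim B_P)$ by the $(S'_n)$ hypothesis on $M$ over $B$. If this minimum is $n$ we are done since $\dim A_\fp$ may only make the required bound smaller. Otherwise $\depth_{B_P}M_P=\dim B_P<n$, and I need $\dim B_P\geq\dim A_\fp$: this is exactly where going-down enters, giving a chain of primes in $B$ below $P$ lying over any chain below $\fp$, so $\dim B_P\geq\dim A_\fp$, and hence $\depth_{A_\fp}M_\fp\geq\dim A_\fp\geq\min(n,\dim A_\fp)$. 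The main obstacle I anticipate is the non-finiteness of $M$: I must make sure the local cohomology comparison $H^n_{\fp A}(M)\cong\prod_P H^n_{PB_P}(M_P)$ and the identity $H^n_{\fp A}(M)\cong H^n_{\fp B}(M)$ hold without Noetherian finiteness on $M$ (only $A$, $B$ Noetherian), which should follow from the fact that local cohomology is computed by the (finite) \v{C}ech or Koszul-colimit complex on generators of $\fp$, a construction insensitive to whether $M$ is finitely generated.
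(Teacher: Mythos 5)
Your proof is correct and takes essentially the same route as the paper: the paper's entire argument for the depth formula is the displayed identity $H^i_{\fp A_\fp}(M_\fp)=H^i_{\fp B_\fp}(M_\fp)=\bigoplus_{P\cap A=\fp}H^i_{PB_P}(M_P)$, which is exactly what your base-independence plus Mayer--Vietoris/comaximality argument establishes, and the second assertion is likewise settled by going-down giving $\dim B_P\geq\dim A_\fp$. One cosmetic remark: in your second case the asserted equality $\depth_{B_P}M_P=\dim B_P$ need not hold literally for a non-finite $M$, but it is also not needed, since the $(S'_n)$ inequality already gives $\depth_{A_\fp}M_\fp=\depth_{B_P}M_P\geq\dim B_P\geq\dim A_\fp$ in that case (the paper instead uses zero-dimensionality of the fibers to get the reverse inequality and an equality of dimensions, which your inequality-form verification does not require).
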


\begin{proof}
We have
\[
H^i_{\fp A_\fp}M_\fp=
H^i_{\fp B_\fp}M_\fp
=\bigoplus_{P\cap A=\fp}H^i_{PB_P}M_P,
\]
and the first assertion follows.

We prove the second assertion.
Let $\fp$ be a prime ideal of $A$ such that $\depth_{A_\fp}M_\fp<n$.
Then there exists some $P\in\Spec B$ such that $P\cap A=\fp$ and
$\depth_{B_P}M_P=\depth_{A_\fp}M_\fp<n$.
So $\depth_{B_P}M_P=\dim B_P$ by the $(S'_n)$ property as a $B$-module.
As the fibers are zero dimensional, $\dim A_\fp\geq \dim B_P$.
By the going-down, $\dim A_\fp\leq \dim B_P$.
Hence $\dim A_\fp=\dim B_P=\depth_{B_P}M_P=\depth_{A_\fp}M_\fp$,
and $M$ satisfies the $(S'_n)$ condition as an $A$-module.
\end{proof}

\begin{lemma}\label{invariance-S_2.lem}.
Let $G$ be a flat $S$-group scheme which is quasi-compact over $S$,
and $X$ be a locally Noetherian $S$-scheme on which $G$ acts trivially.
Let $\M$ be a quasi-coherent $(G,\O_X)$-module which satisfies the
$(S'_2)$ condition.
Then $\M$ satisfies the $(S'_2)$ condition.
\end{lemma}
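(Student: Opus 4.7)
\medskip

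\noindent\textbf{Proposal.} I read the conclusion as ``$\M^G$ satisfies the $(S'_2)$ condition'' (the statement as printed is tautological, and the label \texttt{invariance-S\_2.lem} together with the preceding Lemma~\ref{invariance-qcqs.lem} makes the intended reading clear). The plan is a local cohomology / depth computation applied to a left-exact presentation of $\M^G$ coming from the simplicial description in (\ref{G-trivial.par}).

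First, using (\ref{G-trivial.par}), I have the left-exact sequence
\[
0\to \M^G\to \M\xrightarrow{\beta_{\delta_0}-\beta_{\delta_1}} p_*\M_1,
\]
where $p\colon G\times X\to X$ is the second projection (which coincides with the action, since $G$ acts trivially on $X$) and $\M_1\in\Mod(G\times X)$ is the level-one structure datum of $\M$. The $G$-linearization of $\M$ furnishes an isomorphism $\M_1\cong p^*\M$ of $\O_{G\times X}$-modules, so the target is identified with $p_*p^*\M$. Thus $\M^G$ is realised as a kernel inside $\M$, and the question is to bound $\depth_{\O_{X,x}}(\M^G)_x$ for every $x\in X$.

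Second, fix $x\in X$ and set $B:=\O_{X,x}$, $M:=\M_x$, $N:=(p_*p^*\M)_x$, $\mathfrak m:=\mathfrak m_x$. By flat base change (valid because $p$ is flat quasi-compact and $\Spec B\to X$ is flat) one has $N\cong H^0(G_B,q^*\tilde M)$, where $q\colon G_B:=G\times_S\Spec B\to\Spec B$ is the base change of $p$. Choose a finite affine open cover $\{V_i=\Spec A_i\}$ of $G_B$ (possible since $G$ is quasi-compact). Each $\Gamma(V_i,q^*\tilde M)\cong A_i\otimes_B M$, and $A_i$ is $B$-flat because $G\to S$ is flat. Since local cohomology commutes with flat tensor products, $\depth_B(A_i\otimes_B M)\geq\depth_B M$, and the same holds for all pairwise intersections. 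Presenting $N$ as the kernel of the Čech differential $\prod_i A_i\otimes_B M\to\prod_{i,j}\Gamma(V_{ij},q^*\tilde M)$ and chasing long exact sequences of $H^*_{\mathfrak m}$ yields $\depth_B N\geq\min(2,\depth_B M)$.

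Third, decompose $0\to(\M^G)_x\to M\to I\to 0$ with $I\subseteq N$. The long exact local cohomology sequence gives
\[
H^{i-1}_{\mathfrak m}(I)\to H^i_{\mathfrak m}((\M^G)_x)\to H^i_{\mathfrak m}(M),
\]
so $H^i_{\mathfrak m}((\M^G)_x)=0$ provided both flanking terms vanish. For $i<\min(2,\dim B)$ the vanishing $H^i_{\mathfrak m}(M)=0$ is the $(S'_2)$ hypothesis on $\M$, while $H^{i-1}_{\mathfrak m}(I)\hookrightarrow H^{i-1}_{\mathfrak m}(N)=0$ by Step~2 (for the $H^0_{\mathfrak m}$-case one uses that $H^0_{\mathfrak m}$ is left exact on submodules). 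This yields $\depth_B(\M^G)_x\geq\min(2,\dim B)$, i.e.\ the $(S'_2)$ property at $x$.

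The main technical obstacle is Step~2: the identification of $(p_*p^*\M)_x$ via flat base change, and the Čech/depth bookkeeping when $G$ is not affine over $S$ (if $G$ is moreover assumed quasi-separated one can arrange all pairwise intersections $V_{ij}$ affine; otherwise one refines or iterates the cover, the crucial point being that every Čech term is a flat base change of $M$ and hence has $\depth_B\geq\depth_B M$). Once this is established, the passage to $\M^G$ in Step~3 is routine.
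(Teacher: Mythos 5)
Your reading is right: the printed conclusion is a typo, and the intended assertion (as its uses in Lemma~\ref{finite-invariance-S_2.lem} and Theorem~\ref{Frobenius-pushforward.thm} show) is that $\M^G$ satisfies $(S'_2)$. Your argument is correct, and it is essentially the argument the paper points to rather than writes out: the printed proof is only the one-line citation of \cite[(5.34)]{Hashimoto4} (the analogous statement in the locally Krull/reflexive setting), and the kernel presentation $0\to\M^G\to\M\to p_*p^*\M$ combined with a flatness-and-depth chase is exactly that line of proof, so you are supplying the omitted details rather than taking a different route.

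Two small imprecisions, neither fatal. First, Step~2 proves more than Step~3 uses: you only need $H^0_{\fm}(I)\subset H^0_{\fm}(N)=0$, i.e.\ $\depth_B N\geq 1$; for this the literal claim that every \v{C}ech term is a flat base change of $M$ is false for non-affine intersections, but it suffices (and is true) that sections over any open subset of $G_B$ embed into a product of modules $A\otimes_B M$ with $A$ flat over $B$, and $H^0_{\fm}$ is left exact and carries products into products. Second, the identification $(p_*p^*\M)_x\cong\Gamma(G_B,q^*\tilde M)$ via flat base change tacitly uses quasi-separatedness of $G$ over $S$ (as in Lemma~\ref{invariance-qcqs.lem}), which the statement omits; this is surely intended, and in any case the only fact needed, that $(p_*p^*\M)_x$ has no $\fm$-torsion when $\depth_B\M_x\geq 1$, can be checked directly on the colimit $\varinjlim_{U\ni x}\Gamma(G\times_S U,p^*\M)$ using flatness of $G$ over $S$ alone.
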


\begin{proof}
This is proved in the same line as \cite[(5.34)]{Hashimoto4}.
\end{proof}

\paragraph
Until the end of this 
section, $S$ is an $\Bbb F_p$-scheme, where $p$ is a prime number, and
$\Bbb F_p$ is the prime field of characteristic $p$,
unless otherwise specified.

\begin{lemma}\label{F-finite.thm}
Let 
\[
\xymatrix{
X & U \ar@{_{(}->}[l]_i \ar[r]^{\rho} & V \ar@{^{(}->}[r]^j & Y
}
\]
be a diagram of $S$-schemes.
Assume that $i$ and $j$ are open immersions whose images are large in
$X$ and $Y$, respectively.
Assume that $Y$ is locally Noetherian and 
$(S_2)$, and ${}^eS\times_S Y$ is locally 
Noetherian with a full $2$-canonical module
for some $e\geq 1$.
Let $X$ be locally Noetherian, and assume that $\rho$ is faithfully flat
and reduced \(that is, flat with geometrically reduced fibers\).
Assume that ${}^{e'}S\times_S X$ is locally Noetherian 
for some $e'\geq 1$.
If $X$ is $F$-finite over $S$
\(that is, $\Phi_1(X)$ is a finite morphism, see {\rm\cite{Hashimoto3}}\), 
then $Y$ is $F$-finite over $S$.
\end{lemma}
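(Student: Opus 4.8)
The plan is to reduce the $F$-finiteness of $Y$ over $S$ to the $F$-finiteness of $V$, then transport along $\rho$. First I would observe that the statement is local on $S$, $X$, and $Y$, so I may assume $S=\Spec R$, $Y=\Spec A$, and $X=\Spec B$ are affine, with $A\to B$ faithfully flat and reduced, $A$ Noetherian $(S_2)$, and ${}^eS\times_S Y$, ${}^{e'}S\times_S X$ Noetherian with the former having a full $2$-canonical module. The real content is that $F$-finiteness is governed by codimension-two behaviour: since $i(U)$ is large in $X$ and $j(V)$ is large in $Y$, the module $\Phi_1(Y)_*\O_{{}^1Y}$ (equivalently $A$ viewed as a module over $A^{(1)}:={}^1A\otimes$-type ring, i.e.\ over ${}^1S\times_S Y$) is finite iff its restriction to the large open $V$ is finite, using Lemma~\ref{S_2-large-equiv.lem} (after replacing $e$ by a common multiple so that both ${}^eS\times_S Y$ and ${}^eS\times_S X$ are Noetherian and the $2$-canonical hypothesis is available).

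The key steps, in order, are as follows. (1) By the factorization and genericity, reduce to showing $\Phi_e(X)_*\O_{{}^eX}$ is coherent over ${}^eS\times_S X$ for some $e$, and likewise for $Y$; here I use that $F$-finiteness over $S$ for one value of $e\ge 1$ is equivalent to that for all $e\ge 1$, by iterating the relative Frobenius and the transitivity $\Phi_{e+e'}=\Phi_e\circ(\text{base change of }\Phi_{e'})$. (2) Show that $\Phi_e(Y,V)$ identifies the restriction of $\Phi_e(Y)_*\O_{{}^eY}$ to ${}^eS\times_S V$ with $\Phi_e(V)_*\O_{{}^eV}$ — this is the compatibility of relative Frobenius with open immersions, noted in (\ref{Frobenius-kernel.par}). (3) Use that $\rho:U\to V$ is faithfully flat and reduced together with Radu--André (as invoked in Lemma~\ref{relative-Frob.thm}) to see that $\Phi_e(V,U)$ is faithfully flat; combined with the assumed $F$-finiteness of $X$ over $S$ (hence of $U$ over $S$, by largeness and Lemma~\ref{finite-extension-F-finite.lem}-style descent along the large open), the module $\Phi_e(U)_*\O_{{}^eU}$ is coherent over ${}^eS\times_S U$. (4) Descend coherence of $\Phi_e(V)_*\O_{{}^eV}$ along the faithfully flat quasi-compact map $\Phi_e(V,U):{}^eS\times_S U\to ({}^eS\times_S V)\times_V U$ and the projection to ${}^eS\times_S V$: faithfully flat descent of finite generation (together with quasi-coherence, which is automatic) gives that $\Phi_e(V)_*\O_{{}^eV}$ is coherent over ${}^eS\times_S V$. (5) Finally, apply Lemma~\ref{S_2-large-equiv.lem} to the large open $V\subset Y$ and the full $2$-canonical module on ${}^eS\times_S Y$: since $\Phi_e(Y)_*\O_{{}^eY}$ satisfies $(S_2')$ over ${}^eS\times_S Y$ (it is a finite flat pushforward of a scheme over an $(S_2)$ base, or one argues via Lemma~\ref{invariance-S_2.lem}/Lemma~\ref{finite-depth.lem} after reducing to the $N$-invariant picture) and agrees with the coherent sheaf $j_*(\Phi_e(V)_*\O_{{}^eV})$ on the large open, it is itself coherent; that is, $Y$ is $F$-finite over $S$.

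The main obstacle I expect is Step (4)–(5): I must be careful that "coherent over the large open implies coherent" uses $(S_2')$ of the candidate module $\Phi_e(Y)_*\O_{{}^eY}$ over the possibly non-normal, non-Cohen--Macaulay scheme ${}^eS\times_S Y$, which is exactly why the hypothesis "${}^eS\times_S Y$ has a full $2$-canonical module" is built in; I need to check that this module indeed satisfies $(S_2')$, for which I would use that $\rho$ is faithfully flat reduced and apply Lemma~\ref{S_n-ascent-descent.lem} and Lemma~\ref{finite-depth.lem} to transfer the $(S_2')$ property from the $X$-side, where it is controlled because $X$ is locally Noetherian and the relevant module over ${}^{e}S\times_S X$ is flat over ${}^eX$. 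A secondary subtlety is matching the values of $e$ and $e'$: I would first replace both by a common multiple $e''$ large enough that ${}^{e''}S\times_S X$ and ${}^{e''}S\times_S Y$ are simultaneously locally Noetherian and the $2$-canonical hypothesis persists (it does, since $\Phi$ factors and the base changes compose), so that all the functors above are defined over genuinely Noetherian schemes. Everything else — the identifications of relative Frobenius under open immersion and under base change, the faithfully flat descent of finiteness, and the largeness bookkeeping — is routine given the lemmas already established in the excerpt.
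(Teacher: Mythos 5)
Your overall architecture is the same as the paper's: restrict to $U$ (so $U$ is $F$-finite over $S$), descend $F$-finiteness along the faithfully flat reduced $\rho$ to get coherence of $\Phi_e(V)_*(\O_{{}^eV})$, observe that this sheaf satisfies $(S'_2)$ because $Y$ (hence ${}^eV$) is $(S_2)$ and $\Phi_e(V)$ is a finite homeomorphism (Lemma~\ref{finite-depth.lem}), and then use largeness of ${}^eS\times_S V$ in ${}^eS\times_S Y$ together with the full $2$-canonical module and Lemma~\ref{S_2-large-equiv.lem} to identify $(1_{{}^eS}\times j)_*\Phi_e(V)_*(\O_{{}^eV})$ with $\Phi_e(Y)_*(\O_{{}^eY})$ and conclude that $\Phi_e(Y)$ is finite, whence $Y$ is $F$-finite over $S$ by \cite[Lemma~2]{Hashimoto3}. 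The difference is in the descent step from $U$ to $V$, which the paper does not reprove: it simply quotes \cite[Theorem~21]{Hashimoto3}, the descent of $F$-finiteness along faithfully flat reduced morphisms.

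There is a genuine flaw in the way you carry out that step. You invoke Radu--Andr\'e to claim that the relative Frobenius $\Phi_e(V,U)$ is faithfully flat, but Radu--Andr\'e requires geometrically \emph{regular} fibers, whereas $\rho$ is only assumed reduced; indeed, already for $V=\Spec k$ and $U=\Spec B$ with $B$ a reduced non-regular $k$-algebra, flatness of $\Phi_e(V,U)$ would force $B$ to be regular by Kunz's theorem, so the claim fails in general (this is also why Lemma~\ref{relative-Frob.thm} assumes $N$ regular over $S$ before using Radu--Andr\'e). What your argument actually needs at that point is much less: injectivity (in fact purity) of the relative Frobenius of a flat reduced homomorphism, which is Dumitrescu's theorem, already quoted in the proof of Lemma~\ref{Eakin-Nagata.thm}. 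With that substitution your scheme does work: the pullback of $\Phi_e(V)_*(\O_{{}^eV})$ along the base change of $\rho$ embeds into $\Phi_e(U)_*(\O_{{}^eU})$, which is coherent over the locally Noetherian ${}^{e}S\times_S U$ (this is where the $e'$-Noetherianness hypothesis enters, via Lemma~\ref{Eakin-Nagata.thm}), so it is coherent, and faithfully flat descent of finite generation then gives coherence over ${}^eS\times_S V$. Alternatively, and more simply, do as the paper does and cite \cite[Theorem~21]{Hashimoto3} directly; then no juggling of $e$ and $e'$ or ad hoc flatness statements is needed.
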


\begin{proof}
As the open immersion $i:U\rightarrow X$ is $F$-finite, 
$U$ is also $F$-finite over $S$.
As $\rho:U\rightarrow V$ is faithfully flat and reduced, 
it is easy to see that $V$ is also $F$-finite over $S$ by
\cite[Theorem~21]{Hashimoto3}.
So for each $e\geq 0$, $\Phi_e(V)_*(\O_{{}^eV})$ is coherent.
As $Y$ satisfies the $(S_2)$ condition, ${}^eV$ satisfies the $(S_2)$ condition.
Hence $\Phi_e(V)_*(\O_{{}^eV})$ satisfies the $(S'_2)$ condition by 
Lemma~\ref{finite-depth.lem}.

Now take $e\geq 1$ so that
${}^eS\times_S Y$ is locally
Noetherian with a full $2$-canonical module.
As $V$ is large in $Y$ and ${}^eS\times_S V$ is
large in ${}^eS\times_S Y$,
\[
\Phi_e(Y)_*(\O_{{}^eY})\cong 
\Phi_e(Y)_*{}^ej_*(\O_{{}^eV})\cong
(1_{{}^eS}\times j)_*\Phi_e(V)_*(\O_{{}^eV})
\]
is coherent.
As $\Phi_e(Y)$ is affine, it is finite.
By \cite[Lemma~2]{Hashimoto3}, $Y$ is $F$-finite over $S$.
\end{proof}

\begin{lemma}\label{Eakin-Nagata.thm}
Let $A\rightarrow B$ be an $F$-finite reduced homomorphism between
Noetherian rings of characteristic $p$.
Then ${}^eA\otimes_A B$ is Noetherian for any $e\geq 1$.
\end{lemma}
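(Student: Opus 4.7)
The plan is to realize $C_e := {}^eA \otimes_A B$ as a Noetherian subring of the Noetherian ring $B$ through the $e$-th relative Frobenius $\Phi_e:C_e\to B$, which on elements sends $a\otimes b\mapsto\phi(a)b^{p^e}$ (where $\phi:A\to B$ is the structure map), and then to invoke Eakin--Nagata (Lemma~\ref{finite-extension-F-finite.lem}).

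First, I would show by induction on $e\geq 1$ that $\Phi_e$ is finite, equivalently that $B$ is a finite module over the subring $\Phi_e(C_e)=\phi(A)\cdot B^{p^e}\subseteq B$. The base case $e=1$ is exactly the $F$-finite hypothesis; one picks $b_1,\ldots,b_m\in B$ with $B=\phi(A)B^p\cdot\{b_1,\ldots,b_m\}$. For the inductive step, assume $B=\phi(A)B^{p^e}\cdot\{c_1,\ldots,c_n\}$. Since $(u+v)^{p^e}=u^{p^e}+v^{p^e}$ in characteristic~$p$, one has $B^{p^e}\subseteq\phi(A)B^{p^{e+1}}\cdot\{b_j^{p^e}\}$; substituting into the inductive hypothesis yields $B=\phi(A)B^{p^{e+1}}\cdot\{b_j^{p^e}c_i\}$, so $B$ is finite over $\Phi_{e+1}(C_{e+1})$.

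Second, I would show that $\Phi_e$ is injective; this is the only place where the ``reduced'' hypothesis is used, and it is the main obstacle. Here one invokes the theorem of Radu--Andr\'e--Dumitrescu in its geometrically reduced form: when $A\to B$ is flat with geometrically reduced fibers, the relative Frobenius $\Phi_e$ is pure, in particular injective. This is what upgrades the previous step from ``$B$ is a finite $C_e$-module via $\Phi_e$'' to ``$C_e\hookrightarrow B$ is a finite ring extension.''

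With both steps in place, $\Phi_e$ embeds $C_e$ as a subring of the Noetherian ring $B$ over which $B$ is finite. By Eakin--Nagata (Lemma~\ref{finite-extension-F-finite.lem}), $C_e$ is then Noetherian, as desired.
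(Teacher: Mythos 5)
Your proof is correct and follows essentially the same route as the paper's: the paper likewise deduces injectivity of the relative Frobenius $\Phi_e(A,B)\colon {}^eA\otimes_A B\rightarrow {}^eB$ from Dumitrescu's purity theorem for reduced homomorphisms, notes that $\Phi_e$ is finite by the $F$-finiteness hypothesis, and concludes by Eakin--Nagata. The only difference is that you spell out by induction why finiteness of $\Phi_1$ yields finiteness of $\Phi_e$ for all $e$, a step the paper simply asserts.
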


\begin{proof}
By Dumitrescu's theorem \cite{Dumitrescu}, the relative Frobenius map
$\Phi_e(A,B):{}^eA\otimes_A B\rightarrow {}^eB$ is ${}^eA$-pure.
In particular, it is injective.
It is also finite by assumption.
By Eakin--Nagata theorem \cite[Theorem~3.7]{CRT}, the assertion follows.
\end{proof}

\begin{theorem}\label{Frobenius-pushforward.thm}
Let $f:G\rightarrow H$ be a qfpqc homomorphism between $S$-group schemes
with $N=\Ker f$.
Let $S$ be an $\Bbb F_p$-scheme, 
and assume that $S$ is locally Noetherian and quasi-normal by a
full $2$-canonical module $\M_S$.
Let
\[
\xymatrix{
X & U \ar@{_{(}->}[l]_i \ar[r]^{\rho} & V \ar@{^{(}->}[r]^j & Y
}
\]
be a $G$-enriched rational almost principal $N$-bundle.
Assume that $X$ and $Y$ are locally Noetherian and
flat with $(R_0)+(T_1)+(S_2)$-fibers over $S$, 
and that $X$ is $F$-finite over $S$.
Then
\begin{enumerate}
\item[\bf 1]
${}^eS\times_S X$ is locally Noetherian and quasi-normal by $p_X^*{}^e\M_S$
for each $e\geq 0$,
where $p_X:{}^eS\times_S X\rightarrow {}^eS$ is the first projection.
\item[\bf 2] If $N$ is reduced over $S$, then $Y$ is $F$-finite over $S$.
\item[\bf 3] If $Y$ is $F$-finite over $S$, then 
${}^eS\times_SY$ is locally Noetherian and quasi-normal 
by $p_Y^*{}^e\M_S$ for $e\geq 0$,
where $p_Y:{}^eS\times_S Y\rightarrow {}^eS$ is the first projection.
\item[\bf 4] 
For each $e\geq 0$, 
\[
\xymatrix{
{}^eS\times_S X & {}^eS \times_S U 
\ar@{_{(}->}[l]_{1\times i} \ar[r]^{1\times \rho} & {}^eS\times_S V 
\ar@{^{(}->}[r]^{1\times j} & {}^eS\times_S Y
}
\]
is an ${}^eH\times_H G$-enriched 
rational almost principal ${}^eS\times_S N$-bundle \(where the base
scheme is ${}^eS$, and not $S$\).
\item[\bf 5]
Assume further that $G$ is flat over $S$,
and $f$ is regular \(that is, flat with geometrically regular fibers\).
If, moreover, either $N$ is of finite type; or
${}^eS\times_S X$ and ${}^eS\times_SY$ satisfy $(T_1)+(S_2)$, then
$(S'_2)({}^eH\times_H G,{}^eS\times_S X)$ and 
$(S'_2)({}^eH,{}^eS\times_S Y)$ are equivalent under the equivalence in
{\rm Proposition~\ref{S_2-main.prop}}.
\item[\bf 6] Let the assumptions be as in {\bf 5}.
Let $\M\in (S'_2)(G,X)$, and let $\N$ be the corresponding
sheaf $(j_*\rho_*i^*\M)^N\in(S'_2)(H,Y)$ \(by the correspondence in
{\rm Proposition~\ref{S_2-main.prop}}\).
Then for each $e\geq 0$, ${}^eN_e$ is ${}^eS$-flat and the sheaf
\[
(1_{{}^eS}\times i)_*(1_{{}^eS}\times\rho)^*(1_{{}^eS}\times j)^*
\Phi_e(Y)_*({}^e\N)\in(S'_2)({}^eH\times_H G,{}^eS\times_S X), 
\]
which corresponds to the Frobenius pushforward 
$\Phi_e(Y)_*({}^e\N)$ by the
equivalence in {\rm Proposition~\ref{S_2-main.prop}},
is isomorphic to $(\Phi_e(X)_*({}^e\M))^{{}^eN_e}$.
\end{enumerate}
\end{theorem}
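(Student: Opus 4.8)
The plan is to prove the six assertions roughly in the stated order, reducing each to results already available in the excerpt, with the Frobenius machinery of section~\ref{frob-twist.sec} doing the geometric bookkeeping. For {\bf 1}: the morphism $p_X:{}^eS\times_S X\to {}^eS$ factors as ${}^eS\times_S X\to {}^eS\times_S S={}^eS$ where the map ${}^eS\times_S X\to {}^eS$ is the base change of $X\to S$ along $F^e_S:{}^eS\to S$; since $X\to S$ is flat with $(R_0)+(T_1)+(S_2)$ fibers, so is this base change, and its fibers are the same schemes. By Lemma~\ref{n-canonical-flat.lem}, {\bf 5} (applied with $Y$ there being ${}^eS$, quasi-normal by ${}^e\M_S$, and the morphism $p_X$), since each fiber of $p_X$ satisfies $(T_1)+(S_2)$, the scheme ${}^eS\times_S X$ is quasi-normal by $p_X^*({}^e\M_S)$; local Noetherianness is inherited along a morphism of finite type from a locally Noetherian base once one knows $X$ is $F$-finite over $S$ (so that ${}^{e'}S\times_S X$ is locally Noetherian for some $e'$, hence, using $F$-finiteness and Lemma~\ref{Eakin-Nagata.thm} applied locally, for all $e$). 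Assertion {\bf 3} for $Y$ is identical once we know $Y$ is $F$-finite, using $(R_0)+(T_1)+(S_2)$ fibers of $Y\to S$.

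For {\bf 2}, I would invoke Lemma~\ref{F-finite.thm} directly: the hypotheses there are that $i,j$ are large open immersions, $Y$ is locally Noetherian $(S_2)$, ${}^eS\times_S Y$ is locally Noetherian with a full $2$-canonical module for some $e\geq 1$ (which is {\bf 1}'s analogue for $Y$, provable before knowing $F$-finiteness of $Y$ by the same Lemma~\ref{n-canonical-flat.lem} argument since the hypothesis there only needs ${}^eS\times_S Y$ to be locally Noetherian — and that follows because $Y\to S$ is of finite type... this is the delicate point, see below), $X$ locally Noetherian with ${}^{e'}S\times_S X$ locally Noetherian, $\rho$ faithfully flat and reduced (here one uses that $N$ is reduced over $S$, so the principal $N$-bundle $\rho$ has geometrically reduced fibers), and $X$ is $F$-finite over $S$. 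Its conclusion is exactly that $Y$ is $F$-finite over $S$. Assertion {\bf 4} is a formal consequence of the stability of principal bundles and large open immersions under flat base change, exactly as in the base-change lemma preceding Theorem~\ref{alg-quot-krull-equiv.thm}: $\Phi_e$ and fiber products over ${}^eS$ preserve the relevant properties, and $1\times\rho$ is an ${}^eH\times_H G$-enriched principal $({}^eS\times_S N)$-bundle because it is the base change of $\rho$. Then {\bf 5} is immediate from {\bf 1}, {\bf 3}, {\bf 4}, and Proposition~\ref{S_2-main.prop} applied to the rational almost principal bundle of {\bf 4} over the base ${}^eS$.

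The substance is {\bf 6}. First, ${}^eN_e=\Ker\Phi_e(N)$ is ${}^eS$-flat: this is the content of Lemma~\ref{relative-Frob.thm} (or rather its proof via Radu--André), since $N$ is regular over $S$ by hypothesis on $f$; indeed by Lemma~\ref{relative-Frob.thm} itself, applied to the $G$-enriched principal $N$-bundle $\rho:U\to V$, the relative Frobenius $\Phi_e(V,U):{}^eU\to {}^eV\times_V U$ is an ${}^eG$-enriched principal ${}^eN_e$-bundle. Now I would assemble the commutative diagram of $(F,S)$-schemes relating $\Phi_e(X)$, $\Phi_e(Y)$, the inclusions $i,j$, $\rho$, and their Frobenius twists: on the open locus $U$, one has $\Phi_e(Y)_*({}^e\N)|_{{}^eV}\cong \rho_*\bigl((\Phi_e(V,U)_*({}^e\M|_U))^{{}^eN_e}\bigr)$ after pulling back along $\rho$ and taking ${}^eN_e$-invariants — this is the principal-bundle descent (Theorem~\ref{main.thm}, resp.\ \cite[(7.4)]{Hashimoto5}) applied to the principal ${}^eN_e$-bundle $\Phi_e(V,U)$, together with the compatibility $\rho^*\Phi_e(V)_*({}^e\N)\cong\Phi_e(V,U)_*({}^e\rho^*\N)$ which is the flat-base-change statement \cite[Lemma~4.1]{Hashimoto6} cited in the proof of Lemma~\ref{relative-Frob.thm}. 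Taking ${}^eN_e$-invariants commutes with $\rho_*$ and with $j_*,i^*$ by \cite[(7.3)]{HO2} and by Lemma~\ref{eta-bar-eta.thm}/Lemma~\ref{delta.par}; pushing forward along $1\times i,1\times j$ and using that $V$ is large in $Y$ (so $\Phi_e(Y)_*({}^e\O_Y)$ and hence $\Phi_e(Y)_*({}^e\N)$ is the pushforward of its restriction to ${}^eV$, as in the proof of Lemma~\ref{F-finite.thm}) identifies the stated sheaf with $(\Phi_e(X)_*({}^e\M))^{{}^eN_e}$. The main obstacle I anticipate is precisely the identification over $U$ — checking that the natural map $\Phi_e(X)_*({}^e\M)$ restricted to ${}^eU$ carries an ${}^eN_e$-linearization compatible with the equivariant descent along $\Phi_e(V,U)$, and that the resulting isomorphism is functorial and $(G,\O)$-linear rather than merely an isomorphism of underlying sheaves; this requires carefully threading the $\res$-compatibilities of sections~\ref{restriction.sec}--\ref{Reynolds.sec} (in particular that ${}^eN_e$ inherits the Reynolds property, or at least that the relevant invariance functor is exact enough) through the Frobenius-twisted setting, and verifying that the large-open-immersion reductions commute with all of this. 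Everything else is a routine, if lengthy, diagram chase built from the lemmas already proved.
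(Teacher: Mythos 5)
Your treatment of {\bf 1}--{\bf 5} is essentially the paper's (same ingredients: Lemma~\ref{Eakin-Nagata.thm}, Lemma~\ref{n-canonical-flat.lem}, {\bf 5}, Lemma~\ref{F-finite.thm}, Proposition~\ref{S_2-main.prop}; the paper dismisses {\bf 4} and {\bf 5} as trivial), and for {\bf 6} you begin the same way (flatness of ${}^eN_e$ via Lemma~\ref{relative-Frob.thm} and Radu--Andr\'e; reduction to $X=U$, $Y=V$ by pushing $(S'_2)$-sheaves across the large open immersions). The key identification, however, is done by a genuinely different route. You compute the \emph{forward} image $(1\times i)_*(1\times\rho)^*(1\times j)^*\Phi_e(Y)_*({}^e\N)$ directly, via base change along $1\times\rho$ and faithfully flat descent along the principal ${}^eN_e$-bundle $\Phi_e(V,U)$ of Lemma~\ref{relative-Frob.thm}; this forces you to verify the ${}^eN_e$-equivariant identification of ${}^e\M$ with the pullback of ${}^e\N$ and the compatibility $(1\times\rho)^*\Phi_e(V)_*\cong(\mathrm{pr})_*(\mathrm{pr})^*$, exactly the obstacle you flag. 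The paper goes \emph{backward}: it first checks that $(\Phi_e(X)_*({}^e\M))^{{}^eN_e}$ lies in $(S'_2)({}^eH\times_HG,{}^eS\times_SX)$ (Lemmas~\ref{finite-depth.lem} and \ref{invariance-S_2.lem}), so that by the equivalence it suffices to compute its image under the quasi-inverse $((1\times\varphi)_*\,?)^{{}^eS\times_SN}$; since taking ${}^eN_e$-invariants and then ${}^eS\times_SN$-invariants is taking ${}^eN$-invariants (by the exact sequences of Lemma~\ref{relative-Frob.thm}), that image is $\Phi_e(Y)_*({}^e(\varphi_*\M)^N)\cong\Phi_e(Y)_*({}^e\N)$ in a short chain of isomorphisms, with no descent along $\Phi_e(V,U)$ and no linearization bookkeeping. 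Your route buys a direct description of the forward functor at the cost of the equivariant base-change and descent verifications; the paper's buys a purely formal computation at the cost of the $(S'_2)$-membership check, which your plan does not address and which is what legitimizes testing against the quasi-inverse.

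Two smaller points. In {\bf 4}, the base change is along $F^e_S:{}^eS\to S$, which is not flat in general, so the flat base-change lemma you cite does not apply as stated; what saves the statement is that $F^e_S$ is a universal homeomorphism, so the squares are cartesian, $1\times\rho$ is a base change of $\rho$, and largeness of the open subsets is preserved --- a one-line repair, but your justification as written is wrong. In {\bf 2}, the hypothesis of Lemma~\ref{F-finite.thm} that ${}^eS\times_SY$ be locally Noetherian with a full $2$-canonical module cannot be obtained from ``$Y\to S$ is of finite type,'' since no finite-type hypothesis on $Y$ over $S$ is made; the paper derives the needed input from {\bf 1}, i.e., from the $X$-side, and your plan should do the same rather than lean on finite typeness.
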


\begin{proof}
{\bf 1} Local Noetherian property follows from Lemma~\ref{Eakin-Nagata.thm}.
Quasi-Normality follows from Lemma~\ref{n-canonical-flat.lem}, {\bf 5},
applied to the map ${}^eS\times_S X\rightarrow {}^eS$.

{\bf 2} follows from {\bf 1} and Lemma~\ref{F-finite.thm}.

{\bf 3} is proved similarly to {\bf 1}.

{\bf 4} and {\bf 5} are trivial.

{\bf 6} 
As $N$ is flat, $f$ is fpqc.
Since $G$ is $S$-flat, $H$ is also $S$-flat.
As $f$ is regular, $N$ is regular over $S$.
Note that
\[
1\rightarrow {}^eN_e\rightarrow {}^eN
\xrightarrow{\Phi_e} {}^eS\times_S N\rightarrow 1
\]
is exact (that is, $\Phi_e$ is qfpqc and ${}^eN_e=\Ker\Phi_e$) by
Lemma~\ref{relative-Frob.thm}.
By the theorem of Radu and Andr\'e \cite{Radu}, \cite{Andre},
\cite{Dumitrescu2}, $\Phi_e$ is flat.
Hence ${}^eN_e$ is flat.
Note that
\[
1\rightarrow {}^eN_e\rightarrow {}^eG\xrightarrow{\Phi_e(H,G)} 
{}^eH\times_H G\rightarrow 1
\]
is exact (that is, $\Phi_e(H,G)$ is qfpqc and ${}^eN_e=\Ker \Phi_e(H,G)$) 
by Lemma~\ref{relative-Frob.thm}.
As ${}^eN_e$ is flat, 
$\Phi_e(H,G)$ is flat.
Being flat and qfpqc, it is fpqc.
In particular, ${}^eH\times_H G$ is flat over ${}^eS$.

Note that $\Phi_e(X)_*({}^e\M)$ satisfies the $(S'_2)$ condition.
So
\begin{multline*}
\Phi_e(X)_*({}^e\M)^{{}^eN_e}
\cong
((1\times i)_*(1\times i)^*\Phi_e(X)_*({}^e\M))^{{}^eN_e}\\
\cong
(1\times i)_*(\Phi_e(U)_*{}^ei^*{}^e\M)^{{}^eN_e}
\cong
(1\times i)_*(\Phi_e(U)_*({}^e(i^*\M)))^{{}^eN_e}.
\end{multline*}
So we may assume that $X=U$.

On the other hand, 
\[
(1\times j)^*\Phi_e(Y)_*({}^e\N)
\cong
\Phi_e(V)_*({}^ej^*{}^e\N)
\cong
\Phi_e(V)_*({}^e(j^*\N)),
\]
as can be seen easily.
So we may assume that $Y=V$, 
and hence $\varphi:X\rightarrow Y$ is a $G$-enriched principal
$N$-bundle.

As ${}^e\M\in (S'_2)({}^eG,{}^eX)$ and 
$\Phi_e$ is a finite homeomorphism, we have that 
$\Phi_e(X)_*({}^e\M)\in (S'_2)({}^eG,{}^eS\times_S X)$
by Lemma~\ref{finite-depth.lem}.
So $(\Phi_e(X)_*({}^e\M))^{{}^eN_e}$ belongs to 
$(S'_2)({}^eH\times_HG,{}^eS\times_S X)$ by
Lemma~\ref{invariance-S_2.lem}.
So in view of Theorem~\ref{main.thm}, it remains to prove that
\[
((1\times\varphi)_*(\Phi_e(X)_*({}^e\M))^{{}^eN_e})^{{}^eS\times_S N}
\cong \Phi_e(Y)_*({}^e\N).
\]
This is clear, since
\begin{multline*}
((1\times\varphi)_*(\Phi_e(X)_*({}^e\M))^{{}^eN_e})^{{}^eS\times_S N}
\cong
(((1\times\varphi)_*\Phi_e(X)_*({}^e\M))^{{}^eN_e})^{{}^eS\times_S N}\\
\cong
(\Phi_e(Y)_*{}^e\varphi_*({}^e\M))^{{}^eN}
\cong
\Phi_e(Y)_*({}^e(\varphi_*\M))^{{}^eN}
\cong
\Phi_e(Y)_*({}^e\N).
\end{multline*}
\end{proof}

\paragraph
Let $S=\Spec k$ with $k$ a perfect field, $H$ and $N$ be 
$S$-group schemes.
Assume that $H$ and $N$ are locally Noetherian and regular.
Let $H$ act on $N$ by the group automorphisms, and let $G$ be the
semidirect product $H\ltimes N$.
Let $X$ be a locally Noetherian $F$-finite $G$-scheme.
We say that $X$ is of {\em finite $(H,N)$-$F$-representation type} by
$\M_1,\ldots,\M_r\in \Coh({}^{e_0}H\ltimes N,X)$ if 
for any $e\geq 1$, we can write 
$(F^e_*\O_{{}^eX})^{{}^eN_e}\cong \N_1\oplus\cdots\oplus \N_u$ by some 
$\N_1,\ldots,\N_u\in \Coh({}^eH\ltimes N,X)$ such that 
for each $j=1,\ldots,u$, there exists some $l(j)$ such that 
$\N_j\cong \M_{l(j)}$ as $(N,\O_X)$-modules (not as $({}^eH\ltimes N,\O_X)
$-modules).
If $X$ is finite $(H,e)$-$F$-representation type, then we say that
$X$ is finite {\em $H$-$F$-representation type}, where $e=\Spec k$ denotes
the trivial group.
If, moreover, $H$ is also trivial, then we say that $X$ is finite 
$F$-representation type.
If $H=\Bbb G_m^s$, the split $s$-torus, and $G=H\times N$, the direct product,
then 
finite $(H,N)$-$F$-representation type is called graded finite
$F$-representation type modulo $N$.
If, moreover, $N$ is trivial, we say that $X$ is graded finite
$F$-representation type.

From Theorem~\ref{Frobenius-pushforward.thm}, we immediately have the
following.

\begin{corollary}\label{ffrt.cor}
Let the assumptions
be as in {\rm Theorem~\ref{Frobenius-pushforward.thm}}, {\bf 5, 6}.
Assume further that $S=\Spec k$ with $k$ a perfect field,
and $G=H\ltimes N$ is a semidirect product.
Then $Y$ is of finite $H$-$F$-representation type by 
$\N_1,\ldots,\N_u$ if and only if 
$X$ is of finite $(H,N)$-$F$-representation type by
$\M_1,\ldots,\M_u$, where $\M_l=i_*\rho^*j^*\N_l$ for $l=1,\ldots,u$.
\qed
\end{corollary}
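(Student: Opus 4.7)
The plan is to specialize Theorem~\ref{Frobenius-pushforward.thm}, {\bf 6} to the case $\M = \O_X$, $\N = \O_Y$ and transport decompositions of Frobenius pushforwards through the resulting equivalence. First I observe that $\O_Y \in (S'_2)(H,Y)$ corresponds to $\O_X \in (S'_2)(G,X)$ under the equivalence of Proposition~\ref{S_2-main.prop}: since $X$ satisfies $(S_2)$ and $U$ is large in $X$, $i_*\rho^*j^*\O_Y \cong i_*\O_U \cong \O_X$ as $(G,\O_X)$-modules. Because $k$ is perfect, ${}^eS = S$, so $\Phi_e(Y) = F^e_Y$ and $\Phi_e(X) = F^e_X$; hence part {\bf 6} of the theorem furnishes, for every $e \geq 1$, the correspondence
\[
F^e_*\O_{{}^eY}\ \longleftrightarrow\ (F^e_*\O_{{}^eX})^{{}^eN_e}
\]
under the equivalence $(S'_2)({}^eH\ltimes N, X) \simeq (S'_2)({}^eH, Y)$ supplied by part {\bf 5}.

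For the forward implication, suppose $Y$ has finite $H$-$F$-representation type by $\N_1, \ldots, \N_u$ and fix $e \geq 1$ together with a decomposition $F^e_*\O_{{}^eY} = \bigoplus_{j=1}^{v} \N'_j$ in $\Coh({}^eH, Y)$ with $\N'_j \cong \N_{l(j)}$ as $\O_Y$-modules. Each $\N'_j$ is a direct summand of an $(S'_2)$-module on the quasi-normal $(S_2)$-scheme $Y$, hence lies in $(S'_2)({}^eH, Y)$. Applying the equivalence of part {\bf 5} yields a decomposition $(F^e_*\O_{{}^eX})^{{}^eN_e} = \bigoplus_{j=1}^{v} \M'_j$ in $\Coh({}^eH\ltimes N, X)$ with $\M'_j := i_*\rho^*j^*\N'_j$. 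The converse is symmetric: apply the quasi-inverse $(j_*\rho_*i^*?)^N$ to a decomposition of $(F^e_*\O_{{}^eX})^{{}^eN_e}$ to recover one of $F^e_*\O_{{}^eY}$.

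The main substantive step is matching summands with the prescribed families: in the forward direction I must show $\M'_j \cong \M_{l(j)} = i_*\rho^*j^*\N_{l(j)}$ as $(N,\O_X)$-modules, and symmetrically in the converse. The key observation is that Proposition~\ref{S_2-main.prop} applied with the trivial homomorphism $N \to e$ in place of $f: G \to H$ yields an equivalence $(S'_2)(Y) \simeq (S'_2)(N, X)$ given by the \emph{same} formulas $i_*\rho^*j^*$ and $(j_*\rho_*i^*?)^N$. Since $N$ acts trivially on $Y$, any $\O_Y$-module isomorphism between $G$-equivariant sheaves on $Y$ is automatically $(N,\O_Y)$-linear, so the isomorphism $\N'_j \cong \N_{l(j)}$ transports along this reduced equivalence to $\M'_j \cong \M_{l(j)}$ as $(N,\O_X)$-modules, as required.

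The only anticipated obstacle is bookkeeping in this last step, namely, checking that the equivalence of Proposition~\ref{S_2-main.prop} for $f: G \to H$ is genuinely compatible with — and restricts along — the forgetful functors $\Coh({}^eH\ltimes N, X) \to \Coh(N, X)$ and $\Coh({}^eH, Y) \to \Coh(Y)$ to the analogous equivalence for the trivial $N \to e$. As the defining functors $i_*, \rho^*, j^*$ and $(?)^N$ depend only on the underlying sheaves and on the $N$-action, not on any $H$-structure, this compatibility is built into the construction, and the verification should be essentially immediate.
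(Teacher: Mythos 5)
Your proof is correct and follows exactly the route the paper intends: the paper's own ``proof'' is just the remark that the corollary is immediate from Theorem~\ref{Frobenius-pushforward.thm}, and your argument spells out the same steps (specializing part {\bf 6} to $\M=\O_X$, $\N=\O_Y$, transporting decompositions through the additive equivalence of part {\bf 5}, and matching summands via the underlying $N$-level equivalence of Proposition~\ref{S_2-main.prop}, which uses the same functors and is compatible with forgetting the $H$-structure). The identifications over the perfect field ($\Phi_e$ versus $F^e$, ${}^eH\times_H G$ versus ${}^eH\ltimes N$) that you gloss over are glossed over by the paper as well, so no gap relative to the paper's standard of detail.
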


\section{Global $F$-regularity}\label{global-freg.sec}

\paragraph
Let $X$ be a scheme
and $h:\M\rightarrow \N$ an $\O_X$-linear map between $\O_X$-modules.
We say that $h$ is generically monic 
if $h_\xi:\M_\xi\rightarrow\N_\xi$ is
injective for each $\xi\in X\an 0$.

\paragraph\label{globally-F-reg.settings.par}
Let $S$ be an $\Bbb F_p$-scheme, $G$ an $S$-group scheme, 
and $\varphi:X\rightarrow Y$ a $G$-morphism.
For $e\geq 0$, the scheme $Y\times_{Y^{(e)}}X^{(e)}$ is simply denoted by
$X_Y^{(e)}$.
As $X_Y^{(e)}=Y\times_{Y_S^{(e)}}X_S^{(e)}$, it is a $G$-scheme in a natural
way, and the relative Frobenius map $\Phi_e(Y,X):X\rightarrow 
X_Y^{(e)}$ 
is a $G$-morphism.

\begin{definition}
Let $S$ be an $\Bbb F_p$-scheme, $G$ an $S$-group scheme, 
and $X$ a $G$-scheme.
Assume that $S=\Spec k$ with $k$ a perfect field.

\begin{enumerate}
\item[\bf 1]
We say that $X$ is {\em $G$-globally $F$-regular} 
if for any 
$G$-linearized invertible sheaf 
$\Cal L$ on $X$ and any $G$-invariant 
generically monic section $s:\O_X\rightarrow \L$,
there exists some $e\geq 1$ such that the composite
\begin{equation}\label{sF^e.eq}
sF^e:\O_{X^{(e)}}\xrightarrow{F^e}F^e_*\O_X\xrightarrow{s}
F^e_*\L
\end{equation}
splits as a $(G,\O_{X^{(e)}})$-linear map.
\item[\bf 2] We say that $X$ is {\em $G$-$F$-split} 
if for any (or equivalently, some) $e\geq 1$, 
\[
F^e:\O_{X^{(e)}}\rightarrow F^e_*\O_X
\]
splits as a $(G,\O_{X^{(e)}})$-linear map.
\end{enumerate}
If $G$ is trivial, then we simply say that $X$ is globally $F$-regular
or $F$-split.
\end{definition}

\paragraph
A $G$-globally $F$-regular scheme is
$G$-$F$-split.

\paragraph
Let $\Cal L$ be an ample invertible sheaf on a Noetherian $\Bbb F_p$-scheme $X$.
Assume that 
for any $r\geq 0$ and a monic section $s:\O_{X}\rightarrow \L^{\otimes r}$,
there exists some $e\geq 1$ such that 
$sF^e:\O_{X^{(e)}}\rightarrow F_*^e\L^{\otimes r}$ splits as an
$\O_{X^{(e)}}$-linear map.
Then $X$ is globally $F$-regular.
This is proved similarly to \cite[Theorem~2.6]{Hashimoto9}.

\begin{lemma}\label{smooth-etale-radu-andre.lem}
Let $S$ be an $\Bbb F_p$-scheme, and $Z$ a smooth $S$-scheme.
Then the relative Frobenius map $\Phi_e:Z\rightarrow Z_S^{(e)}$ is affine
and $(\Phi_e)_*(\O_{Z_S^{(e)}})$ is locally free.
In particular, $\Phi_e$ is faithfully flat.
If, moreover, $Z$ is \'etale over $S$, then $\Phi_e$ is an isomorphism.
\end{lemma}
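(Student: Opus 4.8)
The plan is to reduce the statement to a purely local, affine computation about the relative Frobenius map of a smooth algebra, which is the content already packaged in Lemma~\ref{smoth-Frobenius-Noetherian.lem} (applied with $W=S$). First I would observe that smoothness of $Z$ over $S$ is a local property, so the question of whether $\Phi_e\colon Z\rightarrow Z_S^{(e)}$ is affine and whether $(\Phi_e)_*\O_Z$ is locally free can be checked on an affine open cover. Replacing $S$ by $\Spec A$ and $Z$ by $\Spec B$ with $B$ smooth over $A$, I would invoke the standard local structure theorem for smooth morphisms (as in \cite[(I.3.24)]{Milne}): after further localization there is a factorization $A\rightarrow C=A[x_1,\ldots,x_d]\rightarrow B$ with $B$ \'etale over $C$. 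Then $Z_S^{(e)}=\Spec({}^eA\otimes_A B)$, and the relative Frobenius corresponds to the ring map ${}^eA\otimes_A B\rightarrow{}^eB$, whose affineness is immediate; local freeness of ${}^eB$ over ${}^eA\otimes_A B$ is exactly what is shown in the proof of Lemma~\ref{smoth-Frobenius-Noetherian.lem}, via \cite[(33.5)]{ETI} to reduce the \'etale part and the explicit monomial basis $\{{}^ex_1^{i_1}\cdots{}^ex_d^{i_d}\mid 0\le i_j<p^e\}$ for the polynomial part.

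Next I would note that local freeness of $(\Phi_e)_*\O_Z$, together with faithful flatness of the ring map (which holds because the generator $1$ appears among the monomial basis, or more simply because $\Phi_e$ is surjective as a homeomorphism and flat), gives that $\Phi_e$ is faithfully flat; this disposes of the "in particular" clause. For the final sentence, if $Z$ is moreover \'etale over $S$, then in the factorization above one may take $d=0$, so $B$ is \'etale directly over $A$ and ${}^eB\cong{}^eA\otimes_A B$ by \cite[(33.5)]{ETI}; since this isomorphism is precisely $\eta_{\Phi_e}$ up to the identifications, $\Phi_e(S,Z)$ is an isomorphism. (One could alternatively quote the theorem of Radu and Andr\'e used in Lemma~\ref{relative-Frob.thm}, but the \'etale case is elementary and does not need it.)

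The main obstacle, such as it is, is purely bookkeeping: making sure the local structure theorem's localizations of $B$ are compatible with the claim being local on $Z$ (they are, since localizations of $Z$ form an open cover once we also localize $S$ appropriately), and correctly matching $Z_S^{(e)}$ and $\Phi_e$ with the ring-level objects ${}^eA\otimes_A B$ and the map to ${}^eB$ under the conventions of (\ref{Frobenius-kernel.par}). There is essentially no hard mathematics here beyond what Lemma~\ref{smoth-Frobenius-Noetherian.lem} already proves for the case $W=S$; the present lemma is just the globalized, coordinate-free restatement together with the \'etale refinement.

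In short: localize to reduce to $B$ smooth over $A$; apply the smooth local structure theorem to factor through a polynomial ring followed by an \'etale map; quote Lemma~\ref{smoth-Frobenius-Noetherian.lem} (with $W=S$) and \cite[(33.5)]{ETI} to get affineness and local freeness of $(\Phi_e)_*\O_Z$; deduce faithful flatness; and in the \'etale case take $d=0$ so that \cite[(33.5)]{ETI} yields ${}^eB\cong{}^eA\otimes_AB$, i.e.\ $\Phi_e$ is an isomorphism.
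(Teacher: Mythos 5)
Your argument is correct, but it is a genuinely different route from the paper's. The paper does not rerun the local structure computation over $S$: it first uses Noetherian approximation (via \cite[(10.131.14)]{SP}) to write the smooth $R$-algebra $A$ as $R\otimes_{R_0}A_0$ with $R_0$ a finitely generated $\Bbb F_p$-algebra and $A_0$ smooth over $R_0$, then invokes the Radu--Andr\'e theorem together with Lemma~\ref{smoth-Frobenius-Noetherian.lem} to get that $A_0$ is finite projective over $(A_0)^{(e)}_{R_0}$, and finally base changes along $R_0\rightarrow R$; the \'etale refinement is likewise handled at the level of $A_0/R_0$ before base change. The reason for this detour is exactly the point your write-up glosses over: Lemma~\ref{smoth-Frobenius-Noetherian.lem} (and the citations inside its proof, \cite[(I.3.24)]{Milne} and \cite[(33.5)]{ETI}) are stated and used in the paper only in the locally Noetherian setting, whereas here $S$ is an arbitrary $\Bbb F_p$-scheme, so you cannot literally ``quote the lemma with $W=S$'' and the present lemma is not merely a globalized restatement of it. Your approach still works because the two inputs you actually need hold without Noetherian hypotheses: the factorization of a smooth morphism as \'etale over affine space is valid for arbitrary schemes (EGA~IV, 17.11.4), and the relative Frobenius of an \'etale morphism of $\Bbb F_p$-schemes is always an isomorphism (it is \'etale and a universal homeomorphism), after which the polynomial-ring case is the explicit monomial-basis computation. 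So your proof trades the paper's spreading-out plus Radu--Andr\'e for a direct, more elementary computation, at the cost of having to supply (or re-prove) those two facts in the non-Noetherian generality rather than relying on the references as cited; the paper's proof buys the convenience of staying entirely within the Noetherian hypotheses of its quoted results. Your handling of faithful flatness and of the \'etale case ($d=0$) is fine either way.
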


\begin{proof}
We prove that $(\Phi_e)_*(\O_{Z_S^{(e)}})$ is locally free.
As the question is local both on $S$ and $Z$, we may assume that
$S=\Spec R$ and $Z=\Spec A$ are affine.
Then by \cite[(10.131.14)]{SP}, 
there exists some finitely generated $\Bbb F_p$-subalgebra $R_0$ of $R$ 
and a smooth $R_0$ algebra $A_0$ such that $A\cong R\otimes_{R_0}A_0$.
As $R_0\rightarrow A_0$ is a regular homomorphism between Noetherian
$\Bbb F_p$-algebras, we have that $A_0$ is 
$(A_0)_{R_0}^{(e)}:=A_0^{(e)}\otimes_{R_0^{(e)}}R_0$-flat by 
Radu--Andr\'e theorem \cite{Radu}, \cite{Andre}, \cite{Dumitrescu2}.
As $A_0$ is $F$-finite, $A_0$ is a finite projective 
$(A_0)_{R_0}^{(e)}$-module (see Lemma~\ref{smoth-Frobenius-Noetherian.lem}).
Taking the base change $?\otimes_{R_0}R$, we get the desired result.

If, moreover, $Z$ is \'etale, then we can take $A_0$ to be \'etale over $R_0$.
Then by \cite[(33.5)]{ETI}, $(A_0)_{R_0}^{(e)}\rightarrow A_0$ is an 
isomorphism.
By the base change, we have that $\Phi_e$ is an isomorphism.
\end{proof}

\begin{lemma}\label{G-globally-F-reg-descent.lem}
Let $S=\Spec k $ with $k$ a field of characteristic $p>0$, 
$f:G\rightarrow H$ be 
an fpqc homomorphism between $S$-group schemes, and $N=\Ker f$.
Assume that $N$ is smooth over $S$.
Let $\varphi:X\rightarrow Y$ be a $G$-morphism which is $N$-invariant.
Assume that $\varphi(X\an0)\subset Y\an0$.
Assume that $\bar\eta:\O_Y\rightarrow (\varphi_*\O_X)^N$ is an isomorphism.
If $X$ is $G$-globally $F$-regular 
\(resp.\ $G$-$F$-split\), then $Y$ is $H$-globally
$F$-regular \(resp.\ $H$-$F$-split\).
\end{lemma}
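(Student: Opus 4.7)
The plan is to descend the problem: pull back $s$ along $\varphi$ to obtain a generically monic $G$-invariant section of the $G$-linearized invertible sheaf $\varphi^*\L$, apply the $G$-global $F$-regularity of $X$ to obtain a $(G,\O_{X^{(e)}})$-linear splitting $\psi$ at some level $e$, push $\psi$ down by $\varphi^{(e)}_*$, and take $N$-invariants on $Y^{(e)}$ (where $N$ acts trivially) to produce the required $(H,\O_{Y^{(e)}})$-linear splitting. The $F$-split case is the specialization $\L=\O_Y$, $s=\id$ (generic monicity being automatic), handled by the same argument.

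First I would verify that $\varphi^*s:\O_X\to\varphi^*\L$ is $G$-invariant and generically monic. Invariance is immediate since $s$ is $H$-invariant and $\varphi$ is a $G$-morphism with $G$ acting on $Y$ through $f$. For generic monicity, fix $\xi\in X\an 0$; by hypothesis $\eta:=\varphi(\xi)\in Y\an 0$, so $\O_{Y,\eta}$ is an Artinian local ring. Choosing a local trivialization of $\L$ around $\eta$, $s_\eta$ becomes multiplication by an injective element of this Artinian local ring, which is therefore a unit; thus $s_\eta$ is an isomorphism. Since $(\varphi^*s)_\xi\cong\O_{X,\xi}\otimes_{\O_{Y,\eta}}s_\eta$, it is also an isomorphism, in particular injective.

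Applying the $G$-global $F$-regularity of $X$ to the pair $(\varphi^*\L,\varphi^*s)$, choose $e\geq 1$ and a $(G,\O_{X^{(e)}})$-linear map $\psi:F^e_*(\varphi^*\L)\to\O_{X^{(e)}}$ with $\psi\circ(\varphi^*s)F^e=\id$. Apply $\varphi^{(e)}_*$. Using $F^e_Y\varphi=\varphi^{(e)}F^e_X$ together with the equivariant projection formula \cite[(26.4)]{ETI} to identify $\varphi_*(\varphi^*\L)\cong\L\otimes_{\O_Y}\varphi_*\O_X$, one obtains a $(G,\O_{Y^{(e)}})$-linear splitting of the natural map $\varphi^{(e)}_*\O_{X^{(e)}}\to F^e_*(\L\otimes_{\O_Y}\varphi_*\O_X)$ induced by $s$ and the Frobenius. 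Now take $N$-invariants on $Y^{(e)}$, where $N$ acts trivially. Since $\L$ is $N$-trivial (being pulled back from $H$ through $f$), the functor $\L\otimes(?)$ commutes with $(?)^N$, and $(?)^N$ commutes with $F^e_*$ between $N$-trivially acted schemes; thus the source becomes $F^e_*(\L\otimes(\varphi_*\O_X)^N)\cong F^e_*\L$ via $\bar\eta$. For the target, the analogous statement $(\varphi^{(e)}_*\O_{X^{(e)}})^N\cong\O_{Y^{(e)}}$ follows from $\bar\eta$ applied to $\varphi^{(e)}$, which by perfectness of $k$ is canonically identified with $\varphi$ via the Frobenius twist; the induced $(H,\O_{Y^{(e)}})$-linear map $F^e_*\L\to\O_{Y^{(e)}}$ is the desired splitting of $sF^e$.

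The main obstacle will be the careful bookkeeping for the invariance functor: one has to justify that $(?)^N$ commutes with $F^e_*$ (via the natural map $e$ of Section~\ref{invariance.sec}), with $\varphi^{(e)}_*$, and with tensoring by the $N$-trivial invertible sheaf $\L$, and to identify $(\varphi^{(e)}_*\O_{X^{(e)}})^N\cong\O_{Y^{(e)}}$ through perfectness of $k$. The smoothness of $N$ enters to ensure the Frobenius twist $N^{(e)}$ is well-behaved (compare Lemma~\ref{relative-Frob.thm}), though the crucial ingredient for this descent is really the perfectness of $k$ making the Frobenius twist canonically trivial on $S$-level data.
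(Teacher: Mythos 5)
Your overall route coincides with the paper's: pull $s$ back to a $G$-invariant generically monic section on $X$ (your argument for generic monicity via the zero-dimensional local ring $\O_{Y,\varphi(\xi)}$ is fine), split $sF^e$ upstairs $G$-equivariantly, push down by $\varphi^{(e)}_*$, and take $N$-invariants; your treatment of the $\L$-side, $(\varphi^{(e)}_*F^e_*(\varphi^*\L))^N\cong F^e_*(\L\otimes_{\O_Y}(\varphi_*\O_X)^N)\cong F^e_*\L$, is exactly the paper's computation. The gap is in the other identification, $(\varphi^{(e)}_*\O_{X^{(e)}})^N\cong\O_{Y^{(e)}}$. What the Frobenius twist of $\bar\eta$ gives is an isomorphism $\O_{Y^{(e)}}\cong(\varphi^{(e)}_*\O_{X^{(e)}})^{N^{(e)}}$, i.e.\ invariants under $N^{(e)}$. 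But the splitting $\pi$ furnished by the $G$-global $F$-regularity of $X$ is only $(G,\O_{X^{(e)}})$-linear, where $G$ acts on $X^{(e)}$ (and hence on everything you push down to $Y^{(e)}$) through the relative Frobenius $F^e\colon G\rightarrow G^{(e)}$; so after applying $(\varphi^{(e)}_*(?))^N$ you only control invariants under $N$ acting through $F^e\colon N\rightarrow N^{(e)}$, and by Lemma~\ref{restriction-invariance.lem} one has a priori only an inclusion $(\varphi^{(e)}_*\O_{X^{(e)}})^{N^{(e)}}\subset(\varphi^{(e)}_*\O_{X^{(e)}})^{N}$. Perfectness of $k$ does not close this gap: it identifies $\varphi^{(e)}$ with a twist of $\varphi$, but it does not convert the $N$-action obtained by restriction along $F^e$ back into the $N^{(e)}$-action. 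This is precisely where the smoothness of $N$ — which you dismiss as peripheral — is the decisive hypothesis: for smooth $N$ the relative Frobenius $F^e\colon N\rightarrow N^{(e)}$ is faithfully flat, so Lemma~\ref{restriction-invariance.lem} gives $(?)^{N}=(?)^{N^{(e)}}$ on these modules, and only then does the twisted $\bar\eta$ yield $(\varphi^{(e)}_*\O_{X^{(e)}})^N\cong\O_{Y^{(e)}}$.

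To see that this is not mere bookkeeping, take $N=\mu_p$ over a perfect (even algebraically closed) field and $e\geq 1$: the relative Frobenius $N\rightarrow N^{(e)}$ is the trivial homomorphism, so the restricted $N$-action on $\varphi^{(e)}_*\O_{X^{(e)}}$ is trivial, $(\varphi^{(e)}_*\O_{X^{(e)}})^N=\varphi^{(e)}_*\O_{X^{(e)}}$, and your descended map splits $sF^e$ only into this larger sheaf, not into $\O_{Y^{(e)}}$; no appeal to perfectness repairs this. The fix is to replace your sentence invoking perfectness by the paper's argument: $N$ smooth $\Rightarrow$ $F^e\colon N\rightarrow N^{(e)}$ faithfully flat $\Rightarrow$ $(?)^{N^{(e)}}=(?)^{N}$ by Lemma~\ref{restriction-invariance.lem}, whence $\bar\eta\colon\O_{Y^{(e)}}\rightarrow(\varphi^{(e)}_*\O_{X^{(e)}})^{N}$ is an isomorphism; with that correction the remainder of your proof goes through and agrees with the paper's.
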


\begin{proof}
We prove the assertion for the global $F$-regularity.
The case of $F$-splitting is similar.

Let $\Cal L$ be an $H$-linearized invertible sheaf on $Y$, and 
$s:\O_{Y}\rightarrow \L$ an $H$-invariant generically monic section.
As $\varphi(X\an0)\subset Y\an0$, it is easy to see that
$s:\O_X\rightarrow \varphi^*\L$ is a $G$-invariant generically
monic section.
So there exists some $e\geq 1$ and a $G$-invariant splitting
$\pi: F^e_*(\varphi^*\L)\rightarrow \O_{X^{(e)}}$ of $sF^e:
\O_{X^{(e)}}\rightarrow
F^e_*(\varphi^*\L)$.

Obviously,
\[
\bar\eta:\O_{Y^{(e)}}\rightarrow 
(\varphi^{(e)}_*\O_{X^{(e)}})^{N^{(e)}}
\]
is an isomorphism.
On the other hand, as $N$ is $S$-smooth, the Frobenius map
$F^e:N\rightarrow N^{(e)}$ is faithfully flat, and hence
the restriction $(?)^{N^{(e)}}$ agrees with $(?)^N$ by
Lemma~\ref{restriction-invariance.lem}.

So 
\[
\bar\eta:\O_{Y^{(e)}}\rightarrow 
(\varphi^{(e)}_*\O_{X^{(e)}})^N
\]
is an isomorphism.

On the other hand, 
\[
\varphi^{(e)}_*(F^e_X)_*(\varphi^*\L))^{N}
\cong
(F^e_Y)_*(\varphi_*(\varphi^*\L))^N
\cong
(F^e_Y)_*\L.
\]

So applying $(\varphi^{(e)}_*(?))^N$ to the sequence
\[
\O_{X^{(e)}}\xrightarrow{sF^e} F^e_*(\varphi^*\L)
\xrightarrow{\pi}\O_{X^{(e)}},
\]
we get
\[
\O_{Y^{(e)}}\xrightarrow{sF^e}F^e_*\L
\xrightarrow{\pi}\O_{Y^{(e)}}
\]
whose composite is the identity.
Hence $Y$ is $H$-globally $F$-regular.
\end{proof}

\begin{corollary}[cf.~{\cite[Proposition~1.2, (2)]{HWY}}]\label{HWY.cor}
Let $\varphi:X\rightarrow Y$ be a 
morphism between integral $\Bbb F_p$-schemes
such that $\eta:\O_Y\rightarrow \varphi_*\O_X$ is an isomorphism.
If $X$ is globally $F$-regular \(resp.\ $F$-split\), then so is $Y$.
\end{corollary}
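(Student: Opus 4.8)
The plan is to obtain this as a special case of Lemma~\ref{G-globally-F-reg-descent.lem} by taking $G$, $H$, and $N$ all trivial. First I would set $S=\Spec\Bbb F_p$, let $G=H=N=e$ be the trivial $S$-group scheme, and let $f\colon G\to H$ be the (trivially fpqc) identity homomorphism, so that $N=\Ker f=e$ is visibly smooth over $S$. With these choices a $G$-linearized invertible sheaf is just an invertible sheaf, a $G$-invariant section is just a section, and $G$-globally $F$-regular reduces to globally $F$-regular; likewise $G$-$F$-split reduces to $F$-split. So the statement to be deduced is exactly the conclusion of Lemma~\ref{G-globally-F-reg-descent.lem} stripped of all equivariance.

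Next I would check the remaining hypotheses of Lemma~\ref{G-globally-F-reg-descent.lem}. The condition $\varphi(X\an0)\subset Y\an0$ holds automatically here: since $X$ and $Y$ are integral, each of $X\an0$ and $Y\an0$ is a single point, the generic point, and a dominant morphism of integral schemes sends the generic point to the generic point; the hypothesis $\eta\colon\O_Y\to\varphi_*\O_X$ an isomorphism forces $\varphi$ to be dominant (indeed schematically dominant), so $\varphi$ carries the generic point of $X$ to the generic point of $Y$. The hypothesis that $\bar\eta\colon\O_Y\to(\varphi_*\O_X)^N$ be an isomorphism coincides, for $N$ trivial, with the given hypothesis $\eta\colon\O_Y\to\varphi_*\O_X$ being an isomorphism, because $(?)^N$ with $N=e$ is the identity functor and $\bar\eta$ is identified with $\eta$ via (\ref{bar-eta.par}). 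Thus all hypotheses of the lemma are met, and applying it gives that $Y$ is ($H$-)globally $F$-regular (resp.\ ($H$-)$F$-split) whenever $X$ is, which is the assertion.

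There is essentially no obstacle here; the only point needing a word of care is the reduction of $\varphi(X\an0)\subset Y\an0$ to dominance of $\varphi$ for integral schemes, and the identification of $\bar\eta$ with $\eta$ in the trivial-group case, both of which are immediate from the definitions recalled in the excerpt. The proof is therefore:

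\begin{proof}
Apply Lemma~\ref{G-globally-F-reg-descent.lem} with $S=\Spec\Bbb F_p$ and $G=H=N$ the trivial $S$-group scheme, $f$ the identity. Then $N$ is $S$-smooth, and since $X$ and $Y$ are integral, $X\an0$ and $Y\an0$ consist of the respective generic points; as $\eta\colon\O_Y\to\varphi_*\O_X$ is an isomorphism, $\varphi$ is dominant, so $\varphi(X\an0)\subset Y\an0$. For $N$ trivial, $(?)^N=\Id$ and $\bar\eta$ is identified with $\eta$ by (\ref{bar-eta.par}), so $\bar\eta\colon\O_Y\to(\varphi_*\O_X)^N$ is an isomorphism. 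All hypotheses of Lemma~\ref{G-globally-F-reg-descent.lem} hold, and its conclusion, in the trivial-group case, says precisely that $Y$ is globally $F$-regular (resp.\ $F$-split) if $X$ is.
\end{proof}
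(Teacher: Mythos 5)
Your proposal is correct and follows exactly the paper's own route: specialize Lemma~\ref{G-globally-F-reg-descent.lem} to $S=\Spec\Bbb F_p$ with $G=H=N$ trivial, noting that $\eta$ being an isomorphism forces $\varphi$ to be dominant so that $\varphi(X\an0)\subset Y\an0$, and that $\bar\eta$ is identified with $\eta$ in the trivial-group case. The extra checks you spell out (smoothness of the trivial group, $(?)^N=\Id$) are exactly the routine verifications the paper leaves implicit.
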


\begin{proof}
Consider $S=\Spec \Bbb F_p=Z$ and the trivial $G$, $H$, and $N$.
Then apply Lemma~\ref{G-globally-F-reg-descent.lem}.
As $\eta$ is an isomorphism, $\varphi$ is dominating and $\varphi(X\an0)
\subset Y\an0$.
The results follow from
Lemma~\ref{G-globally-F-reg-descent.lem} easily.
\end{proof}

\begin{proposition}\label{globally-freg-Reynolds-i.prep}
Let $S=\Spec k$ with $k$ a perfect field of characteristic $p>0$.
Let $H$ and $N$ be $S$-group schemes.
Let $H$ act on $N$ by group automorphisms, and $G:=N\rtimes H$.
Assume that $N$ is a linearly reductive affine algebraic $k$-group scheme.
Let $X$ be an $F$-finite Noetherian 
$H$-globally $F$-regular \(resp.\ $H$-$F$-split\)
$G$-scheme.
Then $X$ is $G$-globally $F$-regular \(resp.\ $G$-$F$-split\).
\end{proposition}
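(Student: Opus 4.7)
The plan is to use the Reynolds operator $p_N$ attached to the normal subgroup $N\subset G$ to convert an $H$-equivariant splitting of $F^e$ (or of $sF^e$) into a $G$-equivariant one, exploiting that $G=N\rtimes H$ so that an element is $G$-invariant if and only if it is simultaneously $N$- and $H$-invariant.

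First I would set up the framework of Proposition~\ref{enriched-Reynolds.prop}. Since $N$ is linearly reductive affine algebraic over the field $k$, it is Reynolds (as in the unlabeled example preceding Example~\ref{linearly-reductive-Reynolds.ex}). The semidirect product projection $f:G=N\rtimes H\to H$ admits the inclusion $H\hookrightarrow G$ as a section, hence is fpqc, and has kernel $N$. Proposition~\ref{enriched-Reynolds.prop} then guarantees that for every $(G,\O_Y)$-module $\M$ the decomposition $\M=\M^N\oplus U_N(\M)$ is a decomposition in $\Qch(G,Y)$, and the Reynolds projector $p_N:\M\to \M^N$ is $(G,\O_Y)$-linear; a fortiori it is $H$-equivariant.

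For the global $F$-regularity statement, I would take a $G$-linearized invertible sheaf $\L$ on $X$ together with a $G$-invariant generically monic section $s:\O_X\to\L$ (the $F$-splitting case corresponds to $\L=\O_X$, $s=\id$). Since $s$ is in particular $H$-invariant, the hypothesis produces an integer $e\geq 1$ and an $(H,\O_{X^{(e)}})$-linear splitting $\pi:F^e_*\L\to\O_{X^{(e)}}$ of $sF^e$. I would view $\pi$ as a global section of the coherent $(G,\O_{X^{(e)}})$-module
\[
\M:=\uHom_{\O_{X^{(e)}}}(F^e_*\L,\O_{X^{(e)}}),
\]
where coherence uses that $X$ is Noetherian and $F$-finite so that $F^e_*\L$ is coherent. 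Set $\tilde\pi:=p_N(\pi)$. By the preceding paragraph $\tilde\pi$ is still $H$-invariant (as $p_N$ is $H$-linear and $\pi$ is $H$-invariant) and is now $N$-invariant by construction, hence $G$-invariant, i.e.\ $\tilde\pi\in\Hom_{G,\O_{X^{(e)}}}(F^e_*\L,\O_{X^{(e)}})$. To check $\tilde\pi\circ sF^e=\id$, observe that precomposition with the $G$-equivariant map $sF^e$ defines an $(N,\O_{X^{(e)}})$-linear sheaf map $\Phi_s:\M\to\O_{X^{(e)}}$; since $\Phi_s(\pi)=\id$ is already $N$-invariant, $N$-linearity of $\Phi_s$ forces $\tilde\pi\circ sF^e=\Phi_s(p_N\pi)=p_N(\Phi_s\pi)=p_N(\id)=\id$. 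Thus $\tilde\pi$ is the required $G$-equivariant splitting.

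The main technical point I expect will be verifying on the sheaf level that $\uHom_{\O_{X^{(e)}}}(F^e_*\L,\O_{X^{(e)}})$ carries a natural quasi-coherent $(G,\O_{X^{(e)}})$-structure whose $N$-Reynolds decomposition is precisely the one from Proposition~\ref{enriched-Reynolds.prop}, and that precomposition with $sF^e$ defines an $(N,\O_{X^{(e)}})$-linear sheaf morphism so that $p_N$ commutes with it. Both are routine consequences of the $G$-equivariance of $F^e$ and $s$, the coherence of $F^e_*\L$, and Lemma~\ref{Reynolds-four-operations.lem}, but will require careful bookkeeping of the $G$-linearizations on $X^{(e)}$ coming from the fact that the base $S=\Spec k$ is perfect.
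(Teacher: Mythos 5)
Your proposal is correct and takes essentially the same route as the paper's own proof: the paper likewise restricts to $H$ to obtain an $(H,\O_{X^{(e)}})$-linear splitting $\pi$ of $sF^e$, and then applies the $N$-Reynolds projector, which is $G$-linear by Proposition~\ref{enriched-Reynolds.prop}, to produce the $G$-linear splitting, verifying $\pi_0(sF^e)=\id$ via Lemma~\ref{Reynolds-four-operations.lem} and uniqueness of the invariant/anti-invariant decomposition rather than by your (equivalent) naturality-of-$p_N$ argument. The bookkeeping you defer at the end — that since $N$ acts nontrivially on $X^{(e)}$ the Reynolds decomposition has to be taken on $h_*\uHom_{\O_{X^{(e)}}}(F^e_*\L,\O_{X^{(e)}})$ over $S=\Spec k$, where the action is trivial, before passing to $H$-invariants — is exactly how the paper implements the step.
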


\begin{proof}
We prove the assertion for the global $F$-regularity.
The assertion for the $F$-splitting is similar.

Let $\Cal L$ be a $G$-linearized invertible sheaf on $X$, and 
$s:\O_X\rightarrow \L$ a generically monic section.
Then there exists some $e\geq 1$ and 
an $(H,\O_{X^{(e)}})$-linear map
$\pi:F^e_*\L\rightarrow
\O_{X^{(e)}}$ such that $\pi s F^e=\id$.

As $F^e:X\rightarrow X^{(e)}$ is finite, 
$\H_0=\uHom_{\O_{X^{(e)}}}(\O_{X^{(e)}},F^e_*\L)$ and
$\H_1=\uHom_{\O_{X^{(e)}}}(F^e_*\L,\O_{X^{(e)}})$ are
coherent
$(G,\O_{X^{(e)}})$-modules.

Let $h:X^{(e)}\rightarrow S=\Spec k$ 
be the structure map, which is quasi-compact
quasi-separated by assumption
(if $q:X\rightarrow S$ is the structure map, then $h$ is the composite
\[
X^{(e)}\xrightarrow{q^{(e)}}S^{(e)}\xrightarrow{F^{-e}}S).
\]
So we have a direct sum decomposition of quasi-coherent 
$(G,\O_S)$-modules $h_*\H_1=(h_*\H_1)^N\oplus U_N(h_*\H_1)$
by Proposition~\ref{enriched-Reynolds.prop}.
Applying $\Gamma(S,?)\circ(?)^H$, we get the direct sum decomposition
of
abelian groups 
\begin{multline}\label{hom-Reynolds-decomposition.eq}
\Hom_{H,\O_{X^{(e)}}}(F^e_*\L,\O_{X^{(e)}})\\
=
\Hom_{G,\O_{X^{(e)}}}(F^e_*\L,\O_{X^{(e)}})
\oplus
\Gamma(S,U_N(h_*\uHom_{\O_{X^{(e)}}}(F^e_*\L,\O_{X^{(e)}}))^H).
\end{multline}
By the product
\[
h_*\H_1\otimes_{\O_S}h_*\H_0\rightarrow 
h_*(\H_1\otimes_{\O_{X^{(e)}}}\H_0)
\rightarrow h_*(\uHom_{\O_{X^{(e)}}}(\O_{X^{(e)}},\O_{X^{(e)}})),
\]
$U_N(h_*\H_1)\otimes_{\O_S} (h_*\H_0)^N$ is mapped to 
$U_N(h_*(\uHom_{\O_{X^{(e)}}}(\O_{X^{(e)}},\O_{X^{(e)}})))$ by
Lemma~\ref{Reynolds-four-operations.lem}, {\bf 3}.
Hence when we decompose $\pi=\pi_0+\pi_1$ according to the decomposition
(\ref{hom-Reynolds-decomposition.eq}), 
\[
\pi_0(sF^e)\in \Hom_{G,\O_{X^{(e)}}}(\O_{X^{(e)}},\O_{X^{(e)}})
\]
and
\[
\pi_1(sF^e)\in 
\Gamma(S,U_N(h_*(\uHom_{\O_{X^{(e)}}}(\O_{X^{(e)}},\O_{X^{(e)}})))^H).
\]
As we can decompose the identity of $\O_{X^{(e)}}$ in two ways as
\[
\id=\pi_0(sF^e)+\pi_1(sF^e)=\id+0,
\]
we must have $\pi_0(sF^e)=\id$ and $\pi_1(sF^e)=0$ by the 
uniqueness of the decomposition.
Hence $\pi_0:F^e_*\L\rightarrow \O_{X^{(e)}}$ is the 
desired $(G,\O_{X^{(e)}})$-linear splitting of $sF^e$,
and the proof of the proposition has been completed.
\end{proof}

\begin{corollary}\label{linearly-reductive-globally-F-regular.cor}
Let $S=\Spec k$, $H$, $N$, and $G$ be as in 
{\rm Proposition~\ref{globally-freg-Reynolds-i.prep}}.
Assume that $N$ is smooth.
Let $\varphi:X\rightarrow Y$ be a $G$-morphism which is $N$-invariant.
Assume that $\bar\eta:\O_Y\rightarrow (\varphi_*\O_X)^N$ is an 
isomorphism.
Assume that $X$ is Noetherian normal and $F$-finite.
If $X$ is $H$-globally $F$-regular \(resp.\ $H$-$F$-split\), 
then $Y$ is also $H$-globally $F$-regular \(resp.\ $H$-$F$-split\).
\end{corollary}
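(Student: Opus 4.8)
The plan is to combine Proposition~\ref{globally-freg-Reynolds-i.prep} with Lemma~\ref{G-globally-F-reg-descent.lem}, bridging them through the factorization $f:G\to H$ with $N=\Ker f$ linearly reductive and smooth. First I would observe that the hypotheses of Lemma~\ref{G-globally-F-reg-descent.lem} are almost in place: $f:G\to H$ is an fpqc homomorphism (a linearly reductive affine algebraic group scheme over a field is in particular flat, so $N$ is flat and $f$ is fpqc), $N$ is smooth by assumption, $\varphi:X\to Y$ is a $G$-morphism which is $N$-invariant, and $\bar\eta:\O_Y\to(\varphi_*\O_X)^N$ is an isomorphism. The one missing ingredient for that lemma is the condition $\varphi(X\an0)\subset Y\an0$. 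Since $\bar\eta$ is an isomorphism, $\varphi$ is dominating (the composite $\O_Y\to\varphi_*\O_X$ is injective); and because $X$ is normal, hence reduced and locally Krull, and $Y$ is forced to be locally Krull as well via $\bar\eta$ and the codimension-two formalism, the minimal primes go to minimal primes, so $\varphi$ carries generic points of $X$ to generic points of $Y$. I would spell this out carefully since it is the only place where the normality hypothesis on $X$ is actually used in this step.

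The main step is then: given that $X$ is $H$-globally $F$-regular (resp.\ $H$-$F$-split), upgrade this to $G$-global $F$-regularity (resp.\ $G$-$F$-splitting) of $X$. This is exactly Proposition~\ref{globally-freg-Reynolds-i.prep}, applied with the same $H$, $N$, $G=N\rtimes H$: we need $S=\Spec k$ with $k$ perfect, $N$ linearly reductive affine algebraic, and $X$ an $F$-finite Noetherian $H$-globally $F$-regular (resp.\ $H$-$F$-split) $G$-scheme. All of these are hypotheses of the corollary (a normal $F$-finite scheme over a field is Noetherian by Lemma~\ref{finite-extension-F-finite.lem}-type reasoning, or one simply takes Noetherianity as given). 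So $X$ is $G$-globally $F$-regular (resp.\ $G$-$F$-split). Finally, feeding this back into Lemma~\ref{G-globally-F-reg-descent.lem} with the morphism $\varphi:X\to Y$, $f:G\to H$, $N=\Ker f$, we conclude that $Y$ is $H$-globally $F$-regular (resp.\ $H$-$F$-split), which is precisely the assertion.

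I expect the only real obstacle to be the verification of $\varphi(X\an0)\subset Y\an0$ — more precisely, confirming that $Y$ is locally Krull (or at least has well-behaved generic points) so that this codimension statement makes sense and holds; everything else is a direct citation of the two preceding results. One should note that the semidirect product structure $G=N\rtimes H$ matters for Proposition~\ref{globally-freg-Reynolds-i.prep}, and that $N$ being a subgroup scheme normal in $G$ (indeed $N=\Ker f$) is what makes Lemma~\ref{G-globally-F-reg-descent.lem} applicable; these are built into the hypotheses inherited from Proposition~\ref{globally-freg-Reynolds-i.prep}, so no extra work is needed there. The argument is thus essentially a two-line chain $X\ (H\text{-reg}) \Rightarrow X\ (G\text{-reg}) \Rightarrow Y\ (H\text{-reg})$, with the codimension bookkeeping being the only point requiring care.
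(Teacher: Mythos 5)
Your proposal is correct and follows essentially the same route as the paper: the paper's proof is exactly the chain Proposition~\ref{globally-freg-Reynolds-i.prep} followed by Lemma~\ref{G-globally-F-reg-descent.lem}, with the condition $\varphi(X\an 0)\subset Y\an 0$ supplied by citing \cite[(6.3)]{Hashimoto4} (the locally Krull descent result), which is the same fact you verify by hand from $\bar\eta$ being an isomorphism and $X$ normal.
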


\begin{proof}
By \cite[(6.3)]{Hashimoto4}, $\varphi(X\an0)\subset Y\an0$.
The assertion follows easily from
Proposition~\ref{globally-freg-Reynolds-i.prep} and
Lemma~\ref{G-globally-F-reg-descent.lem}.
\end{proof}

\begin{lemma}\label{strongly-F-regular-ample.lem}
Let $\varphi:X\rightarrow Y$ 
be a globally $F$-regular $F$-finite Noetherian $\Bbb F_p$-scheme
with an ample invertible sheaf $\Cal A$.
Then any open subscheme $U$ is also globally $F$-regular.
In particular, $X$ is $F$-regular in the sense that each local ring of 
$X$ is strongly $F$-regular.
In particular, $X$ is Cohen--Macaulay normal.
\end{lemma}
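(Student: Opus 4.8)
The plan is to reduce everything to two known facts: (i) global $F$-regularity is inherited by open subschemes when an ample invertible sheaf is present, and (ii) a globally $F$-regular ring is strongly $F$-regular, hence Cohen--Macaulay and normal. First I would prove the statement about open subschemes. Let $U\subset X$ be open, and let $\Cal B$ be an invertible sheaf on $U$ with a generically monic section $s:\O_U\rightarrow \Cal B$. Since $X$ is Noetherian and carries the ample invertible sheaf $\Cal A$, for a suitable $n\gg 0$ the sheaf $\Cal A^{\otimes n}\otimes_{\O_X}j_*(\Cal B\otimes_{\O_U}\Cal A^{-n}|_U)^{\vee\vee}$ — or more simply, using that $X$ is Noetherian, an extension of $\Cal B$ to a coherent sheaf on $X$ tensored with a high power of $\Cal A$ — can be arranged to be an invertible sheaf $\Cal L$ on $X$ restricting to $\Cal B\otimes \Cal A^{\otimes n}|_U$, together with a section $\O_X\rightarrow \Cal L$ extending $s\otimes(\text{section of }\Cal A^{\otimes n})$; here one uses that $X$ globally $F$-regular together with $F$-finiteness forces $X$ to be reduced (indeed normal, once (ii) is in hand, but reducedness suffices here), so that generic monicity of $s$ is preserved under these operations. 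Then the hypothesis (in the paragraph just before the statement, the criterion using powers of an ample sheaf, proved as in \cite[Theorem~2.6]{Hashimoto9}) gives a splitting $\pi:\O_{X^{(e)}}\rightarrow F^e_*\Cal L$ over $X$ for some $e\geq 1$; restricting $\pi$ to $U^{(e)}$ and twisting back down by $\Cal A^{-n}$ yields the required splitting for $(\Cal B,s)$ over $U$. Applying the ample-power criterion again in the reverse direction, $U$ is globally $F$-regular.

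Next I would deduce that each local ring $\O_{X,x}$ is strongly $F$-regular. Taking $U=\Spec \O_{X,x}$ is not literally an open subscheme, so instead I take a small affine open neighborhood $\Spec R$ of $x$; by the previous paragraph $\Spec R$ is globally $F$-regular, and by the standard fact that global $F$-regularity of an affine $F$-finite Noetherian scheme is equivalent to strong $F$-regularity of the ring $R$ (this is essentially the definition via the ample sheaf $\O_{\Spec R}$ together with \cite[Theorem~2.6]{Hashimoto9}), $R$ is strongly $F$-regular. Strong $F$-regularity localizes, so $\O_{X,x}=R_{\fp}$ is strongly $F$-regular. Thus $X$ is $F$-regular in the stated sense. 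Finally, a strongly $F$-regular Noetherian local ring is Cohen--Macaulay and normal (a classical result of Hochster--Huneke), so $X$ is Cohen--Macaulay and normal.

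The main obstacle I anticipate is the first reduction: carefully producing, from a generically monic section of an invertible sheaf on the open subscheme $U$, an invertible sheaf on all of $X$ with a compatible generically monic section, in a way that the splitting over $X$ restricts correctly to $U$. The ampleness of $\Cal A$ is exactly what makes this extension possible — one clears denominators by multiplying by a high power of $\Cal A$ — but one must check that generic monicity is not destroyed, which is where $F$-finiteness and the reducedness forced by global $F$-regularity enter, and that the resulting splitting genuinely restricts to a splitting of $sF^e$ on $U^{(e)}$ after untwisting. The remaining steps (localization of strong $F$-regularity, and the Cohen--Macaulay/normal consequences) are standard and I would only cite them.
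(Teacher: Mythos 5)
There is a genuine gap in your first step, and it sits exactly where you anticipated. You propose to extend the invertible sheaf $\Cal B$ on $U$ (after twisting by $\Cal A^{\otimes n}|_U$) to an invertible sheaf $\Cal L$ on all of $X$, via $\Cal A^{\otimes n}\otimes j_*(\Cal B\otimes\Cal A^{-n}|_U)^{\vee\vee}$ or via a coherent extension tensored with a high power of $\Cal A$. Neither construction yields an invertible sheaf in general: a line bundle on an open subscheme of a Noetherian scheme need not extend to a line bundle on the whole scheme, the pushforward double dual is at best a rank-one reflexive sheaf (and at this stage $X$ is not even known to be normal --- normality is one of the conclusions), and tensoring a coherent extension with powers of an ample sheaf helps with global generation and extension of sections, never with local freeness. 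Reducedness of $X$ is irrelevant to this obstruction. So the key object $\Cal L$ need not exist and the reduction collapses. The repair is the criterion you only invoke at the very end ``in the reverse direction'': it should be invoked at the very beginning. Since $\Cal A|_U$ is ample on $U$, the criterion stated just before the lemma (proved as in \cite[Theorem~2.6]{Hashimoto9}) lets you take $\Cal B=\Cal A^{\otimes r}|_U$ from the start, so no sheaf ever needs extending --- only the section does. This is the paper's route: pick a generically monic $u\in\Gamma(X,\Cal A^{\otimes r'})$ with $X_u\subset U$, use \cite[(27.24.6)]{SP} to find $n$ with $u^ns\in\Gamma(X,\Cal A^{\otimes(r+nr')})$, split $(u^ns)F^e$ over $X$ by global $F$-regularity of $X$, restrict the splitting to $U$, and precompose with $F^e_*(u^n)$ to recover a splitting of $sF^e$ over $U$. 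Your ``clear denominators with a power of $\Cal A$'' instinct is exactly this section-extension step; the mistake is applying it to the bundle rather than reducing to bundles that are already globally defined.

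Your second half is essentially the paper's: affine opens are globally $F$-regular, hence (being $F$-finite Noetherian) strongly $F$-regular by \cite{HH}, strong $F$-regularity localizes, and then Cohen--Macaulayness and normality follow. One small caution: the Cohen--Macaulay conclusion is not a purely formal consequence of strong $F$-regularity; the paper routes it through $F$-finite $\Rightarrow$ excellent \cite{Kunz} and then \cite[(4.2)]{Huneke}, so your appeal to a ``classical result'' should be understood to include that excellence step.
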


\begin{proof}
Let $r\geq 0$ and $s\in\Gamma(U,\Cal A^{\otimes r})$ a generically
monic section.
Take $r'\geq 0$ and a section $u\in \Gamma(X,\Cal A^{\otimes r'})$ which 
is generically monic such that $X_u\subset U$.
Then by \cite[(27.24.6)]{SP}, there exists some $n\geq 0$ such that 
$u^ns\in\Gamma(X,\Cal A^{\otimes(r+nr')})$.
Then there exists some $e\geq 1$ such that $u^nsF^e:\O_{X^{(e)}}
\rightarrow F^e_*A^{\otimes(r+nr')}$ has a splitting.
Restricting to $U$, $u^nsF^e$ also has a splitting over $U$.
Hence $sF^e$ also has a splitting over $U$.
Thus $U$ is globally $F$-regular.

In particular, any affine open $U=\Spec A$ is globally $F$-regular, and
the $F$-finite Noetherian ring $A$ is strongly $F$-regular \cite{HH}.
So any local ring of $X$ is also strongly $F$-regular.
As an $F$-finite Noetherian ring is excellent \cite{Kunz} and
a strongly $F$-regular ring is weakly $F$-regular \cite[(3.1)]{HH},
$X$ is Cohen--Macaulay normal by \cite[(4.2)]{Huneke}.
\end{proof}

\begin{lemma}
Let $S$ be an $\Bbb F_p$-scheme, 
$G$ an $S$-group scheme, and 
$X$ be an $G$-$F$-split scheme.
If $h:U\rightarrow X$ is an \'etale $G$-morphism, then
$U$ is a $G$-$F$-split $G$-scheme.
\end{lemma}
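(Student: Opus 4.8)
The plan is to reduce the statement to the non-equivariant case via the relative Frobenius, using that \'etale morphisms become isomorphisms after Frobenius twist. First I would recall the meaning of $G$-$F$-split: for some (equivalently any) $e\ge 1$, the relative Frobenius map $F^e\colon\O_{X^{(e)}}\to F^e_*\O_X$ splits as a $(G,\O_{X^{(e)}})$-linear map. So I fix such an $e$ and a $(G,\O_{X^{(e)}})$-linear splitting $\pi\colon F^e_*\O_X\to\O_{X^{(e)}}$, and aim to produce a $(G,\O_{U^{(e)}})$-linear splitting of $F^e\colon\O_{U^{(e)}}\to F^e_*\O_U$.

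The key geometric input is that $h\colon U\to X$ is \'etale, hence the relative Frobenius square is cartesian: by Lemma~\ref{smooth-etale-radu-andre.lem} (applied to the \'etale morphism $h$, which is in particular smooth), the relative Frobenius $\Phi_e(X,U)\colon U\to U_X^{(e)}=X^{(e)}\times_{X}U$ is an isomorphism, where the fiber product uses $h^{(e)}\colon X^{(e)}\to X$ along... more precisely $U_X^{(e)}=X\times_{X^{(e)}}U^{(e)}$ in the notation of (\ref{globally-F-reg.settings.par}), and $\Phi_e(X,U)$ is a $G$-morphism. Concretely this says $F^e_*\O_U\cong h^{(e)*}F^e_*\O_X$ as $(G,\O_{U^{(e)}})$-modules, where I write $h^{(e)}\colon U^{(e)}\to X^{(e)}$ for the twist of $h$ (itself \'etale). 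So I would first establish the base-change isomorphism $F^e_*\O_U\cong h^{(e)*}(F^e_*\O_X)$ compatibly with the map $F^e$, i.e.\ the diagram relating $F^e\colon\O_{U^{(e)}}\to F^e_*\O_U$ to $h^{(e)*}$ applied to $F^e\colon\O_{X^{(e)}}\to F^e_*\O_X$ commutes; this is where the \'etaleness (flatness of $h^{(e)}$ plus the cartesian relative-Frobenius square) is used. Then applying the exact functor $h^{(e)*}$ to $\pi$ gives $h^{(e)*}\pi\colon h^{(e)*}F^e_*\O_X\to\O_{U^{(e)}}$, which via the identification is the desired $(G,\O_{U^{(e)}})$-linear splitting of $F^e\colon\O_{U^{(e)}}\to F^e_*\O_U$. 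Hence $U$ is $G$-$F$-split.

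I expect the main obstacle to be the careful bookkeeping of the equivariant structures and of the relative Frobenius maps: verifying that $\Phi_e(X,U)$ is a $G$-morphism and that the canonical base-change map $h^{(e)*}F^e_*\O_X\to F^e_*(h^*\O_X)=F^e_*\O_U$ is an isomorphism of $(G,\O_{U^{(e)}})$-modules compatible with $F^e$. The flatness needed for the base-change isomorphism $h^{(e)*}F^e_* \cong F^e_* h^*$ comes from $h$ (hence $h^{(e)}$) being \'etale, combined with the fact that the square
\[
\xymatrix{
U \ar[r]^{F^e} \ar[d]^h & U^{(e)} \ar[d]^{h^{(e)}} \\
X \ar[r]^{F^e} & X^{(e)}
}
\]
is cartesian for an \'etale $h$ (again by Lemma~\ref{smooth-etale-radu-andre.lem}, or directly from \cite[(33.5)]{ETI}). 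Once these identifications are in place, the splitting transports formally; the rest is routine and left to the reader. A remark worth including: since $G$-$F$-splitness is independent of the choice of $e\ge1$, it suffices to do this for a single $e$, which is what the argument does.
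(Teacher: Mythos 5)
Your proposal is correct and follows essentially the same route as the paper: pull back the splitting $\pi$ along $h^{(e)}$, identify $h^{(e)*}F^e_*\O_X$ with $F^e_*\O_U$ via the cartesian relative-Frobenius square, using that $\Phi_e(X,U)$ is an isomorphism for \'etale $h$ by Lemma~\ref{smooth-etale-radu-andre.lem}. The bookkeeping you flag (the $G$-equivariance of $\Phi_e(X,U)$ and the base-change identification compatible with $F^e$) is exactly what the paper's short proof carries out.
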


\begin{proof}
There exists some $e\geq 1$ and a $(G,\O_{X^{(e)}})$-linear splitting $\pi:
F^e_*\O_X\rightarrow \O_{X^{(e)}}$ of $F^e$.
Applying $(h^{(e)})^*$, we have that
\[
\eta_{p_2}=(h^{(e)})^*F^e_X: \O_{U^{(e)}}\rightarrow
(h^{(e)})^*F^e_*\O_X
\cong
(p_2)_*p_1^*\O_X
\cong(p_2)_*\O_{U^{(e)}_X}
\]
has a $(G,\O_{U^{(e)}})$-linear 
splitting, where $p_1:U^{(e)}_X\rightarrow X$ is the first projection,
and $p_2:U^{(e)}_X\rightarrow U^{(e)}$ is the second projection.
As $h$ is \'etale, $\Phi_e(X,U)$ is an isomorphism by
Lemma~\ref{smooth-etale-radu-andre.lem}.
As the composite
\[
U\xrightarrow{\Phi_e(X,U)}U^{(e)}_X\xrightarrow{p_2} U^{(e)}
\]
is $F^e_U$, 
$F^e_U:\O_{U^{(e)}}\rightarrow F^e_*(\O_U)$ has a 
$(G,\O_{U^{(e)}})$-linear splitting, as desired.
\end{proof}

\begin{lemma}\label{large-F-finite.lem}
Let $X$ be a Noetherian quasi-normal $\Bbb F_p$-scheme, and $U$ its large
open subset.
Then $X$ is $F$-finite if and only if $U$ is $F$-finite.
\end{lemma}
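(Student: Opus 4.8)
The plan is to prove both directions by reducing to the fact that $\Phi_1$ (the relative Frobenius, here the absolute Frobenius since we may work over $\Bbb F_p$) is finite exactly when the relevant pushforward of the structure sheaf is coherent, and to transport coherence of $\Phi_1$-pushforwards between $X$ and $U$ through the inclusion $i\colon U\hookrightarrow X$ using the codimension-two machinery already developed.

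First I would dispose of the easy direction: if $X$ is $F$-finite over $\Bbb F_p$, then the open immersion $i\colon U\hookrightarrow X$ is $F$-finite (an open immersion is \'etale, hence $\Phi_1(X,U)$ is an isomorphism by Lemma~\ref{smooth-etale-radu-andre.lem}, so $\Phi_1(U)$ factors through $\Phi_1(X)|_U$ and is finite), and thus $U$ is $F$-finite; this is a special case of \cite[Lemma~2, Example~3]{Hashimoto3} or can be quoted directly from the discussion around $F$-finiteness of open subschemes in the proof of Lemma~\ref{F-finite.thm}.

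For the converse, suppose $U$ is $F$-finite. The claim is local on $X$, so I may assume $X=\Spec B$ with $B$ Noetherian, quasi-normal by a full $2$-canonical module $\omega$, and $U$ a large open subset. Since $U$ is $F$-finite, $\Phi_1(U)_*(\O_{{}^1U})={}^1\O_U$ (the structure sheaf with the Frobenius-twisted module structure, viewed as an $\O_{{}^1S\times_S U}$-module) is coherent. Now ${}^1\O_U$ satisfies $(S'_2)$ on ${}^1U$: indeed ${}^1U$ is an open subscheme of ${}^1X$, which is quasi-normal by the pullback of $\omega$ (quasi-normality is preserved under the absolute Frobenius over a perfect base, cf.\ the argument of Theorem~\ref{Frobenius-pushforward.thm}, {\bf 1}, or just note ${}^1X$ is the same scheme as $X$ when $S=\Spec\Bbb F_p$), and in particular $(S_2)$, so $\O_{{}^1U}$ and hence ${}^1\O_U$ satisfies $(S'_2)$; then by Lemma~\ref{finite-depth.lem} applied to the finite homeomorphism $\Phi_1(U)$, the module $\Phi_1(U)_*({}^1\O_U)$ satisfies $(S'_2)$ on ${}^1S\times_S U$. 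Since ${}^1S\times_S U$ is large in the quasi-normal scheme ${}^1S\times_S X$, Lemma~\ref{S_2-large-equiv.lem} gives that the pushforward along the large open immersion ${}^1S\times_S U\hookrightarrow {}^1S\times_S X$ of a coherent $(S'_2)$-module is again coherent and $(S'_2)$. But $\Phi_1(X)$ is affine, and the base-change/compatibility computation
\[
\Phi_1(X)_*(\O_{{}^1X})\cong \Phi_1(X)_*\,{}^1 i_*(\O_{{}^1U})\cong (1_{{}^1S}\times i)_*\,\Phi_1(U)_*(\O_{{}^1U}),
\]
which is valid because ${}^1\O_X\cong {}^1(i_*\O_U)$ (as $U$ is large in $X$ and $\O_X$ satisfies $(S_2)$, so $\O_X\cong i_*\O_U$) and because $\Phi_1$ is compatible with open immersions (the square defining $\Phi_1(X,U)$ is cartesian), shows $\Phi_1(X)_*(\O_{{}^1X})$ is coherent. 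An affine morphism with coherent pushforward of the structure sheaf is finite, so $\Phi_1(X)$ is finite, i.e.\ $X$ is $F$-finite over $S$. This is essentially the argument of Lemma~\ref{F-finite.thm} with $\rho$ the identity.

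The main obstacle I anticipate is bookkeeping with the Frobenius-twist formalism: one must be careful that ${}^1X$ really is quasi-normal (this needs $S$ perfect, or more precisely uses that the absolute Frobenius of $\Bbb F_p$ is an isomorphism, so the hypothesis that $S$ be an $\Bbb F_p$-scheme with $\Bbb F_p$ perfect is what makes ``${}^1S\times_S X$ quasi-normal'' automatic — whereas for general $S$ in Lemma~\ref{F-finite.thm} this had to be assumed), and that $\O_X\cong i_*\O_U$ requires $X$ to satisfy $(S_2)$, which follows from quasi-normality via Lemma~\ref{omega-S_2.lem} plus Corollary~\ref{Ischebeck2.cor}-type reasoning, or more simply from the definition of quasi-normal-by-$\omega$ together with the fact that $\omega$ being full and $2$-canonical forces $\O_X$ itself to be $(S'_2)$ when $\omega\cong\O_X$; in the general quasi-normal case one instead argues that $\O_X$ is $(S'_2)$ because $X$ is quasi-normal, which is part of the definition package in (\ref{quasinormal.par}). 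Once these compatibilities are pinned down, the proof is a short assembly of Lemma~\ref{finite-depth.lem}, Lemma~\ref{S_2-large-equiv.lem}, and the affine-plus-coherent-implies-finite criterion.
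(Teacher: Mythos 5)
Your proposal is correct and follows essentially the same route as the paper's own (much terser) proof: the forward direction by restricting/base-changing the Frobenius to the open subscheme, and the converse by noting $F_*\O_U$ is coherent and $(S'_2)$, pushing it forward along the large open immersion via Lemma~\ref{S_2-large-equiv.lem}, and identifying the result with $F_*\O_X$ via $\O_X\cong i_*\O_U$, so that the affine morphism $F_X$ has coherent pushforward and is finite. The only caveat is that your appeal to ``$\O_X$ is $(S'_2)$ because $X$ is quasi-normal, part of the definition package'' overstates the definition in (\ref{quasinormal.par}) (quasi-normality by $\omega$ concerns $\omega$, not $\O_X$, unless $\omega\cong\O_X$), but the paper's own two-line proof makes exactly the same tacit use of $\O_X\cong i_*\O_U$, so this is a shared reading rather than a gap you introduced.
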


\begin{proof}
Assume that $X$ is $F$-finite.
Then $F_X:X\rightarrow X^{(1)}$ is finite.
Taking the base change by $U\rightarrow X$, 
$F_U: U\cong X\times_{X^{(1)}}U^{(1)}\rightarrow U^{(1)}$ is finite.

Assume that $U$ is $F$-finite.
Then $F_*\O_U$ is a coherent $(S'_2)$ $\O_U$-module.
So letting $i:U\hookrightarrow X$ be the inclusion,
$(i^{(1)})_*F_*\O_U\cong F_*i_*\O_U\cong F_*\O_X$ is a coherent sheaf.
\end{proof}

\begin{theorem}\label{globally-F-reg-main.thm}
Let $S=\Spec k$ with $k$ a perfect field of characteristic $p>0$.
Let $G$ be a smooth linearly reductive affine algebraic $k$-group scheme.
Let the diagram
\[
\xymatrix{
X & U \ar@{_{(}->}[l]_i \ar[r]^{\rho} & V \ar@{^{(}->}[r]^j & Y
}
\]
be a rational almost principal $G$-bundle.
Assume that $X$ and $Y$ are Noetherian normal schemes.
Then we have the following.
\begin{enumerate}
\item[\bf 1] $X$ is $F$-finite if and only if $Y$ is $F$-finite.
\item[\bf 2] Assume that $X$ and $Y$ have ample invertible sheaves
and are $F$-finite.
Then $X$ is globally $F$-regular \(resp.\ $F$-split\) if and only if 
$Y$ is globally $F$-regular \(resp.\ $F$-split\).
\end{enumerate}
\end{theorem}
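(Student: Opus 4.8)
The plan is to reduce everything to the statements already developed for rational almost principal bundles, especially Lemma~\ref{F-finite.thm}, Corollary~\ref{linearly-reductive-globally-F-regular.cor}, and Lemma~\ref{strongly-F-regular-ample.lem}. The setup is $S=\Spec k$ with $k$ perfect, $G$ smooth linearly reductive affine algebraic, and a rational almost principal $G$-bundle $X\hookleftarrow U\xrightarrow{\rho}V\hookrightarrow Y$ with $X$, $Y$ Noetherian normal. Since $k$ is perfect, normal schemes of finite type over $k$ are automatically quasi-normal by $\O$ (indeed $(R_1)+(S_2)$), and the Frobenius-related hypotheses in earlier results are satisfied once we know $F$-finiteness.

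\textbf{Part 1.} First I would prove $F$-finiteness passes from $X$ to $Y$. Since $\rho:U\to V$ is a principal $G$-bundle with $G$ smooth, $\rho$ is faithfully flat with smooth (hence geometrically reduced) fibers. The open immersion $i:U\hookrightarrow X$ has large image, so $U$ is $F$-finite if $X$ is (Lemma~\ref{large-F-finite.lem}, using that normal implies quasi-normal), and $V$ is $F$-finite by descent along $\rho$ as in Lemma~\ref{F-finite.thm}. Since $k$ is perfect, ${}^eS\times_SY=Y$ is locally Noetherian with a full $2$-canonical module ($\O_Y$, as $Y$ is normal), so Lemma~\ref{F-finite.thm} applies directly with the roles as stated and gives that $Y$ is $F$-finite. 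For the converse direction, run the same argument with $X$ and $Y$ interchanged: $j:V\hookrightarrow Y$ large gives $V$ $F$-finite from $Y$; then $U\to V$ is smooth (base change of a smooth bundle) hence $U$ is $F$-finite by Lemma~\ref{smoth-Frobenius-Noetherian.lem} type reasoning (a smooth morphism preserves $F$-finiteness, cf.~\cite[Theorem~21]{Hashimoto3}); finally $i:U\hookrightarrow X$ large and $X$ normal (hence quasi-normal) gives $X$ $F$-finite by Lemma~\ref{large-F-finite.lem}. This proves \textbf{1}.

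\textbf{Part 2.} Now assume in addition $X$, $Y$ carry ample invertible sheaves and are $F$-finite. By Lemma~\ref{strongly-F-regular-ample.lem}, global $F$-regularity of $X$ descends to every open subscheme, in particular to the large $G$-stable open $U$; similarly global $F$-regularity of $Y$ descends to $V$. For the direction ``$X$ globally $F$-regular $\Rightarrow$ $Y$ globally $F$-regular,'' I would first note $\O_V\cong(\rho_*\O_U)^G$ since $\rho$ is a principal $G$-bundle (so $\bar\eta$ is an isomorphism by \cite[(5.31)]{Hashimoto5}), and then apply Corollary~\ref{linearly-reductive-globally-F-regular.cor} with $H=e=N$ trivial... — wait, here $N=G$ and $H$ trivial, so the relevant instance is: $\rho:U\to V$ is a $G$-morphism $G$-invariant with $\bar\eta$ an isomorphism, $U$ is Noetherian normal $F$-finite and globally $F$-regular (i.e.\ $e$-globally $F$-regular for trivial $e$), hence $V$ is globally $F$-regular by Corollary~\ref{linearly-reductive-globally-F-regular.cor} (taking the ``$H$'' there to be trivial). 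Then $V\hookrightarrow Y$ is large and $\O_Y\cong j_*\O_V$ (as $Y$ is $(S_2)$), so by Corollary~\ref{HWY.cor} applied to $j$ — or more directly, since $Y$ has an ample sheaf and global $F$-regularity of $V$ lets us split relevant maps over $V$, pushing forward along the large open $j$ using reflexivity — we get $Y$ globally $F$-regular. For the reverse direction, symmetrically: $V$ is globally $F$-regular (open subscheme of $Y$), $\rho^*$ transports this to $U$ via Corollary~\ref{linearly-reductive-globally-F-regular.cor} — but here the descent goes the wrong way, so instead I would use that $\rho:U\to V$ is smooth surjective and invoke a result that global $F$-regularity ascends along faithfully flat morphisms with geometrically regular fibers (this is where one uses smoothness of $G$ and a Radu--André/base-change argument, analogous to the proof of Lemma~\ref{smooth-etale-radu-andre.lem}); then $U$ globally $F$-regular with $i$ large and $X$ normal gives $X$ globally $F$-regular as in Part 1's pushforward step. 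The $F$-split case is entirely parallel, replacing ``globally $F$-regular'' by ``$F$-split'' throughout and using the $F$-split versions of Corollary~\ref{linearly-reductive-globally-F-regular.cor} and Lemma~\ref{G-globally-F-reg-descent.lem}.

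\textbf{Main obstacle.} The delicate point is the ascent of global $F$-regularity (and $F$-splitting) along the principal bundle $\rho:U\to V$ in the direction from $V$ (base) to $U$ (total space) — Corollary~\ref{linearly-reductive-globally-F-regular.cor} naturally gives descent from total space to base, not ascent. Establishing ascent requires knowing that a splitting of $sF^e$ over $V$ pulls back to a splitting over $U$, which uses that $\rho$ is smooth so that $\Phi_e(V,U)$-pullback is exact and compatible with Frobenius; one essentially repeats the étale argument of the last lemma of the excerpt with ``étale'' weakened to ``smooth affine'' plus the ample-sheaf reduction. The other bookkeeping — reducing to affine opens via ampleness, moving across the large open immersions $i$ and $j$ using $(S_2)$ and reflexivity, and checking $k$ perfect makes all the quasi-normality and $2$-canonical-module hypotheses automatic — is routine given the machinery already in place.
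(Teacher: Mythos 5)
Your Part~1 and the reductions in Part~2 are fine: using Lemma~\ref{F-finite.thm} for ``$X$ $F$-finite $\Rightarrow$ $Y$ $F$-finite'' is a legitimate alternative to the paper's route through Lemma~\ref{F-finite-finite.thm}, and the descent direction of Part~2 ($X$ globally $F$-regular $\Rightarrow$ $Y$ globally $F$-regular) via Corollary~\ref{linearly-reductive-globally-F-regular.cor} on $\rho:U\rightarrow V$ followed by extension across the large open $j$ by reflexivity is exactly what the paper does. The genuine gap is the ascent direction, $Y$ globally $F$-regular $\Rightarrow$ $X$ globally $F$-regular (and likewise $F$-split), which you dispose of by invoking an unproved principle that global $F$-regularity ascends along faithfully flat morphisms with geometrically regular fibers, ``pulling back splittings'' as in the \'etale lemma. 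This does not work as stated. First, the \'etale argument hinges on $\Phi_e(V,U)$ being an isomorphism (Lemma~\ref{smooth-etale-radu-andre.lem}); for smooth non-\'etale $\rho$ it is only finite locally free, so pulling back a splitting of $F^e_V$ only splits $\O_{U^{(e)}}\rightarrow (p_2)_*\O_{U^{(e)}_V}$, not the Frobenius of $U$. Second, and more fundamentally, global $F$-regularity of $U$ (hence of $X$) requires splitting $sF^e$ for \emph{every} generically monic section $s$ of (powers of) an ample sheaf on $X$, and such $s$ are in general not pulled back from $V$; no amount of pulling back splittings from the base can reach them. Since local strong $F$-regularity is not the issue (global $F$-regularity is not a local condition), the ``Radu--Andr\'e type'' ascent you appeal to is precisely the nontrivial content you are supposed to prove.

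The paper's actual proof of this direction is the missing core: choose an ample $\L$ on $Y$ and a generically monic $a\in\Gamma(Y,\L)$ with $Y_a$ regular and affine, and a splitting $\pi_0$ of $aF^{e_0}$ from global $F$-regularity of $Y$. Smoothness of $G$ makes $X_a=\rho^{-1}(Y_a)$ regular affine, hence globally $F$-regular, and $\rho^*\L$ is ample on $X$, so by the criterion for global $F$-regularity via an ample sheaf it suffices to split $sF^e$ for generically monic $s\in\Gamma(X,\rho^*\L^{\otimes n})$. One splits $sF^{e_1}$ over $X_a$ by a map $\pi_1$, multiplies by $(a^{p^{e_2}})^{(e_1)}$ (coherence of the Hom sheaf) to obtain $\pi_2$ defined on all of $X$ with $\pi_2 sF^{e_1}=(a^{p^{e_2}})^{(e_1)}$, and then composes with a Frobenius splitting $\pi_3$ of $Y$ and with $\pi_0$ to get $\pi_0^{(e_1+e_2)}\pi_3^{(e_1)}\pi_2\, s F^{e_0+e_1+e_2}=\id$. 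Without this (or an equivalent) argument handling sections on $X$ that do not come from $Y$, your proof of Part~2 is incomplete; the same issue, in milder form, affects your ``entirely parallel'' claim for the $F$-split case.
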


\begin{proof}
{\bf 1} In view of Lemma~\ref{large-F-finite.lem},
we may assume that $X=U$ and $Y=V$.
If $Y$ is $F$-finite, then $X$ is $F$-finite, since $\rho:X\rightarrow Y$ 
is of finite type.
If $X$ is $F$-finite, then $Y$ is $F$-finite, since $\rho$ is an algebraic
quotient by $N$ and 
$N$ is linearly reductive, see 
Lemma~\ref{F-finite-finite.thm}.

{\bf 2}
We only prove the assertion for the global $F$-regularity.
Let $\Cal L$ be an invertible sheaf on $Y$, and $s\in\Gamma(Y,\L)$ a 
generically monic section.
Then assuming that $V$ is globally $F$-regular, 
there exists some $e\geq1$ such that there is a splitting $\pi$ of 
$sF^e$ on $V$.
Note that $\pi\in\Gamma(V,\uHom_{\O_Y^{(e)}}(F_*^e\O_Y,\O_{Y^{(e)}}))$.
As $Y$ is $F$-finite Noetherian normal, 
$\uHom_{\O_Y^{(e)}}(F_*^e\O_Y,\O_{Y^{(e)}})$ is a reflexive sheaf, and hence
$\pi$ is defined over $Y$.
This shows that $Y$ is globally $F$-regular.
On the other hand, if $Y$ is globally $F$-regular, then by
Lemma~\ref{strongly-F-regular-ample.lem}, $V$ is globlly $F$-regular.
Similarly, $X$ is globally $F$-regular if and only if $U$ is so.
Hence we may assume that $X=U$ and $Y=V$.

If $X$ is globally $F$-regular, then by
Corollary~\ref{linearly-reductive-globally-F-regular.cor},
$Y$ is globally $F$-regular.

Let $Y$ be globally $F$-regular.
Let $\L$ be an ample invertible sheaf on $Y$.
We can take some $r\geq 1$ and 
a generically monic section $a\in\Gamma(Y,\L^{\otimes r})$ such that $Y_a$ is
regular and affine.
Replacing $\L$ by $\L^{\otimes r}$ if necessary, we may assume that $r=1$.
As $Y$ is globally $F$-regular, there exists some $e_0\geq 1$ and
$\pi_0:F^{e_0}_*\L\rightarrow \O_{Y^{(e_0)}}$ such that $\pi_0aF^{e_0}=\id$.

As $N$ is smooth, $\rho$ is smooth, and hence 
$X_a=\rho^{-1}(Y_a)$ is regular.
By Lemma~\ref{codim-two-flat.thm}, {\bf 1}, $X_a$ is dense in $X$.
That is, $a\in\Gamma(X,\rho^*\L)$ is generically monic.
By \cite[(28.38.7)]{SP}, $\rho^*\L$ is an ample invertible sheaf on $X$.

Let $n>0$ and $s\in\Gamma(X,\rho^*\L^{\otimes n})$ 
be a generically monic section.
As $X_a$ is affine regular $F$-finite, it is globally $F$-regular, 
and hence there exists some $e_1\geq 1$ and a splitting
$\pi_1:F^{e_1}_*\rho^*\L^{\otimes n}|_{X_a}\rightarrow \O_{X^{(e_1)}_a}$ of 
$sF^{e_1}:\O_{X^{(e_1)}_a}\rightarrow
F^{e_1}_*\rho^*\L^{\otimes n}|_{X_a}$.
As $\uHom_{\O_{X^{(e_1)}}}(F^{e_1}_*\rho^*\L^{\otimes n},\O_{X^{(e_1)}})$ is coherent,
there exists some $e_2\geq 0$ such that 
$\pi_2=
(a^{p^{e_2}})^{(e_1)}\pi_1$ lies in 
$\Hom_{\O_X^{(e_1)}}(F^{e_1}_*\rho^*\L^{\otimes n},(\L^{\otimes p^{e_2}})^{(e_1)})$.
Then $\pi_2sF^{e_1}=(a^{p^{e_2}})^{(e_1)}$.

As $Y$ is $F$-split, there exists some $\pi_3:F^{e_2}_*\O_Y\rightarrow 
\O_{Y^{(e_2)}}$ such that $\pi_3 F^{e_2}=\id$.
Then
\begin{multline}
\pi_0^{(e_1+e_2)}\pi_3^{(e_1)}\pi_2 sF^{e_0+e_1+e_2}
=
\pi_0^{(e_1+e_2)}\pi_3^{(e_1)}(a^{p^{e_2}})^{(e_1)}F^{e_0+e_2}\\
=
\pi_0^{(e_1+e_2)}\pi_3^{(e_1)}F^{e_2}a^{(e_1+e_2)}F^{e_0}
=\pi_0^{(e_1+e_2)}a^{(e_1+e_2)}F^{e_0}=\id,
\end{multline}
and $sF^{e_0+e_1+e_2}$ has a splitting.
This shows that $X$ is globally $F$-regular.
\end{proof}

\section*{\large Chapter~2. Examples and Applications}
\label{ex-app.chap}

\section{Finite group schemes}\label{finite.sec}

\paragraph\label{definition-small.par}
Let $G$ be an $S$-group scheme acting on $X$.
If there is a $G$-stable open subset $U$ of $X$ such that the action of 
$G$ on $U$ is free and $U$ is $n$-large in $X$
then we say that the action of $G$ on $X$ is {\em $n$-small}.
$0$-small is also called 
{\em generically free}.
$1$-small is also called {\em small}.

\begin{lemma}\label{n-small-flat.lem}
Let $G$ be an $S$-group scheme, and 
$\psi:Z'\rightarrow Z$ a flat $G$-morphism.
If the action of $G$ on $Z$ is $n$-small, then the action of $G$ on $Z'$ 
is also $n$-small.
\end{lemma}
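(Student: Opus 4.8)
The plan is simply to pull back, along $\psi$, the open subset that witnesses $n$-smallness downstairs. By the definition of an $n$-small action there is a $G$-stable open subset $U\subseteq Z$ on which $G$ acts freely and which is $n$-large in $Z$. I would then set $U':=\psi^{-1}(U)$ and show that $U'$ witnesses the $n$-smallness of the action of $G$ on $Z'$.

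The verification breaks into three points. First, $U'$ is open because $\psi$ is (in particular) continuous, and $U'$ is $G$-stable because $\psi$ is a $G$-morphism: denoting by $a_Z$ and $a_{Z'}$ the action morphisms, the square relating $a_{Z'}$, $a_Z$ and $1_G\times\psi$, $\psi$ commutes, and since $a_Z^{-1}(U)=G\times U$ by $G$-stability of $U$, pulling back gives $a_{Z'}^{-1}(U')=G\times U'$, so $U'$ is $G$-stable. Second, since $\psi$ is flat, Lemma~\ref{codim-two-flat.thm}, {\bf 1}, applied to $\psi$ and the $n$-large open subset $U\subseteq Z$, shows that $U'=\psi^{-1}(U)$ is $n$-large in $Z'$. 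Third, the restriction $\psi|_{U'}:U'\rightarrow U$ is a $G$-morphism which is a morphism of schemes, so Lemma~\ref{stabilizer-basics.lem} applies directly and, since the action of $G$ on $U$ is free, gives that the action of $G$ on $U'$ is free as well. Combining these three facts, $U'$ is a $G$-stable open subset of $Z'$, $n$-large in $Z'$, on which $G$ acts freely; hence the action of $G$ on $Z'$ is $n$-small.

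I do not expect a genuine obstacle: the whole argument is a short reduction to Lemma~\ref{codim-two-flat.thm} and Lemma~\ref{stabilizer-basics.lem}. The only step that involves writing anything is the routine check that $U'$ is $G$-stable, and even that is immediate from the compatibility of the action morphisms built into the notion of a $G$-morphism; if one prefers, the same conclusion about freeness can be phrased via the inclusion $\Cal S_{U'}\hookrightarrow \Cal S_U\times_U U'$ of Lemma~\ref{stabilizer-basics.lem}, but working with the pulled-back open set directly is the cleanest route.
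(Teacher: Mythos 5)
Your proposal is correct and is exactly the paper's argument: the paper disposes of this lemma with the single line ``Obvious from Lemma~\ref{stabilizer-basics.lem} and Lemma~\ref{codim-two-flat.thm},'' and your write-up simply makes explicit the intended pullback $U'=\psi^{-1}(U)$, its $G$-stability and $n$-largeness (via Lemma~\ref{codim-two-flat.thm}, {\bf 1}), and the freeness of the action on $U'$ (via Lemma~\ref{stabilizer-basics.lem}). No differences of substance.
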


\begin{proof}
Obvious from 
Lemma~\ref{stabilizer-basics.lem} and
Lemma~\ref{codim-two-flat.thm}.
\end{proof}

\paragraph
Letting $G$ act on $G\times X$ by $g(g_1,x)=(gg_1g^{-1},gx)$ and
on $X\times X$ diagonally, $\Psi:G\times X\rightarrow X\times X$ 
and the diagonal map
$\Delta:X\rightarrow X\times X$ are $G$-morphisms.
So the structure map $\phi:\Cal S_X\rightarrow X$ is also a $G$-morphism,
where $\Cal S_X$ is the stabilizer of the action of $G$ on $X$.

If there is a separated $G$-invariant morphism $\varphi:X\rightarrow Y$
(e.g., $X$ is $S$-separated or $\varphi$ is affine),
then $\Cal S_X\rightarrow G\times X$ is a closed immersion.
If, moreover, $G$ is finite, then $\phi$ is finite.

\paragraph
Assume that
$\phi$ is finite.
Then the cokernel of the
split monomorphism $\eta:\O_X\rightarrow
\phi_*\Cal O_{\Cal S_X}$ is a quasi-coherent $(G,\O_X)$-module which is
finite-type as an $\O_X$-module.
The complement $\Cal U_X$ of the support 
of $\Coker\eta$ is the largest $G$-stable
open subset of $X$ on which $G$ acts freely.
We call $\Cal U_X$ the free locus of the action.

\begin{example}\label{ps-sm.ex}
Let $G$ be a finite (constant) group,
and $X$ be Noetherian and irreducible.
Assume that there is a separated $G$-invariant
morphism $\varphi:X\rightarrow Y$.
Then $\phi$ is finite, and the free locus exists.
More precisely, for $g\in G$, set $X_g:=\{x\in X\mid gx=x\}$.
Note that $X_g$ is a closed subscheme of $X$.
It is easy to see that $\Cal U_X=X\setminus\bigcup_{g\neq e}X_g$.
So the action is generically free if and only if the action is faithful
(that is, the action of $g$ on $X$ is not the identity if $g\neq e$).
If $\codim_X X_g=1$, then we say that $g$ is a {\em pseudoreflection}.
The action is small if and only if there is no pseudoreflection.
However, see Remark~\ref{ps-small.rem} below.
\end{example}

\paragraph
Let $G$ be a finite group scheme over a field $k$ of characteristic $p>0$.
Then the smallest nonnegative integer $e\geq 0$ such that 
$G_e=G^\circ$ is called the {\em exponent} of $G$.
If $I$ is the nilradical of $k[G]$, 
then the exponent $e$ of $G$ is the smallest nonnegative integer
such that $I^{[p^e]}=0$, where $I^{[p^e]}=I^{(e)}k[G]$.
The exponent is not changed by the extension of the base field.

\begin{proposition}\label{finite-equiv.thm}
Let $k$ be a field, $G$ a finite $k$-group scheme acting on a reduced artinian
$k$-algebra $L$, and set $K=L^G$.
Let $\varphi:X=\Spec L\rightarrow \Spec L^G=Y$ be the canonical map.
Assume that $X$ is $G$-connected.
Then we have that $K$ is a field, and $\dim_K L \leq \dim_k k[G]$ in general.
In particular, $\dim_K L$ is finite.
Moreover, the following are equivalent.
\begin{enumerate}
\item[\bf 1] $\varphi$ is a principal $G$-bundle.
\item[\bf 2] The action of $G$ on $X$ is free.
\item[\bf 3] The action of $G$ on $X$ is generically free.
\item[\bf 4] $\dim_K L=\dim_k k[G]$.
\end{enumerate}
\end{proposition}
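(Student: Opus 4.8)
The plan is to first prove the two general statements about $K$, namely that $K$ is a field and $\dim_K L \le \dim_k k[G]$, and then to establish the chain of equivalences. For the field claim: since $X$ is $G$-connected and $\varphi$ is an algebraic quotient, $Y$ is connected by Lemma~\ref{connected.lem}, {\bf 3}. As $L$ is a reduced artinian $k$-algebra, it is a finite product of finite field extensions of $k$, so $K = L^G$ is artinian reduced as well; being connected, $Y=\Spec K$ is a point, hence $K$ is a field. For the dimension bound, I would use the coaction map $\omega_L : L \to L_0 \otimes_k k[G]$ (where $L_0$ is $L$ with trivial action), which is $G$-equivariant and injective; as in the proof of Lemma~\ref{F-finite-finite.thm}, {\bf 3}, $(L_0\otimes_k k[G])^G \cong L_0 \otimes_k k = L_0$ as $K$-modules, and $L_0 \otimes_k k[G]$ is a free $L$-module of rank $\dim_k k[G]$. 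Localizing/base-changing appropriately and using that $L$ is $\dim_K L$-dimensional over the field $K$, this yields $\dim_K L \le \dim_k k[G]$; in particular $L$ is finite over $K$, so $\varphi$ is a finite morphism.

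For the equivalences, the implications {\bf 1}$\Rightarrow${\bf 2}$\Rightarrow${\bf 3} are formal: a principal $G$-bundle has trivial stabilizer by the characterization via $\Psi_{G,\varphi}$ being an isomorphism (recalled in the excerpt), and free obviously implies generically free. For {\bf 2}$\Leftrightarrow${\bf 1}, I would invoke Lemma~\ref{DG.thm}: since $G$ is finite over the field $k$, the $k$-group scheme $G$ is LFF, $\varphi$ is an algebraic quotient (here I use that $\varphi$ is affine, which it is as a map of affine schemes, and that $L^G\to L$ identifies $\O_Y$ with $(\varphi_*\O_X)^G$ by definition of $K$), and Lemma~\ref{DG.thm} says that if the action is free then $\varphi$ is a principal $G$-bundle. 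So the real content is {\bf 3}$\Rightarrow${\bf 4} and {\bf 4}$\Rightarrow${\bf 2} (or {\bf 4}$\Rightarrow${\bf 1}), i.e.\ tying the numerical equality to freeness.

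For {\bf 3}$\Rightarrow${\bf 4}: generic freeness gives a $G$-stable dense open $U\subset X$ on which $G$ acts freely; shrinking, $\rho = \varphi|_U : U \to \varphi(U)$ is a principal $G$-bundle (again Lemma~\ref{DG.thm}, or Lemma~\ref{GIT-remark.lem} to see $\varphi(U)$ is open). Passing to a point $\xi$ of $\varphi(U)$ and its fiber, a principal $G$-bundle over a field is, after a faithfully flat base change, trivial, so the fiber $L\otimes_K \kappa(\xi)$ becomes $\kappa(\xi)\otimes_k k[G]$-like; counting dimensions (dimension is insensitive to the flat base change and to passing to the residue field, because $X$ is artinian so $L$ is already a product of fields and $\dim_K L$ equals the $\kappa(\xi)$-dimension of the fiber) forces $\dim_K L = \dim_k k[G]$. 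For {\bf 4}$\Rightarrow${\bf 2}: the split monomorphism $\eta : \O_X \to \phi_*\O_{\Cal S_X}$ has a finite-type cokernel; the action is free exactly when this cokernel vanishes, i.e.\ when $\Cal S_X = X$. The hypothesis $\dim_K L = \dim_k k[G]$ should force $\Psi_{G,\varphi}: G\times X \to X\times_Y X$ to be an isomorphism by comparing lengths: both sides are finite over $X$, and $\dim_K L = \dim_k k[G]$ makes the $K$-dimensions (equivalently the lengths over $\O_X$) of $\Gamma(G\times X)$ and $\Gamma(X\times_Y X)$ agree, while $\Psi$ is always a closed immersion (since $\varphi$ is affine, hence separated); a closed immersion of schemes finite and of equal length over the base is an isomorphism. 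Then the stabilizer, being the pullback of the diagonal along $\Psi$, is trivial.

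The main obstacle I expect is the bookkeeping in {\bf 4}$\Rightarrow${\bf 2}: making the ``equal length forces isomorphism'' argument precise requires care that $\Psi$ is a closed immersion into a scheme that is finite flat over $X$ of the expected length, and that the length computation is compatible with the non-reduced structure of $G$ (the point of allowing $G$ non-reduced is exactly that $k[G]$ may have nilpotents, so $\dim_k k[G]$ counts with multiplicity). One must be sure that ``generically free'' suffices to pin down this length generically and that the length is constant, which uses flatness of $\varphi$ (from Lemma~\ref{DG.thm}, a principal bundle, hence $\varphi$ here once we know it is one) or a direct artinian computation. I would handle this by reducing to the fiber over a single point of $Y$ and doing a pure length count there.
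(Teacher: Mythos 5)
There are two genuine gaps, and they are exactly at the places where the paper has to work hardest. First, your argument for the bound $\dim_K L\leq \dim_k k[G]$ does not go through: the coaction $\omega_L\colon L\hookrightarrow L_0\otimes_k k[G]$ embeds $L$ into a module whose $K$-dimension is $\dim_K L_0\cdot\dim_k k[G]=\dim_K L\cdot\dim_k k[G]$, so no comparison of $K$-dimensions (or of invariants, which only recovers $L_0$) yields the stated bound; the analogous argument in Lemma~\ref{F-finite-finite.thm}, {\bf 3} gives finiteness statements, not a rank bound by $\dim_k k[G]$. The paper instead proves the bound (and simultaneously {\bf 4$\Rightarrow$2}) by a d\'evissage along closed normal subgroup schemes (glued using Lemma~\ref{principal-composition.thm} and the chain of inequalities for $\dim_K L$ through $L^N$), reducing to two base cases: the infinitesimal exponent-one case, which requires Jacobson's Galois theory of restricted Lie algebras of derivations to get $[L:k]=\dim_L\Cal A$ with $\Cal A=\End_kL$, and the \'etale case, handled by Galois theory and counting primitive idempotents. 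None of this is replaceable by the localization/base-change hand-wave you propose; the subtlety is real, as Example~\ref{group-scheme-non-generically-free.ex} (where $\dim_KL=p^2<p^4=\dim_kk[G]$) already indicates.

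Second, your key step in {\bf 4$\Rightarrow$2} rests on a false premise: $\Psi_{G,\varphi}\colon G\times X\to X\times_YX$ is \emph{not} always a closed immersion when $\varphi$ is affine. What is a closed immersion under a separation hypothesis is the inclusion $\Cal S_X\to G\times X$ of the stabilizer; $\Psi$ being a closed immersion is the GIT-freeness of the action, i.e.\ essentially the conclusion you are trying to prove. (In the example just cited, the ring map $L\otimes_KL\to k[G]\otimes_kL$ cannot be surjective for dimension reasons, so $\Psi$ is not a closed immersion even though $\varphi$ is affine.) What is available a priori from Lemma~\ref{DG.thm} is only surjectivity of $\Psi$, and a surjective morphism between finite $K$-schemes of equal length need not be an isomorphism, so the ``equal length forces isomorphism'' count has no valid starting point. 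In the paper, the identification of $\Psi$ as an isomorphism under hypothesis {\bf 4} comes out of the exponent-one analysis (the isomorphism $L\otimes k[G]^*\to\End_kL$, dualized) and the \'etale/Galois case, propagated through the d\'evissage; your {\bf 1$\Rightarrow$2}, {\bf 2$\Rightarrow$1}, {\bf 2$\Leftrightarrow$3} and the deduction that $K$ is a field (modulo the inaccurate claim that $L$ is a product of \emph{finite} extensions of $k$ and the unjustified assertion that $K$ is artinian — integrality of $L$ over $K$ from Lemma~\ref{DG.thm} plus connectedness is what one should use) are fine and agree with the paper.
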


\begin{proof}
{\bf 1$\Rightarrow$2} As $\Psi$ is an isomorphism, its base change
$\phi:\Cal S_X\rightarrow X$ is an isomorphism.

{\bf 2$\Rightarrow$1} is Lemma~\ref{DG.thm}.

{\bf 2$\Leftrightarrow$3} is obvious, since $X$ is an artinian scheme.

{\bf 1$\Rightarrow$4}.
Compare the $L$-dimension of the two isomorphic spaces $L\otimes_KL$ and
$k[G]\otimes_k L$.

By \cite[(32.6)]{ETI}, $K$ is a finite direct product of normal domains.
As $X$ is $G$-connected, $Y$ is connected by
Lemma~\ref{connected.lem}.
Hence $K$ is a domain.
As $L$ is an integral extension of $K$ by Lemma~\ref{DG.thm} and 
$L$ is of Krull dimension zero, $K$ is also zero dimensional.
Being a zero dimensional domain, $K$ is a field.

It remains to prove the assertions (i) $\dim_K L\leq
\dim_k k[G]$; and (ii) {\bf 4$\Rightarrow$(1, 2, or 3)}.

Replacing $k$ by $K$, $G$ by $K\otimes_k G$, and not changing $L$,
we may assume that $k=K$.

We claim that 
if $N$ is a closed normal subgroup scheme and the proposition is true for
$N$ and $G/N$, then (i) and (ii), hence the proposition is also true for $G$.
Indeed, $M=L^N=M_1\times\cdots\times M_r$ is a finite direct product of fields.
Applying the proposition for the action of $N$ on $L_i=M_i\otimes_ML$, 
$\dim_{M_i}L_i\leq \dim_k k[N]$.
On the other hand, $\dim_K M\leq \dim_k k[G/N]$.
So 
\begin{multline}\label{KL.eq}
\dim_KL=\sum_i\dim_K L_i=\sum_i \dim_KM_i\dim_{M_i}L_i
\\
\leq
\dim_kk[N]\sum_i \dim_K M_i
=\dim_kk[N]\dim_KM
\\
\leq \dim_k k[N]\dim_kk[G/N]
=\dim_kk[G].
\end{multline}
The equality $\dim_KL=\dim_kk[G]$ holds if and only if
the equality holds everywhere in (\ref{KL.eq}).
If so, $\dim_K M=\dim_k k[G/N]$ and $\dim_{M_i}L_i=\dim_k k[N]$ for each $i$.
As the proposition is assumed to be true for $N$ and $G/N$,
we have that $\Spec M\rightarrow \Spec K=Y$ is a principal $G/N$-bundle, 
and $\Spec L_i\rightarrow \Spec M_i$ is a principal $N$-bundle for each $i$.
In particular, $X=\Spec L\rightarrow \Spec M$ is a $G$-enriched principal
$N$-bundle.
By Lemma~\ref{principal-composition.thm}, $\varphi:X\rightarrow Y$ is 
a principal $G$-bundle, and the claim has been proved.

Assume that $G$ is infinitesimal
of exponent one.
That is, $G$ equals its first Frobenius kernel $G_1$.
As $G$ is geometrically connected, $X$ is also 
connected, and hence $L$ is a field
in this case.

Let $\g:=\Lie G$ be the Lie algebra of $G$.
It is a restricted Lie algebra over $k$.
There is a canonical map
\[
\theta: L\otimes\g\rightarrow \End_k L
\]
given by $(\theta(\alpha\otimes D))(\beta)=\alpha D(\beta)$ for $D\in\g$
and $\alpha,\beta\in L$.
Obviously, the image $\Cal D:=\Image \theta$ 
is contained in $\Der_k(L,L)$, the space of $k$-derivations.
Moreover, we have
\begin{multline*}
[\theta(\alpha \otimes D),\theta(\beta \otimes D_1)]\\
=\theta(\alpha\beta\otimes [D,D_1]+\alpha(D(\beta))\otimes 
D_1-\beta(D_1(\alpha))\otimes D)\in\Cal D
\end{multline*}
and
\[
(\theta(\alpha \otimes D))^p=\theta(\alpha^p\otimes D^p+
((\alpha D)^{p-1}(\alpha))\otimes D)\in\Cal D
\]
by \cite[Exercise~25.1]{CRT} and \cite[(25.5)]{CRT}.
This shows that $\Cal D$ is a restricted $L$-Lie subalgebra of $\Der_k(L,L)$
in the sense of Jacobson \cite[(IV.8)]{Jacobson}.

Note that $\g$ generates $k[G]^*$ as a $k$-algebra.
To verify this, we may assume that $k$ is algebraically closed, and
this case is shown in \cite[(I.9.6)]{Jantzen}.
Let $\Cal A$ be the $k$-subalgebra of $\End_k L$ generated by $\Cal D$.
By \cite[(IV.8), Theorem~19]{Jacobson}, $[L:k]$ is finite, and
$\dim_L \End_k L=[L:k]=\dim_L\Cal A$.
After all, $\Cal A=\End_k L$.
It is easy to see that 
$\tilde\theta:L\otimes k[G]^*\rightarrow \Cal A$ induced by $\theta$ is
surjective, and hence we have $\dim_k L \leq \dim_k k[G]$.
Moreover, if the equality holds (i.e., {\bf 4} holds), then
$\tilde\theta:L\otimes k[G]^*\rightarrow \End_k L\cong \Hom_L(L\otimes_k L,L)$ 
is an isomorphism.
Then
its $L$-dual $L\otimes L\rightarrow L\otimes k[G]$ given by
$\alpha\otimes \beta\mapsto \sum_{(\beta)}\alpha\beta_{(0)}\otimes \beta_{(1)}$ 
is also an isomorphism, where we are using the Sweedler's notation 
\cite[(1.2)]{Sweedler2}.
This is equivalent to say that $\Psi:G\times X\rightarrow X\times_Y X$ is
an isomorphism, and hence {\bf 4$\Rightarrow$1} has been proved.
So the proposition has been proved for the case that $G$ is infinitesimal of
exponent one.

Next, consider the case that $G$ is infinitesimal.
We prove the proposition for this case by the induction on the exponent 
$e$ of $G$.
The case that $e\leq 1$ is already done by above.
Let $e\geq 2$.
Then there is an exact sequence
\[
1\rightarrow G_1\rightarrow G\xrightarrow\pi G/G_1\rightarrow 1.
\]
Note that $\pi^{-1}(G/G_1)_i=\Phi_i^{-1}(k\otimes_{k^{(e)}}G_1^{(e)})$,
where $\Phi_i:G\rightarrow k\otimes_{k^{(e)}}G^{(e)}$ is the realtive 
Frobenius map.
The right-hand side is the whole $G$ for $i=e-1$, and hence the
exponent of $G/G_1$ is at mose $e-1$.
By induction, the proposition is true for $G_1$ and $G/G_1$, and hence
the proposition is true for $G$.

Now we consider the general case.
By the exact sequence
\[
1\rightarrow G^\circ\rightarrow G\rightarrow G/G^\circ\rightarrow 1,
\]
replacing $G$ by $G/G^\circ$, we may assume that $G$ is \'etale, 
since the proposition for $G^\circ$ is already proved by the
infinitesimal case.
Replacing $K=k$ by its suitable finite Galois extension $k'$ and $L$ by $L'
=k'\otimes_k L$, we may assume that $G$ is a constant finite group.

Let $e_1,\ldots,e_r$ be the set of primitive idempotents of $L$.
As $X$ is $G$-connected, $G$ acts transitively on this set.
Let $H$ be the stabilizer of $e_1$.
Then $[G:H]=r$.
Let $\sigma_1,\ldots,\sigma_r$ be the complete set of representatives of 
$G/H$ in $G$, where we choose the index so that $\sigma_i(e_1)=e_i$.
Set $L_i=Le_i=\sigma_i(L_1)$.
The image of $K\rightarrow L\rightarrow L_1$ ($\alpha\mapsto e_1\alpha$)
is contained in $L_1^H$.
On the other hand, $\sum_i\sigma_i$ maps $L_1^H$ to $K$, and 
$e_1:K\rightarrow L_1^H$ has an inverse $\sum_i\sigma_i$.
By the Galois theory, $[L_1:K]\leq \#H$ (note that $H$ need not act on 
$L_1$ effectively, so the equality need not hold).
So $[L:K]=r[L_1:K]\leq [G:H]\cdot \#H=\#G=\dim_k k[G]$,
and in particular, $L$ is $K$-finite.

It remains to prove that if $\dim_KL=\#G$, then the action of $G$ on $X$ is
free.
In order to check this, taking the base change by the separable
closure $k\sep$ of $k$, we may assume that $k$ is separably closed.
Let $e_1,\ldots,e_r$, $H$, $L_i$ be as above.
As $L_1$ is a purely inseparable extension of $k$, we have that $L_1=L_1^H$ 
this time.
So $\dim_k L_i=1$ for each $i$, and hence $r=\#G$ by assumption.
As $[G:H]=r=\#G$, we have that $H$ is trivial.
Then $G$ acts on $e_1,\ldots,e_r$ freely, and hence $G$ acts on $X$ freely, 
and {\bf 4$\Rightarrow$2} has been proved.
\end{proof}

\begin{proposition}\label{finite-main.prop}
Let $G$ be an LFF $S$-group scheme, and
$\varphi:X\rightarrow Y$ an algebraic quotient.
Let $U$ be the free locus, and $V:=\varphi(U)$.
Then 
\begin{enumerate}
\item[\bf 1] $\varphi(X\an n)=Y\an n$ for $n\geq 0$.
\item[\bf 2] $\rho:U\rightarrow V$ is a principal $G$-bundle,
where $\rho$ is the restriction of $\varphi$.
\item[\bf 3] The following are equivalent.
\begin{enumerate}
\item[\bf a] The action of $G$ on $X$ is $n$-small.
\item[\bf a'] $U$ is $n$-large.
\item[\bf b] $V$ is $n$-large.
\item[\bf c] $\varphi$ is an $n$-almost principal $G$-bundle
with respect to $U$ and $V$.
\item[\bf d] $\varphi$ is an $n$-almost principal $G$-bundle.
\end{enumerate}
\end{enumerate}
\end{proposition}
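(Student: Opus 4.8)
The plan is to prove the three numbered assertions in sequence, leveraging Proposition~\ref{finite-equiv.thm} pointwise at the local rings and the structure theory of LFF algebraic quotients from Lemma~\ref{DG.thm}. First I would establish {\bf 1}. Since $\varphi$ is an algebraic quotient by an LFF group scheme, by Lemma~\ref{DG.thm} it is a surjective integral universally open morphism, so $\varphi$ is surjective, closed, and open; in particular it sends generic points to generic points and is a homeomorphism onto its image modulo the fibers being finite sets. The key point is that for $x\in X$ with $\varphi(x)=y$, one has $\dim\O_{X,x}=\dim\O_{Y,y}$: the inequality $\dim\O_{X,x}\le\dim\O_{Y,y}$ follows from integrality (going-up and incomparability, Cohen--Seidenberg), while $\dim\O_{Y,y}\le\dim\O_{X,x}$ follows because $\varphi$ is universally open, hence satisfies going-down (or one may invoke that $\varphi$ is universally submersive and apply Lemma~\ref{GIT-remark.lem} together with the fact that localizations of $B^G\to B$ with $G$ LFF remain algebraic quotients by Corollary~\ref{uniform-algebraic-quotient.cor}). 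Combining surjectivity with the equality of local dimensions gives $\varphi(X\an n)=Y\an n$ for each $n\ge0$.

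Next I would prove {\bf 2}. By construction (the paragraph before Example~\ref{ps-sm.ex}) the free locus $U=\Cal U_X$ is the largest $G$-stable open subset of $X$ on which $G$ acts freely, and it is the complement of the support of $\Coker(\O_X\to\phi_*\O_{\Cal S_X})$, hence open and $G$-stable. Since $\varphi$ is universally open (Lemma~\ref{DG.thm}), $V:=\varphi(U)$ is open in $Y$, and by Lemma~\ref{GIT-remark.lem} we have $\varphi^{-1}(V)=U$ (note $\varphi$ is $G$-invariant, universally submersive, and $\Psi_{G,\varphi}$ is surjective by \cite[(III.\S 2)]{DG} as in Lemma~\ref{DG.thm}). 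Therefore $\rho:U\to V$ is again an algebraic quotient by $G$ (restriction of an algebraic quotient to $\varphi^{-1}$ of an open set is an algebraic quotient, using $\bar\eta$ is compatible with restriction), and the action of $G$ on $U$ is free; so $\rho$ is a principal $G$-bundle by the last sentence of Lemma~\ref{DG.thm}.

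Then I would prove {\bf 3}. The equivalence {\bf a}$\Leftrightarrow$\textbf{a'} is immediate from the definition of $n$-smallness in (\ref{definition-small.par}) together with {\bf 2}: $U$ is the largest $G$-stable open subset on which the action is free, so an $n$-large $G$-stable free open subset exists iff $U$ itself is $n$-large. The equivalence \textbf{a'}$\Leftrightarrow${\bf b} follows from {\bf 1}: since $X\setminus U$ is $G$-stable closed and $\varphi^{-1}(V)=U$, we get $Y\setminus V=\varphi(X\setminus U)$, and $\codim_X(X\setminus U)=\codim_Y(Y\setminus V)$ because $\varphi(X\an m)=Y\an m$ for all $m$ (so the minimal codimension of a point missing from $U$ equals that of a point missing from $V$). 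The equivalence {\bf c}$\Leftrightarrow$({\bf a'} and {\bf b}) is essentially the definition of an $n$-almost principal $G$-bundle (Definition~\ref{almost-pb.def}): we need $U$ to be $G$-stable open and $n$-large in $X$, $V$ to be open and $n$-large in $Y$, $\varphi(U)\subset V$, and $\rho:U\to V$ to be a principal $G$-bundle --- the latter two being {\bf 2}. Finally {\bf c}$\Rightarrow${\bf d} is trivial, and {\bf d}$\Rightarrow${\bf b} needs an argument: if $\varphi$ is an $n$-almost principal $G$-bundle with respect to some $U'\subset X$, $V'\subset Y$, then $\rho':U'\to V'$ is a principal $G$-bundle, hence the action of $G$ on $U'$ is free (a principal bundle has trivial stabilizer), so $U'\subset U$ by maximality of the free locus; then $V'=\varphi(U')\subset V$ and $V'$ being $n$-large in $Y$ forces $V$ to be $n$-large in $Y$.

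\textbf{Main obstacle.} The step I expect to require the most care is {\bf 1}, specifically the inequality $\dim\O_{Y,y}\le\dim\O_{X,x}$, i.e.\ that $\varphi$ does not drop dimension. For a merely integral morphism this can fail, so one genuinely needs the universal openness (equivalently going-down, or the universally submersive property via Lemma~\ref{GIT-remark.lem}) supplied by Lemma~\ref{DG.thm}; I would want to state precisely how going-down between $\O_{Y,y}$ and $\O_{X,x}$ follows from universal openness of $\varphi$ (via \cite[Exp.~V]{EGA} type statements or \cite{SP}). A secondary subtlety is checking that the restriction $\rho:U\to V$ is an algebraic quotient --- one must verify $\bar\eta_V:\O_V\to(\rho_*\O_U)^G$ is an isomorphism, which follows from Lemma~\ref{bar-eta-isom.thm}, {\bf 2} applied to the cartesian square over the open immersion $V\hookrightarrow Y$ together with $\varphi^{-1}(V)=U$.
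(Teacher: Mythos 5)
Your proposal is correct and follows essentially the same route as the paper: part {\bf 1} via going-down from openness plus incomparability from integrality (both supplied by Lemma~\ref{DG.thm}), part {\bf 2} via Lemma~\ref{GIT-remark.lem} and the free-action case of Lemma~\ref{DG.thm}, and part {\bf 3} deduced from {\bf 1} and {\bf 2} (which the paper leaves as "follows easily" and you correctly spell out, including the maximality of the free locus for {\bf d}$\Rightarrow${\bf b}).
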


\begin{proof}
{\bf 1} As $\varphi$ is surjective, it suffices to show that $\varphi(X\an n)
\subset Y\an n$ for $n\geq 0$.
To verify this, we may assume that both $X=\Spec B$ and $Y=\Spec A$ are affine.
Let $x\in X$ and $y=\varphi(x)$.
Then as $\varphi$ is open, the going-down theorem holds for the 
map $A\rightarrow B$ \cite[(10.38.2)]{SP}, and hence $\codim x\geq \codim y$.
As $\varphi$ is integral, $\codim x\leq \codim y$.
So the assertion follows.

{\bf 2} 
By Lemma~\ref{GIT-remark.lem}, $V$ is an open subset of $Y$, and
$\rho:U=\varphi^{-1}(V)\rightarrow V$ is an algebraic quotient.
As the action of $G$ on $U$ is free, $\rho$ is a principal $G$-bundle
by Lemma~\ref{DG.thm}.

{\bf 3} Follows easily from {\bf 1} and {\bf 2}.
\end{proof}

\begin{proposition}\label{generically-free.prop}
Let $G$ be an LFF $S$-group scheme with the well defined rank $r$, and 
$\varphi:X\rightarrow Y$ an algebraic quotient by the action of $G$.
Assume that $X$ is reduced and LFI.
Then for each $\eta\in Y\an0$,
\begin{equation}\label{eta-dimension.eq}
\dim_{\O_{Y,\eta}}(\varphi_*\O_X)_\eta\leq r.
\end{equation}
Moreover, 
the action of $G$ on $X$ is generically free if and only if
the equality holds in {\rm(\ref{eta-dimension.eq})} for each point 
$\eta\in Y\an0$.
\end{proposition}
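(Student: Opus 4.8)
The plan is to reduce everything to the generic (codimension-zero) situation and then invoke Proposition~\ref{finite-equiv.thm} pointwise. First I would note that, since the inequality and the generic freeness condition are both local on $Y$ and only involve the points of $Y\an0$, we may assume $Y=\Spec A$ and $X=\Spec B$ are affine, with $A=B^G$ and $B$ reduced. For a fixed $\eta\in Y\an0$, corresponding to a minimal prime $\fp$ of $A$, localize at $\fp$: since $\varphi$ is integral (by Lemma~\ref{DG.thm}) and $\varphi(X\an0)=Y\an0$ by Proposition~\ref{finite-main.prop}, {\bf 1}, the fiber $B\otimes_A \kappa(\fp)$ is an artinian ring, its spectrum being exactly the (finite) set of points of $X\an0$ lying over $\eta$. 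Because $X$ is reduced and LFI, the total quotient ring considerations show that $L:=B\otimes_A\kappa(\fp)$ is a reduced artinian $\kappa(\fp)$-algebra; more precisely $L$ is the product of the local rings $\O_{X,\xi}$ over the minimal primes $\xi$ of $B$ with $\xi\cap A=\fp$, each of which is a field since $X$ is reduced. Here $G$ acts on $L$ with $L^G=\kappa(\fp)$, the last equality because $(?)^G$ commutes with the flat base change $A\to\kappa(\fp)$ (the action on $\kappa(\fp)$ is trivial).

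The second step is to apply Proposition~\ref{finite-equiv.thm} to this action. That proposition is stated for $X=\Spec L$ which is $G$-connected; so I would first decompose $\Spec L$ into its $G$-connected components and run the argument on each. On a single $G$-connected component $\Spec L_i$, with $K_i:=L_i^G$, we get that $K_i$ is a field, $\dim_{K_i}L_i\le\dim_k k[G]$ with equality iff the action of $G$ on $\Spec L_i$ is free, and that equality also holds iff the action is generically free. Summing over the components $i$ and using that $\kappa(\fp)=\prod_i K_i$ together with $\dim_{\kappa(\fp)}L=\sum_i \dim_{K_i}L_i\cdot\dim_{\kappa(\fp)}K_i$ — actually since $L^G=\kappa(\fp)$ is already a field, there is only one component, simplifying this bookkeeping — gives $\dim_{\kappa(\fp)}L\le r$, where $r=\dim_k k[G]$ is the rank of $G$; identifying $\dim_{\kappa(\fp)}L$ with $\dim_{\O_{Y,\eta}}(\varphi_*\O_X)_\eta$ (as $(\varphi_*\O_X)_\eta\otimes_{\O_{Y,\eta}}\kappa(\eta)=L$ and $(\varphi_*\O_X)_\eta$ is a finite $\O_{Y,\eta}$-module, with $\O_{Y,\eta}$ artinian local) yields~(\ref{eta-dimension.eq}).

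For the equivalence: if the $G$-action on $X$ is generically free, then by definition there is a $G$-stable open $U$ containing all of $X\an0$ on which the action is free, hence for each $\eta\in Y\an0$ the induced action on $\Spec L$ is free, so by Proposition~\ref{finite-equiv.thm} the equality holds at $\eta$. Conversely, if equality holds at every $\eta\in Y\an0$, then at each minimal prime $\fp$ of $A$ the action of $G$ on $\Spec L$ is generically free, equivalently (artinian case) free, equivalently the stabilizer $\Cal S_{\Spec L}$ is trivial; since $\Cal S_X$ is compatible with base change, this means the stabilizer $\Cal S_X\to X$ is an isomorphism over all points of $X\an0$, i.e.\ the free locus $\Cal U_X$ contains $X\an0$, which is exactly generic freeness. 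The remaining care is to match "rank $r$" with $\dim_k k[G]$ via base change to residue fields (the rank is well-defined, so $\dim_{\kappa(\fp)}\big(k[G]\otimes_k\kappa(\fp)\big)=r$ at every point), and to handle the passage between the scheme-theoretic statement about $(\varphi_*\O_X)_\eta$ and the ring-theoretic statement about $L$.

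The main obstacle I anticipate is the bookkeeping in identifying $\dim_{\O_{Y,\eta}}(\varphi_*\O_X)_\eta$ correctly with $\dim_{\kappa(\eta)}L$ — one must observe that $\O_{Y,\eta}$ is an artinian local ring (since $\eta\in Y\an0$) with residue field $\kappa(\eta)$, that $(\varphi_*\O_X)_\eta$ is a finite module over it, and that for a finite module $M$ over an artinian local ring $(R,\fm)$ with residue field $\kappa$ one has $\dim_R M=\dim_\kappa(M\otimes_R\kappa)$ only when $M$ is free; in general $\dim_R M$ denotes the length-normalized dimension or one simply replaces the statement by the inequality $\operatorname{length}_R M\le r\cdot\operatorname{length}_R R$ which is what is really meant, and then $M\otimes_R\kappa=L$ gives the bound. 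I would phrase it so that "$\dim$" is interpreted as the appropriate (generic) rank, i.e.\ $\dim_{\kappa(\eta)}\big((\varphi_*\O_X)_\eta\otimes\kappa(\eta)\big)=\dim_{\kappa(\eta)}L$, which is unambiguous and makes the reduction to Proposition~\ref{finite-equiv.thm} immediate. Everything else is a direct citation of Lemma~\ref{DG.thm}, Proposition~\ref{finite-main.prop}, Lemma~\ref{stabilizer-basics.lem}, and Proposition~\ref{finite-equiv.thm}.
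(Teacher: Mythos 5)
Your argument is correct and follows essentially the same route as the paper: localize at a minimal prime $\fp$ of $A=B^G$, observe that the fibre $B\otimes_A\kappa(\fp)$ is a reduced artinian algebra (a finite product of fields) whose invariants are $\kappa(\fp)$ by flat base change, apply Proposition~\ref{finite-equiv.thm}, and translate freeness of the generic fibres back into largeness of the free locus via Proposition~\ref{finite-main.prop}. Your closing worry about how to read $\dim_{\O_{Y,\eta}}$ evaporates once you note that $A\subset B$ is reduced, so $\O_{Y,\eta}=\kappa(\eta)$ is already a field — precisely the observation the paper makes.
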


\begin{proof}
We may assume that $Y=\Spec A$ is affine.
Then $X=\Spec B$ is affine and $A=B^G$.
Then for each minimal prime $P$ of $A$, $B_P$ is reduced and zero-dimensional
(since $\kappa(P)=A_P\rightarrow B_P$ is an integral extension).
As $B$ has finitely many minimal primes, $B_P$ has finitely many minimal 
primes, and $\Spec B_P$ is finite.
Then it is easy to see that $B_P$ is a finite direct product of fields.
To prove that (\ref{eta-dimension.eq}) holds, replacing $\varphi$ by
$\varphi_\eta:X_\eta\rightarrow \eta$, $S$ by $\eta$ and $G$ by $G_\eta$, 
we may assume that $S=Y=\Spec k$ is the spectrum of a field (note that
$\varphi_\eta$ is an algebraic quotient, since $A$ is reduced and hence
$\kappa(\eta)$ is merely a localization of $A$).
By Proposition~\ref{finite-equiv.thm}, the inequality follows.

By Proposition~\ref{finite-equiv.thm}, $\eta\in Y\an0$ lies in $V$ if and
only if the equality in (\ref{eta-dimension.eq}) holds.
The assertion follows immediately by 
Proposition~\ref{finite-main.prop} for the case that $n=1$.
\end{proof}

\begin{example}\label{group-scheme-non-generically-free.ex}
Let $V$ be a finite dimensional $k$-vector space, and
let $G$ be an \'etale finite subgroup scheme of $\GL(V)$.
Then the action of $G$ on $V$ is generically free.
In order to check this, we may assume that $k$ is algebraically closed,
and hence $G$ is a constant subgroup.
As $G$ is a subgroup of $\GL(V)$, we have that $g$ is a non-identity for
$g\neq e$, and hence the action is generically free
(see Example~\ref{ps-sm.ex}).

If $G$ is not \'etale, this is not true any more.
Let $k$ be an algebraically closed field of characteristic $p>0$.
Let $V=k^2$, and consider $G=\GL(V)_1$, the first Frobenius kernel of $\GL(V)$.
Let $B=k[V]=\Sym V^*=k[x,y]$, $A=B^G$, $K=Q(A)$, and $L=Q(B)$.
Then $A=k[x^p,y^p]$.
So $\dim_KL=2<\dim_kk[G]=4$.
Hence the action is not generically free by 
Proposition~\ref{generically-free.prop}.
\end{example}

\begin{lemma}\label{projective-coordinate.lem}
Let $S=\Spec R$ be affine.
Let $G=\Spec \Gamma$ be an LFF $S$-group scheme.
Then the coordinate ring $\Gamma$ of $G$ is a projective object as
a $G$-module.
\end{lemma}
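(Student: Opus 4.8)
Let $S = \Spec R$ be affine, $G = \Spec\Gamma$ an LFF $S$-group scheme. We must show that $\Gamma$, regarded as a $G$-module via the right regular representation, is a projective object in the category of $G$-modules (equivalently, $(G,\O_S)$-modules, i.e.\ $\Gamma$-comodules that are $R$-modules).

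\medskip

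\textbf{Approach.} The plan is to exhibit $\Gamma$ as a direct summand of a free object in the category of $G$-modules, since projectives in an abelian category with enough projectives are exactly the direct summands of the canonical "free" generators. The canonical free $G$-modules here are of the form $\res^e_G(M) = M \otimes_R \Gamma$ for $R$-modules $M$ (the "extended" or "induced from the trivial group" modules): for any $G$-module $N$ one has $\Hom_{G}(M\otimes_R\Gamma, N)\cong \Hom_R(M,N)$ by the tensor identity / Frobenius reciprocity for the inclusion $\res^e_G\colon \Mod(S)\to\Mod(G,S)$ discussed around (\ref{G-trivial.par}), so $M\otimes_R\Gamma$ is projective as soon as $M$ is a projective (e.g.\ free, or here locally free) $R$-module. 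Since $G$ is LFF, $\Gamma$ is a \emph{locally free}, hence projective, $R$-module, so $\Gamma\otimes_R\Gamma$ is a projective $G$-module. The key point is then that the comodule structure map (the coaction) splits this off.

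\medskip

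\textbf{Key steps.} First I would recall that the coaction $\Delta\colon\Gamma\to\Gamma\otimes_R\Gamma$ (comultiplication, which \emph{is} the $G$-comodule structure defining the regular representation) is a homomorphism of $G$-modules, where on the target $\Gamma\otimes_R\Gamma$ we let $G$ act only on the \emph{second} tensor factor (so the target is $\res^e_G(\Gamma_0)$ with $\Gamma_0$ the underlying $R$-module of $\Gamma$ carrying the trivial action). This is the standard fact that $\Delta$ is coassociative, i.e.\ $(\Delta\otimes 1)\Delta = (1\otimes\Delta)\Delta$, which is precisely $G$-equivariance of $\Delta$ for these actions. Second, I would produce an $R$-linear (indeed $G$-linear) retraction $r\colon \Gamma\otimes_R\Gamma\to\Gamma$ with $r\circ\Delta = \id_\Gamma$: the counit does the job, $r = (\varepsilon\otimes 1)$, since the counit axiom gives $(\varepsilon\otimes 1)\Delta = \id_\Gamma$. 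One must check $(\varepsilon\otimes 1)$ is a $G$-module map $\res^e_G(\Gamma_0)\to\Gamma$; here $\varepsilon\otimes 1$ acts on the factor carrying the trivial $G$-action and lands in $\Gamma$ with its regular action, and $G$-equivariance is again a Hopf-algebra identity relating $\varepsilon$ and $\Delta$ (concretely $(\varepsilon\otimes 1\otimes 1)(1\otimes\Delta) = \Delta(\varepsilon\otimes 1)$ after identifications). Third, I would invoke that $\res^e_G(\Gamma_0) = \Gamma_0\otimes_R\Gamma$ is a projective $G$-module because $\Gamma_0$ is a locally free (hence projective) $R$-module — this uses that $\Mod(G,S)$ is a Grothendieck category (Lemma~\ref{Gabber-Grothendieck.lem}) with enough projectives and that $\res^e_G$ is left adjoint to the forgetful / invariance-type functor $(?)_0$, so it carries projectives to projectives; and $\Gamma_0$ being a locally free $R$-module follows directly from the LFF hypothesis. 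Finally, a direct summand of a projective object is projective, so $\Gamma$ is a projective $G$-module.

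\medskip

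\textbf{Main obstacle.} The only genuinely delicate point is pinning down exactly which $G$-action lives on each tensor factor of $\Gamma\otimes_R\Gamma$ and verifying that $\Delta$ and $(\varepsilon\otimes 1)$ are genuinely $G$-\emph{equivariant} for those choices, rather than merely $R$-linear — this is bookkeeping with the comodule/sheaf-over-$B_G^M(S)$ formalism of \cite{ETI}, but it reduces to the coassociativity and counit axioms of the Hopf algebra $\Gamma$. The subsidiary point, that $\res^e_G$ preserves projectivity because $\Gamma_0$ is $R$-locally free, is immediate from the adjunction $\Hom_G(\Gamma_0\otimes_R\Gamma,-)\cong\Hom_R(\Gamma_0,-)$ (exact in the second variable, since $(?)_0$ is exact and $\Hom_R(\Gamma_0,-)$ is exact as $\Gamma_0$ is projective over $R$). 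Everything else is formal, and no Frobenius or characteristic-$p$ input is needed.
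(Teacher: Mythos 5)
There is a genuine gap, and it sits exactly at the step you call "the key point." Your splitting $\Gamma\xrightarrow{\Delta}\Gamma_0\otimes_R\Gamma\xrightarrow{\varepsilon\otimes 1}\Gamma$ is correct and purely formal (coassociativity and the counit axiom), but the object $\Gamma_0\otimes_R\Gamma$ with coaction $1\otimes\Delta$ is the \emph{cofree} comodule on the $R$-module $\Gamma_0$, and the cofree functor is \emph{right} adjoint to the forgetful functor: the true identity is $\Hom_G(N,M\otimes_R\Gamma)\cong\Hom_R(N,M)$, not the one you assert, $\Hom_G(M\otimes_R\Gamma,N)\cong\Hom_R(M,N)$. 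Concretely, since $G$ is LFF the category of $G$-modules is equivalent to the category of modules over $H:=\Gamma^*=\Hom_R(\Gamma,R)$, and under this equivalence $M\otimes_R\Gamma$ becomes the coinduced module $\Hom_R(H,M)$, while the projective generators are the induced modules $H\otimes_R M$. So your two formal identities prove only that $\Gamma$ is a direct summand of a coinduced object, which is the standard argument for (relative) \emph{injectivity} of the regular corepresentation; it says nothing about projectivity. Worse, the intermediate claim you need --- that $\Gamma_0\otimes_R\Gamma$ is a projective $G$-module --- is essentially equivalent to the lemma itself: it amounts to $\Gamma\cong H$ as $H$-modules (a Frobenius-type property of $H$), so the argument is circular as written.

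This also explains why no purely formal Hopf-algebra manipulation can work: the genuine input is the non-formal fact that a finite-dimensional Hopf algebra over a field is a Frobenius algebra (integrals / the fundamental theorem of Hopf modules), which is exactly what the paper uses. The paper's proof first reduces to a Noetherian base by finite presentation, then to a field by \cite[(III.4.1.3)]{Hashimoto7}, and then quotes \cite[(VI.3.6)]{SY} to get $\Gamma\cong\Gamma^*$ as $\Gamma^*$-modules, whence $\Gamma$ is free of rank one over $\Gamma^*$ and in particular projective. To repair your approach you would have to replace the wrong adjunction by an actual proof that $\Gamma\cong\Gamma^*$ as $\Gamma^*$-modules (at least locally on $\Spec R$), i.e.\ reintroduce precisely the Frobenius-algebra input you claim is not needed.
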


\begin{proof}
It is easy to see that there exists some finitely generated 
$\Bbb Z$-subalgebra $R_0$ of $R$ and an LFF $R_0$-group scheme 
$G_0$ such that $R\otimes_{R_0}G_0\cong G$.
Hence we may assume that $R$ is Noetherian.
By \cite[(III.4.1.3)]{Hashimoto7}, we may assume that $R$ is a field.

Note that a $G$-module is nothing but a (right) $\Gamma$-comodule, which is the
same as a (left) $\Gamma^*$-module.
By \cite[(VI.3.6)]{SY}, $\Gamma\cong \Gamma^*$ as a $\Gamma^*$-module,
and we are done.
\end{proof}

\begin{lemma}\label{henselian.lem}
Let $G$ be an LFF $S$-group scheme acting on an $S$-scheme
$X=\Spec B$ which is an affine scheme.
Let $A=B^G$.
If $\varphi:X=\Spec B\rightarrow\Spec A=Y$ is a principal $G$-bundle, 
then $B$ is $A$-finite and $(G,A)$-projective.
If $A$ is a Noetherian Henselian local ring, then $B\cong A[G_A]$ as 
$(G,A)$-modules.
\end{lemma}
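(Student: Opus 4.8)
The plan is to reduce everything to the descent theorem for principal bundles and then treat the Henselian case separately. First I would observe that a principal $G$-bundle $\varphi:X\to Y$ is in particular qfpqc, hence $\varphi_*\O_X$ is a quasi-coherent $(G,\O_Y)$-algebra. By Grothendieck's descent theorem \cite[(3.13)]{Hashimoto4}, $\varphi^*:\Qch(Y)\rightarrow\Qch(G,X)$ is an equivalence whose quasi-inverse is $(?)^G\varphi_*$; applied to $\O_Y$ this recovers $\bar\eta:\O_Y\xrightarrow{\sim}(\varphi_*\O_X)^G$ (this also follows from the principal bundle case \cite[(5.31)]{Hashimoto5} / Corollary~\ref{almost-main.thm}, restricted to affine $Y$). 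To see that $B$ is $A$-finite and $(G,A)$-projective, I would use the fact that $\Psi_{G,\varphi}:G\times X\to X\times_Y X$ is an isomorphism, so the structure sheaf of $X\times_Y X$ viewed as an $X$-module via $p_2$ is $\O_{G\times X}=\O_{G_A}\otimes_A B$. Thus $B\otimes_A B\cong \Gamma\otimes_A B$ as $(G,B)$-modules, where $\Gamma=\Gamma(G_A,\O_{G_A})$ is the coordinate ring of $G_A=G\times_S\Spec A$. Since $G$ is LFF, $\Gamma$ is a locally free $A$-module of finite rank, so $B\otimes_A B$ is $B$-finite; by faithfully flat descent along $A\to B$ (and since $B\otimes_A B$ being $B$-finite projective descends) we get that $B$ is $A$-finite, and in fact $A$-flat (hence $A$-projective when $A$ is Noetherian, or by finite presentation in general). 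The $(G,A)$-projectivity of $B$ follows because $\Gamma$ is $(G,A)$-projective by Lemma~\ref{projective-coordinate.lem}, and the descent isomorphism $B\otimes_A B\cong\Gamma\otimes_A B$ exhibits $B$, after the faithfully flat base change $A\to B$, as the $(G,B)$-projective module $\Gamma\otimes_A B$; since $A\to B$ is faithfully flat and affine, $(G,A)$-projectivity descends.

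Next, for the Henselian case I would exploit that a principal $G$-bundle with $G$ LFF is finite \'etale-like in a weak sense: $\varphi$ is finite by the above, and $\Psi$ being an isomorphism means that, fiberwise over each point $y\in Y$, $X_y$ is a $G_{\kappa(y)}$-torsor. Over the closed point $\fm$ of the Henselian local ring $A$, the torsor $X_\fm=\Spec(B/\fm B)$ over $\kappa=A/\fm$ — I would argue it is the trivial torsor, or at least that it admits a point after replacing by a finite purely inseparable / \'etale extension. Actually the cleanest route is: since $A$ is Henselian and $B$ is finite over $A$, $B$ decomposes as a product of local rings $B=\prod B_i$; each $B_i$ is finite over $A$. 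The $G$-action permutes the factors, but since $G\times X\cong X\times_Y X$ is connected over the connected base (after reduction, using that $B\otimes_A B$ is $B$-free of rank $= \rank\Gamma$), one shows the torsor must be trivial. More concretely, I would show $\Hom_{\SA\text{-alg}}(B,A)\neq\emptyset$: the obstruction lives in the (nonabelian) $H^1$ of $G$ over the Henselian local ring $A$, and using that $G$ is finite flat and $A$ Henselian, any $G$-torsor over $\Spec A$ that is trivial over $\kappa$ is trivial (descent of the splitting along $A\to A/\fm$ via smoothness/flatness of $G$, or by a direct lifting-of-idempotents argument since $B$ is finite over $A$). Once we have a $G$-equivariant $A$-algebra section $B\to A$, equivalently an $A$-point of the torsor, the torsor is trivial, so $\Spec B\cong G_A$ as $G_A$-schemes, i.e. $B\cong A[G_A]=\Gamma$ as $(G,A)$-modules (indeed as $(G,A)$-algebras).

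Then I would assemble: the trivialization $B\cong\Gamma$ as $(G,A)$-modules immediately gives $(G,A)$-projectivity in this case (by Lemma~\ref{projective-coordinate.lem}) and recovers $A$-finiteness, consistent with the general statement. I would also double-check the compatibility $B^G\cong\Gamma^G=A$, which is automatic from $\varphi$ being an algebraic quotient / the bundle axiom.

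\textbf{Main obstacle.} The hard part will be the triviality of the torsor over the Henselian local ring $A$ — i.e., producing the $G$-equivariant $A$-algebra map $B\to A$. For $G$ \'etale this is classical Henselian descent of finite \'etale algebras, but $G$ is only assumed LFF, so $X\to Y$ need not be \'etale and $B/\fm B$ need not be reduced; one must handle the infinitesimal part of $G$ as well. I expect the right tool is that a principal bundle under a \emph{flat affine} group scheme over a Henselian local ring, which becomes trivial over the residue field, is already trivial — this can be proved by smoothness of the $G$-action ($G\times X\cong X\times_Y X$ is faithfully flat of finite presentation over $X$) combined with the infinitesimal lifting criterion and the completeness/Henselian property, lifting a section from $A/\fm$ to $A/\fm^n$ and then to $A$ by Krull intersection plus finiteness of $B$ over $A$. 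I would organize this as a separate lemma if it is not already available in \cite{Hashimoto5}, and cite it here.
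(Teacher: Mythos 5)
The first half of your proposal (finiteness and $(G,A)$-projectivity of $B$) is essentially the paper's argument: one base changes along the faithfully flat map $A\to B'$, where $B'$ is $B$ with the trivial $G$-action, uses $\Psi$ to identify $B\otimes_AB'\cong\Gamma\otimes_AB'$ with $\Gamma=A[G_A]$, descends finite projectivity, and then gets equivariant projectivity from the projectivity of $\Gamma$ as a $G$-module (Lemma~\ref{projective-coordinate.lem}); the paper makes your ``descent of $(G,A)$-projectivity'' precise by checking that a surjection $(\Gamma^*)^n\to B$ splits after tensoring with $B'$, which is legitimate because surjectivity can be tested after a faithfully flat base change. So far, no objection beyond the need to spell that step out.

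The Henselian half, however, has a genuine gap. You propose to show the torsor is trivial over $A$, i.e.\ to produce a $G$-equivariant $A$-algebra section $B\to A$, and you plan to reduce this to triviality over the residue field plus a Henselian lifting argument. Both steps fail for non-smooth LFF group schemes, and the statement you are aiming at is strictly stronger than the lemma and is false in general. Take $A=k=\Bbb F_p(t)$ (a Henselian local ring, being a field), $G=\mu_p$, and $B=k[x]/(x^p-t)$ with the natural $\mu_p$-action (the $\Bbb Z/p$-grading with $\deg x=1$): this is a principal $\mu_p$-bundle over $\Spec k$ which is a nontrivial torsor, since $t$ is not a $p$-th power, so there is no equivariant algebra map $B\to k$ and no finite extension of the base is allowed by the statement; yet the conclusion of the lemma holds, because $B\cong k[\mu_p]$ as $\mu_p$-comodules (both are one-dimensional in each degree). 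So the lemma asserts only a $(G,A)$-module isomorphism, not triviality of the torsor, and no amount of Hensel lifting will give triviality when $G$ has an infinitesimal part (your fallback ``trivial over $\kappa$'' already fails in the example, where the base is $\kappa$ itself). The paper's route avoids this entirely: since $A$ is local and $B'$ is finite projective, $B'\cong A^n$ as $A$-modules, whence $B^n\cong B\otimes_AB'\cong\Gamma\otimes_AB'\cong\Gamma^n$ as $\Gamma^*$-modules; because $A$ is Noetherian Henselian local, endomorphism rings of finite $\Gamma^*$-modules are semiperfect and the Krull--Schmidt theorem holds in that category, so $B\cong\Gamma$ as $(G,A)$-modules. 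Replacing your torsor-triviality step by this Krull--Schmidt cancellation is the missing idea.
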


\begin{proof}
We may assume that $S=Y=\Spec A$.
As $G$ is flat, $\varphi$ is fpqc.
Let $\Gamma=A[G_A]=A[G]$ be the coordinate ring of $G$.
Let $B'$ be the $A$-algebra $B$ with a trivial $G$-action.
Then $B\otimes_A B'\cong \Gamma\otimes_A B'$.
By the descent argument, $B$ is a finite projective $A$-module.
We prove that $B$ is a projective $G$-module, or a
$\Gamma^*$-module.
There is a surjective $\Gamma^*$-linear map
$\alpha:(\Gamma^*)^n\rightarrow B$, as $B$ is $A$-finite.
We want to prove that this map splits.
This is equivalent to the surjectivity of
\begin{equation}\label{alpha-star.eq}
\alpha_*:\Hom_{\Gamma^*}(B,(\Gamma^*)^n)\rightarrow \Hom_{\Gamma^*}(B,B).
\end{equation}
This is checked after tensoring $B'$ over $A$.
But
\begin{multline*}
\Hom_{\Gamma^*}(B,?)\otimes_AB'=\Hom_A(B,?)^G\otimes_AB'
=(\Hom_A(B,?)\otimes_AB')^G\\
=\Hom_{B'}(B\otimes_AB',?\otimes_AB')^G
=\Hom_{(G,B')}(\Gamma\otimes_AB',?\otimes_AB').
\end{multline*}
This is an exact functor by Lemma~\ref{projective-coordinate.lem}.
So (\ref{alpha-star.eq}) is surjective.

Now assume that $A$ is Noetherian Henselian local.
Since $B'$ is finite projective as an $A$-module and $A$ is a local ring, 
$B'\cong A^n$ for some $n$.
Hence 
$B^n\cong B\otimes_A B'\cong \Gamma\otimes_A B'\cong \Gamma^n$
as $\Gamma^*$-modules.
Since $A$ is Henselian, any finite $\Gamma^*$-module has a 
semiperfect endomorphism ring, and the Krull--Schmidt theorem
holds in the category of finite $\Gamma^*$-modules
(the fact that a mofule-finite algebra over a Noetherian Henselian 
local ring is semiperfect follows easily from \cite[(I.4.2)]{Milne}).
So $B\cong \Gamma$ as $G$-modules, as desired.
\end{proof}

\begin{example}
Let $k$ be a field, and $G$ a finite $k$-group scheme acting on a 
$k$-algebra $B$.
Even if $B$ is a DVR (discrete valuation ring) and the action of $G$ on 
$X=\Spec B$ is generically free, the action may not be free (so it is not
a small action either).
We give an example of a finite group in characteristic zero and 
an infinitesimal group scheme in characteristic $p$.
\begin{enumerate}
\item[\bf 1] If $k=\Bbb C$, $G=\Bbb Z_2=\langle\sigma\rangle$ 
(the cyclic group of order two with the generator $\sigma$), $B=k[[x]]$ with
$\sigma(x)=-x$, then the stabilizer at the vertex $\Spec k=\Spec B/(x)$ is 
$G$, and is nontrivial.
So the action is not free.
The action is generically free by 
Proposition~\ref{generically-free.prop}, 
since $[Q(B):Q(B^G)]=[k((x)):k((x^2))]=2=\#G$.
\item[\bf 2] Let $k$ be a field of characteristic $p$, and $B=k[x]_{(x)}$,
the localization of the polynomial ring $k[x]$ at the prime ideal $(x)$.
Let $D$ be the $k$-derivation $x^p\dfrac{d}{dx}$ of $B$.
Note that $D^p=0$.
So $G=\alpha_p:=(\Bbb G_a)_1$, the first Frobenius kernel of the additive
group $\Bbb G_a$, acts on $X=\Spec B$.
The algebra map $B\rightarrow k[G]\otimes B$ associated with the action
$G\times X\rightarrow X$ is the map 
$k[x]_{(x)}\rightarrow k[t]/(t^p)\otimes k[x]_{(x)}
=k[t,x]_{(x)}/(t^p)$ given by
\[
f \mapsto \exp(D(-t))(f)=\sum_{i=0}^{p-1} D^i(f)(-t)^i/i!.
\]
So $B^G=B^D=\{f\in B\mid Df=0\}=k[x^p]_{(x^p)}$.
As $[Q(B):Q(B^G)]=p=k[G]$, the action is generically free.
It is easy to see that the stabilizer at $\Spec k=\Spec B/(x)$ is $G$,
and the action is not free.
\end{enumerate}
\end{example}

\paragraph
Let $f:G\rightarrow H$ be an fppf finite homomorphism between flat 
$S$-group schemes, and $N=\Ker f$.
Note that $N$ is fppf finite over $S$, that is, LFF.

\begin{lemma}\label{small-almost-principal.thm}
Let the notation be as above.
Let $\varphi:X\rightarrow Y$ be a $G$-morphism which is an algebraic
quotient by the action of $N$.
Then the free locus $U$ of the action of $N$ on $X$ is $G$-stable in $X$.
In particular, $\rho:U\rightarrow V=\varphi(U)$ is a $G$-enriched principal
$N$-bundle.
If, moreover, the action of $N$ on $X$ is $n$-small,
then $\varphi$ is a $G$-enriched $n$-almost principal $N$-bundle.
\end{lemma}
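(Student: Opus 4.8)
The plan is to reduce everything to results already established: Lemma~\ref{DG.thm} (an algebraic quotient by an LFF group scheme is a principal bundle on the free locus), Proposition~\ref{finite-main.prop} (the free-locus characterization of $n$-small actions and $n$-almost principal bundles), Lemma~\ref{small-almost-principal.thm}'s companion Lemma~\ref{enriched-Reynolds.prop}-style $G$-stability arguments, and Lemma~\ref{GIT-remark.lem} (on $G$-stable open subsets under invariant submersive maps with surjective $\Psi$). The only genuinely new point is that the free locus $U$ of the $N$-action is not merely $N$-stable but $G$-stable, and that the restriction is $G$-enriched.

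First I would recall from the discussion preceding Example~\ref{ps-sm.ex} that, since $\varphi:X\rightarrow Y$ is an algebraic quotient, it is in particular affine, so $\Cal S_X\rightarrow G\times X$ (for the $N$-action, i.e.\ $\Cal S_X^N\rightarrow N\times X$) is a closed immersion, and since $N$ is LFF the structure map $\phi:\Cal S_X^N\rightarrow X$ is finite. Hence $\Coker(\eta:\O_X\rightarrow\phi_*\O_{\Cal S_X^N})$ is a finite-type $(N,\O_X)$-module, and its non-support complement is the free locus $U$. The key observation is that $\phi:\Cal S_X^N\rightarrow X$ is a $G$-morphism: $\Psi_N:N\times X\rightarrow X\times X$ is $G$-equivariant when $G$ acts on $N\times X$ by $g(n,x)=(gng^{-1},gx)$ (using that $N$ is normal in $G$) and on $X\times X$ diagonally, and the diagonal $\Delta:X\rightarrow X\times X$ is a $G$-morphism, so $\Cal S_X^N=\Psi_N^{-1}(\Delta(X))$ inherits a $G$-action making $\phi$ a $G$-morphism. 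Therefore $\eta$ is a morphism of $(G,\O_X)$-modules, $\Coker\eta$ is a $(G,\O_X)$-module, and its support is a $G$-stable closed subset of $X$; consequently $U=\Cal U_X$ is a $G$-stable open subset of $X$.

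Next I would apply Proposition~\ref{finite-main.prop}, {\bf 2} to the algebraic quotient $\varphi$ (and its restriction): $\rho:U\rightarrow V:=\varphi(U)$ is a principal $N$-bundle, where $V$ is open in $Y$ by Lemma~\ref{GIT-remark.lem} (applicable because the algebraic quotient by the LFF scheme $N$ is universally submersive with surjective $\Psi_N$, by Lemma~\ref{DG.thm}), and $U=\varphi^{-1}(V)$. Since $U$ is $G$-stable and $\varphi$ is a $G$-morphism, $V=\varphi(U)$ is stable under the induced action of $G$ on $Y$; as $N$ acts trivially on $Y$ this descends to an $H$-action and $V$ is $H$-stable. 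Moreover $\rho$, being the restriction of the $G$-morphism $\varphi$ to the $G$-stable open set $U$ over the $H$-stable open set $V$ with $N=\Ker f$ acting trivially downstairs, is a $G$-enriched principal $N$-bundle in the sense of the definition preceding Lemma~\ref{GIT-remark.lem}. This proves the second and third sentences of the statement.

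Finally, if the action of $N$ on $X$ is $n$-small, then by definition there is an $N$-stable $n$-large open subset on which $N$ acts freely, so by maximality $U=\Cal U_X$ is itself $n$-large in $X$; that is, condition {\bf a'} of Proposition~\ref{finite-main.prop}, {\bf 3} holds, hence so does {\bf c}, meaning $\varphi$ is an $n$-almost principal $N$-bundle with respect to $U$ and $V$. Combining this with the $G$-enrichment of $\rho$ established above, and checking the remaining conditions of Definition~\ref{rational-almost-pb.def} (the open-immersion and $n$-large conditions {\bf 2}--{\bf 5} are immediate, {\bf 1} is the triviality of the $N$-action on $Y$, {\bf 6} is what we just proved), we conclude that $\varphi$ is a $G$-enriched $n$-almost principal $N$-bundle. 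I expect the only subtle step to be the verification that $\phi$ and $\eta$ are genuinely $G$-equivariant — i.e.\ tracking the conjugation action on $N$ carefully and confirming that normality of $N$ in $G$ is exactly what makes $\Psi_N$ $G$-equivariant; the rest is bookkeeping against the cited lemmas.
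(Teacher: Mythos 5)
Your proposal is correct and follows essentially the same route as the paper: the only new content is the $G$-stability of the free locus $U$, which you establish exactly as the paper does, by making $\Psi_N$ and the diagonal $G$-equivariant via the conjugation action (using normality of $N$ in $G$) so that $\phi:\Cal S_X\rightarrow X$ is a $G$-morphism, and then everything else is delegated to Proposition~\ref{finite-main.prop} (together with Lemma~\ref{DG.thm} and Lemma~\ref{GIT-remark.lem}). Your extra bookkeeping (the $(G,\O_X)$-module structure on $\Coker\eta$, the $H$-stability of $V$, the check of the conditions in the definition) is just a more explicit spelling-out of the same argument.
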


\begin{proof}
In view of Proposition~\ref{finite-main.prop}, it suffices to show that
$U$ is $G$-stable.

Let $G$ act on $X\times_Y X$ diagonally and on $N\times X$ by 
$g(n,x)=(gng^{-1},gx)$.
Then $\Psi:G\times X\rightarrow X\times_Y X$ defined by $\Psi(g,x)=(gx,x)$ 
and the diagonal map $\Delta_X:X\rightarrow X\times_Y X$ are
$G$-morhpisms.
So $\phi:\Cal S_X\rightarrow X$ is also a $G$-morphism, and
hence $U$ is $G$-stable.
\end{proof}

\paragraph
Let $G$ be a flat $S$-group scheme.
For a $G$-scheme $X$, we define the {\em $G$-radical}
of $X$ by
\[
\rad_G(X):=(\bigcap_{\frak M\in\Max(G,X)}\frak M)^*,
\]
the sum of all the quasi-coherent $G$-ideals of $\O_X$ contained 
in $\bigcap_{\frak M\in\Max(G,X)}\frak M$, where $\Max(G,X)$ is the set of
$G$-maximal $G$-ideals of $\O_X$.
We define the {\em $G$-nilradical} of $X$ to be $\sqrt[G]{0}$, the
$G$-radical of the zero ideal, see \cite[(4.25)]{HM}.
Note that $\sqrt[G]{0}\subset\sqrt{0}$ \cite[(4.30)]{HM}.
If $X$ is quasi-compact, then by \cite[(4.27)]{HM}, we have that 
$\rad_G(X)\supset \sqrt[G]{0}$.
Note that even if $S=\Spec k$ and both $G$ and $X$ are $k$-varieties, 
$\rad(X)$ may not contain $\rad_G(X)$.
For example, when we consider the action of 
$G=\Bbb G_m$ on $B=k[x]$ with the grading $\deg x=1$, then the ideal $(x)$ is
the unique $G$-maximal ideal, and so $\rad_G(B)=(x)\not\subset\rad(B)=(0)$.

\begin{lemma}[$G$-Nakayama's lemma]\label{G-nakayama.lem}
Let $G$ be a flat $S$-group scheme and $X$ a quasi-compact $G$-scheme.
Let $\M$ be a quasi-coherent $(G,\O_X)$-module of finite type.
If $\rad_G(X)\M=\M$, then $\M=0$.
\end{lemma}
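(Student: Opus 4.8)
The statement is a $G$-equivariant version of Nakayama's lemma for a quasi-compact $G$-scheme $X$, where the role of the maximal ideal is played by the $G$-radical $\rad_G(X)$, defined as the sum of all quasi-coherent $G$-ideals contained in $\bigcap_{\fm\in\Max(G,X)}\fm$. The plan is to reduce, by a standard affine-covering argument, to the affine case $X=\Spec B$ with $S=\Spec R$, where $\M$ corresponds to a finitely generated $(G,B)$-module $M$ and $\rad_G(X)$ corresponds to an ideal $\mathfrak r=\rad_G(B)$; then to run a localization argument that mimics the classical proof but stays inside the category of $G$-modules.

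First I would reduce to the affine case. Since $X$ is quasi-compact, cover it by finitely many $G$-stable affine open subsets (using that $G$ is flat, hence universally open, a $G$-stable open subset is available; more to the point, for the statement $\M=0$ we only need to check vanishing on an affine open cover, and $\rad_G$ behaves well under restriction to quasi-compact opens). Actually the cleanest route is: $\M=0$ iff $\M|_W=0$ for each $W$ in an affine open cover; but the $G$-action mixes the opens, so instead I would argue globally using that $\Qch(G,X)$ is a Grothendieck category (Lemma~\ref{Gabber-Grothendieck.lem}) and that $\M$ being finite type means $\Supp\M$ is closed; if $\M\neq0$, pick a $G$-maximal $G$-ideal $\fm$ in the support (such exists when $\M\neq 0$ is finite type and $X$ quasi-compact, by the argument behind \cite[(4.27)]{HM}), and derive a contradiction at $\fm$. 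The key local input is that $\rad_G(X)\subset\fm$ for every $\fm\in\Max(G,X)$ by definition of $\rad_G$, so $\rad_G(X)\M=\M$ forces $\M=\fm\M$ near $\fm$.

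The heart of the matter is then the local statement. After localizing (in the $G$-equivariant sense — replace $B$ by a suitable $G$-stable localization so that $\fm$ becomes the unique $G$-maximal $G$-ideal), we have a finitely generated $(G,B)$-module $M$ with $\fm M=M$ and $\fm=\rad_G(B)$. I would choose a minimal $G$-equivariant generating set; since $M$ is finite type in $\Qch(G,X)$ there is a surjection $F\twoheadrightarrow M$ from a finite sum of twists of $\O_X$-type equivariant modules. The classical determinant/Cayley--Hamilton trick does not directly apply equivariantly, so instead I would use the description of $\fm M=M$ together with the fact that, by \cite[(4.27)]{HM} and the quasi-compactness, every element of $\fm$ acts "topologically nilpotently" on the relevant fibers: pass to the fiber $M\otimes_B \kappa$ over the $G$-maximal ideal, note $\fm M=M$ kills this fiber, and conclude $M\otimes_B\kappa=0$, hence $\Supp M$ misses $\fm$, contradicting the choice of $\fm$.

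\textbf{Main obstacle.} The genuinely delicate point is the localization step: unlike ordinary Nakayama, one cannot simply localize at a prime because primes are not $G$-stable in general (the remark right before the lemma, with $\Bbb G_m$ acting on $k[x]$, warns exactly about this). So I expect the hard part will be setting up the correct $G$-equivariant analogue of "localize at a maximal ideal" — namely restricting to a $G$-stable quasi-compact open on which $\fm$ is the unique $G$-maximal $G$-ideal, and verifying that $\rad_G$ of this restriction still contains the relevant ideal and still annihilates the fiber. Once that is in place, the finiteness of $M$ and the inclusion $\rad_G(X)\subset\bigcap\Max(G,X)$ make the contradiction formal. I would also need to double-check that "$\M$ finite type and nonzero implies $\Supp\M$ contains a $G$-maximal $G$-ideal," which should follow from \cite[(4.27)]{HM} applied to a nonzero quasi-coherent $G$-ideal annihilating a suitable quotient of $\M$.
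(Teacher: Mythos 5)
Your outline stalls exactly at the step you flag as delicate, and the way you try to get around it does not work as written. After choosing a $G$-maximal $G$-ideal $\frak M$ containing the annihilator, you propose an equivariant localization making $\frak M$ the unique $G$-maximal $G$-ideal and then argue on "the fiber $M\otimes_B\kappa$ over the $G$-maximal ideal." But $B/\frak M$ is not a field (a $G$-maximal $G$-ideal cuts out a $G$-stable closed subscheme, not a point -- think of $(x)\subset k[x]$ under $\Bbb G_m$, the example right before the lemma), and the inference "$M/\frak M M=0$, hence $\Supp M$ misses $V(\frak M)$" is itself a Nakayama-type statement for the non-maximal ideal $\frak M$: it is not automatic, and your appeal to elements of $\frak M$ acting "topologically nilpotently" is not an argument. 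Since you also explicitly set aside the determinant trick (which, applied non-equivariantly and affine-locally, would in fact justify that inference), the crucial implication is left unproved, and the equivariant localization scaffolding it rests on is likewise left unresolved. So there is a genuine gap at the heart of the proof.

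The paper's proof shows that no equivariant localization is needed at all: the equivariance is used only to produce, via the fact that the annihilator $\I=0:_{\O_X}\M$ is a quasi-coherent $G$-ideal and \cite[(4.28)]{HM} (quasi-compactness enters here), a $G$-maximal $G$-ideal $\frak M\supset\I$; one then drops equivariance, picks an ordinary maximal quasi-coherent ideal $\fm\supset\frak M$ with closed point $x$, notes $\M_x\neq 0$ (since $\fm\supset\I$ and $\M$ is of finite type) and $\rad_G(X)_x\subset\fm_x$ (since $\rad_G(X)\subset\frak M\subset\fm$), and applies ordinary Nakayama in the local ring $\O_{X,x}$ to get $(\M/\rad_G(X)\M)_x\otimes\kappa(x)\neq0$, contradicting $\rad_G(X)\M=\M$. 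If you replace your localization-plus-fiber step by this passage to an ordinary closed point of $V(\frak M)$, your argument closes up and coincides with the paper's.
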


\begin{proof}
Assume the contrary.
Let $\I=0:_{\O_X}\M$ be the annihilator of $\M$.
Note that $\I$ is a quasi-coherent $G$-ideal (the proof is the same as 
that of \cite[(4.2)]{HM}).
As $\I\neq \O_X$ by assumption, there exists some $\frak M\in\Max(G,X)$ 
containing $\I$ by \cite[(4.28)]{HM}.
Let $\frak m$ be a maximal quasi-coherent ideal of $\O_X$ containing 
$\frak M$, and $x$ the closed point of $X$ corresponding to $\fm$.
Since $\frak m$ contains $\I$ and $\M$ is of finite type, 
$\M_x\neq 0$.
By Nakayama's lemma, $\M_x\otimes_{\O_{X,x}}\kappa(x)\neq 0$.
Similarly, $(\O_X/\rad_G(X))_x\otimes_{O_{X,x}}\kappa(x)\neq 0$,
since $\frak m\supset\rad_G(X)$.
Taking the tensor product, 
$(\M/\rad_G(X)\M)_x\otimes_{\O_{X,x}}\kappa(x)\neq 0$.
Hence $\M\neq \rad_G(X)\M$.
This is a contradiction.
\end{proof}

\begin{lemma}\label{red-free.lem}
Let $f:G\rightarrow H$ be an fppf finite homomorphism between
flat $S$-group schemes, and $N=\Ker f$.
Let $\varphi:X\rightarrow Y$ be a $G$-morphism which is an algebraic
quotient by the action of $N$.
Let $\Cal I$ be a quasi-coherent $G$-ideal of $\O_X$ contained in $\rad_G(X)$,
and $Z=V(\Cal I)$ the corresponding closed $G$-subscheme of $X$.
If the action of $N$ on $Z$ is free, then the action of $N$ on $X$ is also 
free, and hence $\varphi$ is a principal $G$-bundle.
\end{lemma}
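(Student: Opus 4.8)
The strategy is to reduce the freeness of the $N$-action on $X$ to the freeness on $Z$ by a Nakayama-type argument applied to the stabilizer. First I would recall the setup from (\ref{definition-small.par}) and the paragraphs following it: since $\varphi$ is an algebraic quotient by $N$ and $N=\Ker f$ is LFF, the structure map $\phi\colon\Cal S_X\to X$ of the stabilizer $\Cal S_X=\Psi^{-1}(X)$ of the $N$-action is a finite morphism (here we use that $\varphi$, being affine, is separated, so $\Cal S_X\to N\times X$ is a closed immersion, and then finiteness of $N$ over $S$ gives finiteness of $\phi$). The action of $N$ on $X$ is free exactly when $\Cal S_X$ is the trivial $X$-group scheme, i.e.\ when the split monomorphism $\eta\colon\O_X\to\phi_*\O_{\Cal S_X}$ is an isomorphism; equivalently, when the coherent $(G,\O_X)$-module $\Cal Q:=\Coker\eta$ vanishes. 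By Lemma~\ref{stabilizer-basics.lem} applied to the closed immersion $Z\hookrightarrow X$, we have $\Cal S_Z\cong\Cal S_X\times_X Z$, so freeness of $N$ on $Z$ says precisely that $\Cal Q\otimes_{\O_X}\O_Z=\Cal Q/\Cal I\Cal Q=0$, that is, $\Cal I\Cal Q=\Cal Q$.

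Now the point is that $\Cal Q$ is a finite-type quasi-coherent $(G,\O_X)$-module (as $\phi$ is finite and the $G$-action on $\Cal S_X\to X$ is the one described in the paragraph before Example~\ref{ps-sm.ex}, the cokernel $\Cal Q$ carries a natural $(G,\O_X)$-module structure), and $\Cal I\subset\rad_G(X)$, so $\rad_G(X)\Cal Q=\Cal Q$ as well. Assuming $X$ is quasi-compact — which I would note must be part of the hypotheses, or else work locally on $Y$ and pass to quasi-compact $G$-stable opens — the $G$-Nakayama lemma (Lemma~\ref{G-nakayama.lem}) forces $\Cal Q=0$. Hence $\eta$ is an isomorphism, $\Cal S_X$ is trivial, and the action of $N$ on $X$ is free. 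Finally, since $\varphi$ is an algebraic quotient by $N$ and the $N$-action is free, Lemma~\ref{DG.thm} gives that $\varphi$ is a principal $N$-bundle; combined with the fact that $\varphi$ is a $G$-morphism, $\varphi$ is a $G$-enriched principal $N$-bundle (this last enrichment point follows as in Lemma~\ref{small-almost-principal.thm}, where one checks directly that $\Psi$ and $\Delta_X$ are $G$-morphisms for the conjugation action on $N\times X$, so the whole picture is $G$-equivariant).

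The routine parts are the identification of freeness with $\Cal Q=0$ and the compatibility $\Cal S_Z=\Cal S_X\times_X Z$, both already available from Lemma~\ref{stabilizer-basics.lem} and the discussion of the free locus. The main obstacle I anticipate is bookkeeping the $(G,\O_X)$-module structure on $\Cal Q$ carefully enough to legitimately invoke $G$-Nakayama: one must be sure that $\eta\colon\O_X\to\phi_*\O_{\Cal S_X}$ is a morphism of $(G,\O_X)$-algebras (not merely $\O_X$-algebras), which is where the $G$-equivariance of $\phi$ enters, and that "finite type as $\O_X$-module" is preserved. A secondary subtlety is making sure $X$ is quasi-compact so that $\rad_G(X)\supset\sqrt[G]{0}$ and $G$-Nakayama apply; reducing to the affine (or quasi-compact) case over $Y$ handles this, using that the formation of the stabilizer and its cokernel commutes with restriction to $G$-stable opens.
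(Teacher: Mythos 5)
Your proof follows essentially the same route as the paper's: identify freeness of the $N$-action with vanishing of the finite-type quasi-coherent $(G,\O_X)$-module $\Cal Q=\Coker(\O_X\to\phi_*\O_{\Cal S_X})$, use Lemma~\ref{stabilizer-basics.lem} (with $Z\hookrightarrow X$ a monomorphism) to get $\Cal S_Z=\Cal S_X\times_X Z$ so that freeness on $Z$ gives $\Cal I\Cal Q=\Cal Q$, and conclude $\Cal Q=0$ by the $G$-Nakayama lemma (Lemma~\ref{G-nakayama.lem}), after which Lemma~\ref{DG.thm} yields the bundle statement. Your remark that quasi-compactness of $X$ is needed to invoke $G$-Nakayama is a sound observation which the paper leaves implicit (in its application $X$ is affine), but it does not change the argument.
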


\begin{proof}
Let $\Cal C_X$ be the cokernel of $\O_X\rightarrow \phi_*\O_{\Cal S_X}$.
This is a finite-type quasi-coherent $(G,\O_X)$-module.
It suffices to prove that $\Cal C_X=0$.
By Lemma~\ref{G-nakayama.lem}, it suffices to show that 
$j^*\Cal C_X=0$, where $j:Z\hookrightarrow X$ is the inclusion.
As $N$ acts on $Z$ freely, $\Cal C_Z=0$, and hence it suffices to 
show that $\Cal S_Z=\Cal S_X\times_X Z$ in a natural way.
This follows from Lemma~\ref{stabilizer-basics.lem}, 
as $j:Z\rightarrow X$ is a monomorphism.
\end{proof}

\begin{lemma}
Let $G$ be a smooth $S$-group scheme.
\begin{enumerate}
\item[\bf 1] If $X$ is a $G$-scheme, then the reduction $X\red$ has a 
unique $G$-scheme structure such that the inclusion $\mred:
X\red\hookrightarrow X$ 
is a $G$-morphism.
Hence 
$\sqrt{0}=\sqrt[G]{0}$.
If, moreover, $X$ is an LFI-scheme, then the normalization 
$X^\nu$ {\rm\cite[(28.48.12)]{SP}} 
has a unique $G$-scheme structure such that the morphism 
$\nu:X^\nu\rightarrow X$ is a $G$-morphism.
\end{enumerate}
Let $\varphi:X\rightarrow Y$ be a $G$-morphism.
\begin{enumerate}
\item[\bf 2] If $\varphi$ is a $G$-morphism \(resp.\ a principal $G$-bundle\), 
then
$\varphi\red:X\red\rightarrow Y\red$ is a $G$-morphism
\(resp.\ a principal $G$-bundle\).
If, moreover, $\varphi$ is a morphism between LFI-schemes such that
$\varphi(X\an0)\subset Y\an0$, then 
$\varphi^\nu:X^\nu\rightarrow Y^\nu$ is a $G$-morphism
\(resp.\ a principal $G$-bundle\).
\item[\bf 3]
If $\varphi:X\rightarrow Y$ is an algebraic quotient and 
$\varphi\red:X\red\rightarrow Y\red$ is a principal $G$-bundle,
then $\varphi$ is a principal $G$-bundle.
\end{enumerate}
\end{lemma}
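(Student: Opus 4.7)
The plan is to handle the three parts in sequence, with smoothness of $G$ entering each in a different way.

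For part \textbf{1}, the key point is that smoothness of $G\to S$ guarantees that $G\times_S Z$ is reduced whenever $Z$ is reduced. Applied to $Z=X\red$, the composite $G\times X\red\hookrightarrow G\times X\xrightarrow{a_X}X$ has reduced source and therefore factors uniquely through the monomorphism $\mred:X\red\hookrightarrow X$, yielding a map $a_{X\red}:G\times X\red\to X\red$. The action axioms descend from those on $X$ because $\mred$ is a monomorphism, and uniqueness of the $G$-structure on $X\red$ is immediate from the same fact. In particular $\sqrt 0$ is a nilpotent $G$-ideal, hence contained in every maximal $G$-ideal, giving $\sqrt 0\subset\grad 0$; combined with the reverse inclusion recorded in the excerpt this gives $\sqrt 0=\grad 0$. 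For the normalization, smoothness of $G$ makes normalization commute with the base change $G\times X\to X$ (normalization commutes with flat base change with geometrically normal fibers), so $(G\times X)^\nu\cong G\times X^\nu$; the universal property of normalization then lifts $a_X$ uniquely through $\nu:X^\nu\to X$, and the action axioms are verified by the same universal property.

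For part \textbf{2}, the assertion that $\varphi\red$ (respectively $\varphi^\nu$) is a $G$-morphism is formal: apply the reduction (respectively normalization) functor to the $G$-equivariance square, using the identifications from \textbf{1}. When $\varphi$ is a principal $G$-bundle it is in particular smooth (since $G$ is smooth and principal bundles are flat with smooth fibers), so the base change $X\times_Y Y\red\to Y\red$ is smooth over a reduced scheme and hence has reduced total space. As $X\times_Y Y\red\hookrightarrow X$ is a closed immersion with the same underlying topological space as $X$, it must coincide with $X\red$. Hence $\varphi\red$ is the base change of $\varphi$ along $Y\red\hookrightarrow Y$ and so is itself a principal $G$-bundle. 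The normalization version is parallel: smoothness of $\varphi$ forces $X\times_Y Y^\nu$ to be normal, the hypothesis $\varphi(X\an0)\subset Y\an0$ forces the finite map $X\times_Y Y^\nu\to X$ to be an isomorphism at generic points, and the universal property of normalization identifies $X\times_Y Y^\nu$ with $X^\nu$, so $\varphi^\nu$ is principal by base change.

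Part \textbf{3} will be the main obstacle. By \textbf{1}, $\sqrt 0$ is a $G$-ideal contained in $\rad_G(X)$, and the principal-bundle hypothesis on $\varphi\red$ together with Lemma~\ref{stabilizer-basics.lem} yields that $G$ acts freely on $X\red=V(\sqrt 0)$. The plan is to transfer this freeness to $X$ itself, after which the algebraic-quotient hypothesis combined with smoothness of $G$ will force $\varphi$ to be a principal $G$-bundle in the spirit of Lemma~\ref{DG.thm}. The difficulty is that Lemma~\ref{red-free.lem}, which would accomplish this transfer directly, is stated only for an LFF kernel because its proof applies the $G$-Nakayama Lemma~\ref{G-nakayama.lem} to the cokernel $\Cal C_X$ of $\O_X\to\phi_*\O_{\Cal S_X}$, and coherence of $\Cal C_X$ rests on finiteness of $\phi$. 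The technical heart of the argument will be to localize on the affine target $Y$, use the algebraic-quotient structure of $\varphi$ to describe the stabilizer locally, and then deduce freeness of the $G$-action on $X$ from freeness on the closed subscheme $X\red$ using smoothness of $G$ to substitute for the finite-cokernel input of the $G$-Nakayama argument. Once freeness on $X$ is in hand, the principal-bundle conclusion follows along the same lines as in the LFF case.
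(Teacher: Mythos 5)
Your treatment of parts \textbf{1} and \textbf{2} is correct and is essentially the paper's own argument: reducedness of $G\times X\red$ (from smoothness of $G$) plus factorization through the monomorphism $\mred$, the universal property of the normalization for the $X^\nu$ statement, and, for the bundle statements, smoothness of a principal $G$-bundle with smooth $G$, which identifies $X\times_YY\red$ with $X\red$ (resp.\ $X\times_YY^\nu$ with $X^\nu$, using $\varphi(X\an0)\subset Y\an0$), so that $\varphi\red$ (resp.\ $\varphi^\nu$) is a base change of $\varphi$. Two harmless inaccuracies: the paper deduces $\sqrt{0}=\grad{0}$ from the fact that $\sqrt{0}$ is a $G$-ideal, hence coincides with its largest quasi-coherent $G$-submodule $\sqrt{0}^*=\grad{0}$, rather than from containment in maximal $G$-ideals; and the normalization morphism is integral but in general not finite. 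Neither affects the argument.

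Part \textbf{3} is where there is a genuine gap: what you offer is a plan, not a proof. You correctly isolate the two steps the paper uses, namely (i) freeness of the action on $X\red$ implies freeness on $X$, and (ii) a free action with an algebraic quotient is a principal bundle, but you carry out neither. For (i) you propose that smoothness of $G$ should ``substitute for the finite-cokernel input'' of the $G$-Nakayama argument, without saying how; the whole point of Lemma~\ref{red-free.lem} is that finiteness of $\phi:\Cal S_X\rightarrow X$ makes $\Coker(\O_X\rightarrow\phi_*\O_{\Cal S_X})$ of finite type so that Lemma~\ref{G-nakayama.lem} applies, and smoothness of $G$ gives no finiteness of the stabilizer whatsoever. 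For (ii), the ``spirit of Lemma~\ref{DG.thm}'' is the Demazure--Gabriel theorem for finite flat group schemes; for a general smooth group scheme a free action with affine algebraic quotient need not be a torsor (there are well-known free $\Bbb G_a$-actions on affine varieties whose secondary map $\Psi$ fails to be surjective), so smoothness cannot replace finiteness in this step either. The paper's proof of \textbf{3} works because it reads the lemma in the ambient setting of this section, finite (LFF) group schemes, where smooth means \'etale: after reducing to affine $Y$, one has $\sqrt{0}=\grad{0}\subset\rad_G(X)$ by \textbf{1}, Lemma~\ref{red-free.lem} (with $N=G$) transfers freeness from $X\red$ to $X$, and Lemma~\ref{DG.thm} concludes. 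So either import that finiteness hypothesis, as the paper tacitly does, or supply actual arguments for (i) and (ii) in the smooth non-finite case; as written, your part \textbf{3} is unproven.
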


\begin{proof}
{\bf 1} As $G\times X\red$ is reduced by \cite[(10.149.6)]{SP}, 
the composite
\[
G\times X\red \xrightarrow{1_G\times\mred}G\times X\xrightarrow{a}X
\]
uniquely factors through $\mred:X\red\rightarrow X$.
As $X\red$ is $G$-stable in $X$, 
$\sqrt{0}=\sqrt{0}^*=\sqrt[G]{0}$ by \cite[(4.30)]{HM}.
The latter part is proved similarly, using \cite[(10.149.7)]{SP} and
\cite[(28.48.15)]{SP}.

{\bf 2} Consider the diagram
\[
\xymatrix{
G\times X\red \ar[r]^{1_G\times\varphi\red}\ar[d]^a \ar@{}[dr]|{\text{(a)}} &
G\times Y\red \ar[r]^{1_G\times\mred_Y}\ar[d]^a \ar@{}[dr]|{\text{(b)}} &
G\times Y \ar[d]^a \\
X\red \ar[r]^{\varphi\red} &
Y\red \ar[r]^{\mred_Y} &
Y
}.
\]
As the square (b) and the whole rectangle (a)$+$(b) commutes and $\mred_Y$ is
a monomorphism, (a) commutes and $\varphi\red$ is a $G$-morphism.
If $\varphi$ is a principal $G$-bundle, then $\varphi$ is smooth,
and hence $X\times_YY\red$ is reduced.
As $1_X\times\mred_Y: X\times_YY\red\rightarrow X\times_YY=X$ is a 
surjective closed immersion, $X\times_YY\red=X\red$, and 
$\varphi\red:X\red\rightarrow Y\red$ is a base change of $\varphi$.
Hence it is a principal $G$-bundle.

The case of normalization is similar and left to the reader.
Note that 
if $\varphi:X\rightarrow Y$ is a principal $G$-bundle (which is a morphism
between LFI-schemes such that $\varphi(X\an0)\subset Y\an0$),
then $X\times_YY^\nu$ is normal, and 
$(1_X\times \nu_Y)\red:
X\times_Y Y^\nu\rightarrow X\red$ is integral and birational
(for the definition of birational morphisms, see \cite[(28.9.1)]{SP}).

{\bf 3}.
We may assume that $Y$ is affine.
Then $X$ is also affine, and hence 
$\sqrt[G]{0}=\sqrt{0}$ is contained in the $G$-radical $\rad_G(X)$ of $X$
by {\bf 1} and \cite[(4.27)]{HM}.
By assumption, the action of $G$ on $X\red$ is free,
and hence 
the action of $G$ on $X$ is also free by Lemma~\ref{red-free.lem}.
So $\varphi$ is a principal $G$-bundle by Lemma~\ref{DG.thm}.
\end{proof}

\begin{lemma}\label{local-criterion-freeness.lem}
Let $G$ be an LFF $S$-group scheme, and $\varphi:X\rightarrow Y$ be
an algebraic quotient.
Let $U$ be the free locus, and $V=\varphi(U)$.
For $y\in Y$, the following are equivalent.
\begin{enumerate}
\item[\bf 1] $y\in V$.
\item[\bf 2] $X_y:=\varphi^{-1}(y)\rightarrow y$ is a principal $G$-bundle.
\item[\bf 3] The action of $G$ on $X_y$ is free.
\end{enumerate}
\end{lemma}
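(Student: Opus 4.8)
The plan is to prove the cyclic implications $\mathbf{2}\Rightarrow\mathbf{3}\Rightarrow\mathbf{1}\Rightarrow\mathbf{2}$. The implication $\mathbf{2}\Rightarrow\mathbf{3}$ is immediate: if $X_y\to y$ is a principal $G$-bundle, then $\Psi_{G,\varphi_y}$ is an isomorphism, and its base change is the structure map of the stabilizer $\Cal S_{X_y}$, so $\Cal S_{X_y}$ is trivial, which is exactly freeness. The implication $\mathbf{3}\Rightarrow\mathbf{2}$ goes through Lemma~\ref{DG.thm}: the restricted morphism $\varphi_y:X_y\to y$ is still an algebraic quotient by $G_y=G\times_S y$ (the base change of an algebraic quotient is a uniform algebraic quotient by Corollary~\ref{uniform-algebraic-quotient.cor}, since $G$ is quasi-compact quasi-separated), the $y$-group scheme $G_y$ is again LFF, and if the action on $X_y$ is free then Lemma~\ref{DG.thm} says $\varphi_y$ is a principal $G$-bundle. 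So $\mathbf{2}$ and $\mathbf{3}$ are essentially formal once we observe that algebraic quotient and LFF are stable under base change.

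The real content is $\mathbf{1}\Rightarrow\mathbf{2}$ and $\mathbf{2}\Rightarrow\mathbf{1}$. For $\mathbf{1}\Rightarrow\mathbf{2}$: if $y\in V=\varphi(U)$, then $X_y$ is contained in $\varphi^{-1}(V)=U$ (here one uses that $\varphi^{-1}(\varphi(U))=U$, which holds by Lemma~\ref{GIT-remark.lem} since $U$ is $G$-stable and $\varphi$ is a universally submersive geometric quotient with surjective $\Psi$ by Lemma~\ref{DG.thm}), so the action of $G$ on $X_y$ is free by Lemma~\ref{stabilizer-basics.lem}, and then $\mathbf{3}\Rightarrow\mathbf{2}$ applies. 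For $\mathbf{2}\Rightarrow\mathbf{1}$: one shows that if $\varphi_y:X_y\to y$ is a principal $G$-bundle then the free locus $U$ of the action on $X$ actually contains $X_y$. The free locus is the complement of $\supp\Coker(\eta:\O_X\to\phi_*\O_{\Cal S_X})$ where $\phi:\Cal S_X\to X$ is the stabilizer structure map; the point is that forming $\Cal S_X$ and this cokernel is compatible with the base change $X_y\hookrightarrow X$ (again by Lemma~\ref{stabilizer-basics.lem}, since $X_y\to X$ is a monomorphism, $\Cal S_{X_y}=\Cal S_X\times_X X_y$), so $\Coker\eta$ vanishes on $X_y$ iff the action on $X_y$ is free. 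Hence $X_y\subset U$, whence $y=\varphi(X_y)\subset\varphi(U)=V$, i.e.\ $y\in V$.

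The main obstacle I anticipate is purely bookkeeping rather than conceptual: one must be careful that all the relevant constructions (the algebraic quotient property, the stabilizer scheme, the "free locus" as the complement of the support of $\Coker\eta$) commute with the base change along $y\hookrightarrow Y$ and its pullback $X_y\hookrightarrow X$. The stabilizer compatibility is Lemma~\ref{stabilizer-basics.lem} for monomorphisms; the algebraic-quotient compatibility is Corollary~\ref{uniform-algebraic-quotient.cor} together with the remark in the text that $\Psi_{G,\varphi}$ is unchanged under replacing the base by $Y$ (and then further localizing to $y$); and the freeness-detection via $\Coker\eta$ localizes because forming the cokernel and taking support commute with the (flat, indeed arbitrary monomorphism) pullback used here, once $\phi$ is finite — which it is, since $\varphi$ is affine (an algebraic quotient) so $\Cal S_X\to G\times X$ is a closed immersion and $G$ is finite over $S$. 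Assembling these, the proof is short.

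\begin{proof}
$\mathbf{1}\Rightarrow\mathbf{2}$.
By Lemma~\ref{DG.thm}, $\varphi$ is a universally submersive geometric quotient
and $\Psi_{G,\varphi}$ is surjective.
As $U$ is the free locus, it is a $G$-stable open subset of $X$, and by
Lemma~\ref{GIT-remark.lem} we have $\varphi^{-1}(V)=\varphi^{-1}(\varphi(U))=U$.
Hence if $y\in V$, then $X_y=\varphi^{-1}(y)\subset U$, so the action of $G$ on
$X_y$ is free by Lemma~\ref{stabilizer-basics.lem}.
Thus $\mathbf{2}$ follows from $\mathbf{3}\Rightarrow\mathbf{2}$ below.

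$\mathbf{2}\Rightarrow\mathbf{3}$.
If $X_y\rightarrow y$ is a principal $G$-bundle, then $\Psi_{G,\varphi_y}$ is an
isomorphism, and hence its base change along the diagonal
$X_y\rightarrow X_y\times_y X_y$, which is the structure map
$\Cal S_{X_y}\rightarrow X_y$, is an isomorphism.
So $\Cal S_{X_y}$ is trivial, i.e.\ the action of $G$ on $X_y$ is free.

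$\mathbf{3}\Rightarrow\mathbf{2}$.
By Corollary~\ref{uniform-algebraic-quotient.cor}, the base change
$\varphi_y:X_y\rightarrow y$ is an algebraic quotient by the action of
$G_y=G\times_S y$; and $G_y$ is again LFF.
If the action of $G$ on $X_y$ is free, then $\varphi_y$ is a principal
$G_y$-bundle by Lemma~\ref{DG.thm}.

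$\mathbf{2}\Rightarrow\mathbf{1}$.
Assume that $\varphi_y:X_y\rightarrow y$ is a principal $G$-bundle; by the
equivalence $\mathbf{2}\Leftrightarrow\mathbf{3}$, the action of $G$ on $X_y$
is free.
As $\varphi$ is affine, $\Cal S_X\rightarrow G\times X$ is a closed immersion,
and as $G$ is finite over $S$, the structure map $\phi:\Cal S_X\rightarrow X$ is
finite.
Let $\Cal C_X=\Coker(\eta:\O_X\rightarrow\phi_*\O_{\Cal S_X})$, a
finite-type quasi-coherent $(G,\O_X)$-module whose support has complement the
free locus $U$.
Since $X_y\rightarrow X$ is a monomorphism, Lemma~\ref{stabilizer-basics.lem}
gives $\Cal S_{X_y}=\Cal S_X\times_X X_y$, and hence the restriction of
$\Cal C_X$ to $X_y$ is $\Cal C_{X_y}$, which is $0$ because the action on $X_y$
is free.
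Therefore $X_y$ does not meet $\supp\Cal C_X$, i.e.\ $X_y\subset U$, and so
$y=\varphi(X_y)\subset\varphi(U)=V$, that is, $y\in V$.
\end{proof}
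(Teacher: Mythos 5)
Your $\mathbf{2}\Rightarrow\mathbf{3}$ is fine, and your cokernel argument for $\mathbf{2}\Rightarrow\mathbf{1}$ is sound: note that it only uses freeness of the action on $X_y$, so it is really a proof of $\mathbf{3}\Rightarrow\mathbf{1}$, which is the paper's key step. (The paper makes the hidden Nakayama step explicit, affine-locally, using integrality of $A_P\rightarrow B_P$ to get $PB_P\subset\rad(B_P)$; in your formulation the same Nakayama is applied at each $x\in X_y$, using that $\O_{Y,y}\rightarrow\O_{X,x}$ is local and $\Cal C_X$ is of finite type, so ``the restriction to $X_y$ vanishes'' does imply ``$X_y$ misses $\supp\Cal C_X$.'')

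The genuine gap is in your $\mathbf{3}\Rightarrow\mathbf{2}$, on which your $\mathbf{1}\Rightarrow\mathbf{2}$ also leans. You assert that the base change $\varphi_y:X_y\rightarrow y$ is an algebraic quotient by $G_y$, citing Corollary~\ref{uniform-algebraic-quotient.cor}; but that corollary only gives a \emph{uniform} algebraic quotient, i.e.\ stability under \emph{flat} base change, and $\Spec\kappa(y)\rightarrow Y$ is not flat unless $\O_{Y,y}$ is a field. Worse, the asserted statement is simply false in general for LFF group schemes, because taking $G$-invariants does not commute with non-flat base change: for $G=\Bbb Z/2\Bbb Z$ acting on $B=\Bbb Z[x]$ by $x\mapsto -x$, one has $A=B^G=\Bbb Z[x^2]$, and over the point $y$ corresponding to $(2,x^2)$ the fiber ring $\Bbb F_2[x]/(x^2)$ has $G$ acting trivially, so $(B\otimes_A\kappa(y))^G\neq\kappa(y)$. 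With freeness on the fiber the conclusion of $\mathbf{3}\Rightarrow\mathbf{2}$ is of course true, but establishing it is exactly the content of the hard direction $\mathbf{3}\Rightarrow\mathbf{1}$, so your route is circular as written. The repair is the paper's ordering: prove $\mathbf{3}\Rightarrow\mathbf{1}$ first (your cokernel argument already does this), and then get $\mathbf{1}\Rightarrow\mathbf{2}$ directly by observing that, since $U=\varphi^{-1}(V)$ (Lemma~\ref{GIT-remark.lem}), the fiber $X_y\rightarrow y$ is the base change along $y\rightarrow V$ of the principal $G$-bundle $\rho:U\rightarrow V$ of Proposition~\ref{finite-main.prop}, and principal bundles are stable under arbitrary base change; $\mathbf{3}\Rightarrow\mathbf{2}$ is then a consequence of the cycle, not an ingredient, and Lemma~\ref{DG.thm} is not needed for the fiber at all.
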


\begin{proof}
{\bf 1$\Rightarrow$2}.
We have that $\rho:U\rightarrow V$ is a principal $G$-bundle, and
its base change $X_y\rightarrow y$ is also a principal $G$-bundle.

{\bf 2$\Rightarrow $3}.
This is trivial.

{\bf 3$\Rightarrow$1}.
We may assume that $Y=\Spec A$ is affine, and then $X=\Spec B$ is affine
and $A=B^G$.
Let $M$ be the coordinate ring of $\Cal S_X$, and let $C$ be the 
cokernel of $B\rightarrow M$.
Let $P$ be the prime ideal corresponding to $y$.
Since $A_P\rightarrow B_P$ is integral, $PB_P$ is contained in the radical of
$B_P$.
As $X_y\rightarrow X$ is a monomorphism, we have that 
$C\otimes_B(B_P/PB_P)=0$ by the assumption that the action of $G$ on 
$X_y$ is free and Lemma~\ref{stabilizer-basics.lem}.
As $C_P$ is a finite $B_P$-module, $PB_P\subset \rad(B_P)$, and
$PC_P=C_P$, we have that $C_P=0$ by Nakayama's lemma.
So no point of $X_y$ supports $C$.
That is, $X_y\subset U$.
Hence $y\in V$.
\end{proof}

\begin{proposition}\label{free-etale.prop}
Let $G$ be an \'etale finite $S$-group scheme \(in particular, LFF\).
Let $\varphi:X\rightarrow Y$ be an algebraic quotient by $G$.
Assume that 
$X$ and $Y$ are locally Noetherian and $\varphi$ is finite
\(for example, let $S=\Spec k$ with $k$ a field, 
$X$ be locally Noetherian, and 
if the characteristic of $k$ is positive, assume further that
$X$ is $F$-finite, see {\rm Lemma~\ref{F-finite-finite.thm}}\).
Let $U$ be the free locus of the action of $G$, and assume that 
the action is generically free.
Then $U$ agrees with the \'etale locus of $\varphi$.
\end{proposition}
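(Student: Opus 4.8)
Write $E$ for the étale locus of $\varphi$ (an open subset, since $\varphi$ is finite between locally Noetherian schemes). The plan is to prove the two inclusions $U\subseteq E$ and $E\subseteq U$ separately.

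For $U\subseteq E$: by Proposition~\ref{finite-main.prop} the restriction $\rho\colon U\to V$ (with $V=\varphi(U)$ and $U=\varphi^{-1}(V)$) is a principal $G$-bundle. Since $G$ is étale finite, $G_S\to S$ is finite étale, hence so is the trivial bundle $G_V\to V$; a principal $G$-bundle is fppf-locally isomorphic to the trivial one, and the property ``finite étale'' satisfies fppf descent, so $\rho$ is finite étale. As $\varphi|_U$ is $\rho$ followed by the open immersion $V\hookrightarrow Y$, we get $U\subseteq E$. (Note that ``generically free'' is not needed here.)

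For $E\subseteq U$ the first point is that $E$ is a $G$-stable open subset which is \emph{saturated}, i.e.\ $E=\varphi^{-1}(\varphi(E))$. $G$-stability holds because $G$ is étale, so both the action $a\colon G\times X\to X$ and the projection $p_2\colon G\times X\to X$ are étale and satisfy $\varphi a=\varphi p_2$; the cancellation property of étale morphisms then gives that $\varphi$ is étale at $gx$ if and only if it is étale at $x$. For saturation: $\varphi$ is a uniform algebraic quotient by Corollary~\ref{uniform-algebraic-quotient.cor}, so for each $y\in Y$ the geometric fibre $\bar X_y:=X\times_Y\Spec\overline{\kappa(y)}\to\Spec\overline{\kappa(y)}$ is an algebraic quotient by the constant finite group $\bar G_y:=G\times_S\Spec\overline{\kappa(y)}$; since the base is connected, Lemma~\ref{connected.lem} shows $\bar X_y$ is $\bar G_y$-connected, and a finite (hence discrete) scheme that is connected under a constant finite group consists of a single orbit, so $\bar G_y$ acts transitively on the points of $\bar X_y$. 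As $E$ is $G$-stable, its preimage in $\bar X_y$ is $\bar G_y$-stable, hence empty or all of $\bar X_y$; thus $E$ meets each fibre of $\varphi$ in $\emptyset$ or the whole fibre, which is saturation. Then $X\setminus E$ is saturated too, $\varphi(X\setminus E)$ is closed ($\varphi$ being finite), and $V':=Y\setminus\varphi(X\setminus E)$ is an open subset of $Y$ with $\varphi^{-1}(V')=E$ and $\varphi(E)=V'$.

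Next one checks that $\varphi|_E\colon E\to V'$ is an algebraic quotient by $G$: it is affine (being finite), and $((\varphi|_E)_*\O_E)^G=((\varphi_*\O_X)^G)|_{V'}=\O_{V'}$ because $(?)^G$ commutes with restriction to the open subset $V'$ and $\varphi$ is an algebraic quotient. Moreover $\varphi|_E$ is finite étale, so $y'\mapsto\dim_{\kappa(y')}E_{y'}$ is locally constant on $V'$. Since the action is generically free, $U$ is $0$-large, so $V'\supseteq V\supseteq Y\an0$ is dense and $X\an0\subseteq U\subseteq E$; for a generic point $\eta\in Y\an0$ one has $E_\eta=X_\eta\subseteq U$, so $G$ acts freely on $E_\eta$, whence the constant group $\bar G_\eta$ acts freely—and, by the transitivity established above, simply transitively—on the reduced (as $\varphi|_E$ is étale) finite scheme $\bar E_\eta$, giving $\dim_{\kappa(\eta)}E_\eta=|\bar G_\eta|$. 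Each connected component of $V'$ contains such a generic point, and both the fibre length of $\varphi|_E$ and the order of $\bar G_{(\cdot)}$ (the latter via local constancy of the rank of the LFF group $G$) are locally constant, so $\dim_{\kappa(y')}E_{y'}=|\bar G_{y'}|$ holds for every $y'\in V'$. Hence the transitive action of $\bar G_{y'}$ on the $|\bar G_{y'}|$ points of $\bar E_{y'}$ is simply transitive, so $G$ acts freely on $E_{y'}$ (freeness descends along the faithfully flat $\bar E_{y'}\to E_{y'}$ by Lemma~\ref{stabilizer-basics.lem}). Applying Lemma~\ref{local-criterion-freeness.lem} to the algebraic quotient $\varphi|_E$, whose free locus equals $U\cap E=U$ by Lemma~\ref{stabilizer-basics.lem}, we conclude $y'\in V$ for every $y'\in V'$, i.e.\ $V'\subseteq V$, so $E=\varphi^{-1}(V')\subseteq\varphi^{-1}(V)=U$. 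Together with the first inclusion this gives $U=E$.

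I expect the main obstacle to be the saturation of $E$ together with the determination of the fibre length of $\varphi|_E$: this is exactly where one must pass to geometric fibres, use the uniform-algebraic-quotient property and Lemma~\ref{connected.lem} to obtain transitivity of the constant group on fibres, and combine this with local constancy of the rank of a finite étale morphism. The other ingredients—fppf descent of ``finite étale'', descent of freeness via Lemma~\ref{stabilizer-basics.lem}, compatibility of $(?)^G$ with open restriction, and the verification that $\varphi|_E$ is an algebraic quotient—are routine.
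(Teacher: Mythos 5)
Your overall strategy is essentially the paper's: the inclusion $U\subseteq E$ via the principal bundle $\rho$ and descent of ``finite \'etale,'' and the reverse inclusion via a degree count at generic points coming from generic freeness, local constancy of the degree of the finite \'etale map $\varphi|_E$ and of the rank of $G$, transitivity of the group on fibres, and finally Lemma~\ref{local-criterion-freeness.lem}. The paper packages the count as the equality $r=r'$ of the ranks of the projective modules $B$ over $A$ and of the coordinate ring of $G$ (Proposition~\ref{generically-free.prop}) and then shows the fibrewise map $\Psi_y$ is an isomorphism by injectivity (reducedness of \'etale algebras) plus a dimension count; your counting of geometric points together with simple transitivity is the same computation in different clothing, and your explicit verification that $E$ is saturated and that $\varphi|_E$ is again an algebraic quotient makes precise a reduction the paper passes over quickly.

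There is, however, one step whose justification fails as written. To obtain transitivity of $\bar G_y$ on the geometric fibre (and from it the saturation of $E$) you base change $\varphi$ along $\Spec\overline{\kappa(y)}\rightarrow Y$ and invoke Corollary~\ref{uniform-algebraic-quotient.cor} to conclude that the geometric fibre is again an algebraic quotient. ``Uniform'' only covers \emph{flat} base change, and $\Spec\overline{\kappa(y)}\rightarrow Y$ is not flat unless $\O_{Y,y}$ is a field, so for a general $y$ the corollary says nothing about this base change (and taking invariants need not commute with it). The facts you actually need are available in the paper and should be cited instead: by Lemma~\ref{DG.thm} the algebraic quotient $\varphi$ is a universally submersive geometric quotient, in particular $\Psi:G\times X\rightarrow X\times_YX$ is surjective, which gives exactly the transitivity of the geometric group on geometric fibres; and by Lemma~\ref{GIT-remark.lem} any $G$-stable open subset such as $E$ satisfies $\varphi^{-1}(\varphi(E))=E$ with $\varphi(E)$ open, which is your saturation statement. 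With these substitutions the rest of your argument goes through. A smaller slip of the same kind: you cite Lemma~\ref{stabilizer-basics.lem} to descend freeness along $\bar E_{y'}\rightarrow E_{y'}$, but that map is not a monomorphism, so the lemma does not apply; what you need is that the stabilizer, being a fibre product, commutes with extension of the ground field, after which triviality over $\overline{\kappa(y')}$ gives triviality over $\kappa(y')$ by faithfully flat descent of the unit section being an isomorphism.
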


\begin{proof}
We may assume that $Y=\Spec A$ is affine and connected.
So $X=\Spec B$ is affine $G$-connected and $A=B^G$.

Set $V=\varphi(U)$.
Let $\rho:U\rightarrow V$ be the restriction of $\varphi$.
Then $\rho:U\rightarrow V$ is a principal $G$-bundle.
As $G$ is \'etale, $\rho$ is \'etale, and hence $U$ is contained in
the \'etale locus.

We prove the opposite incidence.
Let $U'$ be the \'etale locus of $\varphi$.
Then it is a $G$-stable open subset.
We have shown that $U\subset U'$.
So to prove that $U'=U$, replacing $X$ by $U'$, we may assume that
$\varphi$ is \'etale, and we need to prove that $U=X$.
Again, we may assume that $X=\Spec B$ and $Y=\Spec A$ are affine with $A=B^G$,
and $Y$ is connected.
We may assume that $S=Y$.
Note that $B$ is a finite projective $A$-module.
As $Y$ is connected, it has a well-defined rank, say $r$.
On the other hand, the coordinate ring $\Gamma$ of $A$ has a finite projective
module.
Let $r'$ be its rank.
As the action of $G$ on $X$ is generically free, we have $r=r'$ by
Proposition~\ref{generically-free.prop}.
Let $y\in Y$.
Consider the base change of the map
\[
\Psi:G\times X\rightarrow X\times X \qquad((g,x)\mapsto (gx,x))
\]
by $y\rightarrow Y=S$.
It is 
\[
\Psi_y: G_y\times_y X_y\rightarrow X_y\times_y X_y.
\]
This map is surjective, since $\Psi$ is.
The map $\Psi_y$ is a map between affine schemes corresponding to the
$\kappa(y)$-algebra map between \'etale $\kappa(y)$-algebras
\[
I: B(y)\otimes_{\kappa(y)}B(y)\rightarrow \Gamma(y)\otimes_{\kappa(y)}B(y),
\]
where $?(y)$ means $?\otimes_A \kappa(y)$.
This map $I$ is injective, since the algebras are reduced and the 
corresponding map $\Psi_y$ is surjective.
On the other hand, the source and the target of $I$ are both of dimension 
$r^2$ over $\kappa(y)$.
So $I$ must be an isomorphism.
This shows that $X_y\rightarrow y$ is a principal $G$-bundle.
By Lemma~\ref{local-criterion-freeness.lem}, $y\in V$.
As $y$ is an arbitrary point of $Y$, we have that $V=Y$, and hence $U=X$.
\end{proof}

\begin{corollary}
Under the assuptions of {\rm Proposition~\ref{free-etale.prop}}, 
assume further that $X$ is regular and the action of $G$ is small.
Then the singular locus of $Y$ is $Y\setminus \varphi(U)$.
\end{corollary}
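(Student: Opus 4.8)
The plan is to prove the equivalent statement that $\varphi(U)$ is exactly the regular locus of $Y$, by establishing the two inclusions $\Reg(Y)\subseteq\varphi(U)$ and $\varphi(U)\subseteq\Reg(Y)$. Write $V:=\varphi(U)$. By Proposition~\ref{finite-main.prop} (whose hypotheses hold here) $V$ is open in $Y$, one has $U=\varphi^{-1}(V)$, and $\rho\colon U\to V$ is a principal $G$-bundle; moreover, since the action is small, $U$ is large in $X$ and $V$ is large in $Y$, so $X\setminus U$ has codimension $\ge 2$ in $X$ and $Y\setminus V$ has codimension $\ge 2$ in $Y$. Finally, by Proposition~\ref{free-etale.prop}, $U$ coincides with the \'etale locus of $\varphi$ (note that the smallness hypothesis of the Corollary forces the generic freeness hypothesis of that Proposition).

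First I would show $V\subseteq\Reg(Y)$. As $G$ is \'etale over $S$, the principal $G$-bundle $\rho\colon U\to V$ is an \'etale, hence faithfully flat, surjection. Since $U$ is open in the regular scheme $X$, it is regular, and regularity descends along $\rho$: for $u\in U$ with image $v\in V$ the local homomorphism $\O_{V,v}\to\O_{U,u}$ is flat and $\O_{U,u}$ is regular, hence $\O_{V,v}$ is regular. Thus $V\subseteq\Reg(Y)$, equivalently $\Sing(Y)\subseteq Y\setminus\varphi(U)$.

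The reverse inclusion $Y\setminus V\subseteq\Sing(Y)$ I would obtain from Zariski--Nagata purity of the branch locus. The non-\'etale locus of $\varphi$ in $X$ is $X\setminus U$ by Proposition~\ref{free-etale.prop}; since $X\setminus U=\varphi^{-1}(Y\setminus V)$ and $\varphi$ is surjective, its image is $Y\setminus V$, which by Proposition~\ref{finite-main.prop}, \textbf{1} (finite morphisms preserve codimension) has codimension $\ge 2$ in $Y$. Also $\varphi$ is generically \'etale, because the small action is generically free. Suppose for contradiction that some $y\in Y\setminus V$ lies in $\Reg(Y)$. Then $A:=\O_{Y,y}$ is regular; the semilocalization $B$ of $X$ along $\varphi^{-1}(y)$ is finite over $A$ and, since $X$ is regular (hence normal), is a finite product of normal local domains, and $\Spec B\to\Spec A$ is \'etale over every point of $\Spec A$ of codimension $\le 1$ (such points lie outside $Y\setminus V$) as well as generically \'etale. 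Purity of the branch locus then forces $\Spec B\to\Spec A$ to be \'etale, i.e. $\varphi^{-1}(y)\subseteq U$, i.e. $y\in V$, contradicting $y\notin V$. Hence every point of $Y\setminus V$ is singular, and combining with the first inclusion yields $\Sing(Y)=Y\setminus\varphi(U)$.

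The main obstacle I expect is the codimension bookkeeping feeding into the purity step: one must be sure that $Y\setminus V$ really has codimension $\ge 2$ (this is exactly where smallness and Proposition~\ref{finite-main.prop}, \textbf{1} are used) and that $\varphi$ satisfies the normality, generic-\'etaleness and \'etale-in-codimension-one hypotheses of the purity theorem; once these are in place the argument is formal. Should one prefer to avoid quoting purity directly, an essentially equivalent route is available: $X$ is Cohen--Macaulay, so over $\Reg(Y)$ the finite morphism $\varphi$ is flat by miracle flatness, and the different of a finite flat generically \'etale morphism between regular schemes is an invertible ideal, hence defines a divisor; this would again contradict the fact that the ramification locus $X\setminus U$ has codimension $\ge 2$.
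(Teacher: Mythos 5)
Your proposal is correct and follows essentially the same route as the paper's proof: the inclusion $\varphi(U)\subseteq\Reg(Y)$ by descent of regularity along the faithfully flat (\'etale) principal $G$-bundle $\rho$, and the reverse inclusion by applying Zariski--Nagata purity of the branch locus to the finite extension $\O_{Y,y}\rightarrow(\varphi_*\O_X)_y$ at a regular point $y$, using that smallness makes the branch locus of codimension at least two and that $U$ is exactly the \'etale locus (Proposition~\ref{free-etale.prop}). The only difference is cosmetic: you argue the second inclusion by contradiction, while the paper proves it directly.
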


\begin{proof}
Set $V=\varphi(U)$.
Let $V'$ be the regular locus of $Y$.
We want to prove that $V=V'$.

As $U$ is regular and $U\rightarrow V$ is faithfully flat, $V$ is regular.
So $V\subset V'$.

Let $y\in V'$, $A=\Cal O_{Y,y}$, and $B=(\varphi_*\O_X)_y$.
Then by the smallness assumption, the branch locus of $B$ over $A$ has
codimension at least two.
By the purity of branch locus \cite[(X.3.1)]{SGA-1},
$B$ is \'etale over $A$.
That is, $y\in V$, and $V\supset V'$.
\end{proof}

\begin{lemma}\label{finite-invariance-S_2.lem}
Let $G$ be an LFF $S$-group scheme, and $\varphi:X\rightarrow Y$ an
algebraic quotient by $G$.
Assume that $X$ and $Y$ are locally Noetherian.
\begin{enumerate}
\item[\bf 1] If $\M$ is a quasi-coherent $\O_X$-module which satisfies 
$(S'_n)$, then $\varphi_*\M$ satisfies $(S'_n)$.
\item[\bf 2] 
If a quasi-coherent $(G,\O_X)$-module $\M$ on 
$X$ satisfies the $(S'_2)$ condition, 
then $(\varphi_*\M)^G$ satisfies the $(S'_2)$ condition.
\end{enumerate}
\end{lemma}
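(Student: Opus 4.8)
```latex
\begin{proof}
The plan is to reduce both assertions to statements about depth over local rings, using the fact that $\varphi$ is a finite morphism (Lemma~\ref{DG.thm}, since an LFF group scheme over a locally Noetherian base gives an LFF base-changed group scheme, so an algebraic quotient is integral, and being of finite type, finite). The key point is that the invariance functor $(?)^G$ applied to a coherent $(G,\O_X)$-module is a direct summand subalgebra-type construction that does not decrease depth, once we know $\varphi$ is finite with zero-dimensional fibers.

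First I would prove {\bf 1}. As $\varphi$ is finite, the question is local on $Y$, so assume $Y=\Spec A$ and $X=\Spec B$ with $B$ module-finite over $A$. Then $A\to B$ is an integral homomorphism whose fibers are zero-dimensional, and since $\varphi$ is open (Lemma~\ref{DG.thm}), the going-down theorem holds between $A$ and $B$ (as in the proof of Proposition~\ref{finite-main.prop}, {\bf 1}). So Lemma~\ref{finite-depth.lem} applies directly: if the $B$-module $M=\Gamma(X,\M)$ satisfies $(S'_n)$ as a $B$-module, then it satisfies $(S'_n)$ as an $A$-module, which is exactly the statement that $\varphi_*\M$ satisfies $(S'_n)$.

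Next I would prove {\bf 2}. By {\bf 1}, $\varphi_*\M$ satisfies the $(S'_2)$ condition as an $\O_Y$-module. Now $G$ acts trivially on $Y$, and $(\varphi_*\M)^G$ is the $G$-invariance of the quasi-coherent $(G,\O_Y)$-module $\varphi_*\M$ (here one uses that $\varphi_*$ of a $(G,\O_X)$-module is naturally a $(G,\O_Y)$-module, $G$ acting trivially on $Y$; this is the standard compatibility, and $(?)^G$ of it is quasi-coherent by Lemma~\ref{invariance-qcqs.lem}). So the claim follows once we know: if $\N$ is a quasi-coherent $(G,\O_Y)$-module over a scheme $Y$ with trivial $G$-action, with $G$ flat quasi-compact over $S$, and $\N$ satisfies $(S'_2)$ as an $\O_Y$-module, then $\N^G$ satisfies $(S'_2)$. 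This is exactly Lemma~\ref{invariance-S_2.lem} (note the excerpt states Lemma~\ref{invariance-S_2.lem} with $G$ flat quasi-compact over $S$, which is satisfied here since $G$ is LFF). Combining, $(\varphi_*\M)^G$ satisfies $(S'_2)$.

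The main obstacle, and the only genuinely non-formal point, is verifying the hypotheses of Lemma~\ref{finite-depth.lem} in step~{\bf 1}: one must confirm that $\varphi$ being an algebraic quotient by an LFF group scheme forces $\varphi$ finite \emph{and} forces the going-down property. Both are supplied by Lemma~\ref{DG.thm} together with the argument already run in Proposition~\ref{finite-main.prop}, {\bf 1}; beyond invoking those, everything is a direct application of Lemma~\ref{finite-depth.lem} and Lemma~\ref{invariance-S_2.lem}, so I would simply write ``This follows from Lemma~\ref{DG.thm}, Lemma~\ref{finite-depth.lem}, and Lemma~\ref{invariance-S_2.lem}'' after spelling out the reduction to the affine case.
\end{proof}
```
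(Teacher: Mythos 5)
Your proof is correct and follows essentially the same route as the paper: part {\bf 1} is exactly the combination of Lemma~\ref{DG.thm} (integral, universally open, hence going-down as in the proof of Proposition~\ref{finite-main.prop}) with Lemma~\ref{finite-depth.lem}, and part {\bf 2} is {\bf 1} together with Lemma~\ref{invariance-S_2.lem}. One small caveat: your opening claim that $\varphi$ is \emph{finite} (``integral, and being of finite type, finite'') is unjustified, since no finite-type hypothesis on $\varphi$ is available in this lemma and Lemma~\ref{DG.thm} only yields that $\varphi$ is integral; this is harmless, however, because nothing in your argument actually needs module-finiteness --- the reduction to the affine case only uses that $\varphi$ is affine, and Lemma~\ref{finite-depth.lem} only requires zero-dimensional fibers (integrality) and going-down (openness), and it allows possibly infinite $B$-modules, so quasi-coherence of $\M$ is enough.
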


\begin{proof}
{\bf 1} follows from Lemma~\ref{finite-depth.lem} and
Lemma~\ref{DG.thm}.
{\bf 2} follows from {\bf 1} and Lemma~\ref{invariance-S_2.lem}.
\end{proof}

From the results we obtained so far, we can state the following.

\begin{theorem}\label{finite-resume.thm}
Let $S$ be 
a scheme, and $f:G\rightarrow H$ an fppf finite homomorphism between
flat $S$-group schemes, and $N=\Ker f$.
Let $\varphi:X\rightarrow Y$ be a $G$-morphism which is an algebraic
quotient by the action of $N$.
Assume 
that the action of $N$ on $X$ is small.
Let $U$ be the free locus of the action of $N$ on $X$, and $V:=\varphi(U)$.
Then we have the following.
\begin{enumerate}
\item[\bf 0] $\varphi$ is a $G$-enriched almost principal $N$-bundle
with respect to $U$ and $V$.
\item[\bf 1] (cf.~\cite[Theorem~2.4]{HN})
  Assume that $X$ is locally Krull.
Then $Y$ is also locally Krull, and $(\varphi^*?)^{**}
:\Ref(H,Y)\rightarrow \Ref(G,X)$ and
$(\varphi_*?)^N:\Ref(G,X)\rightarrow \Ref(H,Y)$ are quasi-inverse each other.
In particular, for $\L,\M\in\Ref(G,X)$, $\L\cong \M$ if and only if 
$(\varphi_*\L)^N\cong (\varphi_*\M)^N$ in $\Ref(H,Y)$.
$\M$ is indecomposable in $\Ref(G,X)$ if and only if 
$(\varphi_*\M)^N$ is so in $\Ref(H,Y)$.
This equivalence induces an isomorphism between $\Cl(H,Y)$ and $\Cl(G,X)$.
\item[\bf 2] Assume that $X$ is quasi-compact quasi-separated and 
locally Krull.
Then there is an exact sequence
\[
0\rightarrow H^1(N,\O_X^\times)\rightarrow \Cl(Y)\rightarrow \Cl(X)^N
\rightarrow H^2(N,\O_X^\times).
\]
\item[\bf 3] Assume that $G$ is of finite type.
Let $Y_0$ be a fixed Noetherian 
$H$-scheme with a fixed $H$-dualizing complex $\Bbb I_{Y_0}$, and
assume that $\varphi$ is a morphism in $\Cal F(G,Y_0)$.
Then $\varphi$ is finite, and
we have
\[
\omega_Y\cong (\varphi_*(\omega_X\otimes_{\O_X}\Theta_{N,X}))^N
\cong (\varphi_*\omega_X\otimes_{\O_Y}\Theta_{N,Y})^N
\]
as $(H,\O_Y)$-modules.
If, moreover, $X$ 
has a coherent $(G,\O_X)$-module $\M_X$ which is a full $2$-canonical module,
then
we have
\[
\omega_X\cong (\varphi^*\omega_Y)^{\vee\vee}\otimes_{\O_X}\Theta_{N,X}^*
\cong (\varphi^*(\omega_Y\otimes_{\O_Y}\Theta_{N,Y}^*))^{\vee\vee}
\]
as $(G,\O_X)$-modules.
\item[\bf 4] In {\bf 3}, 
If $\Theta_{N,Y_0}\cong\O_{Y_0}$ \(e.g., 
$N$ is \'etale, $N$ is Reynolds, or $S=\Spec k$ with $k$ a field and $G$ centralizes $N$\) ,
then $\omega_Y\cong (\varphi_*\omega_X)^N$ as $(H,\O_Y)$-modules.
If, moreover, $X$ 
has a coherent $(G,\O_X)$-module $\M_X$ which is a full $2$-canonical module,
then we have
$\omega_X\cong (\varphi^*\omega_Y)^{\vee\vee}$ as $(H,\O_X)$-modules.
\item[\bf 5] In {\bf 3}, assume that $\Theta_{N,Y_0}\cong\O_{Y_0}$.
Let $\L$ be an $H$-linearized invertible sheaf on $Y$.
Then
$\omega_X \cong \varphi^*\L$ as $(G,\O_X)$-modules if and only if
$\omega_Y\cong \L$ as $(H,\O_Y)$-modules and $X$ satisfies the 
$(S_2)$ condition.
If so, both $X$ and $Y$ are quasi-Gorenstein.
\item[\bf 6] Let the assumptions be as in {\bf 3}.
Then $\omega_X\cong \varphi^*\L$ for some $H$-linearized invertible sheaf
on $Y$ if and only if $\omega_Y$ is an invertible sheaf and $X$ satisfies
the $(S_2)$ condition.
If so, then both $X$ and $Y$ are quasi-Gorenstein.
If, moreover, $Y$ is connected, then these conditions are 
equivalent to say that
$Y$ is quasi-Gorenstein and $X$ satisfies $(S_2)$.
\end{enumerate}
\end{theorem}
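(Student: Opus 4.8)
The plan is to prove Theorem~\ref{finite-resume.thm} essentially as a corollary of the general results of Chapter~1, by first verifying that the hypotheses place us in the situation of the rational almost principal bundle machinery and then quoting the relevant statements one by one.

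First I would establish \textbf{0}. Since $f:G\rightarrow H$ is fppf finite, $N=\Ker f$ is LFF, and $\varphi$ is an algebraic quotient by $N$ which is a $G$-morphism. By Lemma~\ref{small-almost-principal.thm}, the free locus $U$ of the action of $N$ on $X$ is $G$-stable, $\rho:U\rightarrow V=\varphi(U)$ is a $G$-enriched principal $N$-bundle, and because the action is small (i.e. $1$-small), $U$ is $1$-large in $X$; Proposition~\ref{finite-main.prop}, \textbf{3} then gives that $V$ is $1$-large in $Y$ and that $\varphi$ is a $G$-enriched almost principal $N$-bundle with respect to $U$ and $V$. This is the hook into everything that follows.

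Next I would run through \textbf{1}--\textbf{6} as applications. For \textbf{1}: when $X$ is locally Krull, Theorem~\ref{alg-quot-krull-equiv.thm} (applicable since $N$ is LFF hence qcqs and $\varphi$ is finite hence qcqs) gives $\O_Y\cong(\varphi_*\O_X)^N$, hence $Y$ is locally Krull; then Corollary~\ref{almost-main.thm} (the $G$-enriched version via Theorem~\ref{main.thm}) yields the equivalence $\Ref(H,Y)\cong\Ref(G,X)$ with the stated quasi-inverses, the consequences on isomorphism classes and indecomposability being formal, and the class group isomorphism being part of Theorem~\ref{main.thm}. For \textbf{2}: apply Theorem~\ref{four-term.thm} with the trivial $f:N\rightarrow e$ to the underlying rational almost principal $N$-bundle; here $\rho$ is quasi-compact (as $N\rightarrow S$ is finite hence qc) and universally open (Lemma~\ref{DG.thm}), so the hypotheses are met. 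For \textbf{3}: with $G$ of finite type and $\varphi$ a morphism in $\Cal F(G,Y_0)$, finiteness of $\varphi$ follows from Lemma~\ref{DG.thm} (it is integral) together with being of finite type; the canonical module isomorphisms are exactly Corollary~\ref{canonical-cor.thm}, \textbf{1} (note $N$ is finite hence trivially separated with fixed relative dimension $0$). For \textbf{4}: invoke Remark~\ref{knop-remark.thm}, \textbf{1},\textbf{2},\textbf{3} to see $\Theta_{N,Y_0}\cong\O_{Y_0}$ in the listed cases, and specialize \textbf{3}. For \textbf{5} and \textbf{6}: these are Corollary~\ref{canonical-cor.thm}, \textbf{2} and \textbf{3} respectively, specialized to $\Theta_{N,Y_0}\cong\O_{Y_0}$ (for \textbf{5}) and in general (for \textbf{6}), using Corollary~\ref{S_2-dualizing-quasinormal.cor} and Lemma~\ref{ogoma.lem} for the connected case in \textbf{6}.

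The main obstacle is not any single deep step but rather the bookkeeping: making sure at each point that the finiteness/quasi-compactness/quasi-separatedness hypotheses needed by Theorem~\ref{alg-quot-krull-equiv.thm}, Theorem~\ref{four-term.thm}, Proposition~\ref{S_2-main.prop}, and Corollary~\ref{canonical-cor.thm} are genuinely supplied by "fppf finite homomorphism" plus the hypotheses on $X$, and that the $G$-enriched versions (rather than the bare $N$-versions) are the ones being quoted so that the $(H,\O_Y)$- and $(G,\O_X)$-module structures in the conclusions are correct. A secondary point requiring a little care is that in \textbf{3}--\textbf{6} one must check $\varphi$ really lands in $\Cal F(G,Y_0)$ and that $X$ inheriting a full $2$-canonical $(G,\O_X)$-module $\M_X$ (when assumed) is compatible with Lemma~\ref{almost-double-dual-S_2.lem}, \textbf{2}. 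Once these verifications are in place, the theorem is a direct assembly of the cited results, so I would keep the proof short and mostly a sequence of references.
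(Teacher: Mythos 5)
Your overall route is the same as the paper's: item \textbf{0} via Lemma~\ref{small-almost-principal.thm} (with Proposition~\ref{finite-main.prop}), \textbf{1} via locally-Krull descent plus Corollary~\ref{almost-main.thm} and Theorem~\ref{main.thm}, \textbf{2} via Theorem~\ref{four-term.thm}, and \textbf{3}--\textbf{6} via Corollary~\ref{canonical-cor.thm} together with Remark~\ref{knop-remark.thm}; your hypothesis bookkeeping is essentially fine (in \textbf{1}, note that $\bar\eta$ being an isomorphism is the definition of an algebraic quotient, so Theorem~\ref{alg-quot-krull-equiv.thm} is only used in the direction giving that $Y$ is locally Krull, which is how the paper uses \cite[(6.3)]{Hashimoto4}).

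There is, however, one step you gloss over in \textbf{5} and \textbf{6}: these are not verbatim specializations of Corollary~\ref{canonical-cor.thm}, \textbf{2} and \textbf{3}. In that corollary the condition on the $X$-side is ``$\omega_X\cong\varphi^*\L$ \emph{and $Y$ satisfies $(S_2)$},'' whereas in the theorem the ``only if'' direction starts from $\omega_X\cong\varphi^*\L$ alone. To bridge the two statements you must observe that $\omega_X\cong\varphi^*\L$ invertible forces $X$ to be quasi-Gorenstein, hence $(S_2)$, and that $X$ being $(S_2)$ implies $Y$ is $(S_2)$: since $\varphi$ is a finite algebraic quotient by the LFF group scheme $N$ and $\O_Y\cong(\varphi_*\O_X)^N$, this is Lemma~\ref{finite-invariance-S_2.lem}. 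This is exactly the extra remark the paper's proof inserts before invoking Corollary~\ref{canonical-cor.thm}; without it the cited corollary does not yield the equivalences as stated in \textbf{5} and \textbf{6}. Once this one-line observation is added, your assembly is complete and coincides with the paper's argument.
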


\begin{proof}
{\bf 0} See Lemma~\ref{small-almost-principal.thm}.

{\bf 1} Note that $Y$ is also locally Krull by \cite[(6.3)]{Hashimoto4}.
Now the result follows from Corollary~\ref{almost-main.thm}.

{\bf 2} follows from Theorem~\ref{four-term.thm}.

Considering the fact that $X$ is $(S_2)$ implies $Y$ is $(S_2)$ 
(by Lemma~\ref{finite-invariance-S_2.lem}),
{\bf 3, \bf 4, \bf 5, \bf 6} 
are immediate from Corollary~\ref{canonical-cor.thm}.
For the conditions for $\Theta_{N,Y_0}$ to be trivial, see
Remark~\ref{knop-remark.thm}.
\end{proof}

\paragraph\label{a-invariant.par}
Let $k$ be a field, and $B=\bigoplus_{n\in\Bbb Z}B_n$
be a finitely generated positively graded $k$-algebra (that is, 
$B_n=0$ for $n<0$ and $B_0=k$).
Let $\omega_B$ denote the canonical module of $B$.
The base scheme $S$ is $\Spec k$, the group $G$ is the torus $\Bbb G_m$,
and the $G$-dualizing complex of $S$ is fixed to
$k$ (concentrated in degree zero).
Then $\omega_B$ is a finitely generated
$\Bbb Z$-graded nonzero $B$-module.
So
\[
a=a(B)=-\min\{n\in\Bbb Z\mid \omega_{B,n}\neq 0\}
\]
is well-defined.
The integer $a$ is called the {\em $a$-invariant} of Goto--Watanabe
\cite[(3.1.4)]{GW}.

\paragraph
If $B$ is a quasi-Gorenstein Noetherian $\Bbb Z^n$-graded $k$-algebra
such that there exists some homomorphism $h:\Bbb Z^n\rightarrow \Bbb Z$
such that $B$ is positively graded with respect to the induced $\Bbb Z$-grading.
Then there exists some unique 
$a\in\Bbb Z^n$ such that $\omega_B\cong B(a)$ as
$\Bbb Z^n$-graded $B$-modules.
We also call $a$ the $a$-invariant of $B$.
This definition is consistent with the one in (\ref{a-invariant.par}) 
when $n=1$ and $B$ is positively graded.

\begin{example}\label{finite-graded-canonical.ex}
Let $k$ be a field, and $N$ a finite $k$-group scheme.
Let $G=N\times \Bbb G_m$ and $H=\Bbb G_m$.

Let $B$ be a $G$-algebra, and $\varphi:X=\Spec B\rightarrow \Spec A=Y$ 
be the algebraic quotient, where $A=B^N$.
Assume that the action of $N$ on $X$ is small.
Note that $B$ is a $\Bbb Z$-graded $N$-algebra.
Assume that $B$ is positively graded (that is, $B=\bigoplus_{n\geq 0}B_n$
with $B_0=k$).
Let $k(a)$ be the one-dimensional $G$-module which is the 
one-dimensional $\Bbb G_m$-module concentrated in degree $-a$ and is
trivial as an $N$-module, and set $B(a)=B\otimes k(a)$.

If $N$ is either \'etale; linearly reductive; or abelian, then
$\omega_B^N=\omega_A$.
By Theorem~\ref{finite-resume.thm}, {\bf 5}, 
$\omega_B\cong B(a)$ as $(G,B)$-modules if and only if
$B$ is $(S_2)$ and $\omega_A\cong A(a)$ as graded $A$-modules, that is, 
$B$ is $(S_2)$ and $A$ is quasi-Gorenstein with the $a$-invariant $a$.

Even if $G$ is a general finite $k$-group scheme, 
$\omega_B\cong B$ as $(N,B)$-modules if and only if $A$ is quasi-Gorenstein
and $B$ is $(S_2)$, 
by Theorem~\ref{finite-resume.thm}, {\bf 6}.
The author does not know if the $a$-invariants of $A$ and $B$ agree in general.
\end{example}

\begin{example}\label{finite-polynomial.ex}
Let $B$ be the polynomial ring $k[x_1,\ldots,x_d]$ with $\deg x_i=1$ in
Example~\ref{finite-graded-canonical.ex} above.
As above, assume that $G$ acts on $B$ (that is, $N$ acts on $B$ linearly).
As in Example~\ref{finite-graded-canonical.ex},
assume that 
the action of $N$ on $X$ is small.
Set $A=B^N$.
Then by Theorem~\ref{finite-resume.thm}, 
\begin{enumerate}
\item[\bf 1] $\Cl(A)\cong H^1(N,B^\times)$, since $\Cl(B)=0$.
If, moreover, $N$ is \'etale, then $\Cl(A)\cong\Cal X(N)$ by
\cite[(4.13)]{Hashimoto4}.
\item[\bf 2] $\omega_A\cong (B\otimes \ext^d B_1\otimes\Theta_{N,k})^N$ 
as graded $A$-modules,
and
\[
(B\otimes_A \omega_A)^{**}
\cong B\otimes_k(\ext^d B_1\otimes_k \Theta_{N,k})
\]
as graded 
$(N,B)$-modules.
\item[\bf 3] The following are equivalent.
\begin{enumerate}
\item[\bf a] $\ext^d B_1\cong \Theta_{N,k}^*$ as $N$-modules.
\item[\bf b] $\ext^d B_1\cong \Theta_{N,k}^*\otimes k(-d)$ as $G$-modules.
\item[\bf c] $A$ is quasi-Gorenstein.
\item[\bf d] $A$ is quasi-Gorenstein of the $a$-invariant $-d$.
\end{enumerate}
\item[\bf 4] (cf.~ \cite{Broer}, \cite{Braun}, \cite{FW})
Assume that $N$ is either \'etale; linearly reductive; or abelian.
Then $\Theta_{N,k}$ is trivial, and the following are equivalent.
\begin{enumerate}
\item[\bf a] $N\subset \SL(B_1)$.
\item[\bf b] $A$ is quasi-Gorenstein.
\item[\bf c] $A$ is quasi-Gorenstein with the $a$-invariant $-d$.
\end{enumerate}
\end{enumerate}
\end{example}

\begin{proof}
We only prove {\bf 3}.
{\bf a$\Rightarrow$b$\Rightarrow$d$\Rightarrow$c} is easy.
If $A$ is quasi-Gorentein, then $\omega_A\cong A(a)$ for some $a\in\Bbb Z$ 
as $(H,A)$-modules.
So $B(a)\cong B\otimes_k(\ext^d B_1\otimes_k \Theta_{N,k})$.
Tensoring $B/B_+$, where $B_+$ is the irrelevant ideal of $B$, 
we get $k\cong \ext^d B_1\otimes_k \Theta_{N,k}$ as $N$-modules,
and {\bf c$\Rightarrow$a} follows.
\end{proof}

\section{Multisection rings}\label{multisection.sec}

\paragraph
Let $X$ be a locally Krull scheme.
We define a divisor on $X$ and the $\O_X$-module $\O_X(D)$ for a divisor on $X$.

Recall that 
$P^1(X)$ denotes the set of integral closed subschemes of codimension one
(\ref{LFI.par}).
Set $\Cal F=\prod_{W\in P^1(X)}\Bbb Z\cdot W$.
An element of $\Cal F$ is called a {\em formal divisor}.
$W\in P^1(X)$ as an element of $\Cal F$ is called a prime divisor.
For $D=(a_{D,W}W)_{W\in X}\in\Cal F$, the support $\supp D$ of $D$ is
$\{W\in P^1(X)\mid a_{D,W}\neq 0\}$.
For $D=(a_{D,W}W)$ and $D'=(a'_{D,W}W)$ in $\Cal F$,
we say that $D\geq D'$ if $a_{D,W}\geq a'_{D,W}$ for any $W$.
We say that $D$ is {\em effective} if $D\geq 0$.
If $\supp D$ is locally finite (see (\ref{LFI.par})) in $X$, we say that 
$D$ is a {\em divisor} on $X$.
The set of divisors $\Div(X)$ on $X$ forms a subgroup of $\Cal F$.
If $X$ is quasi-compact, $\Div(X)=\bigoplus_{W\in P^1(X)}\Bbb Z\cdot W$.
For $D\in\Div(X)$ and an open subset $U$ of $X$, we define 
the restriction $D|_U$ of $D$ to to be $(a_{D,\bar W}W)_{W\in P^1(U)}$,
where $\bar W$ is the closure of $W$ in $X$.

\paragraph
Let $X$ be integral.
Then for $f\in K^\times$ 
and $W\in P^1(X)$, we define $a_{f,W}$ to be the order of
$f$ in the DVR $\O_{X,W}\subset K=\kappa(\xi)$,
where $\xi$ is the generic point of $X$, and $K$ is the function field.
We define $\div f:=(a_{f,W}W)\in\Cal F$.
It is easy to see that $\div f$ is a divisor.
Note that $\div:K^\times\rightarrow \Div(X)$ is a homomorphism.
Its image $\div(K^\times)$ is denoted by $\Prin(X)$.
An element of $\Prin(X)$ is called a {\em principal divisor}.

\paragraph
Let $X$ be integral.
Let $\xi$ be its generic point, and $j:\xi\rightarrow X$ the inclusion.
The quasi-coherent sheaf $j_*j^*\O_X$ is denoted by $\K$.
It is the constant sheaf of $K=\kappa(\xi)$.
Let $D=(a_{D,W}W)\in\Div(X)$.

We define an $\O_X$-submodule $\O_X(D)$ of $\K$ by
\[
\Gamma(U,\O_X(D))=\{0\}\cup \{f\in K^\times\mid (\div f+D)|_U\geq 0\}.
\]
Note that $\O_X(D)$ is a rank-one reflexive quasi-coherent sheaf.

\paragraph
In general, a locally Krull scheme $X=\coprod_i X_i$ is the
disjoint union of its irreducible components.
We define
\[
\Prin(X):=\prod_i \Prin(X_i)\subset \prod_i\Div(X_i)=\Div(X).
\]
We define the (geometric) class group $\Cl'(X)$ to be $\Div(X)/\Prin(X)$.
Thus $\Cl'(X)=\prod_i\Cl'(X_i)$.

For $D\in\Div(X)$, $\O_X(D)$ is also defined componentwise.
It is easy to see that $\O_X:D\mapsto \O_X(D)$ gives a homomorphism from
$\Div(X)$ to $\Cl(X)$.

\begin{lemma}
$\O_X$ induces an isomorphism $\O_X:\Cl'(X)\rightarrow \Cl(X)$.
\end{lemma}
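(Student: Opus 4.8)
The statement asserts that the divisor-to-sheaf assignment $\O_X\colon \Div(X)\to \Ref_1(X)$ descends to a group isomorphism $\Cl'(X)\cong \Cl(X)$ between the geometric class group $\Div(X)/\Prin(X)$ and the abstract class group $\Cl(X)$ (which by definition is the group of isomorphism classes of rank-one reflexive sheaves under $\otimes^{**}$, as used in \cite{Hashimoto4}). Since $X=\coprod_i X_i$ is the finite (locally finite) disjoint union of its irreducible components and both sides decompose as products over $i$, I would first reduce to the case that $X$ is integral; then $\Div(X)=\Div(X_i)$, $\Prin(X)=\div(K^\times)$, and $\Cl'(X)$, $\Cl(X)$ are the usual objects for a Krull domain, so the whole thing becomes the classical fact that for a Krull domain the divisor class group equals the class group of rank-one reflexive modules. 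The work is to check that the map $\O_X$ is (a) a group homomorphism $\Div(X)\to\Cl(X)$, (b) surjective, and (c) has kernel exactly $\Prin(X)$.

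\textbf{Key steps.} First I would verify that $\O_X$ is a homomorphism: for $D,D'\in\Div(X)$ one has a natural map $\O_X(D)\otimes_{\O_X}\O_X(D')\to \K$ (multiplication inside the constant sheaf $\K$ of the function field) whose image is $\O_X(D+D')$, and taking double duals gives $\O_X(D)\otimes^{**}\O_X(D')\cong\O_X(D+D')$; this is a local computation over each DVR $\O_{X,W}$ for $W\in P^1(X)$, using that a rank-one reflexive sheaf is determined by its stalks in codimension one (the codimension-two argument, cf.\ Lemma~\ref{S_1'-equiv.lem} and Lemma~\ref{S_2.lem}, combined with the fact that over a Krull domain $\O_X=\bigcap_{W}\O_{X,W}$). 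Second, for surjectivity: given $\M\in\Ref_1(X)$, pick the generic point $\xi$, trivialize $\M_\xi\cong K$, view $\M$ as an $\O_X$-submodule of $\K$, and set $a_{D,W}=-\mathrm{ord}_W(\M_W)$ (well-defined since $\M_W$ is a rank-one reflexive, hence free, $\O_{X,W}$-module, so $\M_W=f_W\O_{X,W}$ for some $f_W\in K^\times$); local finiteness of $\supp D$ follows because $\M$ is coherent (or, in the locally Krull setting, from the definition of locally Krull). Then $\M$ and $\O_X(D)$ agree in codimension one, both satisfy $(S_2')$/reflexivity, so they are equal by the codimension-two comparison. Third, for the kernel: $\O_X(D)\cong\O_X$ as $\O_X$-modules iff there is $f\in K^\times$ with $f\O_X=\O_X(D)$ inside $\K$ (a generator of a rank-one reflexive submodule of $\K$ must be a unit times an element of $K^\times$), which by the valuative description says exactly $\div f = -D$, i.e.\ $D\in\Prin(X)$.

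\textbf{Main obstacle.} The crux — and the only place where real care is needed rather than bookkeeping — is the codimension-two identification: that a rank-one reflexive (equivalently $(S_2')$) sheaf on a locally Krull $X$ is recovered from its behavior at the codimension-one points, so that $\M\mapsto D$ and $D\mapsto\O_X(D)$ are mutually inverse. This is precisely the content already developed in \cite[\S5]{Hashimoto4} (cf.\ the facts quoted around Lemma~\ref{S_2-large-equiv.lem}: $i_*,i^*$ give an equivalence $\Ref(U)\simeq\Ref(X)$ for $U$ large), so in practice the proof will consist of invoking those results rather than reproving them; the remaining verifications are straightforward and can be left to the reader or dispatched with one or two lines each. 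I therefore expect the write-up to be short, mostly a matter of reducing to the integral case, citing the large-open-subset equivalence, and unwinding the valuative definitions of $\div$, $\O_X(D)$, and $\Prin(X)$.
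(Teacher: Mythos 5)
Your proposal is correct and follows essentially the same route as the paper's proof: reduce to the integral case, realize a rank-one reflexive $\M$ as a subsheaf of the constant sheaf $\K$ via the generic point and read off a divisor $D$ with $\M\cong\O_X(D)$ from the codimension-one stalks, and identify the kernel with $\Prin(X)$ by noting that an isomorphism $\O_X\cong\O_X(D)$ is multiplication by some $f\in K^\times$ with $\div f+D=0$. The only difference is that you spell out the homomorphism property and the codimension-two/valuative details that the paper dismisses as easy (or states just before the lemma), which does not change the argument.
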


\begin{proof}
We may assume that $X$ is integral.
First, we prove that $\O_X$ is surjective.
Let $\M$ be a rank-one reflexive sheaf on $X$.
Let $\xi$ be the generic point of $\xi$, and $j:\xi\rightarrow X$ the
inclusion.
Note that $\K=j_*j^*\O_X$.
As $\M$ is rank-one reflexive, there is a monomorphism
\[
\M\xrightarrow{u} j_*j^*\M\cong j_*j^*\O_X\cong \K,
\]
and we can identify $\M$ with a subsheaf of $\K$.
Then it is easy to see that there exists some $D\in\Div(X)$ such that
$\M\cong\O_X(D)$.
That is, $\O_X:\Div(X)\rightarrow\Cl(X)$ is surjective.

Assume that $f:\O_X\cong\O_X(D)$.
Then $f\in\Hom_{\O_X}(\O_X,\O_X(D))=\Gamma(X,\O_X(D))\subset K$.
As $f$ is an isomorphism, $f\in K^\times$ and $\div f+D=0$,
and hence $D$ is principal.
It is obvious that $\O_X(\div f)=f^{-1}\O_X\cong \O_X$.
So $D\in\Ker(\O_X:\Div(X)\rightarrow \Cl(X))$ if and only if $D\in\Prin(X)$.

Thus the isomorphism $\O_X:\Cl'(X)\rightarrow \Cl(X)$ is induced.
\end{proof}

With this isomorphism, we identify $\Cl'(X)$ and $\Cl(X)$.

\paragraph\label{torus.par}
Let $S$ be a scheme, $\Lambda$ a finitely generated $\Bbb Z$-module,
and $G=\Spec \Bbb Z\Lambda\times_{\Spec \Bbb Z} S$,
where $\Bbb Z\Lambda$ 
is the group algebra $\bigoplus_{\lambda\in \Lambda}\Bbb Z t^\lambda
$ with each $t^\lambda$ group-like.
Let $\varphi:X\rightarrow Y$ be an affine $G$-invariant morphism.
So $X=\uSpec_Y \Cal A$ with $\Cal A=\bigoplus_{\lambda\in\Lambda}\Cal A_\lambda$ 
a graded quasi-coherent $\O_Y$-algebra.

\begin{lemma}\label{principal-torus-equiv.lem}
The following are equivalent.
\begin{enumerate}
\item[\bf 1] $\varphi$ is a principal $G$-bundle.
\item[\bf 2] $\O_Y\rightarrow \Cal A_0$ is an isomorphism, 
each $\Cal A_\lambda$ is an invertible sheaf, and the 
product $\Cal A_\lambda\otimes_{\O_Y}\Cal A_\mu\rightarrow \Cal A_{\lambda+\mu}$ 
is an isomorphism.
\item[\bf 2'] $\O_Y\rightarrow \Cal A_0$ is an isomorphism, 
  and the product
$\Cal A_\lambda\otimes_{\O_Y}\Cal A_{\mu}\rightarrow \Cal A_{\lambda+\mu}$ is 
  surjective for any $\lambda,\mu\in\Lambda$.
\item[\bf 3] 
For each $y\in Y$, there exists some affine open neighborhood $y\in U
=\Spec R$ and a faithfully flat $R$-algebra $R'$ 
such that $A'=\bigoplus_\lambda A_\lambda'$ with 
$A'_\lambda=R'\otimes_R \Gamma(U,\Cal A_\lambda)$ is isomorphic to the group 
algebra $R'\Lambda=\bigoplus_\lambda R't^\lambda$.
\end{enumerate}
If, moreover, $\Lambda$ is torsion-free, then $\varphi$ is a principal
$G$-bundle in the Zariski topology.
That is, we can take $R'=R$ in {\bf 3}.
\end{lemma}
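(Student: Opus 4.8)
The plan is to establish the cycle \textbf{1}$\Rightarrow$\textbf{2}$\Rightarrow$\textbf{3}$\Rightarrow$\textbf{1}, the side equivalence \textbf{2}$\Leftrightarrow$\textbf{2$'$}, and then the torsion-free addendum. Since every condition in play is local on $Y$ and $\varphi$ is affine, I would first reduce to the case $Y=\Spec R$ affine, writing $X=\Spec A$ with $A=\bigoplus_{\lambda\in\Lambda}A_\lambda$ a $\Lambda$-graded $R$-algebra and $G_Y=\Spec R\Lambda$; note that $G_Y\to Y$ is of finite presentation and faithfully flat, so the trivial bundle $G_Y\to Y$ is a principal $G$-bundle.

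The computational core is an explicit description of the secondary map $\Psi=\Psi_{G,\varphi}\colon G_Y\times_Y X\to X\times_Y X$. On coordinate rings one checks directly that $\Psi^{*}\colon A\otimes_R A\to R\Lambda\otimes_R A$ carries the summand $A_\lambda\otimes_R A$ onto the summand $t^\lambda\otimes_R A$ by $a\otimes b\mapsto t^\lambda\otimes ab$; hence $\Psi$ is an isomorphism if and only if every multiplication map $A_\lambda\otimes_R A_\mu\to A_{\lambda+\mu}$ is an isomorphism. For \textbf{1}$\Rightarrow$\textbf{2}: a principal $G$-bundle has $\Psi$ an isomorphism by \cite[(2.8)]{Hashimoto5}, which already forces all products to be isomorphisms, and in particular $\Cal A_\lambda\otimes_{\O_Y}\Cal A_{-\lambda}\cong\Cal A_0$, so each $\Cal A_\lambda$ is invertible; moreover an affine principal $G$-bundle is a (universal) geometric quotient, hence an algebraic quotient, so $\O_Y\to(\varphi_*\O_X)^G=\Cal A_0$ is an isomorphism. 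For \textbf{2}$\Rightarrow$\textbf{3} (after localizing so that $U=Y$): take $R'=A$, which is faithfully flat over $R$ because $A_0=R$ is a direct summand and each $A_\lambda$ is invertible, hence flat; condition \textbf{2} makes each multiplication map $A\otimes_R A_\lambda\to A$ an isomorphism, and unwinding this shows $\bigoplus_\lambda(A\otimes_R A_\lambda)\cong A\Lambda$ as a graded $A$-algebra. For \textbf{3}$\Rightarrow$\textbf{1} (again local, $U=Y$): the hypothesis provides a faithfully flat quasi-compact morphism $Y'=\Spec R'\to Y$ along which the base change of $\varphi$ is the trivial bundle $G_{Y'}\to Y'$, so $\varphi$ is a principal $G$-bundle by fpqc descent \cite[(2.11)]{Hashimoto5}.

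It remains to handle \textbf{2}$\Leftrightarrow$\textbf{2$'$}. The implication \textbf{2}$\Rightarrow$\textbf{2$'$} is immediate, and the reverse one is where the genuine content lies: \textbf{2$'$} says exactly that $A$ is strongly $\Lambda$-graded with $A_0=R$, and I would invoke the structure theory of strongly graded rings (Dade's theorem) to conclude that each $A_\lambda$ is an invertible $R$-module and that the multiplication maps are isomorphisms. The idea is that a partition of unity $1=\sum_i a_ib_i$ with $a_i\in A_\lambda$, $b_i\in A_{-\lambda}$ yields $c=\sum_i(cb_i)a_i$ for all $c\in A_\lambda$, so $A_\lambda$ is a finitely generated projective $R$-module; passing to a residue field $\kappa(\fp)$, the fibre $A\otimes_R\kappa(\fp)$ is again strongly $\Lambda$-graded with degree-zero part $\kappa(\fp)$, and a degree-$\lambda$ element possessing a degree-$(-\lambda)$ inverse forces each graded piece there to be one-dimensional, so $A_\lambda$ has rank one, i.e.\ is invertible; a surjection of invertible modules $A_\lambda\otimes_R A_\mu\to A_{\lambda+\mu}$ is then automatically an isomorphism. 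This upgrade from "products surjective" to "products isomorphic and each $\Cal A_\lambda$ invertible" is the only non-bookkeeping ingredient, and it is the step I would flag as the main (if standard) obstacle.

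Finally, for the torsion-free addendum, assuming \textbf{2} I would fix an isomorphism $\Lambda\cong\Bbb Z^s$ with basis $e_1,\dots,e_s$, so that $\Cal A_\lambda\cong\bigotimes_{i=1}^s\Cal A_{e_i}^{\otimes\lambda_i}$ with negative exponents interpreted via duals. Since $\Cal A_{e_1},\dots,\Cal A_{e_s}$ are finitely many invertible sheaves, around any point of $Y$ there is an affine open $U$ trivializing all of them simultaneously; fixing trivializations $\Cal A_{e_i}|_U\cong\O_U$ induces compatible trivializations $\Cal A_\lambda|_U\cong\O_U$ for every $\lambda$, whence $\bigoplus_\lambda\Gamma(U,\Cal A_\lambda)\cong R[\Lambda]$ with $R=\Gamma(U,\O_U)$, i.e.\ one may take $R'=R$ in \textbf{3}. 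The torsion-freeness is used precisely here: a finite free basis exists and carries no relations, so nothing obstructs choosing the trivializations coherently.
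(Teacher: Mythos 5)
Your proof is correct, but it is organized quite differently from the paper's. The paper runs the single cycle \textbf{1}$\Rightarrow$\textbf{2}$\Rightarrow$\textbf{2$'$}$\Rightarrow$\textbf{3}$\Rightarrow$\textbf{1}, and all the work is in \textbf{2$'$}$\Rightarrow$\textbf{3}: from surjectivity of $\Cal A_{\lambda_i}\otimes\Cal A_{-\lambda_i}\to\Cal A_0$ one writes $1=\sum_l u_{i,l}v_{i,l}$, shrinks $U$ so that one term $u_{i,l_i}v_{i,l_i}$ is a unit, and thereby produces homogeneous units $t_i\in A_{\lambda_i}$ for generators $\lambda_i$ of the cyclic factors of $\Lambda$; in the torsion-free case this already gives $A=R[t_1^{\pm1},\ldots,t_s^{\pm1}]$ with $R'=R$ (so the addendum is absorbed into the same computation), and in the torsion case the faithfully flat cover is the explicit finite free extension $R'=R[T_1,\ldots,T_r]/(T_i^{m_i}-t_i^{m_i})$ extracting roots of the units $t_i^{m_i}\in R$. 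In particular the paper never needs to know that the $\Cal A_\lambda$ are invertible to get local triviality. You instead prove \textbf{1}$\Rightarrow$\textbf{2}$\Rightarrow$\textbf{3}$\Rightarrow$\textbf{1} and handle \textbf{2}$\Leftrightarrow$\textbf{2$'$} separately: your trivializing cover for general $\Lambda$ is the self-trivialization $R'=A$ (equivalently, the observation that condition \textbf{2} is exactly the statement that $\Psi$ is an isomorphism, i.e.\ that $\varphi$ pulls back along itself to the trivial bundle), and your \textbf{2$'$}$\Rightarrow$\textbf{2} is the strongly-graded (Dade-type) upgrade: the partition of unity makes $A_\lambda$ a direct summand of $R^n$, the fibre over each $\fp$ is one-dimensional because a strongly graded commutative ring over the field $\kappa(\fp)$ has homogeneous units in every degree, so $A_\lambda$ is invertible and surjections between invertible modules are isomorphisms. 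Both routes are complete; the paper's is more economical and yields a very concrete (finite free) cover and a uniform treatment of the torsion-free case, while yours cleanly separates the line-bundle content of \textbf{2} from local triviality, reuses the $\Psi$-computation for both \textbf{1}$\Rightarrow$\textbf{2} and \textbf{2}$\Rightarrow$\textbf{3}, and makes visible that the only real content in \textbf{2$'$}$\Rightarrow$\textbf{2} is the standard strongly-graded argument. The only places where you should spell out a line more are the compatibility checks you gloss (that the degreewise isomorphisms $A\otimes_R A_\lambda\cong A$, resp.\ the simultaneous trivializations of the $\Cal A_{e_i}$, assemble into \emph{graded algebra} isomorphisms with $A\Lambda$, resp.\ $R\Lambda$); both follow from associativity and commutativity of the multiplication maps and are routine.
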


\begin{proof}
{\bf 1$\Rightarrow$2} is obvious from the descent argument.

{\bf 2$\Rightarrow$2'} is trivial.
{\bf 2'$\Rightarrow$3}.
Take any affine open neighborhood $U=\Spec R$ of $y$.
Then $A=\Gamma(\varphi^{-1}(U),\O_X)$ is a graded algebra with $A_0=R$.
We may assume that
\[
\Lambda=\Bbb Z/(m_1)\oplus\Bbb Z/(m_2)\oplus\cdots\oplus\Bbb Z/(m_s)
\]
with $m_i\geq 0$, $m_i\neq 1$.
Let 
$\lambda_i$ be a generator of $\Bbb Z/(m_i)$.
So $\lambda_1,\ldots,\lambda_s$ together generate $\Lambda$.

For each $i$, there exists some expression 
$1=\sum_{l=1}^{m_i} u_{i,l} v_{i,l}$ with $u_{i,l}\in A_{\lambda_i}$ and 
$v_{i,l}\in A_{-\lambda_i}$.
Then contracting $U$ if necessary, we may assume that for each $i$, there
exists some $l_i$ such that $u_{i,l_i}v_{i,l_i}$ is invertible in $A_0$.
So each $t_i:=u_{i,l_i}$ are units of $A$.
If $\Lambda$ is torsion-free, then $m_i=0$ for each $i$, 
and $A=R[t_1^{\pm1},\ldots,t_s^{\pm s}]$.
So the last assertion has been proved.

We consider the case that $\Lambda$ may have a torsion.
We may assume that $m_1,\ldots,m_r\geq 2$ and $m_i=0$ for $i>r$.
Set
\[
R'=R[T_1,\ldots,T_r]/(T_1^{m_1}-t_1^{m_1},\ldots,T_r^{m_r}-t_r^{m_r}),
\]
where $T_1,\ldots,T_r$ are new variables of degree zero.
As $R'$ is a nonzero free $R$-module, $R'$ is faithfully flat over $R$.
Letting $t_i':=t_i\bar T_i^{-1}$, we have
that
\[
A':=R'\otimes_R A=R'[t_1',\ldots,t_r',t_{r+1}^{\pm1},\ldots,t_s^{\pm1}]
/((t_1')^{m_1}-1,\ldots,(t_r')^{m_r}-1)\cong R'\Lambda,
\]
as desired.

{\bf 3$\Rightarrow$1} is trivial.
\end{proof}

\paragraph\label{cl-map.par}
Let $\varphi:X\rightarrow Y$ be a morphism between locally Krull schemes
such that $\varphi(X\an 0)\subset Y\an 0$ and 
$\varphi(X\an 1)\subset Y\an 0\cup Y\an 1$.
We define the pull-back $\varphi^*:\Div(Y)\rightarrow \Div(X)$ by $\varphi$ 
by $\varphi^*(a_{V}V)_{V\in P^1(Y)}=(b_{W}W)_{W\in P^1(X)}$,
where 
$b_W=a_{\overline{\varphi(W)}}\length_{\O_{X,w}}(\O_{X,w}/\fm_{\varphi(w)}\O_{X,w})$
for the generic point $w$ of $W\in P^1(X)$
if $w\in Y\an 1$ ($\fm_{\varphi(w)}$ is the maximal ideal of the DVR 
$\O_{Y,\varphi(w)}$), and $b_W=0$ if $w\in Y\an 0$.
It is easy to see that $\varphi^*$ is a homomorphism, and 
$\varphi^*(\div(f))=\div(\varphi^*f)$ 
if both $X$ and $Y$ are integral and $f\in K^\times$,
where $K$ is the function field of $Y$.
So $\varphi^*:\Cl'(Y)\rightarrow \Cl'(X)$ is induced.

\begin{lemma}
Let $\varphi$ be as in {\rm (\ref{cl-map.par})}.
Then $[\M]\mapsto[(\varphi^*\M)^{**}]$ gives a homomorphism 
$\varphi^*:\Cl(Y)\rightarrow\Cl(X)$.
Moreover, the diagram
\[
\xymatrix{
\Cl'(Y) \ar[r]^{\O_Y} \ar[d]^{\varphi^*} &
\Cl(Y) \ar[d]^{\varphi^*} \\
\Cl'(X) \ar[r]^{\O_X} & \Cl(X)
}
\]
is commutative.
\end{lemma}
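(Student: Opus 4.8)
The plan is to verify the two assertions separately, both by reduction to the integral case and then by a direct local computation on discrete valuation rings. Since a locally Krull scheme is a disjoint union of its integral components and both $\Div$, $\Cl'$, $\Cl$, and the operation $(?)^{**}$ are compatible with this decomposition, I would first reduce to the case where $X$ and $Y$ are integral. Also, since $\Cl(Y)$ is the direct limit of $\Pic$ over large quasi-compact open subsets (by \cite[(5.33)]{Hashimoto4}, as used repeatedly in the proof of Theorem~\ref{four-term.thm}), and $(\varphi^*?)^{**}$ restricts correctly to preimages of large opens, I would further reduce the question of well-definedness to a statement about rank-one reflexive sheaves on Krull schemes, where the arguments are purely local.

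First I would show that $[\M]\mapsto [(\varphi^*\M)^{**}]$ is a well-defined group homomorphism $\Cl(Y)\to\Cl(X)$. Well-definedness: if $\M\cong\O_Y$ then $(\varphi^*\O_Y)^{**}\cong\O_X^{**}\cong\O_X$, so $[(\varphi^*\M)^{**}]=0$; more generally if $\M\cong\M'$ then clearly $(\varphi^*\M)^{**}\cong(\varphi^*\M')^{**}$. Homomorphism property: for rank-one reflexive sheaves one has $(\varphi^*(\M\otimes_{\O_Y}\N))^{**}\cong(\varphi^*\M\otimes_{\O_X}\varphi^*\N)^{**}\cong((\varphi^*\M)^{**}\otimes_{\O_X}(\varphi^*\N)^{**})^{**}$, using that $\varphi^*$ commutes with tensor products and that double-dualizing is compatible with the reflexive tensor product (the group law on $\Cl$). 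These are standard facts about reflexive modules over locally Krull schemes established in \cite{Hashimoto4}; I would cite them rather than reprove them.

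Next I would prove commutativity of the square. Fix a divisor $D$ on $Y$ (assume $X$, $Y$ integral), and let $K$ be the function field of $Y$, $L$ that of $X$. Going around one way, $\O_Y:D\mapsto \O_Y(D)\in\Cl(Y)$, then $\varphi^*:\O_Y(D)\mapsto (\varphi^*\O_Y(D))^{**}$. Going around the other way, $\varphi^*:D\mapsto \varphi^*D$ as in (\ref{cl-map.par}), then $\O_X:\varphi^*D\mapsto\O_X(\varphi^*D)$. The claim is that $(\varphi^*\O_Y(D))^{**}\cong \O_X(\varphi^*D)$ in $\Cl(X)$. Since $\O_Y(D)$ is a rank-one reflexive subsheaf of the constant sheaf $\K$ on $Y$, pulling back and double-dualizing gives a rank-one reflexive subsheaf of the constant sheaf on $X$ (the map on generic points induces $K\hookrightarrow L$), and the point is to identify its associated divisor. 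This can be checked at each codimension-one point $w$ of $X$: if $\varphi(w)$ has codimension zero in $Y$, then locally $\O_Y(D)$ is already trivialized near $\varphi(w)$ (a generic point), so the stalk of the pullback at $w$ is free and contributes $0$ to the divisor, matching the convention $b_W=0$ in (\ref{cl-map.par}). If $\varphi(w)$ has codimension one, then $\O_{Y,\varphi(w)}$ is a DVR with uniformizer $\pi$, $\O_Y(D)_{\varphi(w)}=\pi^{-a}\O_{Y,\varphi(w)}$ where $a=a_{D,\overline{\varphi(w)}}$, and pulling back to the DVR $\O_{X,w}$ one computes the order of $\pi$ in $\O_{X,w}$ to be $\length_{\O_{X,w}}(\O_{X,w}/\fm_{\varphi(w)}\O_{X,w})$; hence the coefficient of $W$ in the divisor of the pullback is exactly $a\cdot\length_{\O_{X,w}}(\O_{X,w}/\fm_{\varphi(w)}\O_{X,w})$, which is precisely $b_W$ by definition. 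Summing over $w\in P^1(X)$ (the support is locally finite since $\varphi^*D$ is a divisor by the hypotheses in (\ref{cl-map.par})) yields $(\varphi^*\O_Y(D))^{**}\cong\O_X(\varphi^*D)$, and passing to classes gives the commutative square.

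The main obstacle I anticipate is bookkeeping at codimension-one points of $X$ whose image has codimension zero in $Y$: one must check carefully that double-dualization does not introduce spurious contributions there and that the hypothesis $\varphi(X\an1)\subset Y\an0\cup Y\an1$ is exactly what is needed for $\varphi^*D$ to be a genuine (locally finite) divisor and for the local computation to be exhaustive. The computation $\operatorname{ord}_{\O_{X,w}}(\pi)=\length_{\O_{X,w}}(\O_{X,w}/\fm_{\varphi(w)}\O_{X,w})$ is the standard length-theoretic description of ramification and is routine once the setup is in place, so I would state it and refer to the standard reference rather than expand the calculation.
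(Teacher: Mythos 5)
Your proof is correct, and for the heart of the lemma—the commutativity of the square—it is the same argument as the paper's: reduce to $X$, $Y$ integral, observe that both $(\varphi^*\O_Y(D))^{**}$ and $\O_X(\varphi^*D)$ are reflexive subsheaves of the constant sheaf $\L$, hence determined by their stalks at points of $X\an1$, and then do exactly the two local computations (image of codimension one: the length/ramification count; image of codimension zero: the stalk upstairs is already free, giving coefficient $0$). The one place you diverge is the homomorphism property of $\varphi^*:\Cl(Y)\rightarrow\Cl(X)$: you prove it directly from $(\varphi^*(\M\otimes\N))^{**}\cong((\varphi^*\M)^{**}\otimes(\varphi^*\N)^{**})^{**}$, which is legitimate but requires invoking the compatibility of double duals with the reflexive tensor product over locally Krull schemes (valid here because the relevant stalks in codimension $\leq 1$ are free, thanks to the hypotheses on $\varphi(X\an0)$ and $\varphi(X\an1)$). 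The paper avoids this entirely: it first proves the commutative square, and then observes that since $\O_Y$ and $\O_X$ are group isomorphisms and the divisor-level pullback $\varphi^*:\Cl'(Y)\rightarrow\Cl'(X)$ is already known to be a homomorphism from (\ref{cl-map.par}), the sheaf-level map is forced to be a homomorphism. Your route costs an extra (standard) lemma about reflexive tensor products; the paper's costs nothing, which is why it states the square last and harvests the homomorphism from it. Also, your preliminary reduction via $\Cl(Y)=\indlim\Pic$ over large quasi-compact opens is harmless but unnecessary—the stalkwise argument you give afterwards already does all the work.
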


\begin{proof}
To prove the commutativity of the diagram, we may assume that both 
$X$ and $Y$ are integral.
It suffices to prove that $(\varphi^*\O_Y(D))^{**}\subset \varphi^*\K=\L$
agrees with $\O_X(\varphi^*D)$, where $\K$ and $\L$ are the constant sheaves
of the rational function fields of $Y$ and $X$, respectively.
As $(\varphi^*\O_Y(D))^{**}$ is reflexive, it suffices to prove that
$(\varphi^*\O_Y(D))^{**}_x=(\varphi^*\O_Y(D))_x\subset L$ agrees with 
$\O_X(\varphi^*D)_x$ for each $x\in X\an 1$, where $L$ is the
function field of $X$.
Let $y=\varphi(x)$.
First consider the case that $\codim y=1$.
If the coefficient of $\bar y$ in $D$ is $a$ and the ramification index
$\length_{\O_{X,x}}(\O_{X,x}/\fm_y \O_{X,x})$ is $b$, then the coefficient of 
$\bar x$ in $\varphi^*D$ is the product $ab$ by definition.
So $\O_X(\varphi^*D)_x=\fm_x^{-ab}$.
On the other hand,
\[
(\varphi^*\O_Y(D))_x=\O_Y(D)_y \O_{X,x}=\fm_y^{-a}\O_{X,x}
=\fm_x^{-ab}=\O_X(\varphi^*D)_x.
\]
Next, consider the case that $\codim y=0$.
Then the coefficient of $\bar x$ in $\varphi^*D$ is zero by definition.
So $\O_X(\varphi^*D)_x=\O_{X,x}$.
On the other hand, 
\[
(\varphi^*O_Y(D))_x=\O_Y(D)_y\O_{X,x}=K\O_{X,x}=\O_{X,x}=\O_X(\varphi^*D)_x,
\]
where $K$ is the rational function field of $Y$.
Hence $(\varphi^*\O_Y(D))^{**}=\O_X(\varphi^*D)$ as subsheaves of $\L$.
In particular, $\varphi^*[\O_Y(D)]=[\O_X(\varphi^*D)]$ in $\Cl(X)$, and
the diagram in problem is commutative.

In the diagram, $\O_Y$ and $\O_X$ are group isomorphisms, and the left
$\varphi^*$ is a homomorphism.
So the right $\varphi^*$ is also a homomorphism.
\end{proof}

\paragraph
Let $X$ be a locally Krull scheme, and $\Sigma$ a subset of $P^1(X)$.
For $D=(a_WW),D'=(a'_WW)\in\Div(X)$, we say that $D\geq_\Sigma D'$
if $a_W\geq a'_W$ for $W\in P^1(X)\setminus \Sigma$.
We define $\O_{X,\Sigma}(D)$ by
\[
\Gamma(U,\O_{X,\Sigma}(D))=\{0\}\cup\{f\in K^\times\mid
(\div f+D)\geq_{\Sigma\cup\{W\in P^1(X)\mid W\cap U=\emptyset\}} 0 \}.
\]
Note that $\O_{X,\Sigma}=\O_{X,\Sigma}(0)$ is a quasi-coherent $\O_X$-algebra.
We define $X_\Sigma=\uSpec \O_{X,\Sigma}$.
As a subintersection of a Krull domain is again a Krull domain 
\cite[(1.5)]{Fossum}, it is easy to see that $X_\Sigma$ is locally Krull.

As in (\ref{cl-map.par}),
the canonical map $j_\Sigma:X_\Sigma\rightarrow X$ induces a
surjective map $j_\Sigma^*:\Div(X)\rightarrow \Div(X_\Sigma)$.
As $j_\Sigma$ is birational, 
$j_\Sigma^*$ maps $\Prin(X)$ surjectively onto $\Prin(X_\Sigma)$.
By the snake lemma, we get the following easily.

\begin{lemma}[{Nagata's theorem \cite[(7.1)]{Fossum}}]\label{nagata.lem}
Let $X$ be a locally Noetherian scheme, and $\Sigma$ a subset of $P^1(X)$.
Then 
$j_\Sigma^*:\Cl'(X)\rightarrow \Cl'(X_\Sigma)$ is a surjective map
whose kernel is generated by the divisors supported in $\Sigma$.
\qed
\end{lemma}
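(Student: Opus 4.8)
The plan is to reduce to the case of an integral scheme and then run the snake lemma on the short exact sequences $0\to\Prin(?)\to\Div(?)\to\Cl'(?)\to 0$ attached to $X$ and to $X_\Sigma$. Since $\Cl'$, $\Div$, and $\Prin$ are all defined as products over the irreducible components of $X$, and since $X_\Sigma$ is the disjoint union of the $(X_i)_{\Sigma_i}$ with $\Sigma_i=\Sigma\cap P^1(X_i)$, it suffices to treat the case where $X$ is integral with function field $K$; note that $X_\Sigma$ is then again locally Krull, with the same generic point and the same function field $K$.

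First I would assemble the three ingredients feeding the snake lemma. (i) The pull-back $j_\Sigma^*\colon\Div(X)\to\Div(X_\Sigma)$ is surjective (recorded just above the statement), and, identifying $P^1(X_\Sigma)$ with $P^1(X)\setminus\Sigma$, its kernel is exactly the subgroup of divisors on $X$ supported in $\Sigma$; here one uses that, on an affine Krull chart $\Spec A\subset X$, the subintersection $\bigcap_{W\in P^1(\Spec A)\setminus\Sigma}\O_{X,W}$ is Krull with essential valuations precisely $\{v_W : W\notin\Sigma\}$, each restricting with value group $\Bbb Z$, so that $j_\Sigma^*$ is literally the projection deleting the coordinates indexed by $\Sigma$. (ii) $j_\Sigma^*$ carries $\Prin(X)$ onto $\Prin(X_\Sigma)$: the morphism $j_\Sigma$ is birational and $j_\Sigma^*(\div_X f)=\div_{X_\Sigma}f$ for $f\in K^\times$ by {\rm(\ref{cl-map.par})}, so the image is $\div(K^\times)=\Prin(X_\Sigma)$. (iii) $\Cl'(?)=\Div(?)/\Prin(?)$ by definition. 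Applying the snake lemma to
\[
\xymatrix{
0 \ar[r] & \Prin(X) \ar[r] \ar[d]^{j_\Sigma^*} & \Div(X) \ar[r] \ar[d]^{j_\Sigma^*} & \Cl'(X) \ar[r] \ar[d]^{j_\Sigma^*} & 0 \\
0 \ar[r] & \Prin(X_\Sigma) \ar[r] & \Div(X_\Sigma) \ar[r] & \Cl'(X_\Sigma) \ar[r] & 0
}
\]
then gives the exact piece $\Ker(j_\Sigma^*|_{\Div})\to\Ker(j_\Sigma^*|_{\Cl'})\to\Coker(j_\Sigma^*|_{\Prin})$ together with the fact that $\Coker(j_\Sigma^*|_{\Cl'})$ is a quotient of $\Coker(j_\Sigma^*|_{\Div})$. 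By (i) and (ii) both right-hand cokernels vanish, so $j_\Sigma^*\colon\Cl'(X)\to\Cl'(X_\Sigma)$ is surjective, and $\Ker(j_\Sigma^*|_{\Cl'})$ equals the image of $\Ker(j_\Sigma^*|_{\Div})$ — the divisors supported in $\Sigma$ — under $\Div(X)\to\Cl'(X)$, i.e. the subgroup of $\Cl'(X)$ generated by the classes of the prime divisors lying in $\Sigma$.

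The diagram chase is routine; the real content — and the step I expect to be the main obstacle if one does not simply cite Fossum — is ingredient (i), namely the precise structure of $X_\Sigma=\uSpec_X\O_{X,\Sigma}$: that the subintersection $\O_{X,\Sigma}$ is locally Krull with height-one primes in natural bijection with $P^1(X)\setminus\Sigma$ and unramified restrictions along them, so that $\Div(X_\Sigma)$ is identified with $\bigoplus_{W\in P^1(X)\setminus\Sigma}\Bbb Z\cdot W$ and $j_\Sigma^*$ with the corresponding coordinate projection. This is exactly the subintersection theory of {\rm\cite[(1.5), (7.1)]{Fossum}}; granted it, the remainder of the argument is purely formal.
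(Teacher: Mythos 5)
Your argument is correct and is essentially the paper's own proof: the text immediately preceding the lemma records the surjectivity of $j_\Sigma^*$ on $\Div$ and on $\Prin$ (via birationality) and then simply invokes the snake lemma, exactly as you do. The only detail you spell out beyond the paper — that $P^1(X_\Sigma)$ is identified with $P^1(X)\setminus\Sigma$ with unramified restrictions, so the kernel on $\Div$ is the divisors supported in $\Sigma$ — is precisely the subintersection input from \cite[(1.5), (7.1)]{Fossum} that the paper also relies on implicitly.
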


\begin{lemma}\label{surjective.thm}
Let $G=\Bbb G_m^s$ be a split $s$-torus.
Let $\varphi:X\rightarrow Y$ be a principal $G$-bundle.
If $X$ is locally Krull, then the flat pullback $\varphi^*:\Cl'(Y)
\rightarrow \Cl'(X)$ \(see {\rm (\ref{cl-map.par})}\) is surjective.
\end{lemma}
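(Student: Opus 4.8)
Since $\varphi$ is a principal $G$-bundle with $G = \mathbb{G}_m^s$, Lemma~\ref{principal-torus-equiv.lem} (with $\Lambda = \mathbb{Z}^s$ torsion-free) tells us that $\varphi$ is a principal bundle in the Zariski topology: locally on $Y = \Spec R$ we have $\varphi^{-1}(\Spec R) = \Spec R[t_1^{\pm 1}, \ldots, t_s^{\pm 1}]$. In particular $\varphi$ is flat of relative dimension $s$ with geometrically integral fibers, so it satisfies the hypotheses of (\ref{cl-map.par}): $\varphi(X\an 0)\subset Y\an 0$ (dominant), and $\varphi(X\an 1)\subset Y\an 0 \cup Y\an 1$, so the pullback $\varphi^*:\Cl'(Y)\to\Cl'(X)$ is defined. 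First I would reduce to the case that $X$ and $Y$ are integral, since both sides decompose as products over irreducible components and $\varphi$ carries components to components. Then I would reduce further to the affine case $Y = \Spec R$, $X = \Spec R[t_1^{\pm1},\ldots,t_s^{\pm1}] =: \Spec B$, because $\Cl'$ is computed from codimension-one points and divisors, and a divisor on $X$ is determined by its restrictions to the preimages of an affine open cover of $Y$, which are compatible; the surjectivity statement, once known locally with an explicit lift, patches (a divisor class on $X$ is a divisor up to $\Prin$, and $\Prin(X)$ restricted over an open is again principal).

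\textbf{The core computation.} With $B = R[t_1^{\pm1},\ldots,t_s^{\pm 1}]$ a localization of the polynomial ring $R[t_1,\ldots,t_s]$, I would identify the codimension-one primes of $B$. Since $R$ is a Krull domain, the prime divisors of $X = \Spec B$ are of two kinds: (i) the ``horizontal'' ones, $\mathfrak{p} B$ for $\mathfrak{p} \in P^1(\Spec R)$ (these survive the localization since no $t_i$ lies in $\mathfrak p B$), and (ii) the ``vertical'' ones, height-one primes $\mathfrak q$ of $B$ with $\mathfrak q \cap R = 0$, i.e.\ height-one primes of $K[t_1^{\pm 1},\ldots,t_s^{\pm 1}]$ where $K = \Frac(R)$ — and $K[t_1^{\pm1},\ldots,t_s^{\pm1}]$ is a UFD (a localization of the polynomial ring over a field), so every such $\mathfrak q$ is principal, generated by some $g \in B$ after clearing denominators. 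Thus the vertical part of any divisor on $X$ is principal. This shows $\Div(X) = \varphi^*\Div(Y) + (\text{vertical divisors})$ and the vertical divisors lie in $\Prin(X)$, hence $\varphi^*:\Cl'(Y)\to\Cl'(X)$ is surjective. Concretely, given a class $[D]$ on $X$, write $D = \varphi^*D_0 + D_{\mathrm{vert}}$ with $D_{\mathrm{vert}} = \div(g)$; then $[D] = \varphi^*[D_0]$. I should double-check the coefficient bookkeeping in the definition of $\varphi^*$ in (\ref{cl-map.par}): for a horizontal prime $\mathfrak p B$ over $\mathfrak p$, the ramification length $\length_{\O_{X,w}}(\O_{X,w}/\mathfrak m_{\varphi(w)}\O_{X,w})$ is $1$ because $\O_{Y,\mathfrak p}\to\O_{X,\mathfrak p B}$ is again a localization of a polynomial-type extension (the fiber is a field extension of $\kappa(\mathfrak p)$), so $\varphi^*$ simply sends $\mathfrak p \mapsto \mathfrak p B$ with coefficient preserved.

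\textbf{Expected main obstacle.} The genuinely delicate point is the globalization: patching the local surjectivity into a statement about $\Cl'(Y)\to\Cl'(X)$ for $Y$ merely locally Krull (possibly non-quasi-compact, non-Noetherian). One must be careful that ``divisor'' allows infinite but locally finite support, so a class on $X$ need not come from a single global $D_0$ by a naive gluing — but since the local lifts $D_0$ on the pieces agree on overlaps up to principal divisors coming from units, one can instead argue at the level of reflexive sheaves: a rank-one reflexive $\M$ on $X$ restricts, over $\varphi^{-1}(U)$ for $U=\Spec R$ affine, to $\varphi^*\M_U$ for some rank-one reflexive $\M_U$ on $U$ (by the local UFD computation, $\Pic$-type argument), and these glue because the transition data, being units on overlaps, descend along the faithfully-flat-in-Zariski-topology map $\varphi$. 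Alternatively, and perhaps more cleanly, one invokes that over each affine $U$ the induced $\varphi^*: \Cl'(U) \to \Cl'(\varphi^{-1}(U))$ is surjective with a functorial-in-$U$ splitting given by ``take horizontal part,'' and this functoriality is exactly what lets the splitting extend to all of $X$. I would present the reflexive-sheaf version since it dovetails with Theorem~\ref{main.thm} and avoids delicate divisor-support gymnastics.
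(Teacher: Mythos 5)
Your local computation contains the right idea, but both devices you propose for the globalization step---which you yourself flag as the main obstacle---break down, so the proof as it stands has a genuine gap. The ``take the horizontal part'' splitting is not well defined on class groups: if $R$ is a Dedekind domain with a non-principal ideal $I=(a,b)$ and $B=R[t^{\pm1}]$, then for $f=a+bt$ the coefficient of $\div_X(f)$ along the horizontal prime $\fp B$ is the Gauss valuation $\min(v_\fp(a),v_\fp(b))$, so the horizontal part of the principal divisor $\div_X(f)$ pushes down to the non-principal class $[I]\neq 0$; hence ``horizontal part'' splits $\varphi^*$ on $\Div$ but not modulo $\Prin$. The reflexive-sheaf gluing is also not automatic: an isomorphism $(\varphi^*\M_U)^{**}\cong(\varphi^*\M_{U'})^{**}$ over $\varphi^{-1}(U\cap U')$ need not descend to $U\cap U'$, since $\varphi^*$ is far from injective on class groups and the unit group upstairs is strictly larger than downstairs (it contains the monomials $t^\lambda$); arranging the transition data to descend is precisely the equivariant descent of Theorem~\ref{main.thm}, which you would be re-proving rather than invoking. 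There is also a small inaccuracy in the affine step: a vertical height-one prime $\fq$ of $R[t^{\pm1}]$ is not principal after ``clearing denominators''---$\div(g)$ picks up horizontal components---so vertical prime divisors are principal only modulo horizontal ones (which is, however, all you need).

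The repair is that no affine reduction or patching is necessary: your horizontal/vertical decomposition works verbatim on all of $X$ at once, and this is exactly how the paper argues. Reduce to $Y$ integral with generic point $\eta$. Since $G$ is geometrically integral, $\varphi^{-1}(W)\in P^1(X)$ for every $W\in P^1(Y)$, and $\varphi^*W=\varphi^{-1}(W)$ with coefficient one because the fiber over the generic point of $W$ is integral. Taking $\Sigma$ to be the set of these horizontal prime divisors, Nagata's theorem (Lemma~\ref{nagata.lem}) says $\Cl'(X)\rightarrow\Cl'(X_\Sigma)$ is surjective with kernel generated by classes of divisors supported in $\Sigma$, all of which lie in the image of $\varphi^*$; hence $\Cl'(X)\rightarrow\Coker\varphi^*$ factors through $\Cl'(X_\Sigma)$. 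But $X_\Sigma=\varphi^{-1}(\eta)=\Spec\kappa(\eta)[t_1^{\pm1},\ldots,t_s^{\pm1}]$ by Lemma~\ref{principal-torus-equiv.lem}, whose class group vanishes, so $\Coker\varphi^*=0$. Equivalently, in your language: given a divisor $D$ on $X$, its restriction to the generic fiber is $\div(g)$ for some $g$ in the function field, and $D-\div_X(g)$ is supported on horizontal prime divisors, hence equals $\varphi^*D_0$ for a divisor $D_0$ on $Y$ (local finiteness of $D_0$ is automatic because $\varphi$ is affine), giving $[D]=\varphi^*[D_0]$ with no gluing required.
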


\begin{proof}
We may assume that $Y$ is integral.
Let $W\in P^1(Y)$.
As $G$ is geometrically integral,
we have that $\varphi^{-1}(W)$ is integral.
Applying \cite[(5.13)]{Hashimoto4} to the map of local rings
$\O_{Y,W}\rightarrow \O_{X,\varphi^{-1}(W)}$, we have that $\varphi^{-1}(W)
\in P^1(X)$.
By Lemma~\ref{nagata.lem}, $\Cl'(X)\rightarrow \Coker\varphi^*$ 
factors through the surjection $\Cl'(X)\rightarrow \Cl'(X_\Sigma)$, where
$\Sigma$ is the set of prime divisors of the form $\varphi^{-1}(W)$ with
$W\in P^1(Y)$.
It is easy to see that $X_\Sigma=\varphi^{-1}(\eta)=
\Spec \kappa(\eta)[t_1^{\pm 1},\ldots,t_s^{\pm 1}
]$ by Lemma~\ref{principal-torus-equiv.lem}, 
where $\eta$ is the generic point of $Y$.
As $\Cl'(X_\Sigma)=0$, we are done.
\end{proof}

\paragraph
Let $s\geq 0$.
A $\Bbb Z^s$-graded ring $R=\bigoplus_{\lambda\in \Bbb Z^s}R_\lambda$ is
called a homogeneous DVR if $R$ is a Krull domain with a unique 
graded maximal ideal (that is, a maximal element in the set of 
graded ideals which are not equal to $R$) $P$ such that $\height P=1$.
We say that $(R,P)$ is a homogeneous DVR.
When we set $G$ to be the split torus $\Bbb G_m^s=\Spec \Bbb Z[t_1^{\pm 1},
\ldots,t_s^{\pm 1}]$, then $(R,P)$ is $G$-local.

\begin{lemma}
Let $(R,P)$ be a $\Bbb Z^s$-graded homogeneous DVR.
Then $P$ is a principal ideal.
\end{lemma}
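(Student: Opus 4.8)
The statement claims that in a $\Bbb Z^s$-graded homogeneous DVR $(R,P)$, the unique graded height-one maximal ideal $P$ is principal. The plan is to mimic the classical argument that a discrete valuation ring's maximal ideal is principal, but carry everything out in the graded category, using the torus action $G=\Bbb G_m^s=\Spec\Bbb Z[t_1^{\pm1},\dots,t_s^{\pm1}]$ so that "graded" becomes "$G$-equivariant" and the graded Nakayama lemma (Lemma~\ref{G-nakayama.lem}) is available. First I would record that since $R$ is a Krull domain and $\height P=1$, the localization $R_P$ is a DVR in the usual sense, so $PR_P$ is principal; the task is to descend a generator to a homogeneous element of $P$ itself. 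Choose a nonzero homogeneous $a\in P$ (possible since $P$ is a nonzero graded ideal); then $P^{(n)}\supset (a)$ for some $n$ because $R$ is Noetherian in the graded sense — more precisely, $P$ is the only graded prime of height one containing $a$ among those in $\supp$, so the graded $P$-primary component of $(a)$ is $P^{(n)}$ for some $n$, and I would pick $n$ minimal with $P^{(n)}\subseteq (a)$ but $P^{(n-1)}\not\subseteq (a)$.

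Next, pick a homogeneous element $b\in P^{(n-1)}\setminus (a)$. I would then consider the homogeneous element $t:=a/b\in K$ (the graded fraction field), which is homogeneous of the appropriate degree, and show $t^{-1}=b/a\notin R$ (by choice of $b$) while $t^{-1}P\subseteq R$: indeed $b P^{(1)}\subseteq P^{(n)}\subseteq (a)$, so $t^{-1}P=(b/a)P\subseteq R$, and moreover $t^{-1}P$ is a homogeneous ideal of $R$. The dichotomy is then: either $t^{-1}P=R$, in which case $P=tR$ is a principal (homogeneous-generator) ideal and we are done; or $t^{-1}P\subseteq P$, which would force $t^{-1}$ to be integral over $R$ by the determinant trick applied to the finitely generated (graded) faithful $R$-module $P$, hence $t^{-1}\in R$ since $R$ is a Krull domain and therefore integrally closed — contradicting $b/a\notin R$. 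This is the standard "either multiplication by $t^{-1}$ escapes $P$, or it is integral" trichotomy; the only subtlety is keeping all modules and ideals $G$-stable so that $P\neq t^{-1}P$ can be tested in the graded category, which is legitimate because $P$ is graded and $t^{-1}$ is homogeneous, so $t^{-1}P$ is again graded, and a proper graded ideal contained in the graded-maximal $P$ is contained in $P$.

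The main obstacle I anticipate is the finite generation of $P$ as an $R$-module, which is needed for the determinant/Cayley--Hamilton step and also implicitly for "$P^{(n)}\subseteq (a)$ for some $n$." The hypothesis only says $R$ is a Krull domain with a unique graded height-one maximal ideal; it is not stated to be Noetherian. Here the plan is to invoke that localizing at the graded multiplicative set of all nonzero homogeneous elements gives $\Spec$ of a $\Bbb Z^s$-graded field, i.e. a Laurent polynomial ring over a field (by Lemma~\ref{principal-torus-equiv.lem}, $R$ being $G$-local and a Krull domain makes this a principal $G$-bundle over the generic point), and the symbolic powers $P^{(n)}$ stabilize the relevant structure because a Krull domain's divisorial ideals are determined valuation-by-valuation. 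Concretely: work with $v_P$, the discrete valuation on $K$ attached to the height-one prime $P$; define $t$ to be any homogeneous element of $K^\times$ with $v_P(t)=1$ (such exists because the value group is $\Bbb Z$ and the residue construction is compatible with the grading), and then show $\O_{R}(-\div(\text{stuff}))$-style that $P=\{x\in R: v_P(x)\ge 1\}=tR_{(P)}\cap R$; the graded Nakayama lemma (Lemma~\ref{G-nakayama.lem}) applied to $P/tR$ over the graded-local $R$ then finishes, provided one checks $P=tR+P^2$ locally, which holds since it holds after localizing at $P$. I would present the argument in this second form to avoid assuming Noetherianity, leaning on the Krull-domain structure and the graded-local/Nakayama machinery already set up in the paper.
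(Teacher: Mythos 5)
There is a genuine gap in the route you commit to (the second one). Your finishing move is to apply the graded Nakayama lemma (Lemma~\ref{G-nakayama.lem}) to $P/tR$ after checking $P=tR+P^2$; but (i) that lemma requires the module to be of finite type, which is exactly the Noetherian-type hypothesis you set out to avoid (the hypothesis is only that $R$ is a Krull domain), and (ii) the justification ``$P=tR+P^2$, which holds since it holds after localizing at $P$'' is not valid: an equality of ideals cannot be verified by localizing at the single prime $P$, and no argument is offered at any other prime --- in fact $P=tR+P^2$ is essentially as strong as the lemma itself. The existence of a homogeneous $t$ with $v_P(t)=1$ is also only asserted; it is repairable (since $P$ is generated by homogeneous elements and $PR_P=\frak m$, some homogeneous element has value $1$; equivalently, choose a homogeneous $\alpha\in P\setminus P^{(2)}$, which exists because $P\not\subseteq P^{(2)}$), but as written it is not proved. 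Your first route (the determinant-trick dichotomy) founders on the same finite-generation issue, as you yourself note.

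The missing idea, which removes all finiteness questions and is what the paper uses, is that $P$ is the \emph{only} height-one prime of $R$ containing a nonzero homogeneous element: if $\fq$ is such a prime, the graded prime $\fq^*$ generated by the homogeneous elements of $\fq$ is nonzero, hence (being contained in the unique graded maximal ideal $P$, with $\height P=1$) equals $P$, forcing $\fq=P$. Consequently, for a homogeneous $\alpha\in P$ the principal (hence divisorial) ideal $(\alpha)$ has all its minimal primes equal to $P$, so $(\alpha)=P^{(v_P(\alpha))}$; taking $\alpha\in P\setminus P^{(2)}$ gives $(\alpha)=P$. (Equivalently: every homogeneous element of $R\setminus P$ is a unit, so for homogeneous $x\in P$ one gets $x/\alpha\in Q_G(R)\cap R_P=R$, whence $P=\alpha R$.) Your valuation-theoretic instincts are close to this, but without identifying this key fact the descent from $PR_P=tR_P$ to $P=tR$ is not achieved.
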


\begin{proof}
As $P$ is generated by homogeneous elements, we can take a homogeneous
element $\alpha\in P\setminus P^{(2)}$, where $P^{(2)}$ is the second
symbolic power of $P$.
Then a minimal prime of $\alpha$ must be height-one homogeneous, and hence
$\alpha$ generates $P$.
\end{proof}

\paragraph
Let $(R,P)$ be a $\Bbb Z^s$-graded homogeneous DVR.
Set $Q$ to be the localization of $R$ by all the nonzero homogeneous elements.
It is the $G$-total ring of quotients of $R$ \cite[(3.1)]{Hashimoto},
as can be seen easily.

\begin{lemma}\label{graded-DVR.lem}
$Q=R[\alpha^{-1}]$ and 
$R=Q\cap R_P$.
\end{lemma}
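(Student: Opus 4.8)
The plan is to use that $\alpha$ is a homogeneous generator of $P$ (from the preceding lemma), so $P=\alpha R$, together with the facts that $R_P$ is a discrete valuation ring (since $R$ is Krull and $\height P=1$) with uniformizer $\alpha$, and that $P$, being the unique graded maximal ideal of $R$, contains every proper graded ideal (every proper graded ideal is contained in a maximal one by a Zorn argument, hence in $P$).

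First I would establish $Q=R[\alpha^{-1}]$. The inclusion $R[\alpha^{-1}]\subseteq Q$ is trivial, $\alpha$ being a nonzero homogeneous element. For the opposite inclusion it suffices to show that every nonzero homogeneous $\beta\in R$ is a unit in $R[\alpha^{-1}]$. Set $n:=v_P(\beta)\ge 0$, the $P$-adic valuation in $R_P$. If $n=0$, then $\beta R$ is a graded ideal; were it proper it would be contained in the graded maximal ideal $P$, forcing $v_P(\beta)\ge 1$, contrary to $n=0$; hence $\beta R=R$ and $\beta$ is a unit of $R$. If $n\ge 1$, then $\beta\in PR_P\cap R=P=\alpha R$, so $\beta=\alpha\beta_1$ with $\beta_1\in R$; since $R$ is a domain, $\beta_1$ is homogeneous (its homogeneous components, multiplied by $\alpha$, land in pairwise distinct degrees, so all but one must vanish) and $v_P(\beta_1)=n-1$. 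Iterating, $\beta=\alpha^n\beta_n$ with $\beta_n$ homogeneous and $v_P(\beta_n)=0$, hence $\beta_n$ a unit of $R$ by the case $n=0$. Thus $\beta$ differs from $\alpha^n$ by a unit and is invertible in $R[\alpha^{-1}]$, giving $Q\subseteq R[\alpha^{-1}]$.

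Next I would prove $R=Q\cap R_P$. The inclusion $R\subseteq Q\cap R_P$ is immediate ($R\subseteq Q$ since $1$ is homogeneous, and $R\subseteq R_P$ by localization). For the reverse, I use that the Krull domain $R$ equals the intersection $\bigcap_{\fq}R_{\fq}$ taken over its height-one primes $\fq$. Since $\alpha R=P$ is prime of height one, $P$ is the only prime minimal over $\alpha R$, so $P$ is the only height-one prime of $R$ containing $\alpha$. Consequently, for a height-one prime $\fq\ne P$ we have $\alpha\notin\fq$, hence $\alpha$ is a unit in $R_{\fq}$ and $R[\alpha^{-1}]\subseteq R_{\fq}$; so any $x\in Q=R[\alpha^{-1}]$ lies in $R_{\fq}$. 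For $\fq=P$, any $x\in R_P$ lies in $R_{\fq}=R_P$. Therefore $x\in Q\cap R_P$ lies in $R_{\fq}$ for every height-one prime $\fq$, whence $x\in R$.

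The only ingredients are standard properties of Krull domains ($R_P$ a DVR for a height-one prime $P$, $R=\bigcap_{\fq}R_{\fq}$, and primes minimal over a nonzero principal ideal have height one) and of $\Bbb Z^s$-graded rings (existence of graded maximal ideals, and homogeneity of a homogeneous factor of a homogeneous element in a domain). The one place where a little care is needed is the step reducing ``$\beta R$ proper'' to ``$\beta R\subseteq P$'', which is exactly where the hypothesis that $P$ is the \emph{unique} graded maximal ideal is used; everything else is routine.
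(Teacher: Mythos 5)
Your proof is correct; every step checks out against the hypotheses (the preceding lemma giving $P=\alpha R$ with $\alpha$ homogeneous, $R_P$ a DVR because $R$ is Krull and $\height P=1$, and $P$ being the unique maximal element among proper graded ideals). The route differs from the paper's in both halves, though it rests on the same two facts. For $Q=R[\alpha^{-1}]$ the paper argues globally: $R[\alpha^{-1}]$ has no nonzero homogeneous prime ideal (the only one in $R$ is $P$, which contains $\alpha$), hence every homogeneous element of $R[\alpha^{-1}]$ is a unit and no further homogeneous localization is possible; you instead factor each nonzero homogeneous $\beta\in R$ explicitly as $\alpha^{n}$ times a homogeneous unit, using $v_P$ and $P=\alpha R$. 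Your version makes the role of the unique graded maximal ideal completely explicit and avoids the (implicit) appeal to graded-maximal ideals being prime. For $R=Q\cap R_P$ the paper writes $\beta=b/\alpha^{n}$ with $n$ minimal and derives a contradiction from $b\in R\cap PR_P=P=\alpha R$ when $n>0$, a self-contained minimal-denominator descent; you instead invoke the Krull intersection $R=\bigcap_{\height\fq=1}R_{\fq}$ together with the observation that $P$ is the only height-one prime containing $\alpha$, so $Q\subset R_{\fq}$ for all $\fq\neq P$. Both are equally short; the paper's descent uses less of the Krull machinery, while yours displays more transparently where each element of $Q\cap R_P$ sits locally.
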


\begin{proof}
As $R[\alpha^{-1}]$ does not have a nonzero homogeneous prime ideal, any
homogeneous element of $R[\alpha^{-1}]$ is a unit, and we cannot localize
homogeneously any more, and so $Q=R[\alpha^{-1}]$ holds.

Obviously, $R\subset Q\cap R_P$.
Let $\beta\in Q\cap R_P$.
We can write $\beta=b/\alpha^n$ for some $n\geq 0$ and $b\in R$.
Take $n$ as small as possible, and assume that $n>0$.
Then $b=\beta\alpha^n\in R\cap PR_P=P$, and $b$ is divisible by $\alpha$.
This is absurd.
\end{proof}

\paragraph
Let $B$ be a $\Bbb Z^s$-graded Krull domain with the field of fractions
$K$.
We say that $\Lambda$ is a defining family of homogeneous DVR's of $B$
if each element $R\in \Lambda$ is a homogeneous DVR which is
a graded subring of $Q_G(B)$, and $B=\bigcap_{R\in\Lambda}R$,
where $Q_G(B)$ is the localization of $B$ by all the nonzero homogeneous
elements of $B$.

\begin{lemma}\label{graded-defining-Krull.lem}
Let $B$ be a $\Bbb Z^s$-graded Krull domain with the field of fractions
$K$.
Set $\Lambda_1=\{B_{(P)}\mid \text{$P$ is homogeneous and of height one}\}$,
where $B_{(P)}$ denotes the localization of $B$ by the set of homogeneous
elemets of $B\setminus P$.
Then $\Lambda_1$ is a defining family of homogeneous DVR's of $B$.
If $\Lambda$ is a defining family of homogeneous DVR's of $B$, then
$\Lambda\supset \Lambda_1$, and thus $\Lambda_1$ is the smallest 
defining family of homogeneous DVR's of $B$.
\end{lemma}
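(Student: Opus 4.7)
The plan is to split the proof into establishing that $\Lambda_1$ is a defining family, and then showing it is contained in every defining family $\Lambda$.

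For the first claim, I would verify two things. First, each $B_{(P)}$ with $P$ a homogeneous height-one prime is a homogeneous DVR: as a graded localization of the Krull domain $B$, it is a graded Krull domain; its graded prime ideals correspond to graded primes of $B$ contained in $P$, so $PB_{(P)}$ is the unique graded maximal and has height one. Second, $B=\bigcap_P B_{(P)}$ follows by starting from the Krull decomposition $B=\bigcap_{\fq\in P^1(B)}B_\fq$ and intersecting with $Q_G(B)$: for a non-homogeneous height-one prime $\fq$, the largest graded subideal $\fq^*$ is a graded prime strictly contained in $\fq$, so $\fq^*=0$ by the height hypothesis, whence $\fq$ contains no nonzero homogeneous element and $B_\fq\supset Q_G(B)$; for a homogeneous height-one $P$, Lemma~\ref{graded-DVR.lem} applied to $B_{(P)}$ (noting $Q_G(B_{(P)})=Q_G(B)$ and that the ordinary localization of $B_{(P)}$ at $PB_{(P)}$ is $B_P$) gives $B_{(P)}=Q_G(B)\cap B_P$. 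Combining collapses the intersection to $\bigcap_P B_{(P)}$.

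For the minimality, my plan is to show that any homogeneous DVR $R$ with $B\subset R\subset Q_G(B)$ has the form $B_{(\fp_R)}$ for $\fp_R:=M_R\cap B$. First, $\fp_R\neq 0$: writing a homogeneous uniformizer $\alpha_R=b/c$ with $b,c\in B$ homogeneous gives $0\neq b=\alpha_R c\in M_R\cap B=\fp_R$. Second, $\fp_R$ has height one in $B$: if it had graded height at least two, one could pick a graded prime $Q\subsetneq\fp_R$ of graded height one, and a homogeneous $p\in\fp_R\setminus Q$ would then satisfy $v_R(p)>0$ although the restriction of $v_R$ to $B_{(Q)}$ (a homogeneous DVR) must be proportional to its canonical valuation $v_Q$ (which vanishes on $p$), a contradiction; the standard coincidence of graded and ungraded heights for graded primes in graded Krull domains then upgrades graded height one to ungraded height one. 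Third, the local inclusion $B_{\fp_R}\subset R_{M_R}$ of DVRs with a common fraction field must be an equality (a DVR has no proper DVR overring), so by Lemma~\ref{graded-DVR.lem} applied symmetrically, $R=Q_G(B)\cap R_{M_R}=Q_G(B)\cap B_{\fp_R}=B_{(\fp_R)}$. Hence any defining $\Lambda$ satisfies $\Lambda\subset\Lambda_1$.

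To close the argument I would prove that $\Lambda_1$ is irredundant: for each homogeneous height-one $P$, the family $\Lambda_1\setminus\{B_{(P)}\}$ is not defining. Choose a homogeneous $\alpha\in P$ with $v_P(\alpha)=1$ (possible since $P/P^{(2)}$ is nonzero and graded); by the Krull condition only finitely many height-one primes $P=P_1,\ldots,P_n$ of $B$ contain $\alpha$, and each is homogeneous as a minimal prime of the homogeneous ideal $\alpha B$. The homogeneous ideal $\mathfrak{b}:=\bigcap_{i\geq 2}P_i^{(v_{P_i}(\alpha))}$ is not contained in $P$, since it contains $\prod_{i\geq 2}P_i^{(v_{P_i}(\alpha))}$ and no $P_i$ ($i\geq 2$) lies in $P$. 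Picking a homogeneous $\beta\in\mathfrak{b}\setminus P$, the element $\beta/\alpha$ lies in every $B_{(P')}$ with $P'\neq P$ homogeneous height-one (trivially when $\alpha\notin P'$, by construction otherwise) but not in $B_{(P)}$. Together with $\Lambda\subset\Lambda_1$, any defining $\Lambda$ omitting some $B_{(P)}$ would lie in $\Lambda_1\setminus\{B_{(P)}\}$, forcing $\beta/\alpha\in\bigcap_{R\in\Lambda}R=B$, a contradiction. The main obstacle is the technical step that $\fp_R$ has ungraded height one in $B$; this relies on matching graded and ungraded heights for graded primes in a Krull domain together with the restricted-valuation argument above. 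Once this is in hand, the DVR-overring principle and Lemma~\ref{graded-DVR.lem} yield $R=B_{(\fp_R)}$ cleanly, and the irredundancy becomes a standard Krull approximation in the graded setting.
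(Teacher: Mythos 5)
Your first half---that $\Lambda_1$ is a defining family---is correct and is essentially the paper's argument: $B_{(P)}$ is a homogeneous DVR, an inhomogeneous height-one prime contains no nonzero homogeneous element (the paper phrases this as ``minimal primes of $Bb$ are homogeneous''), and Lemma~\ref{graded-DVR.lem} collapses the Krull intersection $\bigcap_{\fq}B_{\fq}$ intersected with $Q_G(B)$ to $\bigcap_P B_{(P)}$.

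The minimality half, however, has a genuine gap: the claim on which everything rests---that every homogeneous DVR $R$ with $B\subset R\subset Q_G(B)$ equals $B_{(\fp_R)}$ with $\fp_R=M_R\cap B$ of height one, hence that every defining family satisfies $\Lambda\subset\Lambda_1$---is false. Take $B=k[x,y]$ with the $\Bbb Z^2$-grading $\deg x=(1,0)$, $\deg y=(0,1)$, so $Q_G(B)=k[x^{\pm1},y^{\pm1}]$ and homogeneous elements are monomials; here $\Lambda_1=\{B_{(x)},B_{(y)}\}$. For $N\geq 2$ let $R$ be the span of the monomials $x^ay^b$ ($a,b\in\Bbb Z$) with $a+Nb\geq 0$. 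Then $R=k[(x^{-N}y)^{\pm1},x]$ is a Krull domain, its unique graded maximal ideal is $xR$ and has height one, so $R$ is a homogeneous DVR with $B\subset R\subset Q_G(B)$; yet $M_R\cap B=(x,y)$ has height two, so $R$ is not any $B_{(P)}$, and $\Lambda=\{B_{(x)},B_{(y)},R\}$ is a defining family (already $B_{(x)}\cap B_{(y)}=B$) not contained in $\Lambda_1$. The precise step that breaks in your height-one argument is the assertion that the restriction of $v_R$ to $B_{(Q)}$ must be proportional to $v_Q$: there is no inclusion $B_{(Q)}\subset R$ (in the example $y^{-1}\in B_{(x)}\setminus R$), so nothing forces the two valuations to be comparable, and indeed in the example $v_R$ is the monomial valuation $v_R(x)=1$, $v_R(y)=N$, which is not proportional to $v_{(x)}$ on $B_{(x)}$.

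Your irredundancy construction of the witness $\beta/\alpha$ is fine as far as it goes, but without the (false) inclusion $\Lambda\subset\Lambda_1$ it only tells you that \emph{some} member of $\Lambda$ excludes $\beta/\alpha$, and that member need not be $B_{(P)}$ (in the example, $1/x$ is excluded by $R$ as well as by $B_{(x)}$). So the route ``$\Lambda\subset\Lambda_1$ plus irredundancy'' cannot be repaired; what is really needed is the classical fact that the essential valuation rings of a Krull domain occur in every representation of it as an intersection of DVRs of its fraction field. That is exactly how the paper argues: using Lemma~\ref{graded-DVR.lem} it writes $B=Q_G(B)\cap\bigcap_{R\in\Lambda}R_{\fm_R}$, expresses $Q_G(B)$ as an intersection of DVRs, checks that $B_P\not\supset Q_G(B)$ for a homogeneous height-one $P$ (via the contraction $PB_P\cap Q_G(B)$), and then invokes \cite[(12.3)]{CRT} to conclude $B_P=R_{\fm_R}$ for some $R\in\Lambda$, whence $B_{(P)}=Q_G(B)\cap B_P=Q_G(B)\cap R_{\fm_R}=R\in\Lambda$.
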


\begin{proof}
It is easy to see that $(B_{(P)},PB_{(P)})$ is a homogeneous DVR and is 
a graded subring of $Q_G(B)=B_{(0)}$ for 
a homogeneous height one prime $P$ of $B$.
We prove that $B=\bigcap_{R\in\Lambda_1}R=\bigcap_P B_{(P)}$.
$B\subset \bigcap_P B_{(P)}$ is trivial.
Let $a/b\in\bigcap_P B_{(P)}$,
where $a\in B$ and $b$ is a nonzero homogeneous element of $B$.
As a minimal prime of $Bb$ is homogeneous, $a/b\in B_Q$ for any 
height one inhomogeneous prime ideal $Q$.
On the other hand, obviously, $a/b\in B_P$ for any $P$.
Thus $a/b\in( \bigcap_P B_P)\cap (\bigcap_Q B_Q)=B$, since $B$ is a Krull 
domain.

Next, let $\Lambda$ be a defining family of homogeneous DVR's of $B$.
For $R\in\Lambda$, let $\frak m_R$ be the graded maximal ideal of $R$.
Then
\[
B=\bigcap_{R\in\Lambda}R=Q_G(B)\cap \bigcap_R R_{\frak m_R}
\]
by Lemma~\ref{graded-DVR.lem}.

Let $P$ be a homogeneous height-one prime ideal of $B$, and 
assume that $B_P\supset Q_G(B)$.
Set $\frak P=PB_P\cap Q_G(B)$.
Then $\frak P\cap B=PB_P\cap B=P$, and hence $\frak P=PQ_G(B)$ contains $1$,
and this is a contradiction.
So $B_P$ does not contain $Q_G(B)$.
When we express $Q_G(B)=\bigcap_{R'\in \Lambda'}R'$, where $\Lambda'$ is a
set of DVR's whose field of quotients are $K$, then 
$B_P\in \Lambda'\cup\{R_{\frak m_R}\mid R\in\Lambda\}$ by
\cite[(12.3)]{CRT}.
As we know that $B_P\notin \Lambda'$, $B_P=R_{\frak m_R}$ for some $R\in
\Lambda$, and hence $B_{(P)}=Q_G(B)\cap B_P
=Q_G(B)\cap R_{\frak m_R}=R\in\Lambda$.
Hence $\Lambda_1\subset\Lambda$.
\end{proof}

\paragraph
Let $Y$ be a 
locally Krull integral $S$-scheme which is quasi-compact and separated over 
$S$.
Assume that $Y$ has an $h_Y$-ample Cartier 
divisor $D$ (that is, $D$ is a Weil divisor on $Y$ such that 
$\O_Y(D)$ is an $h_Y$-ample invertible sheaf), 
where $h_Y:Y\rightarrow S$ is the structure map.
Set $\Cal A=\Cal R(Y;D):
=(h_Y)_*(\bigoplus_{n\geq 0}\O_Y(nD)T^n)$, where $T$ is a variable.
By assumption, the canonical morphism $u:Y\rightarrow \bar Y:=\uProj\Cal A$
is an open immersion \cite[(27.24.14)]{SP}.
We identify $Y$ with the image $u(Y)$ of $u$, and regard $Y$ as an
open subset of $\bar Y$.

\begin{lemma}
$Y$ is large in $\bar Y$.
\end{lemma}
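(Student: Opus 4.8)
The plan is to check, chart by chart on $\bar Y=\uProj\Cal A$, that the closed subset $\bar Y\setminus Y$ contains no point of codimension $\le 1$; since codimension is read off from local rings, this is exactly the assertion that $Y$ is large in $\bar Y$.

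First I would dispose of the generic point. Identifying $Y$ with the open subscheme $u(Y)$ of $\bar Y$ via the open immersion $u$, and observing that $\Cal A$ is, locally on $S$, a graded subalgebra of $\K[T]$ for $\K$ the constant sheaf of the function field $K$ of the integral scheme $Y$, we get that $\Cal A$ is a domain, $\bar Y$ is integral, and $Y$ is a dense open subscheme of $\bar Y$ with function field $K$; in particular the generic point of $\bar Y$ lies in $Y$. Given a point $w\in\bar Y$, I would choose an affine open $U_S=\Spec R\subseteq S$ and a homogeneous element $g=\tau T^{\,n}$ of degree $n\ge 1$ in $\Gamma(U_S,\Cal A)=\bigoplus_{m\ge 0}\Gamma(W,\O_Y(mD))T^{m}$, where $W:=h_Y^{-1}(U_S)$ and $\tau\in\Gamma(W,\O_Y(nD))\setminus\{0\}$, with $w\in D_+(g)=\Spec\Cal A_{(g)}$. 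It then suffices to prove that $D_+(g)\cap Y$ is large in $D_+(g)$.

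The heart of the argument is a concrete identification of this affine chart. On the one hand, since $g$ pulls back to $\tau$ under $u$, one has $D_+(g)\cap Y=W_\tau$, the non-vanishing locus of $\tau$ in $W$, i.e. $W\setminus\supp\bigl((\div\tau+nD)|_W\bigr)$; as $W$ is quasi-compact (because $h_Y$ is) and locally Krull integral, this is the complement of finitely many prime divisors. On the other hand, a direct computation inside $K$ gives
\[
\Cal A_{(g)}=\bigl\{\tfrac{\eta}{\tau^{k}}\ \big|\ k\ge 0,\ \eta\in\Gamma(W,\O_Y(knD))\bigr\}=\bigcap_{P\in W_\tau\an 1}\O_{Y,P},
\]
the point being that $f\tau^{k}\in\Gamma(W,\O_Y(knD))$ iff $v_P(f)\ge -k\,d_P(\tau)$ for all $P\in W\an1$ (with $d_P(\tau)\ge 0$ the order of vanishing of the section $\tau$ along $P$), and taking the union over $k$ leaves exactly the condition $v_P(f)\ge 0$ at the $P$ with $d_P(\tau)=0$, that is, at $P\in W_\tau\an1$. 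Thus $\Cal A_{(g)}$ is the subintersection of the Krull domain $\Gamma(W,\O_W)=\bigcap_{P\in W\an1}\O_{Y,P}$ obtained by deleting the finitely many divisors along which $\tau$ vanishes; it is again Krull by \cite[(1.5)]{Fossum}, and by Nagata's theorem (cf.~Lemma~\ref{nagata.lem} and \cite[(7.1)]{Fossum}) its height-one primes correspond bijectively to $\{v_P:P\in W_\tau\an1\}$. Hence for each codimension-one point $\mathfrak q$ of $D_+(g)$ we have $(\Cal A_{(g)})_{\mathfrak q}=\O_{Y,P}$ for some $P\in W_\tau\an1$; since $u|_{W_\tau}\colon W_\tau\hookrightarrow D_+(g)$ is an open immersion, the image of $P$ is a codimension-one point of $D_+(g)$ with the same local ring, hence equals $\mathfrak q$. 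Therefore every codimension-one point of $D_+(g)$, together with its generic point, lies in $W_\tau=D_+(g)\cap Y$, so $D_+(g)\cap Y$ is large in $D_+(g)$; letting $w$ vary then completes the proof.

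The step I expect to be the main obstacle is the bookkeeping around the subintersection: one must be certain that passing from $\Gamma(W,\O_W)$ to $\Cal A_{(g)}$ does not absorb any essential valuation, so that the height-one primes of $\Cal A_{(g)}$ really are in bijection with $W_\tau\an1$ and each of them is an honest codimension-one point of $D_+(g)$ already lying in $Y$. Verifying that $g$ pulls back to $\tau$ (hence $D_+(g)\cap Y=W_\tau$) and establishing the displayed description of $\Cal A_{(g)}$ also call for some care, but are routine. The fact that $\O_Y(nD)$ need not be invertible for a general Weil divisor $D$ (here $D$ is Cartier, though the argument does not really use this) causes no difficulty, since everything is phrased through the discrete valuations $v_P$.
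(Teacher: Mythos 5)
Your proof is correct and follows essentially the same route as the paper's: reduce to a chart $D_+(sT^n)$ of $\uProj\Cal A$ over affine $S$, identify its coordinate ring with $\Gamma(Y_s,\O_Y)=\bigcap_{P}\O_{Y,P}$ (intersection over the codimension-one points of the non-vanishing locus), and invoke the essential-valuation property of Krull domains (the paper cites \cite[(12.3)]{CRT}; you route the same fact through subintersections and Fossum) to conclude that every point of codimension at most one in the chart already lies in $Y_s$. The only cosmetic difference is your claim of a bijection between the height-one primes of $\Cal A_{(g)}$ and those of $W_\tau$, whereas only the one containment you actually use is needed (and is what the cited results justify).
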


\begin{proof}
The question is local on $S$, and hence we may assume that $S$ is affine.
The question is also local on $\bar Y$, so it suffices to show that
for any $n>0$ and $0\neq s\in\Cal A_n=\Gamma(Y,\O(nD))$, 
$Y_s$ is large in $D_+(sT^n)$, where
$Y_s=Y\setminus \Supp(\Coker(s:\O\rightarrow\O(nD)))$.
Note that $D_+(sT^n)$ is affine with the coordinate ring
\[
\{0\}\cup\{f\in K^\times \mid \div f+r(\div s+nD)\geq 0 \text{ for some $r$}\}
=\Gamma(Y_s,\O_Y),
\]
and the inclusion $Y_s\rightarrow D_+(sT^n)$ is the obvious map.
Set $R=\Gamma(Y_s,\O_Y)$.
As $R=\Gamma(Y_s,\O_Y)$ and $Y_s$ is locally Krull integral, we have that
$R=\bigcap_{W\in P^1(Y_s)}\Cal O_{\bar Y,W}$ and $R$ is a Krull domain,
where $P^1(?)$ dentoes the set of prime divisors.
Then as $R$ is a Krull domain and is the coordinate ring of $D_+(sT^n)$, 
$R=\bigcap_{W\in P^1(D_+(sT^n))}\Cal O_{\bar Y,W}$.
By \cite[(12.3)]{CRT}, each $W$ in $P^1(D_+(st^n))$ must intersect $Y_s$.
Namely, $Y_s$ is large in $D_+(sT^n)$.
\end{proof}

\paragraph
Let $Y$ and $D$ be as above.
Let $s\geq 1$.
Set $G$ to be the split torus of relative dimension $s$.
That is, $G=\Spec \Bbb Z[t_1^{\pm1},\ldots,t_s^{\pm s}]\times_{\Spec \Bbb Z}S$.
Let $D_1,\cdots,D_s$ be Weil divisors on $Y$, and assume that 
we can write $D=\sum_{i=1}^s\mu_i D_i$ for some $\mu=(\mu_1,\ldots,\mu_s)
\in\Bbb Z^s$.

Set $\Cal D=\bigoplus_{\lambda\in\Bbb Z^s}\O(D_\lambda )t^\lambda$,
where for $\lambda=(\lambda_1,\ldots,\lambda_s)\in\Bbb Z^s$, 
$D_\lambda:=\sum_{i=1}^s \lambda_iD_i$, and $t^\lambda=t_1^{\lambda_1}\cdots
t_s^{\lambda_s}$.
Note that $\Cal D$ is a 
quasi-coherent subalgebra of the constant sheaf of algebra
$K[t_1^{\pm 1},\ldots,t_s^{\pm s}]$ over $Y$, where $K$ is the rational
function field of $Y$.

The (relative) multisection ring of $D_1,\cdots,D_s$ is defined to be
\[
\Cal B=\Cal R(Y;D_1,\ldots,D_s):=(h_Y)_*(\Cal D).
\]

\paragraph We set $X=\uSpec_S\Cal B$, and $Z=\uSpec_Y \Cal D$.
Note that $\Cal D$ is $\Bbb Z^s$-graded, and hence the canonical
map $\pi:Z\rightarrow Y$ is a $G$-invariant morphism.
There is a canonical map $v:Z\rightarrow X$, since $\Cal B=(h_Y)_*(\Cal D)$.
It is a $G$-morphism.

\begin{lemma}
There is a large open subset $V$ of $Y$ such that $D_i|_V$ is Cartier
\(that is, $\O_V(D_i|_V)$ is invertible\) for $i=1,\ldots,s$.
\end{lemma}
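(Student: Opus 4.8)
The plan is to handle each $D_i$ separately and then intersect. Fix $i$. Since $\O_Y(D_i)$ is a rank-one reflexive quasi-coherent $\O_Y$-module, I would consider the set $V_i:=\{y\in Y\mid \O_Y(D_i)_y\text{ is a free }\O_{Y,y}\text{-module}\}$ and show that it is a large open subset of $Y$ on which $D_i$ becomes Cartier. This is precisely the ``large invertibility locus of a rank-one reflexive sheaf'' already used in the excerpt (cf.\ the proof of Theorem~\ref{four-term.thm} and \cite[(5.33)]{Hashimoto4}), specialized to the divisorial sheaf $\O_Y(D_i)$.

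For openness, suppose $y\in V_i$ and write $\O_Y(D_i)_y=g\,\O_{Y,y}$ with $g\in K^\times$, where $K$ is the function field of the integral scheme $Y$. Comparing orders at the height-one primes through $y$ (using the approximation theorem for Krull domains) one checks that the divisor $D_i+\div g$ has coefficient $0$ at every prime divisor passing through $y$; since a Weil divisor has locally finite, hence closed, support, the open set $U_y:=Y\setminus\Supp(D_i+\div g)$ contains $y$, and on $U_y$ the divisor $D_i$ equals the principal divisor $-\div g$. Hence $\O_Y(D_i)|_{U_y}\cong\O_{U_y}$ and $U_y\subseteq V_i$, so $V_i$ is open and $\O_Y(D_i)|_{V_i}$ is invertible. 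For largeness, if $\codim_Y y\le 1$ then $\O_{Y,y}$ is a Krull domain of dimension at most one, hence a field or a discrete valuation ring, and over such a ring every rank-one reflexive module is free; thus $y\in V_i$, so $Y\setminus V_i$ is disjoint from $Y\an0\cup Y\an1$ and $V_i$ is large.

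Finally, $V:=\bigcap_{i=1}^{s}V_i$ is a finite intersection of large open subsets: it is open, and $Y\setminus V=\bigcup_{i}(Y\setminus V_i)$ is still disjoint from $Y\an0\cup Y\an1$, so $V$ is large in $Y$. By construction $\O_Y(D_i)|_{V}$ is invertible, i.e.\ $D_i|_V$ is Cartier, for every $i=1,\dots,s$.

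The delicate point is the openness of $V_i$: because $Y$ is only assumed locally Krull (not locally Noetherian), $\O_Y(D_i)$ need not be coherent, so one cannot simply invoke ``freeness at a point spreads to a neighbourhood.'' The argument above avoids this by exploiting that Weil divisors have closed support, which upgrades ``$\O_Y(D_i)_y$ is free'' to ``$D_i$ is actually principal near $y$.'' Everything else --- the reduction to discrete valuation rings and fields in codimension $\le 1$, and the stability of largeness under finite intersection --- is routine.
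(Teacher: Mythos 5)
Your proof is correct, but it takes a more self-contained route than the paper. The paper disposes of the lemma in two lines: cover $Y$ by connected affine opens $Y_j$, invoke the proof of \cite[(5.33)]{Hashimoto4} (which shows that a rank-one reflexive sheaf on a quasi-compact locally Krull scheme is invertible on a large open subset) to get a large $V_j\subset Y_j$ on which all the $D_i$ are Cartier, and then take the \emph{union} $V=\bigcup_j V_j$, which is large because largeness is checked pointwise in codimension $\leq 1$ and Cartier-ness is local. You instead work globally and per divisor: you construct the invertibility locus $V_i$ of $\O_Y(D_i)$ directly from the divisorial description, proving openness by noting that local freeness $\O_Y(D_i)_y=g\,\O_{Y,y}$ forces $D_i+\div g$ to have coefficient zero at every prime divisor through $y$ (the approximation theorem for Krull domains is indeed what rules out slack in the valuations), so that $D_i$ is actually principal on the complement of the closed, locally finite support of $D_i+\div g$; largeness comes from the DVR/field description of the stalks in codimension $\leq 1$; and you finish by intersecting the finitely many $V_i$. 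In effect you have reproved the relevant content of \cite[(5.33)]{Hashimoto4} in the divisorial setting, and you correctly flag and handle the one genuinely delicate point in the non-Noetherian situation, namely that $\O_Y(D_i)$ need not be coherent, so openness cannot be obtained by spreading out a free generator; the paper's proof hides this issue inside the citation. So the two arguments buy different things: the paper's is shorter by outsourcing the key fact and gluing over an affine cover, yours is longer but self-contained and makes the locally Krull subtleties explicit.
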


\begin{proof}
Let $Y=\bigcup_{j\in J} Y_j$ be an affine open covering with $Y_j$ connected.
Then by the proof of \cite[(5.33)]{Hashimoto4}, for each $j$, 
we can take a large open subset $V_j\subset Y_j$ such that $D_i|_{V_j}$ is
Cartier for $i=1,\ldots,s$.
Now define $V:=\bigcup_{j\in J}V_J$.
\end{proof}

We fix such a $V$.

\begin{proposition}\label{kurano-crutial.prop}
$v:Z\rightarrow X$ above is an open immersion.
We identify $Z$ by $v(Z)$ and regard $Z$ as an open subscheme of $X$.
Then $U:=\pi^{-1}(V)\subset Z$ is large in $X$.
\end{proposition}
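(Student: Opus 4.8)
The plan is to prove the two claims in turn: first that $v\colon Z\to X$ is an open immersion, then that $U=\pi^{-1}(V)$ is large in $X$. For the first claim, I would argue locally on $S$, so assume $S=\Spec R$ is affine. Then $X=\Spec B$ with $B=\Gamma(Y,\Cal D)$ the multisection ring, and $Z=\uSpec_Y\Cal D$ maps to $X$ via the canonical map induced by $B=\Gamma(Y,\Cal D)$. The key is to exhibit, for each point $z\in Z$, a graded localization of $B$ that realizes an open neighborhood of $z$ in $X$ and an isomorphic one in $Z$. Concretely, for a nonzero homogeneous $f\in B$ of degree $\lambda$ with $D_\lambda$ behaving like the ample divisor $D$, the graded piece $Y_f\subset Y$ (the non-vanishing locus of the section $f$) is affine by the ampleness hypothesis on $D$ (using that $D=\sum\mu_iD_i$, so a suitable power gives a positive multiple of $D$), and $\pi^{-1}(Y_f)=\Spec B[f^{-1}]_0\cdot(\text{the graded localization})$; one checks $\pi^{-1}(Y_f)=\Spec B_{(f)}$-style identifications so that $Z$ is covered by basic opens $D(f)\subset X$. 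Thus $v$ is (locally, hence globally) an open immersion. I would lean on \cite[(27.24.14)]{SP} and the Proj-type construction already used for $\bar Y=\uProj\Cal R(Y;D)$, and on Lemma~\ref{principal-torus-equiv.lem} to control the graded structure over $V$.

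For the largeness of $U=\pi^{-1}(V)$ in $X$, the strategy is: $Z$ is large in $X$, and $U$ is large in $Z$; then conclude $U$ large in $X$ by the transitivity property for $n$-large subsets noted in (\ref{LFI.par}) (if $U\subset V\subset X$, $V$ open, $U$ large in $V$ and $V$ large in $X$, then $U$ large in $X$). For $Z$ large in $X$: this is the analogue of the lemma just proved that $Y$ is large in $\bar Y$, transported through the torus action. Since $\Cal D$ is $\Bbb Z^s$-graded and $B=\Gamma(Y,\Cal D)$, the complement $X\setminus Z$ corresponds to the irrelevant-type locus, and one shows any prime divisor of $X$ meets $Z$ by the same Krull-domain/intersection argument (\cite[(12.3)]{CRT}) used for $Y\subset\bar Y$: $\Gamma(Z_f,\O)$ is a Krull domain equal to the coordinate ring of the corresponding affine open of $X$, so its prime divisors, coming from $P^1$ of that affine open, must intersect $Z_f$. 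For $U$ large in $Z$: since $\pi\colon Z\to Y$ is, after restricting to $V$, a principal $G$-bundle (by construction of $\Cal D$ and Lemma~\ref{principal-torus-equiv.lem}, as $D_i|_V$ are Cartier), $\pi$ is flat there, and $V$ is large in $Y$; apply Lemma~\ref{codim-two-flat.thm}, {\bf 1} (flat pullback of an $n$-large subset is $n$-large) to get $\pi^{-1}(V)$ large in $\pi^{-1}(Y)=Z$.

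The main obstacle I expect is the first claim: verifying carefully that $v\colon Z\to X$ is an open immersion and not merely a dominant morphism. The subtlety is that $B=\Gamma(Y,\Cal D)$ is a section ring, so basic opens $D(f)\subset\Spec B$ need not a priori be pulled back from opens of $Z$; one must use ampleness of $D$ to ensure that enough degree-$\lambda$ sections $f$ exist with $Y_f$ affine and $\pi^{-1}(Y_f)=D(f)$ as schemes (in particular that the natural map $B[f^{-1}]\to\Gamma(\pi^{-1}(Y_f),\O_Z)$ is an isomorphism), so that these $D(f)$ cover $Z$. This is essentially the content of \cite[(27.24.14)]{SP} and the identification $Y\hookrightarrow\uProj\Cal R(Y;D)$ already invoked, adapted to the multigraded setting — the book-keeping with the lattice $\Bbb Z^s$ and the decomposition $D=\sum\mu_iD_i$ is the technical heart. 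Once $Z\hookrightarrow X$ is established as an open immersion, the largeness assertions are routine applications of the cited codimension-two lemmas, and I would present those steps only briefly.
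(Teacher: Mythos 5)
Your treatment of the first claim (that $v$ is an open immersion) is essentially the paper's: for $s\in\Gamma(Y,\O_Y(nD))$ with $Y_s$ affine one computes $B[(st^{n\mu})^{-1}]_\lambda=\Gamma(Y_s,\O_Y(D_\lambda))$, so $v$ identifies $\pi^{-1}(Y_s)$ with the basic open $X_{st^{n\mu}}$, and such opens cover $Z$ by ampleness. Your step (a), that $Z$ is large in $X$, is also in the spirit of the paper's argument, but note it is not free: it requires knowing that $B$ is Krull and identifying which height-one primes of $B$ meet $Z$, which is exactly the content of the homogeneous-prime analysis ($P_W$, Lemma~\ref{expression-B.lem}) that constitutes the technical core of the paper's proof; your appeal to \cite[(12.3)]{CRT} is a legitimate repackaging of that, not a shortcut around it.

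The genuine gap is step (b), the claim that $U=\pi^{-1}(V)$ is large in $Z$. You invoke Lemma~\ref{codim-two-flat.thm}, {\bf 1}, but that lemma requires the morphism being pulled back along, here $\pi\colon Z\rightarrow Y$, to be flat, and $\pi$ is \emph{not} flat in general: $\Cal D=\bigoplus_\lambda\O_Y(D_\lambda)t^\lambda$ and the sheaves $\O_Y(D_\lambda)$ are rank-one reflexive but not invertible (hence not flat) at points where $D_\lambda$ fails to be Cartier, i.e., precisely off $V$. Flatness of $\rho$ over $V$ tells you nothing about the points of $Z\setminus U$, which lie over $Y\setminus V$; what must be excluded is a codimension-$\leq 1$ point of $Z$ mapping into $Y\setminus V$, and flatness over $V$ cannot do that. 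The statement of (b) is in fact true, but its proof again needs the structure of the height-one primes of $B$: by (\ref{kurano.eq}) and Lemma~\ref{expression-B.lem}, a homogeneous height-one prime of $B$ is some $P_W$ and lies over the codimension-one point $W\in P^1(Y)\subset V$, while a non-homogeneous height-one prime contains no nonzero homogeneous element, so it contracts to $(0)$ in the degree-zero part and lies over the generic point of $Y$; hence every codimension-$\leq 1$ point of $Z$ maps into $V$. Alternatively, one can avoid (b) altogether as the paper does: since $V$ is large and the $\O_Y(D_\lambda)$ are divisorial, replacing $Y$ by $V$ does not change $B$ (hence not $X$), so one may assume $V=Y$, $U=Z$, and then the single assertion to prove is that the ideal $J=(s_1t^{n\mu},\ldots,s_mt^{n\mu})$ has height at least two, again via the $P_W$'s. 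Either way, the largeness assertion is not a "routine application of the cited codimension-two lemmas" as your sketch suggests; it is where the real work lies.
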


\begin{proof}
The question is local on $S$, and we may assume that $S=\Spec R$ is affine
and hence $B=\Cal B=\bigoplus_\lambda B_\lambda$ is a graded $R$-algebra.
Let $n>0$, and $s\in \Gamma(Y,\Cal O_Y(nD))$.
Then the degree $\lambda$ component $B[(st^{n\mu})^{-1}]_\lambda$
of the localization $B[(st^{n\mu})^{-1}]$ is
\[
\bigcup_{r\geq 0} (st^{n\mu})^{-r}B_{\lambda+rn\mu}
= (\bigcup_r \Gamma(Y,\O_Y(D_\lambda+r(\div s+nD))))t^\lambda
=\Gamma(Y_s,\O_Y(D_\lambda)).
\]
So if, moreover, $Y_s$ is also affine, then 
$v$ maps $\pi^{-1}(Y_s)$ isomorphically 
onto the open subset $X_{st^{n\mu}}=\Spec B[(st^{n\mu})^{-1}]$.

We can take a sufficiently divisible $n$ and $s_1,\ldots,s_{m}\in
\Gamma(Y,\O_Y(nD))$ such that each $Y_{s_i}$ is affine and 
$\bigcup_i Y_{s_i}=Y$.
Hence $v$ is an open immersion whose image is
$X\setminus V(J)$,
where
\begin{equation}\label{J.eq}
J=(s_1t^{n\mu},\ldots,s_mt^{n\mu})\subset B.
\end{equation}

To prove that $U$ is large, since $V$ is large in $Y$, replacing $Y$ by 
$V$ (this does not changes $X$), we may assume that $V=Y$ (and $U=Z$).
It suffices to prove that $J$ has height at least two.

Before we finish the proof, we need some constructions.
\let\qed\relax
\end{proof}

\paragraph
Let the notation be as above ($S$ is affine).
For each $W\in P^1(Y)$, define
$P_W=\bigoplus_{\lambda\in \Bbb Z^s}\Gamma(Y,\O_Y(D_\lambda-W))t^\lambda$.
This is a graded prime ideal of $B$.
For $n>0$ and $s\in \Gamma(Y,\O_Y(nD))$, the localization 
$B[(st^{n\mu})^{-1}]\otimes_B P_W$ is $\bigoplus_\lambda\Gamma(Y_s,
\O_Y(D_\lambda-W))t^\lambda$.
As affine  $Y_s$ with $Y_s\cap W\neq \emptyset$ forms a fundamental set
of open neighborhoods of the generic point $w$ of $W$,
we have
\begin{multline}\label{kurano.eq}
B_{\Cal S}=\bigoplus_\lambda(\{0\}\cup\{f\in K^\times\mid \div(f)+D_\lambda
\geq_{P^1(Y)\setminus \{W\}} 0\})\cdot t^\lambda\\
=
\O_{Y,W}[(\alpha^{-c_1}t_1)^{\pm 1},\ldots,(\alpha^{-c_s}t_s)^{\pm 1}]
\end{multline}
and 
\begin{equation}\label{kurano2.eq}
(P_W)_{\Cal S}=\bigoplus_\lambda
(\{0\}\cup\{f\in K^\times\mid \div(f)+D_\lambda
>_{P^1(Y)\setminus \{W\}} 0\})\cdot t^\lambda
=\alpha B_{\Cal S},
\end{equation}
where $\Cal S$ is the homogeneous multiplicatively closed subset of $B$
given by 
\[
\Cal S=\{st^{n\mu}\mid n>0, \; s\in\Gamma(Y,\O_X(nD)),\; \text{$Y_s$ is affine},
\; Y_s\cap W\neq\emptyset\}\cup\{1\},
\]
$\alpha$ is the generator of the maximal ideal of $\O_{Y,W}$,
and $c_i$ is the coefficient of $W$ in the divisor $D_i$.
So $B_{\Cal S}$ 
is a homogeneous DVR with the graded maximal ideal $(P_W)_{\Cal S}$.
In particular, $P_W$ is a homogeneous height one prime of $B$, and 
$B_{\Cal S}$ is the homogeneous localization $B_{(P_W)}$.

\begin{lemma}\label{expression-B.lem}
The map $W\mapsto P_W$ gives a bijection between $P^1(Y)$ and the
set $HP^1(B)$ of homogeneous height one prime ideals of $B$.
We have $B=\bigcap_{W\in P^1(Y)}B_{(P_W)}$.
$B$ is a Krull domain.
\end{lemma}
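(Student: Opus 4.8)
Looking at Lemma~\ref{expression-B.lem}, I need to establish three things: the bijection $W \mapsto P_W$ between $P^1(Y)$ and $HP^1(B)$, the intersection formula $B = \bigcap_{W} B_{(P_W)}$, and that $B$ is a Krull domain.

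The plan is to use the theory of homogeneous DVR's developed in the preceding paragraphs, particularly Lemma~\ref{graded-defining-Krull.lem}. First I would recall from the computation in (\ref{kurano.eq}) and (\ref{kurano2.eq}) that for each $W \in P^1(Y)$, the homogeneous localization $B_{(P_W)}$ equals the homogeneous DVR $\O_{Y,W}[(\alpha^{-c_1}t_1)^{\pm 1}, \ldots, (\alpha^{-c_s}t_s)^{\pm 1}]$ with graded maximal ideal $\alpha B_{(P_W)}$, and in particular $P_W$ is a homogeneous height-one prime. So the map $W \mapsto P_W$ lands in $HP^1(B)$. For injectivity, note that $P_W$ is recovered from $B_{(P_W)}$ (hence from $W$) via $\O_{Y,W} = (B_{(P_W)})_0$; more concretely, $W$ is determined by $P_W$ since the generic point $w$ of $W$ is the unique codimension-one point at which the stalk computation gives exactly $P_W$. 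For surjectivity, I would take an arbitrary homogeneous height-one prime $Q$ of $B$; its homogeneous localization $B_{(Q)}$ is a homogeneous DVR (a graded subring of $Q_G(B)$), and then show it must coincide with some $B_{(P_W)}$. This is where the structure of $X$ and $Z$ as an open immersion (Proposition~\ref{kurano-crutial.prop}) and the affine local description $B[(st^{n\mu})^{-1}]_\lambda = \Gamma(Y_s, \O_Y(D_\lambda))$ are essential: on each affine chart $\Spec B[(st^{n\mu})^{-1}] \cong \pi^{-1}(Y_s)$, the height-one primes of the ring are the restrictions of the $P_W$ for $W$ meeting $Y_s$, because $Y_s$ is locally Krull and the codimension-one points of $\pi^{-1}(Y_s)$ lying over codimension-one points of $Y_s$ exhaust everything (the fiber is a torus over the function field, which contributes nothing in codimension one by Lemma~\ref{surjective.thm}'s argument, i.e. $\Cl'$ of a torus over a field is zero).

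For the intersection formula, I would apply Lemma~\ref{graded-defining-Krull.lem}: the family $\Lambda_1 = \{B_{(P_W)} \mid W \in P^1(Y)\}$ (which by the bijection equals $\{B_{(P)} \mid P \in HP^1(B)\}$) is a defining family of homogeneous DVR's, so $B = \bigcap_{W \in P^1(Y)} B_{(P_W)}$ — but this requires knowing in advance that $B$ is a Krull domain, so the argument must be organized carefully. I would instead prove directly that $B = \bigcap_W B_{(P_W)}$ by a local computation on the affine charts $\Spec B[(st^{n\mu})^{-1}]$: on such a chart, the ring is $\bigoplus_\lambda \Gamma(Y_s, \O_Y(D_\lambda))t^\lambda$, and since $\Gamma(Y_s, \O_Y(D_\lambda)) = \bigcap_{W \cap Y_s \neq \emptyset} \{f : \div f + D_\lambda \geq_{P^1(Y)\setminus\{W\}} 0\}$ because $Y_s$ is locally Krull, the intersection formula holds chart-by-chart and hence globally. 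Once $B = \bigcap_W B_{(P_W)}$ is established as an intersection of (localizations of) DVR-like rings with the finite-character property — the finiteness coming from the fact that on each affine chart only finitely many, or at least locally finitely many, $W$ contribute (since $\supp D_\lambda$ is locally finite and $B$ is Noetherian on charts when $S$ is, but here we only need locally Krull) — it follows that $B$ is a Krull domain by the standard criterion (intersection of DVR's with finite character, as in \cite[(1.5)]{Fossum}).

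The main obstacle will be the surjectivity of $W \mapsto P_W$ and, intertwined with it, verifying the finite-character condition needed to conclude $B$ is Krull. The delicate point is that a homogeneous height-one prime $Q$ of $B$ could a priori fail to come from a divisor on $Y$ — one must rule out "horizontal" primes coming from the torus direction. The computation in (\ref{kurano.eq}) already shows $B_{(P_W)}$ is a homogeneous DVR with the torus variables inverted, so any $Q$ whose homogeneous localization still contains units in all torus directions is handled; the argument should parallel Lemma~\ref{surjective.thm}, reducing to the statement that the generic fiber $\Spec \kappa(\eta)[t_1^{\pm1},\ldots,t_s^{\pm1}]$ has trivial divisor class group and no homogeneous height-one primes, so that every $Q \in HP^1(B)$ must dominate a point of $Y$, necessarily of codimension one. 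I would also need to be slightly careful that $Z \subset X$ is only a large open subset, not all of $X$, but since largeness means the complement has codimension $\geq 2$, it does not affect the height-one primes, and the identification of $HP^1(B)$ can be carried out on $Z$, equivalently on the affine charts $\pi^{-1}(Y_s)$.
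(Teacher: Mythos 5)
Your treatment of $P_W\in HP^1(B)$, of injectivity, and of the intersection formula is essentially sound, but your proof of surjectivity of $W\mapsto P_W$ contains a genuine circularity. You dispose of a homogeneous height-one prime $Q$ of $B$ by locating the corresponding codimension-one point of $X=\Spec B$ on one of the charts $\Spec B[(s_it^{n\mu})^{-1}]\cong\pi^{-1}(Y_{s_i})$ of $Z$, and you justify that no such point is lost by appealing to the largeness statement of Proposition~\ref{kurano-crutial.prop}. But that largeness is exactly the assertion $\height J\geq 2$ for $J=(s_1t^{n\mu},\dots,s_mt^{n\mu})$, and at this point of the development it is not available: its proof (the lemma that finishes the proof of Proposition~\ref{kurano-crutial.prop}) argues that a graded ideal of height $\leq 1$ would be contained in some $P_W$, which presupposes precisely the surjectivity of $W\mapsto P_W$, i.e.\ Lemma~\ref{expression-B.lem} itself. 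So, as organized, your argument invokes the conclusion to exclude the one case it cannot treat directly, namely a homogeneous height-one prime containing $J$ (a codimension-one point of $X$ outside $v(Z)$); on $Z$ itself the identification of homogeneous height-one primes with the $P_W$, via the torus bundle over the locally Krull $Y_s$ and the absence of nonzero homogeneous primes in the generic fiber (as in Lemma~\ref{surjective.thm}), is indeed routine, but that is not where the difficulty lies.

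The intended route avoids the geometry of $X$ altogether: once $B=\bigcap_W B_{(P_W)}$ is read off from the first equality of (\ref{kurano.eq}) and $B$ is seen to be a (graded) Krull domain, the family $\{B_{(P_W)}\}$ is a defining family of homogeneous DVR's of $B$ inside $Q_G(B)=K[t_1^{\pm1},\dots,t_s^{\pm1}]$, and Lemma~\ref{graded-defining-Krull.lem} says that any such family contains the minimal one $\{B_{(P)}\mid P\in HP^1(B)\}$; hence for every homogeneous height-one prime $P$ one has $B_{(P)}=B_{(P_W)}$, and so $P=P_W$, for some $W$. If you insist on the chart-based argument you must first give an independent proof that $\height J\geq 2$, which you have not done. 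Two smaller points: your injectivity argument tacitly assumes that distinct prime divisors of $Y$ have distinct local rings inside $K$, which is where the separatedness of $Y$ is used; and the finite-character verification for the Krull property should be made for the discrete valuations attached to the $P_W$ together with those defining the Krull domain $K[t_1^{\pm1},\dots,t_s^{\pm1}]$, using that divisors on the quasi-compact $Y$ have finite support, rather than the chart-wise Noetherian considerations you sketch.
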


\begin{proof}
By the first equality of (\ref{kurano.eq}), it is easy to see that
$B=\bigcap_{W\in P^1(Y)}B_{(P_W)}$.
As each $B_{(P_W)}$ is a homogeneous DVR, $B$ is (graded) Krull.
We know that $P_W\in HP^1(B)$ for $P^1(Y)$.
As $Y$ is a separated scheme, the description of (\ref{kurano2.eq}) shows
that $W\mapsto P_W$ is injective.
By Lemma~\ref{graded-defining-Krull.lem} and the fact
$B=\bigcap_{W\in P^1(Y)}B_{(P_W)}$ (with $Q(B)=Q(B_{(P_W)})=K(t_1^{\pm1},\ldots,
t_s^{\pm1})$), 
$W\mapsto P_W$ is surjective.
\end{proof}

The following lemma finishes the proof of Proposition~\ref{kurano-crutial.prop}.

\begin{lemma}
Let $J$ be the ideal of $B$ defined in {\rm(\ref{J.eq})}.
Then the height of $J$ is at least two.
\end{lemma}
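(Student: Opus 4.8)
The plan is to show that $\sqrt{J}$, the radical of $J$, contains no homogeneous height-one prime ideal and no height-zero prime, which by Lemma~\ref{expression-B.lem} forces $\height J \geq 2$. Since $B$ is a Krull domain (Lemma~\ref{expression-B.lem}) and $J\neq 0$ (it is generated by the nonzero elements $s_i t^{n\mu}$), the only height-zero prime is $(0)$, which does not contain $J$. So the real content is that $J$ is not contained in any homogeneous height-one prime $P_W$, together with the observation that a minimal prime over the homogeneous ideal $J$ is itself homogeneous, so it suffices to check homogeneous primes.

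First I would recall, by Lemma~\ref{expression-B.lem}, that $HP^1(B) = \{P_W \mid W\in P^1(Y)\}$, and that it suffices to prove $J\not\subset P_W$ for every $W\in P^1(Y)$. Fix such a $W$ with generic point $w$. The $s_1,\ldots,s_m\in\Gamma(Y,\O_Y(nD))$ were chosen so that the affine opens $Y_{s_i}$ cover $Y$; in particular $w\in Y_{s_{i_0}}$ for some $i_0$, i.e.\ the section $s_{i_0}$ does not vanish at $w$, which means $s_{i_0}$ is a unit in the DVR $\O_{Y,W}$, equivalently $\div(s_{i_0}) + nD \geq_{P^1(Y)\setminus\{W\}} 0$ has coefficient $0$ at $W$ once we account for the $nD$ contribution — more precisely $s_{i_0}$ as an element of $\Gamma(Y_{s_{i_0}},\O_Y)$ is a unit near $w$. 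Then I would use the explicit description of $(P_W)_{\Cal S} = \alpha B_{\Cal S}$ from~(\ref{kurano2.eq}) and of $B_{\Cal S}$ from~(\ref{kurano.eq}): the generator $s_{i_0}t^{n\mu}$ of $J$, viewed in $B_{\Cal S} = \O_{Y,W}[(\alpha^{-c_1}t_1)^{\pm1},\ldots,(\alpha^{-c_s}t_s)^{\pm1}]$, is a unit (it is $s_{i_0}$ times a monomial in the units $\alpha^{-c_i}t_i$, and $s_{i_0}$ is a unit in $\O_{Y,W}$). Hence $J B_{\Cal S} = B_{\Cal S}$, so $J\not\subset P_W$ (since $P_W = P_{(P_W)}\cap B \supset$ would give $JB_{(P_W)}\subset P_W B_{(P_W)}\subsetneq B_{(P_W)}$, contradicting $JB_{\Cal S}=B_{\Cal S}$ and $B_{(P_W)}=B_{\Cal S}$).

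Combining: no homogeneous height-one prime contains $J$, and no height-zero prime contains $J$, so every prime minimal over $J$ has height $\geq 2$ among the primes which must be checked. To finish I would invoke the standard fact that a minimal prime over a homogeneous ideal in a $\Bbb Z^s$-graded ring is homogeneous (so that restricting attention to $HP^1(B)$ is legitimate), and Krull's height theorem is not even needed in the other direction. Therefore $\height J\geq 2$, which is exactly what Proposition~\ref{kurano-crutial.prop} needed to conclude that $U=\pi^{-1}(V)$ is large in $X$.

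The main obstacle I anticipate is bookkeeping around the grading and the twist by $t^{n\mu}$: one must be careful that $s_i t^{n\mu}$ is genuinely an element of $B$ (this uses $D = \sum\mu_i D_i$, so $D_\lambda$ with $\lambda = n\mu$ is $nD$ and $s_i\in\Gamma(Y,\O_Y(nD))$ indeed sits in degree $n\mu$), and that "$s_{i_0}$ a unit in $\O_{Y,W}$" is correctly extracted from "$w\in Y_{s_{i_0}}$" — the latter says $s_{i_0}\notin$ the prime of $\O_{Y,W}$ corresponding to $W$, i.e.\ $s_{i_0}$ has valuation $0$ at $W$ as a section of the line bundle, hence becomes a unit after the identification trivializing $\O_Y(nD)$ near $w$. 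Everything else is a direct application of the already-established local descriptions~(\ref{kurano.eq}) and~(\ref{kurano2.eq}).
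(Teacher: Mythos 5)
Your proof is correct and follows essentially the same route as the paper: reduce to the homogeneous height-one primes $P_W$ (gradedness of $J$ plus Lemma~\ref{expression-B.lem}), then use the covering $Y=\bigcup_i Y_{s_i}$ to produce a generator $s_{i_0}t^{n\mu}$ of $J$ not lying in $P_W$. The only cosmetic difference is that you verify $s_{i_0}t^{n\mu}\notin P_W$ by showing it becomes a unit in $B_{(P_W)}$ via the descriptions (\ref{kurano.eq})--(\ref{kurano2.eq}), whereas the paper reads this off directly from the definition of $P_W$.
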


\begin{proof}
Assume the contrary.
As $J$ is graded, $J$ is contained in some $P_W$.
Since $Y=\bigcup_i Y_{s_i}$, there is some $i$ such that $Y_{s_i}$ intersects
$W$.
Then $s_it^{n\mu}\in J$ cannot be an element of $P_W$ by the definition of 
$P_W$.
A contradiction.
\end{proof}

Now Proposition~\ref{kurano-crutial.prop} has been proved.
\qed
\medskip

We have a diagram of $S$-schemes
\begin{equation}\label{kurano-almost.eq}
\xymatrix{
X & U \ar@{_{(}->}[l]_i \ar[r]^{\rho} & V \ar@{^{(}->}[r]^j & Y
},
\end{equation}
where $X=\uSpec_S\Cal B$, $V$ is a large open subset of $Y$ such that 
$D_l|_V$ is Cartier for $l=1,\ldots,s$, 
$\pi:Z\rightarrow Y$ and $v:Z\rightarrow X$ are the canonical maps,
$U=\pi^{-1}(V)$, 
$\rho:U=\pi^{-1}(V)\rightarrow V$ the restriction of $\pi:Z\rightarrow Y$,
$i:U\rightarrow X$ the composite $U\hookrightarrow Z \xrightarrow v X$,
and $j:V\rightarrow Y$ the inclusion.

\begin{theorem}\label{kurano-main.thm}
Let $S$ be a scheme, $h_Y:Y\rightarrow S$ an integral locally Krull 
$S$-scheme with an ample Cartier divisor $D$.
Let $G$ be the $s$-torus 
\[
\Spec \Bbb Z[t_1^{\pm1},\ldots,t_s^{\pm1}]\times_{
\Spec \Bbb Z}S
\]
over $S$.
Let $s\geq 1$, and $D_1,\ldots,D_s$ divisors on $Y$ such that $
D\in \sum_i\Bbb Z D_i$.
Let the diagram {\rm(\ref{kurano-almost.eq})} be constructed as above.
Then it is a rational almost principal $G$-bundle.
$X$ is a locally Krull scheme.
\end{theorem}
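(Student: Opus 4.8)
The plan is to read the six conditions of Definition~\ref{rational-almost-pb.def} directly off the construction, with $n=1$ (so ``large'' means ``$1$-large'') and $N=G$ (so that the ``$G$-enriched'' qualifier is vacuous, $G$ being trivially normal in itself), and then to deduce that $X$ is locally Krull. First I would record that the diagram (\ref{kurano-almost.eq}) is a diagram of $G$-schemes: $X=\uSpec_S\Cal B$ and $Z=\uSpec_Y\Cal D$ are $G$-schemes because $\Cal B$ and $\Cal D$ are $\Lambda=\Bbb Z^s$-graded; $U=\pi^{-1}(V)$ is a $G$-stable open subscheme of $Z$ because $\pi$ is $G$-invariant and $V$ is open in $Y$; $V$ and $Y$ carry the trivial $G$-action; and $i$, $\rho$, $j$ are $G$-morphisms, $i$ being the composite of $v$ with the open inclusion $U\hookrightarrow Z$, $\rho$ being a restriction of the $G$-invariant $\pi$, and $j$ a morphism of $G$-trivial schemes. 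This gives condition \textbf{1}. Conditions \textbf{2} and \textbf{4} are exactly the content of Proposition~\ref{kurano-crutial.prop}: $v:Z\to X$ is an open immersion, hence so is the composite $i:U\hookrightarrow Z\to X$, and $U$, identified with its image, is large in $X$. Conditions \textbf{3} and \textbf{5} are immediate, $j$ being the inclusion of the open subset $V$, which is large in $Y$ by the very choice of $V$ (the large open subset over which all $D_l$ become Cartier, constructed just before Proposition~\ref{kurano-crutial.prop}).

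The only condition requiring an argument is \textbf{6}, that $\rho:U\to V$ is a principal $G$-bundle (as $\rho$ is $G$-invariant, this is the same as being a $G$-enriched principal $G$-bundle). Here $U=\pi^{-1}(V)=\uSpec_V(\Cal D|_V)$ with $\Cal D|_V=\bigoplus_{\lambda\in\Bbb Z^s}\O_V(D_\lambda|_V)\,t^\lambda$ and $D_\lambda=\sum_i\lambda_iD_i$. Since each $D_i|_V$ is Cartier, $D_\lambda|_V$ is Cartier for every $\lambda$; hence $\O_V(D_\lambda|_V)$ is invertible, $\O_V\to(\Cal D|_V)_0=\O_V(0)=\O_V$ is an isomorphism, and the multiplication maps $\O_V(D_\lambda|_V)\otimes_{\O_V}\O_V(D_\mu|_V)\to\O_V(D_{\lambda+\mu}|_V)$ are isomorphisms by the standard identity $\O(D)\otimes\O(D')\cong\O(D+D')$ for Cartier divisors. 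Thus condition \textbf{2} of Lemma~\ref{principal-torus-equiv.lem} holds for $\Cal A=\Cal D|_V$ (with the base $V$ in place of $Y$ there), so $\rho$ is a principal $G$-bundle, indeed one in the Zariski topology since $\Bbb Z^s$ is torsion-free. Combined with the previous paragraph, this shows that (\ref{kurano-almost.eq}) is a $G$-enriched rational almost principal $G$-bundle, i.e.\ a rational almost principal $G$-bundle.

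For the last assertion, that $X$ is locally Krull, the statement is local on $S$, so I would reduce to $S=\Spec R$ affine, in which case $X=\Spec B$ with $B=\Cal B=\Cal R(Y;D_1,\dots,D_s)$ the multisection ring. Then $B$ is a domain, since $Y$ is integral, and it is a Krull domain by Lemma~\ref{expression-B.lem}, which exhibits $B=\bigcap_{W\in P^1(Y)}B_{(P_W)}$ as an intersection of the homogeneous DVR's $B_{(P_W)}$. Hence $\Spec B$, and therefore $X$, is locally Krull.

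I expect essentially no serious obstacle here: the bulk of the work has already been carried out in Proposition~\ref{kurano-crutial.prop} and its supporting material (Lemma~\ref{expression-B.lem} and the explicit computations of $B_{\Cal S}$ and $(P_W)_{\Cal S}$), so the theorem amounts to assembling those results against the definition. The one point deserving a moment's care is the compatibility of the various local-on-$S$ constructions ($\uProj$, $\uSpec$, the choice of $V$, and the ``height $\geq 2$'' estimate) with globalization over $S$; but these are all Zariski-local on $S$, and the proof of Proposition~\ref{kurano-crutial.prop} already passes to the affine case, so no genuine difficulty arises.
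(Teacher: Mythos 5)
Your proposal is correct and follows essentially the same route as the paper: check the six conditions of the definition, citing Proposition~\ref{kurano-crutial.prop} for the open immersion and largeness of $U$, Lemma~\ref{principal-torus-equiv.lem} for $\rho$ being a principal $G$-bundle, and Lemma~\ref{expression-B.lem} (after reducing to affine $S$) for $X$ being locally Krull. The only difference is that you spell out the verification of condition \textbf{2} of Lemma~\ref{principal-torus-equiv.lem} (invertibility of the $\O_V(D_\lambda|_V)$ and the product isomorphisms), which the paper leaves as ``easily.''
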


\begin{proof}
By construction, $\pi:Z\rightarrow Y$ is $G$-invariant, and 
$v:Z\rightarrow X$ is a $G$-morphism.
$V$ is large in $Y$ by construction.
As $U\hookrightarrow Z$ is the base change of $V\hookrightarrow Y$,
$U$ is a $G$-stable open subset of $Z$.
So the diagram is a diagram of $G$-schemes, and 
$G$ acts on $Y$ and $V$ trivially.

$i$ is an open immersion and $i(U)$ in $X$ is large by
Proposition~\ref{kurano-crutial.prop}.

The fact that $\rho$ is a principal $G$-bundle follows from
Lemma~\ref{principal-torus-equiv.lem} easily.

To prove the last assertion, we may assume that $S$ is affine, and
this case is done in Lemma~\ref{expression-B.lem}.
\end{proof}

\begin{corollary}\label{kurano-correspondence.cor}
Let the notation be as above.
Then 
$\gamma=i_*\rho^*j^*:\Ref(Y)\rightarrow\Ref(G,X)$ is an equivalence.
The quasi-inverse is given by $\delta=(?)^Gj_*\rho_*i^*:\Ref(G,X)\rightarrow
\Ref(Y)$.
For a divisor $E$ on $Y$, $\O_Y(E)$ corresponds to 
\[
(h_Y)_*(\bigoplus_{\lambda\in\Bbb Z^s}\O_Y(D_\lambda+E)t^\lambda).
\]
In particular, $\O_Y(D_\nu)$ corresponds to $\O_X(\nu)$, where 
$?(\nu)$ denotes the shift of degree.
$\gamma$ is equivalent to $v_*(?)^{**}\pi^*$, and $\delta$ is equivalent to
$(?)^G\pi_*v^*$, where $(?)^{**}$ denotes the double dual.
\end{corollary}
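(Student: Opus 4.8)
The plan is to deduce Corollary~\ref{kurano-correspondence.cor} as a direct application of the main correspondence theorem (Theorem~\ref{main.thm}) together with the explicit descriptions established in the construction leading to Theorem~\ref{kurano-main.thm}. First I would invoke Theorem~\ref{kurano-main.thm}, which already tells us that \eqref{kurano-almost.eq} is a rational almost principal $G$-bundle with $X$ locally Krull and $Y$ locally Krull by hypothesis; applying Theorem~\ref{main.thm} to the trivial homomorphism $f:G\rightarrow H=e$ (so $N=G$) with $H$ trivial immediately gives that $i_*\rho^*j^*:\Ref(Y)\rightarrow\Ref(G,X)$ is an equivalence with quasi-inverse $(j_*\rho_*i^*?)^G$. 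This settles the first two sentences of the statement with essentially no extra work; I would simply cite Theorem~\ref{main.thm} and note that $\Ref(H,Y)=\Ref(Y)$ and $\Ref(G,X)$ are the relevant categories here.

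Next I would compute $\gamma(\O_Y(E))=i_*\rho^*j^*\O_Y(E)$ for a divisor $E$ on $Y$. The key observation is that, working locally on $S$ where $S=\Spec R$ is affine and $\Cal B=B$ is the graded multisection ring, the restriction $j^*\O_Y(E)=\O_V(E|_V)$, then $\rho^*$ of this along the principal $G$-bundle $\rho:U=\uSpec_V(\Cal D|_V)\rightarrow V$ produces the quasi-coherent sheaf on $U$ whose degree-$\lambda$ piece is $\O_V(D_\lambda+E)|_V\,t^\lambda$, and finally $i_*$ pushes this forward to the large open $U\subset X$. Since $U$ is large in $X$ (Proposition~\ref{kurano-crutial.prop}) and the relevant sheaves are reflexive, $i_*$ reconstructs the global object $(h_Y)_*(\bigoplus_{\lambda\in\Bbb Z^s}\O_Y(D_\lambda+E)t^\lambda)$; here I would lean on the computation of sections of localizations of $B$ carried out in the proof of Proposition~\ref{kurano-crutial.prop} (the formula $B[(st^{n\mu})^{-1}]_\lambda=\Gamma(Y_s,\O_Y(D_\lambda))$ and its twist by $E$). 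Specializing to $E=D_\nu=\sum_i\nu_iD_i$, the sheaf becomes $(h_Y)_*(\bigoplus_\lambda\O_Y(D_{\lambda+\nu})t^\lambda)$, which is exactly the degree shift $\O_X(\nu)$ of $\O_X=\Cal B$ as a graded module; I would record this as the stated special case.

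The last assertion, that $\gamma$ is equivalent to $v_*(?)^{**}\pi^*$ and $\delta$ to $(?)^G\pi_*v^*$, I would obtain by the same bookkeeping as in Lemma~\ref{almost-double-dual.lem}: one factors $i_*\rho^*j^*$ through $Z$, using $i=v\circ(\text{inclusion }U\hookrightarrow Z)$ and $\pi:Z\rightarrow Y$ with $\rho$ the restriction of $\pi$ to $U$. Since $U$ is large in $Z$ (as the preimage of the large $V\subset Y$), the functors $i_*$ and (inclusion)$_*$ on reflexive modules agree with double-dualization followed by pushforward, and one can commute $i^*$ past double-dual and $\rho^*$ past $\pi^*|_U$ exactly as in \cite[(5.28), (5.20), (5.9)]{Hashimoto4}; dually for $\delta$ one uses that $(?)^Gj_*\cong j_*(?)^G$ (\cite[(7.3)]{HO2}) and that $\pi_*$ restricted along the large open agrees with $v^*$ composed with the appropriate pushforward. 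I expect the main obstacle, such as it is, to be the careful verification that the explicit degree-$\lambda$ identification of sections is compatible with the equivalence of Theorem~\ref{main.thm}—that is, tracking that $\rho^*$ of a Cartier divisor sheaf on $V$ really is the degree-graded sheaf claimed and that $i_*$ recovers the global multisection module—but all the needed local formulas are already present in the construction preceding Theorem~\ref{kurano-main.thm}, so this is bookkeeping rather than a genuine difficulty.
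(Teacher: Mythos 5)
Your proposal is correct and follows essentially the same route as the paper, whose entire proof is a citation of Theorem~\ref{kurano-main.thm} and Theorem~\ref{main.thm}; you simply spell out the routine details (the graded computation of $\gamma(\O_Y(E))$ via $\rho_*\rho^*$ and largeness of $U$ and $V$, and the reformulation through $Z$ as in Lemma~\ref{almost-double-dual.lem}) that the paper leaves implicit. No gaps of substance.
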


\begin{proof}
Follows easily from Theorem~\ref{kurano-main.thm} and 
Theorem~\ref{main.thm}.
\end{proof}

\begin{lemma}
Let the notation be as above, and let $\N\in\Ref(Y)$ corresponds to 
$\M\in\Ref(G,X)$.
Namely, set
\[
\M=\bigoplus_{\lambda \in\Bbb Z^s}(h_Y)_*(\N(D_\lambda)))t^\lambda.
\]
Then the $G$-local cohomology $\uH^i_{X\setminus Z}(\M)$ is zero 
for $i=0,1$, and
\[
\uH^i_{X\setminus Z}(\M)\cong \bigoplus_{\lambda\in\Bbb Z^s}
(R^{i-1}h_Y)_*(\N(D_\lambda))t^\lambda
\]
for $i\geq 2$, where $\N(D_\lambda)$ denotes the reflexive sheaf
$(\N\otimes_{\O_Y} \O_Y(D_\lambda))^{**}$.
\end{lemma}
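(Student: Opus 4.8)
The plan is to reduce the statement to a computation of higher direct images via the explicit description of $X$ in terms of $Y$ and the divisors $D_1,\dots,D_s$. Recall from Theorem~\ref{kurano-main.thm} and Proposition~\ref{kurano-crutial.prop} that the complement $X\setminus Z$ is cut out by a graded ideal $J=(s_1t^{n\mu},\dots,s_mt^{n\mu})$ whose height is at least two, and that the open immersion $v:Z\hookrightarrow X$ identifies $Z$ with $X\setminus V(J)$, with $\pi:Z\to Y$ the affine $G$-invariant map $Z=\uSpec_Y\Cal D$. The first step will be to localize: the question is local on $S$, so I would assume $S=\Spec R$ affine, so that $X=\Spec B$ with $B=\Cal B$ the multisection ring, $Z=\uSpec_Y\Cal D\to X$, and $\M$ is the $\Bbb Z^s$-graded $B$-module $M=\bigoplus_{\lambda}\Gamma(Y,\N(D_\lambda))t^\lambda$. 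The $G$-local cohomology $\uH^i_{X\setminus Z}$ is then just the graded local cohomology $H^i_J$ of the graded module $M$.

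Next I would compute $H^i_J(M)$ using the distinguished triangle relating $\uGamma_{X,Z}$, $Rv_*v^*$ and the identity, exactly as in (\ref{triangle.eq}): there is a long exact sequence
\[
\cdots\to \uH^i_{X\setminus Z}(\M)\to H^i(X,\M)\to H^i(Z,v^*\M)\to \uH^{i+1}_{X\setminus Z}(\M)\to\cdots.
\]
Since $X=\Spec B$ is affine, $H^i(X,\M)=0$ for $i>0$ and $H^0(X,\M)=M=\bigoplus_\lambda\Gamma(Y,\N(D_\lambda))t^\lambda$. For the term $H^i(Z,v^*\M)$ I would use that $\pi:Z\to Y$ is affine, so $H^i(Z,v^*\M)=H^i(Y,\pi_*v^*\M)$, and that by the construction of $\M$ and of $\Cal D$ one has $\pi_*v^*\M\cong\bigoplus_{\lambda\in\Bbb Z^s}\N(D_\lambda)t^\lambda$ as $\O_Y$-modules (this is essentially the compatibility already used to prove Corollary~\ref{kurano-correspondence.cor}, where $\gamma$ was identified with $v_*(?)^{**}\pi^*$). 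Hence $H^i(Z,v^*\M)\cong\bigoplus_\lambda (R^ih_Y)_*(\N(D_\lambda))t^\lambda$ for $i\ge 0$, with the degree-zero-on-$R^0$ part giving $\bigoplus_\lambda\Gamma(Y,\N(D_\lambda))t^\lambda$.

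Now I would read off the answer from the long exact sequence. Since $\height J\ge 2$, standard local cohomology (grade considerations, cf. Lemma~\ref{grade-local-cohomology.lem}) gives $\uH^0_{X\setminus Z}(\M)=\uH^1_{X\setminus Z}(\M)=0$ — or, more directly, $H^0(X,\M)\to H^0(Z,v^*\M)$ is an isomorphism because both compute $\bigoplus_\lambda\Gamma(Y,\N(D_\lambda))t^\lambda$ (here one uses that $U=\pi^{-1}(V)$ is large in $X$ and $\N$ reflexive, so sections over $Z$, over $U$, and over $X$ all agree), and $H^1(X,\M)=0$ forces the connecting map into $\uH^2$ to be surjective with the earlier terms vanishing. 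For $i\ge 2$ the vanishing $H^i(X,\M)=H^{i-1}(X,\M)=0$ makes the connecting map $H^{i-1}(Z,v^*\M)\xrightarrow{\sim}\uH^i_{X\setminus Z}(\M)$ an isomorphism, which yields
\[
\uH^i_{X\setminus Z}(\M)\cong H^{i-1}(Z,v^*\M)\cong\bigoplus_{\lambda\in\Bbb Z^s}(R^{i-1}h_Y)_*(\N(D_\lambda))t^\lambda
\]
as claimed; one then checks that all these identifications are $G$-equivariant (i.e. respect the $\Bbb Z^s$-grading), which is immediate since every map in sight is a map of graded modules.

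The main obstacle I expect is the bookkeeping around the degree-zero comparison in low degrees: showing cleanly that $H^0(X,\M)\to H^0(Z,v^*\M)$ is an isomorphism (not merely injective), equivalently that $\uH^1_{X\setminus Z}(\M)=0$, requires knowing that $i(U)$ is large in $X$ and invoking the reflexivity/$(S_2')$-type codimension-two argument so that restricting reflexive sheaves across a large open subset does not change global sections — this is where Theorem~\ref{main.thm} and the results of \cite{Hashimoto4} on reflexive modules over locally Krull schemes are really used. The rest is a formal unwinding of the localization triangle together with the already-established identification $\pi_*v^*\M\cong\bigoplus_\lambda\N(D_\lambda)t^\lambda$.
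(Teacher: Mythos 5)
Your argument is correct and is essentially the paper's proof: both rest on the localization exact sequence $0\to\uH^0_{X\setminus Z}(\M)\to\M\to v_*v^*\M\to\uH^1_{X\setminus Z}(\M)\to 0$ together with $\uH^i_{X\setminus Z}(\M)\cong R^{i-1}v_*(v^*\M)$ for $i\geq 2$, the isomorphism $\M\xrightarrow{\sim}v_*v^*\M$ coming from reflexivity of $\M$ and largeness of $Z$ in $X$, and affineness of $\pi$ and of $X$ over $S$ to identify $R^{i-1}v_*(v^*\M)$ with $\bigoplus_\lambda(R^{i-1}h_Y)_*(\N(D_\lambda))t^\lambda$. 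The only caution is your parenthetical appeal to $\height J\geq 2$ and Lemma~\ref{grade-local-cohomology.lem} in the not-necessarily-Noetherian locally Krull setting; your ``more direct'' reflexivity-plus-largeness argument is the one that actually carries the vanishing in degrees $0,1$, exactly as in the paper.
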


\begin{proof}
From \cite[(4.10)]{HO2}, the sequence
\[
0\rightarrow \uH^0_{X\setminus Z}(\M)\rightarrow \M\xrightarrow u v_*v^*\M
\rightarrow \uH^1_{X\setminus Z}(\M)\rightarrow 0
\]
is exact, and $R^{i-1}v_*(v^*\M)\cong \uH^i_{X\setminus Z}(\M)$ for $i\geq 2$.
As $\M$ is reflexive and $Z$ is large in $X$, $u:\M\rightarrow v_*v^*\M$ is
an isomorphism.
The result follows.
\end{proof}

\paragraph\label{graded-linearization.par}
Let $R$ be a commutative ring, $M$ a finitely generated abelian group, 
and 
$G:=\Spec RM$, where $RM=\bigoplus_{m\in M}R t^m$ is a group algebra
of $M$ over $R$.
Letting each $t^m$ group-like, $G$ is an $R$-group scheme.
A $G$-module is nothing but an $RM$-comodule.
If $V$ is a $G$-module, then $V=\bigoplus_{m\in M}V_m$ as a $G$-module, 
where
\begin{equation}\label{weight-space.eq}
V_m=\{v\in V\mid \omega_V (v)=v\otimes t^m\}.
\end{equation}
Conversely, if $V=\bigoplus_{m\in M}V_m$ as a graded $R$-module,
then letting (\ref{weight-space.eq}) the definition, $V$ is a $G$-module,
and a $G$-module and an $RM$-comodule and an $M$-graded $R$-module are the
same thing.

So a $G$-algebra is an $M$-graded $R$-algebra $B=\bigoplus_m B_m$,
where $\omega_B(b)=b\otimes t^m$ for $b\in B_m$.
We follow the convention that if $G$ acts on an affine $R$-scheme $X=\Spec B$, 
then $B$ is a $G$-module by $(gb)(x)=b(g^{-1}x)$.
That is, $\alpha(b)=t^{-m}\otimes b$ for $b\in B_m$
(since the antipode of $RM$ sends $t^m$ to $t^{-m}$), where $\alpha:B\rightarrow
RM\otimes_R B$ is the map corresponding to the action $G\times X\rightarrow X$.

For a $(G,B)$-module $N$, the $G$-linearization
\[\phi:(RM\otimes_R B){}_\alpha\otimes_B N\rightarrow 
(RM\otimes_R B){}_\beta \otimes_B N
\]
maps $(1\otimes 1)\otimes n$ to $(t^{-m}\otimes 1)\otimes n$ for $n\in N_m$,
where $\beta$ is given by 
$\beta(b)=1\otimes b$, and corresponds to the second projection 
$p_2:G\times X\rightarrow X$.

\begin{proposition}[cf.~{\cite[(1.1), (3)]{EKW}}]\label{multi-class.prop}
In {\rm Theorem~\ref{kurano-main.thm}}, assume that $S$ is quasi-compact
quasi-separated.
Then we have an exact sequence
\[
0\rightarrow \Cal X(G,X)\xrightarrow{\alpha} \Cal X(G)\xrightarrow{\beta}
\Cl(Y)\xrightarrow{\gamma} \Cl(X)\rightarrow 0,
\]
where $\Cal X(G)=\Bbb Z^s$, 
$\Cal X(G,X)=\{\lambda\in\Cal X(G) \mid B^\times\cap B_\lambda\neq\emptyset\}$
\(where $B=\Gamma(X,\O_X)$\), $\alpha$ is the inclusion, 
$\beta(\varepsilon_i)=\O_Y(D_i)$ \($\varepsilon_i
=(0,\ldots,0,1,0,\ldots,0)$ with
$1$ at the $i$th place\), and
$\gamma=(i^*)^{-1}\rho^*j^*$.
\end{proposition}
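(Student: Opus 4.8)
The plan is to realize the four-term sequence as the concatenation of the isomorphism $\Cl(Y)\xrightarrow{\sim}\Cl(G,X)$ coming from Theorem~\ref{main.thm} (whose underlying equivalence of categories is $i_*\rho^*j^*$, cf.~Corollary~\ref{kurano-correspondence.cor}) with the forgetful homomorphism $r\colon\Cl(G,X)\to\Cl(X)$ that strips the $G$-linearization off a rank-one reflexive sheaf. Since $i(U)$ is large in the locally Krull scheme $X$, $i_*$ and $i^*$ are mutually inverse equivalences on rank-one reflexive sheaves, so $r\circ(i_*\rho^*j^*)=(i^*)^{-1}\rho^*j^*=\gamma$; and Corollary~\ref{kurano-correspondence.cor} says that $\O_Y(D_\nu)$ corresponds to $\O_X(\nu)$, so under $\Cl(Y)\cong\Cl(G,X)$ the homomorphism $\beta$ becomes $\nu\mapsto[\O_X(\nu)]$. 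Thus it suffices to prove exactness of
\[
0\longrightarrow \Cal X(G,X)\xrightarrow{\ \alpha\ }\Cal X(G)\xrightarrow{\ \nu\mapsto[\O_X(\nu)]\ }\Cl(G,X)\xrightarrow{\ r\ }\Cl(X)\longrightarrow 0 .
\]

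First I would dispose of the two ends. Injectivity of $\alpha$ is its definition. For exactness at $\Cal X(G)$ I would argue in terms of $\beta$: $\beta(\nu)=[\O_Y(D_\nu)]$ vanishes exactly when $D_\nu$ is principal, say $D_\nu=\div h$ with $h\in K^\times$ ($K$ the function field of $Y$); then $h^{-1}t^\nu$ is a homogeneous unit of degree $\nu$ in $B=\Gamma(X,\O_X)=\bigoplus_\lambda\Gamma(Y,\O_Y(D_\lambda))t^\lambda$, and conversely a homogeneous unit $ft^\nu\in B^\times\cap B_\nu$ forces $\div f+D_\nu\ge 0$ and (from the inverse, which lies in $\Gamma(Y,\O_Y(-D_\nu))t^{-\nu}$) $\div f+D_\nu\le 0$, whence $D_\nu=\div(f^{-1})$ is principal; so $\Ker\beta=\{\nu:B^\times\cap B_\nu\neq\emptyset\}=\Cal X(G,X)$. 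For exactness at $\Cl(X)$, i.e.\ surjectivity of $\gamma$, I would use the diagram (\ref{kurano-almost.eq}): $j^*$ and $i^*$ induce isomorphisms on class groups because $j(V)$ and $i(U)$ are large, and $\rho^*\colon\Cl'(V)\to\Cl'(U)$ is surjective by Lemma~\ref{surjective.thm} because $\rho$ is a principal $\Bbb G_m^s$-bundle and $U$ is locally Krull; composing (through the identification $\Cl'\cong\Cl$) gives $\gamma=(i^*)^{-1}\rho^*j^*$ surjective.

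The heart of the matter is exactness at $\Cl(G,X)$. One inclusion is trivial, since $\O_X(\nu)\cong\O_X$ after forgetting the linearization, so $[\O_X(\nu)]\in\Ker r$. For the reverse inclusion the key lemma I would establish is: a rank-one reflexive $(G,\O_X)$-module $\M$ whose underlying $\O_X$-module is $\O_X$ is isomorphic in $\Ref_1(G,X)$ to $\O_X(\nu)$ for some $\nu\in\Bbb Z^s$. To prove it I would transport the $G$-linearization of $\M$ along an isomorphism $\M\cong\O_X$; it becomes an automorphism $\phi$ of $\O_{G\times_S X}$ satisfying the cocycle condition for a linearization (cf.~\cite[(1.3)]{GIT}). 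Since $X\to S$ is affine and $S$ (hence $X$) is quasi-compact quasi-separated, $\Gamma(G\times_S X,\O)=B[t_1^{\pm 1},\dots,t_s^{\pm 1}]$, and $B$ is a domain (it embeds into $K[t_1^{\pm 1},\dots,t_s^{\pm 1}]$; cf.~Lemma~\ref{expression-B.lem}), so every unit of this ring has the form $b\,t^\lambda$ with $b\in B^\times$ and $\lambda\in\Bbb Z^s$; the factor $t^\lambda$ is precisely the linearization defining $\O_X(\lambda)$ (up to the sign convention of (\ref{graded-linearization.par})), while the factor $b\in B^\times=\Gamma(X,\O_X^\times)$ is a coboundary, hence realizes an isomorphism $\M\cong\O_X(\lambda)$ in $\Ref_1(G,X)$, and applying the equivalence $\delta$ gives $[\M]=\beta(\lambda)$ back in the original formulation. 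The main obstacle I expect is exactly the bookkeeping in this step: checking that all the comparison maps in sight are $G$-equivariant, pinning down the cocycle/coboundary dichotomy over a possibly non-affine base $S$, and carrying the result back through the equivalences of Theorem~\ref{main.thm} and Corollary~\ref{kurano-correspondence.cor}; everything else is routine given those results and Lemma~\ref{surjective.thm}.
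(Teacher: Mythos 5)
Your proposal follows essentially the same route as the paper's proof: both factor the sequence through the isomorphism $\Cl(Y)\cong\Cl(G,X)$ of Theorem~\ref{main.thm} (with $\O_Y(D_\nu)\leftrightarrow\O_X(\nu)$ as in Corollary~\ref{kurano-correspondence.cor}), obtain surjectivity of $\gamma$ from the largeness of $U$ and $V$ together with Lemma~\ref{surjective.thm}, and analyze the kernel of the forgetful map $\Cl(G,X)\rightarrow\Cl(X)$ by writing a $G$-linearization of the trivial sheaf as a unit $b\,t^\lambda$ of $\Gamma(G\times_S X,\O)=B[t_1^{\pm1},\ldots,t_s^{\pm1}]$, using that $B$ is a domain. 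Your treatment of exactness at $\Cal X(G)$ by the direct divisor computation ($\beta(\nu)=0$ iff $D_\nu$ is principal iff $B^\times\cap B_\nu\neq\emptyset$) is a harmless variant of the paper's computation of the coboundary group $B^1\alg(G,\O_X^\times)=\Cal X(G,X)$; both rest on the homogeneity of units of the graded domain $B$.

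One justification in your key lemma is misstated, although the conclusion survives: the factor $b\in B^\times$ is not in general a coboundary. A coboundary is $\phi(gx)/\phi(x)$ with $\phi\in\Gamma(X,\O_X^\times)=B^\times$, and since such $\phi$ is homogeneous, every coboundary is a pure character $t^{\pm\deg\phi}$; a unit $b$ of nonzero degree, viewed as a function on $G\times_S X$ constant in $g$, is neither a cocycle nor a coboundary, so "cancelling $b$ because it is a coboundary" is not available. What actually closes the step (and is exactly what the paper's proof records) is that the transported linearization $\Phi=b\,t^\lambda$ must itself satisfy the cocycle condition; writing $b\in B_\mu$, so that $b(hx)=h^{-\mu}b(x)$ in the convention of (\ref{graded-linearization.par}), the condition $\Phi(gh,x)=\Phi(g,hx)\Phi(h,x)$ forces $b=1$ (take $h=e$) and hence $\mu=0$. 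Thus the cocycle is the pure character $t^\lambda$, and your chosen trivialization is already an isomorphism $\M\cong\O_X(\lambda)$ in $\Ref_1(G,X)$, up to the sign convention you mention. With this one-line repair your argument is complete and agrees with the paper's.
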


\begin{proof}
The map $j^*:\Cl(Y)\rightarrow \Cl(V)$ is an isomorphism.
The map $\rho^*:\Cl(V)\rightarrow \Cl(U)$ is surjective by
Lemma~\ref{surjective.thm}.
As the map $i^*:\Cl(X)\rightarrow \Cl(U)$ is an isomorphism,
we have that $\gamma=
(i^*)^{-1}\rho^*j^*:\Cl(Y)\rightarrow \Cl(X)$ is surjective.

Note that 
$\gamma(\O_Y(D_\lambda))=\O_X(\lambda)$ in $\Cl(G,X)$ by
Corollary~\ref{kurano-correspondence.cor}.
In particular, it is zero in $\Cl(X)$.

Note that the map 
$\gamma:\Cl(Y)\rightarrow \Cl(G,X)$ is an isomorphism
by Theorem~\ref{main.thm}, and 
$\gamma(\O_Y(D_i))=t_i^{-1}\O_X=\O_X(\varepsilon_i)$ in $\Cl(G,X)$.

The kernel of the forgetful map 
$r:\Cl(G,X)\rightarrow \Cl(X)$ is the algebraic first $G$-cohomology 
group $H^1\alg(G,\O_X^\times)$, see \cite[Theorem~7.1]{Dolgachev} and
Theorem~\ref{four-term.thm}.
It is explained as follows.
An element of $\Ker r$ is the isomorphism class of a rank-one free
module $\O_X$ equipped with a $G$-linearization $\Phi:
a^*\O_X\rightarrow p_2^*\O_X$.
However, both $a^*\O_X$ and $p_2^*\O_X$ are identified with $\O_{G\times X}$, and
$\Phi$ is nothing but a unit 
element of the ring $C=\Gamma(G\times X,\O_{G\times X})
=\Bbb Z [t_1^{\pm 1},\ldots,t_s^{\pm 1}]\otimes_{\Bbb Z}B$ (by the projection
formula \cite[(3.9.4)]{Lipman}).
As $B$ is a domain, we can write $\Phi=t^\lambda\otimes b$ with $b\in B^\times$
and $\lambda\in \Bbb Z^s$.
From the cocycle condition on $\Phi$, we have that $\Phi=t^\lambda\otimes 1$.
Conversely, $t^\lambda\otimes 1$ satisfies the cocycle condition, and
the group of $1$-cocycles $Z^1\alg(G,\O_X^\times)$ 
is the character group $\Cal X(G)$.
By definition,
\[
B^1\alg(G,\O_X^\times)=\{\phi(gx)/\phi(x)\mid \phi\in B^\times\}\subset 
Z^1\alg(G,\O_X^\times).
\]
As $\phi$ is a homogeneous element, it has a degree, say $\lambda$.
Then $\phi(gx)/\phi(x)=t^\lambda\otimes 1$, and thus
$B^1\alg(G,\O_X^\times)=\Cal X(G,X)$.

Then as in (\ref{graded-linearization.par}), the linearization
of $t_i^{-1}B$ corresponds to $t_i\otimes 1\in Z^1\alg(G,\O_X^\times)$,
and the exact sequence has been proved.
\end{proof}

\begin{proposition}[cf.~{\cite[(1.2), (1.3)]{HK}}]\label{multi-canonical.prop}

Let the notation be as in {\rm Theorem~\ref{kurano-main.thm}}.
Let $S$ be Noetherian with a fixed dualizing complex $\Bbb I_S$,
and assume that $Y$ and $X$ are of finite type over $S$.
Then
\[
\omega_X\cong \bigoplus_{\lambda\in\Bbb Z^s}(h_Y)_*\omega_Y(D_\lambda)t^\lambda.
\]
$\omega_Y\cong \O_Y(D_\lambda)$ as $\O_Y$-modules 
if and only if $\omega_X\cong\O_X(\lambda)$ as $(G,\O_X)$-modules.
\end{proposition}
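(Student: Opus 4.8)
The plan is to obtain the proposition by combining Theorem~\ref{canonical.thm} with the explicit description of the equivalence $\gamma=i_*\rho^*j^*$ from Corollary~\ref{kurano-correspondence.cor}. I would first place the situation of Theorem~\ref{kurano-main.thm} inside the framework of (\ref{initial-canonical-settings.par})--(\ref{canonical-settings.par}): take $H=e$ the trivial $S$-group scheme, so that $G=N$ is the split torus $\Bbb G_m^s$, which is flat of finite type over $S$, separated, and of relative dimension $s$; and take $Y_0=S$ with the fixed dualizing complex $\Bbb I_S$ (which, $H$ being trivial, is an $H$-dualizing complex). Then I would check that the whole diagram (\ref{kurano-almost.eq}) lies in $\Cal F(G,Y_0)$: $Y$ is separated of finite type over $S$ by hypothesis, $X=\uSpec_S\Cal B$ is affine over $S$ and of finite type over $S$ by hypothesis, $V$ is open in $Y$, and $U=\pi^{-1}(V)\to V$ is, over the locus where the $D_i$ are Cartier, a locally trivial $\Bbb G_m^s$-torsor by Lemma~\ref{principal-torus-equiv.lem}, hence of finite type and separated over $S$. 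In particular $\omega_X$ and $\omega_Y$ are the $G$-canonical modules of (\ref{canonical-settings.par}).

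Next I would note that $\Theta_{N,Y_0}\cong\O_{Y_0}$: since $N=\Bbb G_m^s$ is abelian, $G$ acts trivially on $N$ by conjugation, so $\omega_{N/S}$ is $G$-equivariantly free of rank one and $\Theta_{N,S}=e^*\omega_{N/S}^*\cong\O_S$ (this is the split reductive case of Remark~\ref{knop-remark.thm}, {\bf 3}). By base change of $\Theta$ (Lemma~\ref{omega-base-change.lem}, as in the discussion preceding Proposition~\ref{canonical-princ.thm}) it follows that $\Theta_{N,X}\cong\O_X$ and $\Theta_{N,Y}\cong\O_Y$. Now Theorem~\ref{canonical.thm} applies, and with $\Theta$ trivial the formulas (\ref{omega_X-isoms.eq}) and (\ref{omega_Y-isoms.eq}) become
\[
\omega_X\cong i_*\rho^*j^*\omega_Y=\gamma(\omega_Y)\ \text{ as }(G,\O_X)\text{-modules},\qquad
\omega_Y\cong (j_*\rho_*i^*\omega_X)^G=\delta(\omega_X)\ \text{ as }\O_Y\text{-modules},
\]
where $\gamma$ and $\delta$ are the quasi-inverse equivalences of Corollary~\ref{kurano-correspondence.cor}.

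For the first assertion I would use that $Y$, being integral and locally Krull, is normal, and that it carries the dualizing complex $\Bbb I_Y$; hence $\omega_Y$ is a rank-one reflexive sheaf (Lemma~\ref{dualizing-semicanonical.lem}, Lemma~\ref{omega-S_2.lem}), so $\omega_Y\cong\O_Y(K)$ for some divisor $K$ on $Y$ in the sense of section~\ref{multisection.sec}. Then Corollary~\ref{kurano-correspondence.cor} gives
\[
\gamma(\omega_Y)\cong\gamma(\O_Y(K))=(h_Y)_*\Bigl(\bigoplus_{\lambda\in\Bbb Z^s}\O_Y(D_\lambda+K)\,t^\lambda\Bigr),
\]
and since $\O_Y(D_\lambda+K)=(\O_Y(D_\lambda)\otimes_{\O_Y}\omega_Y)^{**}=\omega_Y(D_\lambda)$, the right side equals $\bigoplus_{\lambda\in\Bbb Z^s}(h_Y)_*\omega_Y(D_\lambda)\,t^\lambda$; combined with $\omega_X\cong\gamma(\omega_Y)$ this yields the claimed formula. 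For the second assertion, $\gamma(\O_Y(D_\lambda))\cong\O_X(\lambda)$ by Corollary~\ref{kurano-correspondence.cor}, while $\gamma(\omega_Y)\cong\omega_X$ from the above; as $\gamma$ is an equivalence of categories it reflects isomorphisms, so $\omega_Y\cong\O_Y(D_\lambda)$ as $\O_Y$-modules if and only if $\omega_X\cong\O_X(\lambda)$ as $(G,\O_X)$-modules.

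The substance of the argument is thus Theorem~\ref{canonical.thm} and Corollary~\ref{kurano-correspondence.cor}; the points needing attention are purely bookkeeping: confirming that the diagram (\ref{kurano-almost.eq}) really lies in $\Cal F(G,Y_0)$ (in particular that $U$ is of finite type over $S$), and the triviality of $\Theta_{N,Y_0}$ together with its behaviour under base change along $X\to S$ and $Y\to S$. I do not expect a genuine obstacle here; no new estimates or constructions are required.
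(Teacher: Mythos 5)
Your proposal is correct and follows essentially the same route as the paper: the paper's proof simply observes that the torus is smooth abelian so $\Theta$ is trivial and then invokes Theorem~\ref{canonical.thm}, leaving the translation into the explicit formula (which you carry out via Corollary~\ref{kurano-correspondence.cor} and writing $\omega_Y\cong\O_Y(K)$) as immediate. Your extra bookkeeping (placing the diagram in $\Cal F(G,Y_0)$, base change of $\Theta$, and that the equivalence reflects isomorphisms) is exactly what the paper implicitly relies on.
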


\begin{proof}
As $G_{\Bbb Z}=\Spec \Bbb Z[t_1^{\pm 1},\ldots,t_s^{\pm 1}]$ 
is a $\Bbb Z$-smooth 
abelian group, $\Lie(G_{\Bbb Z})$ is trivial, and hence
$\Theta_S=h_S^*(\ext^s\Lie(G_{\Bbb Z}))$ is trivial, where $h_S:S\rightarrow
\Spec \Bbb Z$ is the structure map.
Now the result follows from Theorem~\ref{canonical.thm} immediately.
\end{proof}

\paragraph\label{veronese.par}
Let $\Lambda$ be the abelian group $\Bbb Z^s$, and $\Gamma$ its subgroup.
Let $H$ be the torus $\Spec \Bbb Z \Gamma\times_{\Spec\Bbb Z}S$.
The inclusion of group rings $\Bbb Z \Gamma\hookrightarrow \Bbb Z \Lambda$ 
induces an fppf homomorphism of $S$-group schemes $f:G\rightarrow H$.
We set $N=\Ker f=\Spec \Bbb Z (\Lambda/\Gamma)\times_{\Spec \Bbb Z}S$.

Let $Y$ be an $S$-scheme on which $G$ acts trivially, 
and $\M=\bigoplus_{\lambda\in \Lambda} \M_\lambda$ a $(G,\O_Y)$-module.
Then $\M^N=\bigoplus_{\lambda\in\Gamma}\M_\lambda$ 
is nothing but the Veronese submodule
of $\M$.

\paragraph
Let the assumptions be as in Theorem~\ref{kurano-main.thm}.
Let $M$, $\Gamma$, $f:G\rightarrow H$, and $N$ be as above.
Set 
\[
X'=\uSpec_S\Cal B^N
=
\uSpec_S(h_Y)_*(\bigoplus_{\lambda\in\Gamma}\O(D_\lambda)t^\lambda).
\]
If $\lambda_1,\ldots,\lambda_{s'}$ is a $\Bbb Z$-basis of $\Gamma$ and
when we set $D_l':=D_{\lambda_l}$, then we have
\[
\Cal B^N=\Cal R(Y;D_1',\ldots,D_{s'}')
=(h_Y)_*(\bigoplus_{\alpha\in\Bbb Z^{s'}}\Cal O(\alpha_1D_1'+\cdots+
\alpha_{s'}D_{s'}')t^{\sum_i\alpha_i\lambda_i}).
\]
The schemes and morphisms constructed from the divisors $D_1',\ldots,D_{s'}'$
instead of $D_1,\ldots,D_s$ are denoted by
$\pi':Z'\rightarrow Y$ and $\rho':U'=(\pi')^{-1}(V)\rightarrow V$.
Thus $Z'=\uSpec_Y\Cal D^N$.
Let $\tau:Z\rightarrow Z'$ be the map induced by the map of $\O_Y$-algebras
$\Cal D^N\hookrightarrow \Cal D$.
It is an algebraic quotient by $N$.
Note that $\pi'\tau=\pi$ and $\tau^{-1}(U')=U$.
Let $\upsilon:U\rightarrow U'$ be the restriction of $\tau$.
Let $\theta:X\rightarrow X'$ be the map corresponding to the map of
$\O_S$-algebras $\Cal B^N\hookrightarrow \Cal B$.
Thus we get the commutative diagram
\[
\xymatrix{
X \ar[d]^\theta & 
U \ar[d]^\upsilon \ar@{_{(}->}[l]_i \ar[r]^{\rho} & 
V \ar@{^{(}->}[r]^j \ar[d]^{\id_V} & Y \ar[d]^{\id_Y} \\
X' &
U' \ar@{_{(}->}[l]_{i'} \ar[r]^{\rho'} & 
V \ar@{^{(}->}[r]^j & Y
}
\]
whose first and second rows are rational almost principal $G$- and 
$H$-bundles, respectively.

\begin{lemma}\label{Veronese-main.lem}
Let the notation be as above.
Then $\theta:X\rightarrow X'$ is a $G$-enriched almost principal $N$-bundle
with respect to $U$ and $U'$.
\end{lemma}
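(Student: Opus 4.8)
The plan is to verify that the morphism $\theta:X\rightarrow X'$, together with the open subsets $U\subset X$ and $U'\subset X'$, satisfies the conditions of Definition~\ref{almost-pb.def}, namely that
\[
\xymatrix{
X & U \ar@{_{(}->}[l]_i \ar[r]^{\upsilon} & U' \ar@{^{(}->}[r]^{i'} & X'
}
\]
is a $G$-enriched rational almost principal $N$-bundle and that $\upsilon$ is the restriction of $\theta$. The last point is immediate from the construction of $\theta$ and $\tau$ (both come from the inclusion $\Cal D^N\hookrightarrow\Cal D$, resp.\ $\Cal B^N\hookrightarrow\Cal B$), and $\theta(U)=\theta(i(U))\subset U'$ follows since $\upsilon=\tau|_U$ maps $U$ into $U'$. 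So it suffices to check the six conditions of Definition~\ref{rational-almost-pb.def} for $X$, $\upsilon:U\rightarrow U'$ and $U$, with $N$ in the role of the kernel group and $G$ in the role of the enriching group.

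First I would dispose of the easy conditions. $N$ acts trivially on $X'$ and $U'$ because these schemes are built from the Veronese (i.e., $\Gamma$-graded) subalgebras, on which $N=\Spec\Bbb Z(\Lambda/\Gamma)\times_{\Spec\Bbb Z}S$ acts trivially by construction (\ref{veronese.par}); condition {\bf 1} holds. Conditions {\bf 2} and {\bf 3}: $i:U\hookrightarrow X$ is an open immersion by Theorem~\ref{kurano-main.thm} applied to $D_1,\ldots,D_s$ (it is the composite $U\hookrightarrow Z\xrightarrow{v}X$), and $i':U'\hookrightarrow X'$ is an open immersion by the same theorem applied to the basis $D_1',\ldots,D_{s'}'$ of $\Gamma$. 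Conditions {\bf 4} and {\bf 5}: $i(U)$ is large in $X$ and $i'(U')$ is large in $X'$, again by Theorem~\ref{kurano-main.thm} (via Proposition~\ref{kurano-crutial.prop}) applied to the two systems of divisors. That leaves condition {\bf 6}: $\upsilon:U\rightarrow U'$ is a $G$-enriched principal $N$-bundle.

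The main work is therefore condition {\bf 6}. The strategy is to factor through the common base $V$: we have $\rho=\rho'\upsilon$ with $\rho:U\rightarrow V$ a $G$-enriched principal $G$-bundle and $\rho':U'\rightarrow V$ an (a priori only) $H$-invariant morphism. By Lemma~\ref{principal-torus-equiv.lem} applied to the system $D_1',\ldots,D_{s'}'$ (restricted to $V$, where all $D_i'$ and $D_i$ are Cartier), $\rho':U'\rightarrow V$ is a principal $H$-bundle; one should also note it is $G$-enriched, since $G$ acts on $U'$ through $f:G\rightarrow H$ and $\rho'$ is an $H$-morphism, hence a $G$-morphism. Now I would invoke Lemma~\ref{principal-composition.thm}, {\bf 3}, with the groups $E=G\supset G$ (taking the ambient group to be $G$ itself, which is normal in itself, and with normal subgroup $N$), the qfpqc homomorphism $f:G\rightarrow H$, and the morphisms $\upsilon:U\rightarrow U'$ and $\rho':U'\rightarrow V$: conditions {\bf b} ($\rho'$ is a $G$-enriched principal $H$-bundle) and {\bf c} ($\rho=\rho'\upsilon$ is a $G$-enriched principal $G$-bundle, by Theorem~\ref{kurano-main.thm}) together with the fact that $\upsilon$ is a $G$-morphism (which is built into the construction, since $\tau$ comes from a morphism of graded $\O_Y$-algebras and hence is $G$-equivariant) yield {\bf a}: $\upsilon$ is a $G$-enriched principal $N$-bundle. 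The one subtlety to watch is the hypothesis of Lemma~\ref{principal-composition.thm}, {\bf 3}, that $\upsilon$ be an $E$-morphism (emphasized as indispensable in the remark after it); here $E=G$ and $\upsilon$ is manifestly a $G$-morphism, so this is satisfied, and no issue arises.

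To summarize the order of execution: (i) identify $\upsilon$ with the restriction of $\theta$ and check $\theta(U)\subset U'$; (ii) check conditions {\bf 1}--{\bf 5} of Definition~\ref{rational-almost-pb.def} using Theorem~\ref{kurano-main.thm} twice (for $D_1,\ldots,D_s$ and for $D_1',\ldots,D_{s'}'$) plus the triviality of the $N$-action on Veronese subalgebras; (iii) prove $\rho':U'\rightarrow V$ is a $G$-enriched principal $H$-bundle via Lemma~\ref{principal-torus-equiv.lem}; (iv) apply Lemma~\ref{principal-composition.thm}, {\bf 3}, to conclude $\upsilon$ is a $G$-enriched principal $N$-bundle; (v) assemble into Definition~\ref{almost-pb.def}. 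I expect step (iv) — correctly matching the data of the abstract composition lemma to the concrete $X\to X'\to Y$ tower and confirming all enrichment hypotheses — to be the only place requiring care; everything else is bookkeeping with the already-established structure of the multisection construction.
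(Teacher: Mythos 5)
Your argument is correct, and it shares the paper's reduction (everything except condition {\bf 6} of Definition~\ref{rational-almost-pb.def} is read off from the surrounding construction), but the key step is done by a genuinely different route. The paper proves that $\upsilon\colon U\rightarrow U'$ is a principal $N$-bundle directly: writing $\rho_*\O_U=\bigoplus_{\lambda\in\Lambda}\Cal A_\lambda$ and regrading by $\Lambda/\Gamma$, it notes that $(\rho_*\O_U)^N=\Cal A_{\bar 0}=\rho'_*\O_{U'}$ and that the products $\Cal A_{\bar\lambda}\otimes_{\Cal A_{\bar 0}}\Cal A_{\bar\mu}\rightarrow\Cal A_{\bar\lambda+\bar\mu}$ are surjective, and then applies the surjectivity criterion of Lemma~\ref{principal-torus-equiv.lem} to this coarse grading. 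You instead first show that $\rho'\colon U'\rightarrow V$ is a principal $H$-bundle (the same criterion, applied to the $\Gamma$-grading over $V$, where all $D_\lambda|_V$ are Cartier) and then cancel via Lemma~\ref{principal-composition.thm}, {\bf 3}, with $E=G$ applied to $\rho=\rho'\circ\upsilon$; your checking of the hypotheses is sound ($N$ is normal in $E=G$, $f$ is fppf hence qfpqc, $\rho$ is trivially $G$-enriched, only the principal $H$-bundle property of $\rho'$ is actually used from condition {\bf b}, and $\upsilon$ is indeed a $G$-morphism, which is the hypothesis the paper warns is indispensable). What each approach buys: the paper's is more elementary and self-contained, and it makes visible exactly where fppf (rather than Zariski) local triviality enters when $\Lambda/\Gamma$ has torsion; yours reuses the already-established bundles $\rho$ and $\rho'$ together with the general two-out-of-three machinery, at the price of the enrichment bookkeeping you rightly flag. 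One caveat, common to your write-up and to the paper's own treatment: the primed-side conditions (that $i'\colon U'\rightarrow X'$ is an open immersion with large image) are obtained by running Theorem~\ref{kurano-main.thm} for $D_1',\ldots,D_{s'}'$, whose hypothesis asks for an ample Cartier divisor in $\sum_l\Bbb Z D_l'$, while the standing assumptions only provide one in $\sum_i\Bbb Z D_i$; this implicit extra assumption belongs to the surrounding text and is not a gap introduced by your argument.
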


\begin{proof}
It suffices to show that $\upsilon:U\rightarrow U'$ is a principal 
$N$-bundle.
Let $p: \Lambda\rightarrow \Lambda/\Gamma$ be the canonical projection.
Let us write $\rho_*\O_U=\bigoplus_{\lambda\in\Lambda} \Cal A_\lambda$.
Then the $N$-action on $\rho_*\O_U$ is given by the grading
$\rho_*\O_U=\bigoplus_{\bar\lambda\in \Lambda/\Gamma}\Cal A_{\bar \lambda}$, 
where $\Cal A_{\bar \lambda}=\bigoplus_{\lambda\in p^{-1}(\bar \lambda)}\Cal 
A_\lambda$.
Then $\rho'_*\O_{U'}=(\rho_*\O_U)^N\rightarrow \Cal A_{\bar 0}$ is
an isomorphism, and 
$\Cal A_{\bar \lambda}\otimes_{\Cal A_{\bar 0}}\Cal A_{\bar \mu}
\rightarrow \Cal A_{\bar \lambda+\bar \mu}$ is surjective for 
$\bar\lambda,\bar\mu\in \Lambda/\Gamma$.
By Lemma~\ref{principal-torus-equiv.lem}, we have that
$\upsilon$ is a principal $N$-bundle.
\end{proof}

\begin{lemma}\label{veronese-class-group.lem}
Let the notation be as above.
Then there is an exact sequence
\[
0\rightarrow\Cal X(N,X)\xrightarrow{\bar\alpha} 
\Cal X(N)\xrightarrow{\bar\beta} \Cl(X')
\xrightarrow{\bar\gamma} \Cl(X)\rightarrow 0,
\]
where $\Cal X(N)=\Lambda/\Gamma$, $\Cal X(N,X)=\{\bar\lambda\in
\Cal X(N)\mid B^\times\cap B_{\bar \lambda}\neq\emptyset\}$,
$\bar\alpha$ the inclusion, $\bar\beta(\bar\lambda)=
\gamma'(\O_Y(D_\lambda))$ \(where $p(\lambda)=\bar\lambda$, and this 
definition is independent of the choice of $\lambda\in p^{-1}(\bar\lambda)$\),
and $\bar\gamma(\M)=(\theta^*\M)^{**}$, where $B=\Gamma(X,\O_X)$.
\end{lemma}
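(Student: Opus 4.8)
The plan is to derive the exact sequence in Lemma~\ref{veronese-class-group.lem} by applying the four-term class-group sequence of Theorem~\ref{four-term.thm} to the $G$-enriched almost principal $N$-bundle $\theta:X\rightarrow X'$ established in Lemma~\ref{Veronese-main.lem}, and then to identify the various groups and maps concretely in terms of the torus action and the grading, exactly as was done in the proof of Proposition~\ref{multi-class.prop}. Concretely, since $\theta$ is a $G$-enriched almost principal $N$-bundle with respect to $U$ and $U'$, and since both $X$ and $X'$ are locally Krull (they are of the form $\uSpec_S$ of a multisection ring; cf.~Theorem~\ref{kurano-main.thm} and the discussion in (\ref{veronese.par})), Theorem~\ref{four-term.thm} applied with the group $N$ (in place of $G$) and the scheme $X$ (in place of $X$ there) and base $X'$ gives the exact sequence
\[
0\rightarrow H^1\alg(N,\O_X^\times)\rightarrow \Cl(X')\rightarrow \Cl(X)^N\rightarrow H^2\alg(N,\O_X^\times).
\]
One must check the hypotheses of Theorem~\ref{four-term.thm}: $X$ and $X'$ locally Krull, $X$ quasi-compact quasi-separated, $X'$ quasi-compact, and $\rho=\upsilon:U\rightarrow U'$ quasi-compact and universally open. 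Quasi-compactness is immediate once $S$ is assumed quasi-compact (one may need to add this hypothesis, as was tacitly done in Proposition~\ref{multi-class.prop}); universal openness and quasi-compactness of $\upsilon$ follow since $N=\Spec\Bbb Z(\Lambda/\Gamma)\times_{\Spec\Bbb Z}S$ is finite type over $S$ and $\upsilon$ is a principal $N$-bundle.

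Next I would identify the terms. As in the proof of Proposition~\ref{multi-class.prop}, using the projection formula \cite[(3.9.4)]{Lipman} and the fact that $B=\Gamma(X,\O_X)$ is a domain (it is a multisection ring of an integral scheme, hence a graded Krull domain by Lemma~\ref{expression-B.lem}), every $1$-cocycle for the algebraic $N$-cohomology of $\O_X^\times$ is of the form $t^{\bar\lambda}\otimes 1$ with $\bar\lambda\in\Lambda/\Gamma=\Cal X(N)$, so $Z^1\alg(N,\O_X^\times)\cong\Cal X(N)$, and the coboundaries are exactly $\Cal X(N,X)=\{\bar\lambda\in\Cal X(N)\mid B^\times\cap B_{\bar\lambda}\neq\emptyset\}$; thus $H^1\alg(N,\O_X^\times)\cong\Cal X(N)/\Cal X(N,X)$, which gives the left part $0\rightarrow\Cal X(N,X)\xrightarrow{\bar\alpha}\Cal X(N)\xrightarrow{\bar\beta}\Cl(X')$. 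The second cohomology term $H^2\alg(N,\O_X^\times)$ vanishes because $N$ is a diagonalizable (hence Reynolds, by Example~\ref{diagonalizable-Reynolds.ex}) group scheme: by Theorem~\ref{main.thm} the map $\Cl(X')\rightarrow\Cl(N,X)$ is an isomorphism, and the forgetful map $\Cl(N,X)\rightarrow\Cl(X)$ has kernel exactly $H^1\alg(N,\O_X^\times)$ with no obstruction, so the four-term sequence terminates at $\Cl(X)$ on the right; equivalently $\bar\gamma=(i^*)^{-1}\upsilon^*(i')^*$ is surjective because $\upsilon^*:\Cl(U')\rightarrow\Cl(U)$ is surjective by Lemma~\ref{surjective.thm} (applied to the principal $N$-bundle $\upsilon$, noting $N$ is a torus) and $i^*$, $(i')^*$ are isomorphisms by the codimension-two property of large open immersions into locally Krull schemes. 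This yields $\Cl(X)^N=\Cl(X)$ in the displayed sequence and the final $\rightarrow 0$.

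Finally I would verify that the maps match the stated descriptions: $\bar\beta(\bar\lambda)=\gamma'(\O_Y(D_\lambda))$ where $\gamma':\Ref(H,Y)\rightarrow\Ref(H,X')$ is the equivalence for the $H$-bundle row (so $\gamma'(\O_Y(D_\lambda))$ is well-defined in $\Cl(X')$ and independent of the choice of $\lambda\in p^{-1}(\bar\lambda)$, since changing $\lambda$ by an element of $\Gamma$ shifts by $\gamma'$ applied to $\O_Y(D_\mu)$ for $\mu\in\Gamma$, which is trivial in $\Cl(X')$ as $\O_{X'}(\mu)$ is the shift of a free module — use Corollary~\ref{kurano-correspondence.cor}); and $\bar\gamma(\M)=(\theta^*\M)^{**}$ is the flat-pullback class-group map, which agrees with $(i^*)^{-1}\upsilon^*(i')^*$ by Lemma~\ref{almost-double-dual.lem}. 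The identification of $\bar\beta$ with $\bar\lambda\mapsto\gamma'(\O_Y(D_\lambda))$ requires tracing through the linearization computation of (\ref{graded-linearization.par}): the cocycle $t^{\bar\lambda}\otimes 1$ corresponds to $t^{-\lambda}\O_{X'}$ with its $N$-linearization, which under the equivalence $\Cl(X')\cong\Cl(H,Y)$... actually $\Cl(N,X)$ ... corresponds to $\O_Y(D_\lambda)$ by Corollary~\ref{kurano-correspondence.cor}, exactly parallel to the computation $\gamma(\O_Y(D_i))=\O_X(\varepsilon_i)$ in Proposition~\ref{multi-class.prop}. The main obstacle I anticipate is bookkeeping: keeping the two rational almost principal bundle rows (the $G$-row over $X$ and the $H$-row over $X'$) and their equivalences straight, and correctly matching signs/directions in the character-group identifications — the actual mathematical content is entirely supplied by Theorem~\ref{four-term.thm}, Lemma~\ref{surjective.thm}, Lemma~\ref{Veronese-main.lem}, and the diagonalizability of $N$, so no genuinely new argument is needed beyond what already appears in the proof of Proposition~\ref{multi-class.prop}.
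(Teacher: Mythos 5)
Your overall strategy---apply Theorem~\ref{four-term.thm} directly to the $G$-enriched almost principal $N$-bundle $\theta:X\rightarrow X'$ of Lemma~\ref{Veronese-main.lem} and rerun the identifications from the proof of Proposition~\ref{multi-class.prop}---is a route the paper itself notes is possible, but it is not the proof the paper gives, and as written your argument has genuine gaps precisely in the case of main interest, namely when $\Lambda/\Gamma$ has torsion (e.g.\ $N=\mu_d$ for the Veronese subring in Proposition~\ref{degree-one.prop}). First, you justify surjectivity of $\bar\gamma$ by Lemma~\ref{surjective.thm}, \lq noting $N$ is a torus'; but $N=\Spec\Bbb Z(\Lambda/\Gamma)\times_{\Spec\Bbb Z}S$ is a split torus only when $\Lambda/\Gamma$ is torsion-free, and Lemma~\ref{surjective.thm} is stated (and proved, via the generic fiber being a Laurent polynomial ring) only for $\Bbb G_m^s$. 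Second, the identification $Z^1\alg(N,\O_X^\times)\cong\Cal X(N)$ cannot be transplanted verbatim: in Proposition~\ref{multi-class.prop} it rested on $\Gamma(G\times X,\O)$ being a Laurent polynomial ring over the domain $B$, hence a domain whose units are homogeneous; when $N$ has a finite part, $\Gamma(N\times X,\O)$ is a group ring over $B$ with zero-divisors and extra units, so the step \lq every cocycle is $t^{\bar\lambda}\otimes 1$' fails as stated and the computation of $H^1\alg(N,\O_X^\times)$ needs a different argument. Third, the claim that $H^2\alg(N,\O_X^\times)$ vanishes because $N$ is Reynolds is unsupported: the Reynolds property gives vanishing of the additive cohomology of quasi-coherent modules, not of the multiplicative algebraic cohomology occurring in Theorem~\ref{four-term.thm}.

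The paper sidesteps all three points by a different argument: it applies Proposition~\ref{multi-class.prop} twice, once to the rational almost principal $G$-bundle over $Y$ (yielding $0\rightarrow\Cal X(G)/\Cal X(G,X)\rightarrow\Cl(Y)\rightarrow\Cl(X)\rightarrow 0$) and once to the $H$-bundle over $Y$ (yielding the analogous sequence ending in $\Cl(X')$), observes---using that units of the $\Lambda$-graded domain $B$ are homogeneous---that $\Cal X(H,X')=\Cal X(G,X)\cap\Cal X(H)$, so that $0\rightarrow\Cal X(H)/\Cal X(H,X')\rightarrow\Cal X(G)/\Cal X(G,X)\rightarrow\Cal X(N)/\Cal X(N,X)\rightarrow 0$ is exact, and then applies the snake lemma to the commutative diagram with the identity on $\Cl(Y)$ and $\bar\gamma:\Cl(X')\rightarrow\Cl(X)$. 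In particular surjectivity of $\bar\gamma$ comes for free from the factorization $\gamma=\bar\gamma\gamma'$ with $\gamma$ surjective, and no cocycle or $H^2$ analysis for the finite or non-reduced part of $N$ is needed. If you wish to keep your route, you must repair the three steps above, for instance by deriving surjectivity from the factorization through $\Cl(Y)$ and by computing $\Ker(\Cl(N,X)\rightarrow\Cl(X))$ via a graded-module argument rather than a unit computation in the group ring; as it stands the proof is incomplete.
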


\begin{proof}
Using Lemma~\ref{Veronese-main.lem}, we may repeat the proof of 
Proposition~\ref{multi-class.prop}.
Here we give a proof which use the result of Proposition~\ref{multi-class.prop}.
As $B$ is a domain, a unit of $B$ is homogeneous.
It is easy to see that $\Cal X(H,X')=\Cal X(H,X)$.
As
\[
0\rightarrow \Cal X(H)\rightarrow \Cal X(G)\rightarrow \Cal X(N)\rightarrow 0
\]
and
\[
0\rightarrow \Cal X(H,X')\rightarrow \Cal X(G,X)\rightarrow \Cal X(N,X)
\rightarrow 0
\]
are exact, we have that the sequence
\[
0\rightarrow \Cal X(H)/\Cal X(H,X')\rightarrow \Cal X(G)/\Cal X(G,X)
\rightarrow \Cal X(N)/\Cal X(N,X)\rightarrow 0
\]
is exact.
Now the result follows from the commutative diagram
\[
\xymatrix{
0 \ar[r] & \Cal X(G)/\Cal X(G,X) \ar[r]^-\alpha & \Cl(Y) \ar[r]^\gamma &
\Cl(X) \ar[r] & 0 \\
0 \ar[r] & \Cal X(H)/\Cal X(H,X) \ar[u] \ar[r]^-{\alpha'} &
\Cl(Y) \ar[u]^{\id} \ar[r]^{\gamma'} & \Cl (X') \ar[r] \ar[u]^{\bar\gamma} & 0
}
\]
and the snake lemma.
\end{proof}

\begin{lemma}
Let the notation be as above.
Let $S$ be Noetherian with a fixed dualizing complex $\Bbb I_S$, and
assume that $Y$ and $X$ are of finite type.
Then we have 
\[
\omega_{X'}\cong (\theta_*\omega_X)^N
\]
as $(H,\O_{X'})$-modules.
\end{lemma}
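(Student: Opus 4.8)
The plan is to recognize the statement as an instance of the Knop-type correspondence for almost principal bundles that has already been established in Corollary~\ref{canonical-cor.thm}, once one checks that $\Theta_N$ is trivial for the diagonalizable group scheme $N$. First I would record that, by Lemma~\ref{Veronese-main.lem}, $\theta\colon X\to X'$ is a $G$-enriched almost principal $N$-bundle with respect to $U$ and $U'$, where $N=\Ker f=\Spec\Bbb Z(\Lambda/\Gamma)\times_{\Spec\Bbb Z}S$. I then want to apply Corollary~\ref{canonical-cor.thm} with $Y_0=S$, equipped with its fixed dualizing complex $\Bbb I_S$, so that $\Cal F(G,S)$ is as in (\ref{canonical-Y_0.par}). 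The verifications needed are: $N$ is affine over $S$, hence separated, and since $\Lambda/\Gamma\cong\Bbb Z^d\oplus(\text{finite})$ for some $d\ge 0$ the morphism $N\to S$ has the fixed relative dimension $d$; $X$ is separated of finite type over $S$ by hypothesis (it is affine over $S$); and $X'=\uSpec_S\Cal B^N$ is affine, hence separated, over $S$, and of finite type over $S$ because $\Cal B^N$ is a finitely generated $\O_S$-algebra. For the last point I would use that $N$ is Reynolds (Example~\ref{diagonalizable-Reynolds.ex}), so $\Cal B^N$ is a direct summand subalgebra of the finitely generated algebra $\Cal B$ by Lemma~\ref{direct-summand-Reynolds.lem}, together with the finite generation of invariants of diagonalizable group schemes. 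Thus $\theta$ is a morphism in $\Cal F(G,S)$, $X'\in\Cal F(H,S)$, and $\omega_{X'}$ is defined.

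Next I would compute $\Theta_{N,S}\cong\O_S$ as a $(G,\O_S)$-module. Because $\Bbb Z\Lambda$ is a cocommutative Hopf algebra, $G$ (and hence $N$) is a commutative $S$-group scheme, so the conjugation action of $G$ on $N$ is trivial; therefore $\omega_{N/S}$ is $G$-trivial and $\Theta_{N,S}=e_N^*\omega_{N/S}^*$ carries the trivial $G$-linearization. Moreover $N\to S$ is the base change of $N_{\Bbb Z}=\Spec\Bbb Z(\Lambda/\Gamma)\to\Spec\Bbb Z$, which is flat Gorenstein of relative dimension $d$, so by Lemma~\ref{omega-base-change.lem} the invertible sheaf $\Theta_{N,S}$ is the pullback to $S$ of the invertible sheaf $e^*\omega_{N_{\Bbb Z}/\Bbb Z}^*$ on $\Spec\Bbb Z$, which is trivial since $\Pic(\Bbb Z)=0$. (Alternatively one can write $N=N_0\times_S N_1$ with $N_0$ a split torus and $N_1$ a finite Reynolds group scheme, use $\omega_{N/S}\cong p_0^*\omega_{N_0/S}\otimes_{\O_N}p_1^*\omega_{N_1/S}$, apply Lemma~\ref{Reynolds-Theta.lem} to $N_1$, and note that $\Lie$ of a torus is trivial as in the proof of Proposition~\ref{multi-canonical.prop}.) Consequently $\Theta_{N,X}\cong\O_X$ and $\Theta_{N,X'}\cong\O_{X'}$, since $\Theta_{N,Z}\cong\bar h_Z^*\Theta_{N,S}$ for any $Z\in\Cal F(G,S)$.

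Finally, the first isomorphism of Corollary~\ref{canonical-cor.thm}, item~1, applied to $\theta\colon X\to X'$ reads $\omega_{X'}\cong(\theta_*(\omega_X\otimes_{\O_X}\Theta_{N,X}))^N$ as $(H,\O_{X'})$-modules, and substituting $\Theta_{N,X}\cong\O_X$ yields $\omega_{X'}\cong(\theta_*\omega_X)^N$, as required.

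The substantive input — the comparison of $\omega_X$ and $\omega_{X'}$ — is entirely contained in the already-proven Corollary~\ref{canonical-cor.thm}, so the only real work is the bookkeeping of its hypotheses. The main obstacle I anticipate is confirming that $X'$ genuinely lies in $\Cal F(G,S)$, i.e.\ that the Veronese algebra $\Cal B^N$ is finitely generated over $\O_S$ (so that $\omega_{X'}$ is defined within the framework of Section~\ref{Serre-canonical.sec}); a secondary point is the care needed when $\Lambda/\Gamma$ has torsion, so that $N$ may fail to be smooth, in the triviality computation for $\Theta_N$.
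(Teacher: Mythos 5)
Your proof is correct, but it takes a genuinely different route from the paper's. The paper does not apply Corollary~\ref{canonical-cor.thm} to the full diagonalizable group $N$ in one step: it interposes the saturation $\Gamma'$ of $\Gamma$ in $\Lambda$ and factors $\theta$ through $X''=\Spec B_{\Gamma'}$, so that the two stages are $G$-enriched almost principal bundles for a split torus and for a finite diagonalizable (hence Reynolds) group scheme respectively; in each of these two cases the triviality of $\Theta_{N,S}$ is already on record (the $\extop\Lie$ computation as in Proposition~\ref{multi-canonical.prop}, and Lemma~\ref{Reynolds-Theta.lem}), and the lemma follows by composing two applications of Corollary~\ref{canonical-cor.thm}, {\bf 1}. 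You instead treat the mixed case directly, which obliges you to prove $\Theta_{N,S}\cong\O_S$ as a $(G,\O_S)$-module for an arbitrary diagonalizable $N$ over a general Noetherian base, a case not literally listed in Remark~\ref{knop-remark.thm}; your argument for it is sound (commutativity of $G$ makes the conjugation action, hence the linearization, trivial, and Lemma~\ref{omega-base-change.lem} applied to the flat Gorenstein morphism $N_{\Bbb Z}\rightarrow\Spec\Bbb Z$ of relative dimension $d=\rank$ of the free part of $\Lambda/\Gamma$ shows the underlying invertible sheaf is pulled back from $\Spec\Bbb Z$, hence free), and your alternative via $N\cong N_0\times_S N_1$ is essentially the paper's case division carried out at the level of $\Theta$ rather than of the bundle. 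Your route buys a single application of Corollary~\ref{canonical-cor.thm} with no intermediate Veronese ring and no composition of invariance functors; the paper's route buys that it only quotes triviality statements already proved. One shared implicit point is that $X'$ (and, in the paper's proof, also $X''$) must be separated of finite type over $S$ for $\omega_{X'}$ to be defined in $\Cal F(H,S)$: your justification by ``finite generation of invariants of diagonalizable group schemes'' combined with Lemma~\ref{direct-summand-Reynolds.lem} is somewhat circular as phrased --- the clean argument is the standard finite generation of the Veronese subalgebra $B_\Gamma$ of a finitely generated $\Lambda$-graded algebra (present $B$ as a quotient of a graded polynomial ring and apply Gordan's lemma) --- but this point is equally implicit in the paper's own two-sentence proof, so it is not a defect of your approach relative to theirs.
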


\begin{proof}
Let $\Gamma'\subset \Lambda$ be the subgroup such that $\Gamma\subset \Gamma'$,
$\Gamma'/\Gamma$ is a torsion module, and $\Lambda/\Gamma'$ is torsion-free.
Then comparing $X$ and $X''=\Spec B_{\Gamma'}$ and then 
$X''$ and $X'$, we may
assume either that $N$ is a torus or finite.
If $N$ is a torus, we may use Corollary~\ref{canonical-cor.thm}, {\bf 1}
(note that $\Theta_{N,S}$ is trivial in both cases).
\end{proof}

\section{The Cox rings of toric varieties}\label{toric.sec}

We give an example of toric varieties.

Let $M=\Bbb Z^n$ be the free $\Bbb Z$-module of rank $n$.
Let $k$ be a field, and $Y$ a toric variety determined by a 
fan $\Delta$ in $M^*=\Hom_{\Bbb Z}(M,\Bbb Z)$ \cite{Fulton}.
Let $H$ be the torus $\Spec kM$, where
$kM$ is the group algebra of $M$ (with each element of $M$ group-like).
Let $\Delta(1)$ be the set of one-dimensional faces of $\Delta$.
Note that $\Delta(1)$ is in one-to-one correspondence with the set of
$H$-stable prime divisors.
For each $\sigma\in \Delta(1)$, $m^*_\sigma$ denotes the generator of 
$\sigma\cap M^*$.
Let $W=\bigoplus_{\sigma\in\Delta(1)}\Bbb Z D_\sigma$ 
be the $\Bbb Z$-free module with the basis $\Delta(1)$ so that
$W$ is the group of $H$-stable divisors,
where $D_\sigma$ is the $G$-stable prime divisor
corresponding to $\sigma$.
An element of $M$ is a rational function on $Y$, and 
$\div m=\sum_\sigma \langle m,m^*_\sigma\rangle D_\sigma\in W$.
We assume that the map $\div:M\rightarrow W$ 
is injective, and $\Cl(Y)=W/M$ is torsion-free.
This is equivalent to say that $\{m^*_\sigma\mid \sigma\in\Delta(1)\}$ generates
$M^*$.
We set $G=\Spec kW$.
The inclusion $\div:M\hookrightarrow W$ 
induces a surjective map $f:G\rightarrow H$ with
$N:=\Ker f=\Spec k\Cl(Y)$.
Let $B=k[x_\sigma\mid \sigma\in\Delta(1)]$ be the Cox ring of $Y$ \cite{Cox},
where $x_\sigma$ are variables, and $B$ is a polynomial ring.
Letting each $x_\sigma$ of degree $\sigma$, $B$ is $W$-graded, and hence
is a $G$-algebra.
We set $X=\Spec B$.
We choose $\sigma_1,\ldots,\sigma_s\in \Delta(1)$ so that
$[D_1],\ldots,[D_s]$ forms a $\Bbb Z$-basis of $\Cl(Y)$,
where $D_i=D_{\sigma_i}$.
This gives a splitting $\Cl(Y)\rightarrow W$ (given by $[D_i]\mapsto D_i$) 
and the direct docompositions
$W=M\oplus \Cl(Y)$ and $G=N\times H$.
Then by \cite[(1.1)]{Cox}, $B$ is identified with
\[
R(Y;D_1,\ldots,D_s)=\bigoplus_{\lambda\in\Bbb Z^s}\Gamma(Y,\O_Y(D_\lambda))
t^\lambda.
\]
Note that $mt^\lambda\in R(Y;D_1,\ldots,D_s)_\lambda$ corresponds to
$x^{\div m+D_\lambda}$ for $m\in M$ and $\lambda\in\Bbb Z^s$, 
and thus this identification 
$B=R(Y;D_1,\ldots,D_s)$ respects the $W$-grading.

We set $V=Y\reg$.
This particular choice is consistent with our main discussion 
in section~\ref{multisection.sec}.
That is, $V$ is a large open subset of $Y$ such that 
$D_i|_V$ is Cartier for each $i$.
Not only that, $V$ is an $H$-open subset.
Obviously, $\pi:Z\rightarrow V$ is a $G$-morphism which is $N$-invariant.
So we have

\begin{proposition}\label{cox-main.prop}
Let the notation be as above.
Assume that $Y$ is quasi-projective.
Then {\rm(\ref{kurano-almost.eq})} is a $G$-enriched rational almost principal
$N$-bundle.
\end{proposition}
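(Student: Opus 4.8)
The plan is to deduce this directly from Theorem~\ref{kurano-main.thm} together with the structural facts recorded just before the statement. First I would note that a toric variety $Y$ is normal, hence locally Krull, and integral, and that $h_Y\colon Y\to S=\Spec k$ is quasi-compact and separated. Since $Y$ is quasi-projective over $k$, it carries an ample invertible sheaf $\O_Y(D_0)$ for some Cartier divisor $D_0$; as $[D_1],\ldots,[D_s]$ generate $\Cl(Y)$ we may write $[D_0]=\sum_i a_i[D_i]$, so $D_0$ is linearly equivalent to $D:=\sum_i a_iD_i\in\sum_i\Bbb Z D_i$, and $\O_Y(D)\cong\O_Y(D_0)$ is again ample and invertible, i.e.\ $D$ is an ample Cartier divisor with $D\in\sum_i\Bbb Z D_i$. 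Thus, taking as the $s$-torus of Theorem~\ref{kurano-main.thm} the group $N=\Spec k\Cl(Y)\cong\Spec k[t_1^{\pm1},\ldots,t_s^{\pm1}]$ acting on $\Cal B=B=R(Y;D_1,\ldots,D_s)$ through the $\Bbb Z^s$-grading, all hypotheses of that theorem are met. Hence $(\ref{kurano-almost.eq})$ is a rational almost principal $N$-bundle: $i$ and $j$ are open immersions with $i(U)$ large in $X$ and $j(V)$ large in $Y$, $N$ acts trivially on $Y$, and $\rho\colon U\to V$ is a principal $N$-bundle; moreover $X$ is locally Krull.

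It remains to upgrade the $N$-enrichment to a $G$-enrichment, where $G=N\times H$ so that $N$ is normal in $G$. For $X=\Spec B$: the Cox ring $B$ is $W$-graded, so $X$ is a $G=\Spec kW$-scheme, and under the splitting $W=M\oplus\Cl(Y)$ the $\Cl(Y)$-part of this grading is exactly the $\Bbb Z^s$-grading used above, so the $N$-action on $X$ is the restriction of the $G$-action along $N\hookrightarrow G$. For $Y$: it is a $G$-scheme via $f\colon G\to H$, with $N=\Ker f$ acting trivially; the regular locus $V=Y\reg$ is preserved by $H$ (automorphisms preserve regularity), hence is $G$-stable, and $V$ is exactly the large open used in the construction. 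For $Z=\uSpec_Y\Cal D$: each $D_\lambda=\sum_i\lambda_iD_i$ is an $H$-stable divisor, so $\O_Y(D_\lambda)$ carries a canonical $H$-linearization, the multiplications $\O_Y(D_\lambda)\otimes_{\O_Y}\O_Y(D_\mu)\to\O_Y(D_{\lambda+\mu})$ are $H$-equivariant, and the $\Bbb Z^s$-grading gives the $N$-linearization; together these make $\Cal D$ a quasi-coherent $(G,\O_Y)$-algebra, so $Z$ is a $G$-scheme and $\pi\colon Z\to Y$ is a $G$-morphism which is $N$-invariant. Since $\Cal B=(h_Y)_*\Cal D$ is then a $(G,\O_S)$-algebra, the canonical map $v\colon Z\to X$ is a $G$-morphism as well; consequently $U=\pi^{-1}(V)$ is $G$-stable, and $i\colon U\hookrightarrow Z\xrightarrow{v}X$ and $\rho\colon U\to V$ are $G$-morphisms.

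Finally I would assemble the six conditions of Definition~\ref{rational-almost-pb.def} for the group $G$: conditions {\bf 1}--{\bf 5} are precisely the conclusions of Theorem~\ref{kurano-main.thm} recalled above (condition {\bf 1} being that $N$ acts trivially on $Y$), and for condition {\bf 6} the morphism $\rho\colon U\to V$ is a principal $N$-bundle by that theorem, is a $G$-morphism by the previous paragraph, and $N$ is normal in $G$, so $\rho$ is a $G$-enriched principal $N$-bundle. Hence $(\ref{kurano-almost.eq})$ is a $G$-enriched rational almost principal $N$-bundle. The only genuine work — and the step most likely to require care — is the bookkeeping that identifies the $N$-action arising from the multisection construction with the restriction of the $W$-grading along $N\hookrightarrow G$, together with the verification that $\Cal D$, hence $Z$, is $(G,\O_Y)$-linearized with $\pi$ a $G$-morphism; everything else is a direct appeal to Theorem~\ref{kurano-main.thm}.
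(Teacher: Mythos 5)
Your proof is correct and follows essentially the same route as the paper: use quasi-projectivity to produce an ample Cartier divisor in $\sum_i\Bbb Z D_i$ (the paper phrases this as ``$D$ lies in $\sum_i\Bbb Z D_i$ because the $[D_i]$ generate $\Cl(Y)$,'' and your replacement of $D_0$ by a linearly equivalent $D=\sum_i a_iD_i$ is just the careful version of that point) and then invoke Theorem~\ref{kurano-main.thm}. The $G$-equivariance of the diagram, which you verify explicitly via the $W$-grading of the Cox ring and the $H$-linearizations of the sheaves $\O_Y(D_\lambda)$, is exactly what the paper records in the preamble (that $V=Y\reg$ is $H$-stable and that $\pi:Z\rightarrow Y$ is a $G$-morphism which is $N$-invariant) and then cites in its proof as ``we have already seen that the diagram is that of $G$-schemes.''
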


\begin{proof}
We have already seen that the diagram is that of $G$-schemes.
As $Y$ is quasi-projective, it has an ample Cartier divisor $D$.
$D$ lies in $\sum_i\Bbb Z D_i$, because this group is the whole $\Cl(Y)$.
By Theorem~\ref{kurano-main.thm}, the assertion follows.
\end{proof}

\begin{corollary}[cf.~{\cite[(4.3), Proposition]{Fulton}, 
\cite[(I.13.1)]{Stanley}}]\label{toric-canonical.cor}
Let $M=\Bbb Z^n$, and $H=\Spec kM$.
Let $Y$ be a toric variety over a field $k$ defined by a fan $\Delta$ in $M^*$.
Then the $H$-canonical module $\omega_Y$ of $Y$ is isomorphic to 
$\O_Y(-\sum_{\sigma\in\Delta(1)}D_\sigma)$.
\end{corollary}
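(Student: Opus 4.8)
The plan is to deduce this from the Cox-ring realization of $Y$ as a rational almost principal bundle together with the formula of Proposition~\ref{multi-canonical.prop} for the canonical module of a multisection ring; the point is that the relevant ``total space'' here is the polynomial ring $B=k[x_\sigma\mid\sigma\in\Delta(1)]$, whose canonical module is immediate to compute.

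First I would treat the case where $Y$ is quasi-projective. Then $Y$ has an ample Cartier divisor, and since $\{[D_i]\}_{i=1}^s$ is a $\Bbb Z$-basis of $\Cl(Y)$ one may take the ample divisor in the form $D=\sum_i a_iD_i$, so that the multisection ring $\Cal R(Y;D_1,\dots,D_s)$ is defined and, by \cite[(1.1)]{Cox}, equals the Cox ring $B=k[x_\sigma\mid\sigma\in\Delta(1)]$, graded so that $x_\sigma$ has $\Cl(Y)$-degree $[D_\sigma]$. By Proposition~\ref{cox-main.prop} the diagram (\ref{kurano-almost.eq}) is then a $G$-enriched rational almost principal $N$-bundle with $X=\Spec B$, $N=\Spec k\Cl(Y)$ a split torus, and $G=N\times H$. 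Since $X\cong\Bbb A^{\#\Delta(1)}_k$ is $k$-smooth, $\omega_X\cong\ext^{\#\Delta(1)}\Omega_{X/k}$, and $\Omega_{X/k}=\bigoplus_\sigma\O_X\,dx_\sigma$ is a graded $(G,\O_X)$-module with $dx_\sigma$ of the same degree as $x_\sigma$; hence $\omega_X$ is free of rank one, with generator in $\Cl(Y)$-degree $\sum_{\sigma\in\Delta(1)}[D_\sigma]$. Translating through the sign convention of (\ref{graded-linearization.par}), this says $\omega_X\cong\O_X(\lambda)$ for the $\lambda\in\Bbb Z^s$ with $D_\lambda$ linearly equivalent to $-\sum_{\sigma\in\Delta(1)}D_\sigma$. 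Proposition~\ref{multi-canonical.prop} (whose hypotheses hold with $S=\Spec k$, $\Bbb I_S=k$, and $X$, $Y$ of finite type) then gives $\omega_Y\cong\O_Y(D_\lambda)\cong\O_Y(-\sum_{\sigma\in\Delta(1)}D_\sigma)$, the last isomorphism because linearly equivalent Weil divisors have isomorphic divisorial sheaves. No $\Theta_N$-correction enters, $N$ being a torus, cf.\ Example~\ref{diagonalizable-Reynolds.ex} and Remark~\ref{knop-remark.thm}.

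For a general fan $\Delta$ I would reduce to the quasi-projective case chart by chart. The scheme $Y$ is covered by the affine $H$-stable toric opens $U_\tau=\Spec k[\tau^\vee\cap M]$, each an affine (hence quasi-projective) toric variety whose ray set is $\tau(1)\subset\Delta(1)$ and on which $D_\sigma$ restricts to $0$ for $\sigma\notin\tau(1)$. Since the $H$-dualizing complex, hence $\omega_Y$, restricts compatibly to open subschemes (Lemma~\ref{codim-two-canonical.thm}), the quasi-projective case yields $\omega_Y|_{U_\tau}\cong\O_Y(-\sum_{\sigma\in\Delta(1)}D_\sigma)|_{U_\tau}$ for every $\tau$. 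As both sheaves are rank-one reflexive, this can be promoted to a global isomorphism either by comparing their classes in $\Cl(Y\reg)\cong\Cl(Y)$ after restriction to the large smooth locus $Y\reg$, or, alternatively, by working directly on $Y\reg$ with the torus-invariant logarithmic $n$-form, which globally and $H$-equivariantly trivializes $\omega_{Y\reg}\otimes\O_{Y\reg}(\sum_\sigma D_\sigma|_{Y\reg})$, and then pushing forward along the large open immersion $Y\reg\hookrightarrow Y$ via Lemma~\ref{S_2-large-equiv.lem}.

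The main obstacle I anticipate is bookkeeping: pinning down the exact dictionary between the grading on the Cox ring, the $\Bbb Z^s$-grading used in the multisection formalism, and the $G$-module weights (with the antipode-induced sign in (\ref{graded-linearization.par})), so that the generator of $\ext^{\#\Delta(1)}\Omega_{X/k}$ comes out in degree $+\sum_\sigma[D_\sigma]$ rather than its negative or a shift thereof. Secondarily, the reduction from a general $Y$ to a quasi-projective one needs care, since the chart-wise isomorphisms do not patch a priori and one genuinely has to invoke reflexivity together with a large-open comparison (or the logarithmic-form trivialization) to conclude.
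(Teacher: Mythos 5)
Your first step (the Cox-ring computation when the standing hypotheses of section~\ref{toric.sec} hold) matches the paper's final step and is fine: once one is in the situation of Proposition~\ref{cox-main.prop}, the claim reduces to $\omega_X\cong\O_X(-\sum_\sigma\sigma)$ for the polynomial ring, which is immediate. The problem is your reduction of the general case to this one. Proposition~\ref{cox-main.prop} is proved under the standing assumptions of the toric section: the rays $m^*_\sigma$ generate $M^*$ (so $\div\colon M\to W$ is injective) and $\Cl(Y)$ is torsion-free, in addition to quasi-projectivity. Your ``general case'' argument reduces to the affine charts $U_\tau$, but an affine toric variety typically violates exactly these hypotheses: the quadric cone $\Spec k[x^2,xy,y^2]$ has $\Cl\cong\Bbb Z/2\Bbb Z$, and for $\Bbb A^1\times\Bbb G_m$ the single ray does not span $M^*_{\Bbb R}$. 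So the case you actually proved does not apply to the charts, and the chart-by-chart reduction collapses.

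There is a second gap in the patching step itself. Knowing $\omega_Y|_{U_\tau}\cong\O_Y(-\sum_\sigma D_\sigma)|_{U_\tau}$ for every $\tau$ does not determine the global class: the difference class only has to lie in $\bigcap_\tau\Ker(\Cl(Y)\to\Cl(U_\tau))$, which is usually nonzero (already for $Y=\Bbb P^1$ covered by two copies of $\Bbb A^1$ this intersection is all of $\Cl(Y)=\Bbb Z$). So ``comparing classes in $\Cl(Y\reg)\cong\Cl(Y)$'' cannot be run from chart-wise data alone. Your fallback via the $H$-invariant logarithmic $n$-form on $Y\reg$ would indeed give a global equivariant trivialization of $\omega_{Y\reg}\otimes\O(\sum_\sigma D_\sigma)|_{Y\reg}$ and then Lemma~\ref{S_2-large-equiv.lem} finishes, but that is essentially the classical proof (Fulton), independent of the Cox-ring machinery, and you have not carried out its key computation (that the divisor of the log form is exactly $-\sum_\sigma D_\sigma$). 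The paper's reduction avoids both problems globally: it first passes to a complete fan by restriction from $\bar Y\supset Y$, then takes an equivariant subdivision $g\colon Y'\to Y$ with $Y'$ projective and $\Cl(Y')$ torsion-free, and observes that $g$ is an isomorphism over the large open subset $W\subset Y$ corresponding to the one-skeleton of $\Delta$; largeness of $W$ and reflexivity of $\omega_Y$ then transfer the isomorphism from $Y'$ to $Y$ in one stroke, with no chart-wise patching needed.
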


\begin{proof}
Assume that $\Delta$ is not complete.
Then extending $\Delta$ outside, $Y$ is an $H$-open subscheme of a complete 
toric variety $\bar Y$ determined by $\bar \Delta$.
If $\omega_{\bar Y}\cong \O_{\bar Y}(\sum_{\sigma\in\bar\Delta(1)}D_\sigma)$, then
restricting to $Y$, we get 
\[
\omega_Y\cong \O_{\bar Y}(-\sum_{\sigma\in\bar\Delta(1)}\bar D_\sigma)|_Y
=
\O_{Y}((-\sum_{\sigma\in\bar\Delta(1)}\bar D_\sigma)|_Y)
=\O_Y(-\sum_{\sigma\in\Delta(1)}D_\sigma).
\]
Hence we may assume that $Y$ is complete.

Next, subdividing $\Delta$ if necessary, there is an $H$-equivariant 
birational map between complete toric varieties $g:Y'\rightarrow Y$ 
such that the class group is torsion-free and $Y'$ is projective 
\cite[(2.17)]{Oda}.
Let $W$ be the $H$-stable open subvariety of $Y$ 
obtained by removing the union of all the 
$H$-stable closed subvarieties of codimension grater than or equal to two
($W$ is the toric variety corresponding to the one skelton of $\Delta$).
It is easy to see that $g|_{g^{-1}(W)}:g^{-1}(W)\rightarrow W$ is an isomorphism
($g^{-1}(W)$ also corresponds to the one-skelton of $\Delta$).
If the corollary is true for $Y'$, then it is true also for its 
open subset $g^{-1}(W)$ as above, and then the assertion is also true for
$Y$, because $W$ is large in $Y$.

Thus we are in the situation of Proposition~\ref{cox-main.prop}.
Then as in Proposition~\ref{multi-canonical.prop}, 
it suffices to show that $\omega_X\cong \O_X(-\sum_{\sigma\in\Delta(1)}\sigma)$.
But this is trivial, since $B$ is a polynomial ring with the 
variables $x_\sigma$.
\end{proof}

\begin{lemma}\label{smith-vdb.lem}
Let $S=\Spec k$ with $k$ a field, and $G$ be an affine $S$-group scheme.
Let $(B,\frak m)$ be 
a $G$-local $G$-algebra such that $k\rightarrow B/\frak m$ is bijective.
Let $F$ be a $B$-finite $B$-projective $(G,B)$-module such that 
$F/\frak m F$ is a projective $G$-module.
Then $F$ is a projective $(G,B)$-module, and
$F\cong B\otimes_k (F/\frak m F)$.
\end{lemma}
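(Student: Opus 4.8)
The plan is to show that, $G$-equivariantly, $F$ is the extension of scalars $B\otimes_k(F/\frak mF)$ of the $G$-module $F/\frak mF$, and to read off both assertions from this. The three ingredients will be: a formal adjunction for the extension-of-scalars functor, a $G$-equivariant lifting of the reduction map $F\to F/\frak mF$, and the $G$-equivariant Nakayama lemma (Lemma~\ref{G-nakayama.lem}).

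First I would record the adjunction that yields the projectivity half. The functor $P\mapsto B\otimes_k P$, with $B$ acting on the first factor and $G$ acting diagonally, is a functor $\Mod(G)\to\Mod(G,B)$, and it is left adjoint to the forgetful functor $\Mod(G,B)\to\Mod(G)$: a $(G,B)$-linear map $B\otimes_k P\to M$ is determined by its restriction to $1\otimes P$, which, since $1\in B$ is $G$-fixed, is precisely a $G$-linear map $P\to M$, and conversely any $G$-linear map $P\to M$ extends uniquely $B$-linearly. Since the forgetful functor is exact, $(?)\otimes_k B$ sends projective $G$-modules to projective $(G,B)$-modules; hence once we know $F\cong B\otimes_k(F/\frak mF)$ in $\Mod(G,B)$, the projectivity of $F$ as a $(G,B)$-module follows from the hypothesis that $F/\frak mF$ is a projective $G$-module.

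Next I would build the comparison map. Because $\frak m$ is a $G$-ideal, $\frak mF$ is a $(G,B)$-submodule, and $F/\frak mF$ is a $G$-module which is finite-dimensional over $k$ as $F$ is $B$-finite. Since $F/\frak mF$ is projective as a $G$-module, the surjection $F\to F/\frak mF$ of $G$-modules splits $G$-equivariantly, say by $s\colon F/\frak mF\to F$; extending $s$ $B$-linearly gives a $(G,B)$-linear map $\tilde s\colon B\otimes_k(F/\frak mF)\to F$. Reducing modulo $\frak m$ identifies $\tilde s\otimes_B B/\frak m$ with the identity of $F/\frak mF$, so $\Coker\tilde s$ is a finite-type $(G,\O_X)$-module, $X=\Spec B$, with $\frak m\cdot\Coker\tilde s=\Coker\tilde s$. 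Using that $B$ is $G$-local with unique $G$-maximal $G$-ideal $\frak m$, so that $\frak m=\rad_G(X)$, Lemma~\ref{G-nakayama.lem} applied to the quasi-compact $G$-scheme $X$ forces $\Coker\tilde s=0$; thus $\tilde s$ is surjective. Now $F$ is $B$-projective, so $\tilde s$ admits a $B$-linear section, and therefore $\Ker\tilde s$ is a $B$-direct summand of the $B$-finite module $B\otimes_k(F/\frak mF)$, hence $B$-finite and a $(G,B)$-module; the splitting keeps $0\to\Ker\tilde s\to B\otimes_k(F/\frak mF)\to F\to 0$ exact after $\otimes_B B/\frak m$, so $\Ker\tilde s/\frak m\Ker\tilde s=0$ and $\Ker\tilde s=0$ by Lemma~\ref{G-nakayama.lem} again. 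Hence $\tilde s\colon B\otimes_k(F/\frak mF)\to F$ is an isomorphism of $(G,B)$-modules, which with the previous paragraph proves both claims.

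The only place where care is really needed — the main obstacle — is that $B$ need not be a local ring in the classical sense: $\frak m$ is merely a $G$-maximal $G$-ideal and need not lie in the Jacobson radical of $B$ (already in the graded situation $B=\bigoplus_{n\ge 0}B_n$, $\frak m=B_+$, it does not). So ordinary Nakayama is unavailable, and the vanishing of both $\Coker\tilde s$ and $\Ker\tilde s$ must be routed through the equivariant Nakayama lemma; this in turn forces one to verify that $\frak m=\rad_G(X)$ and that these cokernel and kernel are genuinely finite-type $(G,\O_X)$-modules, and it is precisely the $B$-projectivity of $F$ (equivalently, $B$-flatness, so that base change stays exact) that guarantees the latter.
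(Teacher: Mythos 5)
Your proof is correct and follows essentially the same route as the paper: split $F\to F/\fm F$ $G$-equivariantly using projectivity of $F/\fm F$, extend $B$-linearly to $B\otimes_k(F/\fm F)\to F$, kill cokernel and kernel with the $G$-Nakayama lemma (using $B$-projectivity of $F$ to split off the kernel), and deduce $(G,B)$-projectivity from the adjunction $\Hom_{G,B}(B\otimes_k(F/\fm F),?)\cong\Hom_G(F/\fm F,?)$. Your closing remark on why ordinary Nakayama is unavailable and the equivariant version is needed is exactly the right point of care.
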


\begin{proof}
By assumption, the canonical map $\pi: F\rightarrow F/\fm F$ has a 
$G$-linear splitting $i: F/\fm F\rightarrow F$.
We define $\nu:B\otimes_k (F/\fm F)\rightarrow F$ by
$\nu(b\otimes \alpha)=b\cdot i(\alpha)$.
This is $(G,B)$-linear, and is surjective by $G$-Nakayama's lemma,
Lemma~\ref{G-nakayama.lem}.
As $F$ is assumed to be $B$-projective, $\nu$ has a $B$-linear
splitting.
So $K=\Ker \nu$ is a $B$-finite $(G,B)$-module.
As can be seen easily, we have $K/\frak m K=0$, and hence $K=0$ 
by $G$-Nakayama's lemma again.

For a $(G,B)$-module $M$, we have
\[
\Hom_{G,B}(B\otimes_k(F/\frak mF),M)\cong \Hom_G(F/\frak m F,M).
\]
So $\Hom_{G,B}(B\otimes_k(F/\frak mF),?)$ is an exact functor, and
$F\cong B\otimes_k (F/\frak m F)$ is $(G,B)$-projective.
\end{proof}

\begin{proposition}[cf.~{\cite[Theorem~1]{Thomsen}},
{\cite[section~3]{Bruns}}]\label{thomsen.cor}
Let the notation be as in {\rm Corollary~\ref{toric-canonical.cor}}.
Assume that $k$ is a perfect field of characteristic $p>0$.
Then 
$Y$ is of graded finite $F$-representation type by
some rank-one reflexive sheaves $\Cal M_1,\ldots,\Cal M_u$ on $Y$,
with respect to the action of $H$.
\end{proposition}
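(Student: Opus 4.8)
The plan is to pass to the Cox ring $B=k[x_\sigma\mid\sigma\in\Delta(1)]$ and deduce the result from the general correspondence between Frobenius pushforwards established in Theorem~\ref{Frobenius-pushforward.thm}, via Corollary~\ref{ffrt.cor}. As a preliminary reduction I would arrange that $Y$ is quasi-projective with torsion-free class group, so that the Cox-ring construction and Proposition~\ref{cox-main.prop} apply: if $Y$ is not of this form, replace it, exactly as in the proof of Corollary~\ref{toric-canonical.cor}, by a projective toric $Y'$ with $\Cl(Y')$ torsion-free together with an $H$-equivariant birational morphism $g\colon Y'\to Y$ which is an isomorphism over a large $H$-stable open subset $W$. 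Since $F^e_*(\O_{{}^eY})$ satisfies $(S'_2)$ on the normal scheme $Y$ (and likewise on $Y'$), it is the pushforward along the large open immersion of its restriction to $W$; hence a direct-sum decomposition into restrictions of rank-one reflexive $H$-sheaves on $Y'$ descends first to $W$ and then, by pushing forward along $W\hookrightarrow Y$, to $Y$. Thus we may assume $Y$ is quasi-projective.

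Next I would check the hypotheses of Theorem~\ref{Frobenius-pushforward.thm}, {\bf 5, 6} (equivalently Corollary~\ref{ffrt.cor}) for the $G$-enriched rational almost principal $N$-bundle (\ref{kurano-almost.eq}) supplied by Proposition~\ref{cox-main.prop}, where $G=N\times H$, $H=\Spec kM$, $N=\Spec k\Cl(Y)$. Because $\Cl(Y)$ is torsion-free, $N\cong\Bbb G_m^r$ is a split torus, hence smooth, reduced, of finite type, and diagonalizable, so $f\colon G\to H$ is regular and $N$ is Reynolds (Example~\ref{diagonalizable-Reynolds.ex}); since $k$ is perfect, $S=\Spec k$ is locally Noetherian and quasi-normal by $\O_S$, $X=\Spec B$ is $F$-finite over $S$, and $X$, $Y$ are normal (a polynomial ring, resp. a toric variety), hence flat over $S$ with $(R_0)+(T_1)+(S_2)$ fibers; finally $G$ is flat over $S$. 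So Corollary~\ref{ffrt.cor} applies, and it suffices to show that $X=\Spec B$ is of finite $(H,N)$-$F$-representation type; then $Y$ is of finite $H$-$F$-representation type, i.e. of graded finite $F$-representation type with respect to $H$, by the reflexive sheaves $\mathcal N_l=\delta(\M_l)$ coming from the equivalence $\Ref(H,Y)\cong\Ref(G,X)$, which are rank one because that equivalence preserves rank (Theorem~\ref{main.thm}); concretely the $\mathcal N_l$ are rank-one reflexive sheaves of the form $\O_Y(D_l)$ by Corollary~\ref{kurano-correspondence.cor}.

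The heart of the matter is the computation of $(F^e_*\O_{{}^eX})^{{}^eN_e}$ for $e\ge 1$. Since $B=k[x_\sigma]$ is a polynomial ring over the perfect field $k$, we have $B^{p^e}=k[x_\sigma^{p^e}]$ and $F^e_*(\O_{{}^eX})$ is a free $\O_X$-module with basis the twisted monomials ${}^e(x^a)$, $0\le a_\sigma < p^e$; the $W$-grading makes it an object of the Frobenius-twisted equivariant category, and since ${}^eN_e=\Spec k[\Cl(Y)/p^e\Cl(Y)]$ is diagonalizable, hence Reynolds, its invariants form a direct summand, namely the span of those ${}^e(x^a)$ with $[D_a]:=\sum_\sigma a_\sigma[D_\sigma]\in p^e\Cl(Y)$. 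Rescaling the $\Cl(Y)$-grading by $p^{-e}$ exhibits each summand $B^{p^e}\!\cdot{}^e(x^a)$, viewed as an $(N,\O_X)$-module, as the rank-one free module $\M_w$ of weight $w=p^{-e}[D_a]\in\Cl(Y)$ (this is exactly where $[D_a]\in p^e\Cl(Y)$ is used; the freeness and the reduction to the $\fm$-fibre may alternatively be routed through Lemma~\ref{smith-vdb.lem} applied to the $G$-local ring $B$, with $F^e_*(\O_{{}^eX})^{{}^eN_e}/\fm(\cdot)$ computing the occurring weights). Because $0\le a_\sigma < p^e$, every such $w$ lies in $\mathcal W:=\Cl(Y)\cap\{\sum_\sigma t_\sigma[D_\sigma]\mid 0\le t_\sigma < 1\}$, a finite set — the intersection of a bounded region with the lattice $\Cl(Y)$ — independent of $e$. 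Hence $(F^e_*\O_{{}^eX})^{{}^eN_e}$ is a finite direct sum of copies of the $\M_w$, $w\in\mathcal W$, so $X$ is of finite $(H,N)$-$F$-representation type by $\{\M_w\}_{w\in\mathcal W}$ (taking $e_0=0$ and endowing each $B\otimes_k k_w$ with its evident $G$-structure), and the proof is complete.

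I expect the main obstacle to be the equivariant bookkeeping in this last step: identifying $F^e_*(\O_{{}^eX})$ and its ${}^eN_e$-invariants correctly as objects of the Frobenius-twisted equivariant categories of section~\ref{frob-twist.sec}, verifying that as $(N,\O_X)$-modules (forgetting $H$ and the twist) the summands are precisely the $\M_w$ with the rescaled grading, and phrasing the finiteness of $\mathcal W$ so that it survives translation back to $Y$ via $\delta$ and yields honest rank-one reflexive sheaves on $Y$. The reduction to the quasi-projective case is routine but should be written out, since the proposition is stated for an arbitrary fan.
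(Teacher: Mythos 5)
Your proposal is correct and essentially reproduces the paper's proof: reduce to projective $Y$ with torsion-free class group, apply Corollary~\ref{ffrt.cor} via Proposition~\ref{cox-main.prop}, decompose ${}^eB$ over the polynomial Cox ring into rank-one free summands indexed by twisted monomials (the paper routes the freeness through Lemma~\ref{smith-vdb.lem}, as you note), take ${}^eN_e$-invariants, and obtain finiteness from the boundedness of the resulting weight set in $\Cl(Y)_{\Bbb R}$. The only point to watch in the preliminary reduction is that a projective $Y'$ admitting a birational morphism onto $Y$ exists only when $Y$ is complete, so---as in the proof of Corollary~\ref{toric-canonical.cor} that you cite---one must first extend the fan to embed $Y$ as an $H$-stable open subscheme of a complete toric variety and restrict the decomposition of $F^e_*(\O_{{}^eY})$, after which your descent through the large open subset $W$ is equivalent to the paper's direct use of $g_*$ together with $g_*\O_{Y'}=\O_Y$.
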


\begin{proof}
As in the proof of Corollary~\ref{toric-canonical.cor}, we may assume that
$Y$ is complete.
As before, let $g:Y'\rightarrow Y$ be a birational map between complete
toric varieties such that $Y'$ is projective and $\Cl(Y')$ is torsion-free.
Assume that $Y'$ is of graded finite $F$-representation type by 
rank-one reflexive $({}^{e_0}H,\O_{Y'})$-modules
$\M_1,\ldots,\M_u$.
Then for each $e\geq 1$, 
we can write 
$F^e_*(\O_{{}^eY'})=\bigoplus_j \N_j$ as
$({}^eH,\O_{Y'})$-modules such that for each $j$, there exists some
$l(j)$ such that $\N_j\cong \M_{l(j)}$ as $\O_{Y'}$-modules.
Then $F^e_*(\O_{{}^eY})\cong \bigoplus_j g_*\N_j$ as
$({}^eH,\O_{Y})$-modules, since $g_*\O_{Y'}=\O_Y$.
So each $g_*\N_j$ is rank-one reflexive.
Moreover, $g_*\N_j\cong g_*\M_{l(j)}$ as $\O_Y$-modules.
So wasting $\M_l$ which does not appear in the expression at all, if any,
we have that $g_*\M_1,\ldots,g_*\M_u$ are rank-one reflexive 
$({}^{e_0}H,\O_Y)$-modules, and $Y$ is of graded finite $F$-representation
type by $g_*\M_1,\ldots,g_*\M_u$.
Hence we may replace $Y$ by $Y'$, and we are in the situation of 
Proposition~\ref{cox-main.prop}.

By Corollary~\ref{ffrt.cor}, it suffices to show
that there exist some $e_0\geq 0$ and finitely many
rank-one $B$-free $({}^{e_0}H\times N,B)$-modules such that
$({}^{e}B)^{{}^eN_e}$ is a direct sum of copies of these modules as
$(N,B)$-modules,
where $B$ is the Cox ring of $Y$.

By Lemma~\ref{smith-vdb.lem}, we have that 
${}^eB\cong B\otimes ({}^eB/\fm {}^eB)
\cong B\otimes {}^e(B/\fm^{[p^e]})$ as $({}^eG,B)$-modules,
where $\fm^{[p^e]}=\fm^{(e)}B$.
We identify a ${}^eG$-module with a $p^{-e}W$-graded $k$-vector space.
Then we have that ${}^e(B/\fm^{[p^e]})$ is the sum of one-dimensional
representations
\[
{}^e(B/\fm^{[p^e]})=
\bigoplus_{(\alpha_\sigma)\in\Map(\Delta(1),[0,1)\cap p^{-e}\Bbb Z)}
k \cdot x^{\sum_\sigma\alpha_\sigma \sigma}
\cong
\bigoplus_{(\alpha_\sigma)}
k(-\sum_\sigma\alpha_\sigma D_\sigma).
\]
So
\[
{}^eB\cong
\bigoplus_{(\alpha_\sigma)\in\Map(\Delta(1),[0,1)\cap p^{-e}\Bbb Z)}
B(-\sum_\sigma\alpha_\sigma D_\sigma).
\]

Hence $({}^eB)^{{}^eN_e}
\cong \bigoplus_{(\alpha_\sigma)}B(-\sum_\sigma\alpha_\sigma D_\sigma)$,
where the sum is taken over 
$(\alpha_\sigma)\in\Map(\Delta(1),[0,1)\cap p^{-e}\Bbb Z)$ such that
$-\sum_\sigma\alpha_\sigma[D_\sigma]\in p^{-e}\Cl(Y)$ lies in $\Cl(Y)$.
Let $\pi: \Bbb R^{n+s}=\Map(\Delta(1),\Bbb R)\rightarrow \Cl(Y)_{\Bbb R}$ 
be the map given by $\pi(\alpha_\sigma)=\sum_\sigma\alpha_\sigma[D_\sigma]$.
Then $\pi([0,1]^{n+s})\cap \Cl(Y)$ is compact and discrete, and hence is finite.
So we can find some $e_0$ and rank-one free summands $M_1,\ldots,M_u$
of ${}^eB^{{}^eN_e}$ for $e\leq e_0$ ($e$ may vary) such that 
any other rank-one free summand of ${}^eB^{{}^eN_e}$ for any $e$ is
$(N,B)$-isomorphic to some $M_l$.
This is what we wanted to prove.
\end{proof}

The following is well-known.

\begin{proposition}\label{toric-globally-freg.prop}
Let the notation be as in {\rm Corollary~\ref{toric-canonical.cor}}.
Assume that $k$ is a perfect field of characteristic $p>0$.
Then $Y$ is globally $F$-regular.
\end{proposition}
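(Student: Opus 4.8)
The plan is to reduce to the hypotheses of Proposition~\ref{cox-main.prop} and then apply Theorem~\ref{globally-F-reg-main.thm}. First I would reduce to the case that $Y$ is quasi-projective with torsion-free class group, along the lines of the reductions in the proofs of Corollary~\ref{toric-canonical.cor} and Proposition~\ref{thomsen.cor}: a suitable refinement of the fan $\Delta$ produces a proper birational $H$-equivariant toric morphism $g\colon Y'\to Y$ with $Y'$ quasi-projective and $\Cl(Y')$ torsion-free. Since $Y$ is normal and integral, $g_*\O_{Y'}\cong\O_Y$, so Corollary~\ref{HWY.cor} shows that global $F$-regularity of $Y'$ implies that of $Y$; and since $Y'$ is $F$-finite (being of finite type over the perfect field $k$), Noetherian, normal, and carries an ample invertible sheaf, Lemma~\ref{strongly-F-regular-ample.lem} allows free passage between $Y'$ and its open subschemes, which is what is needed if one first completes $\Delta$ and then recovers $Y$ from a large open subset $W$ (for $W$ large in the normal scheme $Y$ one has $\O_Y\to j_*\O_W$ an isomorphism, so Corollary~\ref{HWY.cor} applies again). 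Thus it suffices to treat the case where $Y$ is quasi-projective and $\Cl(Y)$ is torsion-free.

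In that case we are in the setting of Proposition~\ref{cox-main.prop}: with $N=\Ker(f\colon G\to H)=\Spec k\Cl(Y)$ and $X=\Spec B$, where $B=k[x_\sigma\mid\sigma\in\Delta(1)]$ is the Cox ring, the diagram (\ref{kurano-almost.eq}) is a $G$-enriched rational almost principal $N$-bundle; forgetting the $G$-enrichment, it is in particular a rational almost principal $N$-bundle. Because $\Cl(Y)$ is torsion-free, $N$ is a split torus, hence a smooth linearly reductive affine algebraic $k$-group scheme, so Theorem~\ref{globally-F-reg-main.thm} is applicable to this bundle. The remaining hypotheses of part~{\bf 2} hold: $X=\Spec B$ (a polynomial ring) and the toric variety $Y$ are Noetherian and normal; both are $F$-finite over $k$; and both carry ample invertible sheaves, namely $\O_X$ on the affine scheme $X$ and an ample sheaf on the quasi-projective $Y$. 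Hence $X$ is globally $F$-regular if and only if $Y$ is.

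It remains to observe that $X=\Spec B$ is globally $F$-regular, which is immediate since $B$ is a polynomial ring over the perfect field $k$, hence regular and $F$-finite, hence strongly $F$-regular; equivalently, for a nonzero $f\in B$ and $e$ large enough that $p^e$ exceeds the degree of $f$ in each variable, the projection of $F^e_*B$ onto its $B^{(e)}$-basis of monomials splits $fF^e$. Therefore $Y$ is globally $F$-regular, as claimed. The only genuinely non-formal point is the reduction to a quasi-projective $Y$ with torsion-free class group — it is precisely this that makes $N$ smooth and supplies the ample sheaf on $Y$ required by Theorem~\ref{globally-F-reg-main.thm}; once that reduction is carried out, the statement follows from that theorem together with the (classical) global $F$-regularity of affine space.
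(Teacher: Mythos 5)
Your proposal is correct and follows essentially the paper's own argument: reduce to a projective toric model with torsion-free class group via Lemma~\ref{strongly-F-regular-ample.lem} and Corollary~\ref{HWY.cor} (completing and subdividing the fan and then recovering $Y$ from a large open subset), and then combine Proposition~\ref{cox-main.prop} with Theorem~\ref{globally-F-reg-main.thm} and the strong $F$-regularity of the polynomial Cox ring. The only caveat is that your opening variant of the reduction --- a refinement of $\Delta$ alone giving a quasi-projective $Y'$ with torsion-free class group --- does not by itself place you in the Cox-ring setting when the rays of $\Delta$ fail to generate $M^*$ (for instance when $|\Delta|$ is not full-dimensional), so it is the completion route you also describe, and which the paper uses, that actually carries the reduction.
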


\begin{proof}
By Lemma~\ref{strongly-F-regular-ample.lem} and
Corollary~\ref{HWY.cor},
we may assume that $Y$ is projective and the class group of $Y$ is 
torsion-free.
As the torus $N=\Spec k\Cl(Y)$ is smooth linearly reductive and
the polynomial ring $B$ is strongly $F$-regular,
$Y$ is globaly $F$-regular by 
Theorem~\ref{globally-F-reg-main.thm} and
Proposition~\ref{cox-main.prop}.
\end{proof}

\begin{corollary}
An affine normal semigroup ring over a field of characteristic $p>0$
is strongly $F$-regular.
In particular, it is Cohen--Macaulay.
\end{corollary}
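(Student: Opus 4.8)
The plan is to recognise $A$ as the coordinate ring of an affine toric variety and feed it into the toric results proved above. An affine normal semigroup ring is $A=k[S]$ for a finitely generated saturated submonoid $S$ of a lattice; replacing the lattice by $M:=\Bbb Z S$ we may assume $S$ generates $M$, and then $S=\sigma^\vee\cap M$ for a rational polyhedral cone $\sigma$ in $M^*_{\Bbb R}$ (namely $\sigma=(\Bbb R_{\geq0}S)^\vee$, whose double dual is $\Bbb R_{\geq0}S$). Letting $\Delta$ be the fan of faces of $\sigma$, the associated affine toric variety over $k$ is $Y:=\Spec A$, with dense split torus $H=\Spec kM$; being affine, $Y$ carries the ample invertible sheaf $\O_Y$.

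Next I would reduce to the case that $k$ is perfect, hence $F$-finite. Write $A=\Bbb F_p[S]\otimes_{\Bbb F_p}k$; since $\Bbb F_p$ is perfect, every field extension of $\Bbb F_p$ is separable, so $\Bbb F_p[S]\to A$ is faithfully flat with geometrically regular fibres. Granting, by the argument below, that $\Bbb F_p[S]$ is strongly $F$-regular, strong $F$-regularity ascends to $A$ along such a base change; note also that $A$, being finitely generated over a field, is excellent, so the weakly $F$-regular property used below for Cohen--Macaulayness is inherited as well. Thus it suffices to treat the case $k$ perfect.

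So assume $k$ is perfect. Then $A=k[S]$ is finitely generated over the $F$-finite field $k$, hence is $F$-finite and Noetherian (indeed excellent). By Proposition~\ref{toric-globally-freg.prop}, $Y=\Spec A$ is globally $F$-regular. Since $Y$ is affine it has the ample invertible sheaf $\O_Y$ and $A$ is $F$-finite Noetherian, so Lemma~\ref{strongly-F-regular-ample.lem}, applied with the open subscheme taken to be $Y$ itself, shows that $A$ is strongly $F$-regular; the last part of that lemma, through \cite[(3.1)]{HH} and \cite[(4.2)]{Huneke}, gives that $A$ is Cohen--Macaulay and normal. Together with the previous paragraph this yields the corollary for an arbitrary field $k$ of characteristic $p>0$. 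The toric identification and the invocations of Proposition~\ref{toric-globally-freg.prop} and Lemma~\ref{strongly-F-regular-ample.lem} are routine; the only point requiring care is the base-field reduction — controlling strong $F$-regularity under the flat extension $\Bbb F_p[S]\to k[S]$ when $k$ need not be $F$-finite — and that is the main, though mild, obstacle.
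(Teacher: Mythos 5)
Your main step coincides with the paper's proof: identify $\Spec A$ with the affine toric variety attached to the saturated semigroup, apply Proposition~\ref{toric-globally-freg.prop} over a perfect field, and conclude strong $F$-regularity from global $F$-regularity of the affine scheme (your additional use of Lemma~\ref{strongly-F-regular-ample.lem} is fine and also delivers the Cohen--Macaulay statement). The divergence, and the one genuine gap, is the base-field reduction. You descend to $\Bbb F_p[S]$ and then assert that strong $F$-regularity ascends along the faithfully flat, geometrically regular map $\Bbb F_p[S]\to k[S]$. When $k$ is not $F$-finite, $k[S]$ is not $F$-finite, so the splitting definition of strong $F$-regularity used in \cite{HH} and in Lemma~\ref{strongly-F-regular-ample.lem} does not even directly apply to $k[S]$, and ascent of strong $F$-regularity along regular base change in that generality is not a fact you may simply invoke without a reference or a proof; it is precisely the delicate content of the reduction, not a mild obstacle. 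The paper disposes of exactly this point in the opposite direction, citing \cite[(3.17)]{Hashimoto10} to reduce at once to $k$ algebraically closed (hence perfect and $F$-finite), after which Proposition~\ref{toric-globally-freg.prop} finishes the argument. With a citation of that kind (or of a specific ascent/descent theorem valid for non-$F$-finite rings) your outline becomes the paper's proof; as written, the sentence ``strong $F$-regularity ascends to $A$ along such a base change'' is unsupported.

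A secondary slip: you claim that ``the weakly $F$-regular property used below for Cohen--Macaulayness is inherited'' along the base change. Ascent of weak $F$-regularity along flat maps is not known in general (it is bound up with the localization problem for tight closure) and is not what you need: once the perfect-field case is settled, $\Bbb F_p[S]$ is Cohen--Macaulay, and Cohen--Macaulayness of $k[S]$ follows by the standard fact that the Cohen--Macaulay property ascends along a flat homomorphism with Cohen--Macaulay (here regular) fibres; alternatively it descends from $\bar k\otimes_k k[S]$ by faithful flatness. Replacing the weak-$F$-regularity remark by this observation, and supplying a reference for the strong $F$-regularity transfer, closes the argument.
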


\begin{proof}
Let $A$ be an affine normal semigroup ring over $k$.
By \cite[(3.17)]{Hashimoto10}, we may assume that $k$ is algebraically
closed.
Then by Proposition~\ref{toric-globally-freg.prop},
the associated affine toric variety $\Spec A$ is globally $F$-regular.
That is, $A$ is strongly $F$-regular.
\end{proof}

\section{Surjectively graded rings}\label{surjective.sec}

\paragraph
As we have seen in the last section,
we can construct a rational almost principal bundle from a multisection
ring over a normal quasi-projective variety over a field.
However, given a finitely generated multigraded algebra $B$ over a field $k$,
it seems that it is not so easy to tell if $B$ is a multisection ring.
But this is relatively easy for the case that $B$ is surjectively graded.

\paragraph
Let $\Lambda=\Bbb Z^s$, and $G=\Spec Z\Lambda$, 
the split $s$-torus over $\Bbb Z$.
Let $B$ be an $\Lambda$-graded ring.
Let $\Sigma$ be a subsemigroup (submonoid) of $\Lambda_{\Bbb R}=\Bbb R^s$.
We say that $B$ is {\em $\Sigma$-surjectively graded} 
if for $\lambda,\lambda'\in \Sigma\cap \Lambda$, 
the product 
$B_\lambda\otimes_{\Bbb Z}B_{\lambda'}\rightarrow B_{\lambda+\lambda'}$ is surjective.
By definition, $B$ is $\Sigma$-surjectively graded if and only if
it is $\Sigma\cap\Lambda$-surjectively graded.

The definition is a variant of \cite[(3.5)]{Hashimoto9}.
For a $\Lambda$-graded domain $B$, $\Sigma(B):=\{\lambda\in\Lambda\mid 
B_\lambda\neq 0\}$ is a subsemigroup of $\Lambda$.
We say that $B$ is a surjectively graded domain if $B$ is 
a $\Sigma(B)$-surjectively graded domain.

\begin{lemma}\label{principal-torus-equiv2.lem}
$\Spec B\rightarrow\Spec B_0$ is a principal $G$-bundle if and only if 
$B$ is $\Lambda_{\Bbb R}$-surjectively graded.
\end{lemma}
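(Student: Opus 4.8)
The plan is to deduce this from Lemma~\ref{principal-torus-equiv.lem} by taking $Y=\Spec B_0$ and $\Cal A=\tilde B$, the quasi-coherent $\O_Y$-algebra associated with $B$. Since $B$ is $\Lambda$-graded, $\Cal A$ is a graded quasi-coherent $\O_Y$-algebra with $\Cal A_\lambda$ the sheaf associated with $B_\lambda$, and the canonical map $\varphi:\Spec B\to\Spec B_0$ is the structure morphism $\uSpec_Y\Cal A\to Y$ — which is affine and $G$-invariant, so Lemma~\ref{principal-torus-equiv.lem} applies with $\Lambda=\Bbb Z^s$ torsion-free.

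First I would observe that the condition ``$\Cal A_\lambda\otimes_{\O_Y}\Cal A_\mu\to\Cal A_{\lambda+\mu}$ is surjective for all $\lambda,\mu\in\Lambda$'' (condition {\bf 2'} of Lemma~\ref{principal-torus-equiv.lem}, together with $\O_Y\xrightarrow\sim\Cal A_0$, which here is automatic since $\Cal A_0=\O_Y$ by construction) is equivalent to the ring-theoretic statement that $B_\lambda\otimes_{B_0}B_\mu\to B_{\lambda+\mu}$ is surjective for all $\lambda,\mu\in\Bbb Z^s$: surjectivity of a map of quasi-coherent sheaves on an affine scheme is checked on global sections, and sheafification commutes with tensor product. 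Then I would note that surjectivity of $B_\lambda\otimes_{B_0}B_\mu\to B_{\lambda+\mu}$ for all $\lambda,\mu$ is precisely the condition that $B$ be $\Lambda_{\Bbb R}$-surjectively graded, since $\Lambda_{\Bbb R}\cap\Lambda=\Lambda=\Bbb Z^s$ and by the remark following the definition $\Sigma$-surjective grading only depends on $\Sigma\cap\Lambda$. So the ``if'' direction: if $B$ is $\Lambda_{\Bbb R}$-surjectively graded, then condition {\bf 2'} holds, hence by Lemma~\ref{principal-torus-equiv.lem} (and its last sentence, since $\Lambda$ is torsion-free) $\varphi$ is a principal $G$-bundle. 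The ``only if'' direction: if $\varphi$ is a principal $G$-bundle, then by Lemma~\ref{principal-torus-equiv.lem}, {\bf 1$\Rightarrow$2}, each product $\Cal A_\lambda\otimes_{\O_Y}\Cal A_\mu\to\Cal A_{\lambda+\mu}$ is an isomorphism, in particular surjective on global sections, giving back $\Lambda_{\Bbb R}$-surjective grading.

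There is essentially no obstacle here; the statement is a direct translation of Lemma~\ref{principal-torus-equiv.lem} from the geometric language of graded $\O_Y$-algebras into the algebraic language of graded rings. The only point requiring a line of care is the passage between surjectivity of the sheaf maps $\Cal A_\lambda\otimes_{\O_Y}\Cal A_\mu\to\Cal A_{\lambda+\mu}$ and surjectivity of the ring maps $B_\lambda\otimes_{B_0}B_\mu\to B_{\lambda+\mu}$, which is routine since $Y=\Spec B_0$ is affine and the equivalence between quasi-coherent $\O_Y$-modules and $B_0$-modules is exact and monoidal. I would write the argument in two or three lines citing Lemma~\ref{principal-torus-equiv.lem} and leaving the affine translation to the reader.
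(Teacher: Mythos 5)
Your proposal is correct and is essentially the paper's own proof: the paper disposes of this lemma with the single line ``This is Lemma~\ref{principal-torus-equiv.lem},'' i.e.\ exactly the specialization $Y=\Spec B_0$, $\Cal A=\tilde B$ that you carry out. Your added care about translating surjectivity of $\Cal A_\lambda\otimes_{\O_Y}\Cal A_\mu\rightarrow\Cal A_{\lambda+\mu}$ into surjectivity of $B_\lambda\otimes B_\mu\rightarrow B_{\lambda+\mu}$ (and noting $\Lambda_{\Bbb R}\cap\Lambda=\Lambda$) is exactly the routine verification the paper leaves implicit.
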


\begin{proof}
This is Lemma~\ref{principal-torus-equiv.lem}.
\end{proof}

\begin{lemma}
Let $B$ be a $\Sigma$-surjectively graded ring, and $S$ a multiplicatively
closed subset of $B$ consisting of homogeneous elements.
Set $|S|=\{|s|\mid s\in S\}$ be the submonoid of $\Lambda$ of the
degrees of the elements of $S$.
Assume that $|S|\subset\Sigma$.
Then the localization $B_S$ is $\Sigma-|S|$-surjectively graded,
where 
\[
\Sigma-|S|=\{\mu-|s|\mid \mu\in\Sigma,\;s\in S\}.
\]
\end{lemma}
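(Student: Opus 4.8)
The plan is to unwind the definitions and reduce everything to a short bookkeeping computation with degrees. First I would recall that the localization $B_S$ of a $\Lambda$-graded ring at a multiplicatively closed set $S$ of homogeneous elements is again $\Lambda$-graded, with $\deg(b/s)=|b|-|s|$ for $b\in B$ homogeneous and $s\in S$; this degree is well-defined, since if $b/s=b'/s'$ with $b,b'$ homogeneous then, comparing homogeneous components of $s''(s'b-sb')=0$, either both fractions vanish or $|b|-|s|=|b'|-|s'|$. Since $B_S=\bigcup_{s\in S}B[1/s]$ is a directed union and $(B[1/s])_\delta=\{\,b/s^n:n\ge 0,\ b\in B_{\delta+n|s|}\,\}$, one gets the clean description
\[
(B_S)_\delta=\{\,c/t:t\in S,\ c\in B_{\delta+|t|}\,\}\qquad(\delta\in\Bbb Z^s),
\]
every element of $(B_S)_\delta$ being of this single-fraction form (for a sum of such, pass to a common denominator). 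Next I would check that $\Sigma-|S|$ really is a submonoid of $\Lambda_{\Bbb R}$ (so that the conclusion makes sense): it is visibly closed under addition, and it contains $0$ precisely because $|S|\subseteq\Sigma$ by hypothesis. Finally, for $\lambda\in(\Sigma-|S|)\cap\Lambda$, writing $\lambda=\mu-\nu$ with $\mu\in\Sigma$ and $\nu=|s|$, $s\in S$, one has $\mu=\lambda+\nu\in\Lambda$, so in fact $\lambda=\mu-|s|$ with $\mu\in\Sigma\cap\Lambda$ and $s\in S$; this is the normal form I will use for representatives.

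For the main step, fix $\lambda,\lambda'\in(\Sigma-|S|)\cap\Lambda$ and choose normal forms $\lambda=\mu-|s|$, $\lambda'=\mu'-|s'|$ with $\mu,\mu'\in\Sigma\cap\Lambda$ and $s,s'\in S$. By the description above, an arbitrary element of $(B_S)_{\lambda+\lambda'}$ has the form $c/t$ with $t\in S$ and $c\in B$ homogeneous of degree $\lambda+\lambda'+|t|$. Then $css'\in B$ is homogeneous of degree $(\lambda+|s|)+(\lambda'+|s'|)+|t|=\mu+\mu'+|t|=(\mu+|t|)+\mu'$, and both $\mu+|t|$ and $\mu'$ lie in $\Sigma\cap\Lambda$ (for $\mu+|t|$ one uses $|t|\in|S|\subseteq\Sigma$ together with the fact that $\Sigma$ is a submonoid). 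By the definition of $\Sigma$-surjective grading, the product $B_{\mu+|t|}\otimes_{\Bbb Z}B_{\mu'}\to B_{\mu+\mu'+|t|}$ is surjective, so we may write $css'=\sum_i d_i e_i$ with $d_i\in B_{\mu+|t|}$ and $e_i\in B_{\mu'}$. Dividing by $tss'$ gives
\[
\frac{c}{t}=\sum_i\frac{d_i}{ts}\cdot\frac{e_i}{s'},
\]
where $\deg(d_i/(ts))=(\mu+|t|)-|t|-|s|=\mu-|s|=\lambda$ and $\deg(e_i/s')=\mu'-|s'|=\lambda'$, while $ts,s'\in S$. Hence $c/t$ lies in the image of $(B_S)_\lambda\otimes_{\Bbb Z}(B_S)_{\lambda'}\to(B_S)_{\lambda+\lambda'}$, and since $c/t$ was an arbitrary element of $(B_S)_{\lambda+\lambda'}$, this product map is surjective. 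Therefore $B_S$ is $(\Sigma-|S|)$-surjectively graded.

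There is essentially no deep obstacle here; the only points requiring care are (i) verifying that $\Sigma-|S|$ is a submonoid and that every degree-$\delta$ element of $B_S$ admits a single-fraction representative $c/t$ with $c$ of degree $\delta+|t|$, and (ii) choosing the auxiliary degrees $\mu+|t|$ and $\mu'$ so that they land in $\Sigma\cap\Lambda$ and add up to $\deg(css')$, which is exactly what permits the invocation of $\Sigma$-surjectivity of $B$. The remainder is the routine "clear denominators, split, put the denominators back" manipulation.
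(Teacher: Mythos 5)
Your proof is correct and follows essentially the same route as the paper: clear the denominator by multiplying with suitable elements of $S$, apply $\Sigma$-surjectivity at a shifted pair of degrees in $\Sigma\cap\Lambda$, and divide the denominators back in. The only difference is bookkeeping (you absorb the degree $|t|$ of the denominator into the first factor, the paper absorbs it into the second), plus your explicit but routine preliminary checks on the grading of $B_S$, which the paper leaves implicit.
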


\begin{proof}
Take $\nu_i=\mu_i-|s_i|\in(\Sigma-|S|)\cap \Lambda$ for $i=1,2$,
where $\mu_i\in\Sigma\cap\Lambda$ and $s_i\in S$.
Take $c=bs^{-1}\in (B_S)_{\nu_1+\nu_2}$, where $s\in S$ and $b\in B_{\nu_1+\nu_2
+|s|}$.
Then $bs_1s_2\in B_{\mu_1+\mu_2+|s|}$.
As $B_{\mu_1}\otimes_{\Bbb Z}B_{\mu_2+|s|}\rightarrow B_{\mu_1+\mu_2+|s|}$ is
surjective,
we can write $bs_1s_2=\sum_i u_iv_i$ with $u_i\in B_{\mu_1}$ and
$v_i\in B_{\mu_2+|s|}$.
Then $c=\sum_i (u_is_1^{-1})(v_is^{-1}s_2^{-1})$, 
and $u_is_1^{-1}\in (B_S)_{\nu_1}$ and $v_is^{-1}s_2^{-1}\in (B_S)_{\nu_2}$.
So $(B_S)_{\nu_1}\otimes_{\Bbb Z}(B_S)_{\nu_2}\rightarrow (B_S)_{\nu_1+\nu_2}$ 
is surjective, and $B_S$ is $\Sigma-|S|$-surjectively graded.
\end{proof}

\paragraph
Let $\Sigma$ be a rational convex polyhedral cone in $\Lambda_{\Bbb R}$
with $\Sigma-\Sigma=\Lambda_{\Bbb R}$.
Let $\lambda\in \Sigma^\circ\cap \Lambda$, where $\Sigma^\circ$ is the
interior of $\Sigma$.
Then we have $\Sigma-\Bbb Z_{\geq 0}\lambda=\Lambda_{\Bbb R}$.

Let $B$ be a $\Sigma$-surjectively graded ring.
Let $J(\lambda)$ be the ideal of $B$ generated by $B_\lambda$.
Set $X=\Spec B$, and $U=X\setminus V(J(\lambda))$.

\begin{lemma}[cf.~{\cite[(3.8)]{Hashimoto9}}]\label{surjective-almost.lem}
There is a principal $G$-bundle $\rho:U\rightarrow Y$.
If $\height J(\lambda)\geq 2$, then
\begin{equation}\label{surjective-almost.eq}
\xymatrix{
X & U \ar@{_{(}->}[l]_i \ar[r]^{\rho} & Y \ar@{^{(}->}[r]^{\id_Y} & Y
}
\end{equation}
is a rational almost principal $G$-bundle, where $i:U\rightarrow X$ is
the inclusion.
We have $Y=\Proj \bigoplus_{n\geq 0}B_{n\lambda}t^n$.
$U$ is independent of the choice of $\lambda\in\Sigma^\circ\cap \Lambda$,
and hence $Y$ is also independent of $\lambda$.
\end{lemma}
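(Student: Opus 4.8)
The plan is to deduce everything from the localization lemma preceding the statement together with Lemma~\ref{principal-torus-equiv2.lem}. First I would put $R=\bigoplus_{n\ge 0}B_{n\lambda}t^n$ and set $Y:=\Proj R$. Since $J(\lambda)$ is generated by the homogeneous elements of $B_\lambda$, we have $U=\bigcup_{0\ne a\in B_\lambda}D(a)$, where $D(a)=\Spec B[a^{-1}]$ is a $G$-stable affine open of $X$. For a nonzero $a\in B_\lambda$, the multiplicative set $\{1,a,a^2,\dots\}$ has degrees lying in $\Bbb Z_{\ge 0}\lambda\subset\Sigma$ (as $\lambda\in\Sigma^\circ$ and $\Sigma$ is a cone), so by the localization lemma $B[a^{-1}]$ is $(\Sigma-\Bbb Z_{\ge 0}\lambda)$-surjectively graded; since by hypothesis $\Sigma-\Bbb Z_{\ge 0}\lambda=\Lambda_{\Bbb R}$, this says $B[a^{-1}]$ is $\Lambda_{\Bbb R}$-surjectively graded, whence $D(a)\to\Spec(B[a^{-1}])_0$ is a principal $G$-bundle by Lemma~\ref{principal-torus-equiv2.lem}.

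Next I would identify $(B[a^{-1}])_0$ with $R[(at)^{-1}]_0$ (a degree-zero element of $B[a^{-1}]$ has the form $b/a^n$ with $b\in B_{n\lambda}$), so $\Spec(B[a^{-1}])_0=D_+(at)\subset Y$. Because $\lambda\in\Sigma^\circ$ forces $n\lambda\in\Sigma$ for all $n\ge 0$, $\Sigma$-surjective gradedness gives $B_\lambda\cdot B_{(n-1)\lambda}=B_{n\lambda}$, i.e.\ $R_1\cdot R_{n-1}=R_n$; thus $R$ is generated in degree one over $R_0$ and $Y=\bigcup_{0\ne a\in B_\lambda}D_+(at)$. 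On an overlap $D(a)\cap D(b)=D(ab)$ one has $(B[(ab)^{-1}])_0=(B[a^{-1}])_0[(b/a)^{-1}]$, so the quotient maps $D(a)\to D_+(at)$ are mutually compatible and glue to a $G$-invariant morphism $\rho\colon U\to Y$. Being a principal $G$-bundle is local on the base (by \cite[(2.8)]{Hashimoto5} it amounts to being qfpqc with $\Psi_{G,\rho}$ an isomorphism, and both conditions may be checked over an open cover of $Y$), so $\rho$ is a principal $G$-bundle, $G$ acts trivially on $Y$, and the description $Y=\Proj R$ is established.

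For the second assertion, when $\height J(\lambda)\ge 2$ we have $\codim_X(X\setminus U)=\codim_X V(J(\lambda))=\height J(\lambda)\ge 2$, so $i(U)$ is large in $X$; also $j(V)=Y$ is trivially large in $Y$. Combined with the previous paragraph, all six conditions of Definition~\ref{rational-almost-pb.def} hold, so (\ref{surjective-almost.eq}) is a rational almost principal $G$-bundle.

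Finally, to see that $U$ (hence $Y$) is independent of the choice of $\lambda\in\Sigma^\circ\cap\Lambda$, write $U_\lambda$ for the open set $U$ attached to $\lambda$, fix another $\mu\in\Sigma^\circ\cap\Lambda$, and take a nonzero $a\in B_\lambda$; I would show $J(\mu)B[a^{-1}]=B[a^{-1}]$. Since $B[a^{-1}]$ is $\Lambda_{\Bbb R}$-surjectively graded, we may write $1=\sum_i u_iv_i$ with $u_i$ of degree $\mu$ and $v_i$ of degree $-\mu$; writing $u_i=c_i/a^n$ with $c_i\in B_{\mu+n\lambda}$ and using $B_\mu\cdot B_{n\lambda}=B_{\mu+n\lambda}$ (valid since $\mu,n\lambda\in\Sigma$), each $c_i$, hence each $u_i$, lies in $J(\mu)B[a^{-1}]$, so $1\in J(\mu)B[a^{-1}]$. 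Thus $D(a)\subset U_\mu$, so $U_\lambda\subset U_\mu$, and by symmetry $U_\lambda=U_\mu$; as $Y$ is the unique categorical (indeed geometric) quotient of $U$ by $G$, it is likewise independent of $\lambda$. The points requiring genuine care are the verification that $\{D_+(at)\}$ covers $\Proj R$ and the coherent gluing of the local principal bundles, together with the last step, where both surjectivity hypotheses must be combined; everything else is routine bookkeeping.
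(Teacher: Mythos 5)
Your proof is correct, and it rests on the same two pillars as the paper's argument: the localization lemma (inverting a homogeneous element whose degree lies in $\Bbb Z_{\geq 0}\lambda\subset\Sigma$ makes $B$ $\Lambda_{\Bbb R}$-surjectively graded, because $\Sigma-\Bbb Z_{\geq 0}\lambda=\Lambda_{\Bbb R}$) and Lemma~\ref{principal-torus-equiv2.lem}. The packaging differs. The paper obtains $\rho$ at one stroke from the section ring $C=\bigoplus_{n\geq 0}B(n\lambda)t^n$: it identifies $X$ with $\Proj C$, sets $D=C^G=\bigoplus_{n\geq 0}B_{n\lambda}t^n$, and takes $\rho$ to be the composite $U\cong \Proj C\setminus V_+(D_+C)\rightarrow \Proj D$ induced by $D\hookrightarrow C$; you instead build $\rho$ by hand, gluing the local quotients $D(a)\rightarrow D_+(at)=\Spec (B[a^{-1}])_0$ over the standard cover of $Y=\Proj R$, and your observation $R_1\cdot R_{n-1}=R_n$ (from $\Sigma$-surjective gradedness) is exactly what guarantees that these charts cover $Y$. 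For the locality of the bundle property, it is slightly cleaner to quote condition {\bf 2} of Lemma~\ref{principal-torus-equiv.lem}, which is manifestly Zariski-local on the base for the affine faithfully flat morphism $\rho$, than to unwind the qfpqc condition of \cite[(2.8)]{Hashimoto5}; but since your $\rho$ is affine and faithfully flat this is only a presentational point, not a gap. For the independence of $U$ from $\lambda$, the paper compares radicals, showing $\sqrt{J(\mu)}=\sqrt{J(\lambda)}$ via $J(\lambda)^n=J(n\lambda)$ and $n\lambda-\mu\in\Sigma$ for $n\gg 0$, whereas you show that $J(\mu)$ generates the unit ideal on each chart $D(a)$, $0\neq a\in B_\lambda$; the two computations are equivalent in substance (yours avoids choosing $n$ large, at the cost of working in the localization), and both complete the proof, with the final step about $Y$ (uniqueness of the categorical quotient) identical to the paper's.
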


\begin{proof}
For $\mu\in \Lambda$, let $B(\mu)$ be the rank-one $B$-free 
$(G,B)$-module given by $B(\mu)_\nu=B_{\mu+\nu}$.
The corresponding $G$-linearized invertible sheaf on $X$ is denoted by
$\O(\mu)$.
Let $C$ be the section ring
\[
C=\Gamma_{\geq 0}(X;\O(\lambda))=
\bigoplus_{n\geq 0}\Gamma(X,\O(n\lambda))t^n=\bigoplus_{n\geq 0}B(n\lambda)t^n.
\]
Let $D$ be the ring of invariants $C^G$.
That is, $D=\bigoplus_{n\geq 0}B_{n\lambda}t^n$.
Then we have a sequence of morphisms
\[
U\xrightarrow{\iota} \Proj C\setminus V_+(D_+C)\xrightarrow{\psi} \Proj D,
\]
where $D_+C$ is the ideal of $C$ generated by $D_+=\bigoplus_{n>0}B_{n
\lambda}t^n$.
It is easy to see that $\iota$ is an isomorphism (recall that 
$X=\Proj C$).
On the other hand, it is easy to see that the map $\psi$ induced by
the graded homomorphism of graded rings $D\rightarrow C$ is an algebraic
quotient by $G$.
For $a\in B_{n\lambda}\setminus 0$ for $n\geq 1$, 
$B[a^{-1}]$ is $\Lambda_{\Bbb R}$-surjectively graded.
By Lemma~\ref{principal-torus-equiv2.lem}, $\psi$ is a principal $G$-bundle.
So letting $\rho=\psi\iota$, we are done.

As $B_\lambda^{\otimes n}\rightarrow B_{n\lambda}$ is surjective, 
$J(\lambda)^n=J(n\lambda)$.
If $\mu$ is another element of $\Sigma^\circ\cap \Lambda$, then
$n\lambda-\mu\in\Sigma$ for sufficiently large $n$.
So $B_\mu\otimes_{\Bbb Z}B_{n\lambda-\mu}\rightarrow B_{n\lambda}$ is surjective,
and $J(\mu)\supset J(\lambda)^n$.
Hence $\sqrt{J(\mu)}\supset \sqrt{J(\lambda)}$.
Similarly, $\sqrt{J(\mu)}\subset \sqrt{J(\lambda)}$ is also true, and 
the definition of $U$ is independent of $\lambda$.
As the principal bundle is a categorical quotient and hence is 
unique, $Y$ is also independent of the
choice of $\lambda$.
\end{proof}

\paragraph
Let the assumption be as in Lemma~\ref{surjective-almost.lem}.
Let $\Cal L(\mu)$ be the invertible sheaf on $Y$ corresponding to 
$\O(\mu)$.
Namely, $\Cal L(\mu)=\rho_*(\O_U(\mu))^G$.
Then we have $\rho^*(\Cal L(\mu))\cong \O_U(\mu)$.
Note that $\rho$ is affine, and $\rho_*\O_U$ is a graded $\O_Y$-algebra:
$\rho_*\O_U=\bigoplus_{\mu\in\Lambda}\Cal A_\mu t^\mu$.
As $\Cal L(\mu)=\rho_*\O_U(\mu)^G=(\bigoplus_\nu \Cal A_{\mu+\nu}t^\nu)^G
=\Cal A_\mu$, we have that $U=\uSpec_Y \bigoplus_\mu \Cal L(\mu)t^\mu$.
Hence

\begin{lemma}[cf.~{\cite[(4.4)]{Hashimoto9}}]
  \label{surjective-section-ring.lem}
If $B$ is a Krull domain, then $U$ and $Y$ are locally Krull and integral.
If $B$ is Noetherian and $(S_2)$, then $U$ and $Y$ are 
Noetherian and $(S_2)$.
In both cases, 
$B$ is isomorphic to the multisection ring 
$R(Y;\Cal L_1,\ldots, \Cal L_s)=\bigoplus_{\mu\in \Lambda}\Gamma(Y,\Cal L
(\mu))$.
\end{lemma}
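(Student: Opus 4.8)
The plan is to handle the three assertions in turn, using throughout that $\rho:U\rightarrow Y$ is a principal $\Bbb G_m^s$-bundle and that $\Lambda=\Bbb Z^s$ is torsion-free, so by Lemma~\ref{principal-torus-equiv.lem} the bundle $\rho$ is Zariski-locally trivial, together with the codimension-two machinery for locally Krull and $(S'_2)$ schemes already developed in the excerpt. We may assume $\height J(\lambda)\geq 2$, which is the case of interest and is needed for the last assertion; then $U=X\setminus V(J(\lambda))$ is a nonempty open subset of $X=\Spec B$ which is large in $X$. The assertions about $U$ are immediate: $U$ is an open subscheme of the affine scheme $X$, so if $B$ is a Krull domain then $X$, hence $U$, is integral and locally Krull, and if $B$ is Noetherian and $(S_2)$ then $U$ is Noetherian (open in a Noetherian scheme) and $(S_2)$ (a condition local on the scheme).

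For $Y$ I would use the Zariski-local triviality of $\rho$: since $\Lambda$ is torsion-free, Lemma~\ref{principal-torus-equiv.lem} yields an affine open covering $Y=\bigcup_i W_i$ with $W_i=\Spec R_i$ connected and $\rho^{-1}(W_i)\cong\Spec R_i[t_1^{\pm 1},\ldots,t_s^{\pm 1}]$. Each $\rho^{-1}(W_i)$ is an affine open subscheme of $U$, hence locally Krull (resp.\ Noetherian, resp.\ $(S_2)$) whenever $U$ is. In the Krull case $R_i$ is a domain, so $R_i[t_1^{\pm 1},\ldots,t_s^{\pm 1}]$ is a Krull domain; then $R_i$ is a Krull domain, being the subintersection $R_i[t_1^{\pm 1},\ldots,t_s^{\pm 1}]\cap Q(R_i)$ (equivalently, by faithfully flat descent along $R_i\hookrightarrow R_i[t_1^{\pm 1},\ldots,t_s^{\pm 1}]$, cf.~\cite[(5.8)]{Hashimoto4}). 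Hence $Y$ is locally Krull, and being covered by spectra of domains and connected, as the continuous surjective image of the connected $U$, it is integral. In the Noetherian $(S_2)$ case the same local model shows $Y$ is Noetherian, while $(S_2)$ descends along the faithfully flat $\rho$ from $\rho^*\O_Y=\O_U$ by Lemma~\ref{S_n-ascent-descent.lem}, {\bf 1}. (One may instead deduce that $Y$ is locally Krull directly from Theorem~\ref{alg-quot-krull-equiv.thm}, applied to the affine, hence quasi-compact quasi-separated, principal $G$-bundle $\rho$, since $\bar\eta:\O_Y\rightarrow(\rho_*\O_U)^G$ is an isomorphism by the construction of $\rho$ in Lemma~\ref{surjective-almost.lem}.)

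For the identification $B\cong R(Y;\Cal L_1,\ldots,\Cal L_s)$, I would use that $U$ is large in $X$: since $X$ is locally Krull (resp.\ $\O_X$ satisfies $(S'_2)$), the restriction $\O_X\rightarrow i_*\O_U$ is an isomorphism by \cite[(5.28)]{Hashimoto4} (resp.\ by Lemma~\ref{S_2.lem}), and, being a $G$-morphism on the $G$-stable open $U$, it respects the $\Lambda$-gradings. Hence $B=\Gamma(X,\O_X)\cong\Gamma(U,\O_U)=\Gamma(Y,\rho_*\O_U)$ as $\Lambda$-graded rings. Decomposing the $(G,\O_Y)$-algebra $\rho_*\O_U=\bigoplus_{\mu\in\Lambda}\Cal A_\mu t^\mu$, so that $\Gamma(Y,\rho_*\O_U)=\bigoplus_{\mu\in\Lambda}\Gamma(Y,\Cal A_\mu)t^\mu$ as $\Lambda$-graded rings, and using the identity $\Cal A_\mu=\Cal L(\mu)$ recorded just before the lemma, one obtains $B\cong\bigoplus_{\mu\in\Lambda}\Gamma(Y,\Cal L(\mu))t^\mu=R(Y;\Cal L_1,\ldots,\Cal L_s)$ as $\Lambda$-graded rings, and so in particular as rings.

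The step I expect to be the main obstacle is the passage from $U$ to $Y$ in the second part: one must make sure that the Zariski-local triviality of $\rho$ (or faithfully flat descent) genuinely transfers being locally Krull, $(S_2)$, and Noetherian across the Laurent polynomial extension $R_i\hookrightarrow R_i[t_1^{\pm 1},\ldots,t_s^{\pm 1}]$. In the last part one must check that the $\Lambda$-grading and the ring structure survive all the identifications $B\cong\Gamma(U,\O_U)\cong\Gamma(Y,\rho_*\O_U)\cong\bigoplus_\mu\Gamma(Y,\Cal L(\mu))$; verifying $\Cal A_\mu=\Cal L(\mu)$ and that $\Gamma(Y,-)$ is compatible with the $\Lambda$-decomposition of $\rho_*\O_U$ is where this gets pinned down. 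Both points are routine but warrant care.
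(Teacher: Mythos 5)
Your proposal is correct, and its overall skeleton (pass from $X$ to the open large subset $U$, descend along $\rho$ to $Y$, then use largeness of $U$ and $\O_X\xrightarrow{\;\sim\;}i_*\O_U$ to identify $B$ with $\bigoplus_\mu\Gamma(Y,\Cal L(\mu))$) is the paper's. The one place where you genuinely diverge is the descent step for $Y$: the paper handles the Krull case in one stroke by citing Theorem~\ref{alg-quot-krull-equiv.thm} (applicable because $\rho$ is an affine principal $G$-bundle constructed as an algebraic quotient, so $\bar\eta:\O_Y\rightarrow(\rho_*\O_U)^G$ is an isomorphism by Lemma~\ref{surjective-almost.lem}), and for the $(S_2)$ case it merely says the property "descends to $Y$"; you instead exploit that $\Lambda=\Bbb Z^s$ is torsion-free, so by the last clause of Lemma~\ref{principal-torus-equiv.lem} the bundle is Zariski-locally trivial, and then descend Krull, Noetherian, and $(S_2)$ across $R_i\hookrightarrow R_i[t_1^{\pm1},\ldots,t_s^{\pm1}]$ (resp.\ via Lemma~\ref{S_n-ascent-descent.lem}, {\bf 1}), noting the paper's route only in parentheses. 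Your version is more hands-on and makes explicit the descent mechanism that the paper leaves implicit in the $(S_2)$ case, at the cost of using torsion-freeness of the grading group (harmless here); the paper's version is shorter and leans on the general machinery already proved for almost principal bundles. The final identification $B\cong R(Y;\Cal L_1,\ldots,\Cal L_s)$ is argued the same way in both, via \cite[(5.28)]{Hashimoto4} in the Krull case and Lemma~\ref{S_2.lem} in the $(S_2)$ case; your componentwise matching $B_\mu\cong\Gamma(Y,\Cal A_\mu)=\Gamma(Y,\Cal L(\mu))$ is the right way to avoid any worry about $\Gamma(Y,-)$ versus infinite direct sums.
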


\begin{proof}
Assume that $B$ is a Krull domain.
Being locally Krull and integral is inherited by a nonempty open subset,
and $U$ is locally Krull and integral.
Then by Theorem~\ref{alg-quot-krull-equiv.thm}, $Y$ is also locally 
Krull, and clearly integral.
As $U$ is large, $\O_X\rightarrow i_*\O_U$ is an isomorphism by 
\cite[(5.28)]{Hashimoto4}, and $B$ is isomorphic to 
$R(Y;\Cal L_1,\ldots,\Cal L_s)$.

Next, assume that  $B$ is Noetherian and $(S_2)$.
This property is inherited by the open subset $U$, and then descends to $Y$.
Again, as $U$ is large, $\O_X\rightarrow i_*\O_U$ is an isomorphism by 
Lemma~\ref{S_2.lem}, and we have
$B\cong R(Y;\Cal L_1,\ldots,\Cal L_s)$.
\end{proof}

\begin{proposition}\label{degree-one.prop}
Let $k$ be a field, and let 
$B=\bigoplus_{n\geq 0}B_n$ be a standard graded algebra, that is, 
$B=k[B_1]$ with $\dim_k B_1<\infty$.
Assume moreover that $\dim B\geq 2$.
Then we have
\begin{enumerate}
\item[\bf 1] 
Letting $U=X\setminus 0$ and $Y=\Proj B$, 
{\rm(\ref{surjective-almost.eq})} is a rational almost principal 
$\Bbb G_m$-bundle, where $0$ is the origin of $X$.
\item[\bf 2] $\omega_Y\cong \tilde{\omega}_B$ 
and $\omega_B\cong \bigoplus_{n\in\Bbb Z}\Gamma(Y,\omega_Y(n))t^n$, 
where $\tilde{(?)}$ denotes the
sheaf on $Y$ associated with a graded module.
\item[\bf 3] Let $d>1$. Let $B_{d\Bbb Z}=\bigoplus_{n\geq 0}B_{nd}$ be the
Veronese subring.
Then $(\omega_B)_{d\Bbb Z}\cong \omega_{B_{d\Bbb Z}}$ as $\Bbb Z$-graded
modules.
If, moreover, $B$ has a graded full $2$-canonical module $M$, then 
$\omega_B\cong (B\otimes_{B_{d\Bbb Z}}\omega_{B_{d\Bbb Z}})^{\vee\vee}$,
where $(?)^\vee=\Hom_B(?,M)$.
\item[\bf 4] \(cf.~Goto--Watanabe {\rm\cite[(3.2.1)]{GW}}\) For $r\in\Bbb Z$, 
the following are equivalent.
\begin{enumerate}
\item[\bf a] $B$ is quasi-Gorenstein of $a$-invariant $rd$ \(that is,
$\omega_B\cong B(rd)$\).
\item[\bf b] $\depth B_\fm\geq 2$, and
$B_{d\Bbb Z}$ is quasi-Gorenstein of 
$a$-invariant $rd$ \(that is, $\omega_{B_{d\Bbb Z}}\cong B_{d\Bbb Z}(rd)$\),
\end{enumerate}
where $\fm$ is the irrelevant ideal $B_+$ of $B$.
In particular, $B$ is qusi-Gorenstein and its $a$-invariant is divisible
by $d$ if and only if $\depth B_\fm\geq 2$ and 
the Veronese subring $B_{d\Bbb Z}$ is quasi-Gorenstein.
\item[\bf 5] Assume that $B$ is normal.
Then 
$0\rightarrow \Bbb Z\xrightarrow{\beta}\Cl(Y)\xrightarrow{\gamma}
\Cl(X)\rightarrow 0$ is exact, where $\beta(1)=\O(1)$, and 
$\gamma(\M)=\bigoplus_{n\in \Bbb Z}\Gamma(Y,\M(n))$.
\item[\bf 6] Let $d>1$, and set $X'=\Spec B_{d\Bbb Z}$.
If $B$ is normal, then 
\[
0\rightarrow \Bbb Z/d\Bbb Z\xrightarrow{\bar\beta}\Cl(X')\xrightarrow
{\bar \gamma}\Cl(X)\rightarrow 0
\]
is exact.
\end{enumerate}
\end{proposition}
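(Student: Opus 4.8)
The plan is to realize $B$ as the total coordinate object of a rational almost principal $\Bbb G_m$-bundle and to let each part follow from the comparison theorems of Chapter~1; most parts then become near-verbatim citations.

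\textbf{Parts 1 and 2.} Since $B=k[B_1]$, we have $B_m\cdot B_n=B_{m+n}$ for all $m,n\geq 0$, so $B$ is $\Sigma$-surjectively graded for $\Sigma=\Bbb R_{\geq 0}$ (notation of Section~\ref{surjective.sec}, with $\Lambda=\Bbb Z$). Taking $\lambda=1\in\Sigma^\circ\cap\Lambda$, one has $J(\lambda)=B_+=\fm$, $\height J(\lambda)=\dim B\geq 2$, and $X\setminus V(J(\lambda))=X\setminus 0=U$; so Lemma~\ref{surjective-almost.lem} gives Part~\textbf{1}, with $Y=\Proj\bigoplus_{n\geq 0}B_nt^n=\Proj B$. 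For Part~\textbf{2} I would apply Theorem~\ref{canonical.thm} to this bundle with $G=N=\Bbb G_m$, $H=e$ and $Y_0=\Spec k$ carrying the dualizing complex $k$; since $\Bbb G_m$ is a split torus, $\Theta_{\Bbb G_m,k}$ is trivial by Remark~\ref{knop-remark.thm}, so $\omega_X\cong i_*\rho^*\omega_Y$ and $\omega_Y\cong(\rho_*i^*\omega_X)^{\Bbb G_m}$. Translating through the dictionary between $\Bbb G_m$-equivariant coherent sheaves and graded modules (under which $\omega_X=\widetilde{\omega_B}$ on $\Spec B$, the sheaf $i_*\rho^*\M$ on $X$ has graded module $\bigoplus_n\Gamma(Y,\M(n))t^n$ because $(\rho_*\O_U)_n=\O_Y(n)$, and $(\rho_*i^*\widetilde{\omega_B})^{\Bbb G_m}=\widetilde{\omega_B}$ on $\Proj B$) yields $\omega_Y\cong\widetilde{\omega_B}$ and $\omega_B\cong\bigoplus_n\Gamma(Y,\omega_Y(n))t^n$.

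\textbf{Parts 3 and 4.} Let $f\colon\Bbb G_m\to\Bbb G_m$ be the $d$-th power map, which is fppf finite with $N=\Ker f=\mu_d=\Spec k(\Bbb Z/d\Bbb Z)$, and let $\theta\colon X=\Spec B\to X'=\Spec B_{d\Bbb Z}$ be the Veronese map, a $\Bbb G_m$-morphism which is an algebraic quotient by $\mu_d$ since $B^{\mu_d}=B_{d\Bbb Z}$. The $\Bbb G_m$-action is free on $U=X\setminus 0$ (on each chart $\Spec B[x^{-1}]\cong\Spec B[x^{-1}]_0\times\Bbb G_m$, $x\in B_1$) and fixes $0$, so $U$ is exactly the free locus of $\mu_d$ and is large; hence $\theta$ is a $\Bbb G_m$-enriched almost principal $\mu_d$-bundle by Lemma~\ref{small-almost-principal.thm}. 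As $\mu_d$ is diagonalizable it is Reynolds (Example~\ref{diagonalizable-Reynolds.ex}), so $\Theta_{\mu_d}$ is trivial (Remark~\ref{knop-remark.thm}). Theorem~\ref{finite-resume.thm}, \textbf{3} and \textbf{4}, then gives $\omega_{B_{d\Bbb Z}}\cong(\theta_*\omega_X)^{\mu_d}=(\omega_B)_{d\Bbb Z}$ and, when $B$ has a graded full $2$-canonical module $M$, $\omega_B\cong(\theta^*\omega_{X'})^{\vee\vee}=(B\otimes_{B_{d\Bbb Z}}\omega_{B_{d\Bbb Z}})^{\vee\vee}$ with $(?)^\vee=\Hom_B(?,M)$; this is Part~\textbf{3}. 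For Part~\textbf{4} I would feed into Theorem~\ref{finite-resume.thm}, \textbf{5}, the $\Bbb G_m$-linearized invertible sheaf $\L$ on $X'$ corresponding to $B_{d\Bbb Z}(rd)$, so that $\theta^*\L$ corresponds to $B(rd)$: this yields $\omega_B\cong B(rd)$ iff $\omega_{B_{d\Bbb Z}}\cong B_{d\Bbb Z}(rd)$ and $\Spec B$ satisfies $(S_2)$. To reach the stated form of \textbf{4} one observes that $\Spec B$ satisfies $(S_2)$ iff $\depth B_\fm\geq 2$ and $\Proj B$ satisfies $(S_2)$ (localize, separating $\fm$ from the other primes, using $\dim B\geq 2$), and that $\Proj B=\Proj B_{d\Bbb Z}$ is $(S_2)$ whenever $B_{d\Bbb Z}$ is quasi-Gorenstein (quasi-Gorenstein implies $(S_2)$ by Lemma~\ref{q-Gor-q-nor.lem}, and this passes to the affine charts $\Spec B_{d\Bbb Z}[a^{-1}]_0$ of $\Proj B_{d\Bbb Z}$ by Lemma~\ref{n-canonical-flat.lem}); the direction (a)$\Rightarrow$(b) is immediate since $\omega_B\cong B(rd)$ invertible forces $B$ to be $(S_2')$ by Lemma~\ref{omega-S_2.lem}, hence $\depth B_\fm\geq 2$, and $(\omega_B)_{d\Bbb Z}=B_{d\Bbb Z}(rd)$. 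The final sentence of \textbf{4} is the union over $r\in\Bbb Z$, using that quasi-Gorensteinness of $B_{d\Bbb Z}$ already forces its $a$-invariant in the $B$-grading to be divisible by $d$.

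\textbf{Parts 5 and 6.} When $B$ is a normal domain, $Y=\Proj B$ is integral, Noetherian normal (hence locally Krull), with the ample Cartier divisor $D_1$ satisfying $\O_Y(D_1)=\O_Y(1)$; so by Lemma~\ref{surjective-section-ring.lem} $B\cong R(Y;D_1)$ is a multisection ring and Theorem~\ref{kurano-main.thm} applies with $s=1$ (with $V=Y$). Part~\textbf{5} is then Proposition~\ref{multi-class.prop}: $\Cal X(\Bbb G_m)=\Bbb Z$ and $\Cal X(\Bbb G_m,X)=\{\lambda:B^\times\cap B_\lambda\neq\emptyset\}=\{0\}$ because $B$ is a graded domain with $B_0=k$ a field, so $B^\times=k^\times$; the identifications $\beta(1)=\O_Y(D_1)=\O(1)$ and $\gamma(\M)=\bigoplus_n\Gamma(Y,\M(n))$ are read off from Corollary~\ref{kurano-correspondence.cor}. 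Part~\textbf{6} is Lemma~\ref{veronese-class-group.lem} applied with $\Gamma=d\Bbb Z$, $N=\mu_d$: here $\Cal X(N)=\Bbb Z/d\Bbb Z$ and $\Cal X(N,X)=\{\bar 0\}$ again because $B^\times=k^\times\subseteq B_0$, giving $0\to\Bbb Z/d\Bbb Z\to\Cl(X')\to\Cl(X)\to 0$.

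The only step that is not a direct citation is the bookkeeping in Part~\textbf{4}: rewriting ``$\Spec B$ is $(S_2)$'' (the hypothesis supplied by Theorem~\ref{finite-resume.thm}, \textbf{5}) as ``$\depth B_\fm\geq 2$'' under the hypothesis that $B_{d\Bbb Z}$ is quasi-Gorenstein. This forces one to relate $\Spec B$, $\Spec B_{d\Bbb Z}$, their punctured spectra, the common $\Proj B=\Proj B_{d\Bbb Z}$, and the $\Bbb G_m$- and $\mu_d$-torsor structures, and to invoke ascent/descent of $(S_2)$ along the Veronese quotient (Lemma~\ref{S_n-ascent-descent.lem}); keeping track of the grading twists in Parts 2--4 is the remaining (minor) technical point. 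Everything else is formal from Section~\ref{surjective.sec}, Theorem~\ref{canonical.thm}, Theorem~\ref{finite-resume.thm}, Theorem~\ref{kurano-main.thm}, Proposition~\ref{multi-class.prop}, and Lemma~\ref{veronese-class-group.lem}.
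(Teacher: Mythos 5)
Your proposal is correct and follows essentially the same route as the paper's proof: part 1 via Lemma~\ref{surjective-almost.lem}, part 2 via Theorem~\ref{canonical.thm} with trivial $\Theta$, parts 3--4 by viewing the Veronese map $\theta:\Spec B\rightarrow\Spec B_{d\Bbb Z}$ as a $\Bbb G_m$-enriched almost principal $\mu_d$-bundle and invoking the Watanabe-type statements (Theorem~\ref{finite-resume.thm}, resp.\ Corollary~\ref{canonical-cor.thm}), and parts 5--6 via Lemma~\ref{surjective-section-ring.lem} together with Proposition~\ref{multi-class.prop} and Lemma~\ref{veronese-class-group.lem}. The only (harmless) deviation is in 4 (b$\Rightarrow$a), where you obtain the $(S_2)$ property of $\Spec B$ by passing through $\Proj B=\Proj B_{d\Bbb Z}$ and ascending along the $\Bbb G_m$-bundle $\rho$, while the paper ascends $(S_2)$ directly from the punctured spectrum of $B_{d\Bbb Z}$ along the principal $\mu_d$-bundle $\upsilon:U\rightarrow U'$ (Lemma~\ref{Veronese-main.lem}); both are the same flat ascent/descent argument (Lemma~\ref{S_n-ascent-descent.lem}).
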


\begin{proof}
Let $s=1$ and $\Lambda=\Bbb Z$, and $\lambda=1$.
Set $S=Y_0=\Spec k$, $G:=\Spec k\Lambda$,
and $Y_0:=\Spec k$.
Then $J(1)$ is the irrelevant ideal $B_+$ by assumption, and
$\height J(1)\geq 2$, since $\dim B\geq 2$.
Thus {\bf 1} follows from Lemma~\ref{surjective-almost.lem}.

Note that $\Theta_{G,Y_0}$ is $G$-trivial, since $G$ is an abelian group, 
see Remark~\ref{knop-remark.thm}.
So $\omega_Y\cong(\rho_*i^*\omega_X)^G$ by Theorem~\ref{canonical.thm}.
The right-hand side agrees with $\tilde \omega_B$ by definition.
On the other hand, by Theorem~\ref{canonical.thm}, $\omega_X\cong 
i_*\rho^*\omega_Y$, and hence
$\omega_B\cong \bigoplus_{n\in\Bbb Z}\Gamma(Y,\omega_Y(n))t^n$.
So {\bf 2} has been proved.

Set $\Lambda_d=d\Lambda=\Bbb Z d$, and $H:=\Spec \Bbb Z\Lambda_d$.
Let $f:G\rightarrow H$ be the canonical homomorphism induced by 
$\Lambda_d\hookrightarrow \Lambda$, and 
$N:=\Ker f=\Spec \Bbb Z(\Lambda/\Lambda_d)=\mu_d$.
As $G$ acts freely on $U$, $N$ acts on $U$ freely.
So the canonical map $\theta:X\rightarrow X'$ corresponding to $B_{d\Bbb Z}=
B^N \rightarrow B$ is a $G$-enriched almost principal $N$-bundle.
As $N$ is linearly reductive, {\bf 3} follows from 
Corollary~\ref{canonical-cor.thm}.

{\bf 4}.
{\bf a$\Rightarrow$b}.
As $B$ is quasi-Gorenstein, it satisfies $(S_2)$.
As $\dim B_\fm\geq 2$ by assumption, we have that $\depth B_\fm \geq 2$.
Letting $\L=\O_{X'}(rd)$ in Theorem~\ref{finite-resume.thm}, {\bf 5},
$\omega_{B_{d\Bbb Z}}\cong B_{d\Bbb Z}(rd)$ follows.

{\bf b$\Rightarrow$a}.
Let $U=X\setminus 0$, $X=\Spec B_{d\Bbb Z}$ as above, and 
$\theta:X\rightarrow X'$ the canonical map.
Let $U'=\theta(U)$, and $\upsilon:U\rightarrow U'$ be the restriction of 
$\theta$.
As $\upsilon$ is a principal $N$-bundle by Lemma~\ref{Veronese-main.lem},
it is flat with Cohen--Macaulay fibers.
As $U'$ is quasi-Gorenstein, $U$ satisfies the $(S_2)$ condition.
As $U=X\setminus 0$ and $\depth B_\fm\geq 2$, $X$ satisfies the
$(S_2)$ condition.
Now the result follows from
Theorem~\ref{finite-resume.thm}, {\bf 5}.

{\bf 5, 6}.
As we assume that $B$ is normal, $B=\bigoplus_{n\in\Bbb Z}\Gamma(Y,\O_Y(n))$
by Lemma~\ref{surjective-section-ring.lem}.
So {\bf 5} follows from Proposition~\ref{multi-class.prop}.
{\bf 6} follows from Lemma~\ref{veronese-class-group.lem}.
\end{proof}

\begin{example}\label{Gro.ex}
Let $B=k[x,y]$ with $\deg x=\deg y=1$.
Then $X=\Bbb A^2$, $U=\Bbb A^2\setminus 0$, and $Y=\Bbb P^1$.
The category of locally free sheaves on $\Bbb P^1$ is $\Ref(Y)$, which is
equivalent to $\Ref(\Bbb G_m,X)\cong \Ref(\Bbb G_m,B)$.
As $\dim B=2$, a reflexive $(\Bbb G_m,B)$-module is nothing but
a graded finite free $B$-module.
A graded finite free $B$-module is a direct sum of copies of $B(n)$, $n\in\Bbb
Z$, and the Krull--Schmidt theorem holds.
Hence a locally free sheaf on $\Bbb P^1$ is a direct sum of copies of
$\O(n)$, $n\in\Bbb Z$, and the Krull--Schmidt theorem holds.
This is a well-known theorem of Grothendieck, see 
\cite[(4.1)]{HazM}.
\end{example}

\begin{example}\label{A_n-free.ex}
Let $B=k[x,y]$ with $\deg x=1$ and $\deg y=-1$, and $X=\Spec B=\Bbb A^2$.
As $\Bbb G_m$ acts freely on $B[x^{-1}]$ and on $B[y^{-1}]$, we have that
$G$ acts freely on $X\setminus 0$.
In particular, for $n\geq 1$, the subgroup scheme $N=\mu_{n+1}$ acts freely
on $X\setminus 0$.
In particular, $\varphi:X=\Spec B\rightarrow \Spec B^N=Y$ is an
almost principal $N$-bundle.
So the class group of $B^N=k[x^{n+1},xy,y^{n+1}]$ is $\Cal X(N)=
\Bbb Z/(n+1)\Bbb Z$ \cite[Proposition~4]{Waterhouse2}.
\end{example}

\begin{lemma}\label{small-red-circ.lem}
Let $k$ be a perfect field, $G$ a finite $k$-group scheme acting on a 
$k$-scheme $X$.
Assume that there is a separated $G$-invariant morphism $\varphi:X\rightarrow
Y$.
Then the action of $G$ on $X$ is free if and only if the actions of 
$G\red$ and $G^\circ$ on $X$ are free.
\end{lemma}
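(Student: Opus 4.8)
The plan is to treat the two implications separately, the forward one being immediate and the reverse one being reduced to a pointwise statement about stabilizers. Throughout I form the stabilizer $\Cal S_X$ using the given separated $G$-invariant morphism $\varphi\colon X\to Y$, so that $\Cal S_X$ is a closed subscheme of $G\times X$ and $\phi\colon\Cal S_X\to X$ is finite. For the ``only if'' direction, suppose $G$ acts freely on $X$, so $\Cal S_X=X$. If $N$ is any closed subgroup scheme of $G$, then $\Psi_{N,\varphi}$ is the restriction of $\Psi_{G,\varphi}$ to $N\times X$, so the stabilizer of the $N$-action is $\Cal S_X^N=\Cal S_X\cap(N\times X)$ inside $G\times X$; this is contained in the trivial group scheme $\Cal S_X=X$, hence equals $X$, i.e. $N$ acts freely. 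Applying this with $N=G^\circ$ and, since $k$ is perfect and therefore $G\red$ is a closed subgroup scheme of $G$, with $N=G\red$, gives the claim.

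For the converse I would first reduce to points. The cokernel $\Cal C_X$ of the split monomorphism $\O_X\to\phi_*\O_{\Cal S_X}$ is finite type over $\O_X$, and $\Cal C_X=0$ (equivalently $\Cal S_X=X$, i.e. $G$ acts freely) holds as soon as $\Cal C_X\otimes_{\O_X}\kappa(x)=0$ for all $x\in X$, by Nakayama's lemma. Since $\Cal C_X\otimes_{\O_X}\kappa(x)$ is the cokernel of $\kappa(x)\to\O(\Cal S_x)$, where $\Cal S_x=\Cal S_X\times_X\Spec\kappa(x)$ is the stabilizer $\kappa(x)$-group scheme at $x$ (Lemma~\ref{stabilizer-basics.lem}), it suffices to prove that $\Cal S_x$ is trivial for every $x\in X$. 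Fix $x$ and write $\kappa=\kappa(x)$. Base-changing the hypotheses $\Cal S_X^{G^\circ}=X$ and $\Cal S_X^{G\red}=X$ along $\Spec\kappa\to X$, and using that $\Cal S_X^N=\Cal S_X\cap(N\times X)$ commutes with base change, gives $\Cal S_x\cap(G^\circ)_\kappa=e$ and $\Cal S_x\cap(G\red)_\kappa=e$ as $\kappa$-subgroup schemes of $G_\kappa$.

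Next I would use the first of these to see that $\Cal S_x$ is reduced. The composite $\Cal S_x\hookrightarrow G_\kappa\to(G/G^\circ)_\kappa$ has trivial kernel $\Cal S_x\cap(G^\circ)_\kappa=e$, hence is a closed immersion of finite $\kappa$-group schemes; since $G/G^\circ$ is étale over $k$, the target is étale over $\kappa$, and a closed subscheme of a finite étale $\kappa$-scheme is again étale. Thus $\Cal S_x$ is étale, in particular reduced. On the other hand, because $k$ is perfect, $G\red$ is smooth over $k$, so $(G\red)_\kappa$ is reduced; being a closed subscheme of $G_\kappa$ with the same underlying space as $G_\kappa$, it equals $(G_\kappa)\red$. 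Hence the reduced closed subscheme $\Cal S_x$ of $G_\kappa$ factors through $(G\red)_\kappa$, and combined with $\Cal S_x\cap(G\red)_\kappa=e$ this yields $\Cal S_x=e$. As $x$ was arbitrary, $G$ acts freely on $X$.

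The only genuine subtlety — and the reason the argument must be routed through residue fields rather than run on $X$ directly — is that $X$ need not be reduced, so one cannot conclude that the global stabilizer $\Cal S_X$ lands inside $G\red\times X$. Over the fields $\kappa(x)$ the facts used (a homomorphism of finite group schemes with trivial kernel is a closed immersion, closed subschemes of finite étale schemes are étale, and $(G\red)_\kappa=(G_\kappa)\red$ for $k$ perfect) are all standard, and the finite-type cokernel sheaf $\Cal C_X$ supplies the passage from the pointwise conclusion back to the global one. No input beyond the connected–étale filtration of a finite group scheme over a perfect field and the stabilizer formalism of Section~\ref{quotients.sec} is required; in particular the splitting of the connected–étale sequence is not needed.
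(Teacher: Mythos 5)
Your proof is correct, and it follows the same overall plan as the paper's: reduce freeness to triviality of the pointwise stabilizers via the finite-type cokernel of $\O_X\to\phi_*\O_{\Cal S_X}$ and Nakayama, then kill the stabilizer $\Cal S_x$ using the connected/reduced dichotomy for finite group schemes over a perfect field. The only genuine difference is that you run the key step in the mirror-image order. The paper (after first passing to $\bar k$, which is harmless but, as your argument shows, not necessary) uses freeness of the $G\red$-action to see that $(\Cal S_x)\red$ is trivial, so $\Cal S_x$ is infinitesimal and hence contained in $(G^\circ)_{\kappa(x)}$, and then kills it by freeness of the $G^\circ$-action; this needs nothing beyond the topological observation that a finite group scheme with trivial reduction is connected. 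You instead use freeness of the $G^\circ$-action to embed $\Cal S_x$ into the \'etale quotient $(G/G^\circ)_{\kappa(x)}$ --- invoking the connected--\'etale sequence and the fact that a homomorphism of finite group schemes over a field with trivial kernel is a closed immersion --- conclude that $\Cal S_x$ is \'etale, hence reduced, hence contained in $(G\red)_{\kappa(x)}=(G_{\kappa(x)})\red$ (this is where perfectness of $k$ enters), and then kill it by freeness of the $G\red$-action. Both routes are valid; the paper's is marginally more elementary at that step, since it avoids the quotient $G/G^\circ$ and the trivial-kernel-implies-closed-immersion fact, while yours buys the ability to work directly over the residue fields $\kappa(x)$ without base-changing to the algebraic closure.
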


\begin{proof}
The only if part is trivial.
We prove the if part.
We may assume that $k$ is algebraically closed.
Assume that the actions of $G\red$ and $G^\circ$ are free, but the action
of $G$ is not free.
Then take $x\in X\setminus U$, where $U$ is the free locus.
Then the stabilizer $G_x$ is nontrivial by Nakayama's lemma.
As $(G_x)\red\subset G_x\cap (G\red\otimes_k \kappa(x))=(G\red)_x=e$, 
$G_x$ is contained in $G_x\cap (G\otimes_k \kappa(x))^\circ=(G^\circ)_x=e$,
and $G_x$ is trivial.
A contradiction.
\end{proof}

\begin{lemma}\label{free-red.lem}
Let $k$ be a perfect field, $G$ a finite $k$-group scheme acting on a 
$k$-scheme $X$.
Let $\varphi:X\rightarrow Y$ be an algebraic quotient by $G^\circ$.
Assume that there is a separated $G\red$-invariant 
morphism $\psi:Y\rightarrow Z$.
Then $\Cal S_{G\red,X}=\Cal S_{G\red,Y}\times_Y X$.
The action of $G\red$ on $X$ is free if and only if its action on $Y$ is free.
\end{lemma}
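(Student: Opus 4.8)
The plan is to exploit two structural facts that the hypotheses are designed to supply. First, since $k$ is perfect, $G^\circ$ is a connected finite group scheme with $(G^\circ)\red=\Spec k$, i.e.\ infinitesimal, and the connected--\'etale sequence $1\to G^\circ\to G\to G/G^\circ\to 1$ splits with $G/G^\circ\cong G\red$, so $G=G^\circ\rtimes G\red$. As $G^\circ$ is normal in $G$, the $G\red$-action on $X$ normalizes $G^\circ$; since $\varphi$ is an algebraic quotient by $G^\circ$, hence uniform by Corollary~\ref{uniform-algebraic-quotient.cor}, the $G^\circ$-invariant composite $\varphi\circ a_{G\red}\colon G\red\times X\to Y$ factors through the algebraic quotient $G\red\times X\to G\red\times Y$, and one checks this descends to a $G\red$-action on $Y$ making $\varphi$ a $G\red$-morphism (indeed a $G$-morphism, by the splitting). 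Second, by Lemma~\ref{DG.thm} applied to $G^\circ$, $\varphi$ is surjective, integral, and universally submersive; and since $G^\circ\to\Spec k$ is radicial, the action $a\colon G^\circ\times X\to X$, being the composite of the shearing automorphism of $G^\circ\times X$ with the projection $p_2$, is radicial. As $\Psi_{G^\circ,\varphi}\colon G^\circ\times X\to X\times_Y X$ is surjective and both $p_1\Psi_{G^\circ,\varphi}=a$ and $p_2\Psi_{G^\circ,\varphi}=p_2$ are radicial, the two projections $X\times_Y X\rightrightarrows X$ are radicial, so the diagonal $X\to X\times_Y X$ is surjective and $\varphi$ is universally injective; combined with surjectivity and integrality, $\varphi$ is a universal homeomorphism. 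Affine-locally, writing $Y=\Spec A$, $X=\Spec B$, $A=B^{G^\circ}$, and choosing $p^e$ with $(G^\circ)_e=G^\circ$, one has $b^{p^e}\in A$ for every $b\in B$.

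For the first assertion, I would apply Lemma~\ref{stabilizer-basics.lem} to the $G\red$-morphism $\varphi$ to get a closed immersion $Q\colon\Cal S_{G\red,X}\hookrightarrow\Cal S_{G\red,Y}\times_Y X$ of $X$-subgroup schemes of $G\red\times X$; the existence of the separated $G\red$-invariant $\psi$ ensures $\Cal S_{G\red,Y}$, hence $\Cal S_{G\red,Y}\times_Y X$, is a closed subgroup scheme. By the functor-of-points description in that lemma, $Q$ is an isomorphism exactly when $\varphi(gx)=\varphi(x)$ forces $gx=x$ for all test $W$, $g\in G\red(W)$, $x\in X(W)$. Working affine-locally and letting $\omega\colon B\to k[G\red]\otimes_k B$ be the coaction, which is an algebra map since $G\red$ acts by automorphisms, the ideal of $\Cal S_{G\red,X}$ in $k[G\red]\otimes_k B$ is $\mathcal I_X=(\omega(b)-1\otimes b:b\in B)$ and that of $\Cal S_{G\red,Y}\times_Y X$ is $\mathcal I_Y=(\omega(a)-1\otimes a:a\in A)$; plainly $\mathcal I_Y\subseteq\mathcal I_X$, and since $b^{p^e}\in A$ and $\omega$ is a ring map, $(\omega(b)-1\otimes b)^{p^e}=\omega(b^{p^e})-1\otimes b^{p^e}\in\mathcal I_Y$, so $\mathcal I_X\subseteq\sqrt{\mathcal I_Y}$ and the two closed subschemes have the same underlying space. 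Promoting this to equality of ideals is where the \'etaleness of $G\red$ over $k$ enters: $k[G\red]\otimes_k B$ is finite \'etale over $B$, and \'etale-locally on $Y$ one writes $\Cal S_{G\red,Y}=\coprod_{g}Y^g$ as a disjoint union over the finitely many elements $g$ of $G\red$ of their fixed-point subschemes and compares $Y^g\times_Y X$ with the fixed-point subscheme of $g$ on $X$. I expect this last step to be the main obstacle: it is the point at which one genuinely needs $G\red$ to be well-behaved (e.g.\ of order prime to the characteristic, so that the fixed-point subschemes are reduced), without which only the topological equality of stabilizers is available.

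Granting the first assertion, the freeness equivalence follows quickly. If $\Cal S_{G\red,Y}$ is trivial, then $\Cal S_{G\red,Y}\times_Y X=X$, so $\Cal S_{G\red,X}=X$ and $G\red$ acts freely on $X$. Conversely, suppose $\Cal S_{G\red,X}$ is trivial. Then $\Cal S_{G\red,Y}\times_Y X\cong X$ by the first assertion; since $\varphi$ is a universal homeomorphism, so is its base change $\Cal S_{G\red,Y}\times_Y X\to\Cal S_{G\red,Y}$, whence the structure map $\Cal S_{G\red,Y}\to Y$ is a surjective universal homeomorphism. Being a closed subscheme of the \'etale $Y$-scheme $G\red\times Y$, this map is unramified; an unramified universally injective morphism is a monomorphism, and a finite monomorphism is a closed immersion, so $\Cal S_{G\red,Y}\to Y$ is a surjective closed immersion admitting the identity section as a section, hence an isomorphism. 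Therefore $\Cal S_{G\red,Y}$ is trivial and $G\red$ acts freely on $Y$.
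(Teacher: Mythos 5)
You are right that promoting the radical computation to an equality of ideals is the crux, and as a proof of the lemma as stated your proposal is incomplete exactly there: the Frobenius trick only yields $\Cal I_X\subseteq\sqrt{\Cal I_Y}$, i.e. equality of the underlying spaces of $\Cal S_{G\red,X}$ and $\Cal S_{G\red,Y}\times_YX$, whereas both the first assertion and your derivation of the freeness equivalence invoke the scheme-theoretic identity. The paper closes this point by a different mechanism: since both stabilizers are finite over $X$, it reduces (by a Nakayama-type step) to comparing their fibers at each point $x\in X$, and there the input is that $G\red\times x$ is \'etale over $\kappa(x)$, hence a disjoint union of spectra of fields, so any morphism from it to the fiber $\varphi^{-1}(y)$ factors through the reduction of that fiber, which is just $x$ because $\varphi$ is purely inseparable ($G^\circ$ being infinitesimal); this gives $\Cal S_{G\red,y}\times_y x=\Cal S_{G\red,x}$ with no global ideal manipulation.

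That said, your hesitation is better founded than you suggest, and the extra hypothesis you propose (order of $G\red$ prime to $p$) would not rescue the step. The scheme-theoretic identity can genuinely fail under the stated hypotheses: take $k$ perfect of characteristic $2$ containing a primitive cube root of unity, $B=k[u]$, and $G=\alpha_2\rtimes\mu_3$ acting on $X=\Spec B$ with $G^\circ=\alpha_2$ acting through the derivation $d/du$ (so $A=B^{G^\circ}=k[u^2]$) and $G\red\cong\mu_3$ acting by scaling $u$; then for a nontrivial $g\in\mu_3$ one has $X^g=V(u)$ but $Y^g\times_YX=V(u^2)$, even though the two stabilizers have equal scheme-theoretic fibers over every point of $X$ — so a fiberwise comparison cannot decide the question, and the obstruction comes from $\varphi$ (pulling back the reduced $Y^g$ along a purely inseparable map creates a thickening), not from $G\red$. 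What does survive is the freeness equivalence, and your own tools prove it from the topological statement you did establish: if $G\red$ acts freely on $X$, then by your radical equality together with the surjectivity of $\varphi$ (Lemma~\ref{DG.thm}) the closed subscheme $\Cal S_{G\red,Y}$ of $G\red\times Y$ is supported on the unit section $e(Y)$; since $G\red\times Y\to Y$ is finite \'etale, $e(Y)$ is open and closed in $G\red\times Y$, so $\Cal S_{G\red,Y}$, being a closed subscheme supported in $e(Y)$ and containing $e(Y)$, equals $e(Y)$, i.e. the action on $Y$ is free. The converse is immediate from the inclusion of Lemma~\ref{stabilizer-basics.lem}. So the productive repair is to decouple the freeness statement from the scheme-theoretic identity (which also sidesteps your unramified/monomorphism detour), rather than to look for hypotheses forcing the identity itself.
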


\begin{proof}
We may assume that the characteristic of $k$ is $p>0$.
By Lemma~\ref{stabilizer-basics.lem}, 
$\Cal S_{G\red,X}\subset \Cal S_{G\red,Y}\times_Y X$.
As both of them are finite over $X$, to prove the equality,
it suffices to show that 
$\Cal S_{G\red,x}=\Cal S_{G\red,y}\times_{y} x$ for each point
$x$ of $X$, by Nakayama's lemma, where $y=\varphi(x)$.
Let $a_x:G\red\times x \rightarrow X$ and $a_y:G\red\times y\rightarrow Y$ 
be the actions.
Then
\[
\Cal S_{G\red,y}\times_{y} x
= a_y^{-1}(y)\times_y x=(1_{G\red}\times \varphi)^{-1} a_y^{-1}(y)
=a_x^{-1}(\varphi^{-1}(y)).
\]
As $G\red\times x$ is \'etale over $\kappa(x)$, it is reduced, and hence
any morphism from $G\red\times x$ to $\varphi^{-1}(y)$ factors through
$(\varphi^{-1}(y))\red$.
As $G^\circ$ is infinitesimal, $\varphi$ is purely inseparable.
So $(\varphi^{-1}(y))\red=x$, and
\[
\Cal S_{G\red,y}\times_y x=a_x^{-1}((\varphi^{-1}(y))\red)=a_x^{-1}(x)
=\Cal S_{G\red,x}.
\]
Thus $\Cal S_{G\red,X}=\Cal S_{G\red,Y}\times_Y X$.
If the action of $G\red$ on $Y$ is free, then $\Cal S_{G\red,Y}$ is trivial,
and its base change $\Cal S_{G\red,X}$ is also trivial, and the action on
$X$ is also free.
Conversely, assume that 
the action on $X$ is free and $\Cal S_{G\red,X}$ is trivial.
Let $\Cal C$ be the cokernel of $\O_Y\rightarrow \phi^Y_*\O_{\Cal S_{G\red,Y}}$,
where $\phi^Y:\Cal S_{G\red,Y}\rightarrow Y$ is the structure map.
Then 
\[
\varphi_*\O_X\rightarrow \varphi_*\phi^X_*\O_{\Cal S_{G\red,X}}\rightarrow
\varphi_*\varphi^*\Cal C\rightarrow 0
\]
is exact.
By assumption, $\varphi_*\varphi^*\Cal C=0$.
As $\varphi$ is finite surjective, $\Cal C=0$ by Nakayama's lemma.
This shows that the action of $G\red$ on $Y$ is free.
\end{proof}

\begin{proposition}\label{SL_n-small.prop}
Let $k$ be a field, $n\geq 1$, 
and $G$ be a linearly reductive finite subgroup scheme of $\SL_n$.
Then the canonical action of $G$ on $k[x_1,\ldots,x_n]$ is small.
The action of $G$ on $k[[x_1,\ldots,x_n]]$ is also small.
\end{proposition}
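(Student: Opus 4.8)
The plan is to reduce to the case $k=\bar k$, split $G$ into its \'etale part $G\red$ and its infinitesimal part $G^\circ$, and verify smallness of each action using the hypothesis $G\subseteq\SL_n$. First I would dispose of the power series ring: the completion map $\Spec k[[x_1,\ldots,x_n]]\to\Spec k[x_1,\ldots,x_n]=\Bbb A^n$ is a flat $G$-morphism, so by Lemma~\ref{n-small-flat.lem} it suffices to treat the polynomial ring. For the latter, the free locus $\Cal U$ commutes with the flat base change $\Bbb A^n_{\bar k}\to\Bbb A^n_k$ (the stabilizer $\Cal S_X$ is a fibre product, and formation of the cokernel of $\O_X\to\phi_*\O_{\Cal S_X}$, whose support is the complement of $\Cal U$, commutes with flat maps), so the non-free locus over $\bar k$ is the base change of the one over $k$; by Lemma~\ref{codim-two-flat.thm}, {\bf 1} and {\bf 3}, largeness of the free locus passes both ways along this faithfully flat morphism of locally Noetherian schemes. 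Hence I may assume $k=\bar k$; write $p$ for its characteristic.

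Next I would split $G$. Both $G\red$ and $G^\circ$ are subgroup schemes of $G$, normal because characteristic, and by Nagata's theorem (recalled in (\ref{semireductive.par})) $G^\circ$ is a finite diagonalizable group scheme and $G\red\cong G/G^\circ$ is a constant finite group of order prime to $p$ (in characteristic $0$ this forces $G^\circ$ trivial). As in the proof of Lemma~\ref{small-almost-principal.thm}, the free loci of the $G\red$- and $G^\circ$-actions on $\Bbb A^n$ are $G$-stable open subsets, and on their intersection $U$ both $G\red$ and $G^\circ$ act freely; Lemma~\ref{small-red-circ.lem} applied to $U$ then shows $G$ acts freely on $U$. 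Therefore the non-free locus of $G$ is contained in the union of the non-free loci of $G\red$ and of $G^\circ$, and it remains to show that each of these two actions is small.

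For $G\red$, which is a constant finite subgroup $H\subseteq\SL_n(\bar k)$ of order prime to $p$, Example~\ref{ps-sm.ex} reduces smallness to the absence of pseudoreflections. If some $h\in H\setminus\{e\}$ had $X_h=\{x\mid hx=x\}$ of codimension one, then, $h$ being diagonalizable of order prime to $p$, it would have eigenvalue $1$ with multiplicity $n-1$ and a single further eigenvalue $\zeta\neq1$, whence $\det h=\zeta\neq1$, contradicting $h\in\SL_n$; so $H$ has no pseudoreflection. For $D:=G^\circ$, after a linear change of coordinates (which keeps $D$ inside $\SL_n$) the $x_i$ are $D$-eigenvectors, $t\cdot x_i=\chi_i(t)x_i$, and $D\hookrightarrow\Bbb G_m^n$, $t\mapsto(\chi_i(t))_i$, is a closed immersion; the hypothesis $D\subseteq\SL_n$ forces the product $\chi_1\cdots\chi_n$ to be the trivial character. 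At a point $x$ of $\Bbb A^n$ with at most one vanishing coordinate the stabilizer $\Cal S_x$ is $\bigcap_{x_i\neq0}\ker\chi_i$, which using $\chi_1\cdots\chi_n=1$ equals $\bigcap_{i=1}^n\ker\chi_i=\ker(D\to\Bbb G_m^n)=e$; hence $D$ acts freely off $\bigcup_{i<j}\{x_i=x_j=0\}$, a closed set of codimension $\geq2$, and the $D$-action is small. This completes the proof.

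I expect the analysis of $G^\circ$ to be the step needing the most care: one must work with the scheme-theoretic stabilizer of the non-reduced group scheme $D$ --- so that it is really the locus of points with \emph{scheme-theoretically} nontrivial stabilizer that is shown to lie in codimension two --- and it is exactly in the identity $\bigcap_{i\in I}\ker\chi_i=e$ for $|I|\geq n-1$ that the $\SL$-hypothesis is used. The reductions in the first two paragraphs are routine once Lemmas~\ref{n-small-flat.lem}, \ref{codim-two-flat.thm} and \ref{small-red-circ.lem}, together with the $G$-stability of the free locus of a normal subgroup scheme (proof of Lemma~\ref{small-almost-principal.thm}), are in hand.
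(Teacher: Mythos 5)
Your proof is correct and follows essentially the same route as the paper's: reduce to $k=\bar k$, dispose of the power series case by flatness (Lemma~\ref{n-small-flat.lem}), split into $G\red$ and $G^\circ$ via Lemma~\ref{small-red-circ.lem}, rule out pseudoreflections for $G\red$ using $\det=1$ together with linear reductivity, and diagonalize $G^\circ$ to see the action is free off the codimension-two locus where two coordinates vanish. The only cosmetic differences are that you compute the scheme-theoretic stabilizer of $G^\circ$ directly from the relation $\chi_1\cdots\chi_n=1$, where the paper instead checks that the torus $T\cap\SL_n$ acts freely on the localizations $B[y_i^{-1}]$ and computes $\height(y_1,\ldots,y_n)=2$ as a Stanley--Reisner ideal, and that you fold the paper's separate transvection case into the observation that elements of order prime to $p$ are diagonalizable.
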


\begin{proof}
We prove that the action of $G$ on $k[x_1,\ldots,x_n]$ is small.
We may assume that $k$ is algebraically closed of 
characteristic $p>0$, and $n\geq 2$.
It suffices to show that the actions of $G^\circ$ and $G\red$ are small
by Lemma~\ref{small-red-circ.lem}.
As $G\red\subset\SL_n$, $G\red$ does not have a diagonalizable 
pseudoreflection.
As $G\red$ is linearly reductive, $G\red$ does not have a transvection
(that is, a pseudoreflection $g\in\GL_n$ such that $1-g$ is nilpotent)
by Maschke's theorem (as the Jordan normal form shows, 
the order of a transvection in characteristic $p$ is $p$).
Thus $G\red$ does not have a pseudoreflection of any kind, and the
action of $G\red$ is small.

So we may assume that $G$ is infinitesimal.
Let $B=k[x_1,\ldots,x_n]$.
As $G$ is also linearly reductive, $G$ is diagonalizable \cite{Sweedler3}.
So $\bigoplus_i kx_i$ is a direct sum of one-dimensional $G$-modules.
Changing variables, we may assume that $G\subset T\cap \SL_n$, 
where $T$ is the subgroup of $\GL_n$ consisting of the invertible 
diagonal matricies.
Considering the action of $T\cap \SL_n$ is to consider a 
$\Bbb Z^n/(\alpha)$-grading, where $\alpha=(1,1,\ldots,1)$.
So when we invert $y_i=(x_1\cdots x_n)/x_i$, then the action of 
the torus $T\cap \SL_n$ on $B[y_i^{-1}]$ is free for $1\leq i
\leq n$, and hence the action of $G$ is also free.
Thus it suffices to show that the ideal $I=(y_1,\ldots,y_n)$ of $B$ 
is height two.
By definition, $I$ is the Stanley--Reisner ideal (the defining ideal
of the Stanley--Reisner ring, see \cite[(II.1.1)]{Stanley}) of 
the $(n-3)$-skelton of the $(n-1)$-simplex.
So $\dim B/I=n-2$, and $\height I=2$.

The last assertion follows from the first assertion and
Lemma~\ref{n-small-flat.lem}.
\end{proof}

\begin{remark}\label{ps-small.rem}
Let $k$ be a field, $V=k^n$, and $\tilde G=\GL_n=\GL(V)$.
We define 
\[
\PR(\tilde G)=\{g\in\tilde G\mid \rank(1_V-g)\leq 1\}.
\]
It is a closed subscheme of $\tilde G$.
For a closed subscheme $F$ of $\tilde G$, we define that 
$\PR(F)=F\cap \PR(\tilde G)$.
We say that a finite subgroup scheme $G$ of $\tilde G$ does not 
have a pseudoreflection if $\PR(G)=\{e\}=\Spec k$, scheme theoretically.
On the other hand, we say that $G$ is small if the action of $G$
on $V$ is small.

By Example~\ref{ps-sm.ex}, if $G$ is \'etale, then $G$ is small
if and only if $G$ does not have a pseudoreflection.
However, 
in general, a small subgroup $G$ of $\tilde G$ may have a pseudoreflection.
For example, if $p=2$ and $G$ is the subgroup scheme of $\SL_2$ of 
type $(A_1)$, then it is easy to see that $G=\PR(G)$.

The author does not have an appropriate way to connect the smallness of the
action and the non-existence of pseudoreflections for non-reduced
finite group schemes.
\end{remark}

\paragraph
Let $k$ be an algebraically closed 
field, and $N$ be a nontrivial finite linearly reductive 
$k$-subgroup scheme of $\SL_2$.
Such $N$ is classified with Dynkin diagrams of type ADE 
\cite{Hashimoto8}.
Let $H=\Bbb G_m$, which acts on $B=k[x,y]$ by $\deg x=\deg y=1$.
Then $G=H\times N$ acts on $B$ in a natural way.
By Proposition~\ref{SL_n-small.prop}, the action of $N$ on $X=\Spec B$ is
small.

The following is well-known for the case that $N$ is \'etale,
see \cite[Chapter~6]{LW}.

\begin{theorem}\label{SL_2.thm}
Let $k$, $N\subset \SL_2$, $H$, $G$, $B=k[x,y]$, and $X$ be as above
\($N$ may not be reduced\).
Set $A=B^N$.
Let $\hat A$ and $\hat B$ respectively 
be the completion of $A$ and $B$ with respect 
to the irrelevant ideal.
Let
$\varphi:X=\Spec B\rightarrow Y=\Spec A$ be the canonical algebraic quotient,
and $\hat\varphi:\hat X=\Spec \hat B\rightarrow \hat Y=\Spec \hat A$ be its
completion.
Then
\begin{enumerate}
\item[\bf 1] The free locus of the action of $N$ on $X$ 
\(resp.\ $\hat X$\) is $X\setminus 0$ \(resp.\ $\hat X\setminus 0$\).
In particular, $\varphi$ and $\hat \varphi$ 
are $G$-enriched almost principal $N$-bundles.
\item[\bf 2] $A$ is strongly $F$-regular Gorenstein of the $a$-invariant $-2$.
\item[\bf 3] The category of $\hat B$-finite $\hat B$-free $(N,
\hat B)$-modules and
the category of maximal Cohen--Macaulay $\hat A$-modules are equivalent.
The Cohen--Macaulay ring 
$\hat A$ has finite representation type, and any
maximal Cohen--Macaulay module of $\hat A$ is isomorphic to 
$\hat M_V:=(\hat B\otimes_k V)^N$ for some finite dimensional $N$-module $V$.
$\hat M_V$ is indecomposable if and only if $V$ is simple.
$\hat M_V\cong \hat M_{V'}$ if and only if $V\cong V'$.
An isomorphism class of simple modules of $N$ corresponds to a vertex of
the corresponding extended Dynkin diagram.
\item[\bf 4] The category of $B$-finite $B$-free $(G,B)$-modules 
\(that is, graded $(N,B)$-modules\) is equivalent to the category of 
maximal Cohen--Macaulay $(H,A)$-modules \(that is, graded maximal
Cohen--Macaulay $A$-modules\).
Any graded maximal Cohen--Macaulay $A$-module is isomorphic to
$M_V=(B\otimes_k V)^N$, where $V$ is a finite dimensional $G$-module.
$M_V$ is indecomposable if and only if $V$ is simple.
So $A$ is of finite representation type in the graded sense \(see
{\rm\cite[Chapter~15]{LW}}\).
$M_V\cong M_{V'}$ if and only if $V\cong V'$.
\item[\bf 5] The class groups of $A$ and $\hat A$ are isomorphic to 
the character group $\Cal X(N)$.
$\Cal X(N)$ is $\Bbb Z/(n+1)\Bbb Z$ for type $(A_n)$, $\Bbb Z/2\Bbb Z
\times \Bbb Z/2\Bbb Z$ for type $(D_n)$, $\Bbb Z/3\Bbb Z$ for type $(E_6)$, 
$\Bbb Z/2\Bbb Z$ for type $(E_7)$, and is trivial for $(E_8)$, and is
independent of the characteristic of $k$.
\end{enumerate}
\end{theorem}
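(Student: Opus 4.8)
The plan is to deduce all five assertions by feeding the algebraic quotient $\varphi\colon X=\Spec B\to Y=\Spec A$ into the machinery of Chapter~1. For \textbf{1}, Proposition~\ref{SL_n-small.prop} already gives that the canonical action of $N$ on $B=k[x,y]$, and on $\hat B=k[[x,y]]$, is small. To see the free locus is \emph{exactly} $X\setminus 0$ (resp.\ $\hat X\setminus 0$), recall a linearly reductive $N$ is diagonalizable, so after a linear change of coordinates $N\subset T\cap\SL_2$ and $B_1=kx\oplus ky$ has no nonzero $N$-fixed vector; hence $0$ is the unique $N$-fixed closed point of $\Bbb A^2$. The free locus $\Cal U_X$ is an $N$-stable large open subset avoiding $0$, so $X\setminus(\Cal U_X\cup\{0\})$ is $N$-stable, closed, of codimension $\ge 2$ in $\Bbb A^2$, hence a finite set of $N$-fixed closed points, hence empty; thus $\Cal U_X=X\setminus 0$, and Proposition~\ref{finite-main.prop} (or Lemma~\ref{small-almost-principal.thm}, with Lemma~\ref{n-small-flat.lem} for the completion) makes $\varphi$ and $\hat\varphi$ $G$-enriched almost principal $N$-bundles. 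For \textbf{2}, the Reynolds operator (Lemma~\ref{direct-summand-Reynolds.lem}) exhibits $A$ as a direct summand subalgebra of the polynomial ring $B$, so $A$ is Cohen--Macaulay (and, if the characteristic is $p>0$, strongly $F$-regular, by persistence of strong $F$-regularity under direct summands); since $N\subset\SL_2=\SL(B_1)$ is linearly reductive, $\Theta_{N,k}$ is trivial (Remark~\ref{knop-remark.thm}), so Example~\ref{finite-polynomial.ex}, \textbf{4} makes $A$ quasi-Gorenstein of $a$-invariant $-2$, whence Gorenstein; $\hat A$ is then Gorenstein too.

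For \textbf{3} and \textbf{4} I would invoke Corollary~\ref{almost-main.thm} (a case of Theorem~\ref{main.thm}) for $\hat\varphi$ and for $\varphi$: $(\hat\varphi_*{?})^N\colon\Ref(N,\hat X)\xrightarrow{\ \sim\ }\Ref(\hat Y)$ and $(\varphi_*{?})^N\colon\Ref(G,X)\xrightarrow{\ \sim\ }\Ref(H,Y)$. Over the $2$-dimensional regular rings $\hat B$ and $B$ a finitely generated reflexive module is free (Auslander--Buchsbaum), so by Lemma~\ref{smith-vdb.lem} (both $\hat B$ and $B$ are $N$- resp.\ $G$-local with residue field $k$, and every finite-dimensional $N$-, resp.\ $G$-, module is projective by linear reductivity) every object of $\Ref(N,\hat X)$ is $\hat B\otimes_k V$ with $V$ a finite-dimensional $N$-module, and every object of $\Ref(G,X)$ is $B\otimes_k V$ with $V$ a finite-dimensional $G$-module. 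On the other side, $\hat A$ and $A$ are $2$-dimensional normal Gorenstein, so $\O$ is a full $2$-canonical module (Lemma~\ref{quasi-Gorenstein.lem}) and by Lemma~\ref{2-canonical-double-dual.lem} the reflexive modules are the $(S'_2)$ ones, which in dimension $2$ are exactly the maximal Cohen--Macaulay ones. Thus the equivalences become ``finite-dimensional $N$-modules $\leftrightarrow$ maximal Cohen--Macaulay $\hat A$-modules'', $V\mapsto\hat M_V=(\hat B\otimes_k V)^N$, and ``finite-dimensional $G$-modules $\leftrightarrow$ graded maximal Cohen--Macaulay $A$-modules'', $V\mapsto M_V=(B\otimes_k V)^N$. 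Since $N$- and $G$-modules are semisimple, $\hat B\otimes_k V$ (resp.\ $B\otimes_k V$) has local endomorphism ring iff $V$ is simple — in the complete case using semiperfectness of module-finite algebras over the complete local ring $\hat A$, as in the proof of Lemma~\ref{henselian.lem} — so $\hat M_V$ (resp.\ $M_V$) is indecomposable iff $V$ is simple; and reducing an isomorphism modulo the maximal ideal of $\hat B$ (resp.\ the irrelevant ideal of $B$) shows $\hat M_V\cong\hat M_{V'}$ (resp.\ $M_V\cong M_{V'}$) iff $V\cong V'$. As $N$ has only finitely many simple modules, $\hat A$ has finite representation type and $A$ has graded finite representation type, and the bijection between simple $N$-modules and vertices of the extended Dynkin diagram is the McKay correspondence supplied by the classification \cite{Hashimoto8}.

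For \textbf{5}, since $X=\Bbb A^2$ and $\hat X=\Spec k[[x,y]]$ are regular, $\Cl(X)=\Cl(\hat X)=0$, and Theorem~\ref{four-term.thm} (with $N$ in place of its ``$G$'', applicable because $\varphi$ and $\hat\varphi$ are quasi-compact almost principal $N$-bundles with affine base) degenerates to $\Cl(Y)\cong H^1\alg(N,\O_X^\times)$ and $\Cl(\hat Y)\cong H^1\alg(N,\O_{\hat X}^\times)$. Since $\O_X^\times=k^\times$, we get $\Cl(A)=\Cl(Y)\cong H^1\alg(N,k^\times)=\Cal X(N)$; for the completion the short exact sequence $1\to 1+\fm\to\O_{\hat X}^\times\to k^\times\to 1$ of $N$-groups, together with the vanishing of $H^{\ge 1}\alg(N,1+\fm)$ (apply linear reductivity along the $\fm$-adic filtration of $1+\fm$, whose successive quotients $\fm^j/\fm^{j+1}$ are finite-dimensional $N$-modules over $k$), yields $\Cl(\hat A)=\Cl(\hat Y)\cong H^1\alg(N,k^\times)=\Cal X(N)$ as well. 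The displayed list is then the evaluation of $\Cal X(N)=\Hom(N,\Bbb G_m)$ on the explicit group schemes of \cite{Hashimoto8}, whose shapes — and hence the answer — are independent of the characteristic.

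The hard part will be the two places where the clean heredity statements of the excerpt do not literally apply: in \textbf{2}, the global $F$-regularity heredity (Theorem~\ref{globally-F-reg-main.thm}, Corollary~\ref{linearly-reductive-globally-F-regular.cor}) is stated only for \emph{smooth} $N$, so for non-reduced $N$ one must argue through the direct-summand property together with the external input that direct summands of regular (resp.\ strongly $F$-regular) rings are Cohen--Macaulay (resp.\ strongly $F$-regular); and in \textbf{5}, the class group of the \emph{completion} $\hat A$ needs the cohomology-vanishing argument above rather than a bare appeal to Theorem~\ref{four-term.thm}. The identification of simple modules with vertices of the extended Dynkin diagram likewise rests entirely on the classification \cite{Hashimoto8}, not on anything internal to this excerpt.
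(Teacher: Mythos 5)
Your strategy for parts \textbf{1}--\textbf{4} is essentially the paper's (smallness from Proposition~\ref{SL_n-small.prop}, the Reynolds/direct-summand argument plus Example~\ref{finite-polynomial.ex} for \textbf{2}, the equivalence of Theorem~\ref{main.thm} combined with Lemma~\ref{smith-vdb.lem} and the fact that reflexives over the two-dimensional regular rings $B$, $\hat B$ are free for \textbf{3}--\textbf{4}), but two points need attention. First, in \textbf{1} the step ``hence a finite set of $N$-fixed closed points'' is not justified: a closed point outside the free locus has a \emph{nontrivial} stabilizer, not necessarily all of $N$, and for \'etale $N$ an $N$-stable finite set may be permuted rather than fixed pointwise. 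The paper's argument avoids this: the free locus is $G$-stable (Lemma~\ref{small-almost-principal.thm}), so its complement is a $\Bbb G_m$-stable closed set of codimension $\geq 2$ in $\Bbb A^2$, hence contained in $\{0\}$, and $0$ is not free because it is fixed by the nontrivial $N$. Your slip is easily repaired this way (or by noting that a nontrivial semisimple element of $\SL_2$ fixes no nonzero vector, while an infinitesimal group scheme cannot move closed points), but as written it is a gap.

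Second, and more substantively, the last assertion of \textbf{3} (simple $N$-modules correspond to the vertices of the extended Dynkin diagram) is not something you can simply cite from \cite{Hashimoto8}: that paper classifies the linearly reductive finite subgroup schemes of $\SL_2$, it does not supply a McKay correspondence for possibly non-reduced $N$. The paper proves this statement here, by a self-contained computation: define the McKay graph with $n_{ij}=\dim_k\Hom_N(V_i,V\otimes V_j)$, show it is connected (every simple occurs in some $V^{\otimes r}$ because $k[\End(V)]\rightarrow k[N]$ is surjective), use the relation $2a_j=\sum_i n_{ij}a_i$ with $a_j=\dim V_j$ to reduce to the affine ADE graphs plus one loop graph, exclude the latter, and match the types by counting one-dimensional representations via $N/[N,N]$. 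Without some such argument this part of \textbf{3} (and the explicit list of $\Cal X(N)$ in \textbf{5}, which the paper reads off from the same discussion) is unproved. Finally, for \textbf{5} your route through Theorem~\ref{four-term.thm} and the vanishing of $H^{\geq 1}\alg(N,1+\fm)$ by successive approximation along the $\fm$-adic filtration is genuinely different from the paper's, which merely restricts the equivalence of \textbf{3} to rank-one objects (rank-one reflexive $(N,\hat B)$-modules are $\hat B\otimes_k\chi$ with $\chi\in\Cal X(N)$); your version is workable but obliges you to write out the limit argument that the paper's shortcut renders unnecessary, especially in the complete case where $\hat B^\times\neq k^\times$.
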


\begin{proof}
{\bf 1} As we have seen, the free locus $U$ of the action of $N$ on $X$ is
large in $X$.
As $U$ is $G$-stable and large, we have that $U=X$ or $U=X\setminus\{0\}$.
However, the origin is a fixed point of the action, and $0\notin U$.
The case of $\hat X$ is similar.

{\bf 2} As $N$ is Reynolds, $A$ is a pure subring of $B$ by
Lemma~\ref{direct-summand-Reynolds.lem}.
Hence $A$ is strongly $F$-regular by \cite[(3.1)]{HH}.
As we have that $N\subset\SL_2$ and linearly reductive, 
$A$ is Gorenstein of $a$-invariant $-2$ by 
Example~\ref{finite-polynomial.ex}, {\bf 4}.

{\bf 3} By {\bf 1}, the categories 
$\Ref(\hat A)$ and $\Ref(N,\hat B)$ are
equivalent.
As $\hat A$ is a two-dimensional Cohen--Macaulay local ring, a reflexive
$\hat A$-module is nothing but a maximal Cohen--Macaulay module.
As $\hat B$ is a two-dimensional regular local ring, any reflexive $\hat 
B$-module is free.
By Lemma~\ref{smith-vdb.lem}, such a module is of the form $\hat B\otimes_k V$
with $V$ a finite dimensional $N$-module.
$V\mapsto \hat B\otimes_k V$ and $F\mapsto F/\frak m F$ is 
a one-to-one correspondence between the set of isomorphism classes of finite
dimensional $N$-modules and the set of isomorphism classes of
$\hat B$-finite $\hat B$-free $(N,\hat B)$-modules, 
and this correspondence respects finite direct sums.
So $V\mapsto \hat M_V$ gives a one-to-one correspondence which respects
the finite direct sums.

It remains to show that the simple $N$-modules are in one-to-one correspondence
with the vertices of the corresponding extended Dynkin diagram.
First, we define the McKay graph $\Gamma_N$ 
of $N\subset\SL_2$ as in the case of usual finite
groups (see \cite[(10.3)]{Yoshino}).
It is a finite quiver defined as follows.
A vertex of $\Gamma_N$ is an isomorphism class of simple $N$-modules.
We draw $n_{ij}=\dim_k \Hom_G(V_i,V\otimes_k V_j)$ arrows from $[V_i]$ to 
$[V_j]$, where $[V_i]$ and $[V_j]$ are vertices.
As $V\cong V^*$, it is easy to see that $n_{ij}=n_{ji}$, and we regard
$\Gamma_N$ as an unoriented graph.
As $N\rightarrow \End(V)$ is a closed immersion, $k[\End(V)]\rightarrow k[N]$
is surjective.
So it is easy to see that any simple $N$-module is a direct summand of some
$V^{\otimes r}$.
So $\Gamma_N$ must be a connected graph (if $V_j$ is a direct summnad of
$V^{\otimes r}$, then starting from the trivial module $[V_0]=[k]$, we reach
$[V_j]$ along a path of the length $r$).
Moreover, when we set $a_j=\dim_k V_j$, we have $2a_j=\dim_k (V\otimes V_j)
=\sum_i n_{ij}a_i$.
If $V_N=\{[V_0],\ldots,[V_n]\}$ is the set of vertices, then $n\geq 1$, 
since $N$ is assumed to be non-trivial.
A connected finite graph with the vertex set $V_N$ 
(with $\#V_N\geq 2$) with $n_{ij}$ arrows
from $[V_i]$ to $[V_j]$ with a function $[V_j]\mapsto a_j$ with the
property $2a_j=\sum_i n_{ij}a_i$ is classified easily, and is one of 
$(A_n)$ $(n\geq 1)$, $(D_n)$ $(n\geq 4)$, $(E_6)$, $(E_7)$, or $(E_8)$ 
displayed in \cite[section~10]{Yoshino} (the symbol $[R]$ there should be
replaced by the trivial representation $[k]$ here), or the graph
\begin{equation}\label{exceptional-graph.eq}
\xymatrix{
[k] \ar@{-}[dr] \\
 & 2 \ar@(dr,ur)@{-}[]
\\ 1\ar@{-}[ur]
},
\end{equation}
which has a self arrow.
$N$ is abelian if and only if $a_j=1$ for all $j$ if and only $\Gamma_N$
is of type $(A_n)$.
So $N$ is of type $(A_n)$ if and only if $\Gamma_N$ is of type $(A_n)$.
If $N$ is of type $(D_n)$ ($n\geq 4$), then $N$ has the Klein
group $\Bbb Z/2\Bbb Z\times \Bbb Z/2\Bbb Z$ as a quotient.
So $\Gamma_N$ is not $(A_{4n-9})$, and $a_j=1$ for at least four $j$.
By dimension counting, $\Gamma_N$ must be $(D_n)$.
If $N$ is of type $(E_6)$ (resp.\ $(E_7)$, $(E_8)$), 
then $N/[N,N]$ is of order $3$ (resp.\ $2$, $1$), and there are
exactly three (two, one) 
one-dimensional representations.
So it is easy to see that $\Gamma_N$ is $(E_6)$ (resp.\ $(E_7)$, $(E_8)$).
After all, (\ref{exceptional-graph.eq}) does not have a corresponding $N$.

{\bf 4} is similar to {\bf 3}.

{\bf 5} follows easily from the discussion in the proof of {\bf 3}.
\end{proof}

\section{Determinantal rings}\label{determinantal.sec}

\begin{lemma}\label{S_2-normal.lem}
Let $S$ be a scheme, and $G$ a flat quasi-compact quasi-separated 
$S$-group scheme.
Let $\varphi:X\rightarrow Y$ be an almost principal $G$-bundle
with respect to $U\subset Y$ and $V\subset X$.
Assume that $X$ is Noetherian and normal, and $Y$ is Noetherian and 
satisfies Serre's condition $(S_2)$.
Then $Y$ is normal, and $\bar\eta:\O_Y\rightarrow (\varphi_*\O_X)^G$
is an isomorphism.
\end{lemma}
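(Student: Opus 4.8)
The strategy is the following: normality of $Y$ reduces, by Serre's criterion $(R_1)+(S_2)$, to verifying the $(R_1)$ condition (the $(S_2)$ condition being assumed), and once $Y$ is known to be normal, hence locally Krull, the isomorphism $\bar\eta$ is an immediate consequence of Theorem~\ref{alg-quot-krull-equiv.thm}.

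First I would spell out the data. By Definition~\ref{almost-pb.def}, $\varphi$ restricts to a principal $G$-bundle $\rho:U\to V$, where $i:U\hookrightarrow X$ and $j:V\hookrightarrow Y$ are open immersions with $i(U)$ large (that is, $1$-large) in $X$ and $j(V)$ large in $Y$. As $X$ is Noetherian normal, its open subscheme $U$ is Noetherian normal, in particular satisfies $(R_1)$. Since $G$ is flat, $\rho$ is faithfully flat (indeed qfpqc), and this property is local on $V$; by faithfully flat descent of $(R_1)$ — the elementary fact that a faithfully flat local homomorphism of Noetherian local rings reflects regularity, applied at a point of $U$ lying generically in its fibre over a given codimension-$\le 1$ point of $V$ — the Noetherian scheme $V$ satisfies $(R_1)$ (and in fact, descending $(S_2)$ as well via Lemma~\ref{S_n-ascent-descent.lem}, {\bf 1}, one sees $V$ is normal). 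Now largeness enters: because $j(V)$ is $1$-large in $Y$, every point $y\in Y$ with $\codim_Y y\le 1$ lies in $V$, so $\O_{Y,y}=\O_{V,y}$ is a local ring of dimension $\le 1$ satisfying $(R_1)$, hence regular. Thus $Y$ satisfies $(R_1)$, and together with the hypothesis $(S_2)$ we conclude that $Y$ is normal, in particular locally Krull.

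For the second assertion I would invoke Theorem~\ref{alg-quot-krull-equiv.thm}: $X$ is Noetherian normal, hence locally Krull, and $\varphi$ is a morphism of Noetherian schemes, hence quasi-compact quasi-separated, so the hypotheses of that theorem hold; having just verified its condition {\bf 2} (namely that $Y$ is locally Krull), we obtain its condition {\bf 1}, that $\bar\eta:\O_Y\to(\varphi_*\O_X)^G$ is an isomorphism. Alternatively, since both $X$ and $Y$ are now normal — hence satisfy $(S_2)$ and are quasi-normal by their structure sheaves — one may apply Lemma~\ref{almost-double-dual-S_2.lem}, {\bf 4}, taking $N=G$ and $H$ trivial.

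The only step requiring genuine care is the descent of $(R_1)$ along $\rho$, where for a codimension-$\le 1$ point $y$ of $V$ one must choose a point $u\in U$ over it that is generic in its fibre, so that $\dim\O_{U,u}=\dim\O_{V,y}\le 1$ and regularity of $\O_{U,u}$ transfers to $\O_{V,y}$; this is the standard faithfully flat argument. Everything else — the largeness bookkeeping, the faithful flatness of a principal bundle over a flat group scheme, and the two applications of results already established in the paper — is routine.
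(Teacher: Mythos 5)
Your proposal is correct and follows essentially the same route as the paper's proof: both verify $(R_1)$ for $Y$ by pulling codimension-$\le 1$ points into the large open subset of $Y$, where the principal bundle is faithfully flat over the normal open subset of $X$ (the paper descends normality of the total space wholesale via fpqc descent, while you descend regularity pointwise at fibre-generic points, which is the same underlying descent fact), and both then conclude normality from the assumed $(S_2)$ and obtain $\bar\eta$ from Theorem~\ref{alg-quot-krull-equiv.thm}, {\bf 2}$\Rightarrow${\bf 1}. The only discrepancy is notational: the lemma as stated has $U\subset Y$ and $V\subset X$, whereas you use the opposite labels, which is harmless.
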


\begin{proof}
As $\rho:V\rightarrow U$ is fpqc and $V$ is normal, we have that $U$ is 
normal.
As $U\reg$ is large in $U$ and $U$ is large in $Y$, we have that
$Y$ satisfies Serre's $(R_1)$ condition, and hence $Y$ is normal.
By Theorem~\ref{alg-quot-krull-equiv.thm}, $\bar\eta$ is an isomorphism.
\end{proof}

\paragraph
Let $k$ be a field, and $n\geq m\geq t\geq 2$.
Let $X=\Mat(m,t-1)\times \Mat(t-1,n)$, where $\Mat(a,b)$ denotes the
$ab$-dimensional affine space of the set of $a\times b$ matrices.
Let $Y=Y_t(m,n)$ be the determinantal variety
$\{C\in \Mat(m,n)\mid \rank C<t\}$.
Let $\varphi:X\rightarrow Y$ be the map $\varphi(A,B)=AB$.
Let $U$ be the open set $Y\setminus Y_{t-1}$, and $V=\varphi^{-1}(U)$.
Let $N=\GL(t-1)$, and $G=\GL(m)\times\GL(t-1)\times\GL(n)$.
The proof of \cite[(3.1)]{Hashimoto11} shows the following.

\begin{theorem}\label{dp.thm}
Let the notation be as above.
Then $\varphi$ is a $G$-enriched almost principal $N$-bundle
with respect to $U$ and $V$.
\qed
\end{theorem}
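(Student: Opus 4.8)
The plan is to follow the argument of \cite[(3.1)]{Hashimoto11}, checking the six conditions of Definition~\ref{rational-almost-pb.def} for the diagram $X\xleftarrow{i}V\xrightarrow{\rho}U\xrightarrow{j}Y$ (here $i,j$ are the inclusions and $\rho=\varphi|_V$), and then appealing to Definition~\ref{almost-pb.def}. First I would pin down the group data: take $H=\GL(m)\times\GL(n)$ and $f\colon G\to H$ the projection forgetting the middle factor, so that $N=\Ker f=\GL(t-1)$, a normal (indeed direct) factor of $G$. Let $G$ act on $X=\Mat(m,t-1)\times\Mat(t-1,n)$ by $(P,g,Q)\cdot(A,B)=(PAg^{-1},\,gBQ^{-1})$ and on $Y\subseteq\Mat(m,n)$ by $(P,g,Q)\cdot C=PCQ^{-1}$; the latter factors through $f$, so $N$ acts trivially on $Y$, while on $X$ it acts by $g\cdot(A,B)=(Ag^{-1},gB)$. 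A one-line computation gives $\varphi((P,g,Q)(A,B))=PABQ^{-1}=(P,g,Q)\varphi(A,B)$ and $\varphi(g\cdot(A,B))=Ag^{-1}gB=AB$, so $\varphi$ is a $G$-morphism which is $N$-invariant. Since rank is unchanged under multiplication by invertible matrices, $Y_{t-1}$ is an $H$-stable closed subvariety of $Y$, so $U=Y\setminus Y_{t-1}$ is $H$-stable open; hence $V=\varphi^{-1}(U)$ is $G$-stable open in $X$ and $\varphi(V)\subseteq U$ tautologically. This disposes of conditions \textbf{1}, \textbf{2}, \textbf{3}.

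For the largeness conditions \textbf{4} and \textbf{5} I would use the classical fact that the rank $\le r$ locus in $\Mat(p,q)$ has codimension $(p-r)(q-r)$. On the $Y$ side, since the paper's $Y=\{\rank<t\}$ and $Y_{t-1}=\{\rank<t-1\}$,
\[
\codim_Y Y_{t-1}=(m-t+2)(n-t+2)-(m-t+1)(n-t+1)=(m-t+1)+(n-t+1)+1\ge 3,
\]
using $m\ge t$ and $n\ge m\ge t$; so $j(U)$ is $1$-large in $Y$ (in fact $2$-large). On the $X$ side, from $\rank(AB)\ge\rank A+\rank B-(t-1)$ one sees that $\rank(AB)<t-1$ forces $\rank A<t-1$ or $\rank B<t-1$; hence $X\setminus V$ is the union of $\{\rank A<t-1\}\times\Mat(t-1,n)$, of codimension $m-t+2\ge 2$, and $\Mat(m,t-1)\times\{\rank B<t-1\}$, of codimension $n-t+2\ge 2$. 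Thus $i(V)$ is $1$-large in $X$.

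The heart of the argument is condition \textbf{6}: $\rho\colon V\to U$ is a $G$-enriched principal $N$-bundle. As $N$ is normal in $G$ and $\rho$ is a $G$-morphism, it suffices to show $\rho$ is a principal $N$-bundle, for which I would check, using the criterion \cite[(2.8)]{Hashimoto5}, that $\rho$ is qfpqc and that the secondary map $\Psi_{N,\rho}\colon N\times V\to V\times_U V$, $(g,(A,B))\mapsto((Ag^{-1},gB),(A,B))$, is an isomorphism. On $T$-points, if $AB=A'B'$ has rank $t-1$ then, since $B$ (resp.\ $B'$) is surjective and $A$ (resp.\ $A'$) has full column rank $t-1$, the column spaces of $A$ and $A'$ both coincide with that of $AB$; hence there is a unique $g\in\GL(t-1)(T)$ with $A'=Ag^{-1}$, and then $B'=gB$ follows from the injectivity of $A$. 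To upgrade this bijection to a scheme isomorphism — and simultaneously to get faithful flatness — I would trivialize $\rho$ Zariski-locally: for each choice of $(t-1)$-element row and column subsets $I,J$, let $W_{I,J}\subseteq U$ be the open locus where the $I\times J$ minor of $C$ is invertible; these cover $U$, and over each $W_{I,J}$ one writes down, by Cramer-type formulas, a section of $\rho$ and an $N$-equivariant isomorphism $\rho^{-1}(W_{I,J})\cong W_{I,J}\times N$. (Alternatively, flatness follows from ``miracle flatness'': $\rho$ is a morphism of smooth irreducible $k$-varieties all of whose fibers are $\GL(t-1)$-torsors, hence of dimension $(t-1)^2=\dim V-\dim U$, and $\rho$ is surjective because every matrix of rank $\le t-1$ admits a rank factorization through $k^{t-1}$.)

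Putting the six conditions together, the displayed diagram is a $G$-enriched rational (1-)almost principal $N$-bundle, so by Definition~\ref{almost-pb.def}, $\varphi$ is a $G$-enriched almost principal $N$-bundle with respect to $U$ and $V$. The step I expect to be the main obstacle is the scheme-theoretic verification that $\Psi_{N,\rho}$ is an isomorphism over an arbitrary field $k$ (handled by the explicit local trivializations over the $W_{I,J}$), together with keeping the determinantal codimension bookkeeping straight; everything else is formal or classical.
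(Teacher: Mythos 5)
Your proof is correct and is essentially the argument the paper relies on: the paper proves Theorem~\ref{dp.thm} simply by invoking the proof of \cite[(3.1)]{Hashimoto11}, and your verification (the determinantal codimension counts together with Sylvester's inequality for the two largeness conditions, and the rank-factorization/Cramer-type local trivializations showing $\rho$ is a Zariski-locally trivial principal $\GL(t-1)$-bundle over the maximal-rank locus, which gives both qfpqc-ness and the isomorphy of $\Psi_{N,\rho}$) reconstructs exactly that argument. No gaps.
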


Using this theorem and the fact that $Y$ is
Cohen--Macaulay \cite{HE}, we give short proofs to some well-known
results on determinantal rings.

\begin{corollary}[de Concini--Procesi \cite{DP}, \cite{Hashimoto11}]
$Y$ is normal, and $\varphi$ is an algebraic quotient by the action of $N$
\(as $N$ is reductive, $\varphi$ is also a categorical quotient\).
\end{corollary}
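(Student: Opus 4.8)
The plan is to reduce the statement to Lemma~\ref{S_2-normal.lem} and Lemma~\ref{universally-submersive.lem}, feeding in Theorem~\ref{dp.thm} as the geometric input and the Hochster--Eagon theorem \cite{HE} as the commutative-algebra input.

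First I would record the elementary properties of the two schemes. The source $X=\Mat(m,t-1)\times\Mat(t-1,n)$ is an affine space over $k$, hence a Noetherian, regular, normal scheme; in particular $\varphi$ is a morphism between affine Noetherian schemes and is therefore an affine morphism. The target $Y=Y_t(m,n)$ is of finite type over $k$, so Noetherian, and is Cohen--Macaulay by \cite{HE}, hence satisfies Serre's condition $(S_2)$. Finally $N=\GL(t-1)$ is a smooth affine algebraic $k$-group scheme, so it is flat, quasi-compact and quasi-separated, as required by the lemmas.

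Next I would invoke Theorem~\ref{dp.thm}: forgetting the $G$-enrichment, $\varphi\colon X\to Y$ is an almost principal $N$-bundle with respect to $U=Y\setminus Y_{t-1}$ and $V=\varphi^{-1}(U)$. This is exactly the hypothesis of Lemma~\ref{S_2-normal.lem} (with the group taken to be $N$): $X$ is Noetherian normal and $Y$ is Noetherian $(S_2)$, so the lemma yields both that $Y$ is normal and that $\bar\eta\colon\O_Y\to(\varphi_*\O_X)^N$ is an isomorphism. Together with the fact that $\varphi$ is affine, the latter is precisely the assertion that $\varphi$ is an algebraic quotient by the action of $N$.

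For the parenthetical claim, since $\GL(t-1)$ is reductive it is in particular semireductive, so Lemma~\ref{universally-submersive.lem} applies and shows that the algebraic quotient $\varphi$ is a (universally submersive) categorical quotient. I do not anticipate any genuine obstacle: the only non-formal ingredients are the Cohen--Macaulayness of the determinantal variety, which is quoted from \cite{HE}, and the almost-principal-bundle structure, which is Theorem~\ref{dp.thm}; everything after that is a direct application of Lemma~\ref{S_2-normal.lem} and Lemma~\ref{universally-submersive.lem}.
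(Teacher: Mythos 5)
Your argument is correct and is essentially the paper's own proof: the corollary is deduced immediately from Theorem~\ref{dp.thm} together with Lemma~\ref{S_2-normal.lem}, with the $(S_2)$ hypothesis supplied by the Cohen--Macaulayness of $Y$ from \cite{HE}. Your additional appeal to Lemma~\ref{universally-submersive.lem} for the parenthetical categorical-quotient claim is a sound way to justify what the paper leaves as a remark.
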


\begin{proof}
Follows immediately from Theorem~\ref{dp.thm}, and
Lemma~\ref{S_2-normal.lem}.
\end{proof}

\begin{corollary}[Bruns \cite{Bruns2}]
$\Cl(Y)=\Bbb Z$.
\end{corollary}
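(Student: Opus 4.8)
The plan is to deduce the computation of $\Cl(Y)$ from the four-term exact sequence of Theorem~\ref{four-term.thm}, applied to the \(non-enriched\) almost principal $N$-bundle $\varphi\colon X\rightarrow Y$ with $N=\GL(t-1)$, simply forgetting the $G$-enrichment furnished by Theorem~\ref{dp.thm}. First I would check the hypotheses of Theorem~\ref{four-term.thm}: $X=\Mat(m,t-1)\times\Mat(t-1,n)$ is an affine space, hence quasi-compact quasi-separated and locally Krull \(its coordinate ring is a polynomial ring over $k$, a Noetherian UFD\); $Y$ is affine, hence quasi-compact, and is normal — in particular locally Krull — by the preceding corollary of de Concini--Procesi; and the structure map $\GL(t-1)\rightarrow\Spec k$ is of finite type, hence both quasi-compact and locally of finite presentation, so the principal bundle $\rho$ is quasi-compact and universally open. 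Thus Theorem~\ref{four-term.thm} gives an exact sequence
\[
0\rightarrow H^1\alg(N,\O_X^\times)\rightarrow \Cl(Y)\rightarrow \Cl(X)^N\rightarrow H^2\alg(N,\O_X^\times).
\]

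Next, write $B:=\Gamma(X,\O_X)$; being a polynomial ring over $k$ it satisfies $\Cl(X)=0$ and $B^\times=k^\times$. Hence $\Cl(X)^N=0$, and the sequence collapses to an isomorphism $\Cl(Y)\cong H^1\alg(N,\O_X^\times)$. It then remains to compute this group via the complex
\[
\Gamma(X,\O^\times)\xrightarrow{d_0-d_1}\Gamma(N\times X,\O^\times)\xrightarrow{d_0-d_1+d_2}\Gamma(N\times N\times X,\O^\times)
\]
computing $H^\bullet\alg(N,\O_X^\times)$ \(this is the complex used in the proof of Theorem~\ref{four-term.thm}\). Since $B^\times=k^\times$ and $N$ acts trivially on $k$, every $1$-coboundary $\phi(gx)/\phi(x)$ with $\phi\in B^\times$ is trivial, so $H^1\alg(N,\O_X^\times)=Z^1\alg(N,\O_X^\times)$. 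A $1$-cocycle is a unit $\Phi$ of $\Gamma(N\times X,\O_{N\times X})=k[\GL(t-1)]\otimes_k B=k[(y_{ij})][\det(y)^{-1}]\otimes_k B$; because the generic determinant $\det(y)$ is a prime element of $k[(y_{ij})]$ \(irreducible for $t-1\ge 1$\) and $B$ is a polynomial ring over $k$, every such unit has the form $c\cdot\det(y)^a$ with $c\in k^\times$ and $a\in\Bbb Z$. Exactly as in the proof of Proposition~\ref{multi-class.prop}, the cocycle condition on $\Phi$ forces $c=1$, so $Z^1\alg(N,\O_X^\times)$ is the character group $\Cal X(\GL(t-1))\cong\Bbb Z$, generated by $\det$. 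Therefore $\Cl(Y)\cong\Bbb Z$.

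The main obstacle I expect is this last computation: verifying cleanly that every unit of $k[\GL(t-1)]\otimes_k B$ has the stated shape \(primality of the generic determinant, together with $(k[(y_{ij})]\otimes_k B)^\times=k^\times$ since this is again a polynomial ring over $k$\), and that the simplicial cocycle identity kills the scalar and leaves precisely the characters of $N$. A secondary point requiring care is the verification of the technical hypotheses of Theorem~\ref{four-term.thm} \(quasi-compactness and universal openness of $\rho$, local Krull-ness of $Y$\); here the normality of $Y$ from the de Concini--Procesi corollary is essential.

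As consistency checks: for $t=2$ one has $N=\Bbb G_m$ and $Y$ the affine cone over a Segre embedding, with $\Cl(Y)\cong\Bbb Z$; and the argument runs parallel to the finite-group computations $\Cl\cong\Cal X(N)$ in Theorem~\ref{SL_2.thm} and Example~\ref{A_n-free.ex}. One could alternatively route the proof through the equivariant isomorphism $\Cl(H,Y)\cong\Cl(G,X)$ of Theorem~\ref{main.thm} with $H=\GL(m)\times\GL(n)$, but the four-term sequence is the most direct.
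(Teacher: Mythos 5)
Your proof is correct and follows essentially the same route as the paper: apply Theorem~\ref{four-term.thm} to the almost principal $N$-bundle, use $\Cl(X)=0$ to get $\Cl(Y)\cong H^1\alg(N,\O_X^\times)$, and identify this with $\Cal X(\GL(t-1))\cong\Bbb Z$. The only difference is cosmetic: where the paper cites \cite[(4.15)]{Hashimoto4} for the identification $H^1\alg(N,\O_X^\times)\cong\Cal X(N)$, you compute it by hand (units of $k[\GL(t-1)]\otimes_k B$ are $c\det(y)^a$, and the cocycle condition kills $c$), mirroring the argument already given in the proof of Proposition~\ref{multi-class.prop}.
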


\begin{proof}
As $\Cl(X)=0$, $\Cl(Y)\cong H^1\alg(G,\O_X^\times)$ by 
Theorem~\ref{four-term.thm}.
By \cite[(4.15)]{Hashimoto4}, we have that $\Cl(Y)\cong \Cal X(N)$.
It is well-known that $N/[N,N]\cong \Bbb G_m$, and $\Cal X(N)\cong
\Cal X(\Bbb G_m)\cong \Bbb Z$.
\end{proof}

\begin{corollary}[Svanes \cite{Svanes}]
$Y$ is Gorenstein if and only if $m=n$.
\end{corollary}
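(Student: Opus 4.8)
The plan is to recognize $Y$ as a Cohen--Macaulay normal domain and reduce ``$Y$ Gorenstein'' to the vanishing of the class of its canonical module in $\Cl(Y)$; by the results above this group is $\Cal X(\GL(t-1))\cong\Bbb Z$, so it suffices to transport $\omega_Y$ across the equivalence of Theorem~\ref{main.thm} to the equivariant canonical module of the polynomial ring $B=\Gamma(X,\O_X)$ and read off which power of $\det_{t-1}$ it is.

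First I would collect the standing facts. By the normality corollary just proved, $Y$ is normal, and by Hochster--Eagon \cite{HE} it is Cohen--Macaulay; being an irreducible affine variety over $k$ it is a normal Cohen--Macaulay domain of finite type over $k$, hence has a canonical module $\omega_Y$, which satisfies $(S'_2)$ (Lemma~\ref{omega-S_2.lem}, Lemma~\ref{dualizing-semicanonical.lem}) and is reflexive of rank one. For such a ring $Y$ is Gorenstein exactly when $\omega_Y\cong\O_Y$, i.e.\ when $[\omega_Y]=0$ in $\Cl(Y)$. Moreover the computation of $\Cl(Y)$ above (using $\Cl(X)=0$ since $X$ is affine space, Theorem~\ref{four-term.thm}, and \cite[(4.15)]{Hashimoto4}) gives $\Cl(Y)\cong\Cal X(N)$ with $N=\GL(t-1)$, and $\Cal X(\GL(t-1))\cong\Bbb Z$ is generated by $\det_{t-1}$.

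Next, by Theorem~\ref{dp.thm} the map $\varphi:X\to Y$ is a $G$-enriched almost principal $N$-bundle, so Theorem~\ref{main.thm} yields an equivalence $\Ref(Y)\cong\Ref(G,X)$ restricting to the isomorphism $\Cl(Y)\cong\Cl(G,X)$, and (since $\Cl(X)=0$) this group is identified with $\Cal X(N)$, a rank-one reflexive $N$-equivariant module on $X=\Bbb A^N$ being a rank-one free module with an $N$-linearization, i.e.\ a character of $N$. Because $N=\GL(t-1)$ is reductive, $\Theta_{N,Y_0}$ is trivial (Remark~\ref{knop-remark.thm}), so Corollary~\ref{canonical-cor.thm}, applied with $Y_0=\Spec k$ and its standard dualizing complex and with $X,Y\in\Cal F(G,Y_0)$, identifies $\omega_Y$ with the equivariant canonical module $\omega_X$ under this equivalence. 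Hence $[\omega_Y]=0$ in $\Cl(Y)$ if and only if $[\omega_X]=0$ in $\Cl(G,X)\cong\Cal X(N)$. Now write $X=\Spec B$ with $B=\Sym_k W^\vee$ for the $G$-representation $W=\Mat(m,t-1)\oplus\Mat(t-1,n)$; then $\omega_X\cong\O_X\otimes_k\ext^{\dim W}W^\vee$ as $(G,\O_X)$-modules, so $[\omega_X]\in\Cal X(N)$ is the restriction to $N=\GL(t-1)$ of the character $\ext^{\dim W}W^\vee$ of $G$. With the action $(g,h,k)\cdot(A,B)=(gAh^{-1},hBk^{-1})$ of $G$ used in \cite{Hashimoto11}, the factor $h\in\GL(t-1)$ acts on $\Mat(m,t-1)$ through the dual of the standard representation and on $\Mat(t-1,n)$ through the standard representation; taking top exterior powers of the duals and restricting to $N$, the two contributions are $\det_{t-1}^{\,m}$ and $\det_{t-1}^{\,-n}$, so $[\omega_X]=\det_{t-1}^{\,m-n}$ in $\Cal X(\GL(t-1))\cong\Bbb Z$. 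This vanishes precisely when $m=n$, which combined with the reductions above gives the claim.

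The main obstacle is bookkeeping rather than anything deep: one must fix the conventions for the two $\GL(t-1)$-actions and for the equivalence of Theorem~\ref{main.thm} and Corollary~\ref{canonical-cor.thm} carefully enough that the chain $[\omega_Y]\leftrightarrow[\omega_X]\leftrightarrow\det_{t-1}^{\,m-n}$ is unambiguous, and one must justify the preliminary reduction ``$Y$ Gorenstein $\iff[\omega_Y]=0$ in $\Cl(Y)$'', i.e.\ that $\omega_Y$ really is reflexive of rank one so that its class is defined and its triviality is equivalent to $\omega_Y\cong\O_Y$; both are routine given the machinery already in place. (Alternatively, the same conclusion can be reached directly from Corollary~\ref{canonical-cor.thm}, part~\textbf{3}, which reduces ``$Y$ quasi-Gorenstein'' to ``$\omega_X\cong\varphi^*\L$ for some $H$-linearized invertible sheaf $\L$ on $Y$'', and then the same character computation shows this holds iff $m=n$.)
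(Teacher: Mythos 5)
Your proposal is correct and is essentially the paper's own argument: the same computation showing that $\omega_B$ is $B$ twisted by the character $\det_{t-1}^{\,m-n}$ as an $N$-equivariant module, combined with the canonical-module transfer across the almost principal $N$-bundle (with $\Theta_N$ trivial because $N$ is connected reductive) and Cohen--Macaulayness of $Y$ from \cite{HE}; the paper just applies Corollary~\ref{abstract-Watanabe.thm} directly instead of routing through $\Cl(Y)\cong\Cal X(N)$. One small caution: phrase the equivalence at the $N$-level ($\Ref(Y)\cong\Ref(N,X)$ and $\Cl(Y)\cong\Cl(N,X)\cong\Cal X(N)$), since Theorem~\ref{main.thm} for the full enriched $G$ identifies $\Cl(G,X)$ with $\Cl(H,Y)$ (whose character part is $\Cal X(G)$), not with $\Cl(Y)$.
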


\begin{proof}
Let $V=k^n$, $W=k^m$ and $E=k^{t-1}$ be the vector representations of 
$\GL_n$, $\GL_m$, and $\GL_{t-1}$, respectively.
Then letting $B:=\Sym(W^*\otimes E)\otimes\Sym(E^*\otimes V)$  
(so $X=\Spec B$), we have that 
\begin{multline*}
\omega_B=B\otimes_k \extop(W^*\otimes E)\otimes_k \extop(E^*\otimes V)\\
\cong
B\otimes_k (\extop E)^{\otimes(m-n)}\otimes_k 
(\extop W)^{\otimes(1-t)}\otimes_k (\extop V)^{\otimes (t-1)}.
\end{multline*}
In particular, $\omega_B\cong B$ as $(N,B)$-modules if and only if $m=n$.
If $m=n$, then by 
Corollary~\ref{abstract-Watanabe.thm}, $\omega_{A}\cong A$ as $A$-modules,
and hence $A$ is Gorenstein
(note that $\Theta_{N,k}$ is trivial, since $N$ is connected reductive,
see Remark~\ref{knop-remark.thm}).
Conversely, if $A$ is Gorenstein, being a positively graded ring over a
field, $\omega_A\cong A$ as $A$-modules.
So $\omega_B\cong B$ as $(N,B)$-modules by
Corollary~\ref{abstract-Watanabe.thm}, and hence $m=n$.
\end{proof}

\paragraph
We can do a similar discussion also on the invariant subrings under the
action of symplectic groups.

Let $k$ be a field, $t,n\in\Bbb Z$ with 
$4\leq 2t\leq n$, and $X=\Mat(2t-2,n)$.
Let $Y=Y_t$ be the Pfaffian subvariety of $\Alt(n)$, the affine
space of $n\times n$ alternating matrices, defined by 
$2t$-Pfaffians.
That is, when $C=k[x_{ij}]_{1\leq i<j\leq n}$ is the coordinate ring of 
$\Alt(n)$ and $\Gamma=(x_{ij})$ (where $x_{ii}=0$ and $x_{ji}=-x_{ij}$), then
$Y$ is the closed subscheme of $\Alt(n)$ defined by the ideal generated
by all the $2t$-Pfaffians of the alternating matrix $\Gamma$.
We set 
$J=J_{t-1}=(\delta_{i+j,t})_{1\leq i,j<t}\in\GL(t-1)$, where 
$\delta$ denotes Kronecker's delta.
We define
\[
\tilde J=
\tilde J_{t-1}=
\begin{pmatrix}
0 & J \\
-J & 0
\end{pmatrix}\in\GL(2t-2).
\]
The symplectic group is defined as
\[
\Sp_{2t-2}:=\{A\in\GL(2t-2)\mid {}^tA\tilde J A=\tilde J\}.
\]
Let $N=\Sp_{2t-2}$, 
$V=k^n$, $E=k^{2t-2}$, and $G=\GL(n)\times N$.
Note that $G$ acts on $X$ by $(h,n)\cdot A=nAh^{-1}$.

Let $\varphi:X\rightarrow Y$ be the map given by $\varphi(C)={}^tC\tilde J C$.
Almost by definition, $\varphi$ is $N$-invariant.
For $C\in X$, $\varphi(C)$ has rank at most $2t-2$, and hence $2t$-Pfaffians
of $\varphi(C)$ vanish, and $\varphi$ is well-defined.
Set $V=Y\setminus Y_{t-1}$, and $U=\varphi^{-1}(V)$.
Then the discussion in \cite[section~5]{Hashimoto11} shows the following.

\begin{theorem}
Let the notation be as above.
Then $\varphi$ is a $G$-enriched almost principal $N$-bundle with respect to
$U$ and $V$.
$Y$ is Cohen--Macaulay.
\end{theorem}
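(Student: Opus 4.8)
The plan is to verify directly the conditions of Definition~\ref{almost-pb.def} (through Definition~\ref{rational-almost-pb.def}), reproducing for the symplectic/Pfaffian setting the argument used in the determinantal case in \cite[Section~5]{Hashimoto11}, and then to cite the classical Cohen--Macaulayness of Pfaffian ideals. First I would record the action on the target. The projection $f\colon G=\GL(n)\times N\to H=\GL(n)$ is a qfpqc homomorphism whose kernel is the normal direct factor $N=\Sp_{2t-2}$; letting $H$ act on $\Alt(n)$ by $h\cdot M={}^th^{-1}Mh^{-1}$, the identity ${}^t(Ch^{-1})\tilde J(Ch^{-1})={}^th^{-1}\bigl({}^tC\tilde JC\bigr)h^{-1}$ together with ${}^tn\tilde Jn=\tilde J$ for $n\in N$ shows that $\varphi$ is $H$-equivariant and $N$-invariant, hence a $G$-morphism with $N$ acting trivially on $Y$ (condition $\mathbf 1$). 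Conditions $\mathbf 2,\mathbf 3$ are immediate; $V=Y\setminus Y_{t-1}$ is $H$-stable open because $Y_{t-1}$ is cut out by rank, $U=\varphi^{-1}(V)$ is $G$-stable open, and $\varphi(U)\subset V$, so the requirements ``with respect to $U$ and $V$'' of Definition~\ref{almost-pb.def} hold.

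The key elementary computation is to identify $U$ and check the codimension conditions $\mathbf 4,\mathbf 5$. Regarding $C\in\Mat(2t-2,n)$ as a linear map $k^n\to k^{2t-2}$, the matrix $\varphi(C)$ is the pullback $C^{*}\omega$ of the symplectic form $\omega(x,y)={}^tx\tilde Jy$; since $\omega$ is nondegenerate, the radical of $C^{*}\omega$ is $\ker C$, so $\rank\varphi(C)=2t-2$ exactly when $C$ is surjective, i.e.\ when $\rank C=2t-2$, and otherwise $\rank\varphi(C)\le 2t-4$, so $\varphi(C)\in Y_{t-1}$. Hence $U=\{C:\rank C=2t-2\}$ and $X\setminus U=\{\rank C\le 2t-3\}$ has codimension $n-2t+3\ge 3$ in $\Mat(2t-2,n)$, giving $\mathbf 4$. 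For $\mathbf 5$ one uses that the rank-$\le 2s$ locus in $\Alt(n)$ has codimension $\binom{n-2s}{2}$, so $\codim_{Y_t}Y_{t-1}=\binom{n-2t+4}{2}-\binom{n-2t+2}{2}=2(n-2t+2)+1\ge 5$; thus $U$ is large in $X$ and $V$ is large in $Y$.

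It remains to treat condition $\mathbf 6$: that $\rho\colon U\to V$ is a $G$-enriched principal $N$-bundle. Since $\varphi$ is a $G$-morphism and $N$ is normal in $G$, the $G$-enrichment is automatic once $\rho$ is a principal $N$-bundle, and by \cite[(2.8)]{Hashimoto5} it suffices to show $\rho$ is qfpqc and that $\Psi_{N,\rho}\colon N\times U\to U\times_VU$, $(g,C)\mapsto(gC,C)$, is an isomorphism. On $k$-points bijectivity is transparent: injectivity because $C$ is surjective, and surjectivity because $C^{*}\omega=(C')^{*}\omega$ with $C,C'$ surjective forces $\ker C=\ker C'$ (both equal the radical of this form), whence $C,C'$ induce isomorphisms $k^n/\ker C\xrightarrow{\sim}k^{2t-2}$ and $g:=\overline{C'}\,\overline{C}^{-1}$ satisfies $gC=C'$ and $g^{*}\omega=\omega$, i.e.\ $g\in\Sp_{2t-2}=N$. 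To upgrade this to an isomorphism of schemes and to obtain the flatness needed for qfpqc, I would argue Zariski-locally on $U$: over the open set where the $(2t-2)\times(2t-2)$ submatrix $C_S$ on a fixed column set $S$ is invertible, one writes the inverse of $\Psi_{N,\rho}$ explicitly in terms of $C_S$, so that $\rho$ becomes locally a trivial $N$-bundle (twisted only by the symplectic constraint), exactly as in the proof of \cite[(3.1)]{Hashimoto11}.

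Finally, the Cohen--Macaulayness of $Y=Y_t$ is the classical theorem on ideals of sub-maximal Pfaffians (Kleppe--Laksov; see also De Concini--Procesi \cite{DP}), which I would simply cite rather than reprove. The main obstacle is therefore twofold: making the local trivialization of $\rho$ precise at the scheme level rather than merely on points (this is the geometric heart of the almost-principal-bundle claim), and the fact that the Cohen--Macaulay assertion is a nontrivial external input which the general machinery of this paper does not produce on its own — notably because $N=\Sp_{2t-2}$ fails to be linearly reductive in positive characteristic, so the direct-summand argument available for reductive groups in characteristic zero does not apply.
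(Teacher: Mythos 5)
Your proposal is correct and follows essentially the same route as the paper, whose "proof" of this theorem is simply the citation to \cite[section~5]{Hashimoto11}: there the bundle structure is verified exactly by your linear-algebraic identification of $U$ as the full-rank locus, the codimension counts, and the Zariski-local trivialization over the opens where a principal $(2t-2)\times(2t-2)$ Pfaffian minor is invertible, while the Cohen--Macaulayness of the Pfaffian variety is, as you say, an external classical input rather than an output of the machinery here. Your minor shortcut (the radical of $C^{*}\omega$ equals $\ker C$ only once $C$ is surjective) does not affect the conclusion $\rank\varphi(C)=2t-2\iff\rank C=2t-2$.
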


\begin{corollary}
Let the notation be as above.
\begin{enumerate}
\item[\bf 1] \(De Concini and Procesi {\rm\cite{DP}}\) 
$\varphi$ is an algebraic 
quotient, and $Y$ is a normal variety.
\item[\bf 2] As $N=[N,N]$, we have that $\Cl(Y)$ is trivial.
That is, the coordinate ring of $Y$ is a UFD \(hence is Gorenstein\).
\end{enumerate}
\end{corollary}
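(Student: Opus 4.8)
The plan is to deduce both assertions from the theorem just proved --- that $\varphi\colon X\to Y$ is a $G$-enriched almost principal $N$-bundle with respect to $U$ and $V$ --- together with the stated Cohen--Macaulayness of $Y$, in exact parallel with the determinantal corollaries above.

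For assertion \textbf{1}, I would first record that $X=\Mat(2t-2,n)=\Bbb A^{(2t-2)n}_k$ is regular, in particular Noetherian and normal, while $Y$, being Cohen--Macaulay, satisfies Serre's $(S_2)$ condition. Lemma~\ref{S_2-normal.lem}, applied to the almost principal $N$-bundle $\varphi$, then gives simultaneously that $Y$ is normal and that $\bar\eta\colon\O_Y\to(\varphi_*\O_X)^N$ is an isomorphism; since $\varphi$ is affine by construction, this is precisely the statement that $\varphi$ is an algebraic quotient by $N$. As $N=\Sp_{2t-2}$ is reductive, one may add, as in the determinantal case, that $\varphi$ is then a universally submersive categorical quotient by Lemma~\ref{universally-submersive.lem}.

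For assertion \textbf{2}, I would pass to the underlying almost principal $N$-bundle and apply Theorem~\ref{four-term.thm} with the group taken to be $N$. Its hypotheses hold: $X=\Spec B$ with $B$ a polynomial ring is affine, hence quasi-compact quasi-separated and locally Krull; $Y$ is affine and, by part \textbf{1}, normal Noetherian, hence locally Krull; and $N$ is a $k$-group scheme of finite type, so $\rho$ is quasi-compact and universally open. Since $\Cl(X)=\Cl(B)=0$ (a polynomial ring is a UFD), we get $\Cl(X)^N=0$, so the four-term sequence $0\to H^1\alg(N,\O_X^\times)\to\Cl(Y)\to\Cl(X)^N\to H^2\alg(N,\O_X^\times)$ collapses to $\Cl(Y)\cong H^1\alg(N,\O_X^\times)$; and because $\Cl(X)=0$, the identification \cite[(4.15)]{Hashimoto4} yields $\Cl(Y)\cong\Cal X(N)$. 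Finally $N=[N,N]$ (the symplectic group is semisimple), so every character $N\to\Bbb G_m$ is trivial, $\Cal X(N)=0$, and hence $\Cl(Y)=0$.

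It remains to translate $\Cl(Y)=0$ into the ring-theoretic claims: the coordinate ring $A$ of $Y$ is a normal Noetherian domain, and such a ring with trivial divisor class group is a UFD, which gives the UFD assertion. For ``hence Gorenstein'', I would observe that the canonical module $\omega_Y$ is a rank-one reflexive $\O_Y$-module --- it has rank one at the generic point of the integral scheme $Y$, and it is $(S'_2)$, hence reflexive on the normal variety $Y$, by Lemma~\ref{omega-S_2.lem} together with Lemma~\ref{dualizing-semicanonical.lem} --- so its class lies in $\Cl(Y)=0$; thus $\omega_Y\cong\O_Y$, i.e.\ $Y$ is quasi-Gorenstein, and being Cohen--Macaulay it is Gorenstein. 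I anticipate no genuine obstacle here: the only points requiring care are the order of the argument (part \textbf{1} must precede the class-group computation, so that $Y$ is known to be locally Krull) and the routine verification that the finiteness and quasi-compactness hypotheses of Theorem~\ref{four-term.thm} are in force, both of which are immediate.
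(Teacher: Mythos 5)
Your proposal is correct and follows exactly the route the paper intends: part \textbf{1} via Lemma~\ref{S_2-normal.lem} applied to the almost principal $N$-bundle (using that $X$ is affine space and $Y$ is Cohen--Macaulay, hence $(S_2)$), and part \textbf{2} via Theorem~\ref{four-term.thm} together with \cite[(4.15)]{Hashimoto4} and $\Cal X(\Sp_{2t-2})=0$, just as in the proofs of the two determinantal corollaries that precede it. Your added justification of ``hence Gorenstein'' (the rank-one reflexive $\omega_Y$ is trivial in $\Cl(Y)=0$, so $Y$ is quasi-Gorenstein and Cohen--Macaulay) is the standard argument the paper leaves implicit, and it is fine.
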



\begin{thebibliography}{GHKN}
\def\ji#1#2(#3)#4-#5.{\newblock{\em#1} {\bf#2} (#3), #4--#5.}
\def\GTM#1{Graduate Texts in Math. {\bf #1}, Springer}
\def\SLN#1{Lecture Notes in Math. {\bf #1}, Springer}

\bibitem[Alp]{Alper}
J. Alper,
Adequate moduli spaces and geometrically reductive group schemes,
{\tt arXiv:1005.2398v1}

\bibitem[And]{Andre}
M. Andr\'e,
Homomorphismes r\'eguliers en characteristique $p$,
{\em C. R. Acad. Sci. Paris} {\bf 316} (1993), 643--646.

\bibitem[Aoy]{Aoyama}
Y. Aoyama,
Some basic results on canonical modules,
{\em J. Math. Kyoto Univ.} {\bf 23} (1983), 85--94.

\bibitem[ArI]{AI}
T. Araya and K.-i. Iima,
Locally Gorensteinness over Cohen--Macaulay rings,
{\tt arXiv:1408.3796v1}

\bibitem[AvF]{AF} L. L. Avramov and H.-B. Foxby,
Locally Gorenstein homomorphisms,
{\em Amer. J. Math.} {\bf 114} (1992), 1007--1047.

\bibitem[AvF2]{AF2} L. L. Avramov and H.-B. Foxby,
  Cohen--Macaulay properties of ring homomorphisms,
  {\em Adv. Math.} {\bf 133} (1998), 54--95.

\bibitem[AvFH]{AFH}
  L. L. Avramov, H.-B. Foxby, and B. Herzog,
  Structure of local homomorphisms,
  {\em J. Algebra} {\bf 164} (1994), 124--145.

\bibitem[BN]{BN}
M. B\"okstedt and A. Neeman,
Homotopy limits in triangulated categories,
{\em Compositio Math.} {\bf 86} (1993), 209--234.

\bibitem[Bor]{Borel}
A. Borel,
{\em Linear Algebraic Groups,} 2nd ed.
Graduate Texts in Math. {\bf 126}, Springer (1991).

\bibitem[Bra]{Braun}
A. Braun,
On the Gorenstein property for modular invariants,
{\em J. Algebra} {\bf 345} (2011), 81--99.

\bibitem[Bro]{Broer}
A. Broer,
The direct summand property in modular invariant theory,
{\em Transform. Groups} {\bf 10} (2005), 5--27.

\bibitem[Bru]{Bruns2}
W. Bruns,
Die Divisorenklassengruppe der Restklassenringe von Polynomringen nach
Determinantenidealen,
{\em Rev. Roumaine Math. Pures Appl.} {\bf 20} (1975), 1109--1111.

\bibitem[Bru2]{Bruns}
W. Bruns,
Conic divisor classes over a normal monoid algebra,
{\em Commutative Algebra and Algebraic Geometry,}
{\em Contemp. Math.} {\bf 390}, AMS (2005),
63--71.

\bibitem[Con]{Conrad}
B. Conrad,
{\em Grothendieck Duality and Base Change,}
\SLN{1750} (2000).

\bibitem[Cox]{Cox}
D. A. Cox,
The homogeneous coordinate ring of a toric variety,
{\em J. Algebraic Geometry} {\bf 4} (1995), 17--50.

\bibitem[DeCP]{DP}
C. De Concini and C. Procesi,
A characteristic free approach to invariant theory,
{\em Adv. Math.} {\bf 21} (1976), 330--354.

\bibitem[DemG]{DG}
M. Demazure and P. Gabriel,
{\em Groupes Alg\'ebriques, Tome~I},
North Holland (1970).

\bibitem[DiS]{DS}
M. T. Dibaei and A. Sadeghi,
Linkage of modules and the Serre conditions,
{\tt arXiv:1407.6544v1}

\bibitem[Dol]{Dolgachev}
I. Dolgachev,
{\em Lectures on Invariant Theory,}
London Math. Soc. Lecture Note Series {\bf 296},
Cambridge (2003).

\bibitem[Dum]{Dumitrescu2} T. Dumitrescu,
On a Theorem of N. Radu and M. Andr'e,
{\em Stud. Cercet. Mat.} {\bf 46} (4) (1994), 445--447.

\bibitem[Dum2]{Dumitrescu} T. Dumitrescu,
Reducedness, formal smoothness and approximation in characteristic $p$,
{\em Comm. Algebra} {\bf 23} (1995), 1787--1795.

\bibitem[EKW]{EKW}
E. J. Elizondo, K. Kurano, and K.-i. Watanabe,
The total coordinate ring of a normal projective variety,
{\em J. Algebra} {\bf 276} (2004), 625--637.

\bibitem[EvG]{EG}
E. G. Evans and P. Griffith,
{\em Syzygies,} 
London Math. Soc. Lecture Note Series {\bf 106},
Cambridge (1985).

\bibitem[FlW]{FW}
P. Fleischmann and C. Woodcock,
Relative invariants, ideal classes and quasi-canonical modules of 
modular rings of invariants,
{\em J. Algebra} {\bf 348} (2011), 110--134.

\bibitem[Fog]{Fogarty}
J. Fogarty,
K\"ahler differentials and Hilbert's fourteenth problem for finite groups,
{\em Amer. J. Math.} {\bf 102} (1980), 1149--1175.

\bibitem[Fos]{Fossum}
R. M. Fossum,
{\em The Divisor Class Group of a Krull Domain,}
Springer (1973).

\bibitem[Fra]{Franke} J. Franke, On the Brown representability theorem for 
triangulated categories,
{\em Topology} {\bf 40} (2001), 667--680.

\bibitem[Ful]{Fulton}
W. Fulton,
{\em Introduction to Toric Varieties,}
Princeton University Press (1993).

\bibitem[GW]{GW}
S. Goto and K.-i. Watanabe,
On graded rings, I,
{\em J. Math. Soc. Japan} {\bf 30} (1978), 179--213.

\bibitem[GM]{GM}
  S. Greco and M. G. Marinari,
  Nagata's criterion and openness of loci for Gorenstein and
  complete intersection,
  {\em Math. Z.} {\bf 160} (1978), 207--216.

\bibitem[Gro]{EGA}
A. Grothendieck,
{\em El\'ements de G\'eom\'etrie Alg\'ebrique,} 
IHES Publ. Math. {\bf 4} (1960), {\bf 8} (1961), {\bf 11} (1961), 
{\bf 17} (1963), {\bf 20} (1964), {\bf 24} (1965), {\bf 28} (1966), 
{\bf 32} (1967).


\bibitem[Gro2]{EGA-IV-2}
A. Grothendieck, {\em El\'ements de G\'eom\'etrie Alg\'ebrique
IV, 2e Partie,} IHES Publ. Math. {\bf 24} (1965).


\bibitem[Gro3]{EGA-IV-4}
A. Grothendieck, {\em El\'ements de G\'eom\'etrie Alg\'ebrique
IV, 4e Partie,} IHES Publ. Math. {\bf 32} (1967).

\bibitem[Gro4]{SGA-1}
A. Grothendieck et al.,
{\em S\'eminaire de G\'eom\'etrie Alg\'ebrique,
Rev\'etements \'etale et groupe fondamental (1960--61) (SGA 1),}
\SLN{224} (1971).

\bibitem[HaraWY]{HWY}
N. Hara, K.-i. Watanabe, and K.-i. Yoshida,
Rees algebras of $F$-regular type,
{\em J. Algebra} {\bf 247} (2002), 191--218.

\bibitem[Hart]{Hartshorne2}
R. Hartshorne,
{\em Residues and Duality,}
\SLN{20} (1966).

\bibitem[Hart2]{LC}
R. Hartshorne,
{\em Local Cohomology,}
\SLN{41} (1967).

\bibitem[Hart3]{Hartshorne}
R. Hartshorne,
{\em Algebraic Geometry,}
Graduate Texts in Math. {\bf 52}, Springer Verlag (1977).

\bibitem[Hart4]{Hartshorne4}
  R. Hartshorne,
  Generalized divisors on Gorenstein schemes,
  {\em $K$-Theory} {\bf 8} (1994), 287--339.

\bibitem[Has]{Hashimoto7}
M. Hashimoto,
{\em Auslander--Buchweitz Approximations of Equivariant Modules,}
Cambridge (2000).

\bibitem[Has2]{Hashimoto6}
M. Hashimoto,
Cohen--Macaulay $F$-injective homomorphism,
J.~Herzog and G.~Restuccia (eds.),
{\em Geometric and Combinatorial Aspects of Commutative Algebra} 
(Messina 1999), Dekker (2001), 231--244.

\bibitem[Has3]{Hashimoto9}
M. Hashimoto,
Surjectivity of multiplication and
$F$-regularity of multigraded rings, 
in ``Commutative Algebra: Interactions with Algebraic Geometry,''
L. Avramov et al. (eds.), {\em Contemp. Math.} {\bf 331}, A.M.S. (2003),
pp.~153--170.


\bibitem[Has4]{Hashimoto11}
M. Hashimoto,
Another proof of theorems of De Concini and Procesi,
{\em J. Math. Kyoto Univ.} {\bf 45} (2005), 701--710.

\bibitem[Has5]{ETI}
M. Hashimoto,
Equivariant twisted inverses,
{\em Foundations of Grothendieck Duality for Diagrams
of Schemes} (J.~Lipman, M.~Hashimoto),
\SLN{1960} (2009), pp.~261--478.

\bibitem[Has6]{Hashimoto10}
M. Hashimoto,
$F$-pure homomorphisms, strong $F$-regularity, and $F$-injectivity,
{\em Comm. Algebra} {\bf 38} (2010), 4569--4596.

\bibitem[Has7]{Hashimoto}
M. Hashimoto,
Equivariant total ring of fractions and factoriality of rings 
generated by semiinvariants, to appear in 
{\em Comm. Algebra}, {\tt arXiv:1009.5152v2}

\bibitem[Has8]{Hashimoto3}
M. Hashimoto,
$F$-finiteness of homomorphisms and its descent,
to appear in {\em Osaka J. Math.}, 
{\tt arXiv:1203.3640v1}

\bibitem[Has9]{Hashimoto4}
M. Hashimoto,
Equivariant class group. I. 
Finite generation of the Picard and the class groups of an
invariant subring,
{\tt arXiv:1309.2367v1}

\bibitem[Has10]{Hashimoto8}
M. Hashimoto,
Classification of the linearly reductive finite 
subgroup schemes of $\SL_2$,
{\tt arXiv:1403.1324v1}

\bibitem[Has11]{Hashimoto5}
M. Hashimoto,
Equivariant class group. II.
Enriched descent theorem,
{\tt arXiv:1403.4724v1}

\bibitem[HasK]{HK}
M. Hashimoto and K. Kurano,
The canonical module of a Cox ring,
{\em Kyoto J. Math.} {\bf 51} (2011), 855--874.

\bibitem[HasM]{HM}
M. Hashimoto and M. Miyazaki,
$G$-prime and $G$-primary $G$-ideals on $G$-schemes,
{\em Comm. Algebra} {\bf 41} (2013), 2254--2296.

\bibitem[HasN]{HN}
M. Hashimoto and Y. Nakajima,
Generalized $F$-signature of invariant subrings,
{\tt arXiv:1311.5963v1}

\bibitem[HasO]{HO2}
M. Hashimoto and M. Ohtani,
Local cohomology on diagrams of schemes,
{\em Michigan Math. J.} {\bf 57} (2008), 383--425.

\bibitem[HasO2]{HO}
M. Hashimoto and M. Ohtani,
Equivariant Matlis and the local duality,
{\em J. Algebra} {\bf 324} (2010), 1447--1470.

\bibitem[HazM]{HazM}
M. Hazewinkel and C. F. Martin,
A short elementary proof of Grothendieck's theorem on algebraic 
vectorbundles over the projective line,
{\em J. Pure Appl. Algebra} {\bf 25} (1982), 207--211.

\bibitem[HocE]{HE}
M. Hochster and J. A. Eagon,
Cohen--Macaulay rings, invariant theory, and the generic perfection
of determinantal loci,
{\em Amer. J. Math.} {\bf 93} (1971), 1020--1058.

\bibitem[HocH]{HH} M. Hochster and C. Huneke,
Tight closure and strong $F$-regularity,
{\em M\'em. Soc. Math. France} {\bf 38} (1989), 119--133.

\bibitem[Hun]{Huneke} C. Huneke,
{\em Tight Closure and Its Applications,}
AMS (1996).

\bibitem[Jac]{Jacobson}
N. Jacobson,
{\em Lectures in Abstract Algebra, Vol.~III,}
D. Van Nostrand (1964).

\bibitem[Jan]{Jantzen} 
J. C. Jantzen, {\em Representations of algebraic groups,}
Second edition, 
AMS (2003).

\bibitem[Kem]{Kempf}
G. R. Kempf, Some elementary proofs of basic theorems in the
cohomology of quasi-coherent sheaves, 
{\em Rocky Mountain J. Math.} {\bf 10} (1980), 637--645.

\bibitem[Knp]{Knop}
F. Knop,
Der kanonische Modul eines Invariantenrings,
{\em J. Algebra} {\bf 127} (1989), 40--54.

\bibitem[Kun]{Kunz2}
E. Kunz,
Characterizations of regular local rings of characteristic $p$,
{\em Amer. J. Math.} {\bf 91} (1969), 772--784.

\bibitem[Kun2]{Kunz}
E. Kunz,
On Noetherian rings of characteristic $p$,
{\em Amer. J. Math.} {\bf 98} (1976), 999--1013.

\bibitem[LeW]{LW}
G. J. Leuschke and R. Wiegand,
{\em Cohen--Macaulay Representations,}
AMS (2012).

\bibitem[Lip]{Lipman}
J.~Lipman, Notes on derived functors and Grothendieck duality,
{\em Foundations of Grothendieck Duality for Diagrams
of Schemes} (J.~Lipman, M. Hashimoto),
\SLN{1960} (2009), pp.~1--259.

\bibitem[Mac]{Mac Lane} S. Mac Lane, {\em Categories for the Working
Mathematician,} 2nd ed.
{\em Graduate Texts in Math.} {\bf 52}, Springer Verlag (1998).

\bibitem[Mat]{CRT} 
H. Matsumura, {\em Commutative Ring Theory,} First paperback
edition, Cambridge (1989).

\bibitem[Mil]{Milne}
J. S. Milne, {\em \'Etale cohomology,} Princeton University Press (1980).

\bibitem[MuFK]{GIT} D. Mumford, J. Fogarty and F. Kirwan, 
{\em Geometric Invariant Theory,} 3rd ed., Springer (1994).

\bibitem[Nag]{Nagata}
M. Nagata,
Complete reducibility of rational representations of a matric group,
{\em J. Math. Kyoto Univ.} {\bf 1} (1961), 87--99.

\bibitem[Nee]{Neeman}
A. Neeman,
{\em Triangulated Categories,} Princeton University Press (2001).

\bibitem[Oda]{Oda}
T. Oda,
{\em Convex Bodies and Algebraic Geometry,}
Springer (1985).

\bibitem[Ogo]{Ogoma}
T. Ogoma,
Existence of dualizing complexes,
{\em J. Math. Kyoto Univ.} {\bf 24} (1984), 27--48.

\bibitem[Pes]{Peskin}
B. R. Peskin,
On the dualizing sheaf of a quotient scheme, 
{\em Comm. Algebra} {\bf 12} (1984), 1855--1869.

\bibitem[Rad]{Radu}
N. Radu,
Une classe d'anneaux noetheriens,
{\em Rev. Roum. Math. Pures. et Appl.} {\bf 37} (1992), 79--82.


\bibitem[SkY]{SY}
A. Skowro\'nski and K. Yamagata,
{\em Frobenius Algebras. I., Basic Representation Theory,}
European Mathematical Society (2011).

\bibitem[Smi]{Smith}
K. E. Smith,
Globally $F$-regular varieties:
Applications to vanishing theorems for quotient of Fano varieties,
{\em Michigan Math. J.} {\bf 48} (2000).

\bibitem[SmVdB]{SmVdB}
K. E. Smith and M. Van den Bergh,
Simplicity of rings of differential operators in prime characteristic,
{\em Proc. London Math. Soc.} {\bf 75} (1997), 32--62.

\bibitem[Spa]{Spaltenstein}
N. Spaltenstein,
Resolutions of unbounded complexes,
{\em Compositio Math.} {\bf 65} (1988), 121--154.

\bibitem[Stack]{SP}
The Stacks Project, 
{\tt http://stacks.math.columbia.edu}

\bibitem[Stan]{Stanley}
R. P. Stanley,
{\em Combinatorics and Commutative Algebra,} 2nd ed.,
Birkh\"auser (1996).

\bibitem[Sva]{Svanes}
T. Svanes,
Coherent cohomology on Schubert subschemes of flat schemes and applications,
{\em Adv. Math.} {\bf 14} (1974), 369--453.


\bibitem[Swe]{Sweedler2}
M. Sweedler,
{\em Hopf Algebras,}
Benjamin (1969).


\bibitem[Swe2]{Sweedler3}
M. Sweedler,
Connected fully reducible affine group schemes in positive 
characteristic are Abelian,
{\em J. Math. Kyoto Univ.} {\bf 11} (1971), 51--70.

\bibitem[Tho]{Thomsen}
J. F. Thomsen,
Frobenius direct images of line bundles on toric varieties,
{\em J. Algebra} {\bf 226} (2000), 865--874.

\bibitem[Vis]{Vistoli}
A. Vistoli,
Grothendieck topologies, fibered categories and descent theory,
{\em Fundamental Algebraic Geometry,} B. Fantechi et al. (eds.),
AMS (2005), pp.~1--104.

\bibitem[Wat]{Waterhouse2}
W. C. Waterhouse,
Divisor classes in pseudo Galois extensions,
{\em Pacific J. Math.} {\bf 36} (1971), 541--548.

\bibitem[Wat2]{Waterhouse}
W. C. Waterhouse,
Class groups of rings of invariants,
{\em Proc. Amer. Math. Soc.} {\bf 67} (1977), 23--26.


\bibitem[Yos]{Yoshino}
Y. Yoshino,
{\em Cohen--Macaulay modules over Cohen--Macaulay rings,}
Cambridge (1990).

\end{thebibliography}
\end{document}